\DeclareMathAlphabet{\mathpzc}{OT1}{pzc}{m}{it}
\newcommand{\ind}{\mathbf{1}}
\newtheorem{thm}{Theorem}[section]
\newtheorem{cor}[thm]{Corollary}
\newtheorem{prop}[thm]{Proposition}
\newtheorem{lem}[thm]{Lemma}
\newtheorem{lemma}[thm]{Lemma}
\theoremstyle{definition}
\newtheorem{defn}[thm]{Definition}
\newtheorem{ex}[thm]{Example}
\newtheorem{rk}[thm]{Remark}
\newtheorem{ass}[thm]{Assumption}
\numberwithin{equation}{section}
\newcommand{\ak}{B_{z_0/(2k)}(   0)}
\newcommand{\qdif}{\boldsymbol q^{(k)}_{  \beta}}
\newcommand{\x}{\mathbf x}
\newcommand{\dr}{\mathrm d}
\newcommand{\y}{\mathbf y}
\newcommand{\br}{\boldsymbol{b}}
\newcommand{\brn}{\boldsymbol{b}_N}
\newcommand{\bfa}{\mathbf a}
\newcommand{\Q}{\mathscr Q^{   \beta}_k}
\newcommand{\pdif}{\boldsymbol{p}_{\mathbf{dif}}}
\newcommand{\bdif}{\boldsymbol{b}_{\mathbf{dif}}}
\newcommand{\Pdif}{P_{\mathbf{dif}}}
\newcommand{\z}{\boldsymbol{\zeta}}
\newcommand{\Pb}{ \mathbf P_\mathbf x^{(   \beta,k)}}
\newcommand{\Eb}{ \mathbf E_\mathbf x^{(   \beta,k)} }
\newcommand{\Pbb}{\mathbf P_\mathbf x^{\boldsymbol b}}
\newcommand{\Ebb}{\mathbf E_\mathbf x^{\boldsymbol b}}
\newcommand{\Ebbnn}{\mathbf E_{\mathbf x_N}^{\boldsymbol b_N}}
\newcommand{\Ek}{\mathcal E_{k\times d}^{\mathrm{SRI}}(q,M,F)}
\newcommand{\Ekk}{\mathcal E_{k\times d}^{\mathrm{SRI}}(\Psi_\sigma,M,F)}
\newcommand{\R}{\mathbb{R}}
\renewcommand{\hat}{\widehat}
\newcommand{\Fd}{\mathrm{F_{dec}}}
\renewcommand{\bar}{\overline}
\newcommand{\Ex}{\mathbb{E}}
\newcommand{\Con}{\mathrm{C}}
\newcommand{\mE}{\mathbf{E}}
\newcommand{\e}{\varepsilon_{\mathrm{ic}}}
\newcommand{\sdn}[1]{{\color{red}\ttfamily\upshape\small[#1]\color{black}}}
\title[RWRE fluctuations in all spatial dimensions]{Random walks in space-time random media in all spatial dimensions: the full subcritical fluctuation regime}
\author[H.\ Drillick]{Hindy Drillick}
\address{H.\ Drillick, Simons Laufer Mathematical Sciences Institute,\newline\hphantom{\quad \ \ H. Drillick}17 Gauss Way, Berkeley, CA 94720 USA}
\author[S.\ Parekh]{Shalin Parekh}
\address{S.\ Parekh, Department of Mathematics, University of Maine,\newline\hphantom{\quad \ \ S. Parekh}Neville Hall, Orono, ME 04469 USA}
\subjclass[2020]{
	Primary 60K37, 
        82B21,	
        82C22,	
	Secondary 60G70. 
}
\keywords{Random walks in random environments; Stochastic flows; Additive-noise stochastic heat equation; Additive functionals of Markov chains.
}
\begin{document}
	
	\begin{abstract} In arbitrary spatial dimension $d\ge 1$, we study a generalized model of random walks in a time-varying random environment (RWRE) defined by a stochastic flow of kernels. We consider the quenched probability distribution of the random walker under a scaling where the time is of order $N$ and the spatial window is of size $N^{1/2}$. This spatial window may not necessarily be centered close to the origin.
    
    We show that as $N\to \infty$ there are Gaussian fluctuations up to a certain specific spatial centering radius $\psi_N$ in the tail of the quenched probability distribution, which we call the \textit{critical scale}. This critical scale depends on the spatial dimension of the underlying random walk, specifically $\psi_N = O(N^{3/4})$ when $d=1$, $\psi_N = O( N/\sqrt{\log N})$ when $d=2$, and $\psi_N = O(N)$ when $d\ge 3$. In the particular case of centering the fluctuation window at the origin, our results recover and generalize some known fluctuation results for related models. However, farther from the origin, the previous literature is more sparse. The noise coefficient in the limiting Gaussian field is nontrivial and depends on the invariant measure of the two-point motion of the underlying RWRE model. We furthermore reconcile some of these coefficient formulas with previous works. 

    As part of the proof, we introduce a general class of Markov chains with short-range interactions that admit nice estimates and limit formulas. One of the key technical results for such Markov chains is that in $d\ge 2$, one can propagate test functions backwards in time to obtain precise limiting moment formulas. 
    
    We conjecture that the fluctuations are no longer Gaussian at a certain point within the critical scale, which is already known in $d=1$. In $d=2$, we conjecture that the scaling limit at criticality is given by the 2d critical stochastic heat flow recently constructed in  \cite{CSZ_2d}, and we make various additional conjectures about relating the RWRE crossover to the crossover of directed polymers in arbitrary dimensions.
	\end{abstract}

\maketitle

\setcounter{tocdepth}{1}
\tableofcontents
   
\section{Introduction} \label{sec:intro}

A random walk in a dynamic random environment (RWRE) is a probabilistic model in which a random walker moves through a time-changing random environment that determines the transition probabilities of the walker. The environment may be determined by i.i.d. weights, but often it is nice to allow more general frameworks such as correlated environments that are sufficiently strongly mixing in space or time, or even non-weight-based models. Such frameworks can be used to model scenarios where conditions are not static, such as studying diffusion of many particles in a turbulent medium.

An important object of mathematical interest for these dynamic RWRE models is their \emph{quenched probability density} $P^{\omega}(t,x)$ which describes the random transition probabilities for the random walk given the environment $\omega$. Given a $d$-dimensional vector $x$ and a positive time $t$, the quantity $P^\omega(t,x)$ denotes the probability that the random walker is at position $x$ at time $t$, given the randomness of $\omega$. This quenched density can be studied in different scaling regimes corresponding to different scalings of space and time. In particular, the \emph{bulk regime} corresponds to a diffusive scaling $|x| = O(\sqrt{t})$, and there is a spectrum of \emph{extremal regimes} corresponding to $|x| \gg \sqrt{t}$ going all the way up to the \emph{large deviation regime} $|x| = O(t)$. 

In $d=1$, previous results for these dynamic RWRE models are mostly known for the two ends of this spectrum: the bulk regime where Gaussian fluctuations appear \cite{BMP1,BMP2, timo2, yu,timo, Kot} and the large deviation regime where Tracy-Widom \cite{TW} fluctuations appear \cite{bc, gu, mark,  dom, bc25}. The physics works \cite{ldt2, ldt, bld} therefore predicted the existence of a crossover regime at $x = O(t^{3/4})$  where the fluctuations are given by the Kardar-Parisi-Zhang (KPZ) equation \cite{kpz}. The KPZ equation encapsulates the crossover from the Gaussian to Tracy-Widom regimes. This prediction was mathematically confirmed in our recent series of works \cite{DDP23,DDP+,Par24}. The regime $x = O(t^{3/4})$ can therefore be thought of as a \emph{critical scale} where the fluctuations change from Gaussian to non-Gaussian. In higher dimensions $d >1$, only the fluctuations of the bulk regime have been studied (see for example \cite{BMP1,Kot}), where they are Gaussian. However, we conjecture that there should similarly be a critical scale at which the nature of the fluctuations changes from Gaussian to non-Gaussian.

The goal of this paper is to fill in this spectrum in \emph{all spatial dimensions} up until this conjectured critical scale. In particular do the following.
\begin{enumerate}
    \item We prove new Gaussian fluctuation results for these models in an entire spectrum of scaling locations starting from the bulk regime and going up to a certain critical location scale where we conjecture that non-Gaussian fluctuations will appear. These Gaussian fluctuations will be described by various additive-noise stochastic partial differential equations (SPDEs). We show that there are several different regimes in each dimension, with the law of the SPDE limit changing as one transitions from one regime to another. In $d=1$, our results can be thought of as interpolating the known results in the bulk regimes with the known results in the extremal ones.

    \item We discuss some of the important open problems concerning the fluctuations of the quenched density and tail behavior and comment on some other interesting observables such as the extreme particles in a system of many walkers. Much of this discussion is inspired by the recent physics numerics papers \cite{hass2023b, hass23, hass2024extreme, hass2025superuniversalbehavioroutliersdiffusing, hass2025universal, ark2025universalfluctuationstailprobability}. 

    \item We develop a new framework which we call \emph{Short-Range Interacting Markov Chains (SRI Chains)} to model the environment-averaged law of multiple random walkers in the same random environment. This will be an important tool to help prove the above-mentioned fluctuation results. These are Markov chains that describe families of interacting random walks whose interactions only take place when the random walkers are close enough to each other. We prove various results for such types of Markov chains, including an invariance principle, heat kernel estimates and various limit laws for additive functionals of these chains. 

\end{enumerate}

\subsection{Background}
The study of these dynamical RWRE models began with various works of Bernabei, Boldrighini, Minlos, and Pellegrinotti \cite{BMP1, BMP3,  BMP4, BMP2, BMP5}. They obtain laws of large numbers and central limit theorems for various observables of the random walker \emph{in the bulk regime}. The approach of those papers is to use a cluster expansion method to prove meaningful limit theorems, and relies on rather technical ``small noise" assumptions. Further advances that relaxed those strong assumptions to prove more general quenched central limit theorems and quenched large deviations principles for these dynamic RWRE models came by considering the point of view of ``the environment seen through the random walker," see e.g. the works of \cite{timo3, BZ, timo2, Dolgo1, Dolgo2, ADR, Redig, timo}. Meanwhile, a body of works of \cite{lejan, HW09, HW09b} formulated yet more general and continuum versions of these models that establish deep connections with previous works on classical stochastic flows \cite{kun94a, kun94b}, and further inspired later works of \cite {sss0, sss, sss2} that drew numerous interesting connections between these dynamic RWRE models and the Brownian web and net.

Recently a renewed interest in these dynamic RWRE models has grown from a different direction, namely their connection to the fields of disordered systems, KPZ universality, and singular stochastic PDEs. These objects appear when considering \emph{certain extremal regimes} for the quenched density. These connections were first developed in \cite{bc, gu, mark} and the physics papers \cite{ldt2, ldt, bld} which derived exact formulas for specific RWRE models in spatial dimension $d=1$. They then performed precise asymptotics with those formulas to yield Tracy-Widom type distributions \cite{TW} for various observables of these models \emph{in the large deviation regime}. KPZ universality is the study of physical and probabilistic models of time-varying height interfaces in one spatial dimension that exhibit these limiting distributions as well as anomalous \textit{critical exponents}, usually coming in a ratio of 1:2:3 of fluctuations, space, and time respectively. One of the central objects in the \textit{KPZ universality class} is the so-called KPZ equation, a singular parabolic stochastic PDE of Hamilton-Jacobi type. 
We refer to \cite{FS10, Qua11, Cor12, QS15, CW17, CS20} for some surveys of the mathematical studies of the KPZ equation, and \cite{Wal86, DPZ, Hai13, Hai14, GJ14, GIP15, GP17, GP18, duch} for a treatment of its highly nontrivial existence and uniqueness theory.

In a series of previous papers \cite{DDP23, DDP+, Par24}, we built upon the known connections between RWRE models and KPZ universality by proving convergence of the quenched probability density of one-dimensional dynamic RWRE models to the \emph{multiplicative stochastic heat equation} started with Dirac initial data. This stochastic PDE is the exponentiated version of the KPZ equation. We proved this convergence under a very specific \emph{moderate deviations} scaling regime of time and space, namely $x = O(t^{3/4})$.

In those papers we do not use the previously mentioned techniques of cluster expansions or the environment-seen-from-the-walker. Instead we introduce a different technique, namely deriving a discrete SPDE for the quenched density driven by a martingale noise, inspired by the work of \cite{BG97}. This technique has proven to be fruitful for obtaining the fluctuations in fairly general settings and will be the basis of the proof techniques used in this paper.

\subsection{Definition and assumptions for the model}

Here we introduce a generalized model of random walk in random environment that will be studied throughout the rest of the paper. The paper \cite{Par24} first introduced and studied such a model, but only in spatial dimension $d=1$ and in a very specific scaling window. The model is inspired by the notion of stochastic flows of kernels as introduced by \cite{lejan} and can be thought of as a time discretization of their framework.

\begin{defn}[Markov Kernels] Fix $d\ge 1$. Let $I$ be any locally compact additive subgroup of $\mathbb R^d$, for example $I=\mathbb R^d$ or $I=A \mathbb Z^d$ where $A$ is some nonsingular $d\times d$ matrix. Let $\mathcal B(I)$ denote the Borel $\sigma$-algebra on $I$. 
A \textit{Markov kernel} on $I$ is a continuous function from $I\to \mathcal P(I)$, where $\mathcal P(I)$ is the space of probability measures on $(I,\mathcal B(I))$ equipped with the topology of weak convergence of probability measures. 

Any Markov kernel $K$ on $I$ shall be written as $K(   x,A)$, which is identified with the continuous function from $I\to \mathcal P(I)$ given by $x\mapsto K(   x,\cdot)$. We denote by $\mathcal M(I)$ the space of all Markov kernels on $I$.

We equip $\mathcal M(I)$ with the weakest topology under which the maps $T_{f,   x}:\mathcal M(I)\to \mathbb R$ given by $$K\stackrel{T_{f,   x}}{\mapsto}  \int_I f(   y)\;K(   x,\mathrm d   y)$$ 
are continuous, where one varies over all $x\in I$ and all bounded continuous functions $f:I\to \mathbb R$. This topology endows $\mathcal M(I)$ with a Borel $\sigma$-algebra which allows us to talk about random variables taking values in the space of Markov kernels. For $   a\in I$ define the translation operator $\tau_a:\mathcal M(I)\to \mathcal M(I)$ by $(  \tau_{   a} K)(   x,A):= K(x+   a,A+   a)$ which is a Borel-measurable map on this space. 
\end{defn}
\begin{ass}[Assumptions for the main result] \label{a1}Assume we have a family $\{K_n\}_{n\ge 1}$ of random variables in the space $\mathcal M(I)$, defined on some probability space $(\boldsymbol \Omega, \mathcal F^\omega, \mathbb P)$ such that the following hold true.
\begin{enumerate}
    

    \item \label{a11}(Stochastic flow increments)  $K_1,K_2,K_3,...$ are independent and identically distributed.

    \item \label{a12}(Spatial translational invariance)  $K_1$ has the same law as $  \tau_{a} K_1$ for all $a\in I.$

    \item \label{a22}(Exponential moments for the annealed law) Letting $\mu(A) := \mathbb E[ K_1(   0,A)]$ we have that the moment generating function exists, i.e., $\int_I e^{z_0|   x|}\mu(\mathrm d   x)<\infty$ for some $z_0>0$, where $|\cdot|$ is the Euclidean norm on $\mathbb R^d$. Letting $m_{k_1,...,k_d}:= \int_I x_1^{k_1}\cdots x_d^{k_d} \mu(\mathrm d   x)$ denote the sequence of moments of $\mu$, we also impose that the covariance matrix of $\mu$ is the $d\times d$ identity matrix, i.e., $m_{   e_i+   e_j} -m_{   e_i}m_{   e_j}=\delta_{ij},$ where $   e_i$ are the standard basis of $\mathbb R^d$. We can assume this without loss of generality since the additive subgroup $I$ can be replaced by $\Sigma^{-1} I$ if the covariance matrix is any nondegenerate matrix $\Sigma$. 
    \item \label{a23}(Symmetry up to order $p$) There exists some $p\in \mathbb N$ such that $\int_I y_1 ^{k_1} \cdots y_d^{k_d} K_1(   0,\mathrm d   y)=m_{k_1,...,k_d}$ $\mathbb P$-almost surely for $1\leq k_1+...+k_d \leq p-1$.
    Furthermore $p-1$ is the largest such value, i.e., $\int_I y_1 ^{k_1} \cdots y_d^{k_d} K_1(   0,\mathrm d   y)$ is non-deterministic for some $(k_1,...,k_d)\in \Bbb Z_{\ge 0}^d$ with $k_1+...+k_d=p.$

    \item \label{a24} (Strong decay of correlations)
    Consider a decreasing function $\Fd:\mathbb [0,\infty)\to [0,\infty)$ 
    of exponential decay at infinity (i.e., $\Fd(x)\leq Ce^{-cx}$ for some $c,C>0$). Then assume that the kernel $K_1$ satisfies the following spatial decay of correlations for all $k \in \mathbb N$: there exists $C(k)>0$ such that for every partition $\pi$ of the finite set $\{1,...,k\},$
    \begin{align}\bigg\| \mathbb E \bigg[ \prod_{j=1}^k K_1(x_j,\bullet)\bigg] - \prod_{B\in \pi} \mathbb E \bigg[ \prod_{i\in B} K_1(x_j,\bullet) \bigg] \bigg\|_{TV} \leq C(k) \Fd\bigg( \min_{B\ne B'\in \pi} \min_{i\in B,j\in B'} |   x_i-   x_j|\bigg),  \label{tvb'}
    \end{align}
    uniformly over all $   x_1,...,   x_k \in I$. The products on the left side should be interpreted as the usual tensor products of Borel probability measures.
    \item (Non-degeneracy) \label{a16} Define a deterministic Markov kernel on $I$ by the formula $$\pdif(   x,A):=\int_I \ind_{\{   y_1-   y_2\in A\}} \mathbb E [ K_1(0,\dr y_1) K_1( x, \dr y_2) ],$$ for $   x\in I$ and Borel sets $A\subset I$. We impose the following two conditions on $\pdif$:
    \begin{itemize} 
    \item 
    (Regularity) The family of transition laws $   x\mapsto \pdif(   x,\bullet)$ are continuous in total variation norm.
    \item (Irreducibility) For any two points $   x,   y \in I$ and any $\epsilon>0$ there exists $m\in \mathbb N$ such that $\pdif^m\big(   x,B_\epsilon(   y)\big)>0,$ where $\pdif^m$ is the $m^{th}$ power of this Markov kernel, and $B_\epsilon(   y)$ is the ball of radius $\epsilon$ around $   y \in \mathbb R^d$.
    \end{itemize}
    \end{enumerate}
\end{ass}

We define the \emph{underlying environment} $\omega \in \boldsymbol \Omega$ as the entire realization of the i.i.d. kernels: that is, $\omega=(K_i)_{i=1}^\infty$. 

The crucial part of Assumption \ref{a1} is Item \eqref{a24} which says that, after averaging, random walkers in the same random environment are essentially independent when they are far enough apart. In fact, it says something slightly stronger which is that if we have several clusters of random walkers, the clusters behave independently if they are far enough apart from one another. This can also be viewed as a strong mixing condition on $K_1$ under spatial translation.

Define the $k$-point correlation kernel (a family of measures on $I^k$) by 
\begin{equation}\label{kptkernel}\boldsymbol p^{(k)}\big((   x_1,...,   x_k),(\mathrm d   y_1,...,\mathrm d   y_k)\big):= \mathbb E[ K_1(   x_1,\mathrm d   y_1)\cdots K_1(   x_k,\mathrm d    y_k)].
\end{equation} Taking $\pi = \{ \{1\} ,..., \{k\}\},$ notice that Item \eqref{a24} above implies that 
\begin{align}\big\| \mu^{\otimes k}\big(\bullet-(   x_1,...,   x_{k})\big) \;\;-\;\; \boldsymbol p^{(k)}\big( (   x_1,...,   x_{k}),\;\bullet\; \big)\big\|_{TV} \leq C(k) \Fd\big( \min_{1\le i<j\le k}|   x_i-   x_j|\big).  \label{tvb}
    \end{align}
One might ask if this slightly weaker assumption would suffice to prove our main results, and indeed in a previous work \cite{Par24} in $d=1$ it was sufficient. In higher dimensions, it appears that we need the stronger version above. Checking \eqref{tvb'} in practice is generally straightforward.

While the assumptions look lengthy, they are very general and include many of the well-known cases of random walks in a dynamical random environment such as the discrete random walks of \cite{sss, bc, DDP+}, the sticky Brownian motions of \cite{mark, HW09}, the diffusion in random media of \cite{ew6, dom}, and the KMP model \cite{bc25}. Most physically interesting models satisfy $p=1$ or (rarely) $p=2$, but mathematically it is interesting to study the full hierarchy of $p$-values and the appearance of $p^{th}$ derivatives in the limit theorems below for general values of $p\in \mathbb N$. Our choice to consider general $p\in \mathbb N$ is inspired by the physics work of \cite{hass2025superuniversalbehavioroutliersdiffusing}, who first conjectured based on numerics that different values of $p$ would give rise to different scaling limits and coefficients.

\begin{defn}[The microscopic model of interest] Let $I$ be as in Assumption \ref{a1}, and sample the kernels $K_1,K_2,...$ as described there. Fix a deterministic probability measure $\nu$ on $I$, called the \emph{initial state.} For $r\in \mathbb N$, we define the \emph{quenched probability measure} \begin{equation}P_\nu^\omega(r,\bullet)= \int_I K_1\cdots K_r(   x_0,\bullet)\nu(\dr x_0),\label{kn}\end{equation} where the product of kernels is defined by $K_1K_2(   x,A) = \int_I K_2(   y,A)K_1(   x,\mathrm d   y)$ and so on by associativity. 
\end{defn}
Above and henceforth, the variable $r\in \mathbb N$ will always be used to denote the microscopic time variable. Intuitively, $P_\nu^\omega(r,\bullet)$ is the transition probability at time $r$ of a random walker starting from initial measure $P_\nu^\omega(0,\bullet) = \nu$, such that the walker at position $   x$ at time $n-1$ uses the kernel $K_n(   x,\bullet)$ to determine its position at time $n$, for every $n\ge 0$ and $x\in I$. 

\begin{figure}[t]
    \centering
    \includegraphics[scale = 0.82]{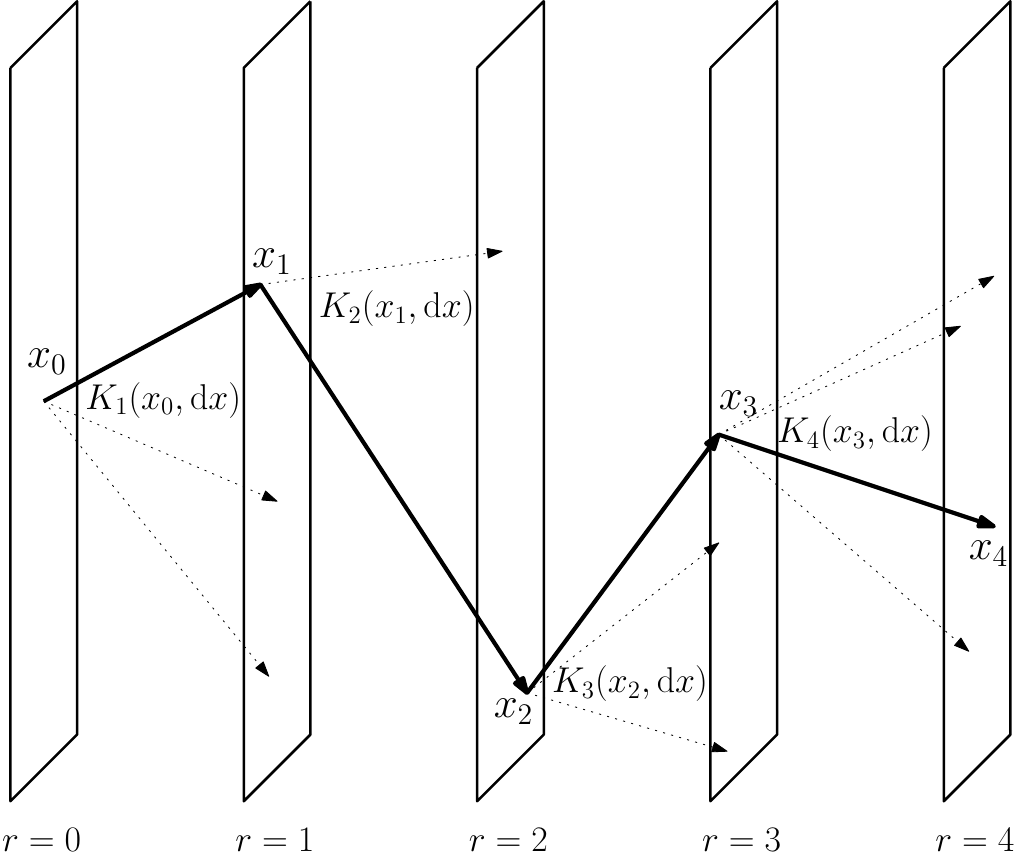}
    \caption{ An illustration of our generalized model of random walk in random environment. The random walker starts at position $x_0$ at time $r=0$, then samples $x_1$ from the \textit{random} probability measure $K_1(x_0,\dr x)$. After landing at position $x_1$ at time $r=1$, the random walker then samples $x_2$ from the probability measure $K_2(x_1,\dr x)$, and continues in this fashion. Each plane represents a copy of the underlying state space $I$ (which in this example is $\mathbb R^2)$. The dotted arrows in the above picture represent any one of many possible jumps that were not actually executed by the random walker in this random environment, while the solid arrows represent the jumps that were actually executed.}
    \label{sticky}
\end{figure}

\subsection{Examples of models satisfying the assumptions}

The following are five simple examples satisfying Assumption \ref{a1} that are useful to have in mind while reading the rest of the paper.

\begin{ex}[Nearest-neighbor random walk in a dynamical random environment in $\mathbb Z^d$]\label{ex:1}
Take $I = \mathbb Z^d$. Let $\hat{e_i}, 1 \leq i \leq d$ denote the unit vectors in $\mathbb Z^d$. We then define a space-time random environment as follows: Consider an i.i.d. family of random vectors $\{\boldsymbol \omega_{n,   x}\}_{n \in \mathbb N,    x \in \mathbb Z^d}$. Each random vector $\boldsymbol \omega_{n,   x}$ has components indexed by the unit vectors in $\mathbb Z^d$, and these components sum up to $1$ so that $\boldsymbol \omega_{n,   x}$ defines jump probabilities for a random walk. In other words we have $$\boldsymbol \omega_{n,   x}= \big(\omega_{n,    x}(\pm \hat{e}_i): 1 \leq i \leq d\big), \;\;\;\;\;\;\sum_{i=1}^d \left( \omega_{n,    x}(\hat{e}_i) +\omega_{n,    x}(-\hat{e}_i) \right) = 1.$$ 

We can then define the kernels 
$$\tilde K_n(   x,    x \pm \hat{e}_i) =  \omega_{n,    x}(\pm \hat{e}_i),$$ and set $\tilde K_n(   x,    y) = 0$ if $|    y -    x| > 1$. In other words, a random walker in this random environment will use the random jump rates given by $\boldsymbol \omega_{n,    x}$ when it is at position $   x$ at time $n$. 

This nearest-neighbor model cannot satisfy the irreducibility condition in Assumption \ref{a1} \eqref{a16}, as the communicating class of the origin under the difference chain $\pdif$ is a proper subgroup $I\subset \mathbb Z^d$. However, this periodicity is not a fatal flaw and can be remedied by considering the two-step chain in which the kernel $K_n$ is defined to be the product $\tilde K_{2n-1} \tilde K_{2n}$. 
The irreducibility condition is then satisfied by these modified $K_n$ on the proper subgroup $I$ as long as we impose that the random variable $\boldsymbol\omega_{0,0} $ 
is not concentrated on a single unit vector a.s. (e.g., Bernoulli if $d=1$) and that it is not supported on a proper-dimensional linear subspace (e.g., there exists some coordinate direction along which both unit vectors have weight zero a.s.). This example illustrates that periodicity issues may cause problems in satisfying the assumptions, however such problems can always be solved by considering the modified kernels that are defined by considering several steps of the random walk.

For these modified two-step kernels $K_n$, Assumption \eqref{a24} is satisfied with $\Fd(x) = \ind_{\{x=0\}}$ as two kernels $K_n(x,\bullet)$ and $K_n(y,\bullet)$ are independent unless $x=y$ for $x,y$ in the lattice $I$, due to the i.i.d. nature of the environment.
\end{ex}

The second example illustrates how we can obtain values of $p > 1$ in Assumption \ref{a1} (\ref{a23}). 

\begin{ex}[Symmetric random walk in random environment]\label{ex:2}
Take $I = \mathbb Z^d$. Let $\{U_{n,    x}\}_{n \in \mathbb N,    x \in \mathbb Z^d}$ be a family of i.i.d. uniform random variables taking values in the interval $[0,1]$. Define 
\begin{align*}
    K_n(   x,    y) = \begin{cases} \frac{1 - U_{n,   x}}{2d} & |   y -    x|=1 \\
   U_{n,    x} &    y =    x  \\
    0 & \text{else}.
    \end{cases}
\end{align*}
The environment is symmetric, so we have that the first moment of the first step of the random walk is deterministically zero, i.e., 
$$\sum_{y\in\mathbb Z^d}    y\;K_1(   0,    y) =   0.$$
We have to go to the second moment to see something random, e.g.
$$\sum_{y\in\mathbb Z^d} y_1 ^2\;K_1(   0,    y) = \frac1d(1 - U_{n,x}).$$ It follows from the definition of $p$ in Assumption \ref{a1} that $p=2$.

\end{ex}

The next example illustrates that despite our discrete-time setup, one can easily fit continuous-time models into our framework.

\begin{ex}[Diffusion in a random medium/Kraichnan flows] \label{diffrm} Consider a vector valued Gaussian space-time white noise $ \vec  \eta = (\eta_1,...,\eta_d)$ with covariance matrix $$C_{ij}(x) = \mathbb E [ \eta_i(t,0) \eta_j (t,x)].$$ Assume that $C$ is compactly supported and that $C(0) = \theta \mathrm{Id}_{d\times d}$ where $\theta \in (0,1)$. As explained in \cite{kun94a, kun94b}, one may sensibly construct It\^o solutions to the SPDE given by 
\begin{equation}\label{fth}\partial_t u (t,x) = \frac12 \Delta u(t,x) + \mathrm{div} \big( u(t,x)  \vec  \eta (t,x)\big), \;\;\;\;\;\; u(0,x) = \delta_0(x),\;\;\;\;\;\;\;\;t\ge 0 , x\in \mathbb R^d.
\end{equation}
A pathwise solution adapted to the filtration generated by $  \vec \eta$ is possible thanks to the spatial smoothness of $\vec \eta $, see \cite{kun94b}. Then \eqref{fth} turns out to be conservative, meaning that $\int_{\mathbb R^d} u(t,x)dx =1$ for all $t>0$. Notice in fact that \eqref{fth} is the forward equation associated to the formal SDE $$d   X(t) =    \vec \eta(t,   X(t))dt + \theta^{1/2} d   W(t).$$ Here $   W(t)$ is a standard Wiener process in $\mathbb R^d$.\footnote{This SDE representation looks misleading because the It\^o-Stratonovich correction in \eqref{fth} is not a constant multiple of $u$, but rather it is given by $\frac12 (1-\theta) \partial_x^2 u(t,x),$ which explains why the viscosity in the It\^o equation is $\frac12$ instead of $\frac12\theta$.} Despite $\vec \eta $ being distribution-valued, \cite{kun94b} shows that such a SDE actually makes sense and that a solution exists for a.e. realization of $\vec \eta$, and furthermore that \eqref{fth} describes the evolution of the density of $   X(t)$ started from 0.

For $t>s\ge 0$ and $x,y\in \mathbb R^d$ we define the ``propagators" $u_{s,t}(x,y)$ associated to the equation \eqref{fth} as follows: let $(t,y)\mapsto u_{s,t}(x,y)$ be the solution of \eqref{fth} started from $\delta_x(y)$ at time $s$. All of these solutions are coupled to the same realization of the driving noise $\vec \eta$. Then we have the convolution property \begin{equation}\label{convo}\int_{\mathbb R^d} u_{s,t}(x,y) u_{t,r}(y,z) dy = u_{s,r}(x,z),\;\;\;\;\; \forall s<t<r\;\;\;\;\;a.s.,\end{equation}
and furthermore if $\{(s_i,t_i]\}_{i=1}^m$ are disjoint intervals then $\{u_{s_i,t_i}\}_{i=1}^m$ are independent (because $u_{s,t}$ is always measurable with respect to the restriction of $\eta$ to $[s,t]\times \mathbb R$) \cite{kun94a,kun94b,ljr02}.

We may thus let $I=\mathbb R^d$ and then define the kernels $$K_n(x,y):= u_{n,n+1}(x,y).$$ The preservation of mass implies that $\int_{\mathbb R^d} K_n(x,y)dy=1$ for all $x\in \mathbb R^d$ and $n\in \mathbb Z_{\ge 0}$. See \cite[Proposition 6.4]{Par24} for a proof that this model satisfies the conditions of Assumption \ref{a1} with $p=1$, thanks to the compact support property of the matrix function $C$.  That proof is for $d=1$, but the same method applies for all $d$. 
\end{ex}

\begin{ex}(The KMP model) Recent work of \cite{bc25} shows that the so called Kipnis-Marchioro-Presutti (KMP) model \cite{MR656869} falls within our framework. To describe it in words, for each \textit{edge} on $\mathbb Z^d$ one has a sequence of Poisson clocks $(T_j)$ and also a sequence of i.i.d. random variables $(U_j)$ taking values in $[0,1]$. Meanwhile, at each \textit{vertex} $x\in \mathbb Z^d$ one has a time-evolving random energy $\eta_t(x)$ that is non-negative. The Markovian dynamics for the temporal evolution of the energy profile are given as follows: when the Poisson clock $T_j$ rings on edge $\{x,y\}$, the total energy of the neighboring two vertices is redistributed according to the $[0,1]$-valued random variable $U_j$; more precisely, $(\eta_{T_j}(x), \eta_{T_j}(y)) $ is replaced by $(U_j( \eta_{T_j}(x)+\eta_{T_j}(y)), (1-U_j) (\eta_{T_j}(x) + \eta_{T_j}(y)))$. 
We refer to \cite[Section 2]{bc25} for a discussion of how this model satisfies our assumptions. That proof is for $d=1$, but the method applies for general $d\ge 1$.
\end{ex}

\begin{ex}[Random landscape model] Let $(\omega_{n,   x})_{n\in \mathbb N,    x\in \mathbb Z^d}$ be any collection of i.i.d. random variables. Let $b: \mathbb Z^d \to [0,1]$ be any deterministic function of finite support, and assume that the additive subgroup of $\mathbb Z^d$ generated by $\{   x: b(   x)>0\}$ is all of $\mathbb Z^d.$
Define the kernels 
\begin{equation}K_n(x,y):= \frac{b(y-x) e^{\omega_{n,y}}}{\sum_{y'\in \mathbb Z} b(y'-x) e^{\omega_{n,y'}}}. 
\end{equation}
These kernels $K_n(x,\bullet)$ are spatially correlated for distinct values of $x$, but it is clear that the dependency is only finite-range due to the finite support of $b$. Thus, these kernels satisfy Assumption \ref{a1}. See \cite[Proposition 6.2]{Par24} for a proof with weaker assumptions on the function $b$ and the weights $\omega$.
\end{ex}

\subsection{Setup and main results}\label{setup}
We now move on to the main results of this paper. The main goal of this paper is to probe the fluctuations of the quenched probability density $P_\nu^\omega(Nt,\bullet)$ from \eqref{kn}. We would like to take $N$ very large and then probe this density around different choices of spatial locations $   d_N\in \mathbb R^d$, and explore how the fluctuations of $P_\nu^\omega(Nt,\bullet)$ change in various different regimes of $ d_N$. Thus, we would like to study the fluctuations of $P_\nu^\omega(Nt,   d_N t + N^{1/2} x).$ The extra $N^{1/2}x$ window around $d_Nt$ turns out to be the correct transversal fluctuation scale in which we can observe a nontrivial space-time limit. This probability will be  very small for spatial locations $   d_N t +N^{1/2}    x $ far from the origin, thus to hope to obtain nontrivial fluctuations, one needs to multiply by a large deterministic value depending on $N, t$, and $   x$. We would also like freedom to choose the initial condition $\nu$ quite generally, not just Dirac.

We will define a deterministic prefactor $C_{N,t,   x,\nu}$ so that $N^{1/2}C_{N,t,   x}P_\nu^\omega(Nt,   d_N t + N^{1/2}    x)$ has nontrivial fluctuations. In particular, we will choose $C_{N,t,x,\nu}$ so that it has the effect of exponentially tilting our random walk. 

We first recall some basic facts about exponential tilting. Let 
\begin{equation}
    M(   \varsigma):=\int_I e^{   \varsigma \bullet    x} \mu(d   x) \label{MGF}
\end{equation} be the moment generating function of $\mu$, where $\mu$ is the annealed measure defined in Assumption \ref{a1} Item \eqref{a22}, and where $\bullet $ is the usual dot product of vectors in $\mathbb R^d.$  Then we can define the exponentially tilted measures 
\begin{equation}
    \mu^{  \varsigma}(\mathrm d    x) := e^{   \varsigma \bullet    x - \log M (   \varsigma)} \mu(\mathrm d   x). \label{eq:expTiltedMeasures}
\end{equation}
The mean of a random vector distributed according to $\mu^{  \varsigma}$ is given by $\int_I    x \mu^{  \varsigma}(d    x) =  \nabla \log M(   \varsigma).$ Let $R_t$ be a random walk starting from initial state $\nu$ with one-step transition probabilities given by $\mu$. Then if we define $C_{N,t,   x,\nu}$ so that $$C_{N, t, \frac{R_{Nt} -    d_N t}{N^{1/2}},\nu} = \exp \left(   \varsigma_N \bullet R_{Nt} - \log M (   \varsigma_N)Nt -\log\int_I e^{\varsigma_N\bullet a} \nu(\dr a)\right)$$ for some vector $   \varsigma_N\in \mathbb R^d$, then this has the effect of tilting our random walk so that $R_{Nt}-R_0$ has mean $N t \nabla \log M (   \varsigma_N)$ under the tilted measure. This means that we should choose the vector $   \varsigma_N\in \mathbb R^d$ so that the original desired spatial scaling location $   d_N$ is given by $   d_N = N \nabla \log M(   \varsigma_N)$.

In practice, it will be more convenient to first choose $   \varsigma_N$ and then choose $   d_N$ accordingly.

\begin{ass}[The location strength vectors] \label{psi_n} Throughout this paper, the sequence
    $\{   \varsigma_N\}_{N\ge 1}$ will denote any deterministic sequence of vectors in $\mathbb R^d$ such that $|N   \varsigma_N| = O(\psi_N(p,d))$ as $N\to \infty$, where 
    $$\psi_N(p,d) = 
    \begin{cases}
     N^{1-\frac1{4p}} ,& d=1\\ 
     N (\log N)^{-\frac1{2p}} , & d=2\\N, &d\ge 3.
    \end{cases}$$
Recall that $p$ denotes the symmetry parameter from Assumption \ref{a1} \eqref{a23}. We then define the \emph{location vector} which is determined from $   \varsigma_N$ according to the relation: \begin{equation}\label{dn}   d_N:= N   \nabla\log  M(    \varsigma_N).\end{equation} 
\end{ass}

We refer to $   d_N$ as the \textbf{location} while $   \varsigma_N$ is the \textbf{location strength}. Each determines the other bijectively, as a consequence of the inverse function theorem. Note that $N^{-1}   d_N$ and $   \varsigma_N$ agree to first order, but their difference may have lower order terms coming from a Taylor expansion. If $\mu$ is a standard Gaussian, which is indeed the case for many interesting continuous-space models where $I=\mathbb R^d$, then $   d_N=N   \varsigma_N$. In the general case, the lower order terms are given by higher cumulants of the one-step measure $\mu$ from Assumption \ref{a1}.

We can now define the prefactor $C_{N, t, x,\nu}$. 
\begin{defn}[The rescaling constants]\label{cntxnu}
Consider a probability measure $\nu$ on $I$, as well as $N\in \mathbb N$, $t\ge 0, x\in\mathbb R^d$. Furthermore, let $  \varsigma_N$ be any sequence of vectors in $\mathbb R^d$ as in Assumption \ref{psi_n}, and let $   d_N$ be the location vector defined from $  \varsigma_N$ as in \eqref{dn}. Define a sequence of deterministic space-time functions $$C_{N,t,   x, \nu}:=\exp\bigg(N^{1/2}   \varsigma_N \bullet    x +\big[   \varsigma_N\bullet    d_N - N\log M(   \varsigma_N)\big]t - \log \int_I e^{\varsigma_N\bullet x} \nu(\dr x)\bigg).$$ We will use the notation $C_{N,t,x}:= C_{N,t,x,\delta_0}$ as a shorthand. We also define the scaling constants $$\mathscr B_N := \begin{cases} N^{\frac{p}2+\frac{d-2}{4}} ,& |   \varsigma_N| = O(N^{-1/2}) \text{ as } N\to\infty \\\frac{N^{\frac{d-2}{4}}}{|   \varsigma_N|^p} , & N^{1/2}|   \varsigma_N| \to \infty \text{ as } N\to \infty.\end{cases}$$ 
\end{defn}

\begin{defn}[Rescaled prelimiting density field]
Choose a sequence of initial states $\nu_N$ (so each $\nu_N$ is a probability measure on $I$). Let $P^\omega_{\nu_N}(r,\dr x)$ be as in \eqref{kn}, with initial state $\nu_N$ and environment $\omega = (K_i)_{i=1}^\infty$ satisfying Assumption \ref{a1}. Furthermore, let $\varsigma_N$ be any sequence of deterministic vectors satisfying Assumption \ref{psi_n} and let $d_N$ be related to $\varsigma_N$ via \eqref{dn}. For $t\in N^{-1}\mathbb Z_{\ge 0}$ we define the \textbf{macroscopic quenched density field} \begin{equation}\label{hn}\mathfrak H^N(t,\phi):= \int_I C_{N,t,N^{-1/2}(   x-   d_Nt), \nu_N} \phi(N^{-1/2}(   x-   d_Nt))P_{\nu_N}^\omega(Nt,\mathrm d   x)
,\;\;\;\;\;\;\; \phi\in C_c^\infty(\mathbb R^d).\end{equation}
Here $N\ge 1$. Then define its recentered and rescaled version as $$\mathfrak F_N:= \mathscr B_N\cdot (\mathfrak H_N-\mathbb E[\mathfrak H_N]).$$
\end{defn}

Thus for each $t\in N^{-1}\mathbb Z_{\ge 0}$ we note that $\mathfrak H^N(t,\cdot)$ is 
a nonnegative random Borel measure on $\mathbb R^d$, which is informally equal to the function given by $   x\mapsto N^{1/2}C_{N,t,   x,\nu_N}P_{\nu_N}^\omega(Nt,   d_Nt +N^{1/2}   x)$
. The definition of $\mathfrak H^N(t,\bullet)$ is extended to $t\in \mathbb R_+$ by linearly interpolating, i.e., taking an appropriate convex combination of the two measures at the two nearest points of $N^{-1}\mathbb Z_{\ge 0}$. 

We explain the meaning of all of the constants. The factor $C_{N, t,    x,\nu}$ encodes the first order behavior of $P_{\nu_N}^\omega(Nt,   d_Nt +N^{1/2}   x)$, and it can be thought of it as a tilting factor that must appear to compensate the choice of taking a general scaling location $\varsigma_N$. Meanwhile $\mathscr B_N$ encodes the scale of its fluctuations, in other words $\mathscr B_N \asymp (\mathrm{Var}(\mathfrak H^N(t,\phi))^{-1/2}$ as we will see. 

We will use the notation $\nu(\dr x) = \mu(c\cdot \dr x)$ to mean that $\int_I f(x) \nu(\dr x) = \int_I f(c^{-1} x) \mu(\dr x).$ In this language, the rescaled density fields have initial conditions $\mathfrak H^N(0,\dr x)= \nu^{\varsigma_N}_N(N^{1/2} \cdot \dr x),$ where $\nu^{\varsigma_N}_N(\dr x) =  e^{  \varsigma_N\bullet x - \log \int_I e^{\varsigma_N\bullet a} \nu_N(\dr a)} \nu_N(\dr x)$. In all of our main results, we will need the sequence $\mathfrak H^N(0,\dr x)$ to satisfy the following regularity condition. 

\begin{defn}[Good sequence of initial data]\label{goodseq} We say that the deterministic sequence of measures $\mathfrak m_N(\dr x)$ on $\mathbb R^d$ is a \textbf{good sequence of measures} if $$\sup_N \mathfrak m_N(\{a\in \mathbb R^d:|a|\ge M\}) \leq Ce^{-c M}$$ for some $C,c>0$ independent of $M\ge 0$, and furthermore there exists $\e>0$ 
for which one has the estimate uniform over $t\in (0,1]$ and $N\ge 1$: $$\sup_{a\in \mathbb R^d} | (\mathfrak m_N, G_t(a-\bullet))_{L^2(\mathbb R^d)} | \leq C(t\vee N^{-1})^{\e-1}$$ for some $\e>0$. Here $G_t(x) = (2\pi t)^{-d/2} e^{-|x|^2/2t}$ is the $d$-dimensional heat kernel. 
\end{defn}

In particular the $\mathfrak m_N$ have uniformly bounded total mass by taking $M=0$ in the first bound. In the second bound, $G_t$ could be replaced by any smooth mollifier of width $t^{1/2}$. We will require $\mathfrak H^N(0,\dr x)$ to be a good sequence in all of our main results, and we will explain in Remark \ref{explain} why this condition is necessary to obtain a nontrivial scaling limit for the field $\mathfrak F_N$ defined above, and in particular why tightness of $\mathfrak H^N$ would fail for a non-good initial sequence. 

The second bound in the good sequence condition is the more important one. Equivalently (and perhaps more intuitively) that bound says that the probability under the tilted initial states $\nu_N^{\varsigma_N}(0, \dr x)$ of sampling a given neighborhood of size 1 around any point of $\mathbb R^d$ is bounded above by $CN^{1-\frac{d}2-\e}$. In dimension 1, \emph{any} sequence of probability measures will satisfy the second bound with $\e=1/2$, while in $d= 2$ this just barely fails as $\e$ would have to be 0 (e.g. $\mathfrak m_N = \delta_0$). In functional-analytic terms, being a good sequence \textit{almost} means having $C^{2(\varepsilon-1)}(\mathbb R^d)$-norm bounded uniformly in $N$, where the latter denotes a H\"older space of negative exponent (see Section \ref{sec:6}). We say ``almost" here because in the definition above, we introduced an extra ``microscopic cutoff" at $t=N^{-1}$ to account for the grid spacing which is $O(N^{-1/2})$ under the macroscopic view---spatial scales below this are sub-microscopic and thus completely irrelevant in all of the analysis. 

\textbf{Invariant measure $\pi^{\mathrm{inv}}(\dr x)$.} An important object appearing in all of our results will be the invariant measure of the Markov kernel $\pdif$ from Assumption \ref{a1} Item \eqref{a24}. Similar to the invariant measure for the simple symmetric random walk, this is an \emph{infinite} measure, not a probability distribution. We will show in Theorem \ref{thm:invMeasure} that this invariant measure exists and is unique (up to a scalar multiple) if we assume constant growth at infinity. We will therefore define the \emph{unit normalization} $\pi^{\mathrm{inv}}(\dr x)$ in Definition \ref{def:unit} to be a specific choice of normalization for this invariant measure. Define the dimension-dependent constants $c_1 =1, c_2=2\pi,$ and $c_d= d(d-2) /\Gamma(1+\frac{d}2)$ for $d\ge 3$. Then informally, the unit normalization is the normalization under which $2 c_d \cdot \pi^{\mathrm{inv}}$ resembles the counting measure (when $I=\mathbb Z^d$) or the Lebesgue measure (when $I=\mathbb R^d$) at infinity.

In all of our main results we will write ``$a(N)\ll b(N)$ as $N\to \infty$" if $a(N)/b(N)\to 0$ as $N\to \infty$. Our main results are then as follows.

\begin{thm}[Main result in spatial dimension $d=1$]\label{main1}
Let $\{K_n\}_{n\ge 1}$ denote an environment satisfying Assumption \ref{a1}, and let $\mathfrak H^N$ be as in \eqref{hn}. Assume as in Definition \ref{goodseq} that one has a good sequence of deterministic initial data $\mathfrak H^N(0,\bullet)$ converging weakly as measures to some limit $\mathfrak H_0$. There is an explicit Banach space $X$ of distributions on $\mathbb R$, which is continuously embedded in $\mathcal S'(\mathbb R)$, such that the rescaled and recentered collection $\{ \mathfrak F^N\}_{N \ge 1}$ is tight with respect to the topology of $C([0,T], X)$. Furthermore any limit point as $N\to \infty$ is as follows. 
\begin{itemize} 
\item (Bulk fluctuations in $d=1$) If $N^{1/2}\varsigma_N \to \boldsymbol{\varsigma} \text{ as } N\to\infty$, then the limit point is unique and coincides with the law of $(t,x)\mapsto e^{\varsigma\bullet x +\frac12 |\varsigma|^2 t} \mathcal U(t,x+\varsigma t)$ where $\mathcal U$ is the solution of the additive-noise SPDE \begin{equation}\label{she3}\partial_t \mathcal U(t, x) = \frac12\partial_x^2 \mathcal U(t, x) + \gamma_{\mathrm{ext}}\cdot \partial_x^p \big((\mathfrak H_0 *G_t)(x)\;\xi(t, x)\big),
\end{equation}started with initial data $\mathcal U(0, x)=0$.

\item (Extremal fluctuations up to location scale $N^{1-\frac1{4p}}$ in $d=1$) If $N^{-1/2}\ll  |\varsigma_N|\ll  N^{-1/4p} \text{ as } N\to \infty,$ then the limit point is unique and coincides with the law of the solution of the additive-noise SPDE \begin{equation}\label{she4}\partial_t \mathcal U(t, x) = \frac12\partial_x^2  \mathcal U(t, x) + \gamma_{\mathrm{ext}}\cdot   (\mathfrak H_0 *G_t)(x)\; \xi(t, x),
\end{equation}started with initial data $\mathcal U(0, x)=0$. 
\end{itemize} Here $G_t(x) = G(t,x)= (2\pi t)^{-1/2}e^{-x^2/2t}$ is the standard heat kernel, $*$ denotes spatial convolution, and $ \xi$ is a standard space-time white noise defined on $\mathbb R_+\times \mathbb R.$ Furthermore in either case we have the noise coefficient $$\gamma_{\mathrm{ext}}^2:=  \frac1{(2p)!}\int_{I} \bigg[ \int_{I^2}( x- y)^{2p}\mu(\mathrm d x)\mu(\mathrm d y)-\int_I ( a- z)^{2p} \pdif( z,\mathrm d a)\bigg]
\pi^{\mathrm{inv}}(\mathrm d z).
$$Here $\pi^{\mathrm{inv}}$ is the invariant measure of the Markov kernel $\pdif$, under its unit normalization (see Definition \ref{def:unit}).
\end{thm}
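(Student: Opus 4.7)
The natural strategy, following the Bertini--Giacomin approach referenced in the paper, is to derive a discrete martingale-driven equation for the tilted density and then pass to the limit. Concretely, let me write $u_N(r,x) := C_{N, r/N, N^{-1/2}(x - d_N r/N), \nu_N} \cdot N^{1/2} P^\omega_{\nu_N}(r, x)$ for $r \in \mathbb{Z}_{\ge 0}$ and $x \in I$, so that $\mathfrak{H}^N(t, \phi)$ is (essentially) the pairing of $u_N(Nt, \cdot)$ with $\phi$ under the macroscopic scaling. The multiplicative structure $P_{\nu_N}^\omega(r, \cdot) = \int K_r(y, \cdot) P_{\nu_N}^\omega(r-1, dy)$ combined with the exponential tilt produces a one-step relation of the form $u_N(r+1, \cdot) = Q_N u_N(r, \cdot) + \mathcal{M}_N(r, \cdot)$, where $Q_N$ is a deterministic (tilted) transition operator whose macroscopic scaling limit is the heat semigroup (after removing the macroscopic drift $\varsigma_N$) and $\mathcal{M}_N$ is a martingale increment, namely $(K_r - \mathbb{E}[K_r])$ acting on the previous profile. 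Iterating gives the discrete Duhamel representation
\begin{equation*}
\mathfrak{F}^N(t, \phi) = \mathscr{B}_N \sum_{r=0}^{Nt-1} \langle \mathcal{M}_N(r, \cdot), \text{(discrete heat-kernel pullback of } \phi\text{)} \rangle,
\end{equation*}
modulo error terms from the difference between $Q_N$ and its continuum analog and from the tilt.

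The plan is then to (i) establish tightness of $\{\mathfrak{F}^N\}$ in $C([0,T], X)$ using moment estimates on the martingale increments, and (ii) identify every limit point by computing the limiting quadratic variation against test functions. For tightness I would use Burkholder--Davis--Gundy to reduce moment bounds for $\mathfrak{F}^N(t, \phi)$ to estimates on $\mathbb{E}[\mathcal{M}_N(r,\cdot) \otimes \mathcal{M}_N(r,\cdot)]$, then convolve against the heat-kernel approximations of $\phi$. The strong decay of correlations assumption Item \eqref{a24}, together with the heat-kernel-type estimates on the two-point motion $\pdif$ from the SRI chain framework, controls the two-point correlation of $\mathcal{M}_N$ and yields uniform-in-$N$ bounds. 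The role of the good-sequence condition on $\mathfrak{H}^N(0,\cdot)$ is precisely to make the convolution $\mathfrak{H}^N(0,\cdot)*G_t$ bounded in the appropriate negative-regularity norm; without it the Gaussian limit degenerates in $d \ge 2$ and has no reason to be tight in $d=1$ either.

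The heart of the argument is the identification of the quadratic variation. For two test functions $\phi, \psi$, the one-step conditional covariance $\mathbb{E}[\langle \mathcal{M}_N(r, \cdot), \phi \rangle \langle \mathcal{M}_N(r, \cdot), \psi \rangle \mid \mathcal{F}_{r-1}^\omega]$ reduces, after using Assumption \ref{a1} Item \eqref{a23}, to an expression involving the $2p$-th symmetric moment of $K_1(0, \cdot) - \mu$. On the macroscopic scale, the tilt plus the $N^{1/2}$ rescaling of space produces the $\partial_x^p$ (bulk regime) or just a multiplicative factor (extremal regime), while the sum over $r$ of the diagonal $x_1 = x_2$ contributions of $u_N(r, x_1)u_N(r, x_2)$ becomes a time integral against $(\mathfrak{H}_0 * G_t)^2$. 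The off-diagonal contributions are precisely controlled by the two-point kernel $\pdif$: the sum $\sum_r \pdif^r(z, \cdot)$ applied to a localized functional is, by the ergodic theory of SRI chains (invoked via the invariant measure $\pi^{\mathrm{inv}}$ from Theorem \ref{thm:invMeasure}), well-approximated by $\int \pi^{\mathrm{inv}}(dz)$ times that functional. Evaluating this integration at the $2p$-th moment functional is what produces the stated formula for $\gamma_{\mathrm{ext}}^2$, where the subtraction $\int (a-z)^{2p}\pdif(z,da)$ inside the brackets captures the one-step annealed correlation of the two kernels.

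The main obstacle is the quadratic variation computation in the extremal regime $N^{-1/2} \ll |\varsigma_N| \ll N^{-1/4p}$. Here the deterministic tilt is macroscopically large, and one has to separate the effect of the tilt (which acts by exponential weighting on pair-wise positions) from the fluctuation scale. Concretely, one must check that the resolvent sum $\sum_r \pdif^r$, when combined with an exponentially weighted test function coming from the tilt, still converges to integration against $\pi^{\mathrm{inv}}$ with the same constant and that the correction from the weighting is subleading because $|\varsigma_N| N^{1/2} \to \infty$ but $|\varsigma_N| N^{1/(4p)} \to 0$. The second condition is precisely what guarantees that the $p$-th derivative in the bulk regime SPDE collapses into a multiplicative coefficient in the extremal regime SPDE and that no additional cumulant-induced terms from the tilt survive. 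Once this is under control, uniqueness of the limit follows from standard linear-SPDE theory: the additive-noise SHE with a deterministic, $L^2_{t,x}$ coefficient has a unique mild solution, which implies convergence of the full sequence $\mathfrak{F}^N$.
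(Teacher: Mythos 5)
Your proposal follows essentially the same strategy as the paper: a discrete stochastic heat equation driven by a martingale (the paper's Section 3 and Lemma \ref{u=kdm}), tightness via moment bounds on the martingale field (Propositions \ref{optbound}, \ref{mcts}), and identification of limit points by computing the limiting predictable quadratic variation through the SRI chain machinery and the invariant measure $\pi^{\mathrm{inv}}$ (Propositions \ref{prop:QVd1}, \ref{4.3}, and Theorem \ref{inv01}), with uniqueness then settled by the martingale problem (Proposition \ref{uniqueness_of_mart_prob}). Two minor imprecisions worth flagging: in $d=1$ the good-sequence condition is actually automatic for \emph{any} sequence of probability measures (the remark after Definition \ref{goodseq}), so your suggestion that tightness would fail without it in $d=1$ is not quite right; and the picture of ``\,$\sum_r \pdif^r$ applied to a localized functional $\approx \int f\,\dr\pi^{\mathrm{inv}}$\,'' is the transient ($d\ge 3$) resolvent picture, whereas in $d=1$ the mechanism is really the Darling--Kac/local-time limit (the paper's Remark \ref{rk:localtime}) with a $t$-dependent coefficient, which is why the $d=1$ argument bypasses the renewal trick used for $d\ge 2$.
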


This theorem will be proved in Section \ref{sec:6}, and $X$ will be some weighted H\"older space of negative exponent.
Also note that for models such as the nearest-neighbor random walk in i.i.d. space-time random environment (Example \ref{ex:1}), the measure $\mu$ is symmetric and $\pdif( z,\mathrm d a)  = (\mu*\mu)(z-\mathrm d a)$ for $z \neq 0$, and therefore the outer integral consists of just a single term as opposed to the full integral written above. 

Interestingly, the fluctuations are different in the bulk region versus anywhere away from the bulk. The fluctuations away from the bulk region do not have a derivative term around the noise, in particular they have better regularity. This can be understood as follows. In the bulk, the limiting SPDE should conserve mass as the prelimiting object is a probability density; however, in the extremal regime, mass is no longer conserved due to multiplying by the prefactor $C_{N, t, x}$ (in the bulk when $   \varsigma_N =0$, note that $C_{N, t,x} = 1$). 

\begin{rk}[Extremal fluctuations at scale $N^{1-\frac1{4p}}$ in $d=1$] \label{critkpz}We remark here on previous results \cite{DDP23,DDP+,Par24} dealing with the regime $\varsigma_N=\beta N^{-1/4p}+o(N^{-1/4p})$. If $\beta\ne 0$, and if $\mathfrak H^N(0,\bullet)$ is a Dirac mass at the origin, then the limit point of $\mathfrak F_N$ is unique in $C([0,T],X)$ and it coincides with the law of $\mathcal U-G$ where $G$ is the deterministic heat kernel and $\mathcal U$ is the solution of the \textit{multiplicative}-noise SPDE \begin{equation}\label{mshe}\partial_t \mathcal U(t, x) = \frac12\partial_x^2  \mathcal U(t, x) + \gamma_{\mathrm{ext}}\cdot \beta^p \cdot   \mathcal U(t,x)\; \xi(t, x),
\end{equation}started with initial data $\mathcal U(0, x)=\frac1{\beta^p}\delta_0(x)$. This was proven in \cite{DDP23,DDP+} for two specific models, and was proven in \cite{Par24} for the full class of models in $d=1$ satisfying Assumption \ref{a1}. 

In other words, when $|  \varsigma_N|$ is \textit{precisely} of order $N^{-1/4p}$ as $N\to \infty$, then no rescaling or recentering are needed, as the fluctuations are of order 1 around the mean field limit, which is consistent with the expression for the rescaling factor $\mathscr B_N$ even at this scale. 

For \textit{even larger scales} $|  \varsigma_N|\gg N^{-1/4p}$ the conjecture is that Tracy-Widom fluctuations (and more generally, the \textit{directed landscape}) will appear under the famous 1:2:3 scaling. See Problem 1 in Subsection \ref{openprob} for more discussion of this.

Note that when $|  \varsigma_N|\ll N^{-1/4p}$ the limiting fluctuations are described by a \textit{linear} stochastic PDE (in particular it is a Gaussian field); however, at the \textit{critical scale} of $  \varsigma_N=N^{-1/4p}$, the fluctuations are now described by a \textit{nonlinear} SPDE that exhibits non-Gaussian marginals and much more complex behaviour. We believe that such a transition from Gaussian to non-Gaussian fluctuations occurs in all dimensions,  but it is currently known only in $d=1$. In higher dimensions the critical scale is much more subtle, and we can only prove results about the fluctuations very slightly but strictly below the true critical threshold. Below this critical threshold, the fluctuations will again be described again by linear SPDEs.
\end{rk}

Next, we discuss the $d=2$ case.

\begin{thm}[Main result in spatial dimension $d=2$]\label{main2}
Let $d=2$, let $\{K_n\}_{n\ge 1}$ denote an environment satisfying Assumption \ref{a1}, and let $\mathfrak H^N$ be as in \eqref{hn}. Assume as in Definition \ref{goodseq} that one has a good sequence of initial data $\mathfrak H^N(0,\bullet)$ converging weakly as measures to some limit $\mathfrak H_0$. There is an explicit Banach space $X$ of distributions on $\mathbb R^2$, which is continuously embedded in $\mathcal S'(\mathbb R^2)$, such that the rescaled and recentered collection $\{ \mathfrak F^N\}_{N \ge 1}$ is tight with respect to the topology of $C([0,T], X)$. Furthermore any limit point as $N\to \infty$ can be described as follows. 
\begin{itemize} \item (Bulk fluctuations in $d=2$) If $N^{1/2}   \varsigma_N \to   {\boldsymbol{\varsigma} }\text{ as } N\to\infty$, then the limit point is unique and coincides with the law of $(t,x)\mapsto e^{\varsigma\bullet x +\frac12 |\varsigma|^2 t} \mathcal U(t,x+\varsigma t)$ where $\mathcal U$ is the solution of the additive-noise SPDE \begin{equation}\label{she1}\partial_t \mathcal U(t,   x) = \frac12\Delta \mathcal U(t,   x) +\sqrt{2\pi} \cdot\eta_p(t,x),
\end{equation}started with initial data $\mathcal U(0,   x)=0$. Here $\eta_p$ is the Gaussian field on $\mathbb R_+\times \mathbb R^2$ that is white in time and for each fixed time $t>0$ has a spatial covariance structure given by $\mathbb E[(\eta_p(t,\bullet),\phi)_{L^2(\mathbb R^2)}^2] = \int_{\mathbb R^2} A_p[\phi,\phi](a)\cdot (\mathfrak H_0 *G_t)(a)^2\dr a$ where
\begin{multline*}A_p[\phi,\psi](a)  := \\
\int_{I} \int_{I^2}  [(   x_1-   y) \bullet \nabla]^p \phi (a)\cdot [   x_2 \bullet \nabla]^p \psi(a) \bigg( \boldsymbol p^{(2)}\big( (   y,   0), (\dr    x_1,\dr    x_2)\big) -\mu(\dr    x_1-   y)\mu(\dr    x_2 )\bigg) \pi^{\mathrm{inv}}(\dr    y). \end{multline*}
\item (Extremal fluctuations up to location scale $N/(\log N)^{\frac1{2p}}$ in $d=2$) If $N^{-1/2}\ll |   \varsigma_N| \ll  (\log N)^{-\frac1{2p}} \text{ as } N\to \infty$ and if $\frac{   \varsigma_N}{|  \varsigma_N|} \to   {\boldsymbol v}_{\mathrm{unit}}$ as $N\to\infty$, then the limit point is unique and coincides with the law of the solution of the additive-noise SPDE \begin{equation}\label{eq:extremal1}\partial_t \mathcal U(t,   x) = \frac12\Delta \mathcal U(t,   x) + \sqrt{2\pi} \cdot \gamma_{\mathrm{ext}}(  {\boldsymbol v}_{\mathrm{unit}})\cdot   (\mathfrak H_0 *G_t)(x)\; \xi(t,   x),
\end{equation}
started from $\mathcal U(0,   x)=0.$ Here $ \xi$ is a standard space-time white noise defined on $\mathbb R_+\times \mathbb R^2$ and $\gamma_{\mathrm{ext}}$ is the homogeneous polynomial of degree $2p$ defined by
\begin{equation}\label{gext}
\gamma_{\mathrm{ext}}(  {\boldsymbol v})^2:= \frac{1}{(2p)!}\int_{I} \bigg[ \int_{I^2}[(   x-   y)\bullet   {\boldsymbol v}]^{2p}\mu(\mathrm d   x)\mu(\mathrm d   y)-\int_I [(   a-   z)\bullet   {\boldsymbol v}]^{2p} \pdif(   z,\mathrm d   a)\bigg]
\pi^{\mathrm{inv}}(\mathrm d   z).
\end{equation}
\item (Extremal fluctuations at scale $N/(\log N)^{\frac1{2p}}$ in $d=2$) There exists $\epsilon_{\mathrm{thr}} \leq 2$ such that the following holds for all $\boldsymbol v$ with $\gamma_{\mathrm{ext}}(  {\boldsymbol v})^2< \epsilon_{\mathrm{thr}}.$ If $   \varsigma_N (\log N)^{\frac1{2p}} \to   {\boldsymbol v} \ne 0\text{ as } N\to \infty,$ then the limit point is unique and coincides with the law of the solution of the additive-noise SPDE \begin{equation}\label{eq:extremal2}\partial_t \mathcal U(t,   x) = \frac12\Delta \mathcal U(t,   x) + \frac{\sqrt{2\pi} }{|  {\boldsymbol{v}}|^{p}}\sqrt{\frac{\gamma_{\mathrm{ext}}(  {\boldsymbol v})^2 }{1 -\frac12\gamma_{\mathrm{ext}}(  {\boldsymbol v})^2}}\cdot   (\mathfrak H_0 *G_t)(x)\; \xi(t,   x),
\end{equation}
started from $\mathcal U(0,   x)=0.$ Here $ \xi$ is a standard space-time white noise defined on $\mathbb R_+\times \mathbb R^2.$
\end{itemize} Above $G_t(x)=G(t,   x) = (2\pi t)^{-1}e^{-|   x|^2/2t}$ is the standard deterministic heat kernel, and $*$ is spatial convolution. Furthermore $\pi^{\mathrm{inv}}$ is the invariant measure of the Markov kernel $\pdif$, under its unit normalization (see Definition \ref{def:unit}).
\end{thm}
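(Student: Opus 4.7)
\textbf{Proof proposal for Theorem \ref{main2}.} The plan follows the discrete-SPDE-for-the-quenched-density approach pioneered by Bertini--Giacomin \cite{BG97} and refined in the authors' earlier series \cite{DDP23,DDP+,Par24}. The starting point is the exact one-step martingale decomposition of the quenched density, $P^\omega_{\nu_N}(n+1,\bullet) = (\mu * P^\omega_{\nu_N}(n,\bullet)) + \mathcal{M}^N_{n+1}$, where $\mathcal{M}^N_{n+1}$ is conditionally centered given the $\sigma$-algebra generated by $K_1,\dots,K_n$. The tilt factor $C_{N,t,x,\nu_N}$ is designed precisely so that, after passing to the macroscopic variables $x \mapsto N^{-1/2}(x-d_N t)$, the annealed convolution kernel $\mu$ converts into a true heat semigroup whose lower-order drift has been absorbed. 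This produces a discrete mild formulation
\begin{equation*}
\mathfrak F^N(t,\phi) \;=\; \mathfrak F^N(0, P^N_t \phi) \;+\; \int_0^t \dr \mathcal{M}^N\!\big(s, P^N_{t-s}\phi\big),
\end{equation*}
where $P^N_t$ is a discrete approximation to the heat semigroup $e^{t\Delta/2}$ on $\mathbb R^2$. The proof will (i) establish tightness of $\{\mathfrak F^N\}$ in a suitable weighted negative-H\"older Banach space $X$, and (ii) identify every subsequential limit by computing the limit of the predictable quadratic variation of the martingale driver.

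The quadratic variation $\langle \mathcal{M}^N(t,\phi)\rangle$ decomposes into a time-sum of expectations involving the two-point kernel $\boldsymbol p^{(2)} - \mu^{\otimes 2}$ evaluated at the positions of two copies of the quenched walker sharing the same environment. Crucially, Assumption \ref{a1}\eqref{a24} forces the integrand to vanish off a short-range diagonal set, which is exactly the setup for the SRI Chain framework developed earlier in the paper. The ergodic theorem and the backward test-function propagation result for SRI Chains in $d\ge 2$ let me replace the local averaging over positions of the two-point motion by integration against the invariant measure $\pi^{\mathrm{inv}}$ of $\pdif$; after Taylor expansion of $\phi$ to order $p$ (justified by Assumption \ref{a1}\eqref{a23}), this is what produces the polynomial $A_p[\phi,\psi]$ in the bulk case and the homogeneous polynomial $\gamma_{\mathrm{ext}}(\boldsymbol v)^2$ in the extremal case. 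The constant $\sqrt{2\pi}=\sqrt{c_2}$ in \eqref{she1}, \eqref{eq:extremal1}, \eqref{eq:extremal2} is the standard $d=2$ heat-kernel factor coming from the unit normalization of $\pi^{\mathrm{inv}}$.

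The three regimes are then distinguished by the behavior of the tilted two-point difference chain. When $|\varsigma_N|=O(N^{-1/2})$, the tilt is macroscopically smooth and the covariance survives as a genuine $L^2(\mathbb R^2)$-valued kernel, giving the colored-noise SPDE \eqref{she1}. When $N^{-1/2}\ll|\varsigma_N|\ll(\log N)^{-1/2p}$, the tilt $\exp(\varsigma_N\!\cdot\!(x_1-x_2))$ sharply localizes the two-point interaction in the difference variable, collapsing the covariance onto the spatial diagonal and leaving the space-time white-noise SPDE \eqref{eq:extremal1} with coefficient $\gamma_{\mathrm{ext}}(\boldsymbol v_{\mathrm{unit}})$. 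The factor $\frac{1}{(2p)!}$ in the definition of $\gamma_{\mathrm{ext}}$ arises from the order-$p$ Taylor expansion, and the subtraction of $\int[(a-z)\!\cdot\!\boldsymbol v]^{2p}\pdif(z,\dr a)$ is exactly the cancellation produced by $\boldsymbol p^{(2)}-\mu^{\otimes 2}$.

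The main obstacle is the critical regime $\varsigma_N\sim \boldsymbol v(\log N)^{-1/2p}$. Here the tilted two-point motion accumulates order-$\log N$ of time in the interaction zone---the logarithmic divergence that characterizes the $d=2$ marginal case---and each such passage through the interaction zone contributes a multiplicative factor of $\tfrac12 \gamma_{\mathrm{ext}}(\boldsymbol v)^2$ to the effective noise strength. The compounding of these passages is a geometric-series-type phenomenon that resums to $\gamma_{\mathrm{ext}}^2/(1-\tfrac12 \gamma_{\mathrm{ext}}^2)$ provided the common ratio is less than one, which is where the threshold $\epsilon_{\mathrm{thr}} \le 2$ enters. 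Making this resummation rigorous---including showing that the contributions from excursions between interaction zones indeed factorize in the limit, which requires controlling the joint behavior of the tilted two-point motion over dyadic time scales---is parallel to the subcritical second-moment analysis for the $2$d stochastic heat equation and will be the hardest technical piece. Tightness in $X$ reduces to Kolmogorov-type $L^2$ bounds on $\mathfrak F^N(t,\phi)-\mathfrak F^N(s,\psi)$ uniform in $N$; the good-sequence hypothesis of Definition \ref{goodseq}, through its exponent $\e>0$, is exactly what supplies the missing margin beyond the borderline $d=2$ regularity needed to close these bounds.
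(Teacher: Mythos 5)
Your proposal follows essentially the same architecture as the paper's actual proof: the discrete martingale decomposition of the quenched density (Section 3, Lemma \ref{u=kdm} gives the discrete Duhamel formula you sketch), quadratic variation analysis reduced to SRI-chain additive functionals (Section 5, Propositions \ref{prop:QVd1}--\ref{4.1c}), tightness via Kolmogorov-type moment bounds using the good-sequence exponent (Propositions \ref{tight1}, \ref{optbound}, Lemma \ref{l:KC}), and identification of limit points by uniqueness of the Gaussian martingale problem (Proposition \ref{uniqueness_of_mart_prob}). You correctly identify the three distinguishing mechanisms across regimes (smooth tilt $\to$ colored noise; localized tilt $\to$ white noise with $\gamma_{\mathrm{ext}}$; critical tilt $\to$ resummed coefficient), the role of backward test-function propagation (Theorem \ref{2.11}), and why $\epsilon_{\mathrm{thr}}\le 2$ appears as the radius of convergence of the exponential moment generating function. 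The one place where your wording is slightly off-mechanism but not wrong in spirit: you describe the critical-regime resummation as coming from factorization of excursions controlled over dyadic time scales; the paper instead runs a direct induction on moments via the renewal trick in Theorem \ref{2.10}, proving that the rescaled collision functional converges to an exponential random variable and reading off the geometric series from its moment generating function. These are two presentations of the same Erd\H{o}s--Taylor phenomenon, so this is a harmless difference of emphasis rather than a gap.
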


This theorem will also be proved in Section \ref{sec:6}. Note that in \eqref{eq:extremal2}, if we take $  {\boldsymbol{v}} \to 0$ then we recover \eqref{eq:extremal1} since  $\gamma_{\mathrm{ext}}(  {\boldsymbol{v}})^2$ is a homogeneous polynomial of degree $2p$ in the variable $  {\boldsymbol{v}}$. Similarly, if we take $|\boldsymbol\varsigma|\to\infty$ in the bulk regime and divide the resulting limiting SPDE by $|\boldsymbol\varsigma|$, then we also recover \eqref{eq:extremal1}. Thus, the limits do commute in some sense, although one cannot go the other way around to recover \eqref{she1} or \eqref{eq:extremal2} from \eqref{eq:extremal1}. This will be the case in all dimensions.

\begin{rk}\label{p=1} If $p=1$ then for the bulk case where the limit is given by \eqref{she1}, the noise $\eta_p$ with $p=1$ can be identified explicitly as $$\eta_{1}(t,x)= \mathrm{div}((\mathfrak H_0 *G_t)(x)\; \Gamma_{\mathrm{bulk}}  \vec  \xi(t,x))$$ for a standard vector-valued space-time white noise $ \vec  \xi$ on $\mathbb R_+\times \mathbb R^2$ and a symmetric $d\times d$ effective matrix $\Gamma_{\mathrm{bulk}}$ defined by the property that $$(\Gamma_{\mathrm{bulk}}^2)_{i,j} = \int_I \int_{I^2} (x_i-y_i)a_j \;\bigg(\boldsymbol p^{(2)} ((   y,0),(\dr    x,\dr   a)) - \mu(\dr   x-   y)\mu(\dr   a)\bigg) \pi^{\mathrm{inv}}(\dr   y). $$ 
Thus even the sheared version of limiting field given by $(t,x)\mapsto e^{\varsigma\bullet x +\frac12 |\varsigma|^2 t} \mathcal U(t,x+\varsigma t)$ can itself be written as an additive noise heat equation with driving noise given by $ \big(\mathrm{div} -  {\boldsymbol{\varsigma}} \bullet \big)\big( (\mathfrak H_0 *G_t)(x)\; \Gamma_{\mathrm{bulk}}  \vec  \xi(t,   x)\big).$ 

Recent work of \cite[Theorem 1.9]{Kot} proves this bulk result \eqref{she1} for a very particular choice of kernels given by a \textit{diffusion in random media} (see Example \ref{diffrm}), where $p=1,   \varsigma_N=0$, and their matrix $V_{\mathrm{eff}}$ is our $\Gamma_{\mathrm{bulk}}$. They obtain a similar topology of convergence. In this special case, $\mu$ is a standard Gaussian and $\boldsymbol p^{(2)} ((y,0),(\dr x,\dr a))$ is a diffusion whose generator can be written explicitly, hence one sees a more explicit formula for $\Gamma_{\mathrm{bulk}}^2$ than the one above.
\end{rk}

\begin{rk}\label{explain} We explain the reason that we need good sequences of initial data in the above result, and why Dirac masses are not permissible in $d\ge 2$. 
Consider in arbitrary dimension the solution of the additive-noise SPDE driven by space-time white noise $\xi:$ $$\partial_t u (t,x)= \frac12 \partial_x^2 u (t,x)+ G (t,x)\cdot \xi(t,x),$$ started from zero initial condition, where $G$ as usual is the deterministic heat kernel and where $(t,x) \in \mathbb R_+\times \mathbb R$. This corresponds to \eqref{eq:extremal1} with $\gamma_{\mathrm{ext}}(  {\boldsymbol v})=\frac1{\sqrt{2\pi}}$ and $\mathfrak H_0=\delta_{(0,0)}$. Then by Duhamel's formula, the solution will \textit{formally} be given by $$u(t,   x) = \int_{\mathbb R_+\times \mathbb R^d} G(s,   y) G(t-s,   x-   y)\xi(ds,d   y),$$ where the latter integral is against the driving white noise $\xi$. In dimension $d=1$, this makes perfect sense and it is even a continuous field in both variables. This is because the integrand is in $L^2(\mathbb R_+\times \mathbb R^d)$. In higher dimensions, the regularity of the noise becomes worse, and one certainly cannot expect function-valued solutions. One might expect that a solution $u$ exists as soon as one allows ``generalized functions," i.e., Schwartz distributions. However if $d>1$, it does not even make sense as a Schwartz-distribution valued field. Indeed the pairing $(u,\phi)_{L^2(\mathbb R_+\times \mathbb R^d)}$ for $\phi \in C_c^\infty(\mathbb R^{d+1})$ will fail to exist because the singularity of square of the heat kernel at the origin exceeds the integrability threshold as soon as $d=2.$ Notice that this bad divergence is logarithmic in $d=2$ and polynomial in $d\ge 3$, with the power getting worse as $d$ grows larger. Hence $u$ as defined above does not exist in any meaningful sense if $d
\ge 2$. Of course, if the continuum limit does not even exist for $d\ge 2$, then we cannot expect any reasonable notion of fluctuations for the prelimiting model.

To fix this issue, one notices that the only real problem is at the origin: the \textit{temporal increments} $u(t,\bullet) - G(t-t_0,\bullet) * u(t_0,\bullet) $ still make sense as Schwartz distributions on $\mathbb R^d$ for $t>t_0>0$, where $*$ is spatial convolution. Furthermore, $t_0$ can be taken to be arbitrarily small. Indeed $u(t,\bullet) - G(t-t_0,\bullet) * u(t_0,\bullet) $ is given by the field $   x\mapsto \int_{[t_0,\infty)\times \mathbb R^d} G(s,   y) G(t-s,   x-   y)\xi(ds,d   y),$ which has removed the singularity of $G(s,   y)$ at $(s,   y) = (0,   0)$ and still agrees with \eqref{eq:extremal1} up to a temporal shift by $t_0$ (e.g. $\mathfrak H_0 = \delta_{(0,0)}$ would get replaced by $\mathfrak H_0 = q_{t_0}$). Hence by shifting time by some small amount $t_0$ in the same manner for the prelimit in our above result, this essentially gets rid of this bad singularity at the origin which would prevent a meaningful scaling limit result. The most general way to fix the singularity issue is to consider a smoother profile of initial conditions so that not all random walk particles start at the same point, which is exactly what we have done. Indeed, a very general class of finite measures $\mathfrak H_0$ for which \eqref{eq:extremal1} makes sense is precisely $C^{2(\e-1)}(\mathbb R^d)$ as needed in Definition \ref{goodseq}, and this threshold is quite sharp as we will see.
\end{rk}

Note that while we only prove \eqref{eq:extremal2} for $\gamma_{\mathrm{ext}}(  {\boldsymbol v})^2< \epsilon_{\mathrm{thr}}$, this is due technical issues arising from our proof. In fact, we expect the results to hold all the way up to $\gamma_{\mathrm{ext}}(  {\boldsymbol v})^2=2$ which is the location where the noise coefficient blows up. Finally we remark that the \textit{critical and }\textit{supercritical regimes} are not covered by Theorem \ref{main2}. In other words, we do not consider what happens if one takes $    \varsigma_N =   {\boldsymbol v}/(\log N)^{\frac1{2p}}$ with $ \gamma_{\mathrm{ext}}(  {\boldsymbol v})^2\ge 2$. This is an important open problem, and we refer to the discussion in Problem 2 of Subsection \ref{openprob} for more context.

Let us now move onto the case $d\ge 3.$

\begin{thm}[Main result in spatial dimensions $d\ge 3$]\label{main3}
 Let $\{K_n\}_{n\ge 1}$ denote an environment satisfying Assumption \ref{a1}, and let $\mathfrak H^N$ be as in \eqref{hn}. Assume that $d\ge 3$, and assume as in Definition \ref{goodseq} that one has a good sequence of initial data $\mathfrak H^N(0,\bullet)$ converging weakly as measures to some limit $\mathfrak H_0$. There is an explicit Banach space $X$ of distributions on $\mathbb R^d$, which is continuously embedded in $\mathcal S'(\mathbb R^d)$, such that the rescaled and recentered collection $\{ \mathfrak F^N\}_{N \ge 1}$ is tight with respect to the topology of $C([0,T], X)$. Furthermore with $c_d:=d(d-2)/\Gamma(1+\frac{d}2)$, any limit point as $N\to \infty$ can be described as follows. 
\begin{itemize} \item (Bulk fluctuations in $d\ge 3$) If $N^{1/2}   \varsigma_N \to   {\boldsymbol{\varsigma} }\text{ as } N\to\infty$, then the limit point is unique and coincides with the law of $(t,x)\mapsto e^{\varsigma\bullet x +\frac12 |\varsigma|^2 t} \mathcal U(t,x+\varsigma t)$ where $\mathcal U$ is the solution of the additive-noise SPDE \begin{equation}\label{she8}\partial_t \mathcal U(t,   x) = \frac12\Delta \mathcal U(t,   x) +\sqrt{c_d} \cdot \eta_p(t,x),
\end{equation}started with initial data $\mathcal U(0,   x)=0$. Here $\eta_p$ is a Gaussian field on $\mathbb R_+\times \mathbb R^d$, whose covariance structure has the same formula as described after \eqref{she1}, i.e., $\mathbb E[(\eta_p(t,\bullet),\phi)_{L^2(\mathbb R^d)}^2] = \int_{\mathbb R^d} A_p[\phi,\phi](a)\cdot (\mathfrak H_0 *G_t)(a)^2da$, with $A_p[\phi,\psi]$ exactly as before. 

\item (Extremal fluctuations up to location scale $N$ in $d\ge 3$) Suppose $N^{-1/2}\ll |   \varsigma_N| \ll  1 \text{ as } N\to \infty$ and that $\frac{   \varsigma_N}{|  \varsigma_N|} \to   {\boldsymbol v}_{\mathrm{unit}}$ as $N\to\infty$. Then the limit point is unique and coincides with the law of the solution of the additive-noise SPDE \begin{equation}\label{she2}\partial_t \mathcal U(t,   x) = \frac12\Delta \mathcal U(t,   x) + \sqrt{c_d} \cdot\gamma_{\mathrm{ext}}(  {\boldsymbol v}_{\mathrm{unit}})\cdot   (\mathfrak H_0 *G_t)(x)\; \xi(t,   x),
\end{equation}
started from $\mathcal U(0,   x)=0$. Here $ \xi$ is a standard space-time white noise defined on $\mathbb R_+\times \mathbb R^d,$ and$$\gamma_{\mathrm{ext}}(  {\boldsymbol v})^2:= \frac{1}{(2p)!}\int_{I} \bigg[ \int_{I^2}[(   x-   y)\bullet   {\boldsymbol v}]^{2p}\mu(\mathrm d   x)\mu(\mathrm d   y)-\int_I [(   a-   z)\bullet   {\boldsymbol v}]^{2p} \pdif(   z,\mathrm d   a)\bigg]
\pi^{\mathrm{inv}}(\mathrm d   z).
$$
\item (Extremal fluctuations at scale $N$ in $d\ge 3$) There exists some $\epsilon_{\mathrm{thr}}>0$ and a bounded measurable function $\nu_{\mathrm{eff}}:B_{\mathbb R^d}(0,\epsilon_{\mathrm{thr}})\to \mathbb R_+$ 
such that the following holds true. Suppose that $     \varsigma_N \to   {\boldsymbol v}\ne 0\text{ as } N\to \infty,$  where $|  {\boldsymbol v}| \in (0,\epsilon_\mathrm{thr})$. Let $H_{  {\boldsymbol v}}$ denote the $d
\times d$ Hessian matrix of $\log M$ at $  {\boldsymbol v}$, where $M$ is the moment generating function of $\mu$ as in \eqref{dn}. 
Then the limit point is unique and coincides with the law of the solution of the additive-noise SPDE \begin{equation}\label{she6}\partial_t \mathcal U(t,   x) = \frac12\mathrm{div} ( H_{  {\boldsymbol v}}  \nabla) \mathcal U(t,   x) + \sqrt{c_d} \cdot\nu_{\mathrm{eff}}(  {\boldsymbol v})\cdot   (\mathfrak H_0 *G_t^{(  {\boldsymbol v})})(x)\; \xi(t,   x),
\end{equation}
started from $\mathcal U(0,   x) = 0$. Here $ \xi$ is a standard space-time white noise defined on $\mathbb R_+\times \mathbb R^d.$ The function $\nu_{\mathrm{eff}}$ depends on the choice of microscopic model through more than just $
\gamma_{\mathrm{ext}}(  {\boldsymbol v})$ and $|v|$.
\end{itemize} Above $G_t(x) = (2\pi t)^{-d/2}e^{-|   x|^2/2t}$ is the standard heat kernel, and $G^{(  {\boldsymbol v})}_t(x)=(\det H_{\boldsymbol v})^{d/2} \cdot G_t(H_{  {\boldsymbol v}}^{-1/2}x)$  
is the heat kernel for the operator $\frac12\mathrm{div} ( H_{  {\boldsymbol v}}  \nabla)$. Furthermore $*$ denotes spatial convolution, and $\pi^{\mathrm{inv}}$ is the invariant measure of the Markov kernel $\pdif$, under its unit normalization (see Definition \ref{def:unit}).
\end{thm}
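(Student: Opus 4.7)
I would prove all three regimes of Theorem \ref{main3} within a single framework, based on a discrete martingale decomposition of the quenched density combined with the SRI chain machinery developed earlier in the paper. The starting point is that, writing $\mathcal F_r := \sigma(K_1,\ldots,K_r)$, one has $P^\omega_{\nu_N}(r,\bullet)-\mathbb E[P^\omega_{\nu_N}(r,\bullet)\mid\mathcal F_{r-1}] = P^\omega_{\nu_N}(r-1,\bullet)\cdot(K_r-\mu)$, so that iterating produces a discrete Duhamel representation with deterministic drift $\mu^{*r}*\nu_N$ plus a sum of martingale increments propagated forward by $\mu^{*(Nt-r)}$. Absorbing the tilting factor $C_{N,t,x,\nu_N}$ amounts to replacing $\mu$ throughout by its exponentially tilted version $\mu^{\varsigma_N}$, so the recentered field $\mathfrak F^N$ is a discrete stochastic integral of martingale increments against the tilted heat kernel. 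A local CLT for $(\mu^{\varsigma_N})^{*r}$ rescales this tilted kernel to the continuum kernel of $\frac12\mathrm{div}(H_{\varsigma_N}\nabla)$; when $|\varsigma_N|\to 0$ one has $H_{\varsigma_N}\to\mathrm{Id}$, which is why a standard Laplacian appears in \eqref{she8} and \eqref{she2}, whereas in the scale-$N$ regime $\varsigma_N\to\boldsymbol v$ one genuinely obtains the operator $\frac12\mathrm{div}(H_{\boldsymbol v}\nabla)$ of \eqref{she6}.

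\textbf{Tightness.} I would work in a weighted negative-H\"older space $X$ on $\mathbb R^d$ chosen so that two-point and temporal-increment bounds on $\mathfrak F^N(t,\phi)$ transfer by standard atom-decomposition and Kolmogorov arguments to tightness in $C([0,T],X)$. Both $\mathbb E[\mathfrak F^N(t,\phi)^2]$ and $\mathbb E[(\mathfrak F^N(t,\phi)-\mathfrak F^N(s,\phi))^2]$ have explicit representations as double sums over microscopic times of correlations between the quenched density at time $r-1$ applied to pairs of shifted test functions. The SRI heat kernel estimates for the $k$-point correlation kernels, transferred to the tilted model by an exponential change of measure, bound these sums uniformly by the corresponding $L^2$ norms of the limiting Gaussian SPDE. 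The ``good sequence'' condition on $\mathfrak H^N(0,\bullet)$ is precisely what makes the limiting object well defined at $t=0^+$ in $d\ge 3$, as explained in Remark \ref{explain}.

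\textbf{Identification of the limit.} The key computation is the quadratic variation of the tilted discrete martingale. Its conditional variance at time $r$ involves the centred two-point kernel $\boldsymbol p^{(2)}-\mu^{\otimes 2}$ paired against a pair of test functions obtained by propagating $\phi$ backward through the tilted heat kernel. Taylor expanding these backward-propagated test functions to order $p$ and using the $(p-1)$-symmetry from Assumption \ref{a1} Item \eqref{a23}, the leading contribution becomes a $p$-th derivative pairing integrated against the centred two-point kernel, which in turn is a time-summed additive functional of the difference chain $\pdif$. Since $d\ge 3$ makes $\pdif$ transient, the occupation measure of this chain along a single trajectory converges to a constant multiple of the invariant measure $\pi^{\mathrm{inv}}$, with the constant $c_d=d(d-2)/\Gamma(1+d/2)$ arising as the continuum Green's function normalization for the limiting Brownian difference. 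This simultaneously identifies the colored Gaussian field $\eta_p$ in the bulk regime (where the full tensor structure of $\boldsymbol p^{(2)}$ survives through $A_p[\phi,\phi]$), the scalar coefficient $\gamma_{\mathrm{ext}}(\boldsymbol v_{\mathrm{unit}})$ in the intermediate extremal regime (where only the projection onto $\boldsymbol v_{\mathrm{unit}}$ contributes after the Taylor expansion), and the effective coefficient $\nu_{\mathrm{eff}}(\boldsymbol v)$ in the scale-$N$ regime, where one performs this same analysis on the tilted two-point motion.

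\textbf{Main obstacle.} The hardest step will be the boundary regime $\varsigma_N\to\boldsymbol v\ne 0$ producing \eqref{she6}. Here the tilt is of order one, so the tilted one-point chain has a non-trivial covariance $H_{\boldsymbol v}$ and the SRI analysis of the tilted two-point motion cannot be obtained by perturbing around $\boldsymbol v=0$. One must establish from scratch (a) quantitative local CLT bounds for the tilted one-point chain with covariance $H_{\boldsymbol v}$, (b) existence and uniqueness of an invariant measure for the tilted difference chain together with a quantitative ergodic statement, and (c) finiteness of the resulting Green's function expression defining $\nu_{\mathrm{eff}}(\boldsymbol v)$, which explains why the effective coefficient genuinely depends on more than $\gamma_{\mathrm{ext}}(\boldsymbol v)$ and $|\boldsymbol v|$. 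The restriction $|\boldsymbol v|<\epsilon_{\mathrm{thr}}$ reflects the perturbative range in which these tilted SRI estimates are accessible; pushing them to the full natural range is genuinely subtle because the exponential tilt interacts non-trivially with the short-range interaction structure encoded in Assumption \ref{a1} Item \eqref{a24}.
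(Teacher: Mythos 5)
Your proposal takes essentially the same route as the paper: derive a discrete stochastic heat equation with a martingale noise, show tightness in weighted negative-H\"older spaces via Kolmogorov-type bounds, compute the predictable quadratic variation and reduce it to an additive functional of the difference chain via the SRI machinery, and then identify any limit point by a martingale-problem characterization. Two points deserve sharpening.

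First, be careful about the claim that ``the occupation measure of this chain along a single trajectory converges to a constant multiple of the invariant measure $\pi^{\mathrm{inv}}$.'' In $d\ge 3$, and for the $N^{1/2}$-separated initial conditions that the good-sequence hypothesis forces on you, the normalized occupation measure $N^{(d-2)/2}\sum_{r\le Nt}\delta_{R^1_r-R^2_r}$ converges pathwise to the \emph{zero} measure, because a transient diffusion started macroscopically away from the origin never comes close; the constant $c_d\,\pi^{\mathrm{inv}}$ only shows up when you take the annealed expectation. This is precisely why the paper does not argue pathwise: it proves that $Q^{f,\infty}_t(\phi)$ minus the explicit deterministic expression is a continuous martingale of finite variation started at zero (hence identically zero), using convergence of \emph{conditional expectations} via the Markov property plus the moment bounds of Proposition \ref{tight1}. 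If you instead phrase it as pathwise occupation-measure convergence, the argument breaks.

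Second, and this is the genuine gap: you do not articulate the backward-in-time propagation of test functions, which the paper isolates as Theorems \ref{2.11}/\ref{2.12} and which is the workhorse for the critical regime $\varsigma_N\to\boldsymbol v\ne 0$. What you call ``propagating $\phi$ backward through the tilted heat kernel'' is the deterministic Duhamel propagation; the paper's backward propagation is a different statement, that in computing
\[
N^{(d-2)/2}\sum_{s\le Nt}\mathbf E^{(\varsigma_N,2)}\Big[\prod_{j=1}^m \boldsymbol u_{\varsigma_N}(R^1_{r_j}-R^2_{r_j})\,\phi\big(N^{-1/2}(R^1_s-N^{-1}d_Ns)\big)\,f(R^1_s-R^2_s)\Big]
\]
one may, without changing the limit, replace $\phi$ evaluated along the trajectory at time $s$ by $\phi$ evaluated at the much earlier time $r_1$ when the difference chain first has a bounded-size excursion. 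This is proved by a discrete It\^o/Tanaka calculation with the Green's-function-type Lyapunov function $u(x)=-(1+|x|)^{2-d}$, followed by a renewal induction exactly parallel to the Erd\H{o}s--Taylor argument. Without this, the $m\ge 1$ terms in the Taylor expansion of the Girsanov correction $e^{\sum_r\boldsymbol u_{\varsigma_N}}$ in regime~D cannot be summed, and the formula for $\nu_{\mathrm{eff}}(\boldsymbol v)$ in terms of $\mathbf E_y^{(\boldsymbol v,\mathrm{diff})}\big[e^{\sum_{s\ge 1}\boldsymbol u_{\boldsymbol v}(X_s)}\big]$ cannot be obtained. Your ``main obstacle'' paragraph gestures at the need for (a)--(c) but misses that the real delicacy is not merely the local CLT and invariant measure for the tilted chain (those follow from the SRI theory once you check the tilted chain is still an SRI chain, as in Proposition \ref{SRI2}), but the backward propagation plus the dominated-convergence bound that controls the $m$-th term uniformly in $N$ (Proposition \ref{prop:DomConvergenceBound}), which is where the threshold $\epsilon_{\mathrm{thr}}$ genuinely enters.
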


Along with Theorems \ref{main1} and \ref{main2}, this theorem will be proved in Section \ref{sec:6}. Here we remark that if $  {\boldsymbol v}=0$ then $H_{  {\boldsymbol v}}$ is the $d\times d$ identity matrix, moreover if $\mu$ is the Gaussian measure (which is true for many models of interest) then $H_{  {\boldsymbol v}}$ is equal to the identity matrix for all $  {\boldsymbol v}\in\mathbb R^d$.

In the bulk case, we again emphasize as in Remark \ref{p=1} that if $p=1$, then we have that $\eta_p(t,x)= \mathrm{div}((\mathfrak H_0 *G_t^{(  {\boldsymbol v})})(x)\;\Gamma_{\mathrm{bulk}}   \vec \xi(t,x))$ for a standard vector-valued space-time white noise $  \vec \xi$ on $\mathbb R_+\times \mathbb R^d$ and the same $d\times d$ matrix $\Gamma_{\mathrm{bulk}}$. Thus the limiting field $(t,x)\mapsto e^{\varsigma\bullet x +\frac12 |\varsigma|^2 t} \mathcal U(t,x+\varsigma t)$ can itself be written as an additive noise heat equation with driving noise given by $$ \big(\mathrm{div} -  {\boldsymbol{\varsigma}} \bullet \big)\big( (\mathfrak H_0 *G_t^{(  {\boldsymbol v})})(x)\;\Gamma_{\mathrm{bulk}}  \vec  \xi(t,   x)\big).$$ In the particular case of $   \varsigma_N=0$ and the $K_n$ given by a diffusion in a random media (see Example \ref{diffrm}), this result was recently shown in \cite{Kot}.

\begin{rk}In the extremal case, note in the third bullet point that the appearance of the effective diffusion matrix and the effective environmental variance coefficient in the limiting SPDE is consistent with the result of \cite{GRZ} about the \textit{directed polymer} partition function in $d\ge 3$, which is a different but related model (see Subsection \ref{openprob} below). Their proof also used a tilting method and an invariance principle for the tilted process, though their SPDE approximations were non-Markovian and the reason for their tilting is a different one than the one here. 
One of the main observations of the present work is that for the particular case of RWRE models of the type considered here, a certain type of tilting approach works to obtain the fluctuations in \emph{all} dimensions under the same unified framework, not just for $d\ge 3$. Furthermore, it works for all possible choices of microscopic interactions, as embodied by the generality of Assumption \ref{a1}. In particular these form a nice class of prelimiting models with a rich class of observables, from which one can hope to understand problems concerning critical phenomena for directed systems in all spatial dimensions.
\end{rk}

The constant $\epsilon_{\mathrm{thr}}$ in our result is not a physically meaningful one: it is simply the value at which our methods break down. Beyond this value, we are unsure what to expect and whether there is a critical value analogous to the one in $d=2$, see Problem 3 in Section \ref{openprob} below.

Interestingly, when $d\ge 3$ the function $\nu_{\mathrm{eff}}$ may depend on the model. This model-specific dependence was not the case for the critical scale in the $d=2$ case earlier, when we saw that this effective noise coefficient was ``universal" in the sense that it degenerated to a single function $\frac12 \gamma_{\mathrm{ext}}^2 \mapsto \frac{1}{1 -\frac{1}{2}\gamma_{\mathrm{ext}}^2}$ for \textit{all possible choices of models}. Note that this function ($f(\lambda)= \frac{1}{1-\lambda}$) is the moment generating function of an exponential random variable. If $d\ge 3$ and the model has only nearest-neighbor interactions and independence between distinct spatial transition kernels, then one may show that $\nu_{\mathrm{eff}}^2$ is the moment generating function of some multiple of a geometric random variable of some rate. In general, $\nu_{\mathrm{eff}}^2$ will take a more complicated form, which is the subject of the following theorem. We will now see that it is a kind of moment generating function of a certain additive functional of the tilted 2-point motion generated by the stochastic kernels $K_i$. 

\begin{thm}[Identification of the coefficient at the critical scale in $d\ge 3$] \label{main4} Let $d
\ge 3$. In the context of the third bullet point of Theorem \ref{main3}, we have that $\nu_{\mathrm{eff}}^2(  {\boldsymbol v})$ can be written as follows. Recalling $\boldsymbol p^{(2)}$ from \eqref{kptkernel}, let 
\begin{align*} 
\boldsymbol u_{  {\boldsymbol v}}(   y_1-   y_2)
=\log \int_{I^2} e^{\sum_{j=1,2}   {\boldsymbol v} \bullet (   x_j-   y_j) } \boldsymbol p^{(2)} \big( (   y_1,   y_2),(\mathrm d   x_1,\mathrm d   x_2)\big)   \;\; -\;\; 2 \log M(  {\boldsymbol v}).
\end{align*}
Let $\boldsymbol q_{  {\boldsymbol v}}^{(2)}((x_1,x_2),(\dr y_1,\dr y_2))$ denote the Markov kernel defined by a Radon-Nikodym derivative with respect to $\boldsymbol p^{(2)}((x_1,x_2),(\dr y_1,\dr y_2) )$ that is proportional to $e^{\sum_{j=1,2}   {\boldsymbol v} \bullet (x_j-y_j) }.$ 
 Let $\mathbf E_{y}^{(  {\boldsymbol v},\mathrm{diff})}$ denote the expectation with respect to the Markov chain $(X_r)_{r\ge 0}$ with state space $I$, given by the difference of the two coordinates of the Markov chain associated to the kernel $\boldsymbol q_{  {\boldsymbol v}}^{(2)}((x_1,x_2),(\dr y_1,\dr y_2))$. Define $\pi_{  {\boldsymbol v}}^{\mathrm{inv}}(\dr y )$ to be the unique invariant measure for that Markov chain $(X_r)_r$. Then we have that 
\begin{equation}\nu_{\mathrm{eff}}^2(  {\boldsymbol v}):= \frac1{|  {\boldsymbol v}|^{2p}} \int_I \left(e^{\boldsymbol u_{  {\boldsymbol v}}(   y)} -1\right)\mathbf E_y^{(  {\boldsymbol v},\mathrm{diff})} \bigg[e^{\sum_{s=1}^{\infty} \boldsymbol u_{  {\boldsymbol v}}(X_s)} \bigg] \pi_{  {\boldsymbol v}}^{\mathrm{inv}} (\dr y). \label{nueff}
\end{equation}
\end{thm}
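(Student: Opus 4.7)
The plan is to identify $\nu_{\mathrm{eff}}^2(\boldsymbol v)$ as the limit of the rescaled conditional quadratic variation of the martingale noise appearing in a discrete SPDE for $\mathfrak H^N$. Following the Bertini--Giacomin/discrete Duhamel strategy used in \cite{BG97,DDP23,Par24}, one writes
\[
\mathfrak H^N(t+N^{-1},\phi) - \widehat{\mathcal L}_N \mathfrak H^N(t,\phi) = \mathcal M^N(t,\phi),
\]
where $\widehat{\mathcal L}_N$ is a suitable tilted discrete semigroup and $\mathcal M^N(t,\phi)$ is a martingale increment with respect to the environment filtration. Its conditional variance is computable in terms of the 2-point correlator $\boldsymbol p^{(2)}-\mu\otimes\mu$ paired with two copies of $\phi$ and the quenched density at time $t$. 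After the exponential tilt by $\varsigma_N \to \boldsymbol v$, the natural 2-point motion becomes exactly the Markov chain generated by $\boldsymbol q_{\boldsymbol v}^{(2)}$, and the mismatch between tilting $\boldsymbol p^{(2)}$ individually by $M(\boldsymbol v)^{2}$ versus tilting it jointly produces the factor $e^{\boldsymbol u_{\boldsymbol v}(y_1-y_2)}$ in the Radon--Nikodym derivative---this is why $\boldsymbol u_{\boldsymbol v}$ is the correct bookkeeping quantity measuring departure from independence.

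Next I would iterate the discrete SPDE and sum the conditional variances across all microscopic time steps $r=1,\dots,Nt$. This produces an expression of the form $\sum_{r} \mathbb{E}[\,\cdots(e^{\boldsymbol u_{\boldsymbol v}(X_r-Y_r)}-1)\cdots]$ where $(X_r,Y_r)$ is the tilted 2-point motion. Passing to $N\to\infty$, the translational component decouples from the difference chain, and one is left with an integration against the invariant measure $\pi_{\boldsymbol v}^{\mathrm{inv}}$ of the difference chain (whose uniqueness and unit normalization are supplied by the SRI Chain framework developed earlier in the paper). The renewal structure of the SPDE---expressing each variance contribution as a functional of the 2-point trajectory starting from the last encounter---produces the Feynman--Kac weight $\mathbf E_y^{(\boldsymbol v,\mathrm{diff})}[e^{\sum_{s=1}^\infty \boldsymbol u_{\boldsymbol v}(X_s)}]$, which resums contributions from all subsequent encounters of the two walkers. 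The factor $(e^{\boldsymbol u_{\boldsymbol v}(y)}-1)$ corresponds to subtracting off the trivial independent contribution at the initial encounter, so that only the correlated excess is counted. The overall normalization $|\boldsymbol v|^{-2p}$ is inherited from the definition of $\mathscr B_N$ in Definition \ref{cntxnu}, since the tilted 2-point motion produces derivatives of order $p$ in each of the two coordinates, and the $2p$-fold Taylor expansion of $\boldsymbol u_{\boldsymbol v}$ near $\boldsymbol v=0$ is what reconciles \eqref{nueff} with the $|\boldsymbol v|\to 0$ limit.

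The hard part will be the convergence and uniform integrability of the Feynman--Kac expectation. Since $d\ge 3$ and $\boldsymbol q_{\boldsymbol v}^{(2)}$ preserves the mean motion of both walkers (by construction of the tilt at the level of each coordinate), the difference chain $(X_r)$ is centered and transient, and $\boldsymbol u_{\boldsymbol v}$ decays exponentially away from the origin by Assumption \ref{a1}\eqref{a24}. This gives $\sum_s \boldsymbol u_{\boldsymbol v}(X_s)<\infty$ almost surely. Controlling the exponential moment $\mathbf E_y^{(\boldsymbol v,\mathrm{diff})}[e^{\sum_s \boldsymbol u_{\boldsymbol v}(X_s)}]$ uniformly in $y$ is more delicate, and is precisely the source of the constraint $|\boldsymbol v|<\epsilon_{\mathrm{thr}}$: for small enough $|\boldsymbol v|$ one can compare the Feynman--Kac weight to a convergent Neumann series built from the Green function of the transient difference chain, using the heat-kernel estimates established elsewhere in the paper for SRI Chains. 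Exchanging the outer limit $N\to\infty$ with this Feynman--Kac expectation then follows by dominated convergence. Beyond the threshold, one expects the Neumann series to diverge, signaling a genuine phase transition that falls outside the scope of this theorem.
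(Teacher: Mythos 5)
Your overall architecture is the same as the paper's: derive the discrete SPDE, compute the predictable quadratic variation of the martingale noise, Girsanov-tilt to the measure $\boldsymbol q_{\boldsymbol v}^{(2)}$, identify $\boldsymbol u_{\boldsymbol v}$ as the tilting correction, pass to the difference chain and its invariant measure, and recognize a Feynman--Kac weight with radius of convergence $\epsilon_{\mathrm{thr}}$. The paper proves this as Proposition~\ref{4.1c} (limit of $Q_N^{\boldsymbol\vartheta_{\varsigma_N}}$), specializing to $f=\boldsymbol\vartheta_{\boldsymbol v}=e^{\boldsymbol u_{\boldsymbol v}}-1$ and performing an explicit resummation to reach the exponential in \eqref{nueff}. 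So the route is the same; but two steps in your sketch are either missing or wrong.

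First, the most substantial gap: you say ``the translational component decouples from the difference chain'' and invoke a ``renewal structure'' to obtain the Feynman--Kac weight, but you never explain how the test function $\phi(N^{-1/2}(R^1_s - N^{-1}d_N s))$, which is evaluated at a \emph{macroscopic} time $s$ of order $Nt$, can be replaced by $\phi$ evaluated at the initial time of the two-walker encounter. This is not a consequence of the centered invariance principle alone. The paper needs a separate input for this---the backwards-in-time propagation of test functions (Theorem~\ref{2.12}), which shows that in $d\ge 3$, any bounded smooth $\phi(N^{-1/2}R^1_s)$ inside an expectation of the form $\sum_s f(R^1_s-R^2_s)\phi(\cdots)$ can be pulled back to $\phi(N^{-1/2}x^1_N)$ in the limit. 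The proof of that theorem requires a discrete It\^o/summation-by-parts argument with the harmonic function $u(x)=(1+|x|)^{2-d}$ for the difference chain, together with the anticoncentration estimate of Theorem~\ref{anti}. Without this ingredient you cannot decouple the sum over $m$ from the limiting heat-kernel factor $\mathcal G^{(\boldsymbol v)}(t,\phi;a_1,a_2)$, so the renewal argument is not actually carried out. Likewise, you do not address the combinatorics of the resummation: the paper passes through a term-by-term limit involving the weight $g_{\mathrm{count}}(\vec s)$ accounting for repeated microscopic time indices, and the reduction of the resulting double sum over $(m,l)$ to $(e^{\boldsymbol u_{\boldsymbol v}}-1)\mathbf E_y^{(\boldsymbol v,\mathrm{diff})}[e^{\sum_{s\ge 1}\boldsymbol u_{\boldsymbol v}(X_s)}]$ is an explicit algebraic identity, not automatic from a heuristic picture.

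Second, a conceptual error: you claim that ``the tilted 2-point motion produces derivatives of order $p$ in each of the two coordinates, and the $2p$-fold Taylor expansion of $\boldsymbol u_{\boldsymbol v}$ near $\boldsymbol v=0$ is what reconciles \eqref{nueff} with the $|\boldsymbol v|\to 0$ limit.'' In Regime D ($d\ge 3$, $\varsigma_N\to\boldsymbol v\ne 0$) there is no Taylor expansion in $\boldsymbol v$---this is exactly the distinguishing feature of this regime and the reason $\nu_{\mathrm{eff}}^2$ is model-dependent. The paper explicitly notes after Theorem~\ref{main4} that the result ``does not depend at all on the value of $p\in\mathbb N$'' because one does not zoom in on $\boldsymbol v=0$. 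The prefactor $|\boldsymbol v|^{-2p}$ is purely an artifact of the normalization $\mathscr B_N = N^{(d-2)/4}/|\varsigma_N|^p$, which is carried through unchanged; there is no $p$-th derivative of anything in the identification of the coefficient. Your invariance-principle and transience arguments for the convergence of the Feynman--Kac exponent, and your identification of $e^{\boldsymbol u_{\boldsymbol v}}-1$ with the correlated-excess coefficient (the measure $\rho=\boldsymbol p^{(2)}-\mu\otimes\mu$), are both correct.
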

Note that in both \eqref{eq:extremal2} and \eqref{nueff}, the factor $1/|  {\boldsymbol v}|^{2p}$ is somewhat artificial, just coming from the form of the rescaling constants $\mathscr B_N.$

It is important that, unlike all other results, the result in the third bullet point of Theorem \ref{main3} does not depend at all on the value of $p\in\mathbb N$. This is because all other results ``zoom in" on $  {\boldsymbol v}=0$ and thus a Taylor expansion of order $p$ in the variable $  {\boldsymbol v}$ is relevant in those results, whereas for this result there is no such expansion needed, see Section \ref{sec:5} for more details. This is also why in $d\ge 3$ the universality breaks and we obtain a model-dependent effective variance.

\begin{thm}[Convergence of f.d.d's for the entire flow of initial data]\label{main5}
Fix $n\in \mathbb N$. For $1\le j \le n$, let $\mathfrak H^N_j(t,\phi)$ be as in \eqref{hn}, but where the subscript now corresponds to different good sequences of initial conditions $\mathfrak H^N_j(0,\bullet)$, all of which are deterministic and converging weakly to some initial measures $\mathfrak H_j$ $(1\le j \le n$) as $N\to \infty$. Then in all of Theorems \ref{main1}, \ref{main2}, and \ref{main3}, the entire tuple $(\mathfrak H^N_1, ..., \mathfrak H^N_n)$ converges in law in the topology of $C([0,T], X)^n$. The limit is given by the natural coupling of the solutions of the stochastic PDEs written in those theorems.
\end{thm}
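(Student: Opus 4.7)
My plan is to leverage the marginal results already established in Theorems \ref{main1}, \ref{main2}, and \ref{main3}, together with the fundamental linear structure of the map from initial data to the field $\mathfrak H^N$. Joint tightness of the tuple $(\mathfrak F^N_1,\ldots,\mathfrak F^N_n)$ in $C([0,T], X)^n$ will follow automatically from the marginal tightness proved in those theorems, so the entire task reduces to identifying the joint limit along any subsequential limit point.

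The key observation I would exploit is that, modulo a $\nu$-dependent normalization that factors out cleanly, the map $\nu \mapsto \mathfrak H^N[\nu]$ is linear: $P_\nu^\omega$ is manifestly linear in $\nu$, and the only $\nu$-dependence inside $C_{N,t,x,\nu}$ is the partition-function term $\log\int_I e^{\varsigma_N \bullet x}\nu(\dr x)$. Once we pass to the tilted initial measures $\nu^{\varsigma_N}$, this nonlinearity disappears, and in particular any nonnegative linear combination $\sum_j a_j \nu^{\varsigma_N}_j$ of the tilted initial data leads to $\mathfrak F^N$ being the corresponding linear combination of the individual $\mathfrak F^N_j$. Moreover, the good-sequence condition of Definition \ref{goodseq} is stable under nonnegative linear combinations, since both bounds defining a good sequence are subadditive in the initial measure.

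With this linearity in hand I would apply the relevant marginal theorem to the combined sequence $\sum_j a_j \mathfrak H^N_j(0,\bullet)$. Since each limiting SPDE is of additive-noise type with coefficient linear in $\mathfrak H_0$ (through $(\mathfrak H_0 * G_t)(x)$ or through the bilinear form $A_p[\phi,\phi]$), the solution corresponding to the combined initial data is necessarily $\sum_j a_j \mathcal U_j$, where the $\mathcal U_j$ are all driven by one common noise $\xi$ (or $\eta_p$). Ranging over all admissible weights $a_j$ and invoking a Cram\'er--Wold type argument identifies the joint law of the limit as the natural coupling claimed. To upgrade this identification from finite-dimensional projections to the topology of $C([0,T],X)^n$, I would combine it with the joint tightness noted above.

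The step I anticipate to be most delicate is making the cross-correlation structure of the limit rigorous, and a cleaner route bypassing the Cram\'er--Wold step is to trace the martingale-driven Duhamel representation used to prove Theorems \ref{main1}--\ref{main3}. Those proofs represent each $\mathfrak F^N_j$ via a discrete martingale noise $\mathcal N^N_j$ whose quadratic variation converges to the correct Gaussian covariance. For the joint statement I would instead compute the joint cross-covariation $\langle \mathcal N^N_i, \mathcal N^N_j\rangle_t$ and show it converges to the cross-covariance of the natural coupling. By polarization this reduces precisely to the variance computations already performed in the marginal proofs, so no essentially new technical input should be required---only careful bookkeeping of the cross terms that arise when expanding the relevant environment-averaged products of kernels $K_n$ that are shared across the different $j$.
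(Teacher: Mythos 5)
Your second, preferred route---compute the cross-variation $\langle M^{i,N}(\phi), M^{j,N}(\phi)\rangle$ and show it converges to the cross-covariance of the coupled limit---is precisely the argument in the paper. The only cosmetic difference is that the paper computes this cross-variation directly as the quadratic variation field $Q_N^{\bullet}(t,\phi;\nu_N^i\otimes\nu_N^j)$ plugged into the already-established mixed-initial-data version of Theorem \ref{gen.ic} (via the identity \eqref{tilt0}), rather than deriving it by polarizing the marginal variance formulas; the two are equivalent because all the limiting QVF formulas are bilinear in the initial measures. Your first sketch (tilted linearity plus Cram\'er--Wold) would also go through but, as you anticipated, is the less economical route, since the Gaussianity needed to close the Cram\'er--Wold argument ultimately still comes from the martingale problem characterization.
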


This theorem will be proved at the very end of Section \ref{sec:6}. The natural coupling means that all of the solutions are run with the same driving noise $\xi$, but $\mathfrak H_0$ is replaced by $\mathfrak H_j$ in the $j^{th}$ equation. For \eqref{she1} and \eqref{she8}, it is not so clear what we mean by this, since these objects have only been defined in terms of their covariance kernels and not some underlying space-time white noise $\xi$, but in these cases one can explicitly write the joint covariance kernels of the two Gaussian fields, and the proof will make this clear.

\subsection{Classification of regimes}
The three main theorems each have several cases, so that the total number of convergence results is eight if we include all of the separate regimes from Theorems \ref{main1}, \ref{main2}, and \ref{main3}. However, we are not going to prove all eight of these results separately. Instead, it will turn out that all eight results can be consolidated into \textbf{four separate cases} which will be denoted as follows throughout the paper:

\begin{enumerate}
    \item \label{eq:regimeA} \textbf{Regime A (Bulk Regime):} The first case deals with the bulk regime in all dimensions $d \geq 1$. This is the regime where $$N^{1/2}   \varsigma_N \to   {\boldsymbol{\varsigma} }\in\mathbb R^d \;\text{ as } \;\;N\to\infty.$$ The proofs in the bulk regime proceed similarly in all dimensions. There is an important simplification that can be made in this regime and that will be justified in Section \ref{sec:3}. This is to assume that $    \varsigma_N \equiv 0$. Finally note that the SPDE obtained in \eqref{she3} is actually the $d=1$ case of the more complicated formula given in \eqref{she1}, so that the bulk SPDE limit is actually of the same form in every dimension. 
    \item \label{eq:regimeB} \textbf{Regime B (Extremal Regime I) :}  The second case deals with $   \varsigma_N$ in between the bulk and critical scales in all dimensions. In other words, $$N^{-1/2} \ll |  \varsigma_N| \ll  N^{-1}\psi_N(p,d).$$ 

    \item \label{eq:regimeC} \textbf{Regime C (Extremal Regime II in $d=2$):} The third regime deals with $   \varsigma_N$ at the critical scale in $d=2$: 
    $$   \varsigma_N/\psi_N(p,d) =    \varsigma_N(\log N)^{\frac{1}{2p}} \to   {\boldsymbol v} \neq 0.$$

    \item \label{eq:regimeD} \textbf{Regime D (Extremal Regime II in $d \geq 3$):} The last regime deals with $   \varsigma_N$ at the critical scale in $d \geq 3$:   $$   \varsigma_N/\psi_N(p,d) =    \varsigma_N \to   {\boldsymbol v} \neq 0.$$ 
\end{enumerate}

Although we use the term ``critical scale" it should be made clear that these results are all \textit{subcritical} ones in the language of e.g. \cite{u5, CSZ_2d}. True criticality should be reached at the same order of magnitude (hence our terminology), but only past a specific critical location, which our results do not cover. In particular, these four regimes do not encompass the critical and supercritical cases, where we expect non-Gaussian behavior.

\subsection{Intuition and ideas of the proof} Here we list some of the key ideas in the proof. 
\\ 
\\ 
\textbf{1. Martingale Problem}
Take $p=1$ and $  \varsigma_N=0$ for simplicity here. In this particular case, one may prove the theorem by first showing that for any $C([0,T],\mathcal S'(\mathbb R))$ valued random process $u(t,\bullet)$ that is adapted to the canonical filtration, if the processes 
\begin{align*}
    M_t(\phi)&:= u(t,\phi)- \frac12 \int_0^{t}  u(s,\Delta \phi)ds \\ G_t(\phi) &:= M_t(\phi)^2 - \gamma_{\mathrm{ext}}^2 \int_0^{t} \big(G(s,\bullet)^2, |   \nabla \phi
    |^2 \big)_{L^2(\mathbb R^d)}\dr s
\end{align*}
are both martingales for every $\phi\in C_c^\infty(\mathbb R^d)$, then $u(t,\bullet)$ must have the same law as the solution of \eqref{she3}, \eqref{she1}, and \eqref{she8} (depending on $d$) with $p=1$. Likewise for the case where $p=1$ and $|  \varsigma_N|\gg N^{-1/2}$, if one replaces $|   \nabla \phi|^2$ with just $\phi^2$ then one obtains the solution of \eqref{she2} (see Theorem \ref{uniqueness_of_mart_prob} for the general claim). 
Then we will prove tightness of $\mathfrak H^N$ and show that any limit point must have this martingality property.

As mentioned earlier, previous results about the fluctuations of $P^\omega_\nu$ were proved either using cluster expansion techniques or by using the environment seen from the particle. The method of the present work will be quite different, and will instead capitalize on the discrete SPDE technique, in which we show that the prelimiting model satisfies a discretized version of the above martingale problem. This discretized martingale problem is in turn based on Girsanov's formula. This Girsanov technique was first discovered independently in the papers \cite{dom, DDP23}, but in the present work we have numerous additional difficulties coming from the general assumptions and the higher spatial dimensions that will need to be overcome. We refer to Section \ref{sec:2} for a precise discussion of how the Girsanov transform will be used. 
\\
\\
\textbf{2. Exponential Functionals of Random Walks.}
Let us try to give some intuition as to \textit{why} one can go up to location scales of order $\psi_N(p,d)$ but not further in the results. As mentioned in the discussion at the beginning of Subsection \ref{setup}, our results are based on a tilting approach where the underlying random walk obtains a slight bias due to the effect of the rescaling constants $C_{N,t,x,\nu}$. The analysis of the quadratic variations of the martingales appearing in the discrete SPDE will eventually lead us to calculating moment-like quantities of the field $\mathfrak H^0_N$. When calculating the moments of the field $\mathfrak H^N$, the Girsanov tilting will be induced by products of the $C_{N,t,x,\nu}$. This tilting does not come for free, as one has to pay a cost which is seen under the tilted measure. This cost takes the form of an exponential functional of the tilted random walk paths, see \eqref{eq:tiltingExpectations}. 

We now explain how to study such exponential functionals in the case of the simple symmetric random walk. Ultimately, we will need to analyze these kinds of functionals in the context of the tilted chains. Consider independent simple symmetric random walks $  R^1_r,    R^2_r$. Suppose that $   R^1_0 =    R^2_0$, i.e., the two walks start out at the same position. We will be interested in expectations that are (roughly speaking) of the form 
$$\mathbf E_{\mathrm{SSRW}}\left [e^{|\varsigma_N|^{2p} \sum_{r=0}^{s-1}  \ind_{\{R^1_r = R^2_r\}}}\right].$$ Note that the exponent here is a (rescaled) discrete local time for the random walk $R^1 - R^2$. We will therefore review some classical limit laws for such local times.

The asymptotic behavior of such functionals is highly dependent on the dimension that we are working in. These results date back to the works of Chung and Hunt \cite{MR29488}, Kallianpur and Robbins \cite{MR56233}, Darling and Kac \cite{MR84222}, and Erd\"{o}s and Taylor \cite{MR121870}. 

We have the following limit laws (in distribution) as we take $N \to \infty$:
\begin{enumerate}
    \item In $d=1$, $N^{-1/2}\sum_{r=0}^{Nt}\ind_{\{R^1_r  - R^2_r = 0\}} \Rightarrow L^{B}_0(t)$, where $L^B_0(t)$ is the local time at $0$ of a Brownian motion $B$ of rate $2$. This was first proven in \cite{MR29488}.
    \item In $d=2$, $(\log N)^{-1}\sum_{r=0}^{Nt}\ind_{\{   R^1_r  -    R^2_r = 0 \}} \Rightarrow Z$, where $Z$ is an exponential random variable with rate $\pi$. This result was first proven in \cite{MR121870}.
    \item Finally, in $d \geq 3$, $\sum_{r=0}^{\infty}\ind_{\{   R^1_r  -    R^2_r = 0 \}}$ is a geometric random variable, with rate given by the return probability to 0. 
\end{enumerate}

\begin{rk} \label{rk:expTimeIndependence}
    Note that while the local time limit in $d=1$ depends on $t$, the exponential limit in $d=2$ does \emph{not depend on $t$}. This is because $2d$ random walks are just barely recurrent, and therefore all visits of $R^1 - R^2$ to $0$ occur on timescales much smaller than the macroscopic timescale $Nt$.
\end{rk}

We see from the above discussion, that it is precisely at the scale $|\varsigma_N| = O(N^{-1}\psi_N(p,d))$ where the discrete local time converges to a nontrivial random limit. This explains the failure of our methods beyond this location scale as these local times blow up.

Finally, we analyze in more detail what happens at the critical scale $|\varsigma_N| = O(N^{-1}\psi_N(p,d))$. It follows from the above discussion that if we take $|\varsigma_N| = \lambda^{1/2p} N^{-1}\psi_N(p,d)$ for some $\lambda \geq 0$, then 
$$\mathbf E_{\mathrm{SSRW}}\left [e^{|\varsigma_N|^{2p} \sum_{r=0}^{s-1}  \ind_{\{R^1_r = R^2_r\}}}\right ] \approx M(\lambda)$$ where $M(\cdot)$ is the moment generating function of either the folded normal distribution in $d=1$ (the distribution of Brownian local time at zero for fixed $t$), the exponential distribution in $d=2$, or the geometric distribution in $d\ge 3$. 

While the moment generating function for the folded normal distribution is defined for all real numbers, the moment generating function for the exponential distribution, $M(\lambda) = \frac{1}{1- \lambda}$,  is only defined for $\lambda < 1$. Similarly, the moment generating function of the geometric random variable with parameter $p$, $M(\lambda) = 
\frac{p e^{\lambda}}{1-(1-p) e^{\lambda}}$,  is only defined for $\lambda < -\ln(1-p).$ 

This means that in $d \geq 2$, the expectation of such exponential functionals blows up for $\lambda$ large enough. This is the precise reason that at the scale $|\varsigma_N| = O(N^{-1}\psi_N(p,d))$ in $d\geq2$, we expect a phase transition. We will see later that when we translate the above intuition to the tilted chains, this MGF blowup will ultimately be the reason that in $d = 2$, \eqref{eq:extremal2} blows up for $\boldsymbol v$ with $\gamma_{\mathrm{ext}}(  {\boldsymbol v})^2 = 2$ and that in $d \geq 3$, \eqref{she6} blows up for $\boldsymbol{v}$ such that $\nu_{\mathrm{eff}}(  {\boldsymbol v}) = \infty.$

\textbf{3. SRI chains.} We will study these exponential functionals for much more general Markov chains than just simple random walks. To do this, we will introduce a formalism called \emph{Short-range interacting Markov chains (SRI Chains)}. This is a general framework for $k$-dimensional Markov chains whose components only interact when they are close to one another. Such chains can be used to model the $k$-point motions of our RWRE models, as well as their exponentially tilted versions. 

We prove a series of results about such Markov chains including an invariance principle, heat kernel estimates, existence and uniqueness of their invariant measures, and limit laws for additive functionals of sequences of these chains in the spirit of the works \cite{MR29488, MR56233, MR84222, MR121870}. This framework is quite general and may be of independent interest. These results are contained in Section \ref{sec:4} and Appendices \ref{appendix:a}, \ref{appendix:b}, \ref{appendix:c}, and  \ref{appendix:d} and can be read independently of the rest of the paper.

\textbf{4. Backwards propagation of test functions. } We highlight another important technique that is used crucially to obtain results in $d \geq 2$. For simplicity, again consider two independent simple symmetric random walks $R^1,R^2$ in $\mathbb Z^2$ starting from the origin. Rather than just additive functionals, we will need to consider even more general observables 
of the form
\begin{equation} \frac1{\log N}\sum_{s=0}^{Nt}
   \ind_{\{R^1_s - R^2_s=0\}} \phi(N^{-1/2}R^1_s)
   ,
\end{equation}
for some test function $\phi \in C_c^{\infty}(\R^2)$. The idea is that we can propagate this test function back to the initial time. More precisely, take the limit in distribution of the above expression as $N\to \infty$. It turns out that to compute this limiting distribution, one can simply replace $\phi(N^{-1/2}R^1_s)$ by $\phi(R^1_0) = \phi(0)$ and then take the distributional limit. The intuition is that most of the ``weight" of the expectation will be at times that are macroscopically 0 (as explained in Remark \ref{rk:expTimeIndependence}). Similarly in $d\ge 3$, the same holds true without the logarithmic normalization. We develop a formalism to rigorously do this with much more general Markov chains and much more general functions than just indicators at the origin. This will be the content of Theorems \ref{2.11} and \ref{2.12}.

\subsection{The broader picture and some open problems}\label{openprob}

Let us discuss some open problems in this area, first focusing on $d=1$, which is the most well-understood case, then on $d=2,$ and finally $d\ge 3.$ Some of these have already been briefly mentioned above, but we explain them here in more detail for emphasis and clarity, because these problems are the primary motivating force behind the present work.

\textbf{1. The supercritical case in $d=1$.} If $p=1$ note that the critical location scaling is given by $\psi_N = N^{3/4}$. In $d=1$ if one takes $   \varsigma_N = O(N^{3/4})$ it is known from \cite{DDP23,DDP+,Par24} that the fluctuations are no longer Gaussian but instead given by the KPZ equation as explained in Remark \ref{critkpz}. For \textit{even larger scales } $|\varsigma_N|\gg N^{-1/4p}$, it is strongly conjectured that for a broad class of kernels, the one-point fluctuations are governed by the Tracy-Widom distribution \cite{TW}, but this is known only in some exactly solvable cases \cite{bc, mark}. More generally, at location scales exceeding $N^{3/4}$ the conjecture is that one should obtain a \textit{full space-time scaling limit} called the \textbf{directed landscape} \cite{MQR, dov}, but this remains a wide open problem (even for the well-understood exactly solvable cases). To obtain these directed landscape fluctuations, one should not look at the density field $\mathfrak H^N$ as above, but rather one should consider the logarithm of a slightly different but strongly related observable of the RWRE called the \textit{quenched tail field}, see Problem 6 below. We do not expect the density field itself to have nontrivial fluctuations at these location scales because it is too rough to take its logarithm. The reason for taking the logarithm is somewhat similar to the fact that for a Brownian motion $B$ the process $X_t:=e^{B_t-t/2}$ does not exhibit nontrivial $t\to \infty$ fluctuations without taking a logarithm (i.e., $(X_t-\mathbb E[X_t]) / \sqrt{Var(X_t)}$ just converges a.s. to 0, despite the second moment converging to 1). This directed landscape conjecture in the $d=1$ case is consistent with the prediction from \cite{ldt2, ldt, bc} of the \textit{dynamic scaling exponents} of $1/3$ and $2/3$ for the respective height and transversal fluctuations, which goes back to the seminal physics work of \cite{kpz}, see also the surveys \cite{Qua11, Cor12}.

\textbf{2. The critical and supercritical case in $d=2.$ } In $d=2$ if one takes $   \varsigma_N$ such that $\gamma_{\mathrm{ext}}(\varsigma_N)^2 =   \frac2{\log N} + \frac{\theta}{(\log N)^2}$
, where $\theta\in\mathbb R$, then we conjecture that the limit will be given by the \emph{ 2d critical stochastic heat flow} $Z_{s,t}^\theta (\dr x,\dr y)$ recently constructed in \cite{CSZ_2d}. We may pursue this problem in a future work, using a recent moment-based technique developed in \cite{Tsa24}. As in the $d=1$ case, this would indicate a transition from a linear SPDE with Gaussian marginals to a nonlinear object that is much more complicated to describe, as one transitions from the bulk regime to the large deviations regime in the prelimiting RWRE model.

The question of fluctuations at even larger spatial location scales where  $\gamma_{\mathrm{ext}}(  \varsigma_N)^2 \gg  2/\log N$ (e.g. $|  \varsigma_N| = O(1)$ in $d=2$) is very difficult. In this \textit{supercritical} regime, we do not make any conjecture at all, as even the physics literature of what \textit{should} happen in this regime is rather sparse with with conflicting guesses as to what the correct scaling behavior should be, see e.g. \cite{phys1,phys2,phys3,phys4,phys5,phys6}. The work \cite{phys3} theoretically predicts a fluctuation scaling exponent of $1/4$, while strong numerical evidence from \cite{phys2, phys4} predicts a slightly smaller value of $0.241$. It will be abundantly clear that our own techniques for this paper break down beyond the critical location scale of $|\varsigma_N| = O((\log N)^{-1/2p})$ in $d=2.$

\textbf{3. The critical and supercritical case in $d\ge 3.$} For the $d\ge 3$ case, even less is known about what the fluctuations should be at the critical value of the slope and beyond. The possibilities are that there is another ``nonlinear" object that appears at criticality, albeit model dependent. Alternatively, it may be possible that there are still Gaussian fluctuations but with some extra diffusivity or modified noise relative to what one would expect. In $d=3$, the physics paper \cite{phys5} seems to predicts a dynamic scaling exponent of approximately $0.18$ for fluctuations. We make no precise conjecture here. Recent rigorous work of \cite{Junk} may be relevant here, as they show that for related polymer models, there are several different notions of criticality, for which the associated inverse temperatures do not necessarily coincide with one another in $d\ge 3$.

\textbf{4. A connection to polymers in arbitrary dimensions.} A directed polymer in an i.i.d. random environment is a random Gibbs measure on finite length random walk paths where the paths are reweighted by the environment. In all spatial dimensions, these directed polymers have a strong disorder regime and a weak disorder regime, see the monograph \cite{Comets}. For directed polymers in any dimension $d$, it is known that the ``intermediate disorder regime," that is, the transition point from weak to strong disorder, occurs at inverse temperature of order given by $$ \beta_N = \begin{cases} O(N^{-1/4})& \text{ in $d=1$} \\ O(1/\sqrt{\log N})& \text{ in $d=2$} \\ O(1)& \text{ in $d\ge 3$}. \end{cases} $$ The model exhibits Gaussian fluctuations in the weak disorder regime, see e.g. \cite{ marginal, GRZ, u3,u2,u4,u5,u6,CAR,CSZ_2d, kotitsas2025heatequationtimecorrelatedrandom}. See also \cite{MR4413215, MR4709542, dunlap2025edwardswilkinsonfluctuationssubcritical2d} where they prove Gaussian fluctuations for more general nonlinear stochastic heat equations in a similar scaling regime.

In other words, the critical scale for the polymer models is $\beta_N=\psi_N(1,d)/N.$ Given these results for polymers, our Theorems \ref{main1}, \ref{main2}, and \ref{main3} can be viewed as analogous results for the RWRE models by proving Gaussian fluctuations (given by an additive stochastic heat equation) strictly up to location $d_N$ of order $\psi_N(p,d).$

However there are still many differences in those polymer results from our RWRE results here: one observes a smaller number of fluctuation regimes for the polymers than for the RWRE, and much simpler expressions for the noise coefficients that appear in the limiting SPDEs. The smaller number of regimes is due to the fact that there is no analogue of the bulk regime for directed polymers. More importantly, the analogue of the coefficient $\gamma_{\mathrm{ext}}$ in our results is much simpler in the case of polymers. 

For example, for the directed polymer in dimension $d=2$, the intermediate disorder scale is given by $\beta_N = \frac{\hat{\beta}}{\sqrt{\log N}}$. There exists some critical constant $\hat{\beta}_c $ where the transition from Gaussian to non-Gaussian fluctuations occurs. In other words, there are Gaussian fluctuations for all $\hat{\beta} <  \hat{\beta}_c$. The noise coefficient in this regime is given by $\sqrt{\frac{1}{1- \hat{\beta}^2}}.$ We can see by analogy with \eqref{she6} that $\hat{\beta}$ plays the role that $\gamma_{\mathrm{ext}}$ plays in our model. As $\gamma_{\mathrm{ext}}$ is itself a nontrivial function of the underling stochastic flow, this shows that our noise coefficients always have an extra layer of complexity compared to the directed polymer case.   

This connection between the RWRE and the directed polymer is rather suggestive, as our proof makes no use of directed polymers or any seemingly related models. Thus we conjecture that there is a precise way of relating the RWRE models to the polymer ones, such that one has the exact relation $$\beta_N = \gamma_{\mathrm{ext}} (\varsigma_N),$$ which should be valid whenever one is not in the bulk regime, i.e.,  $|\varsigma_N| \gg N^{-1/2}.$ We leave this to future work. We do remark that our proof techniques could be used to derive the directed polymer results up to the critical scale as well.

\textbf{5. Temporal correlations.} Imagine now that Assumption \ref{a1} was weakened so that rather than imposing that the $K_i$ are (temporally) i.i.d. and sampled from the same distribution on Markov kernels, they are instead only stationary in $i$ and satisfy some exponentially-fast mixing condition (or even finite-range correlations). It is clear at an intuitive level that this should not affect any of the \textit{macroscopic} behavior of the model in any significant way, as one only gains some microscopic temporal correlations that are very weak. Thus very similar scaling limit results should hold as in Theorems \ref{main1}-\ref{main3}. However, our proofs would break down completely, as one loses the Markov property of the $k$-point motion (and consequently also the associated Dynkin martingales which are the crucial observables in the proof). Thus it would be nice to find a more robust method of proof that works even for these temporally correlated models. This search for a more robust proof may tie in with the previous open problem of relating the model to polymers by connecting the location parameter $\varsigma_N$ in RWRE to the inverse temperature parameter $\beta_N$ in polymers. We remark that the papers \cite{BZ, Dolgo1, Dolgo2} could consider temporally correlated kernels, but spatially they were independent, which is somewhat opposite to what we consider here. We may pursue this in future work using cumulant-based techniques similar to \cite{Par+}.

\textbf{6. Multi-point convergence of the quenched tail field.} Although we have chosen to study the \textit{quenched density field} \eqref{hn} in this paper, there is another field that is also very interesting and relevant to the extreme value theory. Specifically when $d=1$ one can study the \textit{field of quenched tail probabilities} given by $(t,x) \mapsto P^\omega_\nu ( R(Nt) > x) .$ We studied and proved some rather limited KPZ convergence results about this field in \cite{DDP23, DDP+}, for a very particular regime of location values $x$ that scale in a precise way with $N$. In fact we showed that in the extremal regimes when $d=1$, the \textit{density field agrees with the tail field} up to some vanishing error term. In higher dimensions $d\ge 2$, there is no unique way to define a generalization of this quenched tail field, but one could instead consider several variants: the \textit{tails across a plane} e.g. $(t,x)\mapsto P^\omega_\nu ( R(Nt) \bullet v >x)$), the \textit{tails defined by the cdf} e.g. $(t,x_1,...,x_d)\mapsto P^\omega_\nu(   R^1(Nt) >x_1, ...,R^d(Nt) >x_d)$, and the \textit{tails across a circle} e.g. $(t,x)\mapsto P^\omega_\nu ( |R(Nt)| >x)$. This latter notion has recently been studied numerically in \cite{ark2025universalfluctuationstailprobability} using  computer simulations. It would be very interesting to explore various scaling limits of these tail fields, and see if they more-or-less agree with the density limit theorems proved in this paper, or if they reveal some genuinely new behaviour in $d\ge 2$. 

\textbf{7. Extrema of sticky diffusions.} In the extremal case of $|  \varsigma_N| \gg  N^{-1/2}$, our results have a dual interpretation in terms of the extrema of a large number of independent random walkers in a random environment, which is really the \textit{physical motivation} for studying this problem \cite{hass23, hass2023b}. When $d=p=1$, it is known from \cite[Theorem 1.8]{DDP23} or \cite[Theorem 1.2]{DDP+} that for certain specific models, the maximum of $N$ of these RWRE particles at time scales of order $(\log N)^2$ fluctuates (as $N\to\infty)$ like a randomly shifted Gumbel distribution, where the random shift is distributed as the time-one solution of the (1+1)-dimensional KPZ equation (independent of the Gumbel). Moreover, $(\log N)^2$ is the \textit{smallest time scale} where one can observe important information about the environment as captured by the coefficient $\gamma_{\mathrm{ext}}^2$, which is equivalent to the fact that in $d=1$, $N^{3/4}$ is the \textit{smallest location scale} where one observes $O(1)$ fluctuations of the density as described by the third bullet point in Theorem \ref{main1} (i.e., $\mathscr B_N$ can be taken to be 1). The content of Theorem \ref{main2} suggests that the smallest such timescale in $d=2$ is $(\log N) \cdot \log\log N$ as opposed to $(\log N)^2$. This is because the critical location scaling to observe $\mathscr B_N=1$ has changed from $N^{3/4}$ to $N/\sqrt{
\log N}$. Likewise Theorem \ref{main3} says that in $d=3$, the smallest such timescale should be $\log N.$ One open problem is to make this idea rigorous and calculate the distribution of the random shifts in higher dimensions at these critical timescales. There are numerous open questions related to obtaining a more complete picture in this area, see the physics numerics papers \cite{hass23,ldb,hass2023b, hass2024extreme} and the more theoretical ones \cite{bld, lda}. 

\textbf{8. Improving the topology of convergence to obtain pointwise statistics.} In all of our results, there is always some spatial averaging, that is, the convergence is always in $C([0,T], \mathcal S'(\mathbb R^d))$ where \textit{in the spatial variable} one needs to integrate against a smooth function to observe the fluctuations as described in the main results above. Since the effect of these test functions is to average out the behavior of the density over a large number of microscopic spatial locations---roughly $O(N^{1/2})$ of them---this leaves open the question of what exactly happens at the \textit{pointwise} level without the spatial averaging, even in $d=1$. This is not merely a technical obstacle, as we do expect that there are non-trivial \textit{model-dependent local fluctuations} that appear. In \cite{DDP+} we at least were able to prove the partial result that the convergence cannot be improved to a space of continuous functions. Based on conversations with B. Virag and Y. Gu, we believe that the continuity can be restored if one were to quotient the field $\mathfrak H^N$ from \eqref{hn} by the \textit{local fluctuation field}, that is, the so-called environment process. Proving this seems to be a technical challenge, and it may require some adaptation of the cluster expansion tricks from the early papers in the field \cite{BMP1,BMP2, BMP3}. We leave this to future work.

\subsection{Previously known fluctuations results.} 

In our main results (Theorems \ref{main1}, \ref{main2}, and \ref{main3}), there are several different choices that we are allowed to make in terms of the model parameters: (1) the choice of the dimension $d$ and the location scaling $\varsigma_N \in \mathbb R^d$, (2) the choice of initial data $\mathfrak H_0^N$ for the density profile, and (3) the microscopic details of the model as outlined by Assumption \ref{a1}, and in particular the value of $p\in\mathbb N$. This large amount of freedom is the main novelty of the present work over existing results, as we now explain. 

\textbf{1. Bulk fluctuation results.} In the particular case $p=1, $ $  \varsigma_N=0$ (bulk regime), and Dirac or smooth initial data, a few of our fluctuation results are already known, though many of these results are proved in a weaker topology and for a much more restrictive class of microscopic models. \cite{BMP2} assumes an additional ``small noise assumption" and proves a version of the first bullet point of Theorem \ref{main1}, using a sort of cluster expansion technique. This is further studied by the same authors in \cite{BMP1, BMP3, BMP4, BMP5}. Later work of \cite{timo2, yu,timo} showed more general Gaussian fluctuation results using the \emph{environment seen from particle}. Finally, as mentioned in Remark \ref{p=1}, recent work of \cite{Kot} used martingale arguments to prove bulk convergence of the specific model from Example \ref{diffrm} in arbitrary dimension and in a strong topology.

\textbf{2. Extremal fluctuation results.} For the extremal regime, works of \cite{dom, gu} showed KPZ equation limits under a certain \emph{weak noise scaling} which is different from what we do here.

The papers \cite{ldt, ldt2} conjectured that at the critical scaling, one-dimensional diffusions in random media should exhibit fluctuations given by the KPZ equation. In \cite{bld}, this conjecture was proven at the level of moment convergence for two exactly solvable models: sticky Brownian motion with a uniform characteristic measure and the \emph{Beta random walk in random environment}. In \cite{DDP23} we studied the KPZ equation fluctuations of sticky Brownian motions, a stochastic flow of kernels that falls under the framework of Assumption \ref{a1} if we discretize it in time. We considered sticky Brownian motion with any characteristic measure $\nu$ and we found that $\gamma_{\mathrm{ext}}^2 = 1/(2\nu[0,1])$ in this case. In \cite{DDP23}, we proved the KPZ equation fluctuations for the one-dimensional nearest-neighbor random walk in random environment model introduced in Example \ref{ex:1}, in which case we found that $\gamma_{\mathrm{ext}}^2 = \frac{8\sigma^2}{1-4\sigma^2}$ with $\sigma^2$ being the variance of the random $[0,1]$-valued weights used to define the random transition probabilities. In \cite{Par24}, we proved KPZ equation fluctuations for all models satisfying Assumption \ref{a1}, but we looked specifically in the critical regime $\varsigma_N=N^{-1/4p}$. Recently another work of \cite{bc25} studied the critical regime for the KMP model of energy exchange and showed KPZ equation fluctutations.

Going to the \emph{supercritical} regime, the works of \cite{bc, mark} proved Tracy-Widom limits for the aforementioned exactly solvable models, the Beta RWRE and sticky Brownian motion with a uniform characteristic measure, respectively.

\textbf{3. Comparison with \cite{timo2} and related works.} Next we compare our results with the results of  \cite{timo2} (see also \cite{timo}), where they study the fluctuations of the \emph{quenched mean process} for one-dimensional random walks in random environments in the bulk regime and show that they are given by an additive stochastic heat equation. We now discuss how to go from Theorem \ref{main1} which is a result about the quenched density to their result about the quenched mean.

In $d=p=1$ consider the SPDE \eqref{she3} with $\mathfrak H_0 = \delta_a$ with $a\in\mathbb R$, thus we obtain a three-parameter random field given by the solution $$\partial_t \mathcal U(t,x;a) = \frac12 \partial_x^2 \mathcal U(t,x,a) + \gamma_{\mathrm{ext}} \partial_x \big( G(t,x+a) \xi(t,x)\big),\;\;\;\;\;\;\;\;\; \mathcal U(0,x;a)\equiv 0.$$
By Theorem \ref{main5}, this is exactly the field one would obtain if one varied the starting point along with the diffusive scaling, so that all particles start at $N^{1/2}a$ rather than at 0. By that theorem, the scaling limits are coupled together in the natural way for distinct values of $a$, so that the driving noise $\xi$ is the same for all $a$.
Consider the random process of quenched means for this scaling limit: $$\mathcal V(t,x):= \int_\mathbb R y\cdot \mathcal U(t, y;x ) dy.$$ By writing out the Duhamel formula, one may show that $\mathcal V$ solves the Edwards-Wilkinson SPDE for a different noise $\tilde \xi$
: $$\partial_t \mathcal V(t, x) = \frac12\partial_x^2 \mathcal V(t, x) + \gamma_{\mathrm{ext}} \tilde \xi(t, x),$$ where $\gamma_{\mathrm{ext}}$ is the same constant as in Theorem \ref{main1} and $(t,x) \in \mathbb R_+\times \mathbb R$. This is consistent with the results of \cite{timo2} because formally if one commutes the limit with the quenched mean operation, then it is precisely \cite[Theorem 3.2]{timo2}. This operation may be shown rigorously by using the continuous mapping theorem and the relevant tightness and tail bounds, details of which we will not pursue here. Our fluctuation results above make it clear that this is something rather specific to the bulk regime, and would not be expected to hold elsewhere.

\textbf{4. The noise coefficient.} 
The coefficient $\gamma_{\mathrm{ext}}^2$ appearing in our main results has appeared in various guises in all of the previous works mentioned above, but it is not always defined in a way that is easy to match up with the definition given in this paper. For example, in \cite{timo2}, the coefficient $\gamma_{\mathrm{ext}}^2$ is written in terms of the potential kernels of the associated random walks. See Remark \ref{rk:comparison} for a detailed comparison of the coefficient of the EW equation obtained in \cite{timo2} with the coefficients obtained in the present work (and thus also those obtained in \cite{DDP23, DDP+, Par24, hass2024extreme, hass2025universal}). 
\\
\\
\textbf{Outline.} In Section \ref{sec:2}, we introduce a Markov chain associated to the microscopic model, the so-called $k$-point motion. We then show how to interpret the constants $C_{N,t,x}$ as ``Girsanov tilts" for these Markov chains, which possess the important trait of preserving the Markov property. In Section \ref{sec:3}, we prove that the field $\mathfrak H^N$ from \eqref{hn} satisfies a discrete stochastic PDE driven by a martingale noise. In Section \ref{sec:4}, we introduce a formalism called ``SRI chains" and state strong limit theorems for such chains. In Section \ref{sec:5} we prove various estimates and limiting formulas for the quadratic variation of the martingales defined in Section \ref{sec:3}. In Section \ref{sec:6} we prove  tightness and identify  the limit points using the martingale problem characterization. The appendices contain the proofs of the results for SRI chains stated in Section \ref{sec:4}. 
\\
\\
\textbf{Acknowledgements.} We thank Jacob Hass for helping us to simplify some of the physical constants. We also thank Ivan Corwin, Alex Dunlap, Simon Gabriel, Yu Gu, Timothy Halpin-Healy, Jonathon Peterson, and Ran Tao for helpful discussions about the context. HD’s research was supported by the NSF Graduate Research Fellowship under Grant No. DGE-2036197, Ivan Corwin’s NSF grants DMS-1811143, DMS-1937254, and DMS-2246576, and the W.M. Keck Foundation Science and Engineering Grant on “Extreme diffusion”. This material is partly based upon work supported by the National Science Foundation under Grant No. DMS-2424139, while HD was in residence at the Simons Laufer Mathematical Sciences Institute in Berkeley, California, during the Fall 2025 semester. SP acknowledges support by the NSF Mathematical Sciences Postdoctoral Research Fellowship (DMS-2401884).

\section{The Girsanov transform and the tilted $k$-point Markov chains} \label{sec:2}
In this section, we introduce and study the $k$-point motion of the stochastic flow introduced in Assumption \ref{a1}, and we prove various properties about it under certain exponential Girsanov transforms that are relevant for the proof of Theorem \ref{main2}.

\begin{defn}[$k$-point motion] \label{kmotion}
We define the path measure $\mathbf P_{RW^{(k)}}$ on the canonical space $(I^k)^{\mathbb Z_{\ge 0}}$ to be the \textit{annealed} law of $k$ independent walks (all started from the origin) sampled from kernels $K_n$ satisfying Assumption \ref{a1}. The Markov chain whose law is given by the canonical process $\mathbf R= (\mathbf R_r)_{r\ge 0}$ on $I^{\mathbb Z_{\ge 0}}$ under the measure $\mathbf P_{RW^{(k)}}$ is called the \textbf{$k$-point motion}.
\end{defn}

We remark that the 
$k$-point motion is a Markov chain on $I^k$ whose one-step transition probabilities are precisely $\boldsymbol p^{(k)}(\mathbf x,A) := \int_A \mathbb E[ K_1(   x_1,\mathrm d   y_1)\cdots K_1(   x_k,\mathrm d    y_k)],$ which was also discussed in \eqref{kptkernel}. The most basic property of the family of $k$-point motions is that it is \textit{projective}: the temporal evolution of any deterministic subset consisting of $\ell<k$ coordinates behaves as the $\ell$-point motion, which is immediate from the definition. In particular, each coordinate $R^j$ of the $k$-point motion is marginally distributed as a random walk on $I$ (i.e., has independent increments) with increment distribution $\mu$ as defined in Assumption \ref{a1}\eqref{a22}. 
However, the marginal law of a pair $(   R^i,   R^j)$ of distinct coordinates will not be as simple, for instance, it may not have independent increments. Nonetheless, one can say the following.

\begin{lem}\label{diff0}
    For any $1\le i,j\le k$ with $i\neq j$, the process $   R^i-   R^j$ under $\mathbf P_{RW^{(k)}}$ is a Markov process with transition kernel $\pdif$ as defined in Assumption \ref{a1}\eqref{a16}.
\end{lem}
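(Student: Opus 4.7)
\smallskip

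\textbf{Plan.} The proof is a short three-step unwinding of definitions; the only ingredients are the projective property of the $k$-point motion and the translation invariance of the kernel $K_1$ from Assumption \ref{a1} \eqref{a12}.

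\emph{Step 1 (projective reduction to the 2-point motion).} By the projective property noted just before the lemma statement, the marginal law of the pair $(R^i,R^j)$ under $\mathbf P_{RW^{(k)}}$ is precisely that of the 2-point motion, i.e.\ a Markov chain on $I^2$ with one-step kernel
\[
\boldsymbol p^{(2)}\big((x_i,x_j),(\dr y_i,\dr y_j)\big) \;=\; \mathbb E\big[K_1(x_i,\dr y_i)\,K_1(x_j,\dr y_j)\big].
\]
Since the full $k$-point motion is Markov with respect to its own filtration $\mathcal F_r=\sigma(\mathbf R_0,\ldots,\mathbf R_r)$, for any Borel $A\subset I$ we have
\[
\mathbb P\big(R^i_{r+1}-R^j_{r+1}\in A\,\big|\,\mathcal F_r\big) \;=\; \mathbb P\big(R^i_{r+1}-R^j_{r+1}\in A\,\big|\,R^i_r,R^j_r\big).
\]
Thus it suffices to show that the right-hand side depends on $(R^i_r,R^j_r)$ only through the difference $R^i_r-R^j_r$.

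\emph{Step 2 (translation invariance collapses the dependence to the difference).} Apply the translation invariance $K_1 \stackrel{d}{=} \tau_{-x_j}K_1$ at the level of the random kernel itself (this is crucial: it is an identity in law for the whole random object, not merely for its annealed marginal $\mu$). Unpacking $(\tau_{-x_j}K_1)(x,A)=K_1(x-x_j,A-x_j)$ gives the distributional identity of joint measures
\[
\mathbb E\big[K_1(x_i,\dr y_i)\,K_1(x_j,\dr y_j)\big] \;=\; \mathbb E\big[K_1(x_i-x_j,\dr z_i)\,K_1(0,\dr z_j)\big]
\]
under the substitution $z_\bullet = y_\bullet-x_j$. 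Since $y_i-y_j = z_i-z_j$, the pushforward of $\boldsymbol p^{(2)}((x_i,x_j),\cdot)$ under the map $(y_i,y_j)\mapsto y_i-y_j$ is a function of $x_i-x_j$ alone. Combined with Step 1, this yields the Markov property of $(R^i_r-R^j_r)_{r\ge 0}$ together with an explicit formula for its transition kernel.

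\emph{Step 3 (identification of the kernel as $\pdif$).} The kernel produced in Step 2 is
\[
z \;\longmapsto\; \int_{I^2} \mathbf 1_{\{z_i-z_j\in A\}}\,\mathbb E\big[K_1(z,\dr z_i)\,K_1(0,\dr z_j)\big],
\]
which, after a relabeling of the two dummy variables (and the corresponding convention-level choice of orientation of the difference, which is immaterial for the Markov property and the kernel up to notation), matches the formula defining $\pdif$ in Assumption \ref{a1} \eqref{a16}. There is no real obstacle in this argument: the only thing to be careful about is that translation invariance must be invoked at the level of the joint law of $\big(K_1(x_i,\cdot),K_1(x_j,\cdot)\big)$, not merely at the level of marginals, which is why the assumption was stated for $K_1$ itself rather than for $\mu$.
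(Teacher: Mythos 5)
Your proof is correct and is exactly the ``straightforward from the definitions'' argument the paper has in mind: projectivity to the two-point motion, then translation invariance of the random kernel (you are right to stress that this must hold at the level of $K_1$ itself, not merely of the annealed marginal $\mu$) to collapse the dependence to the difference $x_i-x_j$. Your observation about the orientation convention is also well taken: the kernel you derive is $\pdif$ with the roles of $K_1(0,\cdot)$ and $K_1(x,\cdot)$ transposed relative to the formula printed in Assumption~\ref{a1}\eqref{a16}; that transposition is almost certainly a typo in the paper (compare the analogous, correctly ordered formula for $\boldsymbol q_{\beta}^{\mathrm{diff}}$ in Proposition~\ref{diffproc}) and is immaterial to the Markov property that the lemma asserts.
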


The proof is straightforward from the definitions. 

\begin{defn}\label{cumu}
    Let $m,k\in \mathbb N$ and consider the multi-index $\mathbf j = (\vec j_1,...,\vec j_m) \in \big( \{1,...,k\}\times \{1,...,d\}\big)^m$. For $\mathbf x, \mathbf y\in I^k$ define the joint cumulants $$\kappa^{(m;k)}(\mathbf j;\mathbf x) := \frac{\partial^m}{\partial\beta_{\vec j_1} \cdots \partial \beta_{\vec j_m}} \log \int_{I^k} \exp \bigg( \sum_{i=1}^k    \beta_i \bullet (   y_i-   x_i) \bigg) \boldsymbol p^{(k)} (\mathbf x, \mathrm d \mathbf y)\;\;\bigg|_{( \vec   \beta_1,...,  \vec  \beta_k)=(\vec    0,...,  \vec 0)},$$ where $  \vec \beta_i = (\beta_{(i,1)},...,\beta_{(i,d)}),$ and the vertical bar denotes evaluation of the function at the origin.
\end{defn}

When taking the above partial derivatives, we remark for clarity that the indices $\vec j_i$ may be repeated as many times as desired.
\begin{lem}\label{prev}
    Suppose that $m,k\geq 2$ and $\mathbf j=(\vec j_1,...,\vec j_m) \in \big( \{1,...,k\}\times \{1,...,d\}\big)^m$ such that all $\vec j_i$ are not the same index in the first coordinate, i.e., $(\vec j_1,...,\vec j_m)$ is not of the form $((i,v_1),(i,v_2),...,(i,v_m))$ for some $i=1,...,k$. Let $p\in \mathbb N$ be as in Assumption \ref{a1} Item \eqref{a23}. 
    \begin{enumerate}\item \label{eq:prev1} If $m< 2p$ then $\kappa^{(m;k)}(\mathbf j;\mathbf x)=0$.
    \item \label{eq:prev2} If $m= 2p$ then write $\vec j_r=(i_r,v_r)$ for each coordinate of $\mathbf j$. Then $\kappa^{(m;k)}(\mathbf j;\mathbf x)=0$, unless the set of indices $\{i_1,...,i_m\}$ is of the form $\{a,b\}$, where exactly $p$ of the indices $i_r$ are equal to $a$, and thus the other $p$ indices are equal to $b$.
    \item \label{eq:prev3} If $m\ge 2p$, then there exists $C(m)>0$ such that $$|\kappa^{(m;k)}(\mathbf j;\mathbf x)| \leq C(m)\cdot \Fd\big( \min_{1\leq i <j \leq k} |   x_i-   x_j|\big). $$
    \end{enumerate}
\end{lem}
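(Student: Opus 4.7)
The plan is to study the logarithm of the joint MGF
\begin{equation*}
\Phi(\vec\beta_1,\ldots,\vec\beta_k) := \log \int_{I^k} \exp\Big(\sum_{i=1}^k \vec\beta_i \bullet (y_i-x_i)\Big)\, \boldsymbol p^{(k)}(\mathbf x,\mathrm d\mathbf y),
\end{equation*}
whose partial derivatives at the origin are the cumulants $\kappa^{(m;k)}(\mathbf j;\mathbf x)$. Using \eqref{kptkernel} I would rewrite $e^\Phi = \mathbb E[\prod_i M_{x_i}(\vec\beta_i)]$ where $M_{x_i}(\vec\beta) := \int e^{\vec\beta\bullet(y-x_i)} K_1(x_i,\mathrm d y)$ is the random MGF of the single-site kernel recentered at $x_i$. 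By spatial stationarity (Assumption \ref{a1}\eqref{a12}), $\mathbb E[M_{x_i}(\vec\beta)] = M(\vec\beta)$; setting $\tilde M_{x_i} := M_{x_i}/M$ I would then split
\begin{equation*}
\Phi(\vec\beta) = \sum_{i=1}^k \log M(\vec\beta_i) + \log F(\vec\beta), \qquad F(\vec\beta) := \mathbb E\Big[\prod_i \tilde M_{x_i}(\vec\beta_i)\Big].
\end{equation*}
The first sum is additive over walker indices and so is annihilated by any mixed partial derivative across distinct walkers. Under the lemma's hypothesis this reduces everything to $\kappa^{(m;k)}(\mathbf j;\mathbf x) = \partial_{\mathbf j}\log F|_{\vec\beta=0}$.

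The symmetry assumption \eqref{a23} says the moments of $K_1(x,\cdot)-x$ of order $\leq p-1$ are deterministic and equal those of $\mu$ almost surely, so $M_{x_i}(\vec\beta) - M(\vec\beta)$ vanishes to order $p$ at the origin. Consequently $Z_i(\vec\beta_i) := \tilde M_{x_i}(\vec\beta_i) - 1$ satisfies $Z_i(0) = 0$, $\mathbb E[Z_i(\vec\beta_i)] = 0$, and is $O(|\vec\beta_i|^p)$. Expanding
\begin{equation*}
F(\vec\beta) - 1 = \sum_{S \subseteq \{1,\ldots,k\},\, |S| \geq 2} \mathbb E\Big[\prod_{i\in S} Z_i(\vec\beta_i)\Big]
\end{equation*}
(singletons die because $\mathbb E[Z_i]=0$), each surviving summand vanishes to order at least $p\cdot|S| \geq 2p$ in $\vec\beta$. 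Item \eqref{eq:prev1} then follows from $\log F = (F-1) - (F-1)^2/2 + \cdots$, since $(F-1)^n = O(|\vec\beta|^{2pn})$ makes $\log F = O(|\vec\beta|^{2p})$, killing $\partial_{\mathbf j}\log F|_0$ for $m < 2p$. For Item \eqref{eq:prev2} ($m=2p$) only the leading piece $\partial_{\mathbf j}(F-1)|_0$ can survive; further, only $|S|=2$ summands contribute (larger $|S|$ yields order $\geq 3p$), and within $\mathbb E[Z_a(\vec\beta_a) Z_b(\vec\beta_b)]$ a balance forces exactly $p$ derivatives on each of $Z_a, Z_b$, yielding the claimed structural constraint on $\mathbf j$.

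For Item \eqref{eq:prev3}, the $\Fd$ bound, the idea is to combine the decay of correlations \eqref{tvb'} (applied to the singleton partition $\pi = \{\{1\},\ldots,\{k\}\}$) with complex analyticity of $F$. I would write
\begin{equation*}
F(\vec\beta) - 1 = \frac{1}{\prod_i M(\vec\beta_i)} \int \prod_i e^{\vec\beta_i\bullet(y_i-x_i)} \, \Big[\mathbb E\big[\textstyle\bigotimes_i K_1(x_i,\cdot)\big] - \bigotimes_i \mu(\cdot - x_i)\Big](\mathrm d \mathbf y),
\end{equation*}
and pair the TV bound \eqref{tvb'} with the exponential test function through a ball-plus-tail decomposition, using the exponential moment bound from Assumption \ref{a1}\eqref{a22} on both measures being compared to absorb the unboundedness of the integrand. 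This yields $|F(\vec\beta)-1| \leq C(k,\epsilon_0)\, \Fd(\min_{i\neq j} |x_i - x_j|)$ uniformly on a fixed complex polydisk $|\vec\beta| < \epsilon_0$ on which $M$ stays bounded away from zero (hence $F$ is genuinely analytic there). Cauchy's estimate converts this function bound into the same bound on $|\partial_{\mathbf j} F|_0|$; the moment-to-cumulant polynomial identity then transfers it to $|\partial_{\mathbf j} \log F|_0|$ in the regime where the separation is large enough that $|F-1|<1/2$. For smaller separations, where $\Fd(\min_{i\ne j}|x_i-x_j|)$ is already bounded below by a positive constant, the bound reduces to the universal a priori estimate on cumulants coming from exponential moments.

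The main obstacle will be the pairing step: the TV bound naturally couples with bounded test functions, whereas our integrand $e^{\sum \vec\beta_i \bullet (y_i-x_i)}$ is unbounded, so a naive pairing is useless. Resolving this quantitatively---extracting the $\Fd$ factor while controlling exponential tails---is where Assumption \ref{a1}\eqref{a22} is truly needed, and this is the only step beyond routine order-of-vanishing bookkeeping. All remaining work is combinatorial accounting of derivative orders in the Taylor expansion of $\log F$.
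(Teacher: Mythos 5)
Your approach is correct and it takes a genuinely different route from the paper's. The paper works directly at the level of the moment-to-cumulant polynomial: mixed joint cumulants vanish identically if the coordinates $y_i-x_i$ are independent, and by Assumption \ref{a1}\eqref{a23} the joint moments of $\boldsymbol p^{(k)}(\mathbf x,\cdot)$ coincide exactly with those of $\mu^{\otimes k}$ up to order $2p-1$ (and up to order $2p$ except for the $p$-$p$ splits), so Items (1) and (2) follow in one line; Item (3) then comes from ``the moments are exponentially close'' via \eqref{a24}. You instead factor the log-MGF as $\Phi = \sum_i \log M(\vec\beta_i) + \log F$, discard the separable sum under the mixed derivative, and read the order of vanishing of $F-1$ off the inclusion--exclusion expansion $F-1=\sum_{|S|\ge 2}\mathbb E\big[\prod_{i\in S} Z_i\big]$, using $\mathbb E[Z_i]=0$ and $Z_i=O(|\vec\beta_i|^p)$. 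This makes the source of the $2p$ threshold and the $p$-$p$ constraint in Item (2) structurally transparent --- two factors of $Z$, each contributing at least order $p$, and $Z_i(0)=0$ forcing derivatives to fall on both --- at the cost of somewhat more algebra than the paper's one-line observation. For Item (3), your analyticity-plus-Cauchy-estimate route carries more machinery than the paper invokes, but both proofs ultimately hinge on the same thing: pairing the TV bound from \eqref{tvb'} with an unbounded exponential/polynomial test function. A small caveat there: this pairing does not deliver $\Fd$ to the first power as you (and the lemma statement) assert. A Cauchy--Schwarz split as in Lemma \ref{tbb} yields $\Fd^{1/2}$, and your ball-plus-tail optimization yields $\Fd^{\theta}$ for some $\theta\in(0,1)$ governed by the relative exponential rates, not $\theta=1$. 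This discrepancy is cosmetic --- $\Fd$ is bounded, and downstream (e.g.\ in Proposition \ref{dbound}) only the half-power is actually used --- but if you want the statement to be literally what the argument proves you should replace $\Fd$ by $\Fd^{1/2}$ in Item (3), as the paper itself implicitly does in its applications.
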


Item \eqref{eq:prev3} will only be used when $m=2p$ or $m=2p+1$, since we will usually truncate our Taylor expansions at that point.

\begin{proof}

    For any multi-index $\mathbf j$, note that $\kappa^{(m;k)}(\mathbf j,\mathbf x)$ is actually just a finite linear combination of products of quantities of the form $\int_{I^k} \prod_{j=1}^k (   y_j-   x_j)^{   r_j} \boldsymbol p^{(k)} \big( \mathbf x, d\mathbf y\big), $ where $0\leq \#(   r_1)+...+\#(   r_k)\leq m$. Moreover, it is a very special linear combination with the following property: if the random variables $   y_i-   x_i$ $(1\le i \le k)$ were independent under $\boldsymbol p^{(k)}(\mathbf x,d\mathbf y)$ then in fact this linear combination would vanish identically, simply by the definition of the cumulants and the condition that all indices are not the same (i.e., joint cumulants of independent variables are always zero).
    
    But for $m<2p$, the moments $\int_{I^k} \prod_{j=1}^k (   y_j-   x_j)^{   r_j} \boldsymbol p^{(k)} \big( \mathbf x, d\mathbf y\big), $ agree precisely with those of the independent ones given by $\int_{I^k} \prod_{j=1}^k    u_j^{   r_j}\mu^{\otimes k} (\mathrm d\mathbf u),$ by the deterministicity assumption in Item \eqref{a23} of Assumption \ref{a1}. Therefore we immediately conclude Item (1). The proof of Item (2) is similar, noting that the moments agree precisely with those of the independent family unless the given condition on the indices is satisfied.

    For $m\ge 2p$, the $m^{th}$ order joint moments of $y_i-x_i$ under $\boldsymbol p^{(k)}(\mathbf x,\dr\mathbf y)$ will not necessarily agree precisely with the independent version under $\mu^{\otimes k}$, hence they may not vanish outright. However the moment bound in Item \eqref{a24} of Assumption \ref{a1} guarantees that all of the moments are exponentially close to those of the independent ones, therefore we can immediately conclude Item (3) where $C$ might be larger than the constant in Item \eqref{a24}.
\end{proof}

Note that if $(X_r)_{r\ge 0}$ is any sequence of random variables with finite exponential moments, defined on some probability space $(\Omega,\mathcal F,\mathbf P)$ and adapted to a filtration $(\mathcal F_r)_{r\ge 0}$ on this space, then the exponentiated process \begin{equation}
\label{aft}\exp\bigg( X_r-X_0 -\sum_{s=1}^{r} \log \mathbf E[e^{X_{s}-X_{s-1}}|\mathcal F_{s-1}]\bigg)\end{equation} is a martingale in the variable $r\in \mathbb Z_{\ge 0}$, with respect to the same filtration. Henceforth our probability space will always be $\Omega =(I^k)^{\mathbb Z_{\ge 0}},$ equipped with its canonical filtration.  For $|   \beta|\le z_0/k$ with $z_0$ defined in Assumption \ref{a1} Item \eqref{a22}, if we apply the exponential transform \eqref{aft} to the process $X:=   \beta\bullet (   R^1+...+   R^k)$, we see that if all $R^j$ are started from the origin, then
 \begin{align}\label{m_n}\mathpzc M_r^{\beta}(\mathbf R)&:=\exp\bigg(    \beta\bullet \sum_{j=1}^k    R^j_r - \sum_{\ell =0}^{r-1} f_{\ell}^{   \beta,k}\bigg)
\end{align}
is a $\mathbf P_{RW^{(k)}}$-martingale for the $k$-point motion where $f_\ell^{   \beta,k} := f^{   \beta,k}(   R_\ell^1,\ldots,   R_\ell^k)$, and the latter function $f^{   \beta,k}$ is given by
\begin{align*}f^{   \beta,k}(   x^1,\ldots,    x^k) &:= \log\mathbf E_{(   x^1,\ldots,   x^k)}^{RW^{(k)}}[e^{   \beta\bullet \sum_{j=1}^k(   R^j_1-   x^j)}] = \log \int_{I^k}e^{  \beta \bullet \sum_{i=1}^k (   y_i-   x_i)} \boldsymbol p^{(k)} (\mathbf x,d\mathbf y).
\end{align*}
Here $\mathbf E_{(   x^1,\ldots,   x^k)}^{RW^{(k)}}$ denotes expectation with respect to the $k$-point motion when started from $(   x^1,\ldots,   x^k).$

We then use the martingale \eqref{m_n} to define the following measure.

\begin{defn}\label{q}
    For $|   \beta|\le z_0/k$ with $z_0$ defined in Assumption \ref{a1}\eqref{a22}, define the tilted path measure $\mathbf P^{(   \beta,k)}$ on the canonical space $(I^k)^{\mathbb Z_{\ge 0}}$ to be given by $$\frac{\mathrm d\mathbf P^{(   \beta,k)}}{\mathrm d\mathbf P_{RW^{(k)}}}\bigg|_{\mathcal F_r}(\mathbf R) =  \mathpzc M_r^{\beta}(\mathbf R),$$ where $\mathcal F_r$ is the $\sigma$-algebra on $(I^k)^{\mathbb Z_{\ge 0}}$ generated by the first $r$ coordinate maps, and $\mathpzc M_r(\mathbf R)$ is the martingale given by \eqref{m_n}. We will denote by $\mathbf E^{(   \beta,k)}$ the expectation operator associated to $\mathbf P^{(   \beta,k)}.$
\end{defn}

An obvious but important fact is that $\mathbf P^{(   0,k)}=\mathbf P_{RW^{(k)}}$, which will implicitly be used many times below.
\subsection{Studying the tilted measures $\mathbf P^{(   \beta,k)}$}

\begin{prop}\label{mkov}
    Take $|   \beta|\le z_0/k$ where $z_0$ is as in Assumption \ref{a1}\eqref{a22}, and let $k\in \mathbb N$. The canonical process $(\mathbf R_r)_{r\ge 0}$ on $(I^k)^{\mathbb Z_{\ge 0}}$ under the tilted measure $\mathbf P^{(   \beta,k)}$ of Definition \ref{q} is still a Markov chain on $I^k$, with one-step transition law given by \begin{equation}\label{qk}\boldsymbol q^{(k)}_{  \beta} (\mathbf x,\mathrm d\mathbf y) := \frac{e^{   \beta \bullet \sum_{i=1}^k(   y_i-   x_i)} \boldsymbol p^{(k)}(\mathbf x,\mathrm d\mathbf y)}{\int_{I^k}e^{   \beta\bullet  \sum_{i=1}^k(   a_i-   x_i)} \boldsymbol p^{(k)}(\mathbf x,\mathrm d\mathbf a)}.\end{equation}
\end{prop}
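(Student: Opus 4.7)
The plan is to argue directly from the definition of $\mathbf P^{(\beta,k)}$ via the Radon-Nikodym derivative $\mathpzc M_r^{\beta}$ and use the Markov property of the $k$-point motion under $\mathbf P_{RW^{(k)}}$. Concretely, to verify the Markov property together with the identification \eqref{qk}, it suffices to show that for every bounded measurable $\phi:I^k\to\mathbb R$, every $r\ge 0$, and every bounded $\mathcal F_r$-measurable $G$,
\begin{equation*}
\mathbf E^{(\beta,k)}\bigl[G\cdot \phi(\mathbf R_{r+1})\bigr] = \mathbf E^{(\beta,k)}\Bigl[G\cdot \int_{I^k}\phi(\mathbf y)\,\boldsymbol q^{(k)}_\beta(\mathbf R_r,\mathrm d\mathbf y)\Bigr].
\end{equation*}

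The key observation is that the martingale $\mathpzc M_r^{\beta}$ from \eqref{m_n} is multiplicative across time steps: from the explicit formula,
\begin{equation*}
\frac{\mathpzc M_{r+1}^{\beta}(\mathbf R)}{\mathpzc M_r^{\beta}(\mathbf R)} = \exp\!\Bigl(\beta\bullet \sum_{j=1}^k (R^j_{r+1}-R^j_r) - f^{\beta,k}(\mathbf R_r)\Bigr),
\end{equation*}
and the function $f^{\beta,k}$ was chosen precisely so that this conditional increment has mean $1$ under $\mathbf P_{RW^{(k)}}$ given $\mathcal F_r$. First, I would rewrite the left-hand side above using the definition of the tilted measure to obtain $\mathbf E_{RW^{(k)}}[G\cdot \phi(\mathbf R_{r+1})\cdot \mathpzc M_{r+1}^{\beta}(\mathbf R)]$, then pull out $\mathpzc M_r^{\beta}$ (which is $\mathcal F_r$-measurable) and condition on $\mathcal F_r$.

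Applying the Markov property of $\mathbf R$ under $\mathbf P_{RW^{(k)}}$ (i.e., the fact that $\boldsymbol p^{(k)}$ is the one-step kernel of the $k$-point motion by Definition \ref{kmotion}), the conditional expectation becomes
\begin{equation*}
\mathbf E_{RW^{(k)}}\!\Bigl[\phi(\mathbf R_{r+1})\,e^{\beta\bullet \sum_j(R^j_{r+1}-R^j_r)}\,\Big|\,\mathcal F_r\Bigr] = \int_{I^k}\phi(\mathbf y)\,e^{\beta\bullet \sum_j(y_j-R^j_r)}\,\boldsymbol p^{(k)}(\mathbf R_r,\mathrm d\mathbf y),
\end{equation*}
and the denominator $e^{f^{\beta,k}(\mathbf R_r)}$ is exactly $\int_{I^k}e^{\beta\bullet \sum_j(a_j-R^j_r)}\boldsymbol p^{(k)}(\mathbf R_r,\mathrm d\mathbf a)$. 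Putting these together and recognizing the ratio as $\int \phi\,\mathrm d\boldsymbol q^{(k)}_\beta(\mathbf R_r,\cdot)$ yields the desired equality, which simultaneously proves the Markov property (since the conditional law depends on $\mathcal F_r$ only through $\mathbf R_r$) and identifies $\boldsymbol q^{(k)}_\beta$ as the transition kernel.

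There is no real obstacle here; the only care needed is the integrability of $e^{\beta\bullet\sum_j(y_j-x_j)}$ against $\boldsymbol p^{(k)}$, which is assured by the moment generating function assumption in Assumption \ref{a1}\eqref{a22} once one notes that $|\beta\bullet \sum_j(y_j-x_j)|\le |\beta|\sum_j|y_j-x_j|$ and $|\beta|\le z_0/k$, so that $f^{\beta,k}$ is finite and $\mathpzc M_r^{\beta}$ is a genuine positive martingale, not merely a local one. The result then extends from cylinder events to all of $\mathcal F_\infty$ by a standard monotone class argument, completing the proof.
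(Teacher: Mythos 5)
Your proof is correct and follows essentially the same route as the paper: both arguments rely on the abstract Bayes formula for the conditional law under the tilted measure, exploit the one-step multiplicative factor $\mathpzc M_{r+1}^{\beta}/\mathpzc M_r^{\beta}$, and invoke the Markov property of $\mathbf P_{RW^{(k)}}$ before recognizing the resulting ratio as $\boldsymbol q^{(k)}_\beta$. The only cosmetic difference is that you verify the defining property of conditional expectation against $\mathcal F_r$-measurable test functions, whereas the paper writes out the conditional probability quotient directly; the integrability remark at the end matches the paper's H\"older-based check.
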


\begin{proof}
    Take any Borel set $A\subset I^k$. By the definition of the measures $\mathbf P^{(   \beta,k)}$, we have \begin{align*}\mathbf P^{(   \beta,k)}(\mathbf R_{r+1} \in A | \mathcal F_r) &= \frac{\mathbf E_{RW^{(k)}} \big[ \ind_A (\mathbf R_{r+1})\mathpzc M^\beta_{r+1}(\mathbf R)|\mathcal F_r \big]}{\mathpzc M^\beta_{r}(\mathbf R)} \\ &=\frac{\mathbf E_{RW^{(k)}} \big[ \ind_A (\mathbf R_{r+1})e^{   \beta \bullet \sum_{i=1}^k (   R^i_{r+1}-   R^i_r) }|\mathcal F_r\big]}{\mathbf E_{RW^{(k)}} \big[ e^{   \beta \bullet \sum_{i=1}^k (   R^i_{r+1}-   R^i_r) }|\mathcal F_r\big]} \\ &= \frac{\int_A e^{   \beta \bullet \sum_{i=1}^k(   y_i-   R^i_r)}\boldsymbol p^{(k)} (\mathbf R_r,\mathrm d\mathbf y) }{\int_{I^k} e^{   \beta \bullet \sum_{i=1}^k(   y_i-   R^i_r)}\boldsymbol p^{(k)} (\mathbf R_r,\mathrm d\mathbf y)} \\ &= \boldsymbol q^{(k)}_{   \beta}(\mathbf R_r,A),
    \end{align*}
    thus proving the claim. In the third line we applied the Markov property of $\mathbf P_{RW^{(k)}}.$ The condition $|   \beta|\leq z_0/k$ ensures that all moment generating functions actually exist, i.e., the integrals are convergent for all $\mathbf x\in I^k$ by e.g. H\"older's inequality which says that $\int_{I^k}e^{   \beta\bullet \sum_{i=1}^k (   a_i-   x_i)} \boldsymbol p^{(k)}(\mathbf x,\mathrm d\mathbf a) \leq \prod_{i=1}^k \big(\int_{I^k}e^{k   \beta \bullet (   a_i-   x_i)} \boldsymbol p^{(k)}(\mathbf x,\mathrm d\mathbf a)\big)^{1/k} = M(k   \beta )$, where $M$ as always denotes the moment generating function of the annealed one-step law as in \eqref{dn}.
\end{proof}

\begin{defn}\label{shfa}
    Define the measure $\mathbf P_\mathbf x^{(   \beta,k)}$ to be the law on the canonical space $(I^k)^{\mathbb Z_{\ge 0}}$ of the Markov chain associated to \eqref{qk} started from $\mathbf x \in I^k.$ We also let $\mathbf E_\mathbf x^{(   \beta,k)}$ be the associated expectation operator. 
\end{defn}

Immediately from Proposition \ref{mkov} we see that the path measures $\mathbf P^{(   \beta,k)}$ from Definition \ref{q} are a special case of the measures $\mathbf P_\mathbf x^{(   \beta,k)}$ with $\mathbf x= (   0,   0,...,   0).$ Thus, the subscript stands for the initial condition, and the superscript denotes the tilting vector $\beta$ and the particle number $k$.

In general the difference of two particles under the tilted processes will no longer be Markov, but we still have the following partial result when the number of particles is limited to $k=2$. 

\begin{prop}[The difference process is still Markov when $k=2$]\label{diffproc}
    In the particular case of $k=2$, the difference $   R^1-   R^2$ of the two coordinate processes is still Markov under $\qdif$, with Markov kernel on $I$ given by $$\boldsymbol q_{   \beta}^{\mathrm{diff}} (x,A):= \int_{I^2} \ind_{\{    y_1-   y_2 \in A\}} \boldsymbol q^{(2)}_{    \beta} \big((x,0), (\dr y_1,\dr y_2)\big).$$
\end{prop}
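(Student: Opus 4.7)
The plan is to reduce this to two ingredients: (i) the fact that the joint tilted kernel $\boldsymbol q^{(2)}_{\beta}$ inherits translational invariance from $\boldsymbol p^{(2)}$, and (ii) the Markov property of the joint chain established in Proposition \ref{mkov}, combined with the tower property.

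First I would verify that $\boldsymbol q^{(2)}_\beta$ is spatially translation invariant in the sense that $\boldsymbol q^{(2)}_\beta((x_1+a, x_2+a),\, \dr(y_1+a, y_2+a)) = \boldsymbol q^{(2)}_\beta((x_1,x_2),\,\dr(y_1,y_2))$ for every $a \in I$. In the numerator of \eqref{qk}, translation of the starting and ending points by $a$ leaves the exponential factor $e^{\beta\bullet\sum_i(y_i-x_i)}$ unchanged, and leaves $\boldsymbol p^{(2)}$ invariant by Assumption \ref{a1}\eqref{a12} applied to $K_1$ in the definition \eqref{kptkernel}. The same applies to the normalizing denominator. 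Hence the joint law, under $\boldsymbol q^{(2)}_\beta((x_1,x_2),\cdot)$, of the increment pair $(y_1-x_1, y_2-x_2)$ depends only on $x_1-x_2$.

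Second, using this invariance I would compute the one-step conditional law of the difference. Setting $z=x_1-x_2$ and changing variables to $u_i=y_i-x_i$, we can write
\begin{align*}
\int_{I^2} \ind_{\{y_1-y_2\in A\}}\, \boldsymbol q^{(2)}_\beta((x_1,x_2),\dr(y_1,y_2))
= \int_{I^2} \ind_{\{z+u_1-u_2 \in A\}}\, \boldsymbol q^{(2)}_\beta((z,0),\dr(u_1+z, u_2)),
\end{align*}
which is a function of $z=x_1-x_2$ alone. Setting this function equal to $\boldsymbol q_\beta^{\mathrm{diff}}(z,A)$ yields precisely the formula claimed in the statement.

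Finally, for the Markov property itself, let $\mathcal G_r$ denote the $\sigma$-algebra generated by $(R^1_s - R^2_s)_{0\le s\le r}$, and let $\mathcal F_r$ be the canonical filtration generated by the full joint process $(R^1_s, R^2_s)_{0\le s\le r}$. Then $\mathcal G_r\subseteq \mathcal F_r$. For any bounded Borel $f$ on $I$, Proposition \ref{mkov} gives
\[
\mathbf E_\mathbf x^{(\beta,2)}\bigl[f(R^1_{r+1}-R^2_{r+1})\bigm|\mathcal F_r\bigr] = \int_{I^2} f(y_1-y_2)\,\boldsymbol q^{(2)}_\beta((R^1_r,R^2_r),\dr(y_1,y_2)),
\]
and by the computation above this is equal to $\int_I f(w)\,\boldsymbol q_\beta^{\mathrm{diff}}(R^1_r-R^2_r,\dr w)$, which is $\mathcal G_r$-measurable. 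Taking conditional expectation with respect to $\mathcal G_r$ and applying the tower property then gives $\mathbf E_\mathbf x^{(\beta,2)}[f(R^1_{r+1}-R^2_{r+1})\mid \mathcal G_r] = \int_I f(w)\,\boldsymbol q_\beta^{\mathrm{diff}}(R^1_r-R^2_r,\dr w)$, establishing the Markov property with the stated transition kernel. No step here is a serious obstacle; the only point that requires care is tracking how translational invariance passes through the Girsanov tilt, which is the conceptual content that makes the statement special to $k=2$ (for $k\ge 3$ the analogous differences do not form a closed Markov system).
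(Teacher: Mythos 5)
Your proof is correct and takes essentially the same route as the paper: both reduce the claim to the translation invariance of the tilted kernel $\boldsymbol q^{(2)}_\beta$ under simultaneous shifts of both coordinates, which follows directly from Assumption~\ref{a1}\eqref{a12}. You are somewhat more explicit than the paper's proof in spelling out the change of variables and the tower-property argument that converts this invariance into the Markov property of the difference, but the underlying idea is identical.
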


\begin{proof}
    It suffices to show that for all $a\in I$ we have $$\int_{I^2} \ind_{\{    y_1-   y_2 \in A\}} \boldsymbol q^{(2)}_{    \beta} \big((x_1,x_2), (\dr y_1,\dr y_2)\big) = \int_{I^2} \ind_{\{    y_1-   y_2 \in A\}}\boldsymbol q^{(2)}_{    \beta} \big((x_1+a,x_2+a), (\dr y_1,\dr y_2)\big).$$ In other words we need to show that for all bounded $f$, the following quantity does not depend on $a \in I$: $$\frac{ \int_{I^2} f(   y_1-   y_2) e^{\beta \bullet \sum_{j=1,2} (   y_j-   x_j-   a) } \boldsymbol p^{(2)} \big((   x_1+   a,   x_2+   a), (\dr    y_1,\dr    y_2)\big)}{\int_{I^2} e^{\beta \bullet \sum_{j=1,2} (   y_j-   x_j-   a) } \boldsymbol p^{(2)} \big((   x_1+   a,   x_2+   a), (\dr    y_1,\dr    y_2)\big)} . $$ But that is clear for $k=2$ from the translation invariance in Assumption \ref{a1}.
\end{proof}

The above proof does not work for larger values of $k$ because one needs to consider translations of $I$ of the form $(a,a,b,c,d,...)$ and the marginal laws no longer project onto each other in the same way. In fact, the statement is no longer true for $k\ge 3.$

\begin{lem}\label{grow}
    Fix $k\in \mathbb N$, and recall $z_0$ from Assumption \ref{a1}. With $\boldsymbol q^{(k)}_{   \beta}$ defined in \eqref{qk}, we have $$\sup_{|   \beta|\leq z_0/(2k)}\; \sup_{\mathbf x\in I^k} \max_{1\le i\le k} \int_{I^k} e^{\frac{z_0}2 |   y_i-   x_i|} \boldsymbol q^{(k)}_{   \beta} (\mathbf x,\mathrm d\mathbf y) <\infty.$$ In particular, we have that $$\sup_{|\beta|\leq z_0/(2k)}\; \sup_{\mathbf x\in I^k} \int_{I^k} e^{\frac{z_0}{2k} \sum_{i=1}^k |   y_i-   x_i|} \boldsymbol q^{(k)}_{   \beta} (\mathbf x,\mathrm d\mathbf y) <\infty.$$
\end{lem}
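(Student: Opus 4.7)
The plan is to directly bound the numerator and denominator in the Radon--Nikodym expression \eqref{qk}. Fixing $i$ and writing
\[
\int_{I^k} e^{\frac{z_0}{2}|y_i-x_i|}\,\boldsymbol q^{(k)}_\beta(\mathbf x,\dr\mathbf y)\;=\;\frac{\int_{I^k} e^{\frac{z_0}{2}|y_i-x_i|}\,e^{\beta\bullet\sum_j(y_j-x_j)}\,\boldsymbol p^{(k)}(\mathbf x,\dr\mathbf y)}{\int_{I^k} e^{\beta\bullet\sum_j(y_j-x_j)}\,\boldsymbol p^{(k)}(\mathbf x,\dr\mathbf y)},
\]
the key structural fact is that by Assumption \ref{a1}\eqref{a12}, the $j$-th marginal of $\boldsymbol p^{(k)}(\mathbf x,\dr\mathbf y)$ in the shifted coordinate $y_j-x_j$ is exactly $\mu$, independently of $\mathbf x$. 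This is the reason every estimate below will be uniform in $\mathbf x \in I^k$.

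\textbf{Numerator upper bound.} I would apply Cauchy--Schwarz to split off the factor $e^{\frac{z_0}{2}|y_i-x_i|}$ from the exponential tilt, giving
\[
\left(\int_{I^k} e^{z_0|y_i-x_i|}\,\boldsymbol p^{(k)}(\mathbf x,\dr\mathbf y)\right)^{\!1/2}\!\cdot\!\left(\int_{I^k} e^{2\beta\bullet\sum_j(y_j-x_j)}\,\boldsymbol p^{(k)}(\mathbf x,\dr\mathbf y)\right)^{\!1/2}\!\!.
\]
The first factor equals $\bigl(\int_I e^{z_0|u|}\mu(\dr u)\bigr)^{1/2}$, which is finite by Assumption \ref{a1}\eqref{a22}. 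For the second factor, applying H\"older's inequality with $k$ equal exponents (as in the proof of Proposition \ref{mkov}) and then the marginal identification gives an upper bound of $M(2k\beta)^{1/2}$; for $|\beta|\le z_0/(2k)$ we have $|2k\beta|\le z_0$, so $M(2k\beta)$ is controlled uniformly by $\int_I e^{z_0|u|}\mu(\dr u)<\infty$.

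\textbf{Denominator lower bound.} Here I would apply Jensen's inequality to the convex function $e^{\beta\bullet \cdot}$ against the probability measure $\boldsymbol p^{(k)}(\mathbf x,\dr\mathbf y)$, obtaining
\[
\int_{I^k} e^{\beta\bullet\sum_j(y_j-x_j)}\,\boldsymbol p^{(k)}(\mathbf x,\dr\mathbf y)\;\ge\;\exp\!\left(\beta\bullet\sum_j \int_{I^k}(y_j-x_j)\,\boldsymbol p^{(k)}(\mathbf x,\dr\mathbf y)\right)\;=\;e^{k\beta\bullet m_1},
\]
where $m_1=\int_I u\,\mu(\dr u)$ is the mean of $\mu$ (using the marginal identification again). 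Since $|k\beta\bullet m_1|\le \tfrac{z_0}{2}|m_1|$, this lower bound is bounded below by a positive constant independent of $\mathbf x$ and of $\beta$ in the given range. Dividing the two bounds yields the first assertion.

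\textbf{Second assertion.} To deduce the second inequality from the first, I would factorize
\[
e^{\frac{z_0}{2k}\sum_{i=1}^k|y_i-x_i|}\;=\;\prod_{i=1}^k e^{\frac{z_0}{2k}|y_i-x_i|}
\]
and apply the generalized H\"older inequality with $k$ equal exponents against the probability measure $\boldsymbol q^{(k)}_\beta(\mathbf x,\dr\mathbf y)$:
\[
\int_{I^k}\prod_{i=1}^k e^{\frac{z_0}{2k}|y_i-x_i|}\,\boldsymbol q^{(k)}_\beta(\mathbf x,\dr\mathbf y)\;\le\;\prod_{i=1}^k\!\left(\int_{I^k} e^{\frac{z_0}{2}|y_i-x_i|}\,\boldsymbol q^{(k)}_\beta(\mathbf x,\dr\mathbf y)\right)^{\!1/k}\!\!.
\]
Each factor on the right is uniformly bounded by the first part of the lemma, and the proof is complete. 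There is no serious obstacle here; the only point requiring care is that both H\"older inequalities on the numerator side are compatible with the factor of $2$ (hence the exponential moment at $z_0$, not just $z_0/2$), which is precisely why Assumption \ref{a1}\eqref{a22} is invoked at the full scale $z_0$.
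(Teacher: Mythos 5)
Your proof is correct and follows essentially the same route as the paper: Hölder-type inequalities on the numerator (you split via Cauchy--Schwarz then Hölder with $k$ exponents, whereas the paper applies a single generalized Hölder with exponents $(2,2k,\ldots,2k)$—the same bound either way), Jensen on the denominator, and the marginal identification $y_j-x_j\sim\mu$ to reduce everything to moments of $\mu$; the second assertion is deduced by Hölder exactly as in the paper.
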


\begin{proof}
    The second bound follows immediately from the first, since by H\"older's inequality we have that $\int_{I^k} e^{\frac{z_0}{2k} \sum_{i=1}^k |   y_i-   x_i|} q^{(k)}_{   \beta} (\mathbf x,\mathrm d\mathbf y) \le \prod_{i=1}^k \big( \int_{I^k} e^{\frac{z_0}{2} |y_i-x_i|} \boldsymbol q^{(k)}_{   \beta} (\mathbf x,\mathrm d\mathbf y) \big)^{1/k}.$
    
    Thus we will prove the first bound.  We find that for $\lambda>0$ and $|   \beta|\le z_0/(2k)$ and $1\le i \le k$, one has that 
    \begin{align*}
        \int_{I^k} e^{\lambda |   y_i-   x_i|} \boldsymbol q^{(k)}_\beta (\mathbf x,\mathrm d\mathbf y) &= \frac{\int_{I^k} e^{\lambda |   y_i-   x_i|} e^{   \beta \bullet \sum_{j=1}^k (   y_j-   x_j)}\boldsymbol p^{(k)} (\mathbf x,\mathrm d\mathbf y) }{\int_{I^k} e^{   \beta \bullet \sum_{j=1}^k (   y_j-   x_j)} \boldsymbol p^{(k)} (\mathbf x,\mathrm d\mathbf y) } 
        \\ &\leq \frac{\big(\int_{I^k} e^{2\lambda  |   y_i-x_i|} \boldsymbol p^{(k)} (\mathbf x,\mathrm d\mathbf y) \big)^{1/2}\prod_{j=1}^k \big(\int_{I^k} e^{2k|   \beta| |   y_j-   x_j|} \boldsymbol p^{(k)} (\mathbf x,\mathrm d\mathbf y) \big)^{1/(2k)}}{e^{\int_{I^k}   \beta \bullet \sum_{j=1}^k (   y_j-   x_j) \boldsymbol p^{(k)} (\mathbf x,\mathrm d\mathbf y)} } \\ &= \frac{\big(\int_I e^{2\lambda |   a|} \mu(\mathrm d   a)\big)^{1/2}\big(\int_I e^{2k |   \beta| |   a|} \mu(\mathrm d   a)\big)^{1/2}}{e^{k    \beta \bullet    m_1}}.
    \end{align*}
    In the first line, we simply applied the definition of the measures $\boldsymbol q^{(k)}_{   \beta}$. In the second line, we used H\"older's inequality for the numerator and Jensen for the denominator. In the last line, we used the fact that the marginal law of each $   y_j-   x_j$ under $\boldsymbol p^{(k)} (\mathbf x,\mathrm d\mathbf y)$ is just $\mu$, and $   m_1:= \int_I    a\mu(\mathrm d   a).$

    By Assumption \ref{a1}\eqref{a22}, in the last expression one can set $\lambda:=z_0/2$, and that expression is clearly bounded independently of $|   \beta|\le z_0/(2k)$, thus completing the proof.
\end{proof}

\subsection{Important functions and Taylor expansions}

In Theorems \ref{main1} - \ref{main4}, the important quantities are given by integrals against some functions against an invariant measure. In this section we introduce some of these functions and prove that they arise naturally by Taylor expansions of cumulant or moment generating functions near the origin, which in turn are used in the tilting of the path measures.

\begin{defn}
    Let us define the difference between the cumulant  generating function of the $k$-point motion and that of $k$ independent motions sampled according to $\mu^{\otimes k}$.
    \begin{equation}\label{kdef}\mathcal K(\x,\boldsymbol\beta; k):= \log \int_{I^k} e^{\sum_{i=1}^k    \beta_i\bullet (   a_i-   x_i)} \boldsymbol p^{(k)}(\x,\dr\bfa)\;\;\;-\;\;\;\sum_{i=1}^k\log M(   \beta_i).
    \end{equation}
\end{defn}

$\mathcal K(\x,\boldsymbol\beta; k)$ arises naturally as a correction factor when tilting the quenched density fields. Recall from Definition \ref{cntxnu} that we defined the rescaling constants $C_{N ,t, x}$ so that 
$$C_{N, N^{-1}s, \frac{R^i_{s} -    N^{-1}d_N s}{N^{1/2}}} = \exp \left(   \varsigma_N \bullet R^i_{s} - \log M (   \varsigma_N)s \right).$$
In other words, this is the correct tilting factor to exponentially tilt a single motion $R^i_s$. However, if we want to tilt the $k$-point motion, we cannot simply take use the product $\prod_{j=1}^k C_{N, t, \frac{R^i_{Nt} -    d_N t}{N^{1/2}}}$ as the $k$ coordinates are not independent. Instead we need to use the martingale $\mathpzc M_s^{\varsigma_N}(\mathbf R)$ defined above to tilt the $k$-point motion. Since the quenched density fields are defined with the prefactors $C_{N, t,x}$, we will need to convert between $\mathpzc M_s^{\varsigma_N}(\mathbf R)$ and $\prod_{j=1}^k C_{N, t, \frac{R^i_{Nt} -    d_N t}{N^{1/2}}}$. We have that 
$$\mathpzc M_s^{\varsigma_N} (\mathbf R) = \left(\prod_{j=1}^k C_{N,N^{-1}s, N^{-1/2} (   R^j_{s}- N^{-1}   d_Ns)}\right)e^{-\sum_{r=0}^{s-1}\mathcal K(\mathbf R_r, (\varsigma_N, \ldots, \varsigma_N);k)}.$$ 
This means that the correction between the correct tilting factor $\mathpzc M_s^{\varsigma_N} (\mathbf R) $ and the product $$\prod_{j=1}^k C_{N,N^{-1}s, N^{-1/2} (   R^j_{s}- N^{-1}   d_Ns)}$$ is precisely given by $e^{\sum_{r=0}^{s-1}\mathcal K(\mathbf R_r, (\varsigma_N, \ldots, \varsigma_N);k)}$. 

This allows us to convert between expectations of the $k$-point motion and expectations with respect to the tilted $k$-point motion as follows. Given a function $f: I^{kd} \to \R$, we have 
\begin{multline}\label{eq:tiltingExpectations}
    \mathbf E_{0}^{(0,k)}\bigg[\prod_{j=1}^k C_{N,N^{-1}s, N^{-1/2} (   R^j_{s}- N^{-1}   d_Ns)} f(  \mathbf R_s)\bigg]  =\mathbf E_{0}^{(0,k)}\bigg[ \mathpzc M_s^{\varsigma_N} (\mathbf R)e^{\sum_{r=0}^{s-1}\mathcal K(\mathbf R_r, (\varsigma_N, \ldots, \varsigma_N); k)} f(  \mathbf R_s)\bigg]\\
    = \mathbf E^{(   \varsigma_N,k)}_{0}\bigg[ e^{\sum_{r=0}^{s-1}\mathcal K(\mathbf R_r, (\varsigma_N, \ldots, \varsigma_N); k)}   f(  \mathbf R_s)\bigg].
\end{multline}

We will use the following definition to Taylor expand $\mathcal K(\x,\boldsymbol\beta; k)$. 

\begin{defn}\label{etadef}
    Fix two multi-indices $   r_1,   r_2 \in \mathbb Z_{\ge 0}^d.$ Define the function $\boldsymbol \eta_{   r_1,   r_2}: I\to \mathbb R$ by $$\boldsymbol \eta_{   r_1,   r_2} (   y_1-   y_2):= \int_{I^2} (   x_1-   y_1)^{   r_1} (   x_2-   y_2)^{   r_2} \bigg(\boldsymbol p^{(2)} \big( (   y_1,   y_2),(\mathrm d   x_1,\mathrm d   x_2)\big) - \mu(\mathrm d   x_1-   y_1)\mu(\mathrm d   x_2-   y_2)\bigg).$$
\end{defn}

The fact that the expression on the right side defines a function of $   y_1-   y_2$ follows from the translation invariance in Assumption \ref{a1}\eqref{a12}. Given the above definition, it will be useful to abbreviate \begin{equation}\label{rho}\rho\big( (   y_1,   y_2), (\mathrm d   x_1,\mathrm d   x_2) \big):= \boldsymbol p^{(2)} \big( (   y_1,   y_2),(\mathrm d   x_1,\mathrm d   x_2)\big) - \mu(\mathrm d   x_1-   y_1)\mu(\mathrm d   x_2-   y_2).\end{equation}

\begin{prop}\label{dbound}
Fix $k\in \mathbb N$. There exists $C=C(k)>0$ such that the following bounds hold uniformly over $   \beta_1,...,   \beta_k \in \ak$ and $\x\in I^k$ 
    \begin{align*}\bigg|
    \mathcal K(\x,\boldsymbol\beta; k) - \sum_{i<j} \sum_{\substack{r_1+...+r_d=p\\s_1+...+s_d=p}} \frac{   \beta_i^{\;   r}   \beta_j^{\;   s}}{   r!    s!} \boldsymbol \eta_{   r,   s}(   x_i-   x_j)
    \bigg|&
    \leq 
C  \min\bigg\{ \bigg(\sum_{i=1}^k|   \beta_i|\bigg)^{2p+1
    } ,\Fd\big(\min_{i<j}|   x_i-   x_j|\big)^{1/2}\bigg\},
       \end{align*} 
    where $   r = (r_1,...,r_d)$ and $   x^{\;   r} = x_1^{r_1}\cdots x_d^{r_d}$ and $   r! = r_1!\cdots r_d!.$  
\end{prop}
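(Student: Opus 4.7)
The plan is to establish the two bounds separately and then take the minimum. For the polynomial bound $(\sum_i|\beta_i|)^{2p+1}$, I will Taylor expand $\mathcal K(\x, \boldsymbol\beta; k)$ in $\boldsymbol\beta$ about the origin, use Lemma \ref{prev} to identify the first nonvanishing terms, and control the remainder. For the $\sqrt{\Fd}$ bound, I will bound $\mathcal K$ and the $\boldsymbol\eta$-sum individually using a Cauchy--Schwarz comparison against the TV estimate \eqref{tvb}.

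By construction $\mathcal K$ is the joint cumulant generating function of the $k$-point increments minus the sum of the marginal cumulant generating functions, so its Taylor coefficients at $\boldsymbol\beta = \mathbf 0$ are precisely the mixed joint cumulants $\kappa^{(m;k)}(\mathbf j;\x)$ indexed by multi-indices $\mathbf j$ whose first coordinates are not all equal. By Lemma \ref{prev}\eqref{eq:prev1} all of these coefficients vanish for $m<2p$. By Lemma \ref{prev}\eqref{eq:prev2}, at order $2p$ the only surviving multi-indices are those whose first-coordinate multiset is of the form $\{a,\ldots,a,b,\ldots,b\}$ with $p$ copies of each. For such a multi-index, the moment--cumulant inversion formula, applied with the fact that all proper-mixed blocks have vanishing cumulants, collapses the $2p$-point joint moment to $(\text{product of marginal moments under } \mu) + \kappa^{(2p;k)}(\mathbf j;\x)$. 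Since the $(a,b)$-marginal of $\boldsymbol p^{(k)}$ is $\boldsymbol p^{(2)}$ by projectivity, one obtains $\kappa^{(2p;k)}(\mathbf j;\x) = \boldsymbol\eta_{r,s}(x_a - x_b)$ where $r,s$ encode the second coordinates of $\mathbf j$ at particles $a$ and $b$. A multinomial count then shows that the number of multi-indices giving each prescribed data $(a<b, r, s)$ is $\binom{2p}{p}\tfrac{p!}{r!}\tfrac{p!}{s!} = \tfrac{(2p)!}{r!s!}$, which cancels the $\tfrac{1}{(2p)!}$ from Taylor's formula to yield exactly the sum written in the proposition. The Taylor remainder of order $\geq 2p+1$ is bounded by $C(\sum_i|\beta_i|)^{2p+1}$, where $C$ is a uniform supremum of $(2p+1)$-th partial derivatives of $\mathcal K$ over $|\beta_i|\leq z_0/(2k)$; these are joint cumulants under the tilted kernel $\boldsymbol q^{(k)}_{\boldsymbol\beta}$, bounded uniformly in $\x$ by combining Lemma \ref{prev}\eqref{eq:prev3} with the uniform exponential moments provided by Lemma \ref{grow}.

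For the TV bound, \eqref{tvb} gives $\|\boldsymbol p^{(k)}(\x,\bullet) - \mu^{\otimes k}(\bullet - \x)\|_{TV} \leq C\Fd(\min_{i<j}|x_i - x_j|)$. Writing the signed measure $\nu := \boldsymbol p^{(k)}(\x,\bullet) - \mu^{\otimes k}(\bullet - \x)$ via its Radon--Nikodym derivative $g$ (with $|g|\leq 1$) with respect to the positive measure $\Lambda := \boldsymbol p^{(k)}(\x,\bullet) + \mu^{\otimes k}(\bullet - \x)$, Cauchy--Schwarz together with $g^2\le|g|$ yields
$$\bigg|\int f\, d\nu\bigg| \leq \|f\|_{L^2(\Lambda)}\cdot\sqrt{2\|\nu\|_{TV}}.$$
Taking $f(\bfa) = e^{\sum_i \beta_i\bullet(a_i-x_i)}$, whose $L^2(\Lambda)$-norm is bounded uniformly in $\x$ by Assumption \ref{a1}\eqref{a22}, we get $|\int f\,d\nu|\leq C\sqrt{\Fd(\min|x_i-x_j|)}$. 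Since the denominator $\int f\, d\mu^{\otimes k}(\bullet - \x) = \prod_i M(\beta_i)$ is bounded below by a positive constant on $|\beta_i|\leq z_0/(2k)$, this shows $|\mathcal K(\x,\boldsymbol\beta;k)|\leq C\sqrt{\Fd(\min|x_i-x_j|)}$ when the right-hand side is small, and the same bound holds trivially (from the uniform boundedness of $\mathcal K$) when it is not. The same Cauchy--Schwarz argument applied to the polynomial $f(a_1,a_2) = (a_1 - y_1)^r(a_2 - y_2)^s$ and the signed measure $\rho$ from \eqref{rho} yields $|\boldsymbol\eta_{r,s}(x_i - x_j)|\leq C\sqrt{\Fd(|x_i - x_j|)}\leq C\sqrt{\Fd(\min_{a<b}|x_a-x_b|)}$ using that $\Fd$ is decreasing. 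The triangle inequality combined with the finiteness of the $\boldsymbol\eta$-sum (noting $|\beta_i|$ is bounded) then gives the $\sqrt{\Fd}$ bound for the whole difference.

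The main obstacle is the combinatorial identification in the second step: verifying that the multinomial counting of multi-indices and the moment--cumulant inversion conspire to produce precisely the prefactor $\tfrac{1}{r!s!}$ with the ordering convention $i<j$. This ultimately relies on the symmetry $\boldsymbol\eta_{r,s}(x_i - x_j) = \boldsymbol\eta_{s,r}(x_j - x_i)$, which itself follows from the permutation-invariance of $\boldsymbol p^{(2)}$ under simultaneous swap of its two arguments, a consequence of the i.i.d.\ definition of $\boldsymbol p^{(k)}$ in \eqref{kptkernel}.
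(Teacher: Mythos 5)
Your proposal is correct and follows essentially the same route as the paper: Taylor-expand $\mathcal K$ at $\boldsymbol\beta=\mathbf 0$, invoke Lemma~\ref{prev}(1) to kill orders $<2p$, identify the order-$2p$ term with the $\boldsymbol\eta$-sum via Lemma~\ref{prev}(2), bound the remainder by $(\sum_i|\beta_i|)^{2p+1}$ using Lemma~\ref{grow}, and separately obtain the $\Fd^{1/2}$ bound.

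Two differences worth noting. First, for the $\Fd^{1/2}$ bound the paper simply asserts ``follows from Item (3) of Lemma~\ref{prev},'' which bounds the cumulants $\kappa^{(m;k)}$ at $\boldsymbol\beta=\mathbf 0$ by $C(m)\Fd$ but leaves the passage to a bound on the full Taylor remainder implicit (one would need control on the growth of $C(m)$ in $m$, or an equivalent argument). Your triangle-inequality decomposition, bounding $|\mathcal K|$ and the $\boldsymbol\eta$-sum each by $C\Fd^{1/2}$ via the Cauchy--Schwarz mechanism of Lemma~\ref{tbb}, is cleaner and matches the $\Fd^{1/2}$ exponent that naturally comes out of that lemma; it avoids the tail-of-the-Taylor-series question entirely. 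Second, your moment--cumulant inversion and multinomial count $\binom{2p}{p}\tfrac{p!}{r!}\tfrac{p!}{s!}=\tfrac{(2p)!}{r!s!}$ fleshes out what the paper compresses into ``since the joint cumulant of two random variables is just their covariance'' together with an assertion about Item (2); your version is more transparent. One small nit: in bounding the Lagrange remainder you cite Lemma~\ref{prev}(3), but that lemma concerns the untilted cumulants at $\boldsymbol\beta=\mathbf 0$; for the polynomial bound you only need a uniform-in-$\x$ bound on the $(2p+1)$-th tilted derivatives, which Lemma~\ref{grow} already provides, so the reference to Lemma~\ref{prev}(3) there is superfluous.
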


\begin{proof}
    For a pair $(\ell,v) \in \mathbb N^2$ we will say that $(1,1)\le (\ell,v)\leq (k,d)$ if $1\le \ell \le k$ and $1\le j \le d$. By Definition \ref{cumu} the $(2p)^{th}$ order Taylor expansion of $K$ in the $   \beta_i$ variables is given by 
    \begin{align*}\mathcal K_{2p}(\x,\boldsymbol\beta; k)&:= \sum_{m=1}^{2p} \frac1{m!} \bigg(\sum_{(1,1)\le \vec j_1,...,\vec j_m\le (k,d)} \beta_{\vec j_1}\cdots \beta_{\vec j_m}\kappa^{(m;k)} (\mathbf j,\mathbf x)\\ &\hspace{1.5 in} - \sum_{\substack{(1,1)\leq \vec j_1,...,\vec j_m\leq (k,d)\\ j_i \text{ all same in first coordinate}}} \beta_{\vec j_1}\cdots \beta_{\vec j_m} \kappa^{(m;k)} (\mathbf j,\mathbf x)\bigg) \\ &= \sum_{m=1}^{2p} \frac1{m!}\sum_{\substack{(1,1)\leq \vec j_1,...,\vec j_m\leq (k,d)\\ j_i \text{ not all same in first coordinate}}}\beta_{\vec j_1}\cdots \beta_{\vec j_m} \kappa^{(m,k)} (\mathbf j; \x)\\&=\frac1{(2p)!}\sum_{\substack{(1,1)\leq \vec j_1,...,\vec j_{2p}\leq (k,d)\\ \vec j_i \text{ not all same in first coordinate}}}\beta_{\vec j_1}\cdots \beta_{\vec j_{2p}} \kappa^{(2p,k)} (\mathbf j; \x)\end{align*}

    In the first equality, the terms with all $j_i$ being the same comes from the $\log M(   \beta_i)$ contribution to $\mathcal K$, and uses the fact that the marginal law of $   a_i-   x_i$ under $\boldsymbol p^{(k)}(\x,\dr\bfa)$ is precisely $\mu$ for every $\x\in I^k$. In the third line, we used Item \eqref{eq:prev1} in Lemma \ref{prev}, which guarantees that all of the joint cumulants up to order $2p-1$ vanish, hence only the $m=2p$ term survives.

    Note by Taylor's theorem and the uniform exponential moment bounds in Lemma \ref{grow} that the difference $\mathcal K-\mathcal K_{2p}$ satisfies the bound 
    $$\sup_{\x\in I^k} \big| \mathcal K(\x,\boldsymbol\beta; k)-\mathcal K_{2p}(\x,\boldsymbol\beta; k)\big|\le C\bigg(\sum_{i=1}^k|   \beta_i|\bigg)^{2p+1} ,$$ uniformly over $   \beta_1,...,   \beta_m \in \ak.$

    On the other hand, since the joint cumulant of two random variables is just their covariance, Item \eqref{eq:prev2} in Lemma \ref{prev} implies that when $m=2p$ we have that $$\frac1{(2p)!}\sum_{\substack{(1,1)\leq j_1,...,j_{2p}\leq (k,d)\\ j_i \text{ not all same in first coordinate}}}\beta_{j_1}\cdots \beta_{j_{2p}} \kappa^{(2p,k)} (\mathbf j; \x) = \sum_{1 \leq i<j \leq k} \sum_{\substack{r_1+...+r_d=p\\s_1+...+s_d=p}} \frac{   \beta_i^{\;   r}   \beta_j^{\;   s}}{   r!    s!}  \boldsymbol \eta_{   r,   s}(   x_i-   x_j),$$ which is enough to complete the proof. This proves the upper bound of $\big(\sum_{i=1}^k|   \beta_i|\big)^{2p+1}$. The upper bound of $\Fd(\min_{i<j}|x_i-x_j|)^{1/2}$ follows from Item \eqref{eq:prev3} of Lemma \ref{prev}.
\end{proof}

\begin{defn}\label{z}
  Let $  {\boldsymbol v}\in \mathbb R^d$. We define the function $\z_{  {\boldsymbol v}}:I\to \mathbb R$ by 
    \begin{equation*}
         \z_{  {\boldsymbol v}}(   y_1-   y_2) :=\sum_{\substack{r_1+...+r_d=p\\s_1+...+s_d=p}}\frac{  {\boldsymbol v}^{   r+   s}}{   r!   s!}\boldsymbol \eta_{   r,   s}(   y_1-   y_2).
    \end{equation*}
  
\end{defn}
The fact that the expression on the right side defines a function of $   y_1-   y_2$ again follows from the translation invariance in Assumption \ref{a1}\eqref{a12}. Using $\frac1{   r!}(x_1-y_1-(x_2-y_2))^{   r} = \sum_{   r_1+   r_2=   r} \frac{(-1)^p}{   r_1!   r_2!}(x_1-y_1)^{   r_1} (x_2-y_2)^{   r_2}$, 
we equivalently have\begin{equation*}
         \z_{  {\boldsymbol v}}(   y) :=(-1)^p\sum_{r_1+...+r_d=2p} \frac{  {\boldsymbol v}^{   r}}{   r!}\bigg[ \int_{I} (   u-   y)^{   r}\pdif (   y,\dr    u) -\int_{I^2} (   a -   b)^{   r} \mu(\dr    a)\mu(\dr    b)\bigg].
    \end{equation*}
 This function $\z_{ \boldsymbol{v}}$ will play a crucial role throughout the paper, as it is precisely the function appearing in the numerator of the noise coefficient $\gamma_{\mathrm{ext}}$ in Theorem \ref{main2}. We remark that $\z_{ \boldsymbol{v}}$ decays quite fast at infinity in the variable $   y$ thanks to Assumption \ref{a1}\eqref{a24}. Notice that by the deterministicity assumption (Assumption \ref{a1} \eqref{a24})
 \begin{equation}\label{pterm}\z_{\boldsymbol{v}}(   y_1-   y_2) = \int_{I^2} \prod_{j=1,2} \frac{\big(\boldsymbol{v}\bullet (   x_j-   y_j)\big)^p}{p!} \rho\big( (   y_1,   y_2), (\mathrm d    x_1,\mathrm d    x_2) \big),
 \end{equation}
where $\rho$ is as in \eqref{rho}. Notice also that $\z_{   v}$ is a homogeneous polynomial of degree $2p$ in the variable $   v$. An important object in our analysis will be $ \mathcal K(\x,(  \varsigma_N,  \varsigma_N);2)$ and we therefore give it its own name:
\begin{defn}\label{def:u} Recall $\mathcal K$ from \eqref{kdef}. For $   \varsigma_N \in \R^d$, define the function $\boldsymbol u_{   \varsigma_N}: \R^d \to \R$ by 
 \begin{align*}
     \boldsymbol u_{   \varsigma_N}(   x_1 -    x_2) := \mathcal K(\x,(  \varsigma_N,  \varsigma_N);2).
 \end{align*}     
\end{defn}

Again, we remark that this is well-defined as $\mathcal K(\x,(  \varsigma_N,  \varsigma_N);2)$ defines a function of $   x_1 -    x_2$ by the translation invariance in Assumption \ref{a1}\eqref{a12}. Note that this is the same $u_{\varsigma}$ that appears in Theorem \ref{main4}. Finally, we will also need the following definition:
\begin{defn} \label{def:vartheta}
    Recall $\mathcal K$ from \eqref{kdef}. For $   \varsigma_N \in \mathbb R^d$ define the function $\boldsymbol{\vartheta}_{   \varsigma_N } :\mathbb R^d\to \mathbb R$ by 
$$\boldsymbol{\vartheta}_{   \varsigma_N }(   y_1-   y_2):= e^{\mathcal K(\y,(   \varsigma_N ,   \varsigma_N );2)} - 1 =  \int_{I^2} \prod_{j=1}^2 e^{   \varsigma_N \bullet(   x_j-   y_j) - \log M(  {\varsigma_N})} \rho\big( (   y_1,   y_2), (\mathrm d   x_1,\mathrm d   x_2) \big).$$
\end{defn}

Putting together all of the above, we obtain the following:
\begin{prop}\label{prop:TaylorG}
  We can expand $$  \boldsymbol u_{   \varsigma_N}(   x)  = \z_{   \varsigma_N}(   x) + \mathscr R_{   \varsigma_N}(   x)$$ where $\z_{   \varsigma_N}(   x)$ is as in Definition \ref{z} and where we have that $\sup_{   x \in I} |\mathscr R_{   \varsigma_N}(   x)| \leq C|  \varsigma_N|^{2p+1}$. 
\end{prop}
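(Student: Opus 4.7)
The plan is to derive Proposition \ref{prop:TaylorG} as an immediate specialization of the general cumulant expansion given by Proposition \ref{dbound}. First I would set $k=2$ and $\boldsymbol\beta=(\varsigma_N,\varsigma_N)$ in Proposition \ref{dbound}. With this choice, there is only a single pair $i<j$ in the sum, namely $(i,j)=(1,2)$, and the leading term becomes
\begin{equation*}
\sum_{\substack{r_1+\cdots+r_d=p\\ s_1+\cdots+s_d=p}} \frac{\varsigma_N^{\,r}\,\varsigma_N^{\,s}}{r!\,s!}\,\boldsymbol\eta_{r,s}(x_1-x_2) \;=\; \sum_{\substack{r_1+\cdots+r_d=p\\ s_1+\cdots+s_d=p}} \frac{\varsigma_N^{\,r+s}}{r!\,s!}\,\boldsymbol\eta_{r,s}(x_1-x_2),
\end{equation*}
which is exactly $\z_{\varsigma_N}(x_1-x_2)$ as defined in Definition \ref{z}. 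Meanwhile the left-hand side $\mathcal K(\mathbf x,(\varsigma_N,\varsigma_N);2)$ is precisely $\boldsymbol u_{\varsigma_N}(x_1-x_2)$ by Definition \ref{def:u}. Thus writing $\mathscr R_{\varsigma_N}(x):=\boldsymbol u_{\varsigma_N}(x)-\z_{\varsigma_N}(x)$, the decomposition in the proposition holds by definition.

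For the quantitative bound, I would use the first alternative in the minimum appearing on the right-hand side of Proposition \ref{dbound}. Since $\sum_{i=1}^{2}|\beta_i|=2|\varsigma_N|$, that term gives
\begin{equation*}
|\mathscr R_{\varsigma_N}(x_1-x_2)| \;\leq\; C\,(2|\varsigma_N|)^{2p+1} \;\leq\; C'\,|\varsigma_N|^{2p+1},
\end{equation*}
uniformly over $\mathbf x\in I^2$. Since the difference depends only on $x_1-x_2$ by the translation invariance of Assumption \ref{a1}\eqref{a12}, the bound holds uniformly over $x\in I$.

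The only point worth flagging is the admissibility of the parameter choice: Proposition \ref{dbound} requires $\varsigma_N\in \ak$ with $k=2$, i.e., $|\varsigma_N|\leq z_0/4$. This is automatic for large $N$ under Assumption \ref{psi_n}, since $|\varsigma_N|=O(\psi_N(p,d)/N)\to 0$. There is essentially no obstacle here; the entire content of the statement has already been absorbed into Proposition \ref{dbound} via Lemma \ref{prev}. The only mild subtlety is keeping track of the fact that we specifically use the polynomial bound $(\sum|\beta_i|)^{2p+1}$ from the minimum rather than the decay bound $\Fd(\min|x_i-x_j|)^{1/2}$, since we need a uniform-in-$x$ estimate.
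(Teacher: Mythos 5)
Your proof is correct and takes essentially the same route as the paper: Proposition \ref{prop:TaylorG} is a specialization of Proposition \ref{dbound} to $k=2$, $\boldsymbol\beta=(\varsigma_N,\varsigma_N)$, and the uniform $|\varsigma_N|^{2p+1}$ bound comes from the first term in the minimum. The paper phrases this as following from the \emph{proof} of Proposition \ref{dbound}, while you invoke its \emph{statement} directly, which is in fact slightly cleaner since the statement already contains everything needed.
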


\begin{proof}
  This follows from the proof of Proposition \ref{dbound}, where we Taylor expanded  $\mathcal K(\x,(  \varsigma_N,  \varsigma_N);2)$ to the $(2p)^{th}$ order in the variable $  \varsigma_N$ and saw that all terms of order less than $2p$ vanish because of the deterministicity in Assumption \ref{a1} Item \eqref{a24}.
\end{proof}

The following bound will be useful for certain dominated convergence bounds later on. 
\begin{prop} \label{prop:dcbound}
    We have  $|\z_{   v}(   x)|\leq C\;\Fd(|   x|)^{1/2} |   v|^{2p}$.
\end{prop}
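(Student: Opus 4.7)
The plan is to exploit the compact integral expression \eqref{pterm} for $\z_{v}$, namely
\[
\z_{v}(y_1-y_2) \;=\; \int_{I^2} \prod_{j=1,2}\frac{\bigl(v\bullet(x_j-y_j)\bigr)^p}{p!}\;\rho\bigl((y_1,y_2),(\dr x_1,\dr x_2)\bigr),
\]
combined with the strong decay of correlations from Assumption \ref{a1}\eqref{a24}. The factor $\Fd(|x|)^{1/2}$ suggests Cauchy--Schwarz against the signed measure $\rho$, so I would apply the standard inequality $|\int f\,d\rho|^2 \le \|\rho\|_{TV}\int |f|^2\,d|\rho|$.

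First, I would invoke \eqref{tvb} (equivalently, the partition $\pi=\{\{1\},\{2\}\}$ in Assumption \ref{a1}\eqref{a24}) to get $\|\rho((y_1,y_2),\bullet)\|_{TV} \le C\,\Fd(|y_1-y_2|)$. This alone produces one full power of $\Fd$; taking the square root after Cauchy--Schwarz yields the desired $\Fd^{1/2}$.

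Next I would handle the $L^2$-factor $\int_{I^2}\prod_{j=1,2}\frac{(v\bullet(x_j-y_j))^{2p}}{(p!)^2}\,d|\rho|$. Since $|\rho| \le \boldsymbol p^{(2)}((y_1,y_2),\bullet) + \mu(\bullet-y_1)\otimes\mu(\bullet-y_2)$, it suffices to bound the integrand against each of these two positive measures. For the $\mu^{\otimes 2}$ piece this factorizes and is controlled by $|v|^{2p}\cdot m_{2p}$, where $m_{2p}$ is a finite moment of $\mu$ (guaranteed by the exponential moment condition in Assumption \ref{a1}\eqref{a22}). For the $\boldsymbol p^{(2)}$ piece, one application of Cauchy--Schwarz on the product $\prod_{j=1,2}(v\bullet(x_j-y_j))^{2p}$ reduces matters to the one-dimensional marginals of $\boldsymbol p^{(2)}$, which are precisely $\mu$ (translated by $y_1, y_2$); this again gives a bound of $C|v|^{4p}$. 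Combining these pieces yields $|\z_v(y_1-y_2)|^2 \le C\,\Fd(|y_1-y_2|)\,|v|^{4p}$, and taking square roots finishes the argument.

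There is no serious obstacle here: the argument is essentially two applications of Cauchy--Schwarz together with the translation invariance that reduces $\boldsymbol p^{(2)}$-marginals to $\mu$. The only mild subtlety is remembering that the homogeneity $|v|^{2p}$ comes out naturally from the explicit form \eqref{pterm} rather than from the sum-over-multi-indices expression in Definition \ref{z}, which is why \eqref{pterm} is the more convenient starting point.
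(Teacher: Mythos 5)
Your argument is correct and is essentially the paper's own: both start from the compact expression \eqref{pterm}, both peel off a $\Fd^{1/2}$ factor via a total-variation/Cauchy--Schwarz bound (you apply the standard $|\int f\,d\rho| \le \|\rho\|_{TV}^{1/2}(\int f^2\,d|\rho|)^{1/2}$ directly, while the paper routes it through Lemma \ref{tbb}, which is the same estimate via coupling), and both control the remaining $L^2$ factor by reducing to one-dimensional marginals of $\mu$ and $\boldsymbol p^{(2)}$ (the paper cites Lemma \ref{grow} for this; you redo the H\"older step explicitly). The only cosmetic difference is that the paper pulls $|v|^{4p}$ out as a uniform prefactor of a $v$-free integral and cites Lemma \ref{grow}, whereas you estimate the two pieces of $|\rho|$ separately — either way the constants are uniform over $(x_1,x_2)$ by the exponential moment assumption.
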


\begin{proof}
Use the representation 
\begin{align*}
     \z_{   v}(   x_1-   x_2) =\int_{I^2} \prod_{j=1,2} \frac{\big(   v\bullet (   y_j-   x_j)\big)^p}{p!} \rho \big( (   x_1,   x_2),(\mathrm d   y_1,\mathrm d   y_2)\big).
\end{align*}
Define the function $$G_{   v}(   x_1-   x_2) := \int_{I^2} \prod_{j=1,2} \frac{|v|^{2p} |   y_j-   x_j|^{2p}}{(p!)^2}\big(\boldsymbol p^{(2)} \big( (   x_1,   x_2),(\mathrm d   y_1,\mathrm d   y_2)\big) + \mu(\mathrm d   y_1-   x_1)\mu(\mathrm d   y_2-   x_2)\big),$$ so that $G_{   v}$ contains two separate factors of $|v|^{2p}$ (for a total prefactor of $|v|^{4p}$). 
By Lemma \ref{tbb}, we have that
\begin{align*}
    \z_{   v}(   x) &\leq 2G_{   v}(   x)^{1/2}\big\| \mu^{\otimes 2}\big(\bullet-(   x_1,   x_{2})\big) \;\;-\;\; \boldsymbol p^{(2)}\big( (   x_1,   x_{2}),\;\bullet\; \big)\big\|_{TV} ^{1/2} \\
    &\leq 2G_{   v}(   x)^{1/2}\Fd(|   x_1 -    x_2|)^{1/2} \\&\leq 2 |   v|^{2p} \Fd(|   x_1 -    x_2|)^{1/2} \bigg[ \int_{I^2} \prod_{j=1,2} \frac{|   y_j-   x_j|^{2p}}{(p!)^2}\big(\boldsymbol p^{(2)} \big( (   x_1,   x_2),(\mathrm d   y_1,\mathrm d   y_2)\big) + \mu(\mathrm d   y_1-   x_1)\mu(\mathrm d   y_2-   x_2)\big)\bigg]^{1/2} .
\end{align*}
Finally, note that the 
latter integral is bounded independently of $(   x_1,   x_2)$ by Lemma \ref{grow}. 
\end{proof}

\section{A discrete stochastic heat equation with a martingale noise} \label{sec:3}

In this section, we are going to derive a discrete stochastic heat equation for the rescaled field $\mathfrak H^N$ appearing in Theorem \ref{main2}. This discrete equation will be crucial in proving that theorem, because it will eventually allow us to show that any limit point satisfies the martingale problem characterization of the limiting stochastic PDEs written in Theorems \ref{main1}, \ref{main2}, and \ref{main3}. Using such martingale problem techniques for proving stochastic PDE scaling limits of systems with infinitely many interacting particles was pioneered by papers such as \cite{KS88, BG97}. In the context of RWRE, we developed these techniques in \cite{DDP23,DDP+,Par24}.

First we establish some notations that will be used in this section. For a measure $\nu$ on $I$ we will write $\int_I f(x) \mu (\mathrm d x+y)$ to mean the integral of $f$ with respect to the measure $\nu_y(A) := \nu(A+y),$ where $A+y$ is the translate of the Borel set $A$ by the real number $y$. The notation $\mu (\mathrm d x+y)$ can also be understood as $(\tau_y\mu)(\dr x)$ where $\tau_y$ is the shift operator.

Let $   d_N$ be as in \eqref{dn}, and define the discrete lattice $$\Lambda_N:=\{(t,   x)\in \mathbb N\times\mathbb R^d :   x+tN^{-1}   d_N\in \mathbb N\times I\}.$$ Let $\mu$ be the measure from Assumption \ref{a1}\eqref{a22}, and recall that for $\varsigma \in \mathbb R$ we define the \emph{exponentially tilted} measures $$\mu^{  \varsigma}(\mathrm d    x) = e^{   \varsigma \bullet    x - \log M(   \varsigma)} \mu(\mathrm d   x).$$ Note that $\mu^{   \varsigma}$ makes sense for $|   \varsigma |<z_0$ where $z_0$ is as in  Assumption \ref{a1}\eqref{a22}. 

\begin{defn}Let $\Lambda_N$ be as above, and suppose that $f$ is a signed measure on $\Lambda_N$. We then define the \textbf{discrete heat operator} $$\mathcal L_N f(t,   x) = f(t+1,   x-N^{-1}   d_N) - \int_I f(t,   x-   y) \mu^{   \varsigma_N}(\mathrm d   y).$$ \end{defn}
Since convolution of measures is well-defined, this expression makes sense even if $f(t,\cdot)$ is a finite measure for all $t\in\mathbb N$. The resultant object $g=\mathcal L_N f$ makes sense as a signed measure on $\Lambda_N$
, such that $g(t,\cdot)$ is a \textit{finite }signed measure for all $t \in \mathbb N.$ 
If $f(t,\cdot)$ has exponential tails, then so does $g(t,\cdot)$.

In particular if $P^\omega(r,\cdot)$ is given by \eqref{kn}, then we can define for $r\in \mathbb Z_{\ge 0}$ 
\begin{equation}Z_N(r,\mathrm d   x) = C_{N,N^{-1}r,N^{-1/2}   x, \nu_N} P_{\nu_N}^\omega(r,rN^{-1}   d_N+\mathrm d   x).\label{zn}\end{equation}
Then $\mathcal L_N Z_N$ makes sense as a signed measure on $\mathbb N\times I.$ Furthermore $ \mathcal L_N Z_N(Nt,\cdot)$ is a finite signed measure on $I$ for all $t$, and still has exponential moments. Note that the main object of interest in Theorems \ref{main1}, \ref{main2}, and \ref{main3} is essentially the diffusively rescaled field $\mathscr B_N \cdot Z_N(Nt,N^{1/2}\mathrm d   x),$ and roughly speaking the main goal in the remainder of the section will be to find and study a nice family of martingales for this rescaled family.

The following proposition shows what happens when we apply this heat operator to $Z_N$: 

\begin{prop}\label{prop:heatOperator} We have that
 \begin{equation}\label{diff}(\mathcal L_N Z_N)(Nt,\mathrm d   x) = \int_I e^{  \varsigma_N\bullet (   x-   y) - \log M(  \varsigma_N)}\big[ K_{Nt+1}(y,\mathrm d   x) -\mu(\mathrm d   x-   y)\big]Z_N(Nt,\mathrm d   y) .  \end{equation}   
\end{prop}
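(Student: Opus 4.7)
The plan is to expand both sides directly from the definitions. By definition of $\mathcal L_N$, we have $(\mathcal L_N Z_N)(Nt, \dr x) = Z_N(Nt+1, \dr x - N^{-1}d_N) - \int_I Z_N(Nt, \dr x - y)\mu^{\varsigma_N}(\dr y)$. The strategy is to rewrite both summands as integrals against $Z_N(Nt, \dr y)$ with a common prefactor, so that they combine into the single integral on the right side. The key simplifying observation is that the shift by $N^{-1}d_N$ inside the spatial argument of $Z_N(Nt+1, \dr x - N^{-1}d_N)$ is exactly compensated by the $(Nt+1)N^{-1}d_N$ shift coming from the macroscopic recentering inside the definition of $Z_N$: the $P^\omega_{\nu_N}$-factor inside $Z_N(Nt+1, \dr x - N^{-1}d_N)$ equals $P^\omega_{\nu_N}(Nt+1, Nt\cdot N^{-1}d_N + \dr x)$, which is the quenched measure at time $Nt+1$ recentered by the \emph{same} amount used to recenter $Z_N(Nt, \cdot)$.

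For the temporal term, I would then apply the one-step Markov identity $P^\omega_\nu(r+1, \bullet) = \int K_{r+1}(y', \bullet)P^\omega_\nu(r, \dr y')$ and change the dummy variable via $y' = Nt\cdot N^{-1}d_N + y$, obtaining that $P^\omega_{\nu_N}(Nt+1, Nt\cdot N^{-1}d_N + \dr x) = \int K_{Nt+1}(y, \dr x)\, P^\omega_{\nu_N}(Nt, Nt\cdot N^{-1}d_N + \dr y)$, where (following the proposition's convention) $K_{Nt+1}(y, \dr x)$ denotes the kernel in the recentered coordinates. Replacing the last factor by $Z_N(Nt, \dr y)/C_{N, t, N^{-1/2}y, \nu_N}$ produces a prefactor ratio $C_{N, t + N^{-1}, N^{-1/2}(x - N^{-1}d_N), \nu_N}/C_{N, t, N^{-1/2}y, \nu_N}$. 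Expanding $\log C_{N,t,x,\nu}$ from Definition \ref{cntxnu}, one checks that the $\nu_N$-normalization cancels and that the macroscopic $t$-slope $\varsigma_N\bullet d_N - N\log M(\varsigma_N)$ telescopes, leaving exactly $e^{\varsigma_N\bullet(x-y) - \log M(\varsigma_N)}$.

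For the spatial convolution term, using $\mu^{\varsigma_N}(\dr z) = e^{\varsigma_N\bullet z - \log M(\varsigma_N)}\mu(\dr z)$ and viewing the integral as a Lebesgue--Stieltjes convolution, I would rewrite it as $\int_I Z_N(Nt, \dr y)\, e^{\varsigma_N\bullet(x-y) - \log M(\varsigma_N)}\mu(\dr x - y)$. This produces exactly the same prefactor as the temporal term, so subtracting the two gives the claimed identity. There is no serious analytical obstacle here; the proposition is an algebraic identity, and the only real work is careful bookkeeping of the macroscopic shifts and of the notational convention that $K_{Nt+1}(y,\dr x)$ is interpreted in the recentered coordinates. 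Once the prefactor cancellation is verified, the formula follows in two lines.
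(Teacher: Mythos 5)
Your proposal is correct and follows the same route as the paper's proof: apply the one-step Markov identity to $Z_N(Nt+1,\dr x - N^{-1}d_N)$, compute the prefactor ratio $C_{N,N^{-1}(r+1),N^{-1/2}(x-N^{-1}d_N),\nu_N}/C_{N,N^{-1}r,N^{-1/2}y,\nu_N}$ from Definition \ref{cntxnu}, and observe that the spatial convolution term carries the same prefactor via $\mu^{\varsigma_N}(\dr z)=e^{\varsigma_N\bullet z-\log M(\varsigma_N)}\mu(\dr z)$. The paper's own proof silently absorbs the lattice-recentering shift into the notation $K_{Nt+1}(y,\dr x)$; you make this convention explicit, which is a harmless clarification of the same argument.
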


\begin{proof}
First note by the convolution property of the kernels $P^\omega$ of \eqref{kn} that if $r=Nt\in \mathbb N$ then $$Z_N(r+1,\mathrm d   x-N^{-1}   d_N) = C_{N,N^{-1}(r+1),N^{-1/2}(   x-N^{-1}   d_N), \nu_N} \int_I C_{N,N^{-1}r,N^{-1/2}   y, \nu_N}^{-1} K_{r+1}(y,\mathrm d   x) Z_N(r,\mathrm d   y).$$ Now notice that 
\begin{align*}C_{N,N^{-1}(r+1),N^{-1/2}(   x-N^{-1}   d_N), \nu_N} C_{N,N^{-1}r,N^{-1/2}   y, \nu_N}^{-1} &= e^{  \varsigma_N\bullet (   x-N^{-1}   d_N-   y) +   \varsigma_N\bullet  N^{-1}    d_N - \log M(   \varsigma_N)}\\ &= e^{  \varsigma_N\bullet (   x-   y) - \log M(   \varsigma_N)}.
\end{align*}On the other hand we also have by definition that $\mu^{   \varsigma_N}(\mathrm d   x-   y) = e^{  \varsigma_N\bullet (   x-   y) - \log M(   \varsigma_N)}\mu(\mathrm d   x-   y).$ Therefore we can write \begin{equation}(\mathcal L_N Z_N)(Nt,\mathrm d   x) = \int_I e^{  \varsigma_N\bullet (   x-   y) - \log M(  \varsigma_N)}\big[ K_{Nt+1}(y,\mathrm d   x) -\mu(\mathrm d   x-   y)\big]Z_N(Nt,\mathrm d   y) .  \end{equation}     
\end{proof}

Recall from the introduction that $$\mathscr B_N := \begin{cases} N^{\frac{p}2+\frac{d-2}{4}} ,& |   \varsigma_N| = O(N^{-1/2}) \text{ as } N\to\infty \\ \frac{N^{\frac{d-2}{4}}}{|   \varsigma_N|^p} , & N^{1/2}|   \varsigma_N| \to \infty \text{ as } N\to \infty.\end{cases}$$
\begin{lem}
    Let $d,N\in \mathbb N$, and let $\mathscr B_N$ be as above. For $\phi: \mathbb R^d\to \mathbb R$ smooth and bounded, we define $M_t^N(\phi)$ for $t\in N^{-1}\mathbb Z_{\ge 0}$ by the formula $M_0^N(\phi)=0$, and \begin{equation}\label{mfield}M^N_{t+N^{-1}}(\phi) - M_t^N(\phi):= \mathscr B_N \int_I \phi(N^{-1/2}   x)\; (\mathcal L_N Z_N)(Nt,\mathrm d   x).\end{equation}
   Then for all $N\in \mathbb N$ and smooth bounded $\phi$, the process $M^N(\phi)$ is a martingale indexed by $N^{-1}\mathbb Z_{\ge 0}$, with respect to the filtration $(\mathcal F^\omega_{Nt})_{t\in N^{-1}\mathbb Z_{\ge 0}}$ where $\mathcal F_t^\omega:=\sigma(K_1,...,K_{t})$ for $t\in \mathbb N$ (and $\omega=(K_i)_{i=1}^\infty$ are the i.i.d. environment kernels as in Assumption \ref{a1}).
\end{lem}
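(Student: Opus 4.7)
The plan is to compute the conditional expectation of the increment directly using Proposition \ref{prop:heatOperator}, which already packages $(\mathcal L_N Z_N)(Nt,\cdot)$ as an integral that is linear in the fresh kernel $K_{Nt+1}$. First I would check $\mathcal F^\omega_{Nt}$-adaptedness of $M_t^N(\phi)$: writing out \eqref{zn}, the measure $Z_N(Nt,\cdot)$ depends only on $K_1,\ldots,K_{Nt}$ (together with deterministic factors coming from $C_{N,t,x,\nu_N}$), so the telescoping sum $M_t^N(\phi) = \sum_{s=0}^{Nt-1}\mathscr B_N \int_I \phi(N^{-1/2}x)(\mathcal L_N Z_N)(s,\dr x)$ is $\mathcal F^\omega_{Nt}$-measurable by Proposition \ref{prop:heatOperator}.

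Next I would verify integrability of each increment. Using the explicit form of $C_{N,t,x,\nu_N}$ from Definition \ref{cntxnu} and the annealed identity $\mathbb E[P^\omega_{\nu_N}(r,\bullet)] = \nu_N * \mu^{*r}$, one sees that $\mathbb E[Z_N(Nt,\dr x)]$ is (up to the deterministic prefactor) a convolution of tilted measures $\mu^{\varsigma_N}$ centered at $d_N t$, and so has uniform exponential tails in $x$ thanks to Assumption~\ref{a1}\eqref{a22}. Combining this with the plain bound $\|K_{Nt+1}(y,\bullet) - \mu(\bullet-y)\|_{TV}\le 2$ and boundedness of $\phi$ shows $\mathbb E|M^N_{t+N^{-1}}(\phi)-M_t^N(\phi)|<\infty$ and justifies the Fubini interchange below.

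Now the key computation. By Proposition \ref{prop:heatOperator},
\begin{equation*}
M^N_{t+N^{-1}}(\phi) - M_t^N(\phi) = \mathscr B_N\int_{I}\!\!\int_I \phi(N^{-1/2}x)\,e^{\varsigma_N\bullet(x-y)-\log M(\varsigma_N)}\bigl[K_{Nt+1}(y,\dr x)-\mu(\dr x-y)\bigr]Z_N(Nt,\dr y).
\end{equation*}
By Assumption \ref{a1}\eqref{a11} the kernel $K_{Nt+1}$ is independent of $\mathcal F^\omega_{Nt}$ and equidistributed with $K_1$; by Assumption \ref{a1}\eqref{a12} together with the definition $\mu(A):=\mathbb E[K_1(0,A)]$ one has $\mathbb E[K_{Nt+1}(y,A)]=\mu(A-y)$ for every $y\in I$ and every Borel $A\subset I$. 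Since $Z_N(Nt,\dr y)$ is $\mathcal F^\omega_{Nt}$-measurable, conditioning on $\mathcal F^\omega_{Nt}$ and invoking Fubini gives
\begin{equation*}
\mathbb E\bigl[M^N_{t+N^{-1}}(\phi)-M_t^N(\phi)\,\bigl|\,\mathcal F^\omega_{Nt}\bigr] = \mathscr B_N\int_I\!\!\int_I \phi(N^{-1/2}x)e^{\varsigma_N\bullet(x-y)-\log M(\varsigma_N)}\bigl[\mu(\dr x-y)-\mu(\dr x-y)\bigr]Z_N(Nt,\dr y)=0,
\end{equation*}
which is exactly the martingale property.

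The only place requiring care is the Fubini step, since $K_{Nt+1}(y,\dr x)-\mu(\dr x-y)$ is a \emph{signed} measure and $I$ is unbounded. I would resolve this by splitting the signed measure into its positive and negative parts (each bounded by the probability measure $K_{Nt+1}(y,\bullet)+\mu(\bullet-y)$), then dominating the inner integral uniformly using $|\phi|\le\|\phi\|_\infty$ together with the exponential moment bound on $\mu$ coming from Assumption~\ref{a1}\eqref{a22}; the outer $y$-integral is then finite in expectation by the tail estimate on $\mathbb E[Z_N(Nt,\dr y)]$ discussed above. This is the only nontrivial step, and it is routine once the tail estimate on $Z_N$ is written out explicitly.
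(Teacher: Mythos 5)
Your proof is correct and follows essentially the same route as the paper's: use Proposition~\ref{prop:heatOperator} to express the increment as a linear functional of the fresh kernel $K_{Nt+1}$, note that $Z_N(Nt,\cdot)$ is measurable with respect to the past $\sigma$-algebra while $K_{Nt+1}$ is independent of it with mean $\mu(\cdot - y)$, and conclude. The extra care you take with adaptedness, integrability, and the Fubini step is a reasonable amplification of what the paper leaves implicit.
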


This martingale representation of $Z_N$ will be the most important input to the proof of the main limit theorems of this paper. We remark that in $d=1$, the above martingale representation was successfully used to prove convergence to the KPZ equation in $d=1$, see \cite{Par24} and previous related works \cite{DDP+, DDP23}. 

\begin{proof}Using Proposition \ref{prop:heatOperator} the martingality is clear because $Z_N(Nt,\mathrm d   y)$ is $\mathcal F_{Nt-1}$-measurable, and $K_{Nt+1}(y,\mathrm d   x)$ is independent of $\mathcal F_{Nt-1}$ with $\mathbb E[ K_{Nt+1}(y,\mathrm d   x)] = \mu(\mathrm d   x-   y).$
\end{proof}

\subsection{Calculating the quadratic variations of the martingales $M^N$}
Next we will calculate the predictable quadratic variations of the above martingales. Our final calculations are summarized as Proposition \ref{4.3} below. 

If $\phi: \mathbb R^d\to \mathbb R$ is smooth and bounded, define $$(J^N\phi)(   y_1,   y_2) := \int_{I^2} \prod_{j=1}^2 e^{  \varsigma_N\bullet (   x_j-   y_j) - \log M(  \varsigma_N)} \phi(N^{-1/2}   x_j)  \rho \big( (   y_1,   y_2),(\mathrm d   x_1,\mathrm d   x_2)\big) . $$ 
By \eqref{diff} we find that
    \begin{align}\notag\langle M^N(\phi)\rangle_t &:= \mathscr B_N^2 \sum_{s=1}^{Nt} \mathbb E \big[\big( M^N_{sN^{-1}}(\phi) - M^N_{(s-1)N^{-1}}(\phi)\big)^2 \big| \mathcal F_{s-1}^\omega \big] \\ &= \notag \mathscr B_N^2\sum_{s=1}^{Nt} \mathbb E \bigg[\bigg( \int_I \int_I \phi(N^{-1/2}   x) e^{  \varsigma_N\bullet(   x-   y) - \log M(  \varsigma_N)}\big[ K_{s+1}(   y,\mathrm d   x) -\mu(\mathrm d   x-   y)\big]Z_N(s,\mathrm d   y)\bigg)^2 \bigg| \mathcal F^\omega_{s-1} \bigg] \\ &= \mathscr B_N^2\sum_{s=1}^{Nt} \int_{I^2} (J^N\phi)(   y_1,   y_2) Z_N(s,\mathrm d   y_1)Z_N(s,\mathrm d   y_2).\label{quadvar}
    \end{align}
    In the last line we again use the fact that $Z_N$ is $\mathcal F^\omega_{s-1}$-measurable. Now we let $E^{(\omega, 2)}_{\nu^{\otimes 2}}$ denote a quenched expectation of two independent particles $(   R^1_n,   R^2_n)_{n\ge 0}$ sampled from a fixed realization of the environment kernels $\omega=\{K_n\}_{n\ge 1},$ where the initial data $(R^1_0,R^2_0)$ is independently sampled from $\nu^{\otimes 2}.$ Then the last expression may be rewritten as $$\mathscr B_N^2\sum_{s=1}^{Nt} E^{(\omega, 2)}_{\nu_N^{\otimes 2} } \bigg[\prod_{j=1}^2 C_{N,N^{-1}s, N^{-1/2} (   R^j_{s} - sN^{-1}   d_N), \nu_N} \big(J^N\phi\big) (    R^1_{s} - sN^{-1}   d_N,   R^2_{s} - sN^{-1}   d_N) \bigg] .$$

Unfortunately $J^N\phi$ is not yet in a form that is amenable to asymptotic analysis and tightness arguments, therefore we will need to perform Taylor expansions of some expressions therein, which will yield many ``error" terms that we will need to show are inconsequential in the limit. 

The subsequent discussion will now be split into cases, depending on which regime the location strength $  \varsigma_N$ is in.

\subsection{\hyperref[eq:regimeA]{Regime A:}  $|  \varsigma_N| = O(N^{-1/2})$ as $N\to \infty$} 

In this case, there is a simplification we can make. We will assume that all $  \varsigma_N = 0$ identically for all $N$. The reason we may assume this without loss of generality is the following. If $N^{1/2}\varsigma_N \to \boldsymbol\varsigma$ as $N\to \infty$, then we actually have $C_{N,t,x} \to e^{\boldsymbol\varsigma\bullet x+\frac12|\boldsymbol\varsigma|^2t}.$ This convergence is locally uniform in $t$ and $x$. Thus if we can show that $\mathscr B_N \mathfrak H^N(t,x)$ converges in law to some field $\mathcal U(t,x)$ when all $  \varsigma_N=0$, then for the general case it would immediately follow that $$\mathscr B_N \cdot C_{N,t,x} \mathfrak H^N(t,x+N^{1/2}\varsigma_Nt)\stackrel{d}{\longrightarrow} e^{\boldsymbol\varsigma\bullet x+\frac12|\boldsymbol\varsigma|^2t}\mathcal U(t,x+\boldsymbol \varsigma t),$$ where the convergence in law is with respect to the same topology (which will be a space of tempered distributions). In Theorems \ref{main1}, \ref{main2}, and \ref{main3}, we can thus reduce this entire bulk regime to this case where all $  \varsigma_N=0$. This simplification is not crucial to the proof, but it will certainly be convenient. The convenience of this simplification will be to avoid a delicate combinatorial analysis of certain cross-terms that arise while Taylor expanding various terms.

Henceforth, we therefore say that $  \varsigma_N=0$. Since $\rho$ has mass 0, we have 
\begin{align*}J^N\phi(   y_1,   y_2)&=  \int_{I^2}  \bigg[ \prod_{j=1,2}\phi(N^{-1/2}   x_j)\;\;-\;\prod_{j=1,2}\phi(N^{-1/2}   y_j)\bigg] \rho\big( (   y_1,   y_2), (\mathrm d   x_1,\mathrm d   x_2) \big).
\end{align*}

Again, we emphasize that this is only valid in the present case where $  \varsigma_N=0$ as this kills the exponential term. Taylor expanding $\prod_{j=1,2}\phi(N^{-1/2}   x_j)\;\;-\;\prod_{j=1,2}\phi(N^{-1/2}   y_j)$ to order $p$ around the point $(y_1,y_2)$, we obtain that 
\begin{align*}
    \prod_{j=1,2}&\phi(N^{-1/2}   x_j)\;\;-\;\prod_{j=1,2}\phi(N^{-1/2}   y_j) \\&= \sum_{\substack{0\le \#(   r_1)\le p,0\le  \#(   r_2) \le p\\ \#(   r_1+   r_2)>0}} N^{-\frac{\#(   r_1+   r_2)}2}\frac{\partial^{   r_1}\phi(N^{-1/2}y_1) \partial^{   r_2} \phi(N^{-1/2}y_2)}{   r_1!   r_2!} (x_1-y_1)^{   r_1} (x_2-y_2)^{   r_2} + \mathcal R^1_N(\phi,x_1,x_2,y_1,y_2),
\end{align*}
where the sum is over multi-indices $   r$ and $\#(   r) := r_1+...+ r_d$, and where by Taylor's theorem, the remainder term $\mathcal R^1_N$ satisfies $$\sup_{x_1,x_2,y_2,y_2}\mathcal |   R^1_N(\phi, x_1,x_2,y_1,y_2)| \leq C\|\phi\|_{C^{p+1}(\mathbb R^d)}^2 N^{-p-\frac1{2}} \big( |x_1-y_1|^{2p+2} +|x_2-y_2|^{2p+2}\big) .$$

We now recall from Definition \ref{etadef} that for two multi-indices $   r_1,   r_2 \in \mathbb Z_{\ge 0}^d,$ we defined the function $\boldsymbol \eta_{   r_1,   r_2}: I\to \mathbb R$ by $$\boldsymbol \eta_{   r_1,   r_2} (   y_1-   y_2):= \int_{I^2} (x_1-y_1)^{   r_1} (x_2-y_2)^{   r_2} \rho\big( (   y_1,   y_2), (\mathrm d   x_1,\mathrm d   x_2) \big).$$

Then the above Taylor expansion together with the deterministicity of Item \eqref{a23} in Assumption \ref{a1} yields 
\begin{align*}
    J^N\phi(   y_1,   y_2) = N^{-p}& \sum_{\#(   r_1)= p, \#(   r_2) = p} \frac{\partial^{   r_1}\phi(N^{-1/2}y_1) \partial^{   r_2} \phi(N^{-1/2}y_2)}{   r_1!   r_2!} \boldsymbol \eta_{   r_1,   r_2} (   y_1-   y_2) \\&+ \mathcal \int_{I^2} \mathcal R^1_N(\phi,x_1,x_2,y_1,y_2)\rho\big( (   y_1,   y_2), (\mathrm d   x_1,\mathrm d   x_2) \big).
\end{align*}
Indeed, by Lemma \ref{prev} all terms of order strictly less than $p$ will vanish, thus leaving only terms of order $p$. We now have the following lemma about the latter term, which will be shown to be inconsequential. 
\begin{lemma}\label{45}
There exists $C>0$ such that uniformly over all $N,y_1,y_2,\phi $ we have $$
    \bigg|\mathcal \int_{I^2} \mathcal R^1_N(\phi,x_1,x_2,y_1,y_2)\rho\big( (   y_1,   y_2), (\mathrm d   x_1,\mathrm d   x_2) \big)\bigg|\leq C \|\phi\|_{C^{p+1}(\mathbb R^d)}^2 N^{-p-\frac12} \Fd (|y_1-y_2|)^{1/2}.$$
\end{lemma}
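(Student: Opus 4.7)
The proof will closely mirror that of Proposition \ref{prop:dcbound}: the key is to exploit the signed-measure nature of $\rho$ via a Cauchy--Schwarz trick, splitting the analysis into a pointwise remainder bound times a total-variation contribution that produces the $\Fd(|y_1-y_2|)^{1/2}$ decay.

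First, I would recall the general fact (Lemma \ref{tbb} in the paper) that for any finite signed measure $\rho$ on $I^2$ with Hahn decomposition $\rho=\rho^+-\rho^-$, and any measurable $f$,
\begin{equation*}
\Bigl|\int f\,d\rho\Bigr|\le \int |f|\,d|\rho| \le \Bigl(\int |f|^2\,d|\rho|\Bigr)^{1/2}\,\|\rho\|_{TV}^{1/2},
\end{equation*}
where $|\rho|=\rho^++\rho^-$. Since $|\rho|\le \boldsymbol p^{(2)}((y_1,y_2),\cdot)+\mu(\cdot-y_1)\otimes\mu(\cdot-y_2)$ as positive measures, one obtains the estimate
\begin{equation*}
\Bigl|\int \mathcal R^1_N\,d\rho\Bigr|^2\le \Bigl(\int |\mathcal R^1_N|^2\, d\bigl(\boldsymbol p^{(2)}+\mu\otimes\mu\bigr)\Bigr)\cdot \bigl\|\boldsymbol p^{(2)}((y_1,y_2),\cdot)-\mu(\cdot-y_1)\otimes\mu(\cdot-y_2)\bigr\|_{TV}.
\end{equation*}

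Next I would control the two factors separately. For the TV factor, Item \eqref{a24} of Assumption \ref{a1} (specifically the inequality \eqref{tvb} with $k=2$) gives directly
\begin{equation*}
\bigl\|\boldsymbol p^{(2)}((y_1,y_2),\cdot)-\mu(\cdot-y_1)\otimes\mu(\cdot-y_2)\bigr\|_{TV}\le C\,\Fd(|y_1-y_2|).
\end{equation*}
For the $L^2$ factor, I would plug in the pointwise Taylor remainder estimate already recorded in the excerpt,
\begin{equation*}
|\mathcal R^1_N(\phi,x_1,x_2,y_1,y_2)|\le C\|\phi\|_{C^{p+1}(\mathbb R^d)}^{2}N^{-p-\frac12}\bigl(|x_1-y_1|^{2p+2}+|x_2-y_2|^{2p+2}\bigr),
\end{equation*}
so that the $L^2$ integrand is bounded by $C\|\phi\|_{C^{p+1}}^{4}N^{-2p-1}\bigl(|x_1-y_1|^{2p+2}+|x_2-y_2|^{2p+2}\bigr)^2$. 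Since the marginal law of $x_j-y_j$ under either $\boldsymbol p^{(2)}((y_1,y_2),\cdot)$ or $\mu(\cdot-y_j)$ is precisely $\mu$ (translation invariance, Assumption \ref{a1}\eqref{a12}--\eqref{a22}), the polynomial moments $\int |x_j-y_j|^{4p+4}\,d(\boldsymbol p^{(2)}+\mu\otimes\mu)$ are bounded by a constant independent of $(y_1,y_2)$, using the exponential moment hypothesis in Assumption \ref{a1}\eqref{a22}.

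Combining the two factors gives
\begin{equation*}
\Bigl|\int \mathcal R^1_N\,d\rho\Bigr|\le C\,\|\phi\|_{C^{p+1}(\mathbb R^d)}^{2}\,N^{-p-\frac12}\,\Fd(|y_1-y_2|)^{1/2},
\end{equation*}
which is exactly the claim. There is no real obstacle here: the only subtlety is to make sure one integrates the \emph{squared} remainder against a positive measure (so that the integral is uniformly bounded in $y_1,y_2$) and pays the full $\Fd(|y_1-y_2|)^{1/2}$ from the TV bound via Cauchy--Schwarz, rather than naively bounding the integrand pointwise (which would not yield any decay in $|y_1-y_2|$ at all). This square-root loss is the same trick used in Proposition \ref{prop:dcbound}, and is harmless since $\Fd$ has exponential decay.
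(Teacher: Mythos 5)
Your argument is correct and is exactly the paper's proof: take the pointwise Taylor remainder bound on $\mathcal R^1_N$, invoke the total-variation bound $\|\rho((y_1,y_2),\bullet)\|_{TV}\le\Fd(|y_1-y_2|)$ from Assumption \ref{a1}\eqref{a24}, and combine them via the Cauchy--Schwarz trick of Lemma \ref{tbb} to pick up the $\Fd^{1/2}$ factor. You have simply unfolded the application of Lemma \ref{tbb} that the paper leaves to the reader.
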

\begin{proof}
     Recall from above that $\sup_{x_1,x_2,y_2,y_2}\mathcal |   R^1_N(\phi, x_1,x_2,y_1,y_2)| \leq C\|\phi\|_{C^{p+1}(\mathbb R^d)}^2 N^{-p-\frac1{2}} \big( |x_1-y_1|^{2p+2} +|x_2-y_2|^{2p+2}\big) .$ Note that Item \eqref{a24} of Assumption \ref{a1} says that $\|\rho((y_1,y_2),\bullet) \|_{TV} \leq \Fd(|y_1-y_2|)$. Now apply e.g. Lemma \ref{tbb} to immediately obtain the claim.
\end{proof}
We thus define the first error term 
\begin{equation}
    \label{errerr} \mathcal V_N^1(t,\phi):= \mathscr B_N^2\sum_{s=0}^{Nt-1} \int_{I^2} \bigg( \mathcal \int_{I^2} \mathcal R^1_N(\phi,x_1,x_2,y_1,y_2)\rho\big( (   y_1,   y_2), (\mathrm d   x_1,\mathrm d   x_2) \big)\bigg) Z_N(s,\dr y_1)Z_N(s,\dr y_2).
\end{equation}
This quantity will be important later. We also define another error term given by 
\begin{align}
    \notag \mathcal V_N^2(t,\phi):= \mathscr B_N^2\sum_{s=0}^{Nt-1} \int_{I^2}\bigg( N^{-p}& \sum_{\#(   r_1)= p, \#(   r_2) = p} \frac{\partial^{   r_1}\phi(N^{-1/2}y_1) \big(\partial^{   r_2} \phi(N^{-1/2}y_2)-\partial^{   r_2} \phi(N^{-1/2}y_1)\big)}{   r_1!   r_2!} \\ &\cdot\boldsymbol \eta_{   r_1,   r_2} (   y_1-   y_2)\rho\big( (   y_1,   y_2), (\mathrm d   x_1,\mathrm d   x_2) \big)\bigg) Z_N(s,\dr y_1)Z_N(s,\dr y_2).\label{errerr1}
\end{align}
These calculations are summarized just below in the first bullet point of Proposition \ref{4.3}.

\subsection{\hyperref[eq:regimeB]{Regime B:} $N^{-1/2} \ll  |  \varsigma_N| \ll  N^{-1} \psi_N(p,d)$ as $N\to\infty$} 

In this case, there is no simple reduction that will allow us to consider some fixed $  \varsigma_N$, thus we simply have to work with the varying sequence. Define$$\mathcal A^1_N(\phi,   y_1,   y_2):=  \int_{I^2} \prod_{j=1}^2 e^{  \varsigma_N\bullet(   x_j-   y_j) - \log M(  \varsigma_N)} \cdot \big[ \prod_{j=1}^2\phi(N^{-1/2}   x_j)\;\;-\;\phi(N^{-1/2}   y_1)^2\big] \rho\big( (   y_1,   y_2), (\mathrm d   x_1,\mathrm d   x_2) \big),$$
 we can then write 
\begin{align}\label{j1}
    J^N\phi(   y_1,   y_2) &= \mathcal A^1_N(\phi,y_1,y_2)+\phi(N^{-1/2}   y_1)^2\cdot \int_{I^2} \prod_{j=1}^2 e^{  \varsigma_N\bullet(   x_j-   y_j) - \log M(  \varsigma_N)} \rho\big( (   y_1,   y_2), (\mathrm d   x_1,\mathrm d   x_2) \big) \\
    &= \mathcal A^1_N(\phi,y_1,y_2)+\phi(N^{-1/2}   y_1)^2\cdot \boldsymbol{\vartheta}_{   \varsigma_N}(   y_1 -    y_2),
\end{align}
where in the last line we used the function $\boldsymbol{\vartheta}_{   \varsigma_N}(   y_1 -    y_2)$ defined in Definition \ref{def:vartheta}. Also recall $\z_{   v}$ from Definition \ref{z}. Now we are going to expand the above exponential in powers of $   \varsigma_N$, truncating after the $(2p)^{th}$ term. We will see shortly that only one of the terms in the expansion will actually be relevant. 

Define a function $\mathcal A^2_N:I\to\mathbb R$ by the formula \begin{align*}
    \boldsymbol{\vartheta}_{   \varsigma_N}(   y_1 -    y_2) =  e^{-2\log M(  \varsigma_N)} \z_{   \varsigma_N}(   y_1-   y_2) +  \mathcal A_N^2(   y_1-   y_2).
\end{align*}
Note that the above expressions appear to be functions of $(y_1,y_2)$, but we have implicitly used the translation invariance condition in Assumption \ref{a1}\eqref{a12} to say that they are actually only dependent on $y_1-y_2$.

We will ultimately show that $\z_{ \boldsymbol{v}}$ is important in the limit, while the terms $\mathcal A^j_N$ have vanishing contribution. The reason for this is the following estimate.

\begin{lem}\label{46}
    There exists $C>0$ such that uniformly over $N,y_1,y_2$ we have the bound $$|\mathcal A_N^2(y)| \leq C\min \big\{ |  \varsigma_N|^{2p+1} , \Fd (|y|)^{1/2}\big\} \leq C |  \varsigma_N|^{2p+\frac12}\Fd (|y|)^{\frac1{4p+2}}.$$
\end{lem}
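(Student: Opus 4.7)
The plan is to use the identity
\begin{equation*}
\mathcal A_N^2(y) \;=\; \bigl(e^{\boldsymbol u_{\varsigma_N}(y)}-1\bigr) \;-\; e^{-2\log M(\varsigma_N)}\,\z_{\varsigma_N}(y),
\end{equation*}
which follows from Definition \ref{def:vartheta} together with the definition of $\mathcal A_N^2$ given immediately before the lemma statement. I would prove the two halves of the $\min$ separately, then combine them via the elementary inequality $\min(a,b)\le a^{\theta}b^{1-\theta}$ for some $\theta\in[0,1]$.

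For the bound of order $|\varsigma_N|^{2p+1}$: Proposition \ref{prop:TaylorG} gives $\boldsymbol u_{\varsigma_N}(y)=\z_{\varsigma_N}(y)+O(|\varsigma_N|^{2p+1})$ uniformly in $y$. Since $\z_{\varsigma_N}$ is a homogeneous polynomial of degree $2p$ in $\varsigma_N$ with uniformly bounded coefficient functions $\boldsymbol\eta_{r_1,r_2}$ (Definition \ref{etadef} combined with the exponential moment bound of Assumption \ref{a1}\eqref{a22}), we have $|\z_{\varsigma_N}(y)|=O(|\varsigma_N|^{2p})$, hence also $|\boldsymbol u_{\varsigma_N}(y)|=O(|\varsigma_N|^{2p})$. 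Expanding $e^x=1+x+O(x^2)$ for bounded $|x|$,
\begin{equation*}
e^{\boldsymbol u_{\varsigma_N}(y)} - 1 \;=\; \z_{\varsigma_N}(y) + O(|\varsigma_N|^{2p+1}) + O(|\varsigma_N|^{4p}),
\end{equation*}
and the last term is $O(|\varsigma_N|^{2p+1})$ whenever $p\ge 1$. Likewise $1-e^{-2\log M(\varsigma_N)}=O(|\varsigma_N|)$ (since $M(0)=1$ and $\log M$ is smooth), so $e^{-2\log M(\varsigma_N)}\z_{\varsigma_N}(y)=\z_{\varsigma_N}(y)+O(|\varsigma_N|^{2p+1})$. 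Subtracting, the leading $\z_{\varsigma_N}(y)$ terms cancel and we recover $|\mathcal A_N^2(y)|\le C|\varsigma_N|^{2p+1}$.

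For the bound of order $\Fd(|y|)^{1/2}$: Proposition \ref{prop:dcbound} yields $|\z_{\varsigma_N}(y)|\le C|\varsigma_N|^{2p}\Fd(|y|)^{1/2}\le C'\Fd(|y|)^{1/2}$ uniformly in $N$ (using that $\varsigma_N$ stays bounded). Proposition \ref{dbound} applied with $k=2$ and $\beta_1=\beta_2=\varsigma_N$ supplies the alternative estimate $|\boldsymbol u_{\varsigma_N}(y)-\z_{\varsigma_N}(y)|\le C\Fd(|y|)^{1/2}$, which combined with the previous bound gives $|\boldsymbol u_{\varsigma_N}(y)|\le C\Fd(|y|)^{1/2}$ — in particular uniformly bounded, since $\Fd(0)<\infty$. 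Therefore $|e^{\boldsymbol u_{\varsigma_N}(y)}-1|\le C|\boldsymbol u_{\varsigma_N}(y)|\le C'\Fd(|y|)^{1/2}$, and since $e^{-2\log M(\varsigma_N)}$ is uniformly bounded in $N$, combining yields $|\mathcal A_N^2(y)|\le C\Fd(|y|)^{1/2}$.

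The first inequality of the lemma follows by taking the minimum of these two bounds, and the second is obtained by applying $\min(a,b)\le a^{\theta}b^{1-\theta}$ to $a=|\varsigma_N|^{2p+1}$ and $b=\Fd(|y|)^{1/2}$ with an appropriate $\theta\in(0,1)$. The main effort is purely bookkeeping of error terms in the Taylor expansions of $e^{\boldsymbol u_{\varsigma_N}}$ and $e^{-2\log M(\varsigma_N)}$; all substantive estimates are inherited from Propositions \ref{dbound}, \ref{prop:TaylorG}, and \ref{prop:dcbound}, and no genuinely new obstacle arises.
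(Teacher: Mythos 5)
Your proof is correct. It differs from the paper's in how it reaches the $|\varsigma_N|^{2p+1}$ bound, which the paper explicitly flags as the ``trickier'' half. The paper goes back to the integral representation $\boldsymbol{\vartheta}_{\varsigma_N}(y)=e^{-2\log M(\varsigma_N)}\int_{I^2}\prod_j e^{\varsigma_N\bullet(x_j-y_j)}\rho(\cdots)$, splits each exponential factor as $e^x=R_{2p}(x)+\sum_{k\le 2p}x^k/k!$, bounds the three remainder-containing cross terms via Lemma~\ref{grow}, and invokes the vanishing \eqref{vanish} of mixed moments with $k_1<p$ or $k_2<p$ to extract $\z_{\varsigma_N}$ from the fourth term. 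You instead stay at the scalar level: you deploy Proposition~\ref{prop:TaylorG} (itself built on the same deterministicity input via Lemma~\ref{prev}/Proposition~\ref{dbound}) to write $\boldsymbol u_{\varsigma_N}=\z_{\varsigma_N}+O(|\varsigma_N|^{2p+1})$, and then Taylor-expand both $e^{\boldsymbol u_{\varsigma_N}}$ and $e^{-2\log M(\varsigma_N)}$; the key cancellation of the leading $\z_{\varsigma_N}$ terms then happens without ever opening the integral. Your version is more modular and arguably shorter, since it reuses the already-proved Taylor structure rather than re-deriving it; the paper's version is more self-contained and keeps explicit control of every moment appearing. Both use the same underlying facts (uniform exponential moments from Lemma~\ref{grow}, vanishing of low-degree joint moments from Assumption~\ref{a1}\eqref{a23}). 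One small thing worth noting: the chain $|\boldsymbol u_{\varsigma_N}(y)|\le C|\varsigma_N|^{2p}$ uniformly in $y$, together with $p\ge 1$, is what lets you absorb the $O(\boldsymbol u^2)=O(|\varsigma_N|^{4p})$ term into $O(|\varsigma_N|^{2p+1})$; and since $\mu$ need not be mean-zero, you correctly only claim $1-e^{-2\log M(\varsigma_N)}=O(|\varsigma_N|)$, which suffices. Your $\Fd^{1/2}$ half is also sound, again routed through $\boldsymbol u_{\varsigma_N}$ rather than applying Lemma~\ref{tbb} directly to the integrals as the paper does.
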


\begin{proof}
    The second inequality is clear from the first using $\min\{a,b\} \leq a^{\theta}b^{1-\theta}$ where $\theta:= \frac{2p+\frac12}{2p+1}\in [0,1]$. 

     To prove the first inequality, the upper bound of $\Fd (|y|)^{1/2}$ is easier, so let us start with that one. Notice that Item \eqref{a24} of Assumption \ref{a1} says that $\|\rho((y_1,y_2),\bullet) \|_{TV} \leq \Fd(|y_1-y_2|)$. Now apply e.g. Lemma \ref{tbb} and the definition of $\mathcal A^2_N$ to immediately obtain the claim.
     
    The upper bound of $|  \varsigma_N|^{2p+1}$ is trickier, since a naive Taylor expansion only gives an upper bound of $|  \varsigma_N|^{p+1}$ which is not enough. To this end, let us define $$R_{2p}(x) := e^x - \sum_{k=0}^{2p}\frac{x^k}{k!},$$
so that $|R_{2p}(x)| \leq |x|^{2p+1} e^{|x|}.$ 
Next, write 
    \begin{align*}
       \prod_{j=1,2} e^{   \varsigma_N\bullet (   x_j-   y_j) }& = \prod_{j=1,2} R_{2p} \big(   \varsigma_N\bullet (   x_j-   y_j) \big) + R_{2p} ( \varsigma_N\bullet (   x_1-   y_1)) \bigg( \sum_{k=0}^{2p} \frac{(   \varsigma_N\bullet (   x_2-   y_2) )^k}{k!} \bigg)\\&+R_{2p} ( \varsigma_N\bullet (   x_2-   y_2)) \bigg( \sum_{k=0}^{2p} \frac{(   \varsigma_N\bullet (   x_1-   y_1) )^k}{k!}\bigg) + \prod_{j=1,2} \bigg(\sum_{k=0}^{2p} \frac{(   \varsigma_N\bullet (   x_j-   y_j) )^k}{k!}\bigg) 
    \end{align*}
    Out of these four terms, the first term is bounded above by $C|  \varsigma_N|^{4p+2} e^{|  \varsigma_N| \sum_{j=1,2} |x_j-y_j|}$. By Lemma \ref{grow}, we can integrate with respect to $\rho\big( (y_1,y_2), (\mathrm \dr x_1,\mathrm \dr x_2) \big)$ and obtain an upper bound of $C|  \varsigma_N|^{4p+2}$. The second and third terms are bounded above by $C|  \varsigma_N|^{2p+1} e^{|  \varsigma_N| \sum_{j=1,2} |x_j-y_j|}$. By Lemma \ref{grow}, we can integrate with respect to $\rho\big( (y_1,y_2), (\mathrm \dr x_1,\mathrm \dr x_2) \big)$ and obtain an upper bound of $C|  \varsigma_N|^{2p+1}$. Therefore, only the fourth term can be problematic. For this, notice that from the deterministicity in Assumption \ref{a1}\eqref{a23}, we necessarily have 
\begin{equation}\label{vanish}\int_{I^2} \prod_{j=1}^2 \big(   \varsigma_N\bullet (   x_j-   y_j)\big)^{k_j}\rho\big( (y_1,y_2), (\mathrm \dr x_1,\mathrm \dr x_2) \big) = 0,\;\;\;\;\; \text{if} \;\;\;\; k_1<p \;\;\; or \;\;\; k_2<p.\end{equation}
Now use \eqref{vanish} and \eqref{pterm} to write 
    \begin{align*}
        \int_{I^2} \prod_{j=1,2} \bigg(&\sum_{k=0}^{2p} \frac{(   \varsigma_N\bullet (   x_j-   y_j) )^k}{k!}\bigg)  \rho\big( (y_1,y_2), (\mathrm \dr x_1,\mathrm \dr x_2) \big)\\& = \z_{   \varsigma_N} (   y_1-   y_2)+ \int_{I^2} \sum_{\substack{p\leq k_1,k_2\leq 2p\\k_1+k_2>2p}}\frac{ (   \varsigma_N\bullet (x_1-y_1))^{k_1}(   \varsigma_N\bullet(x_2-y_2))^{k_2}}{k_1!k_2!}\rho\big( (y_1,y_2), (\mathrm \dr x_1,\mathrm \dr x_2) \big).
    \end{align*}
    Now notice that $$\big|(   \varsigma_N\bullet (x_1-y_1))^{k_1}(   \varsigma_N\bullet(x_2-y_2))^{k_2}\big| \leq |  \varsigma_N|^{k_1+k_2} |x_1-y_1|^{k_1} |x_2-y_2|^{k_2} ,$$ which then easily yields the claim by Lemma \ref{grow} and the fact that $k_1+k_2 \ge 2p+1$. 
\end{proof}

We therefore define\begin{equation}\label{err1}\mathcal E^1_N(t,\phi) = \mathscr B_N^2\sum_{s=0}^{Nt} 
\int_{I^2}\mathcal A^1_N(\phi,   y_1,   y_2)Z_N(s,\mathrm d   y_1)Z_N(s,\mathrm d   y_2),\end{equation} and 
\begin{align} \mathcal E_N^2(t,\phi)&:= e^{-2\log M(  \varsigma_N)}\mathscr B_N^2 \sum_{s=0}^{Nt-1}\int_{I^2} \phi(N^{-1/2}y_1)^2\mathcal A_N^2(   y_1-   y_2)Z_N(s,\mathrm dy_1)Z_N(s,\mathrm dy_2).\label{err234}
\end{align}

These calculations are summarized just below in the second bullet point of Proposition \ref{4.3}.

\subsection{\hyperref[eq:regimeC]{Regime C:} $d=2,   \varsigma_N/\psi_N(p,d) \to   {\boldsymbol v} \in \mathbb R^d\backslash\{0\}$. }  In this case, the analysis of Case B still applies verbatim.

\subsection{\hyperref[eq:regimeD]{Regime D:} $d=3,   \varsigma_N \to   {\boldsymbol v} \in \mathbb R^d\backslash\{0\}$. } In this case, we still use \eqref{j1} as we did in Case B, but now we do not Taylor expand in the variable $  \varsigma_N.$ Instead we will take into consideration the entire function $\boldsymbol{\vartheta}_{   \varsigma_N}$. We furthermore let $\mathcal E^1_N$ be exactly as defined in \eqref{err1}.

\subsection{Summary of martingale calculations}

We then summarize our calculations in the following proposition, which will be used repeatedly later.
\begin{prop}\label{4.3}
    Let $M^N$ be the martingale from \eqref{mfield}. Given two probability measure $\nu_1,\nu_2$ on $I$, we let $E^{(\omega, 2)}_{\nu_1\otimes \nu_2 }$ denote a \textbf{quenched} expectation of two independent random walks $(   R^1_r,R^2_r)_{r\in \mathbb Z_{\ge 0}}$ in a fixed realization of the environment kernels $\omega= (K_i)_{i=1}^\infty$, with initial conditions $(R_0^1, R_0^2)$ distributed according to $\varrho$. For any bounded function $f:I\to\mathbb R$ define the \textbf{quadratic variation field} \begin{align}Q^f_N(t,\phi; \nu_1\otimes\nu_2):=N^{\frac{d-2}2}&\sum_{s=1}^{Nt} E^{(\omega, 2)}_{\nu_1\otimes \nu_2 }\bigg[\prod_{j=1}^2 C_{N,N^{-1}s, N^{-1/2} (   R^j_{s} - N^{-1}   d_Ns), \nu_j} \cdot \phi\big(N^{-1/2} (   R^1_{s} - N^{-1}   d_Ns)\big) f(   R^1_{s}-   R^2_{s}) \bigg].\label{qfield}\end{align} Then for all $\phi\in C_c^\infty(\mathbb R)$, $N\in \mathbb N$ and $t\in N^{-1}\mathbb Z_{\ge 0}$ we have the following asymptotics: 
    \begin{itemize}
    \item \hyperref[eq:regimeA]{Regime A}: In the case where all $    \varsigma_N =    0$, we have that 
    \begin{align*}\langle &M^N(\phi)\rangle_t= \sum_{\#(   r_1)= p, \#(   r_2) = p} \frac{1}{   r_1!   r_2!} Q_N^{\boldsymbol \eta_{   r_1,   r_2}} (t,\partial^{   r_1} \phi \cdot \partial^{   r_2} \phi; \nu_N^{\otimes 2} )\;\;\;\;+\;\;\; \sum_{j=1,2}\mathcal V_N^j(t,\phi).\end{align*}
    \item \hyperref[eq:regimeB]{Regime B} and \hyperref[eq:regimeC]{Regime C}: In the case where $N^{-1/2} \ll  |  \varsigma_N| \ll  N^{-1} \psi_N(p,d)$, and also in the case where $d=2$ and $   \varsigma_N(\log N)^{\frac{1}{2p}} \to   {\boldsymbol v} \neq 0$, we have that \begin{align*}\langle &M^N(\phi)\rangle_t= e^{-2\log M(   \varsigma_N )} Q^{\z_{\frac{   \varsigma_N}{|  \varsigma_N|}}}_N(t,\phi^2; \nu_N^{\otimes 2})\;\;\;+\;\;\;\sum_{j=1,2} \mathcal E^j_N(t,\phi).\end{align*}
    \item \hyperref[eq:regimeD]{Regime D}: In the case where $d\ge 3$ and $\varsigma_N \to  {\boldsymbol v} \ne 0$ we have that 
    \begin{align*}\langle &M^N(\phi)\rangle_t=  |\varsigma_N|^{-2p}Q^{\boldsymbol{\vartheta}_{   \varsigma_N}}_N(t,\phi^2; \nu_N^{\otimes 2})\;\;\;+\;\;\;\mathcal E^1_N(t,\phi).\end{align*}
    \end{itemize}
    Above $\boldsymbol \eta_{   r_1,   r_2}$ is the function from Definition \ref{etadef}, $\z_{  {\boldsymbol v}}$ is the function from Definition \ref{z}, and $\boldsymbol{\vartheta}_{   \varsigma_N}$ is the function from Defintion \ref{def:vartheta}. Furthermore, 
    $\mathcal V_N^1, \mathcal V_N^2, \mathcal E^1_N, \mathcal E^2_N$ are the ``error terms" that were defined in \eqref{errerr}, \eqref{err1}, and \eqref{err234} respectively. 
\end{prop}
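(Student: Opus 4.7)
The plan is to start from the quadratic variation formula \eqref{quadvar},
\[ \langle M^N(\phi)\rangle_t = \mathscr B_N^2 \sum_{s=1}^{Nt} \int_{I^2} (J^N\phi)(y_1,y_2)\, Z_N(s, \dr y_1) Z_N(s, \dr y_2), \]
and to verify each bullet by (i) inserting the regime-specific decomposition of $J^N\phi$ already derived above, and (ii) rewriting each surviving leading integral against $Z_N \otimes Z_N$ as a quenched two-point expectation of the form $Q^f_N$. The rewriting step is the key bookkeeping identity: unpacking $Z_N$ via \eqref{zn}, for any measurable $g$,
\[ \int_{I^2} g(y_1,y_2)\, Z_N(s,\dr y_1) Z_N(s,\dr y_2) = E^{(\omega,2)}_{\nu_N^{\otimes 2}}\!\Bigl[\prod_{j=1,2} C_{N, N^{-1}s, N^{-1/2}(R^j_s - N^{-1}d_N s), \nu_N}\, \tilde g(R^1_s, R^2_s)\Bigr], \]
where $\tilde g$ is $g$ composed with the recentering $y_i \mapsto R^i_s - N^{-1}d_N s$. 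Whenever $\tilde g$ factors as $f(y_1-y_2)\, h(N^{-1/2}y_1)$, the right-hand side matches exactly the summand in the definition \eqref{qfield} of $Q^f_N(t,h;\nu_N^{\otimes 2})$.

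For \hyperref[eq:regimeA]{Regime A}, $\varsigma_N = 0$ forces all prefactors $C_{N,\ldots}$ and exponential weights $e^{\varsigma_N\bullet(\cdot)-\log M(\varsigma_N)}$ to equal $1$. The Taylor expansion of $\prod_j \phi(N^{-1/2}x_j) - \prod_j \phi(N^{-1/2}y_j)$ around $(y_1,y_2)$ carried out just above, combined with Assumption~\ref{a1}\eqref{a23} which kills every term of multi-order strictly less than $p$ in either coordinate, yields
\[ J^N\phi(y_1,y_2) = N^{-p}\!\!\sum_{\#r_1 = \#r_2 = p}\!\frac{\partial^{r_1}\phi(N^{-1/2}y_1)\,\partial^{r_2}\phi(N^{-1/2}y_2)}{r_1!\,r_2!}\,\boldsymbol\eta_{r_1,r_2}(y_1-y_2)\;+\;\int_{I^2}\!\mathcal R^1_N\, \dr\rho. \]
Using the identity $\mathscr B_N^2 N^{-p} = N^{(d-2)/2}$ to match the normalization built into $Q^f_N$, and then swapping $\partial^{r_2}\phi(N^{-1/2}y_2)$ for $\partial^{r_2}\phi(N^{-1/2}y_1)$ (so that the $Q$-format is respected), produces the claimed main sum; the swap cost is exactly $\mathcal V_N^2$ by definition \eqref{errerr1}, and the Taylor remainder produces $\mathcal V_N^1$ by definition \eqref{errerr}.

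For \hyperref[eq:regimeB]{Regimes B} and \hyperref[eq:regimeC]{C}, start from the split \eqref{j1}, $J^N\phi = \mathcal A^1_N(\phi,y_1,y_2) + \phi(N^{-1/2}y_1)^2\,\boldsymbol\vartheta_{\varsigma_N}(y_1-y_2)$, and combine it with $\boldsymbol\vartheta_{\varsigma_N} = e^{-2\log M(\varsigma_N)}\z_{\varsigma_N} + \mathcal A^2_N$. Since $\z_v$ is homogeneous of degree $2p$ in $v$ (immediate from Definition~\ref{z}), one has $\z_{\varsigma_N} = |\varsigma_N|^{2p}\,\z_{\varsigma_N/|\varsigma_N|}$, and the scale matching $\mathscr B_N^2 |\varsigma_N|^{2p} = N^{(d-2)/2}$ together with the quenched-expectation rewriting from the first paragraph turns the leading piece into $e^{-2\log M(\varsigma_N)}\, Q^{\z_{\varsigma_N/|\varsigma_N|}}_N(t,\phi^2;\nu_N^{\otimes 2})$; the $\mathcal A^1_N$ and $\mathcal A^2_N$ pieces become $\mathcal E^1_N$ and $\mathcal E^2_N$ directly by \eqref{err1} and \eqref{err234}. \hyperref[eq:regimeD]{Regime D} is analogous but simpler: no Taylor expansion in $\varsigma_N$ is performed, so we just use \eqref{j1} and keep $\boldsymbol\vartheta_{\varsigma_N}$ intact; the identity $\mathscr B_N^2 = N^{(d-2)/2}/|\varsigma_N|^{2p}$ then gives the prefactor $|\varsigma_N|^{-2p}$ in front of $Q^{\boldsymbol\vartheta_{\varsigma_N}}_N$, with the $\mathcal A^1_N$ contribution giving $\mathcal E^1_N$.

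There is no serious analytic obstacle: all the hard work (the Taylor expansion and the $\boldsymbol\vartheta$ decomposition, plus the remainder bounds in Lemmas~\ref{45} and \ref{46}) is already in hand, and the proof reduces to the algebraic reorganization sketched above. The only point that requires a bit of care is the swap in Regime A between $\partial^{r_2}\phi(N^{-1/2}y_2)$ and $\partial^{r_2}\phi(N^{-1/2}y_1)$: the $Q$-formalism demands a single base point for the test function, so this cross-term must be routed into $\mathcal V_N^2$ rather than into the main term. This is harmless at the present stage because we are only defining the decomposition; the proof that $\mathcal V_N^1,\mathcal V_N^2, \mathcal E^1_N, \mathcal E^2_N$ vanish in the scaling limit is deferred to the later identification of limit points in Section~\ref{sec:6}.
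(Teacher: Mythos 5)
Your proposal is correct and follows essentially the same route as the paper: start from the quadratic variation formula \eqref{quadvar}, rewrite $Z_N\otimes Z_N$ as a quenched two-point expectation, plug in the regime-specific decomposition of $J^N\phi$ derived in Subsections 3.2--3.5, match normalizations via $\mathscr B_N^2 N^{-p}=N^{(d-2)/2}$ (Regime A) resp.\ $\mathscr B_N^2|\varsigma_N|^{2p}=N^{(d-2)/2}$ (Regimes B--D), and route the remainder and base-point-swap terms into $\mathcal V^1_N,\mathcal V^2_N,\mathcal E^1_N,\mathcal E^2_N$. Your explicit remark that the swap of $\partial^{r_2}\phi(N^{-1/2}y_2)$ for $\partial^{r_2}\phi(N^{-1/2}y_1)$ is precisely the content of $\mathcal V^2_N$ in \eqref{errerr1}, and that the vanishing of the error terms is deferred, matches the paper's intent exactly.
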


\section{SRI chains: Definitions, estimates, and convergence theorems} \label{sec:4}
In this section, we introduce a specific type of formalism that we call \emph{short-range interacting chains (SRI chains)}.  These are a specific type of Markov chain for which we prove various types of upper bounds and limit theorems. The results of this section are general and self-contained, and can be read independently of the rest of this paper. The ultimate goal of this section is to develop tools to study functionals of the form appearing in the quadratic variation field in \eqref{qfield}.

\subsection{Definition of SRI chains}
\begin{defn}[SRI chain]\label{SRI}  Fix some integers $k,d\ge 1$, and let $I$ as always be an additive locally compact subgroup of $\mathbb R^d$. Consider a Markov kernel $\br(\x,\mathrm d \mathbf y)$ on $I^k$. We say that the $q^{th}$ moments of $\br$ are bounded by $M$ if\begin{align}\sup_{\mathbf x \in I^k} \int_{I^k}|   y_i-   x_i|^q \br(\x,\mathrm d \mathbf y)&\leq M,\text{ for all } 1\le i \le k,
\label{diffs}
\end{align} Let $F: [0,\infty) \to [0,\infty)$ be a decreasing function. Let $\nu$ be some probability measure on $I$, and let $M>0$.  
A $k\times d$ \textbf{SRI chain with base measure $\nu$ and decay function $F$} is defined as a Markov kernel $\br$ on $I^k$, satisfying 
\begin{align}\label{tvb2}\| \br(\x,\bullet) - \nu^{\otimes k} (\mathbf x-\bullet)\|_{TV} &\leq F\big(\min_{1\le i<j\le k} |    x_i-   x_j|\big)\;\;\text{ for all } \x\in I^k .
\end{align} Here $\mathbf x = (   x_1,...,   x_k) \in I^k \subset \mathbb R^{kd}$. 
We denote by $\mathcal E^{\mathrm{SRI}}_{k\times d}(q,M,F)$ the set of all SRI chains on $I^k$ whose $q^{th}$ moments are bounded by $M$, and such that \eqref{tvb2} holds with $F$ on the right side, though we allow the base measure $\nu$ to vary between different elements of $\mathcal E^{\mathrm{SRI}}_{k\times d}(q,M,F)$. 

We furthermore say that the SRI chain is \textbf{centered} if $$\int_{I^k} (y_j-x_j) \br (\x,\dr\y) = \int_I    u \;\nu(\dr    u),\;\; \x\in I^k, 1\le j \le k.$$
We will say that an SRI chain is $\delta$-\textbf{repulsive} 
$$\int_{I^k} |   y_i-   y_j-(   x_i-   x_j)| \br(\x ,\dr \y) 
> \delta
\text{ for all $\x\in I^k$ and all $1\le i <j \le k$.}$$
\end{defn}

Here ``SRI" refers to the fact that the $k$ different coordinates or ``particles" only have short-range interactions with one another and behave independently when far apart. In other words, an SRI chain is just a Markov chain whose one-step transitions very closely resemble those of a random walk as one gets further away from the boundary of the ``$(k-1)d$-dimensional Weyl chamber" in $\mathbb R^{kd}$. An SRI chain being $\delta$-repulsive implies (but is stronger than imposing) that these Weyl chamber boundaries are not absorbing: in some sense these boundaries are uniformly repelling with strength determined by $\delta$. We take the convention that the minimum of the empty set is $+\infty$, thus if $k=1$, the above definition \eqref{tvb2} is vacuous and the Markov chain associated to the kernel $\br$ is simply the random walk in $\mathbb R^d$ with increment distribution $\nu.$

\begin{ex} The quintessential example of a \textit{centered} SRI chain is the random walk on $I^k$ with increment distribution $\nu^{\otimes k}$. However these are certainly not the only ones, and the family of $k$-point motions $\boldsymbol p^{(k)}$ associated to any given random walk in random environment model provides its own distinct example of a centered SRI chain, see Definition \ref{kmotion} above. Centered SRI chains will \textit{not} be the only ones we consider. Very important examples of \textit{non-centered} SRI chains include the Girsanov tilts of the $k$-point motions, see \eqref{qk} above. These tilted chains will be instrumental in the analysis of proving our main results, and we will prove that the tilts are still SRI chains that are ``close enough" to centered chains to inherit similar properties, e.g. diffusive behavior. 
\end{ex}

\begin{defn}[Markov operator, canonical process, and canonical measure]
    Let $\br \in \Ek$ be an SRI chain with base measure $\nu$ and decay function $F$. Define the associated \textbf{Markov operator} $P_{\br}$ to act on functions $f$ by the formula $$(P_{\br} f)(\mathbf x) := \int_{I^k} f(\y) \br ( \mathbf x,\dr \y),$$ defined for all Borel-measurable functions $f:I^k\to\mathbb R$ for which the integral converges absolutely for all $\x\in I^k.$

    We also denote by $(\mathbf R_r)_{r\ge 0} = (   R^1_r,...,   R^k_r)_{r\ge 0}$ the \textbf{canonical coordinate process} on $(I^k)^{\mathbb Z_{\ge 0}},$ and we denote by $\Pbb$ the \textbf{canonical probability measure} on $(I^k)^{\mathbb Z_{\ge 0}}$ under which the canonical coordinate process is distributed as the Markov chain with transition kernel $\br$ started from initial state $\x$. We also denote by $\Ebb$ the associated \textbf{expectation operator.}
\end{defn}

\begin{defn}
    Define the \textbf{distance} between two SRI chains $\br,\br'\in \mathcal E_{k\times d}^{\mathrm{SRI}}(q,M,F)$ by $$d_{\mathrm{SRI}} (\br,\br'):= \sup_{\x\in I^k} \| \br(\x,\bullet) - \br' (\x,\bullet)\|_{TV}.$$
    We do not impose that $\br,\br'$ have the same base measure $\nu$, though $(q,M,F)$ will always be the same and fixed for any two SRIs that we compare. We say that a sequence of SRI chains $\brn$ \textbf{converges} as $N\to \infty$ to an SRI chain $\br$ if $d_{\mathrm{SRI}} (\br,\brn) \to 0$ as $N\to \infty$.
\end{defn}

\begin{defn}\label{ekk}
    Fix $\sigma>0$, $M>0$, and a decreasing function $F:[0,\infty)\to[0,\infty)$ of exponential decay at infinity. 
    We say that $\br \in \Ekk$ if $\br$ is an SRI chain with decay function $F$, and the assumption \eqref{diffs} is replaced by the (much stronger) assumption $$\int_{I^k} e^{\sigma |y_j-x_j|} \br(\x,\dr\y)\leq M,\;\;\;\; \x\in I^k, 1\le i\le k.$$
\end{defn}

In other words, $\Ekk$ is just a class of SRI chains where the $q^{th}$ moment assumption is replaced by an exponential moment assumption. The following quantity will play an extremely important role going forward: for $N>1$ define \begin{equation}\label{omega_n}\omega_N(d):= (N^{-1}\psi_N(p,d))^{2p} = \begin{cases}
    N^{-1/2} , & d=1 \\ (\log N)^{-1} ,& d=2 \\ 1, & d=3.
\end{cases}\end{equation}

We now state the several theorems about SRI chains that will be most relevant for us. This includes 
an invariance principle to $kd$-dimensional Brownian motion, a sharp anticoncentration estimate and heat kernel bound for SRI chains, an existence and uniqueness theorem for invariant measures, and limit laws for additive functionals of these chains. The proofs of these theorems will be left to the appendices.

\subsection{Invariance principle}
\begin{thm}[Invariance principle]\label{inv00}
    Fix $q>4$ and $M>0$, and $F:[0,\infty)\to[0,\infty)$ of exponential decay at infinity. 
    Consider a sequence $\brn \in\mathcal E^{\mathrm{SRI}}_{k\times d}(q,M,F)$, and say $\nu_N$ is the base measure of $\brn$. Define the $\mathbb R^{kd}$-valued process $$\mathbf X_r^N:= \mathbf R_r - r\mathbf s_N,$$ where $\mathbf s_N: = \mathbf s(\nu_N)$ and where $\mathbf s(\nu):= \int_{I^k} \mathbf u \;\nu^{\otimes k} (\dr \mathbf u)$ is deterministic. Assume that $\brn$ converge as $N\to \infty$ to some limiting SRI chain $\br$ that is centered 
    and $\delta$-repulsive. Then we have a diffusive bound $$\sup_{N\ge 1} \sup_{\x\in I^k} \mathbf E^{\brn}_{\x} [ |\mathbf X^N_r - \mathbf X^N_{r'}|^q]^{1/q} \leq C|r-r'|^{1/2}.$$
    
    Furthermore, let $\mathbf x_N$ be a sequence of vectors in $I^k$ such that $N^{-1/2}\x_N\to \boldsymbol x\in \mathbb R^{kd}$. Then as $N\to\infty$, the sequence of processes $(N^{-1/2} \mathbf X^N_{Nt})_{t\in [0,T]}$, viewed under $\mathbf P^{b^{\nu_N}}_{\x_N}$, converges in law in the topology of $C[0,T]$ for any $T>0$, to $(\Sigma B(t)+\boldsymbol x)_{t\in [0,T]}$ where $B$ is a standard $kd$-dimensional Brownian motion and $\Sigma^T \Sigma$ is the covariance matrix of $\nu^{\otimes k}$, with $\nu$ being the base measure of $\br$.
\end{thm}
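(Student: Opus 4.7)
The plan is to decompose $\mathbf{X}^N_r$ into a martingale plus a small predictable drift, control both pieces via Burkholder--Davis--Gundy and the short-range structure, and then apply the martingale functional CLT. Let $m_N(\x) := \int_{I^k}(\y-\x)\,\brn(\x,\dr\y)$ denote the one-step drift of $\brn$, and write
\begin{align*}
\mathbf{X}^N_r &= \mathbf{R}_0 + \mathcal{M}^N_r + \mathcal{A}^N_r, \\
\mathcal{M}^N_r &:= \sum_{s=0}^{r-1}\bigl(\mathbf{R}_{s+1}-\mathbf{R}_s - m_N(\mathbf{R}_s)\bigr), \quad \mathcal{A}^N_r := \sum_{s=0}^{r-1}\bigl(m_N(\mathbf{R}_s) - \mathbf{s}_N\bigr).
\end{align*}
The key observation is that since the limit $\br$ is centered, one has $\int(\y - \x)\,\br(\x,\dr\y) \equiv \mathbf{s}$ pointwise on $I^k$; applying the standard interpolation $|\int f\,(d\mu_1-d\mu_2)| \leq C(\int|f|^q d\mu_1 + \int |f|^q d\mu_2)^{1/q}\|\mu_1-\mu_2\|_{TV}^{1-1/q}$ to the coordinate functions $f(\y) = y_j - x_j$ then yields $\sup_{\x\in I^k}|m_N(\x) - \mathbf{s}_N| \leq C\bigl(d_{\mathrm{SRI}}(\brn,\br)^{1-1/q} + |\mathbf{s}_N - \mathbf{s}|\bigr) =: \varepsilon_N \to 0.$

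For the diffusive moment bound I will apply Burkholder--Davis--Gundy componentwise to $\mathcal{M}^N$: using the uniform $q$-th moment assumption \eqref{diffs} for the one-step increments gives $\mathbf{E}_\x^{\brn}[|\mathcal{M}^N_r - \mathcal{M}^N_{r'}|^q]^{1/q} \leq C|r-r'|^{1/2}$, while the pointwise drift bound yields the deterministic estimate $|\mathcal{A}^N_r - \mathcal{A}^N_{r'}| \leq (r-r')\varepsilon_N$. Summing gives the first assertion of the theorem, and at the diffusive scale one reads off $\sup_{t \in [0,T]} N^{-1/2}|\mathcal{A}^N_{Nt}| = O(N^{1/2}\varepsilon_N) \to 0$, so that the entire scaling limit is carried by $\mathcal{M}^N$.

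It therefore suffices to show $N^{-1/2}\mathcal{M}^N_{\lfloor Nt\rfloor} \Rightarrow \Sigma B(t)$ in $C([0,T],\mathbb{R}^{kd})$, for which I invoke the martingale functional CLT. Tightness will be immediate from the $L^q$ bound with $q>4$ via Kolmogorov's criterion, and the Lindeberg condition will follow from the uniform $q$-th moment bound on increments. The remaining ingredient is $N^{-1}\langle \mathcal{M}^N\rangle_{Nt} \to t\,\Sigma^T\Sigma$ in probability: at state $\x$ the conditional covariance of $\brn$ equals $\mathrm{Cov}(\nu_N^{\otimes k})$ plus a correction of size $O(F(\min_{i<j}|x_i-x_j|)^{1-2/q}) + O(d_{\mathrm{SRI}}(\brn,\br)^{1-2/q})$, by the same TV interpolation (using that $\br$ centered forces its conditional covariance to agree with $\mathrm{Cov}(\nu^{\otimes k})$ pointwise). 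Since $q>4$ ensures $1-2/q > 1/2$, the correction inherits effective exponential decay in the distance to the singular set; combined with $\mathrm{Cov}(\nu_N^{\otimes k}) \to \mathrm{Cov}(\nu^{\otimes k}) = \Sigma^T\Sigma$ and the hypothesis $N^{-1/2}\x_N \to \boldsymbol{x}$, this will yield the desired limit modulo negligibility of the diagonal contribution to the quadratic variation.

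The main obstacle will be exactly that last step: controlling the empirical occupation of neighborhoods of the singular set $\{\x\in I^k : x_i = x_j \text{ for some } i\neq j\}$, where all SRI corrections are concentrated. For $k=2$ the difference process is Markov on $I$ (in the spirit of Proposition \ref{diffproc}) and I would reduce the estimate to classical Green-function bounds for a perturbed random walk with increment distribution given by the pushforward of $\nu^{\otimes 2}$, with the $\delta$-repulsion hypothesis ruling out degeneracy and the exponential decay of $F$ delivering summability. For $k\geq 3$ the differences are no longer Markov, and I would instead couple the $k$-point chain to $k$ independent $\nu_N$-random walks using the TV bound \eqref{tvb2}, estimating the diagonal occupation via standard heat-kernel bounds on $\mathbb{R}^{kd}$ combined with a Borel--Cantelli type argument driven by the exponential tails of $F$. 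This is the technical heart that the heat-kernel and occupation-time estimates stated in the rest of Section \ref{sec:4} (and proved in the appendices) are designed to supply.
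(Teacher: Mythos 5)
Your decomposition $\mathbf{X}^N_r = \mathbf{R}_0 + \mathcal M^N_r + \mathcal A^N_r$ is the right starting point, and the martingale part $\mathcal M^N$ is handled correctly. The fatal gap is in your treatment of the drift $\mathcal A^N$. You bound $\sup_{\x}|m_N(\x) - \mathbf s_N| \le \varepsilon_N$ by the TV interpolation and then claim $\sup_t N^{-1/2}|\mathcal A^N_{Nt}| = O(N^{1/2}\varepsilon_N) \to 0$. But $\varepsilon_N \to 0$ only with the rate of $d_{\mathrm{SRI}}(\brn,\br)$, and the paper explicitly works in a regime (see the remark at the end of Subsection~\ref{sec:additivefunctionals}) where this distance can decay as slowly as $(\log N)^{-1/2}$. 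In that case $N^{1/2}\varepsilon_N \to \infty$, and the argument collapses. The same defect hits your first claim: $(r-r')\varepsilon_N$ is not $\le C(r-r')^{1/2}$ uniformly over $N$ and $r-r'$, so the stated diffusive moment bound is not proved.

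The missing ingredient is that the drift is not merely uniformly small: it is \emph{localized near the diagonal}. The correct pointwise estimate (cf.\ \eqref{useful''}) is
\[
|m_N(\y) - \mathbf s_N| \le C\, d_{\mathrm{SRI}}(\brn,\br)^{1/4}\, F\bigl(\min_{i<j}|y_i-y_j|\bigr)^{1/4},
\]
obtained by interpolating the two trivial bounds $d_{\mathrm{SRI}}$ and $F$ that you each mention but never combine. Summing this along the trajectory gives $|\mathcal A^N_r - \mathcal A^N_{r'}| \le C d_{\mathrm{SRI}}^{1/4}\sum_{s=r'}^{r-1}\sum_{i<j}F(|R^i_s-R^j_s|)^{1/4}$, which is precisely the additive functional $V^{ij}(F^{1/4};\cdot)$ of the chain. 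The nontrivial input is then that this additive functional has $q$-th moments of order $(r-r')^{1/2}$ (not $r-r'$) uniformly over nearby SRI chains --- this is the content of Theorem~\ref{exp00} in the paper, proved via a discrete Tanaka formula and Lyapunov-type martingales (Steps 2--6 of Appendix~\ref{appendix:a}). With that in hand, $\Ebbnn[|\mathcal A^N_{Nt}-\mathcal A^N_{Ns}|^q]^{1/q} \le Cd_{\mathrm{SRI}}^{1/4}N^{1/2}|t-s|^{1/2}$, and the diffusive rescaling kills the drift purely via $d_{\mathrm{SRI}}^{1/4} \to 0$, with no dependence on its rate.

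Note also a structural circularity in your final paragraph: you propose to handle the occupation-time estimate using ``standard heat-kernel bounds'' for the chain, but in this paper the heat kernel estimates (Theorem~\ref{anti}) are themselves built on the occupation-time bound \eqref{f2b'}, which in turn rests on Theorem~\ref{exp00}. You must prove the occupation-time estimate by elementary martingale means first, exactly because no heat-kernel input is available at that stage. Once you have Theorem~\ref{exp00} (or its equation \eqref{e2''}), both the drift and the quadratic-variation corrections are controlled by the same mechanism, and the rest of your martingale-CLT outline goes through essentially as you describe.
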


Note that if $\brn=\br_0$ is a constant sequence, then centeredness is equivalent to saying that the process $\mathbf X^N_r$ is a martingale. Then the $L^q$ bound is trivial from Burkholder-Davis-Gundy, and the invariance principle is also a fairly easy consequence of the martingale central limit theorem. The above theorem can thus be understood as saying that we can slightly perturb this martingality so that it only holds for the limiting chain, and the invariance principle still holds.

This Theorem is proven in Appendix \ref{appendix:a}. 

\subsection{Heat kernel estimates}
\begin{thm}[Anti-concentration/heat kernel estimates]\label{anti}
    Fix $\sigma>0$, $M>0$, $K > 0$. Let $F:[0,\infty)\to[0,\infty)$ be decreasing and of exponential decay at infinity. Let $\br_0 \in \Ekk$ be centered and $\delta$-repulsive. Then
    \begin{itemize} 
    \item Case $d\ge 2$: there exists $\epsilon=\epsilon(F,\sigma,M,\delta)>0$ and $C=C(F,\sigma,M,\delta,K)>0$ such that uniformly over $\x\in I^k$ and $\y\in \mathbb R^{kd}$ and integers $r\ge 0$ one has the bound $$
       \sup_{\substack{\br\in \Ekk\\d_{\mathrm{SRI}}(\br,\br_0) <\epsilon
       }}
       \; \Pbb ( |\mathbf R_r-r\mathbf s(\nu_{\br}) -\y|\leq 1) < C r^{-kd/2} \big( 1+ r^{-1/2} |\x-\y|\big)^{-K}. $$
       \item $d=1$. We have the weaker estimate just for the centered chain: $$\mathbf P_{\x}^{\br_0} ( |\mathbf R_r - \y|\le 1) \leq Cr^{-kd/2}  \big( 1+ r^{-1/2} |\x-\y|\big)^{-K}.$$ We also have the following time-averaged bound for non-centered chains when $q\ge 4$: there exists $\epsilon>0$ so that for all $\lambda>0$ one has 
        $$  \sup_{\substack{\br \in \Ek\\d(\br,\br_0)<\epsilon }} \sup_{\x\in I^k} \sup_{r\in\mathbb Z_{\ge 0}} \Ebb [ e^{ \lambda r^{-1/2} \sum_{s=0}^{r-1} \sum_{1\le i'<j'\le k} \ind_{\{|R^{i'}_s-R^{j'}_s|\le 1 \}} }]<\infty.$$ 
       \end{itemize} 
\end{thm}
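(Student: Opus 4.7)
The strategy combines the uniform exponential moment bound of $\Ekk$ with a Fourier-analytic local CLT, comparing the SRI chain step-by-step with $k$ independent random walks with increment $\nu$. The proof splits into a spatial-tail estimate, an on-diagonal estimate, and a separate treatment of the $d=1$ local time bound. The spatial-tail factor $(1+r^{-1/2}|\x-\y|)^{-K}$ follows from a sub-Gaussian tail bound: iterating the conditional exponential moment bound $\mathbf E^{\br}[e^{\sigma|R^j_{r+1}-R^j_r|}\mid \mathcal F_r]\leq M$ along the chain and using that $\mathbf R_r-r\mathbf s(\nu_\br)$ is a martingale, standard martingale-type concentration yields $\Pbb(|\mathbf R_r-r\mathbf s(\nu_\br)-\x|\geq L)\leq C\exp(-cL^2/r)$ for $L\leq \kappa r$, with a Chernoff bound for $L\geq \kappa r$. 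Combining this with the triangle inequality $\Pbb(|\mathbf R_r-r\mathbf s-\y|\leq 1)\leq \Pbb(|\mathbf R_r-r\mathbf s-\x|\geq |\x-\y|/2-1)$ when $|\x-\y|\geq 4$ gives stronger-than-polynomial decay off-diagonal, whence the factor $(1+r^{-1/2}|\x-\y|)^{-K}$ for any fixed $K$.

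For the on-diagonal bound $r^{-kd/2}$ in $d\geq 2$, the plan is to decompose $\br_0(\x,\bullet)=\nu^{\otimes k}(\x-\bullet)+\rho_0(\x,\bullet)$ with $\|\rho_0(\x,\bullet)\|_{TV}\leq F(\min_{i\neq j}|x_i-x_j|)$ and expand the $r$-step kernel as a Neumann-type sum over subsets $S\subset\{1,\ldots,r\}$ of the steps at which the perturbation $\rho_0$ is inserted. The $|S|=0$ term is exactly the $r$-step distribution of $k$ independent $\nu$-walks, for which the classical lattice/Cram\'er LCLT gives a density of order $r^{-kd/2}$ with sub-Gaussian off-diagonal decay. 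For $|S|\geq 1$ the contribution is weighted by factors $F(\min_{i\neq j}|R^i_s-R^j_s|)$ at each $s\in S$; since for $k$ independent random walks in $d\geq 2$ the expected total weight $\sum_s F(\min_{i\neq j}|R^i_s-R^j_s|)$ is bounded ($d\geq 3$) or at most logarithmic ($d=2$), using the exponential decay of $F$ together with standard anti-concentration of independent walks near the diagonal, the series in $|S|$ is summable. To pass from $\br_0$ to chains $\br$ with $d_{\mathrm{SRI}}(\br,\br_0)<\epsilon$, one writes $\br=\br_0+(\br-\br_0)$ and runs a second Neumann expansion; taking $\epsilon$ small ensures the correction terms are dominated.

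For $d=1$ the same Fourier/coupling machinery yields the bound for $\br_0$ itself, but the perturbation step breaks down because pairs of independent $1$-d walks collide $O(\sqrt{r})$ times rather than $O(\log r)$, destroying uniform summability across $\br$ near $\br_0$; this is why only the weaker statement for the fixed chain $\br_0$ is asserted. The exponential moment bound for the collision local time is the most delicate piece. The plan is to reduce it to the local time at $0$ of the difference chain $R^{i'}-R^{j'}$. Although this difference is not exactly Markov when $k\geq 3$, its one-step transitions agree with those of a genuine $1$-d random walk whose step is the difference of two independent samples from $\nu$, outside a neighborhood of the origin, and $\delta$-repulsiveness together with the $q\geq 4$ moment assumption prevents it from getting stuck near $0$. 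Coupling to such a random walk and invoking the classical fact that the local time at $0$ of a centered recurrent $1$-d random walk at time $r$, normalized by $\sqrt{r}$, has Gaussian-type tails (so uniformly finite exponential moments of all orders) then yields the claim for every $\lambda>0$, with an additional truncation/union bound argument to upgrade from a fixed pair $(i',j')$ to the sum over pairs and to obtain uniformity in the starting state $\x$.

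The main technical obstacle is the on-diagonal LCLT with simultaneous uniform control across (i) the combinatorial expansion over $S$, and (ii) the perturbation $\br-\br_0$. The critical dimension $d=2$ is the most delicate since near-collisions of independent walks grow only logarithmically, placing the Neumann series right at the edge of summability; here the exponential (rather than merely polynomial) decay of $F$ is precisely what saves the argument.
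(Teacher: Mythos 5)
Your plan takes a genuinely different route from the paper: you propose a Neumann expansion of the $r$-step kernel around $k$ independent $\nu$-walks, while the paper constructs an explicit step-wise coupling $\mathbf R_r = \mathbf S_r + \mathbf M_r + \mathbf D_r$ with $\mathbf S$ a random walk, $\mathbf M$ a martingale, and $\mathbf D$ an accumulated drift, then obtains the optimal $r^{-kd/2}$ rate through an iterative bootstrap (Proposition~\ref{trunc} gives the rate $r^{-\frac{kd}{2}(1-2\beta)}$ for every $\beta>0$, and a union-bound plus Markov-property argument over a dyadic partition closes the gap). Both strategies are legitimate in spirit, but as written yours has two real gaps. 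First, your off-diagonal tail argument asserts that $\mathbf R_r - r\mathbf s(\nu_{\boldsymbol b})$ is a martingale under $\mathbf P^{\boldsymbol b}_{\x}$; that is only true for \emph{centered} chains, whereas the theorem quantifies over all $\boldsymbol b$ within distance $\epsilon$ of $\br_0$, and these need not be centered. For such $\boldsymbol b$ the drift $\sum_{s<r}\mathrm{Dr}_{\boldsymbol b}(\mathbf R_s)$ is nonzero and is in fact comparable to a local time at the Weyl-chamber boundary; in $d=1$ this drift is of size $\sqrt{r}$, which is exactly why no uniform-in-$\boldsymbol b$ bound holds there. Controlling this drift (not the martingale part) is the heart of the difficulty, and your sketch simply wishes it away.

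Second, the Neumann expansion as you describe it does not deliver the on-diagonal rate in the critical dimension $d=2$ without an additional cancellation idea you have not stated. Bounding the single-insertion term by $\sum_{s} \mathbf E_{\mathrm{RW}}\big[F(\min_{i\neq j}|X^i_s-X^j_s|)\big]\cdot(r-s)^{-kd/2}$ and using $\mathbf E_{\mathrm{RW}}[F(\cdots)]\lesssim s^{-d/2}$ gives $r^{-kd/2}\log r$ when $d=2$, already off by a logarithm; with $|S|$ insertions the loss compounds as $(\log r)^{|S|}$, and the series over $S$ is not summable. (Exponential versus polynomial decay of $F$ does not help here: both give the same $O(\log r)$ for $\sum_s \mathbf E[F]$ in $d=2$.) To rescue the expansion one must exploit that $\rho_0(\z,\bullet)$ has total mass zero, replacing $\int\rho_0(\z,\dr\w)g(\w)$ by $\int\rho_0(\z,\dr\w)[g(\w)-g(\z)]$ to gain a factor $(r-s)^{-1/2}$; but doing this at every insertion and keeping track of the combinatorics, the signed-measure iterates, and the uniform-in-$\boldsymbol b$ perturbation is a substantial additional argument that your outline neither provides nor flags. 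Your $d=1$ treatment of the collision local time is qualitatively on the right lines (the paper's proof of that part, Lemma~\ref{lemm:antid1}, is indeed a separate moment-by-moment argument that reduces to a Tanaka-type martingale for $|R^{i'}-R^{j'}|$), though the proposed coupling to a genuine $1$-d random walk again needs care near the origin where the difference is not Markov.
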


This theorem is proven in Appendix \ref{appendix:b}. The factor with the $-K$ power represents a sort of replacement for the Gaussian decay kernel $e^{-\frac{|\x-\y|^2}r}$. With Cramer's theorem as intuition, the optimal decay should be exponential decay like $e^{-r^{-1/2} |\x-\y|}$, but we could not obtain as strong a decay as that in our proofs, and large negative powers will suffice for our purposes.

In particular, for $d \geq 2$, by a union bound over $r^{(k-1)d/2}$ balls, we have for each $i<j$ the bound uniformly over $   y\in\mathbb R^d$ and $\mathbf x=(x_1,...,x_k) \in I^k$ that 
\begin{equation}\label{eq:acBound2}
    \sup_{\substack{\br\in \Ekk\\d_{\mathrm{SRI}}(\br,\br_0) <\epsilon
       }}
       \; \Pbb ( |R_r^i-   R^j_r -   y|\leq 1) < C r^{-d/2} \big( 1+ r^{-1/2} |   x_i -   x_j  -    y|\big)^{-K}.
\end{equation}

We remark that this bound is fairly sharp; e.g., for a non-degenerate random walk on $\mathbb R^d$, it is sharply obtained from the local central limit theorem that the density at time $r$ has $L^\infty$ norm bounded above by $Cr^{-d/2}$, and the above estimate generalizes this fact to more complicated Markov chains. Thus the above theorem is fairly intuitive, essentially saying that any Markov chain that does not ``get stuck" near the boundaries of the Weyl chamber and resembles a random walk far from those boundaries still satisfies a similar bound. 

We will often apply Theorem \ref{anti} to bound expectations of certain functionals of $   R^i_r -   R^j_r$. In particular, we will make extensive use of the following result: 
\begin{cor}\label{cor:expectationBoundGeneral}
     Let $F:[0,\infty) \to[0,\infty)$ be a decreasing function of exponential decay at infinity. Fix $\sigma>0$, $M>0$, $K > 0$, and $d\ge 2$. Let $H:[0,\infty)\to[0,\infty)$ be decreasing and satisfy $H(x) \leq Ae^{-bx}$ where $A,b>0$. Let $\br_0 \in \Ekk$ be centered and $\delta$-repulsive. Then there exists $\epsilon=\epsilon(F,\sigma,M,\delta,d)>0$ and $C=C(F,\sigma,M,\delta,K, A,b,d)>0$ such that uniformly over $\x\in I^k$, $y\in \mathbb R^d$, $r\ge 0$ and  all $f: I\to [0,\infty)$ satisfying $|f(x)| \leq H(|x|)$ one has 
        $$ \sup_{\substack{\br\in \Ekk\\d_{\mathrm{SRI}}(\br,\br_0) <\epsilon
       }}\Ebb [f(   R^i_r -    R^j_r -y)]<C \cdot A b^{-d} \cdot r^{-\frac{d}2}  \big( 1+ r^{-1/2} |x_i-x_j-y|\big)^{-K} .$$
    \end{cor}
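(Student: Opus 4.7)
The plan is to reduce the claim to the pointwise heat-kernel bound \eqref{eq:acBound2}, which controls $\Pbb\bigl(|R^i_r - R^j_r - (y+n)| \leq 1\bigr)$ by $Cr^{-d/2}(1 + r^{-1/2}|x_i - x_j - y - n|)^{-K}$, and then integrate the exponentially decaying envelope $H$ against this estimate. Writing $w := x_i - x_j - y$, I would cover $\mathbb R^d$ by unit cubes $\{B_n\}_{n \in \mathbb Z^d}$ centered at the lattice points and bound
\begin{align*}
\Ebb\!\bigl[f(R^i_r - R^j_r - y)\bigr] \;\leq\; \sum_{n \in \mathbb Z^d} \Bigl(\sup_{z \in B_n} f(z)\Bigr)\cdot \Pbb\bigl(R^i_r - R^j_r - y \in B_n\bigr).
\end{align*}
Using the monotonicity of $H$ together with the hypothesis $|f(z)| \leq H(|z|) \leq A\,e^{-b|z|}$, the supremum over $B_n$ is at most $C(d)\,A\,e^{-b|n|}$, while \eqref{eq:acBound2} (applied to any $\br$ with $d_{\mathrm{SRI}}(\br,\br_0) < \epsilon$, where $\epsilon$ is inherited from Theorem \ref{anti}) controls the probability. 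This reduces matters to the purely analytic estimate
\begin{align*}
\sum_{n \in \mathbb Z^d} e^{-b|n|}\bigl(1 + r^{-1/2}|w - n|\bigr)^{-K} \;\leq\; C(K,d,b)\, b^{-d}\,\bigl(1 + r^{-1/2}|w|\bigr)^{-K},
\end{align*}
which I would establish for $r \geq 1$. The scale $b^{-d}$ is correct because $\sum_{n \in \mathbb Z^d} e^{-b|n|} \asymp \int_{\mathbb R^d} e^{-b|z|}\,dz \asymp b^{-d}$.

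To prove the sum bound I would split into regimes. When $|w| \leq 2\sqrt r$, the prefactor $(1 + r^{-1/2}|w|)^{-K}$ is bounded below by $3^{-K}$, so one may crudely bound the polynomial factor by $1$ and conclude via $\sum_n e^{-b|n|} \leq C b^{-d}$. When $|w| > 2\sqrt r$, split the sum at $|n| = |w|/2$. On the region $|n| \leq |w|/2$ one has $|w - n| \geq |w|/2$, so the polynomial factor is at most $2^K(1 + r^{-1/2}|w|)^{-K}$ and the residual exponential sum contributes $Cb^{-d}$. On the region $|n| > |w|/2$, one writes $e^{-b|n|} \leq e^{-b|w|/4}\,e^{-b|n|/2}$ and bounds the resulting convolution $\sum_n e^{-b|n|/2}(1 + r^{-1/2}|w - n|)^{-K}$ by a standard $\ell^1 \times \ell^\infty$ Young-type estimate, since $\|e^{-b|\cdot|/2}\|_{\ell^1(\mathbb Z^d)} \asymp b^{-d}$ and $(1 + r^{-1/2}|\cdot|)^{-K}$ has $\ell^\infty$ norm $1$. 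This yields a contribution of $Cb^{-d} e^{-b|w|/4}$, which is absorbed into $b^{-d}(1 + r^{-1/2}|w|)^{-K}$ via the elementary inequality $u^K e^{-bu/4} \leq C(b,K)$ evaluated at $u = r^{-1/2}|w|$, using $|w| \geq r^{-1/2}|w|$ when $r \geq 1$.

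The main subtlety is extracting the correct $b^{-d}$ dependence: it reflects the $L^1$ mass of the envelope $e^{-b|z|}$, and the case-split above is precisely what allows this mass to multiply the polynomial heat-kernel tail rather than the far cruder pointwise bound. Apart from this bookkeeping the remaining steps are standard. The degenerate case $r=0$ is handled separately: $R^i_0 - R^j_0 = x_i - x_j$, so the expectation is just $f(w) \leq A\,e^{-b|w|}$, which is dominated by the right-hand side under the natural convention (or by restricting to $r \geq 1$, since this is the regime of interest for all applications of the corollary in the sequel).
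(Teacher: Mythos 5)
Your proof is correct and follows essentially the same route as the paper's: union-bound over a unit lattice covering of $\mathbb R^d$, apply the anticoncentration estimate \eqref{eq:acBound2}, and bound the resulting convolution sum of the exponential envelope against the polynomial tail. The paper's treatment of that last sum is rather terse---it applies \eqref{eq:acBound2} with exponent $2K$, records two crude bounds, and closes with the assertion $\min\{A/b, A/b^d\} = Ab^{-d}$, which is not a literal identity and glosses over how the spatial factor $(1+r^{-1/2}|x_i-x_j-y|)^{-K}$ survives the interpolation---whereas your explicit case-split on the size of $r^{-1/2}|w|$, together with the $\ell^1\times\ell^\infty$ estimate on the far-field region and the absorption of $e^{-b|w|/4}$ via $u^K e^{-bu/4}\lesssim 1$, is a cleaner rendering of the same estimate that correctly delivers both the $Ab^{-d}$ scaling and the polynomial tail simultaneously.
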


    In most applications, $H$ will be fixed, so we will use the form of the bound with the right side given by $Cr^{-d/2}(1+r^{-1/2}|x_i-x_j-y|)^{-K}$, disregarding $A,b$.

    \begin{proof}

We have
        \begin{align*}
\Ebb [f(   R^i_r - R^j_r -y)] &\leq  \Ebb [H(|   R^i_r - R^j_r -y|)] \\
&\leq \sum_{a\in\mathbb Z^d}  H(|a|) \Pbb(|   R^i_r - R^j_r -y -a| \le 1)
\end{align*}
We can use Theorem \ref{anti} (more precisely, Equation \eqref{eq:acBound2}) to obtain $\Pbb(|   R^i_r - R^j_r -y -    a| \leq 1) \leq C r^{-d/2} \cdot (1+r^{-1/2} |x_i-x_j-y-a|)^{-2K},$ uniformly over all the desired parameters. We therefore obtain
\begin{align*}
 \sup_{\substack{\br\in \Ekk\\d_{\mathrm{SRI}}(\br,\br_0) <\epsilon}}\Ebb [f(   R^i_r - R^j_r -y)] &\leq  \sup_{\substack{\br\in \Ekk\\d_{\mathrm{SRI}}(\br,\br_0) <\epsilon}}  \sum_{a\in \mathbb Z^d}^{\infty} H(|a|)\Pbb(|   R^i_r - R^j_r -y -    a| \leq 1)   \\
       &\leq  Cr^{-d/2} \sum_{a\in \mathbb Z^d} H(|a|) (1+r^{-1/2} |x_i-x_j-y-a|)^{-2K}.
\end{align*}
On one hand, $H$ decays exponentially which gives the upper bound of $\frac{A}{b}\cdot \big( 1+r^{-1/2} |x_i-x_j-y|\big)^{-2K}$. On the other hand, the $-K$ power in the sum is bounded above by $1$, thus the sum is also bounded by $\sum_{a\in \mathbb Z^d} H(|a|) \leq C \sum_{ \ell =1}^{\infty} \ell^{d-1} H(\ell) \leq C \frac{A\cdot d!}{b^d} .$ Now just use $\min\{ \frac{A}{b}, \frac{A}{b^d}\} = Ab^{-d} .$
\end{proof}

\subsection{Existence of invariant measures}
\begin{defn}
We say that a Borel measure $\pi$ on $\mathbb R^d$ has \textbf{constant growth at infinity} if $$\sup_{y\in \mathbb R^d} \pi(\{x:|x-y|\le M\}) \le CM^d$$ where $C$ is independent of $M.$
\end{defn}

If $\pi$ is a measure of constant growth of infinity on $\mathbb R^d$, then any measurable function $f:\mathbb R^d\to\mathbb R$ such that $|f(   x)|\leq H(|   x|)$ is integrable as long as $H:[0,\infty)\to[0,\infty)$ is non-increasing and satisfies $\sum_{\ell=0}^\infty \ell^{d-1} H(\ell)<\infty$ or equivalently $\sum_{\ell=0}^\infty H(\ell^{1/d})<\infty.$ This is fairly straightforward to show. Examples of such functions include any $f$ satisfying $|f(   x)| \leq C |   x|^{-d-\epsilon}$ for $|   x|>1$. 
This will be a very important and useful fact later.

\begin{thm}[Invariant measure] \label{thm:invMeasure} 
Assume that one has an SRI chain $\br\in \Ekk$ with $k=2$. 
Furthermore assume that $\br$ has the property that $\br (\x, \dr \y) = \br(\x',\dr\y')$ whenever $\x'=\x+(a,a)$ and $\y'=\y+(a,a)$ for some $a\in I$. 

Define the associated difference kernel $\bdif(   x,A):= \int_{I^k} \ind_{\{   y_1-   y_2\in A\}}\br( (   x,   0),\dr\y), $ for Borel sets $A\subset I$. Furthermore assume that $\bdif$ is ``Strong Feller" in the sense that $x\mapsto \bdif(x,\bullet)$ is continuous in total variation norm, and furthermore that for all $x\in I$ and all open sets $U\subset I$ there exists $m\in \mathbb N$ such that $\bdif^m(x,U)>0$. Then the Markov kernel $\bdif$ has a unique (up to scalar multiple) invariant measure $\pi^{\mathrm{inv}}$ on $I$ that has constant growth at infinity. 
\end{thm}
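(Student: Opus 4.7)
The plan is to construct $\pi^{\mathrm{inv}}$ by exploiting the fact that, under the SRI bound \eqref{tvb2} with $k=2$, the difference kernel $\bdif$ is a short-range perturbation of the pure random walk kernel $q(x,\bullet) := (\nu*\check\nu)(\bullet - x)$, for which the Haar measure $\lambda$ on $I$ (Lebesgue on $\mathbb{R}^d$ or counting measure on a lattice) is invariant. I write $\bdif = q + \epsilon$ with $\|\epsilon(x,\bullet)\|_{TV} \leq 2F(|x|)$; the exponential decay of $F$ gives $\int_I F(|x|)\,d\lambda(x) < \infty$, so the signed measure $\lambda \epsilon$ (defined by $A \mapsto \int \epsilon(y,A)\,d\lambda(y)$) is finite. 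First I would verify that $\bdif$, viewed as a one-particle chain on $I$, is itself centered and $\delta$-repulsive---centeredness because $\nu*\check\nu$ has zero mean, repulsiveness from non-degeneracy of $\nu*\check\nu$---which supplies heat-kernel bounds for $\bdif^n$ via Theorem \ref{anti} and Corollary \ref{cor:expectationBoundGeneral}.

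For existence I would take the ansatz $\pi^{\mathrm{inv}} = \lambda + \mu$. Invariance reduces to the Poisson-type equation $\mu = \lambda \epsilon + \mu \bdif$, formally solved by the Neumann series $\mu = \sum_{n \geq 0} (\lambda\epsilon) \bdif^n$. A key fact is that $\lambda\epsilon$ has total mass zero (both $\bdif(x,\bullet)$ and $q(x,\bullet)$ are probability kernels), so by a discrete integration-by-parts against $\bdif^n$ one gains an extra factor of $n^{-1/2}$, yielding the estimate $|(\lambda\epsilon)\bdif^n|(B) \leq C|B| n^{-(d+1)/2}$ on any bounded Borel set $B$. This series is absolutely convergent on bounded sets for $d \geq 2$; for $d=1$ I would either extract a further zero-first-moment cancellation, or switch to the Athreya-Ney-Nummelin regeneration scheme, whose minorization ingredient is available from the strong Feller plus irreducibility hypotheses, and define $\pi^{\mathrm{inv}}$ as the expected excursion-time occupation measure.

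Uniqueness up to scalar multiple will follow from irreducibility: any two $\sigma$-finite invariant measures are mutually absolutely continuous, and their Radon-Nikodym density is $\bdif$-harmonic; a Liouville-type argument based on coupling two copies of the chain (itself controlled by the heat-kernel bound of Theorem \ref{anti}) forces this density to be constant. For the constant-growth bound $\pi^{\mathrm{inv}}(B_M(y)) \leq CM^d$, the $\lambda$ piece contributes exactly $|B_M(y)| \leq CM^d$, and Corollary \ref{cor:expectationBoundGeneral} gives $|(\lambda\epsilon)\bdif^n|(B_M(y)) \leq CM^d(n \vee 1)^{-(d+1)/2}$ uniformly in $y$, summable by the same mechanism. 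The hard part will be the low-dimensional existence step: sharpening the heat-kernel estimate for $(\lambda\epsilon)\bdif^n$ in $d=1$ requires either additional cancellation from the dipole structure of $\lambda\epsilon$ or switching to the regeneration-based construction, which in turn demands a separate argument that the resulting excursion-based measure still inherits the constant-growth property from its comparison with $q$.
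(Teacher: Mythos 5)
Your $d\geq 3$ existence construction is, after unwinding the Neumann series, the same resolvent-type formula the paper uses: your $\pi^{\mathrm{inv}}(f)=\lambda(f)+\sum_n(\lambda\epsilon)(\bdif^n f)$ agrees (up to overall sign, which is irrelevant modulo scalar multiples) with the paper's $\pi^{\mathrm{inv}}(f)=\int_I (P_{\mathrm{RW}}-\mathrm{Id})\big(\sum_m \Pdif^m f\big)(x)\,\dr x$. In that regime your plan works for essentially the paper's reason: $\|\Pdif^m f\|_\infty\lesssim m^{-d/2}$ is summable for $d\geq 3$, no cancellation needed.

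The genuine gaps are in $d\leq 2$ and in the uniqueness step. Your extension of the Neumann series to $d=2$ rests on the bound $|(\lambda\epsilon)\bdif^n|(B)\lesssim|B|\,n^{-(d+1)/2}$, which you attribute to the zero total mass of $\lambda\epsilon$ plus a ``discrete integration by parts.'' That extra factor of $n^{-1/2}$ is a \emph{gradient} (spatial-Lipschitz) heat-kernel estimate on $y\mapsto \bdif^n(y,B)$. Theorem~\ref{anti} and Corollary~\ref{cor:expectationBoundGeneral} only give the sup-norm bound $\bdif^n(y,B)\lesssim n^{-d/2}$; they say nothing about the spatial increments. The chain $\bdif$ is only assumed Strong Feller (TV-continuous), with no quantitative modulus, so this gradient estimate would need a new proof, and it is exactly the kind of pointwise local-CLT regularity that the SRI machinery in the paper was designed to avoid. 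Moreover, for $d=1$ even this (unproved) gain gives only $n^{-1}$, which is logarithmically divergent; you flag the Athreya--Ney--Nummelin regeneration fallback but do not carry it out. The paper sidesteps all of this by not using the Neumann series at all in low dimensions: Propositions~\ref{d1} shows Harris recurrence via explicit Lyapunov functions ($V(x)=\sqrt{1+|x|}$ in $d=1$, $V(x)=\sqrt{\log(1+|x|^2)}$ in $d=2$), which delivers existence and uniqueness simultaneously from the Meyn--Tweedie theory with no heat-kernel gradient bounds required.

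The uniqueness argument is also underdeveloped in the transient regime. For $d\geq 3$ the chain is transient, so the Harris-theory uniqueness does not apply, and your Liouville-via-coupling route needs a Liouville theorem for $\bdif$-harmonic functions that are a priori only controlled through the constant-growth condition on the two invariant measures --- not obviously bounded. Two copies of a transient chain started far apart typically never couple, so bounding the oscillation of the Radon--Nikodym density requires something like a parabolic Harnack inequality or a boundary-theory argument, neither of which you supply. The paper instead transforms the problem: it shows (via the same resolvent identity) that invariant measures of $\bdif$ with constant growth are in bijection with invariant measures of the underlying random walk with step law $\nu_{\mathrm{dif}}$, and then pins those down by a Fourier-transform argument exploiting that $|\hat\nu_{\mathrm{dif}}(\xi)|<1$ for $\xi\neq 0$. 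That reduction is the key ingredient your proposal is missing for $d\geq 3$ uniqueness.

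One smaller point: running Theorem~\ref{anti} on $\bdif$ ``viewed as a one-particle chain'' is not quite right, since the $k=1$ SRI condition is vacuous and would force $\bdif$ to be a pure random walk, which it is not. The heat-kernel bound you want for $\bdif^n$ comes from applying Theorem~\ref{anti} to the \emph{two}-particle chain $\br$ and projecting to the coordinate difference, as in Corollary~\ref{cor:expectationBoundGeneral}.
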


This theorem will be proven in Appendix \ref{appendix:c}. 

Note that $\pi^{\mathrm{inv}}$ is not a probability measure or even a finite measure---typically it will look like Lebesgue measure or counting measure far from the origin, but will have some impurities near the origin. In particular, the origin will often have extra mass to account for the extra time spent there by the process $\Pdif$. Since the invariant measure is only unique up to scalar multiple, we will need to fix a normalization of the measure $\pi^{\mathrm{inv}}$. 

\begin{defn}\label{def:f}
    Define the function $$\mathfrak f:=(\Pdif - \mathrm{Id})u \;\;\;\;\;\;\;\text{ where }\;\;\;\; u(x) := \begin{cases}1+|x|, &d=1 \\ \log(1+|x|) , & d=2\\ -(1+|x|)^{2-d}, &d\ge 3 .
\end{cases}$$
\end{defn}
We can think of $u(x)$ as an approximate (up to rescaling by a constant plus some lower order terms) fundamental solution for the generator $(\Pdif - \mathrm{Id})$. 
It follows from \eqref{eq:intfdu} and the constant growth condition of $\pi^{\mathrm{inv}}$ that $\int_I \mathfrak f\; \dr\pi^{\mathrm{inv}} < \infty.$ This allows us to define the following:
\begin{defn}\label{def:unit}
We define the \textit{unit normalization} of $\pi^{\mathrm{inv}}$ as the one in which $\int_I \mathfrak f\; \dr\pi^{\mathrm{inv}} =1,$ where $\mathfrak f$ is as in Definition \ref{def:f}. We further define the dimension dependent constants $c_1 =1, c_2=2\pi,$ and $c_d= d(d-2) /\Gamma(1+\frac{d}2)$ for $d\ge 3$.
\end{defn}

For the definition to make sense, we must show that $\int_I \mathfrak f \;\dr \pi^{\mathrm{inv}}\ne 0$, which will be clear from e.g. \eqref{theorem417} and \eqref{eq:12limit}. Informally, the unit normalization is the one such that $2c_d\cdot \pi^{\mathrm{inv}}$ would agree with Lebesgue measure at infinity if $I=\mathbb R^d$, or counting measure at infinity if $I=\mathbb Z^d$. Henceforth, all integrals against $\pi^{\mathrm{inv}}$ should be understood as being under the unit normalization.

\begin{rk}[Comparison with noise coefficients in \cite{timo2} and \cite{DDP+}] \label{rk:comparison}
    We discuss how our noise coefficient in $d=1$ matches with the coefficient obtained in \cite{timo2} for finite range $1$-d random walks in random environments. This discussion gives another interpretation of the noise coefficient and unit normalization in terms of the potential kernel (or Green's function if we extended this to the case $d \geq 3)$ for $\Pdif$. In \cite{timo2} they consider a model where $I = \mathbb Z$ and the kernels $K_n(x, \cdot)$ are i.i.d. in $x$ and $n$. Furthermore, the kernels have finite range, meaning that $K_n(x, y) \neq 0$ for only finitely many $y$, and they are aperiodic.  For simplicity we will assume that $p=1$.

    Following the notation in \cite{timo2}, let $q(x,y) = \Pdif(x,y)$, i.e. the transition probabilities for the difference of the two walkers sampled from the two-point motion, and let $\overline{q}(x,y)$ denote the transition probabilities for the difference of two independent random walks each sampled from the one-point motion.
    
   In $d=1$, the noise coefficient in Theorem \ref{main1} is given by 
   \begin{align*}
    \gamma_{\mathrm{ext}}^2 &:= \frac12 \sum_{z} \bigg[ \sum_{x,y}( x- y)^{2}\mu(x)\mu(y)-\sum_{a} ( a- z)^{2} q( z,a)\bigg] \pi^{\mathrm{inv}}(z) \\
    &= \frac12 \bigg [\sum_{x,y}( x- y)^{2}\mu(x)\mu(y)-\sum_{a} a^{2} q( 0,a)\bigg] \pi^{\mathrm{inv}}(0).
\end{align*}
Here $\pi^{\mathrm{inv}}$ is the invariant measure of the Markov kernel $q$, under its unit normalization. The second equality is due to the fact that for $z \neq 0$, $q(z,a) = \overline{q}(z,a) = \sum_{k} \mu(k)\mu(k + z-a)$. The inner sum can then be shown to equal zero. This can be further simplified by noting that 
\begin{align*}
    \frac12 \left[\sum_{x,y}( x- y)^{2}\mu(x)\mu(y)-\sum_{a} a^{2} q( 0,a)\right]= \text{Var}\left(\sum_{x} x K_1(0, x)\right).
\end{align*}
In the language of \cite{timo2}, this right-hand side is denoted by $\sigma_D^2$. Further define $D = \sum_{x} x K_1(0, x)$ and let 
$ \sigma_a^2: = \sum_{x \in \mathbb Z} (x - D)^2 \mu(x).$ By Assumption \ref{a1} Item \eqref{a22}, we will assume here (WLOG) that $\sigma_a^2 = 1$.

Let $G_n(x,y)$ and $\overline{G}_n(x,y)$ be the $n$-step Green's functions for $q$ and $\overline{q}$ respectively. Then define the potential kernels 
\begin{equation}\label{eq:potential}
a(x):= \lim_{n \to \infty}\{G_n(0,0) - G_n(x,0)\}; \  \ \ \ \ \  \overline{a}(x):= \lim_{n \to \infty}\{\overline{G}_n(0,0) - \overline{G}_n(x,0)\}.    
\end{equation} These potential kernels are fundamental solutions to their respective generators, i.e. 
$$(q - \mathrm{Id})a(x) = \tfrac12 \ind_{\{x = 0\}}; \  \ \ \ \ \   (\overline{q} - \mathrm{Id})\overline{a}(x) = \tfrac12 \ind_{\{x = 0\}}.$$
Next define $$\beta:= \sum_x q(0,x)\overline{a}(x) = \frac{\overline{a}(x)}{a(x)}.$$ This equivalence is explained in \cite{timo2}. In \cite{timo2}, the noise coefficient is then given by $\frac{\sigma_D^2}{\beta}$ so it remains to show that $\beta^{-1} =\pi^{\mathrm{inv}}(0).$ As shown in \cite[Theorem 4.4.8]{lawler2010random}, we have that $\overline{a}(x) = \frac{|x|}{2\sigma_a^2} + C + O(e^{-\beta x})$ for some constants $C$ and $\beta$. It follows that 
$$\int_I (q - \mathrm{Id})u \dr\pi^{\mathrm{inv}} = 2\int_I (q - \mathrm{Id})\overline{a} \dr\pi^{\mathrm{inv}}$$ as the $O(e^{-\beta |x|})$ term of the integral vanishes due to $\pi^{\mathrm{inv}}$ being an invariant measure. 

Finally, we have defined the unit normalization so that 
\begin{align*}
    1&= \int_I (q - \mathrm{Id})u \dr\pi^{\mathrm{inv}}= 2\sigma_a^2\int_I (q - \mathrm{Id})\overline{a} \dr\pi^{\mathrm{inv}} \\
    &= 2\sigma_a^2 \beta \int_I (q - \mathrm{Id})a\dr\pi^{\mathrm{inv}} =\beta \pi^{\mathrm{inv}}(0),
\end{align*}
where the last line follows from \eqref{eq:potential}. This concludes the proof. 

In \cite{DDP+}, we considered a special case of the above model, which is the $d=1$ case of the nearest-neighbor random walk in random environment described in Example \ref{ex:1}. For that model, we defined $\sigma^2: = \frac{1}{4}\sigma_D^2$. We then showed that  $\gamma_{\mathrm{ext}}^2 = \frac{8\sigma^2}{1 - 4\sigma^2} = \frac{2\sigma_D^2}{1 - 4\sigma^2}$. This denominator of $1 - 4\sigma^2$ is precisely equal to $\beta = \frac{1}{\pi^{\mathrm{inv}}(0)}$ and represents how likely the two walkers in the two-point motion are to stick together when they are at the same location. Note that the extra factor of $2$ in the numerator is due to the periodicity of the nearest-neighbor random walk. 

For further discussion of the noise coefficient for these discrete random walks in random environments, see \cite{hass2024extreme,hass2025universal}. 

\end{rk}

\subsection{Limit theorems for additive functionals} \label{sec:additivefunctionals}
We now consider additive functionals of the difference chain $R_r^1 - R^2_r$. The asymptotic behavior of such functionals is highly dependent on the dimension that we are working in as well as how close together the two walks are at the initial time. 

 We will first recall some classical results for simple symmetric random walks that will illustrate the dimension-dependent behavior that occurs for SRI chains as well. 
Consider independent simple symmetric random walks $   R^1_r,    R^2_r$. First suppose that $   R^1_0 =    R^2_0$, i.e., the two walks start out at the same position. Then as described in the introduction, we have the following limit laws (in distribution) as we take $N \to \infty$:

\begin{enumerate}
    \item In $d=1$, $N^{-1/2}\sum_{r=0}^{Nt}\ind_{\{R^1_r  - R^2_r = 0\}} \Rightarrow L^{B}_0(t)$, where $L^B_0(t)$ is the local time at $0$ of a Brownian motion $B$ of rate $2$. This was first proven in \cite{MR29488}.

    \item In $d=2$, $(\log N)^{-1}\sum_{r=0}^{Nt}\ind_{\{   R^1_r  -    R^2_r = 0 \}} \Rightarrow Z$, where $Z$ is an exponential random variable with rate $\pi$. This result was first proven in \cite{MR121870}.

    \item Finally, in $d \geq 3$, $\sum_{r=0}^{\infty}\ind_{\{   R^1_r  -    R^2_r = 0 \}}$ is a geometric random variable, with rate given by the return probability to 0. 
\end{enumerate}


More generally, for any random walk on $\mathbb Z^d$ with increments of mean zero and identity covariance matrix, we can consider the asymptotic behavior of $\sum_{r=0}^{N}f(   R^1_r -    R^2_r)$ for any nonnegative bounded function $f: \mathbb Z^d \to \R$ with $\sum_{x \in \mathbb Z^d} f(x) < \infty$, and we would get the following:
\begin{enumerate}
    \item In $d=1$, $N^{-1/2}\sum_{r=0}^{Nt}f(   R^1_r -    R^2_r) \Rightarrow (\sum_{x \in \mathbb Z} f(x)) \cdot L^{B}_0(t)$.

    \item In $d=2$, $(\log N)^{-1}\sum_{r=0}^{Nt}f(   R^1_r -    R^2_r) \Rightarrow (\sum_{x \in \mathbb Z^2} f(x)) \cdot Z,$ with $Z$ exponentially distributed and not depending on $f$ or $t$. 

    \item In $d \geq 3$, $\sum_{r=0}^{\infty}f(   R^1_r -    R^2_r)$ is a random variable depending both on $f$ and the model.
\end{enumerate}

These types of results are part of the general theory for asymptotic behaviors of additive functionals of Markov chains called the Darling-Kac theorem \cite{MR84222}, which shows that under suitable renormalization, the limiting distribution for a large class of additive functionals of a Markov process is given by a Mitag-Leffler distribution. Both the folded normal distribution (the distribution of Brownian local time at zero for fixed $t$) and the exponential distribution are special cases of the Mitag-Leffler distribution. See also \cite{MR56233} for analogous result for 2d Brownian motion.


Next, we consider the case where the random walkers start out far apart. In particular, we consider the case when $| R^1_0-   R^2_0| = O(\sqrt N)$. In $d=1$, we still have $$N^{-1/2}\sum_{r=0}^{Nt}\ind_{\{R^1_r  - R^2_r = 0\}} \Rightarrow L^{B}_0(t),$$ where the only difference is that Brownian motion $B$ may not start at 0. 

In $d \geq 2$, things change dramatically, as these additive functionals are now dominated by the (very small) probability that the two walkers meet up at least once before time $Nt$. We must normalize appropriately if we hope to see a nontrivial limit. The expected number of collisions between the random walkers is given by the $d$-dimensional Green's function whose behavior is of order $|   R^1_0 -    R^2_0|^{2-d} = N^{\frac{2-d}{2}}$. Therefore, $N^{\frac{2-d}{2}}$ should be the correct normalization if we expect to see something nonzero in the limit.

However, by Donsker's theorem, we still have that 
$N^{\frac{d-2}{2}}\sum_{r=0}^{Nt}\ind_{\{R^1_r  - R^2_r = 0\}}$ converges a.s. to $0$, 
because a $d$-dimensional Brownian motion with $d \geq 2$ starting macroscopically away from $0$ will remain bounded away from $0$ 
on the interval $[0,t]$. While the almost sure limit is zero, we still obtain a nontrivial limit in expectation.  
If the starting location is $(x^1_N,x^2_N)$ with $N^{-1/2} (x^1_N-x^2_N) \to x\ne 0$, then we have
$$ \lim_{N \to \infty} N^{\frac{d-2}{2}}\mathbb E_{(x^1_N,x^2_N)}\bigg[ \sum_{r=0}^{Nt} \ind_{\{R^1_r  - R^2_r = 0\}}\bigg] = c_d \int_0^t G(2s, x) ds .$$ 
Here $c_d$ are the dimension dependent constants from Defintion \ref{def:f}, and $G$ is the standard heat kernel on $\R^d$.


For SRI chains, we will essentially have the same set of results, with $\sum_{x \in \mathbb Z} f(x)$ replaced everywhere with $\int f\dr\pi^{\mathrm{inv}}$, which captures the local structure of the SRI chain near the origin. This is the subject of the next several theorems. We begin by stating an assumption that will be used throughout the remainder of this subsection. 

\begin{ass}\label{ass:SRI}
    Fix $\sigma>0$, $M>0$, $d\ge 1$ and a decreasing function $F:[0,\infty)\to[0,\infty)$ of exponential decay at infinity. 
Consider a sequence $\brn \in \Ekk$ with $k=2$, 
satisfying $\brn (\x, \dr \y) = \brn(\x',\dr\y')$ whenever $\x'=\x+(a,a)$ and $\y'=\y+(a,a)$. Assume that $\brn$ converges as $N\to \infty$ to some limiting SRI chain $\br$, which is $\delta$-repulsive for some $\delta>0$. We do not assume that $\br$ is centered, but we impose that $\br$ satisfies $d_{\mathrm{SRI}} (\brn, \br_0)\leq \epsilon$ for some centered chain $\br_0$ where $\epsilon$ is as in Theorem \ref{anti}. This is required in order for the anti-concentration estimates of Theorem \ref{anti} to hold, and these estimates will be crucial in the forthecoming proofs. We also assume that  the limiting chain $\br$ satisfies the strong Feller conditions in Theorem \ref{thm:invMeasure}. 

Consider any sequence of functions $f_N:I\to\mathbb R$ such that $|f_N(x)|\leq H(|x|)$ for some decreasing $H:[0,\infty)\to[0,\infty)$ of exponential decay at infinity. Assume $f_N\to f$ uniformly as $N\to\infty$. Let $   s_N:= \int_I    u \;\nu_N(\dr u)$, where $\nu_N$ is the base measure of $\brn$. Also assume (without loss of generality) that the covariance matrix of the base measure $\nu$ of $\br$ is $\mathrm{Id}_{d\times d}$. Let  $\pi^{\mathrm{inv}}$ be the invariant measure of $\bdif$ under its unit normalization.
\end{ass}

\begin{rk} The exponential decay assumption on $F$ is stronger than is necessary. It could be relaxed to a power decay of sufficient degree depending on $d$, but this would clutter the notation and require additional proof details, without adding much value to the main results, and we therefore do not pursue it.\end{rk}

\begin{thm}\label{inv01} Let $d \geq 1$ and consider the setting of Assumption \ref{ass:SRI}.
For any $t\ge 0$, and any $\phi\in C_c^\infty(\mathbb R^d)$, we have that 
\begin{multline}\label{eq:invlimit}\lim_{N\to\infty} N^{\frac{d-2}2} \sum_{r=0}^{Nt} \Ebbnn \big[\phi(N^{-1/2} (   R^1_r -    s_N r)\big)f_N(   R^1_r-   R^2_r) ] \\ =\bigg( \int_0^t \int_{\mathbb R^d} G(2s,x_1-x_2) G(2s,y) \phi(\tfrac12(y+x_1+x_2)) \dr y\dr s\bigg) c_d \int_I f \;\dr\pi^{\mathrm{inv}}
\end{multline}
whenever $N^{-1/2}\mathbf x_N \to \boldsymbol x= (   x_1,   x_2)\in \mathbb R^{2d},$ where $   x_{1}\ne    x_{2}$. Here $G$ appearing on the right side is the standard heat kernel on $\mathbb R^d$, that is $G(t,y) = (2\pi t)^{-d/2} e^{-|y|^2/(2t)}.$ Furthermore, $c_d$ and $\pi^{\mathrm{inv}}$ are as in Definition \ref{def:unit}.
\end{thm}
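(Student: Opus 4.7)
The plan is to separate macroscopic from microscopic scales by introducing the center of mass $M_r := \tfrac12(R^1_r+R^2_r)$ and difference $D_r := R^1_r - R^2_r$, so that $R^1_r = M_r + D_r/2$. Since $f_N$ decays exponentially at infinity, the sum effectively localizes on times with $|D_r| = O(1)$, on which $R^1_r$ differs from $M_r$ by only $O(1)$. A Taylor expansion
\[
\phi(N^{-1/2}(R^1_r - s_N r)) = \phi(N^{-1/2}(M_r - s_N r)) + \tfrac{1}{2} N^{-1/2} D_r \cdot \nabla\phi(\cdot) + O(N^{-1}|D_r|^2)
\]
together with Corollary \ref{cor:expectationBoundGeneral} shows that replacing $R^1_r$ by $M_r$ contributes an error of order $O(N^{-1/2})$ after the $N^{(d-2)/2}$ rescaling, hence is negligible. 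Next, I would introduce a time cutoff $r \geq \delta N$: using Corollary \ref{cor:expectationBoundGeneral}, for $r \leq \delta N$ one has $\mathbf{E}[f_N(D_r)] \leq C r^{-d/2}(1 + r^{-1/2}\sqrt{N}|x_1-x_2|)^{-K}$, so the discarded piece is bounded by $C\int_0^\delta G(2s, x_1-x_2)\,ds$, which vanishes as $\delta\to 0$ since $x_1\ne x_2$ and the Gaussian damps out near $s = 0$.

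The main analytic step is to establish an asymptotic factorization, for $r = Ns$ with $s \in [\delta, t]$:
\[
\mathbf{E}^{\brn}_{\x_N}\!\bigl[\phi(N^{-1/2}(M_r - s_N r))\,f_N(D_r)\bigr] \;=\; \mathbf{E}^{\brn}_{\x_N}[\phi(N^{-1/2}(M_r - s_N r))] \cdot \mathbf{E}^{\brn}_{\x_N}[f_N(D_r)] + o(N^{-d/2}).
\]
This captures the decoupling of the macroscopic center of mass and the microscopic difference, reflecting the fact that in the Brownian limit $M$ and $D$ are linear combinations of the coordinates with orthogonal coefficients and are therefore independent. I would prove it via a last-passage/coarse-graining decomposition: let $\sigma_r$ be the last time before $r$ that $|D_s|$ exceeded a large threshold $L$, apply the strong Markov property at $\sigma_r$, combine the invariance principle (Theorem \ref{inv00}) for the macroscopic increment $M_r - M_{\sigma_r}$ with the quantitative equilibration of the difference chain near the origin captured by $\pi^{\mathrm{inv}}$ (Theorem \ref{thm:invMeasure}), and use the heat kernel bounds of Theorem \ref{anti} to control the coupling errors.

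Given the factorization, each factor is identified separately. The invariance principle shows that $N^{-1/2}(M_{Ns} - s_N Ns)$ converges to a Gaussian with mean $\tfrac12(x_1+x_2)$ and covariance $\tfrac{s}{2}I$, and the change of variables $y = 2w - (x_1+x_2)$ then yields
\[
\mathbf{E}[\phi(N^{-1/2}(M_{Ns} - s_N Ns))] \longrightarrow \int_{\mathbb{R}^d} G(2s, y)\,\phi\bigl(\tfrac{1}{2}(y + x_1 + x_2)\bigr)\,dy.
\]
Simultaneously, a local limit theorem for the difference chain—combining Gaussian travel from displacement $\sqrt{N}(x_1-x_2)$ to a bounded neighborhood of the origin with local equilibrium under $\pi^{\mathrm{inv}}$ in the unit normalization of Definition \ref{def:unit}—gives $N^{d/2}\mathbf{E}[f_N(D_{Ns})] \to G(2s, x_1-x_2) \cdot c_d \int_I f\,d\pi^{\mathrm{inv}}$. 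Combining the two and recognizing the Riemann sum $N^{-1}\sum_{r=0}^{Nt} \to \int_0^t ds$ produces the stated double integral.

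The main obstacle is making the factorization quantitative: this requires matching a rate of mixing of $D$ toward $\pi^{\mathrm{inv}}$ against the diffusive time scale on which $M$ equilibrates, and delicate use of the heat kernel bounds in Theorem \ref{anti}. The argument is somewhat different in $d=1$, where the difference chain is recurrent and local time grows as $\sqrt{r}$, so one relies on the time-averaged exponential bound in Theorem \ref{anti} rather than the pointwise anti-concentration; however, the overall architecture of the proof is the same.
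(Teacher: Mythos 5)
Your center-of-mass reduction matches the paper's Step 3 (replacing $R^1_r$ by $\tfrac12(R^1_r+R^2_r)$ via Taylor expansion and the anticoncentration bounds), so that part is fine. Where the argument departs from the paper is the core of your plan: you propose a pointwise factorization
\[
\mathbf{E}^{\brn}_{\x_N}\!\bigl[\phi(N^{-1/2}(M_r-s_Nr))\,f_N(D_r)\bigr]
\;\approx\; \mathbf{E}^{\brn}_{\x_N}\!\bigl[\phi(N^{-1/2}(M_r-s_Nr))\bigr]\cdot\mathbf{E}^{\brn}_{\x_N}[f_N(D_r)],
\]
followed by a pointwise local limit theorem for the difference chain, $N^{d/2}\,\mathbf{E}^{\brn}_{\x_N}[f_N(D_{Ns})]\to G(2s,x_1-x_2)\,c_d\int_I f\,\dr\pi^{\mathrm{inv}}$, and then a Riemann sum. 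The second claim is the genuine gap.

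That pointwise claim is a sharp local CLT with an identified constant, and it is essentially the hard part of the theorem disguised as a lemma. None of the tools developed in the paper produce it. Theorem \ref{anti} and Corollary \ref{cor:expectationBoundGeneral} give only one-sided (upper) heat-kernel bounds of the form $\mathbf{E}[f(D_r)]\le C r^{-d/2}(1+r^{-1/2}|x_1^N-x_2^N|)^{-K}$, with a $C$ that is not $G(2s,x_1-x_2)\,c_d\int f\,\dr\pi^{\mathrm{inv}}$. The invariance principle (Theorem \ref{inv00}) gives distributional convergence of $N^{-1/2}D_{Ns}$, which is convergence after integrating against \emph{continuous} functions on diffusive scale; it says nothing about $\mathbf{E}[f(D_r)]$ for compactly supported microscopic $f$ at a single macroscopic time, which is a local-scale question. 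And the ``local equilibration toward $\pi^{\mathrm{inv}}$ in a window near the origin'' that you invoke is precisely the subtle combination of a local CLT and a renewal argument that the paper goes out of its way not to prove. The paper instead works only with the \emph{time-averaged} quantities $\gamma_N(f)=N^{(d-2)/2}\sum_{r\le Nt}\mathbf{E}[\cdots]$: it shows any subsequential limit of $\gamma_N$ is an invariant measure for $\pdif$ (so by Theorem \ref{thm:invMeasure} a scalar multiple of $\pi^{\mathrm{inv}}$), and then pins down the scalar by a summation-by-parts identity with the special choice $\mathfrak f=(\Pdif-\mathrm{Id})u$, for which the time-averaged sum telescopes and the constant pops out of the invariance principle applied to $u(D_{Nt})$. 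Your route would require either supplying a genuine local CLT for SRI difference chains (not developed here, and quite delicate), or reducing your lemma to the averaged statement — at which point you are back to the paper's argument.

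The factorization step also needs justification: in the prelimit $M_r$ and $D_r$ are correlated (the joint chain is SRI, not a product), so independence holds only in the Brownian limit. A last-passage decomposition at the last excursion of $|D|$ above a level $L$ is a plausible strategy, but it is a substantial piece of machinery of its own (strong Markov at a backward time, uniform control of the excursion law) that the paper avoids entirely by treating $\phi(N^{-1/2}M_r)f(D_r)$ as a single observable and never separating the factors. So even if you could prove the local CLT, the factorization would remain an additional unproved step.
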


When $N^{-1/2}\mathbf x_N \to \boldsymbol x= (   x_1,   x_2)\in \mathbb R^{2d},$ where $   x_{1}\ne    x_{2}$, we say that $x^1_N$ and $x_N^2$ are \emph{well-separated}. We now address the case where $x_N^1$ and $x_N^2$ are close to each other, rather than well-separated as in Theorem \ref{inv01}. 

\begin{thm}\label{inv02} Assume Assumption \ref{ass:SRI}.
\begin{enumerate}\item If $d=1$ then Theorem \ref{inv01} remains valid even when $x_1=x_2,$ under the same assumptions.

    \item If $d=2$ then in the same setting as Theorem \ref{inv01}, if $\x_N$ is any sequence in $I^k$ such that $x^1_N-x^2_N$ remains bounded as $N\to\infty$, then we have that  \begin{equation} \label{theorem417}
    \lim_{N\to\infty}   \frac1{\log N}\sum_{r=0}^N\Ebbnn \bigg[f_N(   R^1_r-   R^2_r)\bigg] = \frac12 \int_I f\;\dr \pi^{\mathrm{inv}} . \end{equation}
    \end{enumerate}
\end{thm}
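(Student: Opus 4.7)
I would handle the two parts separately, as they rest on rather different phenomena: Part (1) is a mild extension of Theorem \ref{inv01} to coincident starting points, while Part (2) genuinely exploits the critical recurrence of the two-dimensional difference chain.

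For Part (1), the idea is a short-time truncation. Fix $\varepsilon \in (0,t)$ and split the sum as $\sum_{r=0}^{\lfloor \varepsilon N\rfloor} + \sum_{r=\lfloor\varepsilon N\rfloor+1}^{Nt}$. Using the $d=1$ anti-concentration estimate from Theorem \ref{anti} together with the exponential decay of $H$, the proof of Corollary \ref{cor:expectationBoundGeneral} adapts to give $\Ebbnn[|f_N|(R^1_r - R^2_r)] \leq C r^{-1/2}$, so the first piece contributes at most $N^{-1/2}\sum_{r=0}^{\varepsilon N} C r^{-1/2} \leq C\sqrt{\varepsilon}$ to the prelimit, which is matched by the small-$s$ portion of the limit integral since $\int_0^\varepsilon G(2s,0)\,ds = O(\sqrt{\varepsilon})$ in $d=1$. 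For the main sum I would apply the strong Markov property at $r_0 := \lfloor\varepsilon N\rfloor$: by the invariance principle and the $L^q$ bound of Theorem \ref{inv00}, the rescaled law of $N^{-1/2}(R^1_{r_0},R^2_{r_0})$ concentrates near the law of a pair of Brownian motions at time $\varepsilon$ both started at $x_1$, whose two coordinates are a.s.\ distinct. Theorem \ref{inv01} then applies conditionally to each such (well-separated) starting configuration, and integrating out $\mathbf R_{r_0}$ produces the rest of the limit formula. Letting $\varepsilon \to 0$ recovers \eqref{eq:invlimit} at $x_1 = x_2$.

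For Part (2), the plan uses the Poisson-equation structure afforded by $u(x) = \log(1+|x|)$ from Definition \ref{def:f}. Under the unit normalization, $\mathfrak f := (\Pdif - \mathrm{Id}) u$ satisfies $\int_I \mathfrak f\, d\pi^{\mathrm{inv}} = 1$. Writing $X_r := R^1_r - R^2_r$, telescoping gives
\begin{equation*}
\Ebbnn[u(X_N)] - u(x_N^1 - x_N^2) = \sum_{r=0}^{N-1}\Ebbnn[\mathfrak f(X_r)].
\end{equation*}
Because $\brn$ is translation-invariant, $X_r$ is a Markov chain on $I \subset \mathbb R^2$ with kernel $\bdif$ that inherits the heat-kernel bound $\Ebbnn[\ind_{\{|X_r - y| \leq 1\}}] \leq Cr^{-1}$ from Theorem \ref{anti}. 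The invariance principle of Theorem \ref{inv00}, combined with uniform integrability from the moment bound, yields $\Ebbnn[u(X_N)] = \tfrac12 \log N + O(1)$, since $|X_N|$ is of order $\sqrt{N}$ with a nondegenerate Gaussian limiting distribution. Dividing by $\log N$ proves \eqref{theorem417} for the special choice $f = \mathfrak f$. To extend to general $f$ with $|f| \leq H$ of exponential decay, I would solve the Poisson equation $(\Pdif - \mathrm{Id}) v_f = f - \alpha_f \mathfrak f$, where $\alpha_f := \int f\, d\pi^{\mathrm{inv}}$ makes the right-hand side $\pi^{\mathrm{inv}}$-orthogonal, and $v_f$ grows at most logarithmically at infinity. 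Applying the same telescoping identity to $v_f$ and using $(\log N)^{-1}\Ebbnn[v_f(X_N)] \to 0$ (a consequence of the growth bound on $v_f$ and the heat-kernel estimate) then yields \eqref{theorem417} in full generality.

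The main obstacle in Part (1) is making the conditional application of Theorem \ref{inv01} uniform as the initial separation tends to zero; this requires a heat-kernel bound for the non-centered one-dimensional chain, whereas Theorem \ref{anti} in $d=1$ only provides a time-averaged version. Since the translation-invariance of $\brn$ implies that the difference $X_r$ is itself an approximately driftless Markov chain on $I$, one can sidestep this by working with $X_r$ directly in the spirit of Part (2). The main obstacle in Part (2) is constructing the Poisson solution $v_f$ with controlled growth: existence and uniqueness rely on a range characterization of $\Pdif - \mathrm{Id}$ modulo $\pi^{\mathrm{inv}}$-orthogonal functions, which in turn rests on uniqueness of the invariant measure (Theorem \ref{thm:invMeasure}) and the heat-kernel decay. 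An alternative route avoiding $v_f$ altogether is to first prove \eqref{theorem417} for $f = \ind_{B}$ on bounded balls $B$ via a direct renewal-type computation using Theorem \ref{anti}, and then approximate a general $f$ by linear combinations of such indicators.
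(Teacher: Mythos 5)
Your two parts each take a genuinely different route from the paper, so let me address them separately.

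\textbf{Part (1).} The paper's proof is a one-liner: the argument of Theorem \ref{inv01} goes through verbatim in $d=1$, because the Step-2 uniform bound on $N^{(d-2)/2}\sum_s \Ebbnn[f(R^1_s-R^2_s)]$ does not actually require the well-separation of the initial points when $d=1$ (the singularity $\int_0 s^{-1/2}\,ds$ is already integrable). Your bootstrap argument (truncate at $\lfloor \varepsilon N\rfloor$, condition, feed the random starting configuration back into Theorem \ref{inv01}, integrate, send $\varepsilon\to 0$) is a different strategy, and you correctly flag its weak point: to integrate out $\mathbf R_{r_0}$ you would need a uniform-in-the-starting-point version of Theorem \ref{inv01}, which the paper never states. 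On the event that $|R^1_{r_0}-R^2_{r_0}|$ is small the conditional convergence degrades, and the statement ``this concentrates on well-separated configurations'' does not by itself supply the dominated-convergence control you need. Your suggested fix (work with the difference chain directly) is the right instinct, but once you follow it through you essentially land back on the paper's own proof.

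\textbf{Part (2).} The key ingredient is the same in both arguments: the choice $\mathfrak f = (\Pdif - \mathrm{Id})u$ with $u(x)=\log(1+|x|)$, the telescoping identity, and the invariance principle giving $\Ebbnn[\log(1+|X_N|)] = \tfrac12\log N + O(1)$; this pins down the constant as $\tfrac12$ under the unit normalization. Where you diverge is the extension to general $f$. The paper uses a Krylov--Bogoliubov argument: uniform total-variation bounds on $\gamma_N$ (Banach--Alaoglu), identification of any subsequential limit as an invariant measure (Step 4 of Theorem \ref{inv01}), and uniqueness from Theorem \ref{thm:invMeasure}; the $\mathfrak f$ computation is then only used once to fix the scalar. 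You propose instead to solve the Poisson equation $(\Pdif-\mathrm{Id})v_f = f - \alpha_f\mathfrak f$ and telescope again. This would be a clean alternative, \emph{but your growth claim for $v_f$ is internally inconsistent}. If $v_f(x) = c_v\log(1+|x|) + O(1)$ with $c_v\neq 0$, then by the same invariance-principle computation you used for $\mathfrak f$ one gets $(\log N)^{-1}\Ebbnn[v_f(X_N)] \to c_v/2 \neq 0$, so the boundary term does not vanish and the telescoping identity produces the wrong constant. For the argument to close you need $v_f$ to be \emph{bounded} (sublogarithmic growth), i.e., $c_v = 0$. This is in fact true --- for a $\pi^{\mathrm{inv}}$-orthogonal, exponentially decaying right-hand side, the potential-kernel solution of the Poisson equation for a $2$d recurrent chain is bounded --- but it is a nontrivial result you would have to establish for the SRI chain, and it is precisely what the paper's Krylov--Bogoliubov route avoids having to prove. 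As written, the step ``$(\log N)^{-1}\Ebbnn[v_f(X_N)]\to 0$ by the growth bound'' is a gap. Your alternative remark (prove the statement for indicator $f$ by renewal, then approximate) is closer in spirit to the paper's Theorem \ref{2.10} and would work, but is also left vague.

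In summary: the normalization-by-$\mathfrak f$ idea is shared and correct; the subsequential-compactness machinery in the paper does the rest with less hidden structure than your Poisson-equation sketch, and what the paper buys with it is precisely not having to construct a bounded potential solution $v_f$.
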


The above two theorems will be proven in Appendix \ref{appendix:d}, but we sketch the important ideas of the proofs here. The main idea is that by using the Krylov-Bogoliubov trick, all subsequential limits of these additive functionals can be shown to be equal to some constant multiple of the unique invariant measure for the limiting Markov chain $\bdif$, yielding the term $\int_I f\;\dr \pi^{\mathrm{inv}}$ in each of the results. We then identify the constant uniquely, so that the entire sequence converges to some unique multiple of the invariant measure. To nail down this constant, we compute the value of the limit for a specific choice of $f$. In particular, we take $f = \mathfrak f$ as in Definition \ref{def:f}. This yields the coefficient $\big( \int_0^t \int_{\mathbb R^d} G(2s,x_1-x_2) G(2s,y) \phi(\tfrac12(y+x_1+x_2)) \dr y\dr s\big)c_d$ in Theorem \ref{inv01} and the simpler coefficient $\frac{1}{2}$ in Theorem \ref{inv02}. 

We explain here how one can obtain the coefficient $\frac{1}{2}$ as the proof is illustrative. For an explanation of how the coefficient  $\big( \int_0^t \int_{\mathbb R^d} G(2s,x_1-x_2) G(2s,y) \phi(\tfrac12(y+x_1+x_2)) \dr y\dr s\big)c_d$ arises in $d=1$, see Remark \ref{rk:localtime}. 

The invariance principle of Theorem \ref{inv00} implies that for $\mathfrak f = (\Pdif - \mathrm{Id})u$ where $ u(x) =\log(1+|x|)$,
    \begin{align}
        \notag (\log N)^{-1}\sum_{r=0}^{N-1} \Ebbnn[\mathfrak f(   R^1_r-   R^2_r)] &= (\log N)^{-1} \sum_{r=0}^{N-1} \Ebbnn [ \log (1 + |R^i_{r+1} - R^j_{r+1}|) - \log ( 1+ |R^i_r-R^j_r|) ] \\ &= \notag \frac{ \Ebbnn[\log (1 + |R^1_{N}-R^2_{N}| )] - \log(1+ |x^i_N-x^j_N| ) }{\log N} \\&= \notag \frac12 + \frac{ \Ebbnn[\log (N^{-1/2} + N^{-1/2}|R^1_{N}-R^2_{N}| )] - \log(1+|x^i_N-x^j_N| ) }{\log N} \\& \stackrel{N\to\infty}{\longrightarrow} \frac12.\label{eq:12limit}
    \end{align} 
    In the last line, we used the invariance principle and the assumption that $|x^i_N-x^j_N|$ remains bounded to conclude that the numerator remains bounded. Finally, note that $\frac{1}{2} = \frac12 \int_I \mathfrak f \dr \pi^{\mathrm{inv}}$ when $\pi^{\mathrm{inv}}$ is under the unit normalization.

\begin{rk} \label{rk:localtime}
    For $d=1$, there is an alternative approach to proving Theorem \ref{inv01} (and the $d=1$ case of Theorem \ref{inv02}) in which one can use the theory of local times to take the limit of the expectations, using the invariance principle. This uses the approach of \cite{DDP23,DDP+, Par24}. For instance, assuming all particles start from 0, we have that 
     \begin{align*}\lim_{N\to\infty}  N^{-\frac{1}{2}} \sum_{r=0}^{Nt} \mathbf{E}_{(0,0)}^{\brn}\big[\phi(N^{-1/2} &(   R^1_r -    s_N r)\big)f_N(   R^1_r-   R^2_r) ] = \pi^{\mathrm{inv}}(f) \mathbf E_{BM^{\otimes 2}}\left[ \int_0^t \phi(X_s)L_0^{X-Y}(\dr s) \right]\\
     &=\pi^{\mathrm{inv}}(f) \mathbf E_{BM^{\otimes 2}}\left[ \int_0^t \phi\left(\frac{X_s + Y_s}{2}\right)L_0^{X-Y}(\dr s) \right]
     \\&= \pi^{\mathrm{inv}}(f) \int_0^t \bigg[\int_\mathbb R  G\big(\tfrac{s}{2},x\big) \phi(x)\dr x\bigg]G(2s,0)\dr s.
     \end{align*}
   The second equality is due to the fact that on the support of the local time measure $L_0^{X-Y}(\dr s)$, we have $X_s = Y_s$. The reason we want to change $X \to \frac{X +Y}{2}$ is so that it becomes independent of $X-Y$. Doing a further change of variables $x \to \frac{x}{2}$ matches  this expression with the right-hand side of \eqref{eq:invlimit} and completes the proof. 
     
     However, such an approach does not work in higher dimensions. Indeed if $d\ge 2$, then the family of processes $\big(N^{\frac{d-2}2} \sum_{s=N\varepsilon}^{Nt} f(   R^i_s-   R^j_s)\big)_{t\ge \varepsilon}$ viewed under the measures $\Ebbnn $ do not satisfy any nice $L^q$ moment bounds for \textit{any} $q>1$. Indeed if $|x_N^i-x_N^j|$ is of order $N^{1/2}$, the process itself converges in $C[0,T]$ to the zero process (by the invariance principle of Theorem \ref{inv00}), however the expectation converges to a finite and positive value, while the second moments and beyond blow up to infinity. This is in sharp contrast to $d=1$ where the process converges to the local time and satisfies moment bounds of every order. 
 \end{rk}

\begin{rk}In the above convergence theorems, the main difficulty comes from the fact that $d_{\mathrm{SRI}}(\br,\brn)$ can tend to zero extremely slowly (as slowly as one desires). Indeed if $d_{\mathrm{SRI}}(\br,\brn)$ tends to zero at a fast rate of $O(N^{-1})$, then many of the proofs of the appendix can be substantially simplified by a direct coupling with the centered chain. However such a case is irrelevant to us, since all of the interesting cases of models we actually study will have much slower convergence rates. For example in $d=2$, by Theorem \ref{main2} the most interesting case is when it goes to 0 like $1/\sqrt{\log N}$.
\end{rk}

The following theorem is an extension of the Erd\"os-Taylor theorem for 2d SRI chains and will be most important for the $  \varsigma_N$ at the critical scale in $d=2$ (that is, the third bullet point of Theorem \ref{main2}). 

\begin{thm}[Erd\"os-Taylor theorem for SRI chains in $d=2$]\label{2.10}
    Let $d=2$ and assume Assumption \ref{ass:SRI}. If $\x_N$ is any sequence in $I^2$ such that $x^1_N-x^2_N$ remains bounded as $N\to\infty$, then $(\log N)^{-1}\sum_{r=0}^N f_N(   R^1_r-   R^2_r) $ converges in law (viewed under $\Ebbnn$) as $N\to \infty$ to an exponential random variable of rate $2/\big(\int f\dr\pi^{\mathrm{inv}}\big)$. 

    Furthermore, for any $t>\varepsilon>0$ the random variable $\sum_{r=N\varepsilon}^{Nt}f_N(   R^1_r-   R^2_r)$ goes to 0 in probability, thus $(\log N)^{-1}\sum_{r=0}^{Nt} f_N(   R^1_r-   R^2_r), $ viewed as a process in the variable $t$, converges in f.d.d.'s to a constant process equal to the same exponential random variable. 
\end{thm}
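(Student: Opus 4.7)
The plan is to prove the distributional convergence by the method of moments, after reducing the problem to a single Markov chain on $I$. By the diagonal translation invariance of $\brn$ in Assumption \ref{ass:SRI}, the process $X_r := R^1_r - R^2_r$ is itself a Markov chain on $I$, whose one-step kernel converges in total variation to $\bdif$ uniformly in starting point. Writing $L_N^f := \sum_{r=0}^N f_N(X_r)$ and $\varphi(f) := \tfrac12 \int_I f\,\dr\pi^{\mathrm{inv}}$, and using that $\mathrm{Exp}(\lambda)$ is uniquely determined by its moments $k!/\lambda^k$, it suffices to prove
\begin{equation}\label{eq:ET-momentGoal}
\lim_{N\to\infty}(\log N)^{-k}\,\mathbf E_{z_N}\bigl[(L_N^f)^k\bigr] = k!\,\varphi(f)^k \qquad \text{for each } k \ge 1,
\end{equation}
uniformly in $z_N := x^1_N - x^2_N$ bounded. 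The case $k = 1$ is exactly Theorem \ref{inv02}(2).

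For $k \ge 2$, the Markov property of $X$ gives $\mathbf E_{z_N}[(L_N^f)^k] = k!\, S_N^{(k)}(z_N) + O((\log N)^{k-1})$, where
$$S_N^{(k)}(z) := \sum_{0 \le r_1 \le \cdots \le r_k \le N} \mathbf E_z\!\Bigl[\prod_{j=1}^k f_N(X_{r_j})\Bigr]$$
satisfies the recursion $S_N^{(k)}(z) = \sum_{r=0}^N \mathbf E_z[f_N(X_r)\,S_{N-r}^{(k-1)}(X_r)]$, and the $O$-term absorbs diagonal coincidences $r_i = r_j$ using that $\|f_N\|_\infty$ is bounded. The induction hypothesis is that $S_M^{(j)}(y) = \varphi(f)^j(\log M)^j + o((\log M)^j)$ uniformly in bounded $y$. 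Substituting this into the recursion is legitimate because $f_N$ has exponential decay, so $\mathbf E_z[f_N(X_r)\,S_{N-r}^{(k-1)}(X_r)]$ localizes to $X_r$ bounded, where the induction hypothesis applies, while the contribution of the unbounded region is suppressed by $f_N$ uniformly via Corollary \ref{cor:expectationBoundGeneral}. What remains is to evaluate
$$\sum_{r=0}^N (\log(N-r))^{k-1}\, \mathbf E_z[f_N(X_r)] \sim \varphi(f)(\log N)^k,$$
which follows from the log-uniform concentration of the mass of $\mathbf E_z[f_N(X_r)]$ on $r \in [1,N]$ (as underlies Theorem \ref{inv02}(2)) together with the estimate $\log(N - N^\alpha) = \log N + o(1)$ uniformly in $\alpha \in (0, 1-\delta)$: the change of variables $r = N^\alpha$ reduces the sum to $\log N$ times the log-uniform average of $(\log N)^{k-1}$, giving the asserted leading order.

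The main obstacle is controlling the accumulation of error terms across the $k$ induction steps. To handle this, split the ordered simplex $\{0 \le r_1 \le \cdots \le r_k \le N\}$ into the ``well-separated'' region $\mathcal S_\epsilon := \{r_{j+1}-r_j > N^\epsilon \text{ for all } j\}$ and its complement, for a fixed small $\epsilon > 0$. On $\mathcal S_\epsilon$, every intermediate sum has length at least $N^\epsilon$, so the induction hypothesis applies at each level with per-step error $o((\log N)^{k-1})$, and the leading asymptotics propagate multiplicatively without deterioration. On the complement, at least one consecutive gap is $\le N^\epsilon$; using the 2D bound $\mathbf E_y[f_N(X_r)] \le C/r$ from Corollary \ref{cor:expectationBoundGeneral} for bounded $y$, the contribution of the small gap is $\sum_{s=1}^{N^\epsilon} C/s \le C\epsilon\log N$, whereas each of the other $k-1$ gaps contributes $O(\log N)$. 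Summing over which gap is small yields a total bound of $O(\epsilon(\log N)^k)$, which after dividing by $(\log N)^k$ and sending $\epsilon \downarrow 0$ becomes negligible, completing the verification of \eqref{eq:ET-momentGoal}.

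Finally, for the process-level statement, Corollary \ref{cor:expectationBoundGeneral} gives $\mathbf E_{z_N}\bigl[\sum_{r=N\varepsilon}^{Nt} f_N(X_r)\bigr] \le C \log(t/\varepsilon) = O(1)$, so the rescaled tail piece $(\log N)^{-1}\sum_{r=N\varepsilon}^{Nt} f_N(X_r) \to 0$ in $L^1$ and hence in probability. Meanwhile the rescaled head piece $(\log N)^{-1}\sum_{r=0}^{N\varepsilon} f_N(X_r)$ converges to the same $\mathrm{Exp}(1/\varphi(f))$ limit as the full sum, by applying the first part with $N$ replaced by $N\varepsilon$ and using $\log(N\varepsilon)/\log N \to 1$. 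Thus $(\log N)^{-1}\sum_{r=0}^{Nt} f_N(X_r)$ has the same exponential limit for every $t > 0$; this forces the f.d.d.\ limit of $t \mapsto (\log N)^{-1}\sum_{r=0}^{Nt} f_N(X_r)$ to be a constant process equal to that single exponential random variable.
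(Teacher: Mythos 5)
Your proof proposal is essentially the same as the paper's argument: method of moments, the Markov-property recursion $S_N^{(k)}(z)=\sum_r \mathbf E_z[f(X_r)\,S_{N-r}^{(k-1)}(X_r)]$, and an inductive step where the leading order comes from the base case combined with evaluating $\sum_r(\log(N-r))^{k-1}\mathbf E_z[f_N(X_r)]$. The main difference is how the two proofs manage the error from applying the induction hypothesis when $N-r$ is not large: the paper reduces (by truncation) to compactly supported $f$, sets $g_N(s)=\log(N-s)/\log N$, splits at $s_1 = N/2$, and uses a monotone decreasing error sequence $c_{m-1,f}(\cdot)$; you instead split the ordered simplex into a gap-$>N^\epsilon$ region $\mathcal S_\epsilon$ and its complement, bound the complement by $O(\epsilon(\log N)^k)$, and send $\epsilon\downarrow 0$ at the end. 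Both are workable, and your route avoids the paper's WLOG reduction to compactly supported $f$ (handling general exponentially decaying $f$ directly via Corollary \ref{cor:expectationBoundGeneral}), which is a slight economy. Two caveats worth flagging.

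First, the $\mathcal S_\epsilon$ step is stated loosely: you say ``the induction hypothesis applies at each level'' on $\mathcal S_\epsilon$, but the induction hypothesis concerns the unrestricted sum $S_M^{(k-1)}$, not the sum restricted to large gaps, and the $\mathcal S_\epsilon$-restricted recursion is not self-similar (the gap threshold $N^\epsilon$ is tied to the outer $N$, not to the subwindow length $N-r_1$). This is fixable — one should instead apply the unrestricted recursion, invoke the induction hypothesis on $S_{N-r}^{(k-1)}$ for $r\le N-N^\epsilon$, and separately bound the $r>N-N^\epsilon$ tail (which is actually $O(N^{\epsilon-1}(\log N)^{k-1})$, even smaller than $O(\epsilon(\log N)^k)$) — but as written the logic doesn't quite close.

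Second, for the ``Furthermore'' sentence, the theorem claims $\sum_{r=N\varepsilon}^{Nt}f_N(X_r)\to 0$ in probability \emph{without} the $(\log N)^{-1}$ normalization; your $L^1$ argument only yields the weaker statement $(\log N)^{-1}\sum_{r=N\varepsilon}^{Nt}f_N(X_r)\to 0$. The paper obtains the stronger, unnormalized version from the invariance principle of Theorem \ref{inv00}: with probability $\to 1$, $N^{-1/2}(R^1_r-R^2_r)$ stays bounded away from $0$ for $r\in[N\varepsilon,Nt]$, on which event $\sum_{r=N\varepsilon}^{Nt}f_N(X_r)\le N(t-\varepsilon)H(cN^{1/2})\to 0$. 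Your weaker statement still suffices for the final f.d.d. conclusion, so this is not fatal, but it leaves part of the stated theorem unproved as written.
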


\begin{rk}
    This theorem can be thought of as an Erd\"{o}s-Taylor theorem for sequences of SRI chains. The original Erd\"{o}s-Taylor theorem \cite{MR121870} proves that the occupation times of a 2d simple symmetric random walk are asymptotically exponential when rescaled by $\log  N$. A series of recent papers \cite{MR4616647,MR4750558} extends the Erd\"{o}s-Taylor theorem to the pairwise collisions of a family of i.i.d. simple symmetric random walks on $\mathbb Z^2$.
\end{rk}

We include the proof of Theorem \ref{2.10} in this section because it is extremely instructive for the proofs in later sections.

\begin{proof}
    Note that the last sentence is immediate from the invariance principle of Theorem \ref{inv00}, and the fact that the law of Brownian motion in $d=2$ stays positive distance from the origin on any time interval $[\varepsilon,t]$. 

    It suffices to prove the statement for the case where $f_N = f$ for all $N$. We can then bound the error terms involving $f_N - f$ using Corollary \ref{cor:expectationBoundGeneral}, noting e.g. that $|f_N(x)-f(x)| \leq H_N(|x|)$ for some functions of the form $H_N \downarrow 0$ with $H_N$ having some uniform exponential decay bound. In that case $b$ can be taken to be fixed and $A \to 0$, thus $Ab^{-d} \to 0$.

    We now prove the first statement. We will prove convergence of the integer moments of $(\log N)^{-1}\sum_{r=0}^N f(   R^1_r-   R^2_r) $ to the correct sequence of values, and one may easily show that this is enough. Thus we need to show that for all $m\in\mathbb N$:
    \begin{equation}\label{renew}
    \lim_{N\to\infty} \Ebbnn \bigg[ \bigg(\frac1{\log N}\sum_{r=0}^N f(   R^1_r-   R^2_r)\bigg)^m\bigg] = m! 2^{-m} \bigg(\int_I f\;\dr \pi^{\mathrm{inv}} \bigg)^m. \end{equation}
    
    Without loss of generality, we will assume that $f$ is compactly supported. This suffices by an approximation argument because for general nonnegative $f$ of exponential decay, one may take a sequence $f^{(\ell)} \uparrow f$ with each $f^{(\ell)}$ of compact support. Note that again by taking the factor $A$ appearing in the right side of Corollary \ref{cor:expectationBoundGeneral} to zero, $$\lim_{\ell \to \infty} \sup_{N\ge 1}\Ebbnn \bigg[ (\log N)^{-1} \sum_{r=0}^{N} |f^{(\ell)}-f|(   R^1_r-   R^2_r)\bigg] = 0.$$ Clearly any limit in $L^1$ of exponential random variables is still exponential, with rate given by the limit of the rate, thus justifying the reduction to $f$ of compact support.

    Let us now proceed by induction on $m\in \mathbb N$. The base case for $m=1$ just follows from Theorem \ref{inv02}.
    The inductive step simplifies due to the compact support assumption on $f$. The method that we use to complete the inductive step will henceforth be called the \textbf{renewal trick.} Suppose that the support of $f$ is contained in a ball of radius $K$ about the origin. Suppose the claim holds up to $m-1$. Then there exists some decreasing deterministic sequence $c_{m-1,f}(N)$ with $\lim_{N\to\infty} c_{m-1,f}(N)\to 0$ such that $$\sup_{\x: |x_i-x_j|\leq K} \bigg| \mathbf E_{\x}^{\brn} \bigg[\bigg( \frac1{\log N} \sum_{s=0}^N  f(   R^1_s-   R^2_s) \bigg)^{m-1}\bigg] - (m-1)!\bigg( \frac12 \int_I f\dr\pi^{\mathrm{inv}} \bigg)^{m-1}\bigg| \leq c_{m-1,f}(N).$$

    We can expand
    \begin{align*}\Ebbnn \bigg[ \bigg(\frac1{\log N}\sum_{r=0}^N f(   R^1_r-   R^2_r)\bigg)^m\bigg] &= m! (\log N)^{-m} \sum_{0\leq s_1<...<s_m \leq N} \Ebbnn \bigg[ \prod_{\ell=1}^m f(   R^1_{s_\ell}-   R^2_{s_\ell})\bigg] \\& \;\;+ (\log N)^{-m} \sum_{\substack{0\le s_1,...,s_m\le N\\ \text{repeated index}}} \Ebbnn \bigg[\prod_{\ell=1}^m f(   R^1_{s_\ell}-   R^2_{s_\ell})\bigg].
    \end{align*}
One may check that the terms with repeated indices are negligible as they yield a term of order $(\log N)^{-1}$ by the inductive hypothesis. As for the first term with no repeated indices, we can write it using the Markov property as 
\begin{align*}
    m! (\log & N)^{-m} \sum_{0\leq s_1<...<s_m \leq N} \Ebbnn \bigg[ \prod_{\ell=1}^m f(   R^1_{s_\ell}-   R^2_{s_\ell})\bigg] = \\&= \frac{m!}{\log N} \sum_{s_1=0}^N \Ebbnn \bigg[ f(   R^1_{s_1} -   R^2_{s_1}) \cdot \frac{g_N(s_1)^{m-1}}{(m-1)!} \mathbf E^{\brn}_{\mathbf R_{s_1}} \bigg[\bigg( \frac1{\log (N-s_1)} \sum_{s=0}^{N-s_1}f(   R^1_s-   R^2_s) \bigg)^{m-1} \bigg]\bigg]+O(1/\log N)
\end{align*}
where $g_N(s):= \frac{\log(N-s)}{\log N}$ and the $O(1/\log N)$ term again consists of collection of terms that have repeated indices and are negligible in the limit. By the inductive hypothesis and compact support of $f$, we know that 
\begin{align*}f(   R^1_{s_1} -   R^2_{s_1}) \bigg| \bigg( \frac1{\log (N-s_1)} \sum_{s=0}^{N-s_1} \mathbf E^{\brn}_{\mathbf R_{s_1}} [f(   R^1_{s}-   R^2_{s})]& \bigg)^{m-1} - (m-1)! \bigg( \frac12 \int_I f\dr\pi^{\mathrm{inv}} \bigg)^{m-1}\bigg| \\&\leq c_{m-1,f}(N-s_1)\|f\|_\infty \cdot \ind_{\{|   R^1_{s_1}-   R^2_{s_1}|\le K\}},\end{align*} where the right side is a deterministic quantity. 

Note that $$\lim_{N\to\infty} \frac1{\log N} \sum_{s=0}^N c_{m-1,f}(N-s)\Ebbnn [ \ind_{\{|   R^1_{s}-   R^2_{s}|\le K\}}] = 0.$$ This follows from splitting the sum in half and observing that by Corollary \ref{cor:expectationBoundGeneral}, we have
\begin{align*}
     \frac1{\log N}\sum_{s=0}^{N/2} c_{m-1,f}(N-s)\Ebbnn [ \ind_{\{|   R^1_{s}-   R^2_{s}|\le K\}}]  &\leq   \frac{C}{\log N} \sum_{s=0}^{N/2} \frac{c_{m-1, f}(N/2)}{1 \wedge s} \\
    & \leq C \cdot c_{m-1, f}(N/2)
\end{align*}
and
\begin{align*}
     \frac1{\log N}\sum_{s=N/2}^{N} c_{m-1,f}(N-s)\Ebbnn [ \ind_{\{|   R^1_{s}-   R^2_{s}|\le K\}}] 
    &\leq  \frac{C}{\log N} \sum_{s=N/2}^{N} \frac{1}{s} \\
    &\leq \frac{C \log 2}{\log N}.
\end{align*}
Both of these expressions tend to $0$ as $N \to \infty$. 

Putting the above together, it remains to show that 
\begin{align*}
\lim_{N \to \infty} \frac{1}{\log N} \sum_{s_1=0}^N \Ebbnn \bigg[ f(   R^1_{s_1} -   R^2_{s_1}) \cdot g_N(s_1)^{m-1}\bigg]  =  \frac12 \int_I f\dr\pi^{\mathrm{inv}} .
\end{align*}
We note that $g_N(s)\in[0,1]$ so that $$|g_N(s)^{m-1} - 1 | \leq m |g_N(s)-1| \leq Cm \cdot \frac{s}{N\log N}.$$ We therefore have 
\begin{multline*}
    \frac{1}{\log N} \sum_{s_1=0}^N \Ebbnn \bigg[ f(   R^1_{s_1} -   R^2_{s_1}) \cdot g_N(s_1)^{m-1}\bigg]  \\ \leq   \frac{1}{\log N} \sum_{s_1=0}^N \Ebbnn \bigg[ f(   R^1_{s_1} -   R^2_{s_1})\bigg] +\frac{C m}{N(\log N)^2} \sum_{s_1=0}^N s\Ebbnn \bigg[ f(   R^1_{s_1} -   R^2_{s_1})  \bigg].   
\end{multline*}
The first term on the right-hand side converges to $ \frac12 \int_I f\dr\pi^{\mathrm{inv}}$ by the $m=1$ case of this theorem, and the second term converges to $0$ by Corollary \ref{cor:expectationBoundGeneral}. 
\end{proof}

\begin{rk}
    Note that the reason this proof technique fails in $d= 1$ is due to the crucial fact noted in Remark \ref{rk:expTimeIndependence} that while in $d=1$, the limit of the discrete local time up to time $Nt$ depends on $t$, the exponential limit in $d=2$ does not depend on $t$. This allowed us to apply the inductive step to $\sum_{s=0}^{N-s_1} \mathbf E^{\brn}_{\mathbf R_{s_1}} [f(   R^1_{s}-   R^2_{s})]$ as $s_1$ is of order $N$ in the sum.
\end{rk}

We now prove a version of the last theorem with a test function appearing in the sum, which will be important later.
\begin{thm}[Backwards-time propagation of test functions in $d=2$]\label{2.11} 
Let $d=2$. Assume $\x_N=(x^1_N,x^2_N)$ is any sequence such that $N^{-1/2}x_N^1 \to x_1$ and $x^1_N-x^2_N$ remains bounded, and consider the same setting as Assumption \ref{ass:SRI}. 
      Consider smooth bounded functions $\phi_1, ..., \phi_m : \mathbb R^d\to \mathbb R$ with globally bounded and continuous first and second partial derivatives, and consider $f^1,..., f^m : I\to \mathbb R$ of exponential decay at infinity. Then \begin{equation}\label{eq:expMultiplef} 
    \lim_{N\to\infty} \Ebbnn \bigg[ \frac1{(\log N)^m}\sum_{0\leq r_1 < \ldots < r_m \leq Nt} \prod_{\ell =1}^m f^{\ell}(   R^1_{r_{\ell}}-   R^2_{r_{\ell}})\phi_{\ell}(N^{-1/2} (R^1_{r_\ell}-   s_N r_\ell))\bigg] = \prod_{\ell = 1}^m \bigg( \phi_\ell (x_1) \int_I f^\ell \; \dr \pi^{\mathrm{inv}} \bigg)\end{equation}
    The claim remains true if we replace $(f^1,...,f^m)$ on the left side by some sequence $(f^1_N,...,f^m_N)$ converging uniformly on compact sets as $N\to\infty$ to some $(f^1,...,f^m)$, as long as $\sup_{N} |f^j_N|$ decays exponentially fast at infinity for $j=1,...,m$.
\end{thm}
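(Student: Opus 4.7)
The plan is to reduce the statement to the moment convergence extracted in the renewal-trick induction inside the proof of Theorem \ref{2.10}, by showing that each factor $\phi_\ell(N^{-1/2}(R^1_{r_\ell}-s_Nr_\ell))$ can be replaced by the constant $\phi_\ell(x_1)$ at the cost of an error that vanishes after division by $(\log N)^m$. Once this replacement is justified, the $\phi_\ell(x_1)$ pull out of the expectation, and the remaining identity
\[
\lim_{N\to\infty}\frac{1}{(\log N)^m}\sum_{0\le r_1<\cdots<r_m\le Nt}\Ebbnn\Big[\prod_{\ell=1}^m f^\ell(R^1_{r_\ell}-R^2_{r_\ell})\Big]=\mathrm{const}\cdot\prod_{\ell=1}^m\int_I f^\ell\,d\pi^{\mathrm{inv}}
\]
is a minor variant of the moment computation inside the proof of Theorem \ref{2.10}: allowing distinct $f^\ell$ does not change the renewal argument, and extending the horizon from $N$ to $Nt$ costs only an $O(1)$ error in each $\sum_r\Ebbnn[f^\ell(R^1_r-R^2_r)]$ by Corollary \ref{cor:expectationBoundGeneral} in $d=2$, which is absorbed by the $(\log N)^m$ normalization.

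To justify the replacement, I telescope the product $\prod_\ell\phi_\ell(\cdots)-\prod_\ell\phi_\ell(x_1)$ so that the task reduces to showing, for each fixed $\ell$,
\[
\lim_{\epsilon\to 0}\limsup_{N\to\infty}\frac{1}{(\log N)^m}\sum_{0\le r_1<\cdots<r_m\le Nt}\Ebbnn\Big[\big|\phi_\ell(N^{-1/2}(R^1_{r_\ell}-s_Nr_\ell))-\phi_\ell(x_1)\big|\prod_{j=1}^m|f^j(R^1_{r_j}-R^2_{r_j})|\Big]=0,
\]
and then split the inner sum at $r_\ell=\epsilon N$. On $\{r_\ell<\epsilon N\}$, the invariance principle (Theorem \ref{inv00}) together with the diffusive $L^q$-bound therein gives $|\phi_\ell(N^{-1/2}(R^1_{r_\ell}-s_Nr_\ell))-\phi_\ell(x_1)|\le C\sqrt{\epsilon}$ except on an event whose probability vanishes with $\epsilon$; this contributes $O(\sqrt{\epsilon})$ times the full unrestricted moment, which is itself bounded uniformly in $N$ by the already-established extension of Theorem \ref{2.10}. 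On $\{r_\ell\ge\epsilon N\}$, I bound the $\phi$-difference crudely by $2\|\phi_\ell\|_\infty$, invoke the strong Markov property at $r_\ell$, and apply the inductive form of the theorem under $\mathbf E^{\brn}_{\mathbf R_{r_\ell}}$ to the $m-\ell$ indices beyond $r_\ell$, yielding a factor $O((\log N)^{m-\ell})$; combining this with $\sum_{r\ge \epsilon N}^{Nt}\Ebbnn[f^\ell(R^1_r-R^2_r)]=O(1)$ (Corollary \ref{cor:expectationBoundGeneral}) and the $O((\log N)^{\ell-1})$ contribution from the indices below $r_\ell$ gives an overall bound of $O((\log N)^{m-1})$, which is $o((\log N)^m)$ for fixed $\epsilon$, and then $\epsilon\to 0$.

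The main technical obstacle is ensuring that the Markov step at $r_\ell$ leaves the problem in a configuration to which the inductive hypothesis applies: the new starting configuration is random rather than a deterministic sequence with macroscopic limit. The remedy is to formulate a uniform version of the $m-1$ statement, valid for upper estimates over all starting pairs $\mathbf y_N\in I^2$ satisfying $|y_N^1-y_N^2|\le K$, the latter restriction automatic on the support of $f^\ell(R^1_{r_\ell}-R^2_{r_\ell})$ by the exponential decay of $f^\ell$. Such uniformity follows directly from a careful bookkeeping of the renewal argument in the proof of Theorem \ref{2.10} and the proof of Theorem \ref{inv02} in Appendix \ref{appendix:d}, since the only role of the starting position in the limit is through the macroscopic image $N^{-1/2}y_N^1$ and we only need an upper bound in terms of $(\log N)^{m-\ell}$, not the precise limit constant. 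Finally, the extension from fixed $f^\ell$ to sequences $f^\ell_N$ converging uniformly on compacts with uniform exponential decay at infinity is a direct approximation argument using Corollary \ref{cor:expectationBoundGeneral}.
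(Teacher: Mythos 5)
Your approach is genuinely different from the paper's, and there is a substantive concern you should address head on. The paper's proof of Theorem \ref{2.11} proceeds, for the base case $m=1$, by viewing $\gamma_N(f):=(\log N)^{-1}\sum_{r\le N}\Ebbnn[f(R^1_r-R^2_r)\phi(N^{-1/2}(R^1_r-s_Nr))]$ as a sequence of signed measures, showing every subsequential weak limit is a scalar multiple of $\pi^{\mathrm{inv}}$, and then pinning down the scalar by an exact summation-by-parts computation with the special choice $\mathfrak f=(\Pdif-\mathrm{Id})\log(1+|\cdot|)$. Immediately after stating the base-case limit \eqref{backprop}, the paper inserts the remark: \emph{``We remark that this is not true if one were to try to bring an absolute value inside the sum, so we need to be more precise.''} Your telescoping reduction brings the absolute value $|\phi_\ell(\cdot)-\phi_\ell(x_1)|$ inside the sum over $(r_1,\dots,r_m)$, which is exactly what this remark warns against. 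You should explain explicitly how your argument escapes this warning. The resolution, if there is one, is that you are not using a crude bound $|\phi_\ell(\cdot)-\phi_\ell(x_1)|\le 2\|\phi_\ell\|_\infty$ (which would indeed produce a $\Theta(\log N)$ contribution and fail), but a refined $r_\ell$-dependent Lipschitz bound $\lesssim M\sqrt{r_\ell/N}$ on a good event; this distinction is invisible in your write-up and deserves a sentence.

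A second, more concrete gap is your treatment of the bad event. You write that $|\phi_\ell(N^{-1/2}(R^1_{r_\ell}-s_Nr_\ell))-\phi_\ell(x_1)|\le C\sqrt\epsilon$ on $\{r_\ell<\epsilon N\}$ ``except on an event whose probability vanishes with $\epsilon$,'' but the complementary probability of the good event $\{\max_{r\le\epsilon N}|N^{-1/2}(R^1_r-s_Nr)-N^{-1/2}x^1_N|\le M\sqrt\epsilon\}$ is controlled by the threshold $M$, not by $\epsilon$ — it is $\delta(M)$, independent of $\epsilon$ and $N$ by the diffusive $L^q$ bound plus a maximal inequality. Your parametrization must therefore either take $M\to\infty$ \emph{after} $\epsilon\to 0$, or tie $M=M(\epsilon)\to\infty$ to $\epsilon$ so the good-event contribution $O(M(\epsilon)\sqrt\epsilon)$ and the bad-event contribution $O(\delta(M(\epsilon))^{1/2})$ (the latter via Cauchy--Schwarz against the bounded $2m$-th moment of $(\log N)^{-1}\sum_r f$) both vanish. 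It is also worth noting a trap you must avoid: if instead of conditioning on the bad event you apply Cauchy--Schwarz directly to the summand $\Ebbnn[|N^{-1/2}(R^1_{r_\ell}-\cdots)-x_1|\,|f(R^1_{r_\ell}-R^2_{r_\ell})|]$, the resulting product $\sqrt{r_\ell/N}\cdot r_\ell^{-1/2}=N^{-1/2}$ summed over $r_\ell\le N$ diverges against $\log N$; the factor $r_\ell^{-1}$ from the $f$-weight and the factor $\sqrt{r_\ell/N}$ from the $\phi$-deviation must be multiplied directly (i.e., exploited jointly, via the anti-concentration bound of Theorem \ref{anti} applied to the \emph{pair} $(R^1_{r_\ell}-R^2_{r_\ell},R^1_{r_\ell})$), not separated by H\"older. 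Finally, your ``uniform version of the $m-1$ statement for random starting configurations'' is the right remedy for the Markov step but is not proved; it is a genuine additional input, in contrast to the paper's proof, which sidesteps uniformity by identifying the full sequence of measures $\gamma_N$ rather than analysing a telescope. In short: a plausible alternative strategy, but the bad-event and uniformity steps are where the actual work (and the paper's warning) live, and they are currently only sketched.
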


The reason we call this backwards propagation in time is because we begin with a test function $\phi$ whose input value is the Markov chain evaluated at some large time $r$, and then this input value gets propagated backwards in time to obtain the test function evaluated at time zero in the limit, i.e., the expression $\phi(x_1)$ appearing on the right-hand side of \eqref{eq:expMultiplef}. 

In $d=1$, this type of backward-time propagation fails completely, and the joint limit will be a nontrivial one given by a local time of Brownian motion and some Lebesgue-Stieltjes integral against the local time measure. The point is that in higher dimensions, all of the information is already concentrated at the initial time, as we already observed in Theorem \ref{2.10}.

\begin{proof}
    We proceed by induction on $m$. The induction step follows exactly as in the proof of Theorem \ref{2.10}, using the Markov property and applying the same exact renewal trick. It only remains to prove the $m=1$ case which is the base case for the induction argument. For the $m=1$ case, we will show that 
    \begin{equation}\label{backprop}(\log N)^{-1}  \sum_{r=0}^N \Ebbnn \bigg[ f(   R^1_r-   R^2_r) \cdot \bigg(\phi \big(N^{-1/2} (   R^1_r-   s_Nr)\big) - \phi(N^{-1/2}x_1^N) \bigg)\bigg]  \to 0.
    \end{equation}
    We remark that this is not true if one were to try to bring an absolute value inside the sum, so we need to be more precise.
    

    To prove the above limit, fix the test function $\phi$ and consider the sequence of (signed) measures $\gamma_N(f):= (\log N)^{-1}  \sum_{r=0}^N \Ebbnn \big[ f(   R^1_r-   R^2_r) \cdot \phi \big(N^{-1/2} (   R^1_r-   s_Nr)\big)\big] $ as defined above. On one hand, it is simple to show that any subsequential limit of $\gamma_N$ must be an invariant measure (see e.g., \hyperref[step4]{Step 4} in Appendix D), thus by Theorem \ref{thm:invMeasure} a multiple of $\pi^{\mathrm{inv}}$. If we can uniquely identify the constant as being equal to $\frac12 \cdot \phi(x_1)$, this would uniquely pin down every subsequential limit of $\gamma_N$. To do that, we just need to pick some specific $f$ and then explicitly calculate the limit of the expectation. 

    In the proof of Theorem \ref{2.10} we picked the specific $f$ given by $\mathfrak f = (\Pdif - \mathrm{Id})u(\x)$ where $u(\x):= \log(1+|x_1-x_2|)$, and here we will pick the same $f$. The calculation for this will be based on ``discrete Ito calculus" and is very similar to ideas used later in Appendix C. Henceforth abbreviate $V:=   R^1-   R^2$. We have by the Markov Property that 
    \begin{align*}
        \gamma_N&\big((\Pdif - \mathrm{Id})u\big) = (\log N)^{-1} \sum_{r=0}^N \Ebbnn \bigg[ \bigg( \log (1+|V_{r+1}| ) - \log (1+ |V_r|) \bigg) \cdot \phi \big(N^{-1/2} (   R^1_r-   s_Nr)\big)\bigg] \\
        &= (\log N)^{-1} \sum_{r=0}^N \Ebbnn \bigg[ \bigg( \log (N^{-1/2}+N^{-1/2}|V_{r+1}| ) - \log (N^{-1/2}+ N^{-1/2}|V_r|) \bigg) \cdot \phi \big(N^{-1/2} (   R^1_r-   s_Nr)\big)\bigg].
        \end{align*}
        Summing this expression by parts, we get that 
        \begin{align*}
        &\gamma_N\big((\Pdif - \mathrm{Id})u\big)
        = (\log N)^{-1} \Ebbnn \bigg[ \log (N^{-1/2}+N^{-1/2}|V_{N+1}|) \phi(N^{-1/2} (   R^1_{N+1} -    s_N(N+1))) \bigg] \\
        &- (\log N)^{-1} \Ebbnn \bigg[\log (N^{-1/2} +N^{-1/2} |x^1_N - x^2_N|) \phi(N^{-1/2} x_1^N)  \bigg] \\
        &\;\; -(\log N)^{-1} \sum_{r=0}^N \Ebbnn \bigg[ \log (N^{-1/2}+ N^{-1/2}|V_{r}|) \cdot \bigg(  \phi \big(N^{-1/2} (   R^1_{r+1}-   s_N(r+1)\big)- \phi \big(N^{-1/2} (   R^1_r-   s_Nr)\big)\bigg)\bigg] \\& + (\log N)^{-1} \sum_{r=0}^N \Ebbnn \bigg[ \bigg(\log (1+|V_{r+1}|) - \log(1+|V_r|) \bigg) \bigg( \phi \big(N^{-1/2} (   R^1_{r+1}-   s_N(r+1)\big)- \phi \big(N^{-1/2} (   R^1_r-   s_Nr)\big)\bigg) \bigg].
    \end{align*}

We now take the limits of each of these four terms.

\textbf{Term 1:} By the invariance principle of Theorem \ref{inv00}, the expression $\Ebbnn [ \log (N^{-1/2} +N^{-1/2} |V_{N+1}|)]$ approaches a finite value as $N \to \infty$ and hence $$(\log N)^{-1} \Ebbnn \bigg[ \log (N^{-1/2}+N^{-1/2}|V_{N+1}|) \phi(N^{-1/2} (   R^1_{N+1} -    s_N(N+1))) \bigg] \to 0.$$ 

\textbf{Term 2:} Using the assumption that $x^1_N-x^2_N$ remains bounded and the fact that $N^{-1/2} X^1_N\to x_1$, we have 
$$- (\log N)^{-1} \Ebbnn \bigg[\log (N^{-1/2} +N^{-1/2} |x^1_N - x^2_N|) \phi(N^{-1/2} x_1^N)  \bigg] \to \frac12 \phi(x_1).$$

\textbf{Term 3:} 
Taylor expand $\phi$ to get 
\begin{align*}
    \phi& \big(N^{-1/2} (   R^1_{r+1}-   s_N(r+1)\big)- \phi \big(N^{-1/2} (   R^1_r-   s_Nr)\big) \\& = N^{-1/2} (   R^1_{r+1}-R^1_r-   s_N) \bullet \nabla\phi  \big(N^{-1/2} (   R^1_r-   s_Nr)\big) +O(1/N).
\end{align*}
The $O(1/N)$ term can be disregarded since there are only $N$ summands and the sum above includes a further multiplying factor of $1/\log N$. On the other hand the first term is also inconsequential since the SRI property yields $|\Ebbnn [ R^1_{r+1} - R^1_r -    s_N| \mathcal F_r]| \leq \tilde f(   R^1_r-   R^2_r)$ for some rapidly decreasing function $\tilde f:I\to \mathbb R$, thus by Theorem \ref{anti} and the extra multiplying factor of $N^{-1/2}$ one obtains the vanishing in the associated term above.

\textbf{Term 4:} Note that 
\begin{align*}|\log (1+|V_{r+1}|) - \log(1+ |V_r|)| &\leq \bigg(\frac{1}{1+|V_r|} + \frac1{1+|V_{r+1}|} \bigg) |V_{r+1}-V_r| \\& \le  N^{-1/2} \bigg(\frac{1}{ N^{-1/2}(1+ |V_r|)} + \frac1{ N^{-1/2} (1+|V_{r+1}|)} \bigg) \sum_{j=1,2} |R^j_{r+1}-   R^j_r-   s_N|.
\end{align*} 
Likewise $$|\phi \big(N^{-1/2} (   R^1_{r+1}-   s_N(r+1)\big)- \phi \big(N^{-1/2} (   R^1_r-   s_Nr)\big)| \leq CN^{-1/2} \|\nabla \phi\|_{L^\infty} |   R^1_{r+1} - R^1_r -    s_N|.$$
Thus Term 4 is bounded above by something of the form $$C(\log N)^{-1} N^{-1} \|\nabla \phi\|_{L^\infty} \sum_{r=0}^N \Ebbnn \bigg[ \bigg(\frac{1}{ N^{-1/2}(1+ |V_r|)} + \frac1{ N^{-1/2} (1+|V_{r+1}|)} \bigg) \cdot \sum_{j=1,2} |R^j_{r+1}-   R^j_r -    s_N|^2 \bigg].$$ Consider conjugate exponents $q',q$ with $q'<2$, and use the Young's inequality $ab \le \frac1p a^{q'} + \frac1q b^q $, then note that the last expression is bounded above by $$C(\log N)^{-1} N^{-1} \|\nabla \phi\|_{L^\infty} \sum_{r=0}^N \Ebbnn \bigg[ \bigg(\frac{1}{ N^{-1/2}(1+ |V_r|)} + \frac1{ N^{-1/2} (1+|V_{r+1}|)} \bigg)^{q'} +  \sum_{j=1,2} |R^j_{r+1}-   R^j_r -    s_N|^{2q}  \bigg].$$ 


The power $q'$ term can be shown to vanish like $O(1/\log N)$ using the invariance principle of Theorem \ref{inv00} (and the anticoncentration estimate of Theorem \ref{anti} to establish the uniform integrability, noting $q'<2$), while the power $2q$ term vanishes thanks to uniform boundedness of the $(2q)^{th}$ moments in the definition of SRI chains which ensures that $ \sum_{r=0}^N \Ebbnn \big[ \sum_{j=1,2} |R^j_{r+1}-   R^j_r -    s_N|^{2q}  \big] \leq C N$ for some uniform constant $C$. Dividing this sum by $N \log N$ takes this term to $0$.

Combining the above four term-wise limits, we finally conclude that   $\gamma_N\big((\Pdif - \mathrm{Id})u\big) \to \frac12 \phi(x_1).$ As this is also the limit of $(\log N)^{-1}  \sum_{r=0}^N \Ebbnn \big[ \mathfrak f(   R^1_r-   R^2_r) \cdot \phi(N^{-1/2}x_1^N)\big] $ by Theorem \ref{inv02}, we conclude the proof of \eqref{backprop}. 
\end{proof}

\begin{ex}[Triviality of the exponential field] Let $d=2$ and assume Assumption \ref{ass:SRI}.
    Assume $N^{-1/2}x_N^1 \to x_1$ and $x^1_N-x^2_N$ remains bounded. Then for any $\phi\in C_c^\infty(\mathbb R^2)$, the pair $$(\log N)^{-1}\;\;\cdot\;\; \bigg (  \sum_{r=0}^{N} f(   R^1_r-   R^2_r)\;\;\;\; ,\;\;\;\; \sum_{r=0}^{N} f(   R^1_r-   R^2_r) \cdot \phi \big(N^{-1/2} (   R^1_r-   s_Nr)\big)\bigg) $$ when sampled from the chain $\brn$ converges in law to $(Z,\phi(x_1)Z)$ where $Z$ is an exponential random variable of rate $2 / \big( \int f\;\dr \pi^{\mathrm{inv}}\big).$ This follows immediately from Theorem \ref{2.11} which immediately yields
\begin{equation}(\log N)^{-1}  \sum_{r=0}^N   f(   R^1_r-   R^2_r) \cdot \bigg(\phi \big(N^{-1/2} (   R^1_r-   s_Nr)\big) - \phi(N^{-1/2}x_1^N) \bigg)  \to 0
    \label{backprop2} 
\end{equation}
in $L^2$ under the measures $\Ebbnn$, in the sense that the sequence of second moments goes to 0.
\end{ex}

    

\begin{thm}[Backwards-time propagation of test functions in $d \geq 3$] \label{2.12} Let $d\ge 3$. Assume $\x_N=(x^1_N,x^2_N)$ is any sequence such that $N^{-1/2}x_N^1 \to x_1$ and $x^1_N-x^2_N$ converges to $y\in I$ (\textit{without} any normalization), and consider the same setting as Assumption \ref{ass:SRI}. 
      Consider smooth bounded functions $\phi_1, ..., \phi_m : \mathbb R^d\to \mathbb R$ with globally bounded and continuous first and second partial derivatives, and consider $f^1,..., f^m : I\to \mathbb R$ continuous of exponential decay at infinity. Then \begin{align}\label{eq:expMultiplef1} 
    \lim_{N\to\infty}\notag  \Ebbnn &\bigg[\sum_{0\leq r_1 < \ldots < r_m \leq Nt} \prod_{\ell =1}^m f^{\ell}(   R^1_{r_{\ell}}-   R^2_{r_{\ell}})\phi_{\ell}(N^{-1/2} (R^1_{r_\ell}-   s_N r_\ell))\bigg] \\&=  \phi_1 (x_1) \cdots \phi_m(x_1) \sum_{0\le r_1 < ...< r_m<\infty } \; \mathbf E^{\boldsymbol b_{\mathrm{dif}}}_y \big[ f^1(X_{r_1}) \cdots f^m(X_{r_m}) ].\end{align}
    The claim remains true if we replace $(f^1,...,f^m)$ on the left side by some sequence $(f^1_N,...,f^m_N)$ equicontinuous and converging uniformly on compact sets as $N\to\infty$ to some $(f^1,...,f^m)$, as long as $\sup_{N} |f^j_N|$ decays exponentially fast at infinity for $j=1,...,m$.
\end{thm}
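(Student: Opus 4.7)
The plan is to follow the same two-step architecture as the proof of Theorem \ref{2.11}: induction on $m$ combined with the Markov property (renewal trick). The crucial simplification in $d\ge 3$ is that the difference chain is transient, so Corollary \ref{cor:expectationBoundGeneral} gives $\Ebbnn[|f(R^1_r-R^2_r)|]\le Cr^{-d/2}$ and the sum $\sum_{r=0}^\infty\mathbf E_y^{\bdif}[f(X_r)]$ is automatically finite; no logarithmic renormalization is needed, and the tail of these sums beyond any cutoff $T$ is $O(T^{1-d/2})$ uniformly in $N$.

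\textbf{Base case $m=1$.} I split the time sum at a large fixed cutoff $T$. For $r\le T$, the convergence $\brn\to\br$ in $d_{\mathrm{SRI}}$ implies weak convergence of $\mathbf R_r$ under $\mathbf P^{\brn}_{\mathbf x_N}$; in particular $R^1_r-R^2_r$ converges in law to $X_r$ started from $y$ under $\bdif$, using $x^1_N-x^2_N\to y$ and translation invariance. Meanwhile $|N^{-1/2}(R^1_r-s_Nr)-N^{-1/2}x_N^1|$ is $O(N^{-1/2}r^{1/2})$ in $L^q$, hence tends to $0$ in probability for bounded $r$, so $\phi(N^{-1/2}(R^1_r-s_Nr))\to\phi(x_1)$ in probability; together with uniform integrability of $f(R^1_r-R^2_r)\phi(\cdot)$ coming from Theorem \ref{anti}, this gives term-wise convergence. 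The tail $r>T$ is controlled uniformly in $N$ by the $r^{-d/2}$ bound, and letting $T\to\infty$ closes the argument.

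\textbf{Inductive step.} Assume the claim for $m-1$. Define
\begin{equation*}
F^{(m-1)}(w):=\sum_{0\le s_2<\ldots<s_m<\infty}\mathbf E_w^{\bdif}\Bigl[\prod_{\ell=2}^m f^\ell(X_{s_\ell})\Bigr];
\end{equation*}
the Green's-function-type bound shows $F^{(m-1)}$ is bounded, and the strong Feller hypothesis on $\bdif$ (Theorem \ref{thm:invMeasure}) together with dominated convergence shows $w\mapsto F^{(m-1)}(w)$ is continuous. Applying the Markov property at time $r_1$ and setting $\tilde\phi_\ell^{r_1}(z):=\phi_\ell(z-N^{-1/2}s_Nr_1)$, the left-hand side of \eqref{eq:expMultiplef1} becomes $\sum_{r_1=0}^{Nt}\Ebbnn[f^1(R^1_{r_1}-R^2_{r_1})\,\phi_1(\cdot)\,\Psi_N(r_1,\mathbf R_{r_1})]$, where $\Psi_N(r_1,\tilde{\mathbf x})$ denotes an inner $(m-1)$-fold expectation starting from $\tilde{\mathbf x}$ with the translated test functions $\tilde\phi_\ell^{r_1}$. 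I split once more at $r_1\le T$ and $r_1>T$. For $r_1\le T$ the joint weak convergence $(N^{-1/2}R^1_{r_1},R^1_{r_1}-R^2_{r_1})\Rightarrow(x_1,X_{r_1})$ combined with continuity of $F^{(m-1)}$ and the inductive hypothesis (applied with the random starting point $\mathbf R_{r_1}$) yields $\Psi_N(r_1,\mathbf R_{r_1})\to\prod_{\ell\ge 2}\phi_\ell(x_1)\cdot F^{(m-1)}(R^1_{r_1}-R^2_{r_1})$, and dominated convergence (using exponential decay of $f^1$ to restrict to bounded starting-point differences) gives the term-wise limit. The $r_1>T$ tail is $O(T^{1-d/2})$, uniformly in $N$, via $\sup_N|\Psi_N|\le C$ and the anti-concentration bound on $f^1$. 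Summing over $r_1$ and unfolding via the Markov property of $X$ under $\bdif$ collapses the limit to $\prod_{\ell=1}^m\phi_\ell(x_1)\sum_{0\le r_1<\ldots<r_m<\infty}\mathbf E_y^{\bdif}[\prod f^\ell(X_{r_\ell})]$. The extension to sequences $f^\ell_N\to f^\ell$ is the same approximation argument used at the beginning of the proof of Theorem \ref{2.10}, relying on uniform exponential decay of $\sup_N|f^\ell_N|$ and equicontinuity.

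\textbf{Main obstacle.} Unlike the $d=2$ case of Theorem \ref{2.11}, where the limit $\tfrac12\int f\,d\pi^{\mathrm{inv}}$ is independent of the starting configuration, here $F^{(m-1)}(w)$ depends genuinely on the initial difference $w$. The technical heart of the argument is therefore to push the inductive hypothesis through with a \emph{random} starting point $\mathbf R_{r_1}$ whose difference coordinate has a non-degenerate limit. This requires either a mild uniform-in-starting-point version of the inductive hypothesis over compact sets, or a careful continuous-mapping / dominated-convergence argument exploiting joint weak convergence together with continuity of $F^{(m-1)}$. Verifying this continuity and the uniform integrability needed to pass to the random-starting-point limit is the main technical point.
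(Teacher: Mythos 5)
Your proposal takes a genuinely different route from the paper's, and your route is arguably the more natural one in $d\ge 3$. The paper establishes the $m=1$ case via the ``backwards propagation'' reduction
\[
\sum_{r=0}^N \Ebbnn\Bigl[ f(R^1_r-R^2_r)\bigl(\phi(N^{-1/2}(R^1_r-s_Nr)) - \phi(N^{-1/2}x_1^N)\bigr)\Bigr]\to 0,
\]
and proves this by a Tanaka-type summation-by-parts formula with the test function $\mathfrak f=(\Pdif-\mathrm{Id})u$, $u(x)=(1+|x|)^{2-d}$, combined with an appeal to the characterization of subsequential limits of $\gamma_N$ (referencing the invariant-measure argument of Appendix D, Step 4). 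You instead skip the detour through backwards propagation entirely: since $d\ge3$ gives the summable Green's-function bound $\Ebbnn[|f(R^1_r-R^2_r)|]\le Cr^{-d/2}$ uniformly in $N$ (Corollary~\ref{cor:expectationBoundGeneral}), you split the sum at a fixed cutoff $T$, evaluate the finite block $r\le T$ term-by-term via TV-convergence of $\brn\to\br$ plus the diffusive $O(N^{-1/2}r^{1/2})$ control on the $\phi$-argument, and absorb the $r>T$ tail into a uniform $O(T^{1-d/2})$ error. This direct dominated-convergence scheme avoids the summation-by-parts calculus and the invariant-measure uniqueness machinery altogether, and it transparently produces the Green's-function answer (which is, after all, \emph{not} a multiple of $\pi^{\mathrm{inv}}$), whereas the paper's appeal to ``multiples of $\pi^{\mathrm{inv}}$'' requires extra boundary bookkeeping in the non-well-separated, un-normalized $d\ge3$ setting. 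The trade-off is that the paper's scheme is designed to parallel the $d=2$ proof of Theorem~\ref{2.11}, unifying the two cases under one template; your approach is cleaner but specific to the transient regime.

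For the inductive step you correctly identify the genuine technical point: unlike $d=2$, where the $(m-1)$-fold limit is a constant independent of the starting configuration, in $d\ge3$ the quantity $F^{(m-1)}(w)$ truly depends on $w$, so the inductive hypothesis must be applied at the random starting point $\mathbf R_{r_1}$. You leave this as a flagged gap rather than resolving it. The fix is standard — either strengthen the IH to convergence uniform over compact sets of $w=x^1_N-x^2_N$ (argue by contradiction and sequential compactness, using that your term-wise limits are continuous in $w$), or pass to a Skorokhod coupling making $(N^{-1/2}R^1_{r_1}, R^1_{r_1}-R^2_{r_1})\to(x_1,X_{r_1})$ a.s.\ and then apply the IH pathwise; one also needs the boundedness of $\Psi_N$ (which your tail bound supplies) to invoke dominated convergence. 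It is worth noting that the paper's own renewal trick, imported from Theorem~\ref{2.10}, also silently relies on exactly this uniform-in-starting-point strengthening (stated explicitly there for $d=2$, but not re-examined for $d\ge3$ where the limit is $w$-dependent), so your version is not worse off than the published one; it is just more candid about where the work lies.
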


Unlike the $d=2$ case above, we remark that there is no logarithmic factor here, but more importantly the terms do not decouple as a product of individual terms in the limit, instead nesting inside one another due to the transience and the Markov property. Thus summarizing, we see that for $d=1$, the backwards propagation simply fails, while for $d=2$ and $d\ge 3$ it remains true but takes on somewhat different forms.

\begin{proof}
Again, we proceed by induction on $m$. The inductive step follows from the same renewal trick we used in the proof of Theorem \ref{2.10}, so we just need to prove the base case $m=1$. 

    For the $m=1$ case, we must show that
    \begin{equation} \label{ref:backpropd3} \sum_{r=0}^N \Ebbnn \bigg[ f(   R^1_r-   R^2_r) \cdot \bigg(\phi \big(N^{-1/2} (   R^1_r-   s_Nr)\big) - \phi(N^{-1/2}x_1^N) \bigg)\bigg]  \to 0.
    \end{equation}
   
   Fix the test function $\phi$ and consider the sequence of signed measures $\gamma_N(f):= \sum_{r=0}^N \Ebbnn \big[ f(   R^1_r-   R^2_r) \cdot \phi \big(N^{-1/2} (   R^1_r-   s_Nr)\big)\big] $. By the usual arguments (see \hyperref[step4]{Step 4} in Appendix D), any subsequential limit of $\gamma_N$ must be a multiple of $\pi^{\mathrm{inv}}$.  We will show that this multiple equals $u(y) \phi(x_1)$, which uniquely pins down every subsequential limit of $\gamma_N$. As usual, we will show this by considering the specific choice $\mathfrak f = (\Pdif - \mathrm{Id})u$ with $u(x) := (1+|x|)^{2-d}$. Recall that $V:=   R^1-   R^2$. We have by the Markov Property that 
    \begin{align*}
        \gamma_N&\big((\Pdif - \mathrm{Id})u\big) = \sum_{r=0}^N \Ebbnn \bigg[ \bigg( u(V_{r+1}) - u(V_r) \bigg) \cdot \phi \big(N^{-1/2} (   R^1_r-   s_Nr)\big)\bigg].
        \end{align*}
        Summing this expression by parts, we get that 
        \begin{align*}
        &\gamma_N\big((\Pdif - \mathrm{Id})u\big)
        =  \Ebbnn \bigg[ u(V_{N+1}) \phi(N^{-1/2} (   R^1_{N+1} -    s_N(N+1))) \bigg] \\
        &-  \Ebbnn \bigg[u(x^1_N - x^2_N) \phi(N^{-1/2} x_N^1)  \bigg] \\
        &\;\; -\sum_{r=0}^N \Ebbnn \bigg[ u(V_r) \cdot \bigg(  \phi \big(N^{-1/2} (   R^1_{r+1}-   s_N(r+1)\big)- \phi \big(N^{-1/2} (   R^1_r-   s_Nr)\big)\bigg)\bigg] \\
        & + \sum_{r=0}^N \Ebbnn \bigg[ \bigg(u(V_{r+1}) - u(V_r) \bigg) \bigg( \phi \big(N^{-1/2} (   R^1_{r+1}-   s_N(r+1)\big)- \phi \big(N^{-1/2} (   R^1_r-   s_Nr)\big)\bigg) \bigg].
    \end{align*}

We now take the limits of each of these four terms.

\textbf{Term 1:} By the invariance principle of Theorem \ref{inv00}, the expression $\Ebbnn [ u(V_{N+1})] =\Ebbnn [ (1 + |V_{N+1}|)^{2-d}] $ approaches $0$ as $N \to \infty$.

\textbf{Term 2:} Using the assumption that $x^1_N-x^2_N \to y$ and the fact that $N^{-1/2} x^1_N\to x_1$, we have 
$$ \Ebbnn \bigg[u(x^1_N - x^2_N) \phi(N^{-1/2} x_N^1)  \bigg] \to  u(y) \phi(x_1) .$$

\textbf{Term 3:} 
Taylor expand $\phi$ to get 
\begin{align*}
    \phi& \big(N^{-1/2} (   R^1_{r+1}-   s_N(r+1)\big)- \phi \big(N^{-1/2} (   R^1_r-   s_Nr)\big) \\& = N^{-1/2} (   R^1_{r+1}-R^1_r-   s_N) \bullet \nabla\phi  \big(N^{-1/2} (   R^1_r-   s_Nr)\big) +O(1/N).
\end{align*}
The $O(1/N)$ term goes to $0$ since by Theorem \ref{inv00},
\begin{align*}
   \frac{1}{N} \sum_{r=1}^N \Ebbnn \big[ u(V_r)\big] \leq  C\frac{1}{N} \sum_{r=1}^N r^{1- \frac{d}{2}} \to 0.
\end{align*}
The first term is also inconsequential since the SRI property yields $|\Ebbnn [ R^1_{r+1} - R^1_r -    s_N| \mathcal F_r]| \leq \tilde f(   R^1_r-   R^2_r)$ for some rapidly decreasing function $\tilde f:I\to \mathbb R$, thus by Theorem \ref{anti} and the extra multiplying factor of $N^{-1/2}$ one obtains the vanishing in the associated term above. 

\textbf{Term 4:} 
We have 
$$|\phi \big(N^{-1/2} (   R^1_{r+1}-   s_N(r+1)\big)- \phi \big(N^{-1/2} (   R^1_r-   s_Nr)\big)| \leq CN^{-1/2} \|\nabla \phi\|_{L^\infty} |   R^1_{r+1} - R^1_r -    s_N|.$$
Thus Term 4 is bounded above by $$C N^{-1/2} \|\nabla \phi\|_{L^\infty} \sum_{r=0}^N \Ebbnn \bigg[ \bigg(\frac{1}{ (1+ |V_r|)^{d-2}} - \frac1{  (1+|V_{r+1}|)^{d-2}} \bigg) \cdot  |   R^1_{r+1}-R^1_r -    s_N| \bigg].$$ 

Consider conjugate exponents $q',q$ with $q'<2$, and use H{\"o}lder's inequality to obtain
$$C N^{-1/2} \|\nabla \phi\|_{L^\infty} \sum_{r=0}^N \Ebbnn \bigg[ \bigg|\frac{1}{(1+ |V_r|)^{d-2}} - \frac1{  (1+|V_{r+1}|)^{d-2}} \bigg| ^{q'} \bigg]^{1/q'} \cdot  \Ebbnn \bigg[|   R^1_{r+1}-R^1_r -    s_N|^{q'}\bigg]^{1/q'}.$$ 

Due to uniform boundedness of the $q^{th}$ moments in the definition of SRI chains which ensures that $ \Ebbnn \big[ |   R^1_{r+1}-R^1_r -    s_N|^{q}  \big] \leq C $ for some uniform constant $C$.

The remaining sum can be shown to vanish using the invariance principle of Theorem \ref{inv00} (and the anti-concentration estimate of Theorem \ref{anti} to establish the uniform integrability, noting $q'<2$), while the power $2q$ term vanishes thanks to uniform boundedness of the $(2q)^{th}$ moments in the definition of SRI chains which ensures that $ \sum_{r=0}^N \Ebbnn \big[ \sum_{j=1,2} |R^j_{r+1}-   R^j_r -    s_N|^{2q}  \big] \leq C N$ for some uniform constant $C$. Dividing this sum by $N \log N$ takes this term to $0$.

Combining the above four term-wise limits, we finally conclude that   $\gamma_N\big((\Pdif - \mathrm{Id})u\big) \to  -u(y) \phi(x_1).$  As this is also the limit of $ \sum_{r=0}^N \Ebbnn \big[ \mathfrak f(   R^1_r-   R^2_r) \cdot \phi(N^{-1/2}x_1^N)\big] $, we conclude the proof of \eqref{ref:backpropd3}. 
\end{proof}

\begin{ex}
    Let $d \geq 3$ and assume Assumption \ref{ass:SRI}. Further assume that $f$ is continuous. Assume $N^{-1/2}x_N^1 \to x_1$ and $x^1_N-x^2_N$ converges to $y \in I$. Then for any $\phi\in C_c^\infty(\mathbb R^2)$, we have that 
\begin{equation} \sum_{r=0}^N   f(   R^1_r-   R^2_r) \cdot \bigg(\phi \big(N^{-1/2} (   R^1_r-   s_Nr)\big) - \phi(N^{-1/2}x_1^N) \bigg)  \to 0.
\end{equation}
in $L^2$ under the measures $\Ebbnn$. This is immediate from Theorem \ref{2.12}. In particular the sequence $$ \sum_{r=0}^{N} f(   R^1_r-   R^2_r) \cdot \phi \big(N^{-1/2} (   R^1_r-   s_Nr)\big)$$ converges in law as $N\to \infty$ to a random variable distributed as $\phi(x_1) \cdot \sum_{r=0}^\infty f(X_r)$ where $X$ is the limiting centered chain representing the difference of the two coordinates $R^1,R^2,$ with $X_0=y$.
\end{ex}

\subsection{Proof of SRI property for the tilted $k$-point Markov chains}

In this section, we will show that the tilted measures $\Pb$ that we defined in Section \ref{sec:2}  are SRI chains.

\begin{prop}\label{SRI2}
    The tilted measures $\Pb$ are SRI chains in the sense of Definition \ref{SRI}. More precisely, we have $\boldsymbol p^{(k)}\in \Ekk $ with $F= \Fd$ as in Assumption \ref{a1}, and $\boldsymbol p^{(k)}$ is centered and $\delta$-repulsive for some $\delta>0$. 
    
    Moreover, $d_{\mathrm{SRI}}(\boldsymbol p^{(k)}, \qdif) \leq C|   \beta|,$ thus there exists $\beta_0>0$ such that for $|   \beta|<\beta_0$ we have $\qdif \in \mathcal E^{\mathrm{SRI}}_{k\times d} (\Psi_{\sigma/2}, 2M, F^{1/2})$ and $\qdif$ is $(\delta/2)$-repulsive. 
    
    Furthermore, for any sequence $\beta^N\to v$ with $|v|<\beta_0$ the conditions in Assumption \ref{ass:SRI} are satisfied by the sequence $\boldsymbol q_{\beta^N}^{(2)}.$
\end{prop}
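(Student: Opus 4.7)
The plan is to address the three claims in sequence. First, I would verify that the untilted $k$-point motion $\boldsymbol p^{(k)}$ lies in $\mathcal{E}^{\mathrm{SRI}}_{k\times d}(\Psi_\sigma, M, \mathrm{F_{dec}})$ and is centered and $\delta$-repulsive. The SRI decay bound \eqref{tvb2} with $F = \mathrm{F_{dec}}$ is exactly Assumption \ref{a1}\eqref{a24} applied to the discrete partition $\pi = \{\{1\},\ldots,\{k\}\}$, i.e., estimate \eqref{tvb}. The exponential-moment control in Definition \ref{ekk} follows from Assumption \ref{a1}\eqref{a22}, because by projectivity the marginal law of $y_j - x_j$ under $\boldsymbol p^{(k)}(\mathbf x, \mathrm d\mathbf y)$ is precisely $\mu$. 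Centering is immediate from the same observation. For $\delta$-repulsiveness I would reduce (via projectivity again) to a uniform lower bound on $F(x) := \int_I |z-x|\,\boldsymbol p_{\mathbf{dif}}(x,\mathrm d z)$. For $|x|$ large, Assumption \ref{a1}\eqref{a24} applied to $k=2$ gives that $\boldsymbol p_{\mathbf{dif}}(x,\cdot)$ is TV-close to the independent-convolution law, under which $z$ has mean $-x$; combined with uniform integrability from the exponential moment bound, this yields $F(x)\gtrsim |x|$ as $|x|\to\infty$. For $x$ in a compact set, continuity in TV of $x\mapsto\boldsymbol p_{\mathbf{dif}}(x,\cdot)$ (the regularity part of Assumption \ref{a1}\eqref{a16}) and uniform integrability give continuity of $F$, while $F(0)>0$ because $\boldsymbol p_{\mathbf{dif}}(0,\cdot)\ne\delta_0$ (otherwise irreducibility would fail). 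A continuous positive function diverging at infinity has a uniform positive lower bound, so $\delta>0$ exists.

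Next I would prove the distance bound $d_{\mathrm{SRI}}(\boldsymbol p^{(k)},\boldsymbol q^{(k)}_\beta)\le C|\beta|$. By \eqref{qk}, $\boldsymbol q^{(k)}_\beta$ is absolutely continuous with respect to $\boldsymbol p^{(k)}(\mathbf x,\mathrm d\mathbf y)$ with density $f(\mathbf y) = e^{\beta\bullet\sum_i(y_i-x_i)}/Z(\mathbf x)$, where $Z$ is the normalization. Using $|e^t-1|\le |t|e^{|t|}$, the triangle inequality, and the uniform exponential moment bound on $\mu$ (which makes $Z(\mathbf x)$ stay in a compact neighborhood of $1$), one gets
\[
\|\boldsymbol q^{(k)}_\beta(\mathbf x,\bullet) - \boldsymbol p^{(k)}(\mathbf x,\bullet)\|_{TV} = \tfrac12\int |f(\mathbf y)-1|\,\boldsymbol p^{(k)}(\mathbf x,\mathrm d\mathbf y) \le C|\beta|
\]
uniformly in $\mathbf x$. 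This perturbation then transfers the other properties to $\boldsymbol q^{(k)}_\beta$ for small $|\beta|$: (i) for membership in $\mathcal{E}^{\mathrm{SRI}}_{k\times d}(\Psi_{\sigma/2},2M,F^{1/2})$, the moment bound survives with the weakened parameters $(\sigma/2, 2M)$ via a H\"older argument applied to $f$; the decay bound follows from $\|\boldsymbol q^{(k)}_\beta - \nu^{\otimes k}\|_{TV} \le \|\boldsymbol q^{(k)}_\beta - \boldsymbol p^{(k)}\|_{TV} + \|\boldsymbol p^{(k)} - \nu^{\otimes k}\|_{TV}$, bounding the sum by $C\min\{1,\mathrm{F_{dec}}\}^{1/2}$ using that each summand is bounded by $1$; (ii) for $(\delta/2)$-repulsiveness, $\bigl|\int|y_i-y_j-(x_i-x_j)|(\boldsymbol q^{(k)}_\beta - \boldsymbol p^{(k)})(\mathbf x,\mathrm d\mathbf y)\bigr|$ is $O(|\beta|)$ by uniform integrability from the exponential moment bound.

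Finally, for a sequence $\beta^N\to v$ with $|v|<\beta_0$, I would verify each condition of Assumption \ref{ass:SRI}: membership in the relaxed class and translation invariance of $\boldsymbol q^{(2)}_{\beta^N}$ follow from the previous step and from the translation invariance of $\boldsymbol p^{(2)}$ (Assumption \ref{a1}\eqref{a12}) via the explicit formula \eqref{qk}; SRI-convergence $\boldsymbol q^{(2)}_{\beta^N}\to\boldsymbol q^{(2)}_v$ comes from the same Radon--Nikodym perturbation argument applied to the difference of tilts, yielding $d_{\mathrm{SRI}}(\boldsymbol q^{(2)}_{\beta^N},\boldsymbol q^{(2)}_v)\le C|\beta^N - v|\to 0$; the limit $\boldsymbol q^{(2)}_v$ is $(\delta/2)$-repulsive by the same perturbation argument; the proximity $d_{\mathrm{SRI}}(\boldsymbol q^{(2)}_{\beta^N},\boldsymbol p^{(2)})<\epsilon$ to the centered chain $\boldsymbol p^{(2)}$ holds provided we shrink $\beta_0$ so that $C\beta_0<\epsilon$; and the strong-Feller-type conditions (TV continuity of the starting point and the irreducibility-style positivity condition) for the difference kernel of $\boldsymbol q^{(2)}_v$ are inherited from those for $\boldsymbol p_{\mathbf{dif}}$ in Assumption \ref{a1}\eqref{a16}, since the difference kernel of $\boldsymbol q^{(2)}_v$ is absolutely continuous with respect to $\boldsymbol p_{\mathbf{dif}}$ with a bounded and strictly positive density (the normalized tilt factor).

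The main obstacle is the $\delta$-repulsiveness of $\boldsymbol p^{(k)}$, since the remaining arguments are all perturbative near $\beta=0$. Establishing a uniform lower bound on the continuous positive function $F$ requires combining the asymptotics at infinity (from the strong decay of correlations in Assumption \ref{a1}\eqref{a24}) with positivity on compacts (from non-degeneracy in Assumption \ref{a1}\eqref{a16}), and the bridge between TV convergence and convergence of the unbounded-function integrals relies on uniform integrability supplied by the exponential moment assumption.
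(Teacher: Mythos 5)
Your overall architecture follows the paper's proof: membership of $\boldsymbol p^{(k)}$ in $\mathcal{E}^{\mathrm{SRI}}_{k\times d}(\Psi_\sigma, M, \mathrm{F_{dec}})$ from Assumption~\ref{a1}, centeredness from projectivity, $\delta$-repulsivity from the non-degeneracy assumption, the $d_{\mathrm{SRI}}(\boldsymbol p^{(k)}, \boldsymbol q^{(k)}_\beta)\le C|\beta|$ bound by expanding the Radon--Nikodym density, and a perturbative transfer to the tilted chain. There are, however, two places where the argument as written does not go through.

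The more serious gap is the claim that $\boldsymbol q^{(k)}_\beta \in \mathcal{E}^{\mathrm{SRI}}_{k\times d}(\Psi_{\sigma/2}, 2M, F^{1/2})$ via the triangle inequality
\[
\|\boldsymbol q^{(k)}_\beta(\mathbf x,\cdot) - \nu^{\otimes k}(\mathbf x-\cdot)\|_{TV} \le \|\boldsymbol q^{(k)}_\beta - \boldsymbol p^{(k)}\|_{TV} + \|\boldsymbol p^{(k)} - \nu^{\otimes k}\|_{TV}.
\]
The base measure $\nu$ of $\boldsymbol q^{(k)}_\beta$ is \emph{not} the base measure $\mu$ of $\boldsymbol p^{(k)}$; under tilting it becomes the exponentially tilted measure $\mu^{\beta}$. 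Definition~\ref{SRI} explicitly allows the base measure to vary between elements of the class, and the right comparison for $\boldsymbol q^{(k)}_\beta$ is with $(\mu^{\beta})^{\otimes k}(\mathbf x-\cdot)$. Moreover, the first summand $\|\boldsymbol q^{(k)}_\beta(\mathbf x,\cdot) - \boldsymbol p^{(k)}(\mathbf x,\cdot)\|_{TV}$ is of order $|\beta|$ \emph{uniformly in $\mathbf x$} and does not decay in the particle separation, so the triangle inequality cannot possibly yield a right-hand side decaying like $F^{1/2}(\min|x_i-x_j|)$; your remark "each summand is bounded by $1$" does not fix this. The intended argument (this is what the paper's reference to Lemmas~\ref{grow} and~\ref{tbb} is for) is to compare $\boldsymbol q^{(k)}_\beta(\mathbf x,\cdot)$ directly with $(\mu^\beta)^{\otimes k}(\mathbf x-\cdot)$: both are tilts of TV-close measures by the same bounded-exponential-moment factor, so Lemma~\ref{tbb} (the Cauchy--Schwarz TV bound) converts the $F$-closeness of the untilted kernels into $F^{1/2}$-closeness of the tilted ones, with the normalization constants controlled by Lemma~\ref{grow}.

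The second issue is in the $\delta$-repulsivity argument for $\boldsymbol p^{(k)}$. You reduce to $F(x) := \int_I |z-x|\,\boldsymbol p_{\mathbf{dif}}(x,\mathrm d z)$ and observe that $z$ has mean $-x$ under $\boldsymbol p_{\mathbf{dif}}(x,\cdot)$, concluding $F(x)\gtrsim|x|$. The sign is the problem: the repulsivity integral $\int_{I^2}|(y_1-y_2)-(x_1-x_2)|\,\boldsymbol p^{(2)}(\mathbf x,\mathrm d\mathbf y)$ is the first absolute moment of the increment of the \emph{actual} difference chain, whose transition kernel from state $x$ is $\bdif(x,\cdot)$ with mean $x$ (a martingale), not $\boldsymbol p_{\mathbf{dif}}(x,\cdot)$ with mean $-x$. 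In other words, after the translation-invariance reduction you should arrive at $\int|z-x|\,\bdif(x,\mathrm d z)$, which is the absolute first moment of a \emph{centered} random variable and therefore tends to a fixed positive constant (the first absolute moment of the base measure at the identity covariance normalization) as $|x|\to\infty$ --- it does not diverge. Compare Lemma~\ref{buo}, which proves exactly this: the quantity is controlled within $2F(|x|)$ of $\int|u|\,\nu(\mathrm d u)>0$. Your final conclusion (uniform positive lower bound via positivity at the origin, TV-continuity, and the asymptotic) happens to survive the error, but as stated the asymptotic claim is wrong and obscures the reason the constant $\delta$ exists.
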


\begin{proof}
    The reason why $\boldsymbol p^{(k)}$ is centered is that the marginal law of each coordinate of the associated Markov chain is simply a random walk with increment distribution $\mu$. 
    
    The reason why $\boldsymbol p^{(k)}$ is $\delta$-repulsive is because of the final item in Assumption \ref{a1}. Next, one has by definition that 
    $$\|\boldsymbol p^{(k)}(\x,\bullet) - \qdif(\x,\bullet) \|_{TV} = \int_{I^k} \bigg| e^{\beta \bullet \sum_{i=1}^k (y_i-x_i) - \log \int_{I^k} e^{\beta \bullet \sum_{i=1}^k (a_i-x_i) }\boldsymbol p^{(k)}(\x,\dr \bfa)}\;-\;1 \bigg| \boldsymbol p^{(k)}(\x,\dr\y) .$$ Taylor expand everything in $\beta$, then note that the zeroth order terms cancel since it is 1, thus for small $\beta$ the first order term is the first nontrivial term. Factor out $\beta$ in this term and take the norm, then use Lemma \ref{grow} to bound the remaining integral by an absolute constant, and we arrive at the bound $d_{\mathrm{SRI}}(\boldsymbol p^{(k)}, \qdif) \leq C|   \beta|.$ The fact that $\qdif \in \mathcal E^{\mathrm{SRI}}_{k\times d} (\Psi_{\sigma/2}, 2M, F^{1/2})$ follows from e.g. Lemmas \ref{grow} and Lemma \ref{tbb}. Next, we need to make sure that the regularity conditions of Theorem 

    To prove the repulsivity statement, note more generally that there exists $\epsilon_0=\epsilon_0(\sigma,M,\delta)>0$, so that $\br\in\Ekk$ and $d_{\mathrm{SRI}}(\br,\br_0)<\epsilon_0$ together imply that $\br$ is $(\delta/2)$-repulsive. To prove this, notice (for example by Lemma \ref{tbb}) that $$\bigg|\int_{I^k} |y_i-y_j-(x_i-x_j)| \big( \br - \br_0 )(\x,\dr\y) \bigg| \leq C \cdot d_{\mathrm{SRI}}(\br,\br_0)^{1/2},$$
for some $C=C(q,M)>0.$ Thus taking $\epsilon_0:= \delta^2/(4C) $ gives the claim.

To prove the final statement, note that the strong Feller condition on the limiting chain is already provided by Assumption \ref{a1} Item (6).
\end{proof}

    \begin{cor}\label{cor:expectationBound}
        Let $d\ge 2$. Let $H:[0,\infty) \to[0,\infty)$ be decreasing with $H(x) \leq Ae^{-bx}$. There exists $C , \beta_0>0$ such that uniformly over all $r\in\mathbb Z_{\ge 0}$ and all $f: I\to [0,\infty)$ satisfying $|f(x)| \leq H(|x|)$ one has 
        $$ \sup_{   \beta: |   \beta|\leq \beta_0}\sup_{\x,\y \in I^k} \Eb [f(   R^i_r - R^j_r -y)]<Cr^{-\frac{d}2} \cdot Ab^{-d} \cdot  \big( 1+ r^{-1/2} |x_i-x_j-y|\big)^{-K}.$$
        The constant $C$ only depends on $H$ through $A,b$.
    \end{cor}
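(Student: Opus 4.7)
The plan is to derive this corollary directly by combining the two results that immediately precede it in the paper: Proposition \ref{SRI2}, which identifies the tilted chains $\boldsymbol q^{(k)}_{\beta}$ as SRI chains close to a centered reference, and Corollary \ref{cor:expectationBoundGeneral}, which provides the desired bound for any SRI chain within a small neighborhood of a centered, $\delta$-repulsive chain.

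First, I would invoke Proposition \ref{SRI2} to get control on the tilted chain. That proposition tells us three things: the untilted chain $\boldsymbol p^{(k)}$ belongs to $\mathcal E^{\mathrm{SRI}}_{k\times d}(\Psi_\sigma, M, F)$ and is centered and $\delta$-repulsive; the tilted chain $\boldsymbol q^{(k)}_\beta$ belongs to $\mathcal E^{\mathrm{SRI}}_{k\times d}(\Psi_{\sigma/2}, 2M, F^{1/2})$ for all $|\beta|\le\beta_0$; and the SRI distance satisfies $d_{\mathrm{SRI}}(\boldsymbol p^{(k)}, \boldsymbol q^{(k)}_\beta) \le C|\beta|$. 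By shrinking $\beta_0$ if necessary, we can force $d_{\mathrm{SRI}}(\boldsymbol p^{(k)}, \boldsymbol q^{(k)}_\beta) < \epsilon$, where $\epsilon = \epsilon(F^{1/2}, \sigma/2, 2M, \delta)$ is the threshold appearing in Corollary \ref{cor:expectationBoundGeneral}.

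Next I would apply Corollary \ref{cor:expectationBoundGeneral} with $\br_0 := \boldsymbol p^{(k)}$ as the centered, $\delta$-repulsive reference chain and $\br := \boldsymbol q^{(k)}_\beta$ as the chain within $\epsilon$ of $\br_0$. Since the hypotheses (decreasing $H$ with exponential tail, nonnegative $f$ dominated by $H$, $k=2$ component difference) match exactly, we obtain
\[
\mathbf E_{\mathbf x}^{(\beta,k)}[f(R^i_r - R^j_r - y)] \le C \cdot A b^{-d} \cdot r^{-d/2} \bigl(1 + r^{-1/2}|x_i - x_j - y|\bigr)^{-K},
\]
uniformly over $|\beta| \le \beta_0$, $\mathbf x, y$, and $r$, with the constant $C$ depending only on $F, \sigma, M, \delta, K, d$ (i.e., independent of $H$ except through $A,b$). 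This is precisely the claim.

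There is essentially no obstacle here since the real work was done in the two preceding results: the anti-concentration estimate of Theorem \ref{anti} (which underlies Corollary \ref{cor:expectationBoundGeneral}) and the verification in Proposition \ref{SRI2} that Girsanov-tilted kernels retain the SRI structure and sit uniformly close to the untilted kernel. The only minor subtlety to check is that the threshold $\epsilon$ produced by Corollary \ref{cor:expectationBoundGeneral} is determined by $(F^{1/2}, \sigma/2, 2M, \delta)$, and the bound $d_{\mathrm{SRI}}(\boldsymbol p^{(k)}, \boldsymbol q^{(k)}_\beta) \le C|\beta|$ from Proposition \ref{SRI2} is linear in $|\beta|$, so choosing $\beta_0 \le \epsilon/C$ closes the loop. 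The dimension restriction $d\ge 2$ is inherited directly from Corollary \ref{cor:expectationBoundGeneral}, since Theorem \ref{anti} only provides the pointwise (as opposed to time-averaged) anti-concentration bound in this regime.
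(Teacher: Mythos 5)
Your proposal is correct and takes essentially the same approach as the paper, which gives only a one-line proof citing Proposition \ref{SRI2} and Corollary \ref{cor:expectationBoundGeneral}; you have simply supplied the details that the paper leaves implicit (including the bookkeeping of which SRI class the tilted chain belongs to and why $\beta_0$ can be chosen so that $d_{\mathrm{SRI}}(\boldsymbol p^{(k)},\boldsymbol q^{(k)}_\beta) < \epsilon$).
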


    \begin{proof} This follows from Proposition \ref{SRI2} and Corollary \ref{cor:expectationBoundGeneral}. The exponential decay assumption on $\Fd$ ensures that the conditions of Corollary \ref{cor:expectationBoundGeneral} are satisfied.
    \end{proof}

The following lemma allows us to apply Corollary \ref{cor:expectationBound} in conjunction with the Markov property to bound expectations of products of functions of exponential decay.
   \begin{lem}\label{prop:productEstimate} Let $d\ge 2$.
Suppose that $F:\mathbb [0,\infty)\to [0,\infty)$ 
    is a decreasing function of exponential decay at infinity. 
    Let $K > 0$. Then there exists some $\epsilon>0$ and a constant $C = C(K, F) >0$ such that uniformly over $|\varsigma|<\epsilon$, $   a_1,   a_2\in I$, and all $0 = r_0 < r_1 \leq \cdots \leq r_m$, we have  \begin{align*}
    \mathbf E^{(\varsigma,2)}_{(   a_1,   a_2)} \bigg[ \prod_{j=1}^{m} F(|   R^1_{r_j}-   R^2_{r_j}|)\bigg] \leq C \left( 1+ r_1^{-1/2} |   a_1 -    a_2|\right)^{-K} \prod_{j=1}^{m} 1 \wedge (r_j - r_{j-1})^{-d/2}. 
\end{align*}
In particular, for fixed $r\in \mathbb Z_{\ge 0}$ we have that \begin{equation}\label{!!!}\sum_{1\le s_1\le ...\le s_{m-1} \le r}\mathbf E^{(\varsigma,2)}_{(   a_1,   a_2)} \bigg[ F(|R^1_r -R^2_r|) \prod_{j=1}^{m} F(|   R^1_{r_j}-   R^2_{r_j}|)\bigg] \leq \frac{C^m}{\omega_r(d)^{m-1} } r^{-d/2} (1+r^{-1/2} |   a_1-   a_2| \big)^{-K} .
\end{equation}
\end{lem}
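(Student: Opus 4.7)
\textbf{Proof plan for Lemma \ref{prop:productEstimate}.} The plan is to prove the first inequality by iterated conditioning with respect to the Markov property of $\Pb$, applying Corollary \ref{cor:expectationBound} at each time step, and then to deduce the second inequality by summing the first one against a simple pigeonhole-style bound for the discrete time integral. By Proposition \ref{SRI2}, the chain $\boldsymbol q^{(2)}_\varsigma$ is in $\mathcal E^{\mathrm{SRI}}_{2\times d}(\Psi_{\sigma/2},2M,F^{1/2})$ and is close in $d_{\mathrm{SRI}}$ to the centered, $\delta$-repulsive chain $\boldsymbol p^{(2)}$ once $|\varsigma|<\beta_0$ is small, so Corollary \ref{cor:expectationBound} applies uniformly in $\varsigma$ in that range.

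For the first inequality, first I would reduce to the case where the times $r_j$ are strictly increasing: whenever $r_j=r_{j-1}$, we merge the corresponding factors into $F^2$, and since $F^2$ also decays exponentially and $1\wedge 0^{-d/2}=1$, the merged expression fits into the same framework with one fewer factor and with a larger constant in front. Then I would condition on $\mathcal F_{r_{m-1}}$ and use the Markov property to write
\begin{align*}
\mathbf E^{(\varsigma,2)}_{(a_1,a_2)} \bigg[\prod_{j=1}^{m} F(|R^1_{r_j}-R^2_{r_j}|)\bigg]
&= \mathbf E^{(\varsigma,2)}_{(a_1,a_2)} \bigg[\prod_{j=1}^{m-1} F(|R^1_{r_j}-R^2_{r_j}|)\cdot \mathbf E^{(\varsigma,2)}_{\mathbf R_{r_{m-1}}}\!\big[F(|R^1_{r_m-r_{m-1}}-R^2_{r_m-r_{m-1}}|)\big]\bigg].
\end{align*}
Corollary \ref{cor:expectationBound} bounds the inner expectation by $C(r_m-r_{m-1})^{-d/2}(1+(r_m-r_{m-1})^{-1/2}|R^1_{r_{m-1}}-R^2_{r_{m-1}}|)^{-K}$, and at this stage I would throw away the $(1+\cdots)^{-K}$ factor, keeping only $C\,(r_m-r_{m-1})^{-d/2}$. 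Iterating this inductively down to $j=1$ produces $m-1$ factors of $C\,(r_j-r_{j-1})^{-d/2}$, and on the final (outermost) expectation I would \emph{retain} the full bound from Corollary \ref{cor:expectationBound}, giving the factor $(1+r_1^{-1/2}|a_1-a_2|)^{-K}$. Replacing each $(r_j-r_{j-1})^{-d/2}$ by $1\wedge(r_j-r_{j-1})^{-d/2}$ absorbs the trivial bound from the merged-times case, proving the first inequality.

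For the second inequality, I would apply the first inequality with $s_m=r$ and use $s_1\le r$ to replace $(1+s_1^{-1/2}|a_1-a_2|)^{-K}$ by the weaker $(1+r^{-1/2}|a_1-a_2|)^{-K}$. It then remains to estimate
\begin{equation*}
S_m(r)\;:=\;\sum_{1\le s_1\le\cdots\le s_{m-1}\le r}\ \prod_{j=1}^{m}\bigl(1\wedge(s_j-s_{j-1})^{-d/2}\bigr),\qquad s_0=0,\ s_m=r.
\end{equation*}
By pigeonhole, at least one gap $s_{j^\star}-s_{j^\star-1}$ is $\ge r/m$, hence that factor is bounded by $(r/m)^{-d/2}\le C_m r^{-d/2}$. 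Summing over $j^\star$ and then summing the remaining $m-1$ unconstrained factors yields
\begin{equation*}
S_m(r)\ \le\ m\cdot C_m r^{-d/2}\cdot\Bigl(\sum_{s\ge 0} 1\wedge s^{-d/2}\Bigr)^{m-1},
\end{equation*}
and the last bracket is $O(\omega_r(d)^{-1})$ since $\sum_{s=1}^r s^{-1}\asymp \log r=\omega_r(2)^{-1}$ for $d=2$ and $\sum_{s\ge 1}s^{-d/2}<\infty=\omega_r(d)^{-1}$ for $d\ge 3$. Combining gives \eqref{!!!}.

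\textbf{Expected obstacle.} The real content is the sum $S_m(r)$: one has to handle the $d=2$ logarithmic behavior and the $d\ge 3$ transient behavior in a unified way, and to make the pigeonhole splitting clean when several gaps compete (e.g., when $s_{m-1}$ is close to $r$). A conservative but sufficient route is the split above; the slight loss of $m$ in the constant is absorbed into $C^m$. The repeated-times reduction and the bookkeeping of which step keeps the $-K$ power are minor nuisances but are the only places where care is required.
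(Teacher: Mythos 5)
Your proof of the first inequality is essentially identical to the paper's: condition from the largest index downward, apply Corollary \ref{cor:expectationBound} at each step while discarding the $(1+\cdots)^{-K}$ factor except at the final step (which produces the $(1+r_1^{-1/2}|a_1-a_2|)^{-K}$ term), and absorb coincident times by merging into $F^2$, which is what the $1\wedge$ cap in the stated product accounts for. The paper disposes of the second inequality with the single line ``follows readily from the first,'' so your pigeonhole argument for the sum
\[
S_m(r)=\sum_{1\le s_1\le\cdots\le s_{m-1}\le r}\prod_{j=1}^m \bigl(1\wedge(s_j-s_{j-1})^{-d/2}\bigr)
\]
is welcome added detail, and it is sound: some gap is $\ge r/m$, contributing $(r/m)^{-d/2}$; the remaining $m-1$ gaps, after relaxing the constraint that they sum with the large gap to $r$, each contribute at most $\sum_{s=0}^{r}\bigl(1\wedge s^{-d/2}\bigr)\lesssim\omega_r(d)^{-1}$; the $m\cdot m^{d/2}$ polynomial loss is absorbed into $C^m$. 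One cosmetic wrinkle: the clause ``$\sum_{s\ge 1}s^{-d/2}<\infty=\omega_r(d)^{-1}$'' reads literally as asserting a quantity equals $\infty$; you mean that for $d\ge 3$ both the series and $\omega_r(d)^{-1}$ are $O(1)$, which is what the argument actually uses.
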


One might think of $F$ here as a place-holder for functions like $\Fd$ from Assumption \ref{a1}, as well as for particular important functions like $\z, \boldsymbol\eta_{\sigma,\beta},\boldsymbol u_{   v}$ from Section \ref{sec:2}.

\begin{proof}
The second bound follows readily from the first one. To prove the first one, consider $1 < k \leq m$. If $r_k \neq r_{k-1}$, it follows from Corollary \ref{cor:expectationBound} that 
    \begin{align*}
        \mathbf E^{(\varsigma,2)}_{(   a_1,   a_2)} \bigg[ \prod_{j=1}^{k} F(|   R^1_{r_j}-   R^2_{r_j}|) \bigg] &=  \mathbf E^{(\varsigma,2)}_{(   a_1,   a_2)} \bigg[ \prod_{j=1}^{k-1} F(|   R^1_{r_j}-   R^2_{r_j}|)  \mathbf E^{(\varsigma,2)}_{(   a_1,   a_2)}\left[F( |   R^1_{r_k}-   R^2_{r_k}|)| \mathcal F_{r_{k-1}}  \right]\bigg] \\
        &\leq C(r_k - r_{k-1})^{-d/2} \mathbf E^{(\varsigma,2)}_{(   a_1,   a_2)} \bigg[ \prod_{j=1}^{k-1} F(|   R^1_{r_j}-   R^2_{r_j}|) \bigg].
    \end{align*}
    On the other hand, if $r_k = r_{k-1}$, we can absorb the $k$th term of the product into the ${k-1}$st term so that the $(k-1)^{st}$ term is $F^2(|   R^1_{r_{k-1}}-   R^2_{r_{k-1}}|)$. We can then repeat the above argument  for the index $r_{k-1}$, as $F^2$ is still a function of exponential decay, so Corollary \ref{cor:expectationBound} still applies.  
   
    Therefore, iterating over the product, we obtain that 
    \begin{align*}
        \mathbf E^{(\varsigma,2)}_{(   a_1,   a_2)} \bigg[ \prod_{j=1}^{m} F(|   R^1_{r_j}-   R^2_{r_j}|)  f(    R^1_{s}-   R^2_{s})\bigg] &\leq C \prod_{j=2}^{m+1} 1 \wedge (r_j - r_{j-1})^{-d/2} \mathbf E^{(\varsigma_N,2)}_{(   a_1,   a_2)}\bigg[ F(|   R_{r_1}^1-   R^2_{r_1}|)\bigg].
    \end{align*}
    Finally, since $r_1 \neq 0$, we can apply Theorem \ref{anti} to the remaining expectation, taking into account the initial positions of the two particles to obtain that 
    \begin{align*}
        \mathbf E^{(\varsigma,2)}_{(   a_1,   a_2)}\bigg[ F(|   R_{r_1}^1-   R^2_{r_1}|) \bigg] \leq C r_1^{-d/2} \left( 1+ r_1^{-1/2} |   a_1 -    a_2|\right)^{-K}.
    \end{align*}
   
\end{proof}

The following corollary illustrates how we will apply the previous results.
    \begin{cor}[Markov property allows us to upgrade first-moment bounds to exponential moment bounds] Fix $t\ge 0$. Recall $\psi_N(p,d)$ from Assumption \ref{psi_n}, and define $$\omega_N(d):= (N^{-1}\psi_N(p,d))^{2p}.$$ Consider $f: I\to [0,\infty)$ of exponential decay at infinity. Then there exists $\epsilon_0>0$ and $C=C(d,k,f)>0$ such that we have the exponential moment bound
        $$\sup_{N\ge 1} \sup_{|   \beta|<\epsilon_0}\sup_{\x\in I^k} \Eb [ e^{\frac1{C} \omega_N(d)\sum_{r=1}^{Nt} f(   R^i_r-   R^j_r)} ] <\infty. $$
    \end{cor}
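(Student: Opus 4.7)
The plan is to use the classical Taylor expansion of the exponential, controlling each moment via the product-type estimate of Lemma \ref{prop:productEstimate}. Setting $\lambda_N := \omega_N(d)/C$, write
\[
\Eb\big[e^{\lambda_N \sum_{r=1}^{Nt} f(R^i_r - R^j_r)}\big] = \sum_{m \ge 0} \frac{\lambda_N^m}{m!}\, \Eb\Big[\Big(\sum_{r=1}^{Nt} f(R^i_r - R^j_r)\Big)^m\Big].
\]
Expanding the $m$th power and using symmetry at cost of a factor $m!$, it suffices to bound the ordered correlator $\sum_{1 \le r_1 \le \cdots \le r_m \le Nt} \Eb\big[\prod_{j=1}^m f(R^i_{r_j} - R^j_{r_j})\big]$ and then sum the resulting series in $m$.

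For $d \ge 2$, since $\qdif$ is an SRI chain by Proposition \ref{SRI2}, Lemma \ref{prop:productEstimate} applies and bounds this correlator by $C^m \prod_{j=1}^m (1 \wedge (r_j - r_{j-1})^{-d/2})$, with $r_0 := 0$ and the initial factor $(1 + r_1^{-1/2}|x_i - x_j|)^{-K}$ bounded by $1$. Substituting $\tau_j = r_j - r_{j-1}$ and using the inclusion of the simplex $\{\tau_j \ge 1,\, \sum_j \tau_j \le Nt\}$ into the product box $[1, Nt]^m$ gives
\[
\sum_{1 \le r_1 \le \cdots \le r_m \le Nt} \prod_{j=1}^m (1 \wedge (r_j - r_{j-1})^{-d/2}) \le \Big(\sum_{\tau = 1}^{Nt} (1 \wedge \tau^{-d/2})\Big)^m,
\]
and the inner sum is $O(\log N)$ for $d = 2$ and $O(1)$ for $d \ge 3$. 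Combined with $\omega_N(2) = 1/\log N$ and $\omega_N(d) = 1$ for $d \ge 3$, the $m$th Taylor term is bounded by $(C'/C)^m$ uniformly in $N$ and $|\beta| < \epsilon_0$. Choosing $C$ large enough makes the series geometrically summable.

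The case $d = 1$ is not directly covered by Lemma \ref{prop:productEstimate} and must be handled separately. One approach is to repeat the same Taylor argument but use the Dirichlet integral $\int_{\tau_j \ge 0,\, \sum \le Nt} \prod_j \tau_j^{-1/2}\,d\tau = \pi^{m/2} (Nt)^{m/2}/\Gamma(m/2+1)$ in place of the box bound; combined with $\omega_N(1) = N^{-1/2}$, this yields an $m$th term bounded by $(C\sqrt{t})^m/\Gamma(m/2+1)$, which is summable for all $\lambda$ (super-exponentially so). Alternatively, and more in the spirit of the rest of the paper, one can decompose $f(x) \le A \sum_{a \in \mathbb{Z}} e^{-b|a|} \ind_{\{|x - a| \le 1\}}$ using the exponential decay of $f$, apply Jensen's inequality to split the exponential over $a$ with weights $p_a = e^{-b|a|/2}/Z$, and appeal directly to the $d = 1$ time-averaged exponential moment bound of Theorem \ref{anti}. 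The spatial translation invariance of the law of the difference chain under $\qdif$ (Proposition \ref{diffproc}) ensures that each shifted contribution is controlled uniformly in $a$.

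The key conceptual point, common to all dimensions, is that the scale $\omega_N(d) = (N^{-1}\psi_N(p,d))^{2p}$ is engineered precisely so that the pairwise collision local time of the difference chain has $O(1)$ exponential moments, consistent with the limiting laws in Theorems \ref{inv01}, \ref{inv02}, and \ref{2.10}. No single step is technically difficult; the main obstacle is threading the dimension-dependent scalings through the estimates consistently and handling $d = 1$ separately, since the underlying product estimate Lemma \ref{prop:productEstimate} is stated only for $d \ge 2$.
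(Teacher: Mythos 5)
Your $d\ge 2$ argument is correct and is essentially the paper's proof: Taylor expand, pass to ordered indices, invoke Lemma \ref{prop:productEstimate}, and absorb the resulting gap-sum into powers of $\omega_N(d)^{-1}$ via the box bound. You also correctly flag that the paper's own one-line proof glosses over the fact that Lemma \ref{prop:productEstimate} is stated only for $d\ge 2$, which is a legitimate observation about the $d=1$ case.

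However, both of your proposed repairs for $d=1$ have gaps. The Dirichlet-integral route silently assumes the pointwise product estimate
$\mathbf E^{(\beta,k)}_{\x}\big[\prod_\ell f(R^i_{r_\ell}-R^j_{r_\ell})\big]\le C^m\prod_\ell(r_\ell-r_{\ell-1})^{-1/2}$
holds for the \emph{tilted} chain $\boldsymbol q^{(k)}_\beta$. In $d=1$ this is precisely what is not available: Theorem \ref{anti} gives pointwise anticoncentration only for the \emph{centered} chain $\br_0$, and the lemma you would iterate is not proved there for non-centered perturbations. Your alternative route via an indicator decomposition $f(x)\le A\sum_a e^{-b|a|}\ind_{\{|x-a|\le 1\}}$ and Jensen needs the $d=1$ time-averaged bound of Theorem \ref{anti} for \emph{shifted} indicators $\ind_{\{|R^i_s-R^j_s-a|\le 1\}}$; the theorem is stated at $a=0$ only, and the appeal to ``spatial translation invariance of the difference chain under $\boldsymbol q_\beta^{\mathrm{diff}}$'' is not correct — that chain is not translation invariant, its SRI structure is pinned to the origin. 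Neither route closes as written.

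The tool the paper actually uses for $d=1$ (in the analogous step of Proposition \ref{prop:DomConvergenceBound}) is the appendix estimate \eqref{f2b} inside Lemma \ref{lemm:antid1}: for any decreasing $H$ of exponential decay and any $\br$ with $d_{\mathrm{SRI}}(\br,\br_0)<\epsilon$,
$\sup_{\x}\mathbf E^{\br}_{\x}\big[\big(\sum_{s<r}\sum_{i'<j'}H(|R^{i'}_s-R^{j'}_s|)\big)^m\big]\le C^m\sqrt{m!}\,r^{m/2}$.
Since your $f$ is dominated by such an $H$, this gives the $d=1$ moment bound directly, without any product estimate, and the $\sqrt{m!}$ gain is exactly what you were extracting from the Dirichlet integral. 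Plugging this in with $\omega_N(1)=N^{-1/2}$ produces an $m$-th Taylor term of size $C^mt^{m/2}/\sqrt{m!}$, which is summable. So the conceptual point you raise is right and the numerology is right, but the citation chain needs to go through Lemma \ref{lemm:antid1}/\eqref{f2b} rather than through a nonexistent $d=1$ version of Lemma \ref{prop:productEstimate}.
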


    Before the proof, we remark that this is the crucial estimate that explains why the $\psi_N(p,d)$ has the form that it has.

    \begin{proof}
      Taylor expand the exponential inside the expectation, and commute the infinite sum with the expectation. We will obtain an infinite sum over $m\in \mathbb N$ of terms of the form $$\sum_{m=1}^\infty \frac{\omega_N(d)^m}{C^m m!}\sum_{1\le r_1\le ... \le r_m \le Nt}\Eb \bigg[ \prod_{\ell=1}^m f(   R^i_{r_\ell} -   R^j_{r_\ell}) \bigg]. $$ Corollary \ref{prop:productEstimate} implies that this last sum is upper bounded by $$\sum_{m=1}^\infty \frac{\omega_N(d)^m}{C^m m!}\sum_{1\le r_1\le ... \le r_m \le Nt}\prod_{\ell=1}^m 1\wedge (r_\ell - r_{\ell-1})^{-d/2}.$$
      Note that in dimension $d$, this inner sum is precisely of order $m! \omega_N(d)^m$, so that this entire expression is bounded by 
     \begin{align*}
       \sum_{m=1}^\infty \frac{(C')^m}{C^m}. 
     \end{align*}
      Now choose $C<C'$ and the series converges, completing the proof.
    \end{proof}

\section{Limit formulas and estimates for the quadratic variations} \label{sec:5}


The quadratic variation field (QVF) from \eqref{qfield} will be extremely useful when proving tightness in a certain topology and for identifying the limit points. In particular, it will help us to obtain the correct noise coefficient for the limiting fields. 

\subsection{The limit formulas}

\begin{defn} \label{def:extendedQVF}
Given $   a_1,    a_2 \in I$, we define $Q^f_N(t,\phi;    a_1,   a_2): = Q^f_N(t,\phi; \delta_{a_1} \otimes \delta_{a_2})$, where $Q_N^f$ is given by \eqref{qfield}. In other words, the parameters $   a_1,   a_2\in I$ denote the starting locations of the two random walkers $R^1,   R^2$.
\end{defn}

Note the identity 
\begin{equation}\label{tilt0}\Ex\big[ Q_N^f(t,\phi; \nu_1 \otimes \nu_2)\big]  = \frac{\int_{I^2} e^{\varsigma_N\bullet(a_1+a_2)} \Ex\big[ Q_N^f(t,\phi;a_1,a_2)\big] \nu_1(\dr a_1) \nu_2(\dr a_2)}{\int_{I^2} e^{\varsigma_N \bullet(a_1+a_2)} \nu_1(\dr a_1)\nu_2(\dr a_2)},
\end{equation}
which will be important later. This identity is immediate from the definition of $Q_N$ in \eqref{qfield} and the form of the constants $C_{N,t,x,\nu}$ from Definition \ref{cntxnu}, in particular their dependence on $\nu.$

\begin{ass} \label{ass:QVBounds}
         Let $\phi \in C_c^\infty(\mathbb R)$. Let $f_N:I\to\mathbb R$ be any sequence of continuous functions such that $\sup_N|f_N(x)| \leq F(|x|)$ for some function $F: [0, \infty) \to [0, \infty)$ of exponential decay. Furthermore assume that $f_N$ converge uniformly as $N\to \infty$ to some $f: I\to\mathbb R$. Consider sequences $   a_i^N\in I \; (i=1,2)$ such that $(N^{-1/2}    a^N_1,N^{-1/2}   a^N_2) \to (   a_1,   a_2)$ respectively as $N\to \infty$. If $d \geq 2$, we further assume that $a_1^N$ and $a_2^N$ are well-separated in the sense that the limits satisfy $   a_1\ne    a_2$.
     \end{ass}

With this assumption, we will now prove some limit formulas for this quadratic variation field in each dimension. These quadratic variation formulas will be the key to identifying limit points later in Section \ref{sec:6}.

Fix $\phi\in C_c^\infty(\mathbb R^2)$. Notice that 
\begin{multline}\label{eq:quadVarSeriesOrig}
    \mathbb E[Q^{f_N}_N(t,\phi;    a_1,   a_2)]= \\N^{\frac{d-2}{2}}\sum_{s=0}^{Nt}\mathbf E_{(   a_1^N,   a_2^N)}^{(0,2)}\bigg[ \prod_{j=1}^2 C_{N,N^{-1}s, N^{-1/2} (   R^j_{s}-a_j^N - N^{-1}   d_Ns)} \phi\big(N^{-1/2} (   R^1_{s} - N^{-1}   d_Ns)\big)\cdot f_N(    R^1_{s}-   R^2_{s})\bigg].
\end{multline}
We would like to take the $N\to\infty$ limit of this expectation.  
Recall from Theorem \ref{main4} that we have $$\boldsymbol u_{   \varsigma_N}(   x_1 -    x_2)= \log\int_{I^2} e^{  \varsigma_N \bullet \sum_{j=1,2} (   y_j-   x_j)} \boldsymbol p^{(2)} (\x,\dr \y) - 2 \log M(   \varsigma_N).$$
Using \eqref{eq:tiltingExpectations} to rewrite \eqref{eq:quadVarSeriesOrig} in terms of the tilted measures introduced in Definition \ref{shfa}, we obtain 
     $$N^{\frac{d-2}{2}}\sum_{s=0}^{Nt}\mathbf E^{(   \varsigma_N,2)}_{(   a_1^N,   a_2^N)}\bigg[ e^{\sum_{r=0}^{s-1}\boldsymbol u_{   \varsigma_N}(   R^1_{r}-   R^2_{r})}  \phi \big(N^{-1/2} (   R^1_{s} - N^{-1}   d_Ns)\big)\cdot f(    R^1_{s}-   R^2_{s})\bigg].$$ 
Finally, we Taylor expand the exponential into an infinite series and we obtain an infinite series of the form 
\begin{equation}\label{eq:quadVarSeries}
N^{\frac{d-2}{2}}\sum_{s=0}^{Nt}\sum_{m=0}^\infty \frac1{m!} \mathbf E^{(   \varsigma_N,2)}_{(   a_1^N,   a_2^N)} \bigg[ \bigg(\sum_{r=0}^{s-1}\boldsymbol u_{   \varsigma_N}(   R^1_{r}-   R^2_{r})\bigg)^m  \phi \big(N^{-1/2} (   R^1_{s} - N^{-1}   d_Ns)\big)\cdot f(    R^1_{s}-   R^2_{s})\bigg].
\end{equation}

The limit of \eqref{eq:quadVarSeries} as $N \to \infty$ depends heavily on the dimension as shown in the following propositions. The proofs will involve taking the $N\to\infty$ limit for each fixed $m$ and then using dominated convergence to sum over $m$. The dominated convergence will be justified in Proposition \ref{dcbd}.


     \begin{prop}[Limit formula for the QVF in $d=1$] \label{prop:QVd1}Let $d=1.$  Assume that we are in the setting of Assumption \ref{ass:QVBounds}. Then for all $t>0$, if $ N^{1/4p}\varsigma_N  \to 0$ then 
		\begin{align}
			\label{eq:QLimitd1}
			\lim_{N \to \infty}\;  \Ex\big[ Q_N^{f_N}(t,\phi;    a^N_1,   a^N_2)\big] = \int_I f\;\dr\pi^{\mathrm{inv}} \cdot \int_0^t \int_{\mathbb R^d} G(2s,   a_1-   a_2) G(2s,   y) \phi(\tfrac12(   y+   a_1+   a_2)) \dr    y\dr s.
		\end{align}
	\end{prop}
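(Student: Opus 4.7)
The plan is to work directly from the series representation~\eqref{eq:quadVarSeries}, proving the claim by showing that only the $m=0$ term contributes in the limit and that every $m\ge 1$ term vanishes under the hypothesis $N^{1/4p}\varsigma_N\to 0$. By Proposition~\ref{SRI2}, the tilted two--point chains $\boldsymbol q^{(2)}_{\varsigma_N}$ are SRI chains in $\mathcal{E}^{\mathrm{SRI}}_{2\times 1}(\Psi_{\sigma},M,F)$ that converge as $N\to\infty$ to the centered chain $\boldsymbol p^{(2)}$, whose base measure $\mu^{\varsigma_N}$ has mean $s_N=\nabla\log M(\varsigma_N)=N^{-1}d_N$. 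This puts us squarely in the setting of Assumption~\ref{ass:SRI}.

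For the $m=0$ term, we apply Theorem~\ref{inv01} (or Theorem~\ref{inv02}(1) if $a_1=a_2$, which is allowed in $d=1$) to the sequence $\boldsymbol q^{(2)}_{\varsigma_N}$, together with the uniform convergence $f_N\to f$ handled by a standard approximation via Corollary~\ref{cor:expectationBoundGeneral}. With $c_1=1$, this directly yields
\begin{equation*}
N^{-1/2}\sum_{s=0}^{Nt}\mathbf E^{(\varsigma_N,2)}_{(a_1^N,a_2^N)}\bigl[\phi\bigl(N^{-1/2}(R^1_s-N^{-1}d_Ns)\bigr)\,f_N(R^1_s-R^2_s)\bigr]\longrightarrow \int_I f\,d\pi^{\mathrm{inv}}\,\cdot\,\int_0^t\!\!\int_{\mathbb R} G(2s,a_1-a_2)G(2s,y)\phi\bigl(\tfrac12(y+a_1+a_2)\bigr)\,dy\,ds.
\end{equation*}

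For $m\ge 1$, we use Propositions~\ref{prop:TaylorG} and~\ref{prop:dcbound} to split $\boldsymbol u_{\varsigma_N}=\boldsymbol z_{\varsigma_N}+\mathscr R_{\varsigma_N}$ with $|\boldsymbol z_{\varsigma_N}(x)|\le C|\varsigma_N|^{2p}F_{\mathrm{dec}}(|x|)^{1/2}$ and $\|\mathscr R_{\varsigma_N}\|_\infty\le C|\varsigma_N|^{2p+1}$; the remainder is further bounded by $C|\varsigma_N|^{2p+1/2}F_{\mathrm{dec}}(|x|)^{1/(4p+2)}$ via the interpolation in Lemma~\ref{46}, which effectively reduces to the same decaying-kernel estimate with a gain of $|\varsigma_N|^{1/2}$. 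The key input is the $d=1$ time--averaged exponential moment bound in Theorem~\ref{anti}, which gives
$$\mathbf E^{(\varsigma_N,2)}\!\Bigl[\bigl(\textstyle\sum_{r=0}^{s-1}F_{\mathrm{dec}}(|R^1_r-R^2_r|)^{1/2}\bigr)^{\!m}\Bigr]\le C^m\,m!\,s^{m/2}.$$
Combined with Cauchy--Schwarz against $|f_N|\le F$ and integration in $s$, this yields the term-wise bound $|\text{$m$-th term}|\le C_m\,t^{(m+1)/2}(|\varsigma_N|^{2p}N^{1/2})^m$, with $C_m$ of subfactorial growth coming from a Beta-integral over the simplex of gap-times. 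Since the hypothesis $N^{1/4p}\varsigma_N\to 0$ is precisely $|\varsigma_N|^{2p}N^{1/2}\to 0$, each $m\ge 1$ term vanishes.

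Finally, absolute summability over $m$ (for $N$ large enough that $|\varsigma_N|^{2p}N^{1/2}\le 1$) justifies interchanging the limit with the sum by dominated convergence. The main technical obstacle is verifying that the time-averaged exponential bound in the $d=1$ case of Theorem~\ref{anti}, which holds only for a near-centered chain, transfers uniformly in $N$ to the tilted sequence $\boldsymbol q^{(2)}_{\varsigma_N}$, and carefully tracking the power counting so that the spatially non-decaying remainder $\mathscr R_{\varsigma_N}$ does not overwhelm the leading chaos terms; the resolution is the interpolation bound noted above, which effectively restores a rapidly-decaying envelope at the cost of an extra factor $|\varsigma_N|^{1/2}$, safely within the subcritical threshold.
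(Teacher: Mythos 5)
Your proposal is correct and follows essentially the same route as the paper's proof: reduce $f_N\to f$ via anticoncentration, expand the tilted expectation into the $m$-series from \eqref{eq:quadVarSeries}, kill the $m\ge 1$ chaos terms using a power count in $|\varsigma_N|^{2p}N^{1/2}\to 0$, and identify the $m=0$ term via Theorem~\ref{inv01} (with Theorem~\ref{inv02}(1) covering the $a_1=a_2$ case allowed in $d=1$). The one substantive difference is that you re-derive the $m$-th-term bound from \eqref{f2b} and Theorem~\ref{anti} instead of simply invoking Proposition~\ref{prop:DomConvergenceBound}; note also that the moment bound in \eqref{f2b} is actually $C^m\sqrt{m!}\,s^{m/2}$ rather than your $C^m\,m!\,s^{m/2}$, which is slightly sharper but irrelevant here since the hypothesis $N^{1/4p}\varsigma_N\to 0$ makes the series eventually geometric with ratio less than one in either case.
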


    The case where $d=1$ and $ N^{1/4p}\varsigma_N  \to   {\boldsymbol v}\ne 0$ was addressed in \cite{Par24} and leads to a more complicated limit, but this regime is not covered in this paper. 

\begin{prop}[Limit formula for the QVF in $d=2$] \label{4.1b}Let $d=2.$ Assume that we are in the setting of Assumption \ref{ass:QVBounds}. Then there exists $\epsilon_{\mathrm{thr}} > 0$ (possibly depending on $F$) such that for all $t>0$ and $  {\boldsymbol v}\in \mathbb R^2$ with $|   {\boldsymbol{v}}| < \epsilon_{\mathrm{thr}}$, if $(\log N)^{1/2p}   \varsigma_N\to   {\boldsymbol v}$ then 
		\begin{align}
			\label{eq:QLimitd2}
			\lim_{N \to \infty}\;  \Ex\big[ Q_N^{f_N}(t,\phi;    a^N_1,   a^N_2)\big] = \frac{\int_I f\;\dr\pi^{\mathrm{inv}}}{1- \frac12\gamma_{\mathrm{ext}}(  {\boldsymbol v})^2 }\cdot 
          2 \pi \cdot \int_0^t \int_{\mathbb R^d} G(2s,   a_1-   a_2) G(2s,   y) \phi(\tfrac12(   y+   a_1+   a_2)) \dr    y\dr s.
		\end{align}
	\end{prop}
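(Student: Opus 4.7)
The plan is to Taylor-expand the exponential appearing in the Girsanov rewriting \eqref{eq:quadVarSeries} as a power series in $m$, prove term-by-term convergence of each $T_{N,m}$, and then resum. First, by Proposition \ref{prop:TaylorG}, I would replace $\boldsymbol u_{\varsigma_N}(x)$ by its leading-order term $\z_{\varsigma_N}(x) = |\varsigma_N|^{2p}\z_{\varsigma_N/|\varsigma_N|}(x)$; the remainder is bounded by $C|\varsigma_N|^{2p+1}$ and is negligible because in the critical regime $(\log N)^{1/(2p)}\varsigma_N \to \boldsymbol v$ one has $|\varsigma_N|^{2p+1}\log N \to 0$. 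Since $|\varsigma_N|^{2p}\log N \to |\boldsymbol v|^{2p}$, the Erd\H{o}s--Taylor structure in $d=2$ (Theorems \ref{2.10} and \ref{2.11}) ensures that the leading-order exponent $\sum_r \z_{\varsigma_N}(V_r)$ is of order one and that each term in the series expansion scales correctly.

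For each fixed $m \geq 0$, I claim
\begin{equation*}
\lim_{N\to\infty} T_{N,m} \;=\; 2^{-m}\,\gamma_{\mathrm{ext}}(\boldsymbol v)^{2m}\cdot \int_I f\, d\pi^{\mathrm{inv}}\cdot 2\pi \int_0^t\!\!\int_{\mathbb R^2} G(2s,a_1-a_2)\,G(2s,y)\,\phi(\tfrac12(y+a_1+a_2))\, dy\,ds,
\end{equation*}
which I would prove by induction on $m$. The base case $m=0$ is exactly Theorem \ref{inv01} applied to the tilted two-point chain $\boldsymbol q^{(2)}_{\varsigma_N}$, which is in the SRI class by Proposition \ref{SRI2} and converges as $N\to \infty$ to the untilted chain $\boldsymbol p^{(2)}$. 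For the inductive step, I would use a \emph{first-meeting decomposition}: condition on the smallest index $r_1$ at which a $\z$-insertion contributes, apply the strong Markov property at $r_1$, and split $T_{N,m}$ as an outer sum over $r_1$ of an inner conditional expectation starting from $(R^1_{r_1}, R^2_{r_1})$ with $V_{r_1}$ bounded (forced by the rapid decay of $\z$, so that the close-starting hypothesis of Theorem \ref{2.11} is satisfied). For the inner expectation---a sum of $m-1$ further $\z$-insertions followed by $f$ at time $s$, multiplied by $\phi(N^{-1/2}R^1_s)$---I would invoke Theorem \ref{2.11} to propagate $\phi$ backwards from time $s$ to time $r_1$, yielding a factor $(\log N)^m \cdot 2^{-m}\big(\int\z_{\boldsymbol v/|\boldsymbol v|}d\pi^{\mathrm{inv}}\big)^{m-1}\int f\, d\pi^{\mathrm{inv}} \cdot \phi(N^{-1/2}R^1_{r_1})$. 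The outer sum over $r_1$, now with effective integrand $\z_{\boldsymbol v/|\boldsymbol v|}(V_{r_1})\phi(N^{-1/2}R^1_{r_1})$ and no further normalization, is then handled by Theorem \ref{inv01}, producing the macroscopic heat-kernel integral together with an extra factor $\int\z_{\boldsymbol v/|\boldsymbol v|}d\pi^{\mathrm{inv}}$. Combining with $|\varsigma_N|^{2pm}(\log N)^m \to |\boldsymbol v|^{2pm}$ and the identity $|\boldsymbol v|^{2pm}\big(\int\z_{\boldsymbol v/|\boldsymbol v|}d\pi^{\mathrm{inv}}\big)^m = \gamma_{\mathrm{ext}}(\boldsymbol v)^{2m}$ gives the claim.

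To interchange the $N \to \infty$ limit with the sum over $m$, I would establish the uniform bound $|T_{N,m}| \leq C^{m+1}|\boldsymbol v|^{2pm}$ for $N$ sufficiently large. By Lemma \ref{prop:productEstimate} applied to the tilted chain, the expectation inside $T_{N,m}$ is bounded by $C^{m+1}|\varsigma_N|^{2pm}\prod_{j=1}^{m+1}(1\wedge(r_j - r_{j-1})^{-1})$ times a decay factor controlled by the initial separation. Summing over the ordered lattice $0 \leq r_1 \leq \cdots \leq r_m \leq s \leq Nt$ yields at most $C^m(\log N)^m$ in $d=2$, and combining with $|\varsigma_N|^{2p}\log N \leq C|\boldsymbol v|^{2p}$ gives the claimed bound. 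The associated geometric series converges provided $|\boldsymbol v|$ is sufficiently small, which defines the threshold $\epsilon_{\mathrm{thr}}$. Finally, summing $\sum_{m\geq 0}2^{-m}\gamma_{\mathrm{ext}}(\boldsymbol v)^{2m} = (1-\tfrac12\gamma_{\mathrm{ext}}(\boldsymbol v)^2)^{-1}$ yields the desired limit \eqref{eq:QLimitd2}.

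The main obstacle will be a rigorous execution of the first-meeting decomposition together with the backwards propagation of the test function, while controlling the delicate interplay between the diffusive contribution at $r_1$ (where the heat kernel $G(2\tau,a_1-a_2)$ emerges via Theorem \ref{inv01}) and the logarithmic Erd\H{o}s--Taylor structure governing the subsequent meetings (Theorem \ref{2.11}). A key quantitative step is showing that $\phi(N^{-1/2}R^1_s) \approx \phi(N^{-1/2}R^1_{r_1})$ in the relevant averaged sense, which relies on the fact---already essential in the proofs of Theorems \ref{2.10} and \ref{2.11}---that in $d=2$ the bulk of the meeting times after $r_1$ occurs on time scales $o(N-r_1)$, so that $R^1$ has not yet dispersed to a macroscopically different location. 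The approximation $\boldsymbol u_{\varsigma_N} \approx \z_{\varsigma_N}$ introduces further error terms that one controls using the rapid decay of $\mathscr R_{\varsigma_N}$ analogous to Proposition \ref{prop:dcbound} and the invariance principle of Theorem \ref{inv00}. All of these points are manageable using the tools of Section \ref{sec:4}, but require careful bookkeeping throughout.
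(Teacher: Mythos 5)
Your proposal follows essentially the same route as the paper's proof: Taylor-expand $\boldsymbol u_{\varsigma_N}$ to isolate $\z_{\varsigma_N}$ with a negligible remainder, prove a fixed-$m$ limit by conditioning on the earliest insertion index $r_1$, applying the Markov property and the backwards-propagation result (Theorem~\ref{2.11}) to the inner expectation, and using Theorem~\ref{inv01} for the outer $r_1$-sum, then resum via the geometric bound from Lemma~\ref{prop:productEstimate}. The only cosmetic difference is that you frame the fixed-$m$ step as ``induction on $m$,'' whereas the paper dispenses with the induction language because Theorem~\ref{2.11} already handles all $m-1$ inner insertions in one stroke.
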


     \begin{prop}[Limit formula for the QVF in $d\ge 3$] \label{4.1c}Let $d \geq 3$. Assume that we are in the setting of Assumption \ref{ass:QVBounds}. Then there exists $\epsilon_{\mathrm{thr}}>0$ (possibly depending on $F$ and $d$) and a function $\Theta_{\mathrm{eff}}^2(f,\bullet):B_{\mathbb R^d}(0,\epsilon_{\mathrm{thr}})\to \mathbb R$ equal to $\pi^{\mathrm{inv}}(f)$ at $  {\boldsymbol v}= 0$, such that for all $t>0$ and $  {\boldsymbol v}\in \mathbb R^d$ with $|  {\boldsymbol v}|<\epsilon_{\mathrm{thr}}$, if $  \varsigma_N\to   {\boldsymbol v}$ then 
		\begin{align}
			\label{eq:QLimitd3}
			\lim_{N \to \infty}\;  \Ex\big[ Q_N^{f_N}(t,\phi;    a^N_1,   a^N_2)\big] = \Theta_{\mathrm{eff}}^2(f;  {\boldsymbol v}) \cdot c_d \cdot \int_0^t \int_{\mathbb R^d} G^{(\boldsymbol{v})}(2s,   a_1-   a_2) G^{(\boldsymbol{v})}(2s,   y) \phi(\tfrac12(   y+   a_1+   a_2)) \dr    y\dr s,
		\end{align}
        where $G^{(\boldsymbol{v})}(t,x)$ is the heat kernel with respect to the operator $\mathrm{div}(H_{  {\boldsymbol v}}\nabla)$, 
where $H_{  {\boldsymbol v}}$ is the Hessian matrix of $\log M$ at $  {\boldsymbol v}$, and where $M$ is the moment generating function of $\mu$ as in \eqref{dn}.
        The explicit expression for $\Theta_{\mathrm{eff}}^2(f;  {\boldsymbol v})$ is given by $$\int_I  \left[f(y) +  \left(e^{\boldsymbol u_{  \boldsymbol v}(   y)} -1\right) \mathbf E_y^{(\boldsymbol v,\mathrm{diff})} \left[\sum_{n=0}^{\infty} \sum_{1 \leq s_1\le ...\le s_{n} < \infty} g_{\mathrm{count}}( \vec s) \boldsymbol u_{  \boldsymbol v}(X_{s_1}) \cdots \boldsymbol u_{  \boldsymbol v}(X_{s_{n}}) \left(\sum_{s = s_{n} +1}^{\infty} f(X_{s})\right) \right] \right]\pi_{\boldsymbol v}^{\mathrm{inv}} (\dr y). $$ 
        Here $\pi_v^{\mathrm{inv}}$ is the invariant measure for the difference process (see Proposition \ref{diffproc}) of the tilted two-point chain $\boldsymbol q^{(2)}_{ v}$ from \eqref{qk}, and 
        \begin{equation}\label{gcount} g_{\mathrm{count}}( \vec r):= \prod_{n \in \mathbb Z} \frac{1}{\left(\#\{i: r_i = n\}\right)!}.\end{equation}
        In the particular special case of $f=\boldsymbol{\vartheta}_{   v}:= e^{\boldsymbol u_{   v}}-1,$ 
        we have the simpler form which matches with Theorem \ref{main4}: \begin{equation} \label{eq:d3specialization}\Theta_{\mathrm{eff}}^2(\boldsymbol{\vartheta}_{   v};    v) = \int_I \big( e^{\boldsymbol u_{   v }(y) }-1 \big) \mathbf E_y^{(v,\mathrm{diff})} \bigg[ e^{\sum_{s=1}^\infty \boldsymbol u_{   v} (X_s)} \bigg] \pi_{   v}^{\mathrm{inv}} (\dr y).\end{equation} 
        \end{prop}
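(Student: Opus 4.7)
The plan is to compute the limit of the Taylor-expanded expression (\ref{eq:quadVarSeries}) term by term in $m$, then re-sum. First, I would interchange the $m$-sum with the $N \to \infty$ limit using the dominated-convergence estimates of Proposition \ref{dcbd}. Then, for each fixed $m$, I would compute the $N$-limit of the $m^{\text{th}}$ term by conditioning on the first time $\tau_N$ when the two walkers come within a bounded distance of one another. Since $a_1^N, a_2^N$ are well-separated (their difference is of order $N^{1/2}$) and the tilted two-point chain satisfies a diffusive invariance principle (Theorem \ref{inv00}) with effective covariance $H_{\boldsymbol v}$, I expect $\tau_N / N$ to converge to the first hitting time of zero by the difference of two Brownian motions with covariance $H_{\boldsymbol v}$.

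\textbf{Macroscopic factor.} Decomposing the tilted two-point motion into its midpoint and difference, and combining the anti-concentration bound of Theorem \ref{anti} (for the probability that the difference chain sits in a prescribed unit window at time $s$) with Theorem \ref{inv00} (for the macroscopic convergence of the midpoint to a Gaussian with covariance $\tfrac12 H_{\boldsymbol v}$), the pre-$\tau_N$ component of the expectation, after the rescaling by $N^{(d-2)/2}$, converges to the factor $c_d \int_0^t \int_{\mathbb R^d} G^{(\boldsymbol v)}(2u, a_1-a_2) G^{(\boldsymbol v)}(2u, y) \phi(\tfrac12(y+a_1+a_2)) \, dy\, du$ of (\ref{eq:QLimitd3}). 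The test function $\phi$ freezes at the macroscopic midpoint at time $\tau_N$ thanks to Theorem \ref{2.12} (backwards-time propagation in $d \geq 3$), which implies that $\phi(N^{-1/2}(R^1_{r} - N^{-1}d_N r))$ at any later microscopic time $r$ can be replaced by its value at time $\tau_N$ with vanishing error.

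\textbf{Microscopic factor.} Conditional on the two-point state at $\tau_N$, the post-$\tau_N$ dynamics is that of the tilted two-point chain started from a bounded-separation configuration; its difference process is the tilted difference chain $X$ with invariant measure $\pi_{\boldsymbol v}^{\mathrm{inv}}$ (Theorem \ref{thm:invMeasure} together with the strong-Feller/irreducibility input verified in Proposition \ref{SRI2}). Transience in $d \geq 3$ ensures that $\sum_{r > \tau_N} \boldsymbol u_{\varsigma_N}(R^1_r - R^2_r)$ has essentially finite duration and converges in law to $\sum_{s=1}^\infty \boldsymbol u_{\boldsymbol v}(X_s)$ under $\mathbf E_y^{(\boldsymbol v,\mathrm{diff})}$. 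Expanding $\bigl(\sum_r \boldsymbol u_{\varsigma_N}(R^1_r - R^2_r)\bigr)^m / m!$ via the multinomial theorem over ordered tuples $\vec s$ naturally produces the combinatorial weight $g_{\mathrm{count}}(\vec s)$, and Theorem \ref{2.12} (applied with each $\phi_\ell \equiv \phi$ and $f_\ell \in \{\boldsymbol u_{\boldsymbol v}, f\}$) assembles the microscopic piece into the bracketed formula defining $\Theta_{\mathrm{eff}}^2(f; \boldsymbol v)$: the leading $f(y)$ term comes from the $m = 0$ contribution, while the $m \geq 1$ contributions combine with the one-step Girsanov correction $e^{\boldsymbol u_{\boldsymbol v}(y)} - 1$ produced by the boundary time step between the pre- and post-$\tau_N$ regimes. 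For the specialisation $f = \boldsymbol \vartheta_{\boldsymbol v} = e^{\boldsymbol u_{\boldsymbol v}} - 1$, the bracketed series re-assembles into the Taylor expansion of $\mathbf E_y^{(\boldsymbol v,\mathrm{diff})}\bigl[e^{\sum_{s=1}^\infty \boldsymbol u_{\boldsymbol v}(X_s)}\bigr]$, yielding (\ref{eq:d3specialization}).

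\textbf{Main obstacle.} The most delicate step is the joint handling of the pre- and post-$\tau_N$ regimes: one must verify, via a local central limit theorem for the tilted two-point chain, that the hitting distribution at $\tau_N$ factorises (in the scaling limit) into the macroscopic heat kernel $G^{(\boldsymbol v)}$ times an $\mathbf R_{\tau_N}$-distribution proportional to $\pi_{\boldsymbol v}^{\mathrm{inv}}$ in the bounded-separation window. This is analogous in spirit to the role of Theorems \ref{inv02} and \ref{2.11} in $d=2$, but substantially more subtle here because the macroscopic prefactor is of order $1$ rather than $\log N$, removing the averaging that made the $d=2$ argument tractable and forcing a genuinely pointwise local CLT. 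A secondary issue is defining $\epsilon_{\mathrm{thr}}$ as the radius of convergence of the resulting series — governed by the spectral radius of the multiplication-by-$\boldsymbol u_{\boldsymbol v}$ operator composed with the Green's function of the tilted difference chain — and ensuring $\epsilon_{\mathrm{thr}} > 0$; this positivity follows from the $O(|\boldsymbol v|^{2p})$ bound on $\boldsymbol u_{\boldsymbol v}$ given by Proposition \ref{prop:TaylorG}.
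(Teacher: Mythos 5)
The proposal follows the paper's overall skeleton (Taylor-expand in $m$, commute the $m$-sum and the $N\to\infty$ limit via Proposition \ref{dcbd}, use backward propagation from Theorem \ref{2.12}, specialize at the end), but the central mechanism you propose for each fixed-$m$ limit diverges from the paper's in a way that leaves an unresolved hole — one that you explicitly flag as your ``main obstacle.'' You condition on a hitting time $\tau_N$ and argue that the hitting distribution must factorize (macroscopic heat kernel $\times$ invariant-measure profile) via a pointwise local CLT, which you claim is ``substantially more subtle'' than the $d=2$ case because there is no $\log N$ averaging. But this factorization is precisely the content of Theorem \ref{inv01}, which the paper proves uniformly in $d$ in Appendix D and then invokes directly; you never cite Theorem \ref{inv01}. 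The paper's decomposition is also cleaner than a hitting-time one: it decomposes at the index $r_1 = \min\{r_j\}$ of the Taylor expansion (the first time the weight $\boldsymbol u_{\varsigma_N}$ is sampled) and applies the Markov property there — sidestepping the issue that in $d\ge 3$ the difference chain is transient and the first hitting time of a bounded window is $+\infty$ with positive probability, which your conditioning would need to handle.

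Two secondary misattributions. First, your derivation of the $e^{\boldsymbol u_{\boldsymbol v}(y)}-1$ factor as a ``one-step Girsanov correction produced by the boundary time step'' is not what is happening: in the paper it arises from the $l$-sum $\sum_{l\ge 1}\frac{1}{l!}\boldsymbol u_{\varsigma_N}(R^1_{r_1}-R^2_{r_1})^l$, which counts how many of the ordered indices $r_1\le\cdots\le r_m$ coincide with the minimum $r_1$; this resums to $e^{\boldsymbol u_{\boldsymbol v}}-1$ over $m$, and is a combinatorial artifact of the Markov rearrangement at $r_1$ rather than a Girsanov weight. Second, you characterize $\epsilon_{\mathrm{thr}}$ via a spectral radius of a multiplication operator composed with a Green's function; the paper chooses $\epsilon_{\mathrm{thr}}$ simply so that the geometric upper bound $C^{m+1}|\boldsymbol v|^{2pm}$ in \eqref{eq:geomBound} is summable in $m$ (i.e.\ $C|\boldsymbol v|^{2p}<1$), which is what dominated convergence needs. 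The positivity claim via Proposition \ref{prop:TaylorG} that you cite is correct but indirect. In short: the route could be made to work, but the step you identify as the hard part is already Theorem \ref{inv01}, and replacing the paper's $r_1$-decomposition with a hitting-time argument creates, rather than resolves, technical difficulties in $d\ge 3$.
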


        \begin{rk}
        Note that the above quadratic variation formulas are computed for general functions $f_N$, which will be useful for certain upper bounds. 
        However, to identify the limiting SPDEs, we will ultimately be choosing 
        $f_N = \boldsymbol{\eta}_{   r_1,    r_2}, f_N = \z_{\frac{   \varsigma_N}{|   \varsigma_N|}}$ or $f_N = \boldsymbol{\vartheta}_{   \varsigma_N}$ 
        as explained in Proposition \ref{4.3}. The choice $f_N = \boldsymbol{\eta}_{   r_1,    r_2}$ 
        will correspond to  \hyperref[eq:regimeA]{Regime A}, and will be addressed later. In the case where 
        $f_N = \z_{\frac{   \varsigma_N}{|   \varsigma_N|}}$ 
        and $\frac{   \varsigma_N}{|   \varsigma_N|} \to \frac{\boldsymbol v}{|\boldsymbol v|}$, 
        we have that  $f_N\to f:=\z_{\frac{\boldsymbol{   v}}{|\boldsymbol v|}}$. 
        Therefore $ \int_I f\;\dr\pi^{\mathrm{inv}} = \frac{\gamma_{\mathrm{ext}}(  {\boldsymbol v})^2}{|\boldsymbol{   v}|^{2p}}$.
        This choice is what ultimately gives us the noise coefficient in \hyperref[eq:regimeB]{Regime B} and \hyperref[eq:regimeC]{Regime C}. Finally, the case where $f_N = \boldsymbol{\vartheta}_{   \varsigma_N}$ corresponds with \hyperref[eq:regimeD]{Regime D} and was addressed in \eqref{eq:d3specialization}.
    \end{rk}


\subsection{Proofs of the limit formulas}

A key idea to prove the limit formulas will be to write exponential functionals of the relevant Markov processes as infinite series, then commute the infinite series with the limit $N\to\infty$. To do this, we will need various uniform estimates independent of the initial data as well as $N$. 

\begin{prop}[Bounds for Uniform integrability] Fix $f:I\to \mathbb R$ of exponential decay at infinity. \label{prop:DomConvergenceBound} 
\begin{enumerate}

\item 
We have a bound uniform over all $N\ge 1$, all initial conditions $   a_1,   a_2\in I$ and all $  {\boldsymbol{v}}$ in some neighborhood of the origin of $\mathbb R^d$:
\begin{multline}
    \left| N^{\frac{d-2}{2}}\sum_{s=0}^{Nt}\frac1{m!} \mathbf E^{(  {\boldsymbol{v}},2)}_{(   a_1,   a_2)} \bigg[ \bigg(\sum_{r=0}^{s-1}\boldsymbol u_{  {\boldsymbol{v}}}(   R^1_{r}-   R^2_{r})\bigg)^m  \phi \big(N^{-1/2} (   R^1_{s} - N^{-1}   d_Ns)\big)\cdot f(    R^1_{s}-   R^2_{s})\bigg]\right| \\\leq 
    \begin{cases}
      C^m \|\phi\|_{L^\infty} |  {\boldsymbol{v}}|^{2pm}\frac{(Nt)^{\frac{m}{2}}}{(m/2)!} & d=1  \\
      C^m \|\phi\|_{L^\infty}|  {\boldsymbol{v}}|^{2pm}  (\log Nt)^{2pm} \left[ 1+\big|\log \left(\frac{Nt}{  1+|a_1-a_2|^2}\right)\big|\right]  & d=2\\
      C^m \|\phi\|_{L^\infty}  |  {\boldsymbol{v}}|^{2pm}  \big(N^{-1/2} (1+|a_1-a_2|)\big)^{2-d} & d\ge 3\\
    \end{cases}\label{geombound}
\end{multline}
Here $C$ is an absolute constant not depending on $v,t,m,\phi, a_1,a_2$.

\item Summing \eqref{geombound} over $m\in\mathbb Z_{\ge 0}$, we have a bound uniform over all $N\ge 1$, all initial conditions $   a_1,   a_2\in I$ and all $  {\boldsymbol{v}}$ in some neighborhood of the origin of $\mathbb R^d$:
\begin{multline}\label{eq:geomBound1}
\left| N^{\frac{d-2}{2}}\sum_{s=0}^{Nt} \mathbf E^{(  {\boldsymbol{v}},2)}_{(   a_1,   a_2)} \bigg[ e^{\sum_{r=0}^{s-1}\boldsymbol u_{  {\boldsymbol{v}}}(   R^1_{r}-   R^2_{r})}  \phi \big(N^{-1/2} (   R^1_{s} - N^{-1}   d_Ns)\big)\cdot f(    R^1_{s}-   R^2_{s})\bigg]\right| \\\leq 
    \begin{cases}
      C\|\phi\|_{L^\infty} e^{N |  {\boldsymbol{v}}|^{4p} t} & d=1  \\
      \frac{C\|\phi\|_{L^\infty}}{1-C|  {\boldsymbol{v}}|^{2p}\log Nt}   \left[ 1+\big|\log \left(\frac{Nt}{  1+|a_1-a_2|^2}\right)\big|\right] & d=2\\
     \frac{C\|\phi\|_{L^\infty}}{1-C|  {\boldsymbol{v}}|^{2p}}  \left(N^{-1/2} (1+|a_1-a_2|)\right)^{2-d}  & d\ge 3\\
      \end{cases}
\end{multline}
The right side should be understood to be infinite if the denominator is non-positive. Here $C$ is an absolute constant not depending on $v,t,m,\phi, a_1,a_2$.
\end{enumerate}

\end{prop}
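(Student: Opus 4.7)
The plan is to prove Part (1) term-by-term in $m$, and then derive Part (2) by summing the Part (1) bounds over $m\in\mathbb Z_{\ge 0}$. The first step is a pointwise estimate $|\boldsymbol u_{\boldsymbol v}(x)|\le C|\boldsymbol v|^{2p}\tilde F(|x|)$, valid uniformly for $|\boldsymbol v|$ in a small neighborhood of the origin, where $\tilde F:[0,\infty)\to[0,\infty)$ has exponential decay. This follows by combining Propositions \ref{prop:TaylorG} and \ref{prop:dcbound} with the $\Fd^{1/2}$ tail bound in Proposition \ref{dbound}: both the leading cumulant $\z_{\boldsymbol v}$ and the remainder $\mathscr R_{\boldsymbol v}$ inherit exponential spatial decay from $\Fd^{1/2}$, together with a factor of at least $|\boldsymbol v|^{2p}$ once $|\boldsymbol v|$ is small enough.

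Next, I would expand the $m$-th power symmetrically as
$$\frac{1}{m!}\bigg(\sum_{r=0}^{s-1}\boldsymbol u_{\boldsymbol v}(R^1_r-R^2_r)\bigg)^m \;=\; \sum_{0\le r_1\le\cdots\le r_m\le s-1}g_{\mathrm{count}}(\vec r)\prod_{i=1}^m \boldsymbol u_{\boldsymbol v}(R^1_{r_i}-R^2_{r_i}),$$
with $|g_{\mathrm{count}}|\le 1$ as in \eqref{gcount}. Substituting the pointwise bounds on $\boldsymbol u_{\boldsymbol v}$ and on $f$, bounding $|\phi|$ by $\|\phi\|_{L^\infty}$, and applying Proposition \ref{SRI2} to place the tilted two-point kernel $\boldsymbol q^{(2)}_{\boldsymbol v}$ in a suitable short-range class for small $|\boldsymbol v|$, I can then invoke the chained SRI estimate \eqref{!!!} of Lemma \ref{prop:productEstimate} to control the inner sum over $r_1\le\cdots\le r_m$ by $C^{m+1}\omega_s(d)^{-m}s^{-d/2}(1+s^{-1/2}|a_1-a_2|)^{-K}$.

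Summing over $s\in\{0,\ldots,Nt\}$ then produces the dimension-dependent bounds of Part (1). In $d=1$ the summand $\omega_s(1)^{-m}s^{-1/2}=s^{(m-1)/2}$ integrates to an expression of order $(Nt)^{(m+1)/2}/m$ before the combinatorial refinement discussed below. In $d=2$ the summand $\omega_s(2)^{-m}s^{-1}=(\log s)^m/s$ has partial sums of order $(\log Nt)^{m+1}/(m+1)$, and the additional factor $[1+|\log(Nt/(1+|a_1-a_2|^2))|]$ arises from the cutoff range $s\lesssim |a_1-a_2|^2$ where the anti-concentration factor $(1+s^{-1/2}|a_1-a_2|)^{-K}$ saturates. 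In $d\ge 3$ the sum $\sum_s s^{-d/2}(1+s^{-1/2}|a_1-a_2|)^{-K}$ is dominated by $C(1+|a_1-a_2|)^{2-d}$, which after the $N^{(d-2)/2}$ prefactor reproduces the claimed $(N^{-1/2}(1+|a_1-a_2|))^{2-d}$. Part (2) is then obtained by summing these Part (1) bounds over $m$: for $d=2$ and $d\ge 3$ this is a geometric series whose convergence thresholds $C|\boldsymbol v|^{2p}\log Nt<1$ and $C|\boldsymbol v|^{2p}<1$ fix $\epsilon_{\mathrm{thr}}$, while for $d=1$ one recognizes $\sum_m (C|\boldsymbol v|^{2p}(Nt)^{1/2})^m/(m/2)!\sim\exp(CN|\boldsymbol v|^{4p}t)$.

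The main obstacle is the $d=1$ case of Part (1), where obtaining the sharp $1/(m/2)!$ combinatorial denominator does not follow from the naive summation of $s^{(m-1)/2}$, which only yields $(Nt)^{(m+1)/2}/m$ and is too loose in $m$ to close the $e^{N|\boldsymbol v|^{4p}t}$ bound after summing. To recover this factor I would invoke the dedicated $d=1$ exponential moment estimate in the last bullet of Theorem \ref{anti}, namely
$$\sup_{\br\in\mathcal E^{\mathrm{SRI}}_{k\times d}(q,M,F)} \sup_{\boldsymbol x,r}\Ebb\Big[\exp\Big(\lambda r^{-1/2}\sum_{s=0}^{r-1}\sum_{i'<j'}\mathbf 1_{|R^{i'}_s-R^{j'}_s|\le 1}\Big)\Big]<\infty$$
for sufficiently small $\lambda>0$. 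Dominating $\tilde F(|x|)$ by an exponentially weighted sum of unit-ball indicators at lattice translates and exploiting spatial translation-invariance of the difference process will transfer this exponential integrability to $\sum_{r=0}^{Nt}\tilde F(|R^1_r-R^2_r|)$; differentiating the resulting moment generating function $m$ times at $\lambda=0$ then produces the required $(Nt)^{m/2}\cdot m!/(m/2)!$ bound on the $m$-th moment, which divided by $m!$ gives the sharp $(Nt)^{m/2}/(m/2)!$ term entering Part (1).
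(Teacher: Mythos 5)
Your $d\ge 2$ argument matches the paper's proof closely: reduce to $\phi\equiv 1$, control $|\boldsymbol u_{\boldsymbol v}|$ via Propositions \ref{dbound}, \ref{prop:TaylorG}, \ref{prop:dcbound} by $C|\boldsymbol v|^{2p}\Fd^{1/2}$, expand the $m$-th power over ordered indices, invoke \eqref{!!!} from Lemma \ref{prop:productEstimate}, and sum over $s$. The dimension-by-dimension calculus (the $[1+|\log(Nt/(1+|a_1-a_2|^2))|]$ factor in $d=2$, the $(1+|a_1-a_2|)^{2-d}$ bound in $d\ge 3$) is correct, and Part (2) follows by summing the geometric series as you describe.

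The $d=1$ route, however, has a genuine gap. You correctly observe that the naive summation does not supply the $1/(m/2)!$ factor, and you propose to fix this by invoking the time-averaged exponential moment bound (last bullet of Theorem \ref{anti}) and "differentiating the resulting moment generating function $m$ times at $\lambda=0$." But that theorem only asserts that $\sup_{\br,\x,r}\Ebb[e^{\lambda r^{-1/2}V_r}]<\infty$ \emph{for each fixed $\lambda$}; it gives no control on the growth rate of this bound in $\lambda$. A uniform MGF bound at a single $\lambda_0$ yields at best $\Ebb[V_r^m]\le C\,m!\,\lambda_0^{-m}r^{m/2}$, and dividing by $m!$ then produces only $C^m r^{m/2}$ -- no gain of $(m/2)!$ in the denominator. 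To extract $\Ebb[V_r^m]\lesssim C^m\sqrt{m!}\,r^{m/2}$ by optimizing $m!\,\lambda^{-m}M(\lambda)$ over $\lambda$, you would need a sub-Gaussian growth rate $M(\lambda)\le e^{c\lambda^2}$, which the cited theorem does not give you. There is also a smaller issue earlier in your outline: you invoke Lemma \ref{prop:productEstimate} uniformly in $d$, but \eqref{!!!} and its ingredient Corollary \ref{cor:expectationBound} are only proved for $d\ge 2$, because the pointwise heat kernel bound of Theorem \ref{anti} takes a weaker (time-averaged) form in $d=1$.

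The paper avoids both issues by quoting the sharp moment inequality \eqref{f2b} from Lemma \ref{lemm:antid1}, which states directly that $\sup_\x\Ebb\bigl[\bigl(\sum_{s<r}H(|R^1_s-R^2_s|)\bigr)^m\bigr]\le C^m\sqrt{m!}\,r^{m/2}$ (with $\sqrt{m!}\asymp(m/2)!$ up to $C^m$) -- this bound is established by the explicit induction on $m$ in Appendix B, and is logically upstream of (in fact, it implies) the MGF bound you wished to use. The correct move for $d=1$ is therefore to cite \eqref{f2b} directly, rather than try to recover it from the MGF statement.
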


These bounds are quite sharp and they are illuminating, because their specific forms show where and why the critical scaling arises in each separate dimension. Notice that in $d=1$, the bounds on the right side is independent of the initial condition $(a_1,a_2)$, whereas in $d\ge 3$ the bound on the right side is independent of $t$. Meanwhile in $d=2$, the bound depends on both $t$ and $(a_1,a_2)$.

\begin{proof}
Since $\phi$ is a bounded function, we can just upper bound it by $\|\phi\|_{L^\infty}$ which respects the bounds, so we can just assume $\phi\equiv 1$ henceforth. Now since Item (1) implies Item (2) by summing over $m$, we focus on proving Item (1). 

We can then rewrite the remaining terms on the left-hand side as follows: 

\begin{align*}
 &C N^{\frac{d-2}{2}}\sum_{s=0}^{Nt}\frac1{m!} \mathbf E^{(  {\boldsymbol{v}},2)}_{(   a_1,   a_2)} \bigg[ \bigg(\sum_{r=0}^{s-1}\boldsymbol u_{  {\boldsymbol{v}}}(   R^1_{r}-   R^2_{r})\bigg)^m  f(    R^1_{s}-   R^2_{s})\bigg]\\
 &= C N^{\frac{d-2}{2}}\sum_{s=0}^{Nt}\sum_{0 \leq r_1 \leq \ldots \leq r_m \leq s-1} g_{\mathrm{count}}( \vec r)\mathbf E^{(  {\boldsymbol{v}},2)}_{(   a_1,   a_2)} \bigg[ \prod_{j=1}^{m} \boldsymbol u_{  {\boldsymbol{v}}}(   R^1_{r_j}-   R^2_{r_j})  f(    R^1_{s}-   R^2_{s})\bigg]
\end{align*}
where $g_{\mathrm{count}}$ is as in \eqref{gcount}. 
 We will use the bound $g_{\mathrm{count}}( \vec r) \leq 1$ to omit the term $g_{\mathrm{count}}( \vec r)$ going forward. It follows from Propositions \ref{dbound}, \ref{prop:TaylorG}, and \ref{prop:dcbound} that for $|  {\boldsymbol v}| < z_0/8$ we have 
\begin{align*}
    |\boldsymbol u_{  {\boldsymbol{v}}}(   R^1_{r}-   R^2_{r})| 
    &\leq C \Fd(|(   R^1_{r}-   R^2_{r})|)^{1/2} |  {\boldsymbol{v}}|^{2p}.
\end{align*}
Abbreviating $F(x):= \max\{|f(x)|, \Fd(|x|)^{1/2}\}$, it remains to show that the quantity
\begin{align}
\label{lhs1}C^m |  {\boldsymbol{v}}|^{2pm} \cdot N^{\frac{d-2}{2}}\ \sum_{s=0}^{Nt}\sum_{0 \leq r_1 \leq \ldots \leq r_m \leq s-1} \mathbf E^{(  {\boldsymbol{v}},2)}_{(   a_1,   a_2)} \bigg[ F(R^1_s-R^2_s) \prod_{j=1}^{m+1} F(   R^1_{r_j}-   R^2_{r_j})\bigg],
\end{align}
can be upper-bounded by the right side of \eqref{geombound}. The key to proving this will of course be \eqref{!!!}. 

We break this proof into cases based on dimension.
\\
\\
\textbf{$d= 2$ case.} Using \eqref{!!!} we see that 
\begin{align*}
         \eqref{lhs1} & \le  C^m |  {\boldsymbol{v}}|^{2pm}(\log (Nt))^m  \sum_{s=1}^{Nt} s^{-1} (1+s^{-1/2}|a_1-a_2|)^{-K} 
         \\& \le C^m |  {\boldsymbol{v}}|^{2pm}  (\log(Nt))^m \cdot [1+|\log (Nt / (1+|a_1-a_2|^2))|],
     \end{align*}
where the last bound follows by taking any $K>2$ so that those terms with $s$ of order less than $|a_1-a_2|^2$ become negligible in the sum.
\\
\\
\textbf{$d\ge 3$ case.} Using \eqref{!!!} we see that 
\begin{align*}
    \eqref{lhs1} & \le  C^m |  {\boldsymbol{v}}|^{2pm} N^{\frac{d-2}2}  \sum_{s=1}^\infty s^{-d/2} (1+s^{-1/2}|a_1-a_2|)^{-K} 
         \\& \le C^m |  {\boldsymbol{v}}|^{2pm} N^{\frac{d-2}2}  \cdot (1+ |a_1-a_2|)^{2-d}.
     \end{align*}
where the last bound follows by taking any $K>d$ so that those terms with $s$ of order less than $|a_1-a_2|^2$ become negligible in the sum. 
\\
\\
\textbf{$d=1$ case.} This case is somewhat different from the case of $d\ge 2$ as \eqref{!!!} has only been proved for $d\ge 2$. However, for $d=1$ the desired bound for \eqref{lhs1} is instead implied by \eqref{f2b} of the appendix (noting that $\sqrt{m!}$ and $(m/2)!$ are of the same order up to some constant $C^m$), completing the proof.
\end{proof}

As an easy corollary, the following proposition will allow us to apply the dominated convergence theorem to interchange the limit in $N$ and the sum in $m$ in \eqref{eq:quadVarSeries}. Recall $\omega_N$ from \eqref{omega_n}.

\begin{cor}[Bounds for dominated convergence]\label{dcbd} Assume that we are in the setting of Assumption \ref{ass:QVBounds}. Then there exists some $C,\epsilon > 0$ such that for each $m \geq 0$, each $N \geq 0$ and every $|  {\boldsymbol v}| < \epsilon$, we have  
\begin{align}\label{eq:geomBound}
\left| N^{\frac{d-2}{2}}\sum_{s=0}^{Nt}\frac1{m!} \mathbf E^{(\omega_N^{1/2p}  {\boldsymbol{v}},2)}_{(   a_1^N,   a_2^N)} \bigg[ \bigg(\sum_{r=0}^{s-1}u_{\omega_N^{1/2p}  {\boldsymbol{v}}}(   R^1_{r}-   R^2_{r})\bigg)^m  \phi \big(N^{-1/2} (   R^1_{s} - N^{-1}   d_Ns)\big)\cdot f(    R^1_{s}-   R^2_{s})\bigg]\right| \leq C^{m+1} |  {\boldsymbol{v}}|^{2pm}.
\end{align}In $d=1$ the right side can be improved to $C^{m+1}|  {\boldsymbol{v}}|^{2pm} / (m/2)!$.
\end{cor}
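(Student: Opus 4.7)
The plan is to derive Corollary \ref{dcbd} as a direct consequence of Item (1) of Proposition \ref{prop:DomConvergenceBound} by substituting $\boldsymbol v \mapsto \omega_N^{1/2p}\boldsymbol v$ into the bound \eqref{geombound} and then showing dimension by dimension that the factors depending on $N$, $t$, and $(a_1^N,a_2^N)$ collapse to an $(m,N)$-independent constant. The identity $|\omega_N^{1/2p}\boldsymbol v|^{2pm}=\omega_N^m|\boldsymbol v|^{2pm}$ is what will precisely cancel the pathological $N$-dependence in each dimension; this is the whole reason $\psi_N(p,d)$ was chosen to have its particular form, so one should view this corollary as the place where that choice gets ``paid off.''

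First I would treat $d=1$, where $\omega_N=N^{-1/2}$ and the right side of \eqref{geombound} is $C^m\|\phi\|_{L^\infty}|\boldsymbol v|^{2pm}\omega_N^m(Nt)^{m/2}/(m/2)!=C^m t^{m/2}|\boldsymbol v|^{2pm}/(m/2)!$, which gives the stronger claim stated at the end of the corollary after absorbing $t^{m/2}$ into the constant (using that $t$ is fixed). In $d=2$ I would use $\omega_N=(\log N)^{-1}$ and the bound $C^m\|\phi\|_{L^\infty}|\boldsymbol v|^{2pm}(\log N)^{-m}(\log Nt)^{m}[1+|\log(Nt/(1+|a_1^N-a_2^N|^2))|]$. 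Since Assumption \ref{ass:QVBounds} requires $(N^{-1/2}a_1^N,N^{-1/2}a_2^N)\to(a_1,a_2)$ with $a_1\neq a_2$, we have $|a_1^N-a_2^N|\asymp N^{1/2}$, so the argument of the outer $\log$ stays bounded away from $0$ and $\infty$, making that bracket $O(1)$; and $(\log Nt/\log N)^m\to 1$ is $\leq C^m$ uniformly in $m,N$ after choosing $C$ large. In $d\geq 3$, $\omega_N=1$ so no cancellation is needed; the factor $(N^{-1/2}(1+|a_1^N-a_2^N|))^{2-d}$ converges to $(1+|a_1-a_2|)^{2-d}<\infty$ again by the well-separation assumption, so this factor is uniformly bounded.

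Collecting the three cases and taking $C$ large enough to dominate the bounded multiplicative constants $\|\phi\|_{L^\infty}$, $t^{m/2}$ (in $d=1$), the bracket and the ratio of logs (in $d=2$), and $(1+|a_1-a_2|)^{2-d}$ (in $d\geq 3$), as well as the constant appearing in \eqref{geombound}, yields \eqref{eq:geomBound} with the same $C$ for every $m\geq 0$ and all sufficiently small $|\boldsymbol v|$. The smallness threshold $\epsilon$ is inherited directly from Proposition \ref{prop:DomConvergenceBound}, which already requires $\boldsymbol v$ in some neighborhood of the origin.

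The main (and essentially only) obstacle is the $d=2$ case: one must verify that $(\log Nt)^m/(\log N)^m$ stays bounded by $C^m$ uniformly in both $m$ and $N$, which is not entirely automatic because for $N$ just slightly above $1$ the ratio blows up. This is handled by restricting to $N$ large enough that $\log Nt\leq 2\log N$ (which holds for all but finitely many $N$, and those can be absorbed into the constant) and then using $2^m\leq C^m$. The other mild subtlety is in $d\geq 3$, where one must use well-separation $a_1\neq a_2$ in an essential way to prevent the factor $(1+|a_1^N-a_2^N|)^{2-d}$ from blowing up; this is exactly why well-separation is imposed as part of Assumption \ref{ass:QVBounds} in $d\geq 2$.
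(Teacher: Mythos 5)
Your proof is correct and follows exactly the route the paper takes (the paper's own proof of this corollary is just one sentence pointing to Proposition~\ref{prop:DomConvergenceBound} with $(a_1,a_2)\mapsto(a_1^N,a_2^N)$ and $\boldsymbol v \mapsto \omega_N^{1/2p}\boldsymbol v$); you have spelled out the dimension-by-dimension cancellations that the paper leaves implicit. One small point worth flagging: as printed, the $d=2$ case of \eqref{geombound} carries a factor $(\log Nt)^{2pm}$, which is inconsistent with the exponent $m$ that the proof of Proposition~\ref{prop:DomConvergenceBound} actually establishes via \eqref{!!!}; with $2pm$ in place the substitution would produce $(\log N)^{-m}(\log Nt)^{2pm}$, which is not bounded uniformly in $N$ once $p\ge 1$. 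You correctly used the exponent $m$ from the proof rather than the exponent from the displayed statement, which is what makes the argument close — it would have been cleaner to say so explicitly.
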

By choosing $|  {\boldsymbol{v}}|$ small enough, the right-hand side of \eqref{eq:geomBound} is summable. Furthermore, we see that if $  {\boldsymbol{v}}= 0$, then only the $m=0$ term is nonzero.  

\begin{proof}
    This is immediate from the previous proposition, specializing to the case where $(a_1,a_2) = (a_1^N,a_2^N)$ and using their well-separatedness to argue that $|a^1_N-a^2_N| \ge cN^{1/2}$, and also replacing $v$ by $\omega_N^{1/2p} v$. 
\end{proof}

We now turn to proving Propositions \ref{prop:QVd1}, \ref{4.1b} and \ref{4.1c}. Theorem \ref{inv01} will be the key to calculating the relevant limits, therefore we introduce a shorthand notation for the quantities appearing there. Let $$\pi^{\mathrm{inv}}(f):= \int_I f\;\dr\pi^{\mathrm{inv}},$$ where $\pi^{\mathrm{inv}}$ is the invariant measure for the difference process of the two-point motion $\boldsymbol p^{(2)}$, and has its unit normalization. Also let
\begin{align*}
    \mathcal G^{(\boldsymbol{v})} (t, \phi;   a_1,   a_2) :=  c_d\cdot \int_0^t \int_{\mathbb R^d} G^{(\boldsymbol{v})}(2s,   a_1-   a_2) G^{(\boldsymbol{v})}(2s,   y) \phi(\tfrac12(   y+   a_1+   a_2)) \dr    y\dr s,
\end{align*}
where $G^{(\boldsymbol{v})}$ is the heat kernel with respect to the operator $\mathrm{div}(H_{  {\boldsymbol v}}\nabla)$. We will finally use the notation $  \mathcal G (t, \phi;   x_1,   x_2) = \mathcal G^{(0)} (t, \phi;   x_1,   x_2)$ to denote the case where $G^{(0)} = G$ is the standard heat kernel without any tilting. 

\textbf{} 

\begin{proof}[Proof of Proposition \ref{prop:QVd1}] Thanks to the anticoncentration estimate of Theorem \ref{anti}, we can assume all $f_N\equiv f$, as the difference is easily bounded and shown to vanish by e.g. Corollary \ref{cor:expectationBoundGeneral}.

We can apply Proposition \ref{prop:DomConvergenceBound} to interchange the sum in $m$ in \eqref{eq:quadVarSeriesOrig} with the limit in $N$. Also note that it follows from Proposition \ref{prop:DomConvergenceBound} that for $m > 0$, the $m$th term converges to $0$ as $N \to \infty$ since we are in the case $  {\boldsymbol{v}} = 0$. Therefore only the $m=0$ term of the series contributes in the limit. The limit of this term is computed via Theorem \ref{inv01}, giving the desired result. 
\end{proof}

\textbf{}

 \begin{proof}[Proof of Proposition \ref{4.1b}]
Again, thanks to the anticoncentration estimate of Theorem \ref{anti}, we can assume all $f_N\equiv f$, as the difference is easily bounded and shown to vanish.

It follows from Proposition \ref{prop:TaylorG} that we can Taylor expand $\boldsymbol u_{   \varsigma_N}$ in the ${   \varsigma_N}$-variable to obtain
$$ \boldsymbol u_{   \varsigma_N}(   R^1_{r}-   R^2_{r}) = \z_{   \varsigma_N}(   R^1_{r}-   R^2_{r}) + \mathscr R_{   \varsigma_N}(   R^1_{r}-   R^2_{r}),$$ where $\z_{   \varsigma_N}(   x_1-   x_2)$ is a polynomial of degree $2p$ in the $  \varsigma_N$-variable and where we have that $\sup_{   x\in I} |\mathscr R_{   \varsigma_N}(   x)| \leq C|  \varsigma_N|^{2p+1}$.

Using this and interchanging the sum over $m$ with the sum over $s$, the $m^{th}$ term of the series in \eqref{eq:quadVarSeries} becomes $$\sum_{s=0}^{Nt} \sum_{0\le r_1< ...< r_m \le s-1} \mathbf E^{(   \varsigma_N,2)}_{(   a_1^N,   a_2^N)} \bigg[ \prod_{j=1}^m\z_{   \varsigma_N}(   R_{r_j}^1-   R^2_{r_j}) \cdot  \phi \big(N^{-1/2} (   R^1_{s} - N^{-1}   d_Ns)\big)\cdot f(    R^1_{s}-   R^2_{s}) \bigg] + O(1/\log N)$$where the $O(1/\log N)$ term is uniform over all variables and consists of collection of terms that either have repeated indices, or depend on the remainder term $\mathscr R_N$, and are thus negligible in the limit.
     
Let $  {v}_N:=(\log N)^{\frac{1}{2p}}    \varsigma_N$. We will now show that for all $m\in \mathbb N$ one has \begin{align}\notag
    \lim_{N\to\infty}& \sum_{s=0}^{Nt} \frac1{(\log N)^m}\sum_{0\le r_1< ...< r_m \le s-1} \mathbf E^{(   \varsigma_N,2)}_{(   a_1^N,   a_2^N)} \bigg[ \prod_{j=1}^m \z_{   v_N}(   R_{r_j}^1-   R^2_{r_j}) \cdot  \phi \big(N^{-1/2} (   R^1_{s} - N^{-1}   d_Ns)\big)\cdot f(    R^1_{s}-   R^2_{s}) \bigg] \\&= 2^{-m} \pi^{\mathrm{inv}}(\z_{   v})^m    \pi^{\mathrm{inv}}(f) \cdot \mathcal G (t, \phi;   a_1,   a_2) .\label{renew2} \end{align}
Recall the defintion of $\gamma_{\mathrm{ext}}(\boldsymbol{v})$ from \eqref{gext}. Observe that $\gamma_{\mathrm{ext}}(\boldsymbol{v}) =\pi^{\mathrm{inv}}(\z_{   v})$. Then summing \eqref{renew2} over $m$ and using the dominated convergence theorem to commute the sum over $m$ with the limit as $N\to\infty$ will yield the claim as long as the limiting vector $  {\boldsymbol v}$ satisfies $|  {\boldsymbol v}|< \epsilon_{\mathrm{thr}}$ where $\epsilon_{\mathrm{thr}}$ is chosen so that $C \epsilon_{\mathrm{thr}}^{2p} < 1$ with $C$ as appearing on the right side in Corollary \ref{dcbd}. This last condition ensures that the left-hand-side of \eqref{eq:geomBound} is summable in $m$, even with a supremum over all $N$ so that dominated convergence may be applied.
     
     Let us first consider $m=0$. This corresponds to finding the limit of $$\sum_{s=0}^{Nt}\mathbf E^{(   \varsigma_N,2)}_{(   a_1^N,   a_2^N)} \bigg[ \phi\big(N^{-1/2} (   R^1_{s} - N^{-1}   d_Ns)\big)\cdot f(    R^1_{s}-   R^2_{s})\bigg].$$ We can apply Theorem \ref{inv01}, since the tilted Markov kernels $\qdif$ have the SRI property as noted in Proposition \ref{SRI2} to obtain that this converges to 
     \begin{equation*}
    \pi^{\mathrm{inv}}(f) \cdot \mathcal G (t, \phi;   a_1,   a_2)
     \end{equation*}
    as desired. 

    Next we deal with the case $m \geq 1$.   
    We rearrange the indices on the left-hand side of \eqref{renew2} and applying the Markov property, we get that it equals
    \begin{multline}
\sum_{r_1=0}^{Nt-1} \mathbf E^{(   \varsigma_N,2)}_{(   a_1^N,   a_2^N)} \Bigg[ \z_{   v_N}(   R^1_{r_1} -    R^2_{r_1})  \\ \cdot
\mathbf E^{(   \varsigma_N,2)}_{(   R^1_{r_1},   R^2_{r_1})} \bigg[\frac{1}{(\log N)^m} \sum_{1 \leq s_1 <...< s_{m}  \le Nt}  \prod_{j=1}^{m-1}  \z_{   v_N}(   R^1_{s_j} -    R^2_{s_j}) \cdot f(    R^1_{s_m}-   R^2_{s_m})\phi \big(N^{-1/2} (   R^1_{s_m} - N^{-1}   d_Ns_m)\big)\bigg] \Bigg] \label{mp1}
\end{multline}
Here we take the convention that the empty sum is equal to $1$ and that $s_0 = 0$.
The main idea to proceed is now to apply the expectation limit formula of Theorem \ref{inv01} in conjunction with the backwards-in-time propagation of test functions from Theorem \ref{2.11}.


Fix $T>0$. We now claim that thanks to the backwards-time-propagation from Theorem \ref{2.11}, one has
\begin{multline}
\lim_{N \to \infty} N\cdot \sup_{u \in N^{-1} \mathbb Z \cap [0,T]} \mathbf E^{(   \varsigma_N,2)}_{(a^1_N,a^2_N)} \Bigg[\z_{   v_N} (   R^1_{Nu}-   R^2_{Nu}) \;\;\cdot \\ \bigg|\mathbf E^{(   \varsigma_N,2)}_{(   R^1_{Nu},   R^2_{Nu})} \bigg\{\frac{1}{(\log N)^m} \sum_{1 \leq s_1 <...< s_{m}  \le Nt}  \prod_{j=1}^{m-1}  \z_{   v_N}(   R^1_{s_j} -    R^2_{s_j}) \cdot f(    R^1_{s_m}-   R^2_{s_m})\phi \big(N^{-1/2} (   R^1_{s_m} - N^{-1}   d_Ns_m)\big)\bigg\}\\ - \phi \big(N^{-1/2} (   R^1_{Nu} - N^{-1}   d_N Nu)\big)\cdot 2^{-m} \pi^{\mathrm{inv}}(f) \pi^{\mathrm{inv}}(\z_v)^{m-1} \bigg|\Bigg]=0.\label{th2}
\end{multline} 

In other words, we can replace the inner expectation of \eqref{mp1} with the much simpler quantity given by some constant multiple of $\phi \big(N^{-1/2} (   R^1_{r_1} - N^{-1}   d_N r_1)\big),$ where the constant is precisely $2^{-m} \pi^{\mathrm{inv}}(f) \pi^{\mathrm{inv}}(\z_v)^{m-1}$. The proof of \eqref{th2} follows readily from the backwards propagation of Theorem \ref{2.11}, and for completeness we will sketch the argument at the end of this proof. For now we assume it is true. Then we may divide by $N$ in \eqref{th2}, and sum over all $u\in N^{-1}\mathbb Z\cap [0,t]$. 
This means that $N\cdot \sup_{u\in (N^{-1}\mathbb Z)\cap [0,T]}$ can be replaced by $\sum_{u\in (N^{-1}\mathbb Z)\cap [0,T]}$ in the expression \ref{th2}, and the limit is still 0. Finally we can use this replacement to calculate the original limit \eqref{mp1}. Indeed, it just remains to take the limit of 
\begin{multline} 
2^{-m} \pi^{\mathrm{inv}}(f) \pi^{\mathrm{inv}}(\z_v)^{m-1} \sum_{r_1=0}^{Nt-1} \mathbf E^{(   \varsigma_N,2)}_{(   a_1^N,   a_2^N)} \Bigg[ \z_{   v_N}(   R^1_{r_1} -    R^2_{r_1}) \phi \big(N^{-1/2} (   R^1_{r_1} - N^{-1}   d_Nr_1)\big)
\Bigg].
\end{multline}
We can now use Theorem \ref{inv01} to immediately obtain that this converges to \begin{align*}
&2^{-m} \pi^{\mathrm{inv}}(\z_{   v})^{m}    \pi^{\mathrm{inv}}(f) \cdot \mathcal G (t, \phi;   a_1,   a_2),
\end{align*}
as desired. 
\\
\\
\textbf{Proof of \eqref{th2}.} Consider any sequence $u_N \in (N^{-1} \mathbb Z_{\ge 0} )\cap [0,T]$, 
with $\liminf_N u_N>0$. Define the change of measure $\tilde {\mathbf E}_N [F] := N \cdot \mathbf E^{(\varsigma_N,2)}_{(a^1_N,a^2_N)} \big[|\z_{   v_N} (   R^1_{Nu_N}-   R^2_{Nu_N})|\cdot F \big].$ Such $\tilde {\mathbf E}_N $ may not be probability measures, but their mass is bounded uniformly in $N$ by the anticoncentration estimate of Theorem \ref{anti} (since $N^{d/2} = N$ if $d=2$), so we can assume without loss of generality that they are probability measures. Now the same anticoncentration estimate and the fact that the initial states $a^1_N,a^2_N$ are assumed to be well-separated, one easily obtains $\tilde {\mathbf E}_N [R^1_{Nu_N}] \leq CN^{1/2}$, so that $N^{-1/2} R^1_{Nu_N}$ is tight under $\tilde {\mathbf E}_N .$
Similarly $|R^1_{Nu_N} -R^2_{Nu_N}|$ is tight under $\tilde{\mathbf E}_N$ since $\z_{v_N}$ decays rapidly. This tightness ensures that the random walkers are close together at time $Nu_N$ so that we can apply Theorem \ref{2.11} with $\phi_\ell\equiv 1$ for $1\le \ell \leq m-1$, and $\phi_m:=\phi$. This yields that under $\tilde {\mathbf E}_N $, one has convergence in law as $N\to \infty$: 

\begin{align*}
    \mathbf E^{(   \varsigma_N,2)}_{(   R^1_{Nu_N},   R^2_{Nu_N})} \bigg\{\frac{1}{(\log N)^m} \sum_{1 \leq s_1 <...< s_{m}  \le Nt}  \prod_{j=1}^{m-1}  \z_{   v_N}(   R^1_{s_j} -    R^2_{s_j}) \cdot f(    R^1_{s_m}-   R^2_{s_m})\phi \big(N^{-1/2} (   R^1_{s_m} - N^{-1}   d_Ns_m)\big)\bigg\}\\ - \phi \big(N^{-1/2} (   R^1_{Nu_N} -    d_N u_N)\big)\cdot 2^{-m} \pi^{\mathrm{inv}}(\z_{   v})^{m-1}    \pi^{\mathrm{inv}}(f) \to 0.
\end{align*}
From here, \eqref{th2} follows readily by e.g., bounded convergence. 
\end{proof}

\begin{proof}[Proof of Proposition \ref{4.1c}] As usual, thanks to the anticoncentration estimate of Theorem \ref{anti}, we can assume all $f_N\equiv f$, as the difference is easily bounded and shown to vanish. 

Interchanging the sum over $m$ with the sum over $s$, the $m^{th}$ term of the series in \eqref{eq:quadVarSeries} becomes 
\begin{equation}\label{wttl}N^{\frac{d-2}2} \sum_{s=0}^{Nt} \sum_{0\le r_1\le  ...\le r_m \le s-1} g_{\mathrm{count}}( \vec r) \mathbf E^{(   \varsigma_N,2)}_{(   a_1^N,   a_2^N)}  \bigg[ \prod_{j=1}^m \boldsymbol u_{   \varsigma_N}(   R^1_{r_j}-R^2_{r_j}) \cdot  \phi \big(N^{-1/2} (   R^1_{s} - N^{-1}   d_Ns)\big)\cdot f(    R^1_{s}-   R^2_{s}) \bigg].
\end{equation}
For each $m\in \mathbb N$, we would like to take the limit of this expression as $N\to\infty$. Since $  \varsigma_N\to    v$ as $N\to \infty$, notice that $\boldsymbol u_{   \varsigma_N}$ converges to $\boldsymbol u_{   v}$, the function defined in Theorem \ref{main4}. Similarly, $\boldsymbol q^{(2)}_{    \varsigma_N}  \to \boldsymbol q_{  {\boldsymbol v}}^{(2)}$ as defined in Theorem \ref{main4}. The expectation with respect to the difference between the coordinates of this limiting chain is denoted by $\mathbf E_y^{(v,\mathrm{diff})}.$ Unlike the proof of Proposition \ref{4.1b}, the terms with repeated indices will \textbf{not} be irrelevant here since $   \varsigma_N$ is not necessarily going to $0$ here. 

By Theorem \ref{inv01}, the limit of the $m=0$ term is 
$\pi^{\mathrm{inv}}(f) \cdot \mathcal G^{(\boldsymbol{v})} (t, \phi;   a_1,   a_2).$
Here $\pi_v^{\mathrm{inv}}$ is the invariant measure for $\boldsymbol q_{  {\boldsymbol v}}^{(2)}$, and it has its unit normalization as in Theorem \ref{inv01}. Similarly, the heat kernel $G^{(v)}$ is with respect to the operator $\mathrm{div}(H_{  {\boldsymbol v}}\nabla)$
where $H_{  {\boldsymbol v}}$ is the Hessian matrix of $\log M$ at $  {\boldsymbol v}$, where $M$ is the moment generating function of $\mu$ as in \eqref{dn}.

Next we consider the case $m \geq 1$. As in the proof of Proposition \ref{4.1b}, we will assume without loss of generality that $\Fd(\cdot)$ is compactly supported so that the sequence of functions $\boldsymbol u_{   \varsigma_N}(\cdot)$ has uniformly bounded support in $N$. This ensures that at time $r_1$, the distance $|   R^1_{r_1} -    R^2_{r_1}|$ is uniformly bounded in $N$ on the event where $\boldsymbol u_{   \varsigma_N}(   R^1_{r_1} -    R^2_{r_1}) > 0$. We can then use Theorem \ref{2.12} to propagate the function $\phi$ backwards in time to time $r_1$ in the expression \eqref{wttl}. The details are similar to the proof of Proposition \ref{4.1b} for the $d=2$ case, and follow the same lines as the proof of \eqref{th2} above (using Theorem \ref{2.12} as opposed to Theorem \ref{2.11}). The effect of this backwards propagation is that we can replace $\phi(N^{-1/2} (   R^1_s - N^{-1}    d_Ns))$ in \eqref{wttl} with the quantity $\phi \big(N^{-1/2} (   R^1_{r_1} - N^{-1}   d_Nr_1)\big)$, without changing the value of the $N\to\infty$ limit. More precisely, we just need to  compute the limit of 
\begin{align*}
    &N^{\frac{d-2}2} \sum_{s=0}^{Nt} \sum_{0\le r_1\le  ...\le r_m \le s-1} g_{\mathrm{count}}( \vec r) \mathbf E^{(   \varsigma_N,2)}_{(   a_1^N,   a_2^N)}  \bigg[ \prod_{j=1}^m \boldsymbol u_{   \varsigma_N}(\mathbf R_{r_j}) \cdot  \phi \big(N^{-1/2} (   R^1_{r_1} - N^{-1}   d_Nr_1)\big)\cdot f(    R^1_{s}-   R^2_{s}) \bigg].
\end{align*}
Rearranging the indices and applying the Markov property, this equals
\begin{multline*}
N^{\frac{d-2}2} \sum_{r_1=0}^{Nt-1}\mathbf E^{(   \varsigma_N,2)}_{(   a_1^N,   a_2^N)} \Bigg[ \sum_{l = 1}^m \frac{1}{l!} \boldsymbol u_{   \varsigma_N}(   R^1_{r_1} -    R^2_{r_1})^l \phi \big(N^{-1/2} (   R^1_{r_1} - N^{-1}   d_Nr_1)\big) \\ \cdot
\mathbf E^{(   \varsigma_N,2)}_{(   R^1_{r_1},   R^2_{r_1})} \left[\sum_{1\le s_1\le  ...\le s_{m-l}  \le Nt-1} g_{\mathrm{count}}( \vec s)  \prod_{j=1}^{m-l} \boldsymbol u_{   \varsigma_N}(\mathbf R_{s_j}) \cdot \sum_{s = s_{m - l}+1}^{Nt} f(    R^1_{s}-   R^2_{s})\right] \Bigg]
\end{multline*}
As usual, we take the convention that the empty sum is equal to $1$ and that $s_0 = 0$.

Applying Theorem \ref{inv01}, we immediately obtain that this converges to 

\begin{multline} \label{eq:fixedmd3}
 \mathcal G^{(\boldsymbol{v})} (t, \phi;   a_1,   a_2) \cdot \\ 
\int_I   \sum_{l = 1}^m \frac{1}{l!} \boldsymbol u_{ \boldsymbol  v}(   y)^l \mathbf E_y^{(\boldsymbol v,\mathrm{diff})} \bigg[ \sum_{1 \leq s_1\le ...\le s_{m-l} < \infty} g_{\mathrm{count}}( \vec s) \boldsymbol u_{ \boldsymbol  v}(X_{s_1}) \cdots \boldsymbol u_{  \boldsymbol v}(X_{s_{m-l}}) \sum_{s = s_{m-l} +1}^{\infty} f(X_{s}) \bigg] \pi_{\boldsymbol v}^{\mathrm{inv}} (\dr y). 
\end{multline}

Finally, if the limiting vector $  {\boldsymbol v}$ satisfies $|  {\boldsymbol v}|< \epsilon_{\mathrm{thr}}$ where $\epsilon_{\mathrm{thr}}$ is chosen so that $C |  {\boldsymbol{v}}|^{2p} < 1$ in Proposition \ref{dcbd}, then the left-hand-side of \eqref{eq:geomBound} is absolutely summable in $m$ (even with a supremum over $N$). We can therefore use the dominated convergence theorem to compute \eqref{eq:quadVarSeries} by commuting the sum in $m$ with the limit as $N \to \infty$. This then reduces to summing the expression \eqref{eq:fixedmd3} over all $m\ge 0$. Letting $n = m-l$, this yields the limiting value of
\begin{multline*}
 \mathcal G^{(\boldsymbol{v})} (t, \phi;   a_1,   a_2) \cdot  \\
\int_I  \left[f(y) +  \left(e^{\boldsymbol u_{   v}(   y)} -1\right) \mathbf E_y^{(v,\mathrm{diff})} \left[\sum_{n=0}^{\infty} \sum_{1 \leq s_1\le ...\le s_{n} < \infty} g_{\mathrm{count}}( \vec s) \boldsymbol u_{   v}(X_{s_1}) \cdots \boldsymbol u_{   v}(X_{s_{n}}) \left(\sum_{s = s_{n} +1}^{\infty} f(X_{s})\right) \right] \right]\pi_v^{\mathrm{inv}} (\dr y). 
\end{multline*}
The second factor is precisely $\Theta(f,v)$ as defined in the statement of the proposition.
\\
\\
\textbf{Specialization to the specific case $f=\boldsymbol \vartheta_{   v}$.} Proposition \ref{4.1c} gives the limit of the quadratic variation field for a general function $f$. We now plug in $f(   y) = \boldsymbol{\vartheta}_{   v}(   y) = e^{\boldsymbol u_{   v}(   y)}  -1 = \sum_{k=1}^{\infty}\frac{1}{k!} \boldsymbol u_{   v}(X_s)^k.$  We can now further simplify the term involving $f$ as follows: 
\begin{align*}
&f(y) +  \left(e^{\boldsymbol u_{   v}(   y)} -1\right) \mathbf E_y^{(v,\mathrm{diff})} \bigg[\sum_{n=0}^{\infty} \sum_{1 \leq s_1\le ...\le s_{n} < \infty} g_{\mathrm{count}}( \vec s) \boldsymbol u_{   v}(X_{s_1}) \cdots \boldsymbol u_{   v}(X_{s_{n}}) \sum_{s = s_{n} +1}^{\infty} f(X_{s}) \bigg] \\
&= \left(e^{\boldsymbol u_{   v}(   y)} -1\right) \left[1 + \mathbf E_y^{(v,\mathrm{diff})} \bigg[\sum_{n=0}^{\infty} \sum_{1 \leq s_1\le ...\le s_{n} < \infty} g_{\mathrm{count}}( \vec s) \boldsymbol u_{   v}(X_{s_1}) \cdots \boldsymbol u_{   v}(X_{s_{n}}) \sum_{s = s_{n} +1}^{\infty} \left(\sum_{k=1}^{\infty}\frac{1}{k!} \boldsymbol u_{   v}(X_s)^k\right) \bigg] \right] \\
&=  \left(e^{\boldsymbol u_{   v}(   y)} -1\right)\left[1 + \mathbf E_y^{(v,\mathrm{diff})} \bigg[\sum_{n=0}^{\infty}\sum_{k=1}^{\infty} \sum_{1 \leq s_1\le ...\le s_{n} < s_{n+1}  = \cdots = s_{n+k}  < \infty} g_{\mathrm{count}}( \vec s) \boldsymbol u_{   v}(X_{s_1}) \cdots \boldsymbol u_{   v}(X_{s_{n+k}}) \bigg]\right]. \\
\end{align*}
In the last line, we absorbed the $\frac{1}{k!}$ into the function $g_{\mathrm{count}}( \vec s)$. Finally, note that we are now summing over all possible multi-indices $   s$ such that $s_1  \geq 1$. Therefore, this equals 
\begin{align*}
\left(e^{\boldsymbol u_{   v}(   y)} -1\right)\mathbf E_y^{(v,\mathrm{diff})} \bigg[e^{\sum_{s=1}^{\infty} \boldsymbol u_{   v}(X_s)} \bigg]. 
\end{align*}
Putting everything together, we arrive at 
\begin{equation*}
 \mathcal G^{(\boldsymbol{v})} (t, \phi;   a_1,   a_2) \cdot 
\int_I \left(e^{\boldsymbol u_{   v}(   y)} -1\right)\mathbf E_y^{(v,\mathrm{diff})} \bigg[e^{\sum_{s=1}^{\infty} \boldsymbol u_{   v}(X_s)} \bigg] \pi_v^{\mathrm{inv}} (\dr y). 
\end{equation*}
\end{proof}

  \begin{rk}[The important difference between $d=1$ and $d\ge 2$ when looking at the critical location scale] We explain here why we cannot use these techniques to study the $d=1$ case in the critical scale $|  \varsigma_N|\sim \psi_N(p,d)$. This is because the renewal trick of Theorem \ref{2.10} that was used to prove the limit theorem for $Q_N$ in $d \geq 2$ no longer works. This is due to the existence of a \textit{nontrivial} limit in distribution for the intersection processes in $d=1$ (the limit being the Brownian local time process), which does not happen for $d\ge 2$. As indicated by Theorem \ref{main1}, this is related to the fact that the fluctuations of the quenched density $P^\omega(t,x)$ are \textbf{not} Gaussian in $d=1$, for $x$ at the critical scale. Letting $Q^\infty$ denote a limit point of $Q_N$ as $N\to \infty$, and letting $\mathcal F_s$ denote the canonical filtration on $C([0,T],\mathcal S'(\mathbb R)),$ this boils down to the fact that at the critical scale $|  \varsigma_N|\sim \psi_N(p,d)$, the following are true: if $d\ge 2$, then from Propositions \ref{4.1b} and \ref{4.1c}, the quantity $\mathbb E[ Q^\infty(t,\phi) - Q^\infty(s,\phi)| \mathcal F_s]$ is a deterministic quantity; but if $d=1$ then $\mathbb E[ Q^\infty(t,\phi) - Q^\infty(s,\phi)| \mathcal F_s]$ is a genuinely random quantity (see \cite[Proposition 5.3]{Par24}).
\end{rk}
 
 Up until now, we have studied the quadratic variation field with initial conditions given by $\delta_{a_1^N} \otimes \delta_{a_2^N}.$ We now consider the quadratic variation field with initial conditions given by $\nu^{\otimes 2}_N.$ 
 \begin{thm}[The key estimate with general initial data in all dimensions]\label{gen.ic}
     Consider any (microscopic) sequence of measures $\nu_N$ on $I$ for which the associated (macroscopic) sequence of initial conditions $\mathfrak H^N(0,\bullet)$ as defined in \eqref{hn} is a good sequence in the sense of Definition \ref{goodseq}, converging weakly as measures on $\mathbb R^n$ to a measure $\mathfrak H_0$. Let $Q_N(t,\bullet; \nu_N^{\otimes 2})$ be as in \eqref{qfield}. Then, we have the limits:
     \begin{itemize}
        \item If $d=1$ and $N^{1/4p}   \varsigma_N\to   0$ then we have 
            $$\lim_{N \to \infty}\;  \Ex\big[ Q_N^f(t,\phi; \nu_N^{\otimes 2})\big] =\pi^{\mathrm{inv}}(f)\int_0^t \int_{\mathbb R} \big((G_s * \mathfrak H_0)(a)\big)^2 \phi(a)\dr a\dr s.$$
     
         \item If $d=2$ and $(\log N)^{1/2p}   \varsigma_N\to   {\boldsymbol v}$ with $|\boldsymbol v|$ sufficiently small, then we have 
            $$\lim_{N \to \infty}\;  \Ex\big[ Q_N^f(t,\phi; \nu_N^{\otimes 2})\big] =\frac{\pi^{\mathrm{inv}}(f)}{1- \frac12 \gamma_{\mathrm{ext}}(  {\boldsymbol v})^2}\cdot 2\pi \cdot \int_0^t \int_{\mathbb R^2} \big((G_s * \mathfrak H_0)(a)\big)^2 \phi(a)\dr a\dr s.$$
         \item if $d\ge 3$ and if $  \varsigma_N\to   {\boldsymbol v}$ with $|\boldsymbol v|$ sufficiently small, then 
         $$\lim_{N \to \infty}\;  \Ex\big[ Q_N^f(t,\phi; \nu_N^{\otimes 2})\big]= \Theta_{\mathrm{eff}}^2(f;   {\boldsymbol v}) \cdot c_d\cdot \int_0^t \int_{\mathbb R^d} \big((G_s^{(  {\boldsymbol v})} * \mathfrak H_0)(a)\big)^2 \phi(a)\dr a\dr s,$$ where $\Theta_{\mathrm{eff}}^2( f,  {\boldsymbol v})$ is as in Proposition \ref{4.1c}.
     \end{itemize}
 \end{thm}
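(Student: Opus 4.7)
The plan is to reduce the theorem to the Dirac-initial-condition results already established in Propositions \ref{prop:QVd1}, \ref{4.1b}, \ref{4.1c} via the identity \eqref{tilt0}, and then pass the limit inside the initial-data integration by dominated convergence, using Proposition \ref{prop:DomConvergenceBound} as the uniform envelope. First, by \eqref{tilt0} we have
\begin{equation*}
\Ex\big[ Q_N^f(t,\phi; \nu_N^{\otimes 2})\big] = \int_{I^2} \Ex\big[ Q_N^f(t,\phi; a_1,a_2)\big] \, \nu_N^{\varsigma_N}(\dr a_1)\,\nu_N^{\varsigma_N}(\dr a_2),
\end{equation*}
where $\nu_N^{\varsigma_N}$ is the tilted initial measure from the introduction. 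Changing variables $a_i = N^{1/2} x_i$ converts the integration to one against $\mathfrak H^N(0,\dr x_1)\otimes \mathfrak H^N(0,\dr x_2)$, which converges weakly to $\mathfrak H_0 \otimes \mathfrak H_0$.

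For any $(x_1,x_2)$ with $x_1\ne x_2$, the corresponding sequences $a_j^N:=N^{1/2}x_j$ are well-separated in the sense of Assumption \ref{ass:QVBounds}, so the pointwise limits in Propositions \ref{prop:QVd1}, \ref{4.1b}, and \ref{4.1c} apply. The limit in each case is a heat kernel expression that I will rewrite using the Gaussian identity
\begin{equation*}
G_s(u-x_1)\,G_s(u-x_2) = G(2s, x_1-x_2)\,G(s/2,\,u-\tfrac{x_1+x_2}{2}),
\end{equation*}
(and its obvious modification for the tilted heat kernel $G^{(\boldsymbol v)}$). Combined with the substitution $u = \tfrac{1}{2}(y+x_1+x_2)$ in the $y$-integration, this rewrites the factor coming from the pointwise limit as
\begin{equation*}
\int_0^t \int_{\mathbb R^d}\bigl((G_s*\mathfrak H_0)(a)\bigr)^2\phi(a)\,\dr a\,\dr s
\end{equation*}
(and analogously with $G^{(\boldsymbol v)}_s$ in \hyperref[eq:regimeD]{Regime D}), yielding precisely the claimed formulas after multiplying by the prefactor $\pi^{\mathrm{inv}}(f)$, $\pi^{\mathrm{inv}}(f)/(1-\tfrac12\gamma_{\mathrm{ext}}(\boldsymbol v)^2)\cdot 2\pi$, or $\Theta_{\mathrm{eff}}^2(f;\boldsymbol v)\cdot c_d$, depending on the dimension.

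The main obstacle is justifying dominated convergence: the envelopes in \eqref{geombound} diverge along the diagonal $a_1=a_2$, logarithmically in $d=2$ and polynomially (of order $(N^{-1/2}(1+|a_1-a_2|))^{2-d}$) in $d\ge 3$. After rescaling $a_i = N^{1/2}x_i$, the $d\ge 3$ envelope becomes comparable to $|x_1-x_2|^{2-d}$ plus a microscopic-cutoff contribution of order $N^{(d-2)/2}$ concentrated on $|x_1-x_2|\lesssim N^{-1/2}$, while the $d=2$ envelope is comparable to $1+|\log|x_1-x_2||$ on the macroscopic scale (plus a $\log N$ piece on the microscopic diagonal). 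These singularities are precisely what the good sequence condition of Definition \ref{goodseq} is designed to control: the bound $\sup_a|(\mathfrak m_N, G_t(a-\bullet))_{L^2}|\leq C(t\vee N^{-1})^{\e-1}$ implies
\begin{equation*}
\int_{\mathbb R^{2d}} G(2s,x_1-x_2)\,\mathfrak H^N(0,\dr x_1)\mathfrak H^N(0,\dr x_2) \leq C\,(s\vee N^{-1})^{\e-1},
\end{equation*}
which is integrable in $s$ near $0$ uniformly in $N$, and by a similar convolution estimate it also controls the integrals of $(1+|\log|x_1-x_2||)$ and $|x_1-x_2|^{2-d}$ against $\mathfrak H^N(0,\cdot)^{\otimes 2}$ uniformly in $N$ and absorbs the microscopic-diagonal contributions. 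With these estimates in hand, dominated convergence applies and completes the proof; the final step is a routine simplification using the Gaussian identity above. In $d=1$ the bound of \eqref{geombound} is already uniform in $(a_1,a_2)$, so no regularity of $\mathfrak H_0$ beyond finite mass is needed.
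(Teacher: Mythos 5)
Your proposal follows essentially the same route the paper takes: reduce to Dirac initial conditions via \eqref{tilt0}, invoke the pointwise limits of Propositions \ref{prop:QVd1}, \ref{4.1b}, \ref{4.1c} for well-separated starting points, repackage the heat-kernel expressions with the identity $G_s(u-a_1)G_s(u-a_2) = G(2s,a_1-a_2)G(s/2, u-\tfrac{a_1+a_2}{2})$, and then use the good-sequence condition to control the singularity along the diagonal. All of the substantive estimates you invoke are the correct ones, and they are indeed what the paper uses.

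The one place to tighten up is your final interchange step. You say ``dominated convergence applies,'' but after the rescaling $a_i = N^{1/2}x_i$ you are integrating an $N$-dependent function $F_N(x_1,x_2)$ against an $N$-dependent product measure $\mathfrak H^N(0,\cdot)^{\otimes 2}$, with $F_N$ converging pointwise off the diagonal and $\mathfrak H^N(0,\cdot) \to \mathfrak H_0$ only weakly. Classical DCT does not directly cover this situation. The paper instead reinterprets the integral as $\mathbb E\bigl[\mathbb E[Q_N^f(t,\phi;\mathfrak a_1^N,\mathfrak a_2^N)\mid \mathfrak a_1^N,\mathfrak a_2^N]\bigr]$ with $\mathfrak a_i^N\sim \nu_N^{\varsigma_N}$, notes that the inner conditional expectation converges in distribution (by the Dirac-initial-data propositions plus weak convergence of the initial law), and then proves the sequence of inner conditional expectations is uniformly integrable by establishing a uniform $L^q$ bound for some $q>1$. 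It is precisely that $q$-th moment bound that requires the dyadic-decomposition estimate you sketch (bounding $\int (N^{-1/2}+|x_1-x_2|)^{q(2-d)}\,\mathfrak H^N(0,\dr x_1)\mathfrak H^N(0,\dr x_2)$ in $d\ge 3$, and the logarithmic analogue in $d=2$). So your estimates are the right ones, but the convergence mechanism should be stated as ``convergence in distribution plus uniform integrability'' rather than ``dominated convergence''; as written, the last step is a small gap that would need this reformulation to be airtight.
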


 We remark that the integrals on the right side are in fact convergent due to $\mathfrak H_0^N$ being a good sequence as in Definition \ref{goodseq}, which is not obvious but will be shown as an artifact of the proof.

 \begin{proof}
     The main idea is to integrate all of the expressions from Propositions \ref{prop:QVd1} $-$ \ref{4.1c} over the initial conditions $(   a_1^N,   a_2^N),$ and use the assumption of goodness to obtain a vanishing bound on the part of the integral where $   a^N_1,   a^N_2$ are close to each other, using Proposition \ref{prop:DomConvergenceBound}. 
     
     Recall from \eqref{tilt0} that \begin{align}\notag \Ex\big[ Q_N^f(t,\phi; \nu_N^{\otimes 2})\big] & = \frac{\int_{I^2} e^{\varsigma_N \bullet(a_1+a_2)} \Ex\big[ Q_N^f(t,\phi;a_1,a_2)\big] \nu_N^{\otimes 2}(\dr a_1, \dr a_2)}{\int_{I^2} e^{\varsigma_N \bullet(a_1+a_2)} \nu_N^{\otimes 2}(\dr a_1, \dr a_2)} \\&= \int_{I^2} \Ex\big[ Q_N^f(t,\phi;a_1,a_2)\big] (\nu_N^{\varsigma_N})^{\otimes 2} (\dr a_1,\dr a_2), \notag
     \end{align}
     where $\nu^{\varsigma_N}_N(\dr x) :=  e^{  \varsigma_N\bullet x - \log \int_I e^{\varsigma_N\bullet a} \nu_N(\dr a)} \nu_N(\dr x)$ are the tilted initial states. We can reinterpret this as 
     a nested expectation given by \begin{equation}\label{though}\Ex \big[\Ex\big[ Q_N^f(t,\phi; \mathfrak a^N_1, \mathfrak a^N_2)| \mathfrak a_1^N,\mathfrak a_2^N \big]\big]
     \end{equation}where $\mathfrak a_i^N$ are independent $I$-valued random variables, each distributed as $\nu_N^{\varsigma_N}$ (and also independent of the kernels $K_i$). The \textit{distributional limit as $N\to\infty$} of the conditional expectation $\Ex [Q_N^f(t,\phi;\mathfrak a_1^N,\mathfrak a_2^N)|\mathfrak a_1^N,\mathfrak a_2^N]$ is given precisely by the expressions in 
     Propositions \ref{prop:QVd1} $-$ \ref{4.1c}, where $(a_1,a_2)$ there are replaced by $(\mathfrak a_1,\mathfrak a_2)$ which are distributed as $\mathfrak H_0^{\otimes 2}$. This is because $\mathfrak H_0^{\otimes 2}$ is the limiting law of $N^{-1/2} (\mathfrak a_1^N,\mathfrak a_2^N)$, simply by the assumptions in the statement of the theorem.
     
     We claim that the sequence $\Ex [Q_N^f(t,\phi;\mathfrak a_1^N,\mathfrak a_2^N)|\mathfrak a_1^N,\mathfrak a_2^N]$ is uniformly integrable, which will be justified further below. 
     Thus we can interchange the outer expectation in \eqref{though} 
     with the distributional limit at $N\to \infty$ as calculated by Propositions \ref{prop:QVd1} $-$ \ref{4.1c}. Thus, the limit as $N\to \infty$ of \eqref{though} would given by the $(f,v)$-dependent prefactor which depends on the dimension (e.g. $\pi^{\mathrm{inv}}(f)$, or $\frac{\pi^{\mathrm{inv}}(f)}{1-\gamma_{\mathrm{ext}}(v)^2}\cdot 2\pi$, or $\Theta_{\mathrm{eff}}(f;v)\cdot c_d$), in turn multiplied by $$\int_{\mathbb R^{2d}} \mathcal G(t,\phi, a_1,a_2) \mathfrak H_0^{\otimes 2}(\dr a_1,\dr a_2) =\int_0^t \int_{\mathbb R^d} \int_{\mathbb R^{2d}} G(2s,x) G(2s, a_1-a_2) \phi(\tfrac12 (x+a_1+a_2)) \mathfrak H_0^{\otimes 2}( \dr a_1,\dr a_2)  \dr x\dr s,$$ 
     where $G$ should be replaced by $G^{(v)}$ if $d\ge 3$ and $v\ne 0$ (i.e., \hyperref[eq:regimeD]{Regime D}). Substitute $x\to x-a_1-a_2$, and we obtain that the last integral can be rewritten as $$\int_0^t \int_{\mathbb R^d} \int_{\mathbb R^{2d}} G(2s,x-a_1-a_2) G(2s, a_1-a_2)  \mathfrak H_0^{\otimes 2}( \dr a_1,\dr a_2)  \cdot \phi(\tfrac12 x) \dr x\dr s.$$ 
     Making the change of variables $x \to \frac12 x$, we obtain that the last expression is the same as
     
     \begin{equation}\label{eq5}\int_0^t \int_{\mathbb R^d} \int_{\mathbb R^{2d}} G(\tfrac{s}{2},x-\tfrac12(a_1+a_2)) G(2s, a_1-a_2)  \mathfrak H_0^{\otimes 2}( \dr a_1,\dr a_2)  \cdot \phi(x) \dr x\dr s.
     \end{equation}
    Finally, use the identity $G(\frac{s}{2},x-\frac12(a_1+a_2)) G(2s, a_1-a_2) = G(s, x- a_1)G(s,x-a_2),$ and notice (for \hyperref[eq:regimeD]{Regime D}) that this identity also remains true for the $v$-tilted heat kernel, because $G^{(v)}(s,x) = G(s,H_v^{-1/2} x)$. Thus we obtain that the above integral equals
     $$\int_0^t \int_{\mathbb R^d} (G_{s} * \mathfrak H_0(x))^2 \cdot \phi(x) \dr x\dr s,$$ 
   and likewise for \hyperref[eq:regimeD]{Regime D} but with $G$ replaced by $G^{(v)}$. 
   It is not entirely obvious a priori that this integral converges, as one only has $G_s * \mathfrak H_0 (a)^2 \leq t^{2\e -2}$ which is not necessarily an integrable singularity for $\e\in (0,1/2)$. However, using \eqref{eq5}, 
   the convergence follows by first performing the integral over $x$ first, and noting that $\int_{\mathbb R^d} G(\frac{s}2, x-\frac12(a_1+a_2) )\phi(x)\dr x \leq C$ for some absolute constant $C$ not depending on $s$, then noting that $\int_{\mathbb R^{2d}}G(2s,a_1-a_2) \mathfrak H_0^{\otimes 2}(\dr a_1,\dr a_2)\leq Cs^{\e-1}$ thanks to the good sequence property from Definition \ref{goodseq}.
\\
\\   
     \textbf{Justification of the uniform integrability.} Definition \ref{goodseq} will be crucial here. 
     To first give some rough idea of how we need to apply that definition, notice first that if we have some fixed finite measure $\mathfrak H$ on $\mathbb R^d$ of Holder class $C^{2(\e-2)}$ and exponential tails, then for $d\ge 2$ we can form the convolution $\int_I |x-y|^{2-d} \mathfrak H(\dr y)$ by writing $|x|^{2-d} \le \phi(x) + \sum_{n=1}^\infty 2^{n(d-2)} \psi( 2^{n} x)$ for some smooth bounded $\phi\ge 0$ and compactly supported $\psi\ge 0$. Performing the sum shows that the convolution is (just barely) well defined: writing $\delta_n(x):= 2^{nd}\psi(2^nx)$ one sees that $\int|x|^{2-d}\mathfrak H(\dr x) \le \sum_n 2^{-2n} |(\delta_n,\mathfrak H)| \le \sum_n 2^{-2\e n}<\infty.$ To rigorously apply these ideas, we are going to break up the proof into cases depending on the dimension. 
\\
\\
     \textbf{$d=1$ case.} This is the easiest case. By Proposition \ref{prop:DomConvergenceBound}, one has $\sup_{a_1,a_2, N} \mathbb E [ Q^f_N(t,\phi,a_1,a_2)] < \infty$ for each fixed $t>0$, which immediately implies the required uniform integrability, in fact the expectation with the randomized initial conditions $\mathfrak a_N^1 ,\mathfrak a_N^2$ will just converge by the bounded convergence theorem.
\\
\\
     \textbf{ $d\ge 3$ case.} By Proposition \ref{prop:DomConvergenceBound}, the definition of $Q_N$, and $|\varsigma_N|<\epsilon_{\mathrm{thr}}$  we have that \begin{align*}
         \mathbb E[ Q_N^f(t,\phi;   a_1,   a_2)] & \le N^{\frac{d-2}2} \|\phi\|_{L^\infty} \sum_{s=0}^\infty \mathbf E^{(\varsigma_N,2)}_{(a_1,a_2)} [e^{\sum_{r=0}^{s-1} \boldsymbol u_{   v} (R^1_r-R^2_r)} f(R^1_s-R^2_s)] 
         \\& \le C N^{\frac{d-2}2} \|\phi\|_{L^\infty} \cdot (1+ |a_1-a_2|)^{2-d}.
     \end{align*}

     Now, we are going to prove uniform integrability by proving that there exists some $q>1$ such that $\sup_N \mathbb E \big[ \mathbb E[Q_N^f(t,\phi; \mathfrak a^N_1, \mathfrak a^N_2)|\mathfrak a_1^N,\mathfrak a_2^N]^{q}\big] <\infty.$ It suffices to show that for $q>1$ sufficiently small one has that 
     \begin{align*}\sup_N \int_{I^2} \big( N^{-1/2} (1+|x-y|)\big)^{q(2-d)} \nu_N^{\varsigma_N}(
     \dr x) \nu_N^{\varsigma_N}(\dr y) <\infty,
     \end{align*}
     where $\nu^{\varsigma_N}_N(\dr x) :=  e^{  \varsigma_N\bullet x - \log \int_I e^{\varsigma_N\bullet a} \nu_N(\dr a)} \nu_N(\dr x)$ are the tilted initial states. 
     Make the change of variable $(x,y)= N^{1/2}(a,b)$, and recall (from e.g.\eqref{hn}) that $\mathfrak H_0^N(N^{-1/2}\dr x) =\nu_N^{\varsigma_N}(\dr x)$. Thus we get that\begin{align}\notag \int_{I^2} \big( N^{-1/2} (1+|x-y|)\big)^{q(2-d)} \nu_N^{\varsigma_N}(
     \dr x) \nu_N^{\varsigma_N}(\dr y) &=\int_{I^2} \big( N^{-1/2} +|a-b| \big)^{q(2-d)} \mathfrak H_0^N (\dr a) \mathfrak H_0^N (\dr b) \\ \notag &= \int_I \bigg[ \int_I\big( N^{-1/2} +|a-b| \big)^{q(2-d)} \mathfrak H_0^N (\dr a) \bigg] \mathfrak H_0^N (\dr b) \\&\leq C\sup_{b} \int_I \big( N^{-1/2} +|a-b| \big)^{q(2-d)} \mathfrak H_0^N (\dr a), \label{su1}
     \end{align}
     where we used the fact that $\mathfrak H_0^N$ has uniformly bounded total mass thanks to the first bound in Definition \ref{goodseq}. Now as long as $q< 1 + \frac{2\e}{d-2} $ we will show that this last quantity is bounded independently of $N$ thanks to the second bound of Definition \ref{goodseq}. Choose some smooth bounded $\phi\ge 0$ and smooth compactly supported $\psi\ge 0$ with the property that $\big( N^{-1/2} +|x| \big)^{q(2-d)} \leq \phi(x) +\sum_{n=1}^{\lfloor \frac12 \log_2 N\rfloor} 2^{nq(d-2)}\psi(2^nx).$ Using the second bound in the good sequence condition, one sees that $\sup_b\int_I 2^{nd} \psi(2^n(a-b))\mathfrak H_0^N(\dr a) \leq 2^{-2n (\e-1)}$ for $n\le \frac12\log_2 N$. The uniform and at-worst-exponential tail decay of $\mathfrak H_0^N$ also implies that $(\phi,\mathfrak H_0^N) \leq C$. Hence \eqref{su1} is bounded above by $C\sum_{n\ge 0} 2^{nq(d-2) -nd -2n(\e-1)}$, which converges for $q< 1 + \frac{2\e}{d-2} $, with an upper bound independent of $N$, proving the uniform integrability in the case where $d\ge 3$. 
     \\
     \\
     \textbf{$d=2$ case.} We proceed very similarly to the $d\ge 3$ case above. By Proposition \ref{prop:DomConvergenceBound} and $|\varsigma_N| <\epsilon_{\mathrm{thr}} (\log N)^{-1/2p},$
     \begin{align*}
         \mathbb E[ Q_N^f(t,\phi;   a_1,   a_2)] & \le  \|\phi\|_{L^\infty} \sum_{s=0}^{Nt} \mathbf E^{(\varsigma_N,2)}_{(a_1,a_2)} [e^{\sum_{r=0}^{Nt} \boldsymbol u_{   v} (R^1_r-R^2_r)} f(R^1_s-R^2_s)] 
         \\& \le C  \|\phi\|_{L^\infty} \cdot (1+|\log (Nt / (1+|a_1-a_2|^2))|).
     \end{align*}
     Again it suffices to show that for some $q>1$ one has that 
     \begin{align*}\sup_N \int_{I^2} |\log ( N / (1+|x-y|^2) )|^q  \nu_N^{\varsigma_N}(
     \dr x) \nu_N^{\varsigma_N}(\dr y) <\infty.
     \end{align*}
     Since $d=2$ one can actually show that this is true for all $q>1$. Write $|\log x| = \log_+ x + \log_- x$. Dealing with $\log_+$ is very easy thanks to the uniform-in-$N$ exponential tail decay on $\mathfrak H_0^N$ coming from the first bound in Definition \ref{goodseq}. The negative part $\log_-$ is more subtle and requires the second bound of Definition \ref{goodseq}. Make change of variable $(x,y)\to N^{1/2}(x,y)$ and we get that the negative part of the above integral is bounded by \begin{align*}\int_{I^2} \log_- \big( N^{-1/2} +|a-b| \big)^{q} \mathfrak H_0^N (\dr a) \mathfrak H_0^N (\dr b) &= \int_I \bigg[ \int_I \log_- \big( N^{-1/2} +|a-b| \big)^{q} \mathfrak H_0^N (\dr a) \bigg] \mathfrak H_0^N (\dr b) \\&\leq C \sup_{b} \int_I \log_- \big( N^{-1/2} +|a-b| \big)^{q} \mathfrak H_0^N (\dr a) \\ &\leq C_{\e,q} \cdot \sup_{b} \int_I (N^{-1/2} +|a-b| \big)^{-\e} \mathfrak H_0^N (\dr a),
     \end{align*}
     where we used the fact that $\mathfrak H_0^N$ has uniformly bounded total mass and $|\log x|^q \leq C_{\e,q} x^{-\e}$ for $|x|<1$. Now this last quantity is finite thanks to the good sequence condition of Definition \ref{goodseq}, using a similar dyadic decomposition idea as in the $d\ge 3$ case, thus proving the uniform integrability in the case where $d=2$. 
 \end{proof}

 \subsection{Important moment estimates for the QVF}

 So far in this section, we proved \textit{limit formulas} that will be crucial to identify limit points later. However, we still need to prove \textit{upper bound estimates} that will be important to obtain the tightness (i.e., existence of limit points) in the first place. 

  \begin{prop}[Estimates for moments of the increments of QVF in all dimensions]\label{tight1} Fix $k\in\mathbb N$, $T>0$, and $F:[0,\infty)\to[0,\infty)$ decreasing and of exponential decay at infinity. Let $f(x):=F(|x|)$. 
If the sequence of initial data $\mathfrak H^N(0,\bullet)$ is a good sequence in the sense of Definition \ref{goodseq}, then we have that there exists $C = C(\varepsilon_{\mathrm{i.c.}},k,d,f)$ such that \begin{align}
			\label{e.tight7}
			\sup_{N\ge 1}\Ex\big[ (Q_N^f(t,\phi)-Q_N^f(s,\phi))^{k}\big]\leq C\|\phi\|^{k}_{L^\infty(\mathbb R)}\cdot (t-s)^{k\e}.
		\end{align}
        where $\e$ is as in Definition \ref{goodseq}.
        \end{prop}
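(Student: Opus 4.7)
The plan is to extend the first-moment analysis underlying Proposition \ref{prop:DomConvergenceBound} to the $k$-th moment. Expanding $(Q_N^f)^k$ into a $k$-fold sum over times and taking the annealed expectation couples the $k$ independent pairs of walkers, via their common environment, into a single $2k$-point motion. Arguing as in the derivation of \eqref{eq:tiltingExpectations} but at the $2k$-point level, one obtains a representation of the form
\begin{equation*}
\mathbb{E}[(Q_N^f(t,\phi) - Q_N^f(s,\phi))^k] = N^{\tfrac{(d-2)k}{2}} \sum_{\vec r \in (Ns,Nt]^k} \mathbf{E}^{(\varsigma_N, 2k)}_{\nu_N^{\otimes 2k}}\!\left[ e^{\Xi_N(\mathbf{R}; \vec r)} \prod_{j=1}^k \phi_j(\mathbf{R}) \, f(R^{2j-1}_{r_j} - R^{2j}_{r_j}) \right],
\end{equation*}
where $\phi_j(\mathbf{R}) := \phi(N^{-1/2}(R^{2j-1}_{r_j} - N^{-1} d_N r_j))$ and $\Xi_N(\mathbf{R}; \vec r)$ is the Girsanov correction accumulated along the $2k$-point trajectory up to time $\max_j r_j$. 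A Cauchy-Schwarz inequality then separates this correction from the $\phi \cdot f$-product.

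For the exponential correction, Proposition \ref{dbound} yields the pointwise bound $|\mathcal K(\mathbf{R}_r, (\varsigma_N,\ldots,\varsigma_N); 2k)| \leq C|\varsigma_N|^{2p} \sum_{1 \leq i < j \leq 2k} \Fd(|R^i_r - R^j_r|)^{1/2}$ up to higher-order remainders. Since the tilted $2k$-point chain is still SRI by Proposition \ref{SRI2}, the exponential-moment corollary at the end of Section \ref{sec:4} (proved via Lemma \ref{prop:productEstimate}) applies pair by pair; summing over the $\binom{2k}{2}$ pairs and the time variable gives $\sup_N \sup_{\x} \mathbf{E}^{(\varsigma_N, 2k)}_{\x}[e^{2\Xi_N}] \leq C$ throughout the subcritical regime $|\varsigma_N| = O(N^{-1} \psi_N(p,d))$. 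For the remaining factor, bound $\|\phi\|_{L^\infty}^k$ pointwise and apply Lemma \ref{prop:productEstimate} in its $2k$-point form to $\mathbf{E}[\prod_j f(R^{2j-1}_{r_j} - R^{2j}_{r_j})^2]^{1/2}$. Summing over $\vec r$ with the prefactor $N^{(d-2)k/2}$ reduces the whole expression to $C\|\phi\|_{L^\infty}^k$ times an integral against $(\nu_N^{\varsigma_N})^{\otimes 2k}$ of products of the single-pair kernels from Proposition \ref{prop:DomConvergenceBound}; the good-sequence hypothesis on $\mathfrak H^N(0,\bullet)$ controls this integral uniformly in $N$ via the dyadic decomposition used in the proof of Theorem \ref{gen.ic}, and restricting each of the $k$ time-integrations to $[Ns, Nt]$ contributes one factor of $(t-s)^\varepsilon$ per pair, for a total of $(t-s)^{k\varepsilon}$.

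The main obstacle is the multi-pair exponential moment bound: the tilted $2k$-point chain has $\binom{2k}{2}$ pairs of coordinates contributing to $\Xi_N$, whereas Lemma \ref{prop:productEstimate} was stated for a single pair. The extension to $2k$ walkers is natural because the anticoncentration estimate \eqref{eq:acBound2} holds pairwise in the $k$-dimensional SRI setting of Theorem \ref{anti}, and then repeated application of the Markov property as in Lemma \ref{prop:productEstimate} controls each pair separately, with the combinatorial prefactors absorbed into constants depending on $k$. A secondary subtlety is obtaining the sharp $(t-s)^{k\varepsilon}$ scaling rather than a weaker $(t-s)^\varepsilon$: this requires the $k$ time-integrations over $[Ns, Nt]$ to decouple up to SRI error terms, which is again a consequence of the same anticoncentration estimates applied pair-by-pair.
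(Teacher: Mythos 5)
Your proposal is on the right track in that you expand $(Q_N^f)^k$ into a $k$-fold sum over the $2k$-point motion with the $2k$-point Girsanov correction $\Xi_N$, and that you ultimately want to feed the resulting single-pair kernels into the good-sequence hypothesis via the dyadic argument from Theorem \ref{gen.ic}. However, the intermediate Cauchy--Schwarz step is a genuine gap, in two ways.

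First, the claimed uniform exponential-moment bound $\sup_N\sup_{\x}\mathbf{E}^{(\varsigma_N,2k)}_{\x}[e^{2\Xi_N}]\le C$ is false. Consulting Proposition \ref{prop:DomConvergenceBound}, for $d=2$ and $|\varsigma_N|=O((\log N)^{-1/2p})$ the exponential moment is bounded by $\tfrac{C}{1-C|\varsigma_N|^{2p}\log Nt}\bigl[1+\bigl|\log\bigl(Nt/(1+|a_1-a_2|^2)\bigr)\bigr|\bigr]$, which diverges like $\log N$ as the initial pair distance $|a_1-a_2|$ stays $O(1)$; an analogous $N^{(d-2)/2}$-type divergence appears for $d\ge 3$. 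The supremum over $\x$ is therefore not a constant, and this $\x$-dependence cannot be discarded before integrating against $(\nu_N^{\varsigma_N})^{\otimes 2k}$.

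Second, and more structurally, Cauchy--Schwarz halves every decay exponent in the remaining factor. Applying Lemma \ref{prop:productEstimate} to $F^2$ and taking the square root gives $\mathbf{E}\bigl[\prod_j f(R^{2j-1}_{r_j}-R^{2j}_{r_j})^2\bigr]^{1/2}\lesssim\prod_{j}1\wedge(r_j-r_{j-1})^{-d/4}$, and the sum $\sum_{0<r_1<\dots<r_k\le Nt}\prod_j(r_j-r_{j-1})^{-d/4}$ is not uniformly bounded after the prefactor $N^{(d-2)k/2}$ for $d\le 4$: the square-root decay $r^{-d/4}$ is below the summability threshold exactly where the result is most needed. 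The paper's proof avoids both issues by \emph{not} separating the exponential from the $f$-factors. Instead it establishes \eqref{uniform1} by induction on $k$: at each step it conditions at the smallest time $r_1$, rewrites in the tilted measure, Taylor-expands the Girsanov correction and applies the crude algebraic inequality $\prod_{i<j}a_{ij}\le\sum_{i<j}a_{ij}^{k(2k+1)}$ to reduce to a single-pair functional, all the while retaining the full $r_j^{-d/2}$ decay \emph{and} the factors $(1+r_j^{-1/2}|a_{2j-1}-a_{2j}|)^{-K}$ that the good-sequence condition is designed to absorb. That bookkeeping is precisely what delivers $(t-s)^{k\e}$, and it is lost once the exponential is split off by Cauchy--Schwarz.
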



        \begin{proof}
Using the definition of $Q_N$ together with the trivial bound $|\phi(N^{-1/2}(   R^j(Nt_j)-   d_N t_j))|\leq \|\phi\|_{L^\infty},$ we obtain that 
		\begin{align*}
			 \notag &\Ex[\big(Q_N^f(t,\phi)-Q_N^f(s,\phi)\big)^k]  \leq \|\phi\|_{L^\infty}^k \Ex[\big(Q_N^f(t,\ind_{\mathbb R})-Q_N^f(s,\ind_{\mathbb R} )\big)^k]\\&= N^{\frac{k(d-2)}2} k!\|\phi\|_{L^\infty}^k\sum_{Ns\leq r_1\leq ... \leq r_k\leq Nt} g_{\mathrm{count}}(\vec r) \mathbf E_{\nu_N^{\otimes 2k}}^{(   0,2k)}\bigg[\prod_{j=1}^k\bigg\{ f(R^{2j-1}_{r_j} -R^{2j}_{r_j}) \prod_{i=0,1} C_{N,N^{-1}r_j,N^{-1/2}(R^{2j-i}_{{r_j}}-  N^{-1} d_N r_j),\nu_N}\bigg\} \bigg]
             \\ & \le  N^{\frac{k(d-2)}2} k!\|\phi\|_{L^\infty}^k\sum_{Ns\leq r_1\leq ... \leq r_k\leq Nt} \mathbf E_{\nu_N^{\otimes 2k}}^{(   0,2k)}\bigg[\prod_{j=1}^k\bigg\{ f(R^{2j-1}_{r_j} -R^{2j}_{r_j}) \prod_{i=0,1} C_{N,N^{-1}r_j,N^{-1/2}(R^{2j-i}_{{r_j}}-  N^{-1} d_N r_j),\nu_N}\bigg\} \bigg].
		\end{align*}

Fix $d\ge 2$, then we claim that there exists some $K>d$ such that uniformly over $N\ge 1$ and  $t>s\ge 0$ and $\bfa \in I^{2k},$ one has the bound
\begin{align}\notag  
\mathbf E_{ \bfa }^{(0,2k)}  \bigg[\prod_{j=1}^k\bigg\{ f(R^{2j-1}_{r_j} -R^{2j}_{r_j}) \prod_{i=0,1} &C_{N,N^{-1}r_j,N^{-1/2}(R^{2j-i}_{{r_j}}- a_j - N^{-1}d_N r_j)}\bigg\} \bigg] \\&\le C \prod_{j=1}^{k} r_j^{-d/2}\big( 1+  r_j^{-1/2} |a_{2j-1} -a_{2j} |\big)^{-K}.\label{uniform1}
\end{align}
Here $\bfa = (a_1,...,a_{2k})$ can be taken as large as desired, $\e$ is as in Definition \ref{goodseq}, and $C$ may depend on $\varepsilon,k,d,f,K$ but not on $s,t,\bfa, N$.

For $d=1$, we claim that the bound still holds with time averaging, that is, with $\sum_{1\le r_1 \le ... \le r_k \le r}$ on both sides and ignoring the factor $\big( 1+  r_j^{-1/2} |a_{2j-1} -a_{2j} |\big)^{-K}$ on the right side, which is unnecessary.

The remainder of the proof will consist of two separate parts: establishing \eqref{uniform1} and then using it to prove the claim. First, we assume it is true and use that to establish the claim. 
\\
\\
\textbf{Proof of \eqref{e.tight7} assuming \eqref{uniform1}. } This is where Definition \ref{goodseq} will be crucial. 
Recall that $\nu^{\varsigma_N}_N(\dr x) :=  e^{  \varsigma_N\bullet x - \log \int_I e^{\varsigma_N\bullet a} \nu_N(\dr a)} \nu_N(\dr x)$ are the tilted initial states, and recall (from e.g. \eqref{hn}) that $\mathfrak H_0^N(N^{-1/2}\dr x) =\nu_N^{\varsigma_N}(\dr x)$. Use the second bound in the definition of good sequence (Definition \ref{goodseq}), taking $t:=u/N$, and we obtain that 
\begin{align*}(N/u)^{d/2} &\int_{I^{2}} \big(1 + u^{-1/2}|x_1-x_2|\big)^{-K} (\nu_N^{\varsigma_N})^{\otimes 2}(\dr x)\\&=  (N/u)^{d/2} \int_{I^{2}} \big(1 + (u/N)^{-1/2}|x_1-x_2|\big)^{-K} (\mathfrak H_0^N)^{\otimes 2}(\dr x) \\& \leq C (u/N)^{\e-1} .
\end{align*}
Applying this iteratively and using \eqref{tilt0}, we have that 
\begin{align*}N^{\frac{k(d-2)}2} &\sum_{Ns\leq r_1\leq ... \leq r_k\leq Nt} \mathbf E_{\nu_N^{\otimes 2k}}^{(   0,2k)}\bigg[\prod_{j=1}^k\bigg\{ f(R^{2j-1}_{r_j} -R^{2j}_{r_j}) \prod_{i=0,1} C_{N,N^{-1}r_j,N^{-1/2}(R^{2j-i}_{{r_j}}-  N^{-1} d_N r_j),\nu_N}\bigg\} \bigg]\\ &= N^{\frac{k(d-2)}2}\sum_{Ns\le r_1\le ... \le r_k \le Nt} (r_1\cdots r_k)^{-d/2} \int_{I^{2k}} \prod_{j=1}^{k} \big( 1+  r_j^{-1/2} |a_{2j-1} -a_{2j} |\big)^{-K} (\nu_N^{\varsigma_N})^{\otimes 2k}(\dr \bfa) \\& \le N^{-k} \sum_{Ns \le r_1 \le ... \le r_k \le Nt} \prod_{j=1}^k\bigg( \frac{r_j}{N} \bigg)^{\e-1} \\& \leq C_{k,\e} \cdot (t-s)^{k\e}.
\end{align*}
The last bound follows by e.g. interpreting it as a Riemann sum and iteratively integrating. This is the desired result. 
\\
\\
\textbf{Proof of \eqref{uniform1}. } 
Take $d\ge 2$. We prove this bound by induction on $k$. 
The claim is vacuously true when $k=0$, let us assume that the above bound has been proved up to $k-1$ where $k\in \mathbb N.$ By conditioning on time $r_1$, using the Markov property, and applying the inductive hypothesis for the conditioned expression, it suffices to show that 
\begin{align} \notag &\mathbf E_{ \bfa }^{(0,2k)}  \bigg[f(R^{1}_{r_1} -R^{2}_{r_1})\prod_{j=1}^{2k} C_{N,N^{-1} r_1, N^{-1/2} (R^j_{r_1} -a_j-N^{-1}d_N r_1)} \\&\times \prod_{j=2}^k (r_j-r_1)^{-d/2} \big( 1 + (r_j-r_1)^{-1/2} |R^{2j-1}_{r_1} - R^{2j}_{r_1}| \big)^{-K} \bigg] \le C \prod_{j=1} ^{k} r_j^{-d/2}\big( 1+  r_j^{-1/2} |a_{2j-1} -a_{2j}|\big)^{-K},\notag \end{align}
where in the left side everything beyond time $r_1$ has already been reduced to time $r_1$ using the inductive hypothesis. Cancelling out like terms which are deterministic, we just need to prove that 
\begin{align}\notag \mathbf E_{ \bfa }^{(   0,2k)} & \bigg[f(R^{1}_{r_1} -R^{2}_{r_1})\prod_{j=1}^{2k} C_{N,N^{-1} r_1, N^{-1/2} (R^j_{r_1} -a_j-N^{-1}d_N r_1)}   \prod_{j=2}^k \big( 1 + (r_j-r_{1})^{-1/2} |R^{2j-1}_{r_1} - R^{2j}_{r_1}| \big)^{-K} \bigg] \\& \le C \prod_{j=1}^k(r_j-r_1)^{d/2} r_j^{-d/2} \big( 1+  r_j^{-1/2} |a_{2j-1} -a_{2j}|\big)^{-K}.\label{uniform2}\end{align}
Now we will use the constants to rewrite the left side in terms of the tilted measures $\mathbf P^{(\beta,k)}_\x$ of Definition \ref{shfa}. Define $\mathcal H_N: I^{2k} \to \mathbb R_+$ by \begin{equation} \label{hfunction}\mathcal H_N(\mathbf R_{s-1}):= \mathcal K(\mathbf R_{s-1}, (\varsigma_N, \ldots \varsigma_N); 2k) =   \log \mathbf E_{\x}^{(0,2k)} \big[ e^{  \varsigma_N\bullet \sum_{j=1}^{2k} (   R^j_s -   R^j_{s-1})} \big| \mathcal F_{s-1}\big] \; - 2k \log M(   \varsigma_N) \end{equation}
where by the Markov property the equality is true for all $\x\in I^{2k}$. Here $\mathcal K$ is as defined in Definition \ref{kdef}. Then using \eqref{eq:tiltingExpectations}, the previous bound \eqref{uniform2} can be rewritten in terms of the tilted measures $\Pb$ of Definition \ref{shfa} as \begin{align}\notag \mathbf E_{ \bfa }^{(  \varsigma_N,2k)} & \bigg[f(R^{1}_{r_1} -R^{2}_{r_1})e^{\sum_{s=0}^{r_1-1} \mathcal H_N( \mathbf R_s)} \times \prod_{j=2}^k \big( 1 + (r_j-r_{1})^{-1/2} |R^{2j-1}_{r_1} - R^{2j}_{r_1}| \big)^{-K} \bigg] \\ &\le C \prod_{j=1}^k(r_j-r_1)^{d/2} r_j^{-d/2} \big( 1+  r_j^{-1/2} |a_{2j-1} -a_{2j}|\big)^{-K}.\label{uniform3}\end{align}
This is what we wish to prove. We begin by Taylor expanding (with respect to $  \varsigma_N$) the expression inside of the exponential and using the result of Lemma \ref{prev} one will obtain for each $t,N>0$ that $$\sup_{\x\in I^k} |\mathcal H_N(\x) |\leq C |  \varsigma_N|^{2p}\sum_{1\le i \le j \le k}  f_0(   R^i_r-   R^j_r)  $$ deterministically for some function $f_0:I\to \mathbb R$ decaying to 0 exponentially fast at infinity. 
Now use the very crude bound $\prod_{1\le i<j \le 2k} a_{ij} \le \sum_{1\le i <j\le 2k} a_{ij}^{k(2k+1)}$ (valid if $a_{ij}\ge 0$), we obtain 
\begin{align*}e^{C\sum_{1\le i<j\le 2k} \sum_{s=1}^{r}  |  \varsigma_N|^{2p} f_0(   R^i_s-   R^j_s)} &\leq \sum_{1\le i<j\le 2k} e^{Ck(2k+1) \sum_{s=1}^{r}  |  \varsigma_N|^{2p} f_0(   R^i_s-   R^j_s)} \\&\le \sum_{1\le i<j\le 2k} \sum_{m=0}^\infty C^m |  \varsigma_N|^{2pm} \sum_{s_1 \leq ... \leq s_m\le r} \prod_{\ell =1}^m f_0(   R^i_{s_\ell} -R^j_{s_\ell}). 
\end{align*}
We Taylor expanded the exponential into powers in the last line, and canceled the $m!$ coming from the ordering of coordinates. Thus to prove \eqref{uniform3}, we are now reduced to proving the bound for each $a<b$ 
\begin{align}\notag \sum_{s_1 \leq ... \leq s_m\le r_1} \mathbf E_{ \bfa }^{(  \varsigma_N,2k)} & \bigg[f(R^{1}_{r_1} -R^{2}_{r_1})\prod_{\ell =1}^m f_0(   R^a_{s_\ell} -R^b_{s_\ell}) \times \prod_{j=2}^k \big( 1 + (r_j-r_{1})^{-1/2} |R^{2j-1}_{r_1} - R^{2j}_{r_1}| \big)^{-K} \bigg] \\ &\le C^m  \prod_{j=1}^k(r_j-r_1)^{d/2} r_j^{-d/2} \big( 1+  r_j^{-1/2} |a_{2j-1} -a_{2j}|\big)^{-K}.\label{uniform4}\end{align}
This can be shown directly using the anticoncentration estimate in Theorem \ref{anti} combined with the Markov property of the measures $\mathbf P^{(\beta,k)}_\bfa$. The key here is to iteratively apply the elementary convolution bound $$r^{-d/2}s^{-d/2} \sum_{x\in \mathbb Z^d} \big( 1+ r^{-1/2} |x-y| \big)^{-K} \big( 1+ s^{-1/2} |x|\big)^{-K} \leq C (r+s)^{-d/2} \cdot \big( 1+ (r+s)^{-1/2} |y|\big)^{-K}, $$ with $C$ independent of $r,s,y$. To prove this bound, take e.g. $K=d+1$. The bound follows immediately from the fact that the radially symmetric Cauchy distribution on $\mathbb R^d$ is stable, i.e., \begin{multline*}\int_{\mathbb R^d} a^{-d} (1+a^{-1} |x|^2)^{-\frac{d+1}2} \cdot b^{-d} (1+ b^{-1} |x-y|^2)^{-\frac{d+1}2} \dr x =\frac{\pi^{d/2}\Gamma(\frac12)}{\Gamma(1+ \frac{d}2)}  (a+b)^{-d} (1+ (a+b)^{-1} |y|^2)^{-\frac{d+1}2}\end{multline*} for all $a,b>0$ and $y\in\mathbb R^d$.

For the $d=1$ case, one simply notes that at every step, the bounds are still true after time-averaging by the final bound of Theorem \ref{anti}. 
        \end{proof}

\begin{prop}[Controlling the error terms]\label{errbound} Let $\mathcal E^j_N, \mathcal V^j_N$ $(1\le j \le 2)$ be as in Proposition \ref{4.3}. Let $q\ge 1$ be an even integer, and let $T>0$. Then there exists $C>0$ and a function $f:I\to \mathbb R_+$ such that $|f(x)| \leq F(|x|)$ where $F:[0,\infty)\to[0,\infty)$ is decreasing and 
of exponential decay at infinity, such that uniformly over all $N\ge 1$ and $s,t\in [0,T]\cap (N^{-1}\mathbb Z_{\ge 0})$ we have the following:
    \begin{enumerate}\item \hyperref[eq:regimeA]{Regime A:} In the case where $|  \varsigma_N|=O(N^{-1/2})$ we have the bound 
    \begin{align*}
        \sum_{j=1,2}\mathbb E[ |\mathcal V^j_N(t,\phi)-\mathcal V^j_N(s,\phi)|^q]^{1/q} &\leq C N^{-1/2}\|\phi\|_{C^{p+2}(\mathbb R^d)}^2 \bigg(\mathbb E[ |Q^f_N(t,\ind_\mathbb R)-Q^f_N(s,\ind_\mathbb R)|^q]^{1/q}+|t-s|\bigg).
\end{align*}
The integer $p$ appearing on the right side is the one from Assumption \ref{a1}.
\item \hyperref[eq:regimeB]{Regime B:} In the case where $N^{-1/2} \ll  |  \varsigma_N| \ll  \psi_N(p,d)N^{-1}$ we have \begin{align*}\sum_{j=1,2}\mathbb E[ |\mathcal E^j_N(t,\phi)-\mathcal E^j_N(s,\phi)|^q]^{1/q}& \leq C |  \varsigma_N|^{1/2} \|\phi\|_{C^3(\mathbb R^d)}^2\bigg(\mathbb E[ |Q^f_N(t,\ind_\mathbb R)-Q^f_N(s,\ind_\mathbb R)|^q]^{1/q}+|t-s|\bigg).
    \end{align*}
    \item \hyperref[eq:regimeC]{Regime C:} In the case where $d=2$ and  $ (\log N)^{1/2p}   \varsigma_N\to   {\boldsymbol v}\ne 0 $, we have 
    \begin{align*}\sum_{j=1,2}\mathbb E[ |\mathcal E^j_N(t,\phi)-\mathcal E^j_N(s,\phi)|^q]^{1/q}& \leq C |  \varsigma_N|^{1/2} \|\phi\|_{C^3(\mathbb R^d)}^2\bigg(\mathbb E[ |Q^f_N(t,\ind_\mathbb R)-Q^f_N(s,\ind_\mathbb R)|^q]^{1/q}+|t-s|\bigg).
    \end{align*}
    \item \hyperref[eq:regimeD]{Regime D:} In the case where $d\ge 3$ and $  \varsigma_N  \to   {\boldsymbol v}\ne 0$ we have that $$\mathbb E[ |\mathcal E^1_N(t,\phi)-\mathcal E^1_N(s,\phi)|^q]^{1/q} \leq C|\varsigma_N-  {\boldsymbol v}| \|\phi\|_{C^3(\mathbb R^d)}^2\bigg(\mathbb E[ |Q^f_N(t,\ind_\mathbb R)-Q^f_N(s,\ind_\mathbb R)|^q]^{1/q}+|t-s|\bigg).$$
    \end{enumerate}
 Note that the second and third cases have the same error bounds; however, we distinguish these two cases because they will need to be analyzed separately later on. 
\end{prop}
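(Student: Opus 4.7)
\textbf{Proof proposal for Proposition \ref{errbound}.} The general strategy is to obtain a pointwise bound on each error integrand of the form $C\varepsilon_N\cdot g(y_1-y_2)$, where $\varepsilon_N$ is an explicit small deterministic prefactor ($N^{-1/2}$, $|\varsigma_N|^{1/2}$, or $|\varsigma_N-\boldsymbol v|$) and $g:I\to\mathbb R_+$ is a function of exponential decay at infinity. Once this is achieved, the resulting error field reduces (up to the prefactor $\mathscr B_N^2/N^{(d-2)/2}\cdot\varepsilon_N$ and the uniform bound $\|\phi\|_{C^{k}}$) to the increment $Q_N^g(t,\ind_\mathbb R)-Q_N^g(s,\ind_\mathbb R)$ of the QVF introduced in \eqref{qfield}. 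The $q$-th moment bound then follows from Minkowski's inequality, Proposition \ref{tight1}, and the fact that $\mathbb E[Q_N^g(t,\ind)-Q_N^g(s,\ind)]=O(|t-s|)$ by Theorem \ref{gen.ic} (this gives the $|t-s|$ summand on the right-hand side of each bound, accounting for the non-centered contribution, while Proposition \ref{tight1} handles the centered part). The plan is therefore to carry out the pointwise estimates case-by-case, and each case reduces to standard manipulations of Taylor expansions and total variation bounds.

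\textbf{Regime A.} For $\mathcal V_N^1$, Lemma \ref{45} directly yields $|\int \mathcal R^1_N \rho|\le C\|\phi\|_{C^{p+1}}^2 N^{-p-\tfrac12}\Fd(|y_1-y_2|)^{1/2}$; since $\mathscr B_N^2 N^{-p-1/2} = N^{-1/2}\cdot N^{(d-2)/2}$ we obtain exactly the claimed $N^{-1/2}$-prefactor times $Q_N^{\Fd^{1/2}}$. For $\mathcal V_N^2$, Taylor expand $\partial^{r_2}\phi(N^{-1/2}y_2)-\partial^{r_2}\phi(N^{-1/2}y_1)=O(N^{-1/2}|y_1-y_2|\|\phi\|_{C^{p+2}})$, and absorb the excess $|y_1-y_2|$-factor into the rapidly decaying $\boldsymbol\eta_{r_1,r_2}$ via the quantitative bound in Lemma \ref{prev}\eqref{eq:prev3}.

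\textbf{Regimes B and C.} For $\mathcal E_N^2$, Lemma \ref{46} gives $|\mathcal A^2_N(y)|\le C|\varsigma_N|^{2p+\frac12}\Fd(|y|)^{1/(4p+2)}$. Since $\mathscr B_N^2=N^{(d-2)/2}/|\varsigma_N|^{2p}$, this produces the factor $|\varsigma_N|^{1/2}$ together with $Q_N^{\Fd^{1/(4p+2)}}$. For $\mathcal E_N^1$, split $\mathcal A_N^1$ into a ``cross'' piece $\int\prod_j e^{\varsigma_N\bullet(x_j-y_j)-\log M}\bigl[\prod_j\phi(x_j/\sqrt N)-\prod_j\phi(y_j/\sqrt N)\bigr]\rho$ and a ``diagonal'' piece $[\prod_j\phi(y_j/\sqrt N)-\phi(y_1/\sqrt N)^2]\cdot\boldsymbol\vartheta_{\varsigma_N}(y_1-y_2)$. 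The diagonal piece is controlled by $N^{-1/2}|y_1-y_2|\cdot|\boldsymbol\vartheta_{\varsigma_N}|\le C N^{-1/2}|\varsigma_N|^{2p}|y_1-y_2|\,\Fd(|y_1-y_2|)^{1/2}$ (using Proposition \ref{prop:dcbound} applied to $\boldsymbol u_{\varsigma_N}$ and Proposition \ref{prop:TaylorG}), which absorbs the excess $|y_1-y_2|$ into the rapid decay. For the cross piece, Taylor expand $\phi(x_j/\sqrt N)$ about $y_j/\sqrt N$ to order $2$; the first-order term, after being paired with $\prod_j e^{\varsigma_N\bullet(x_j-y_j)}$ and integrated against $\rho$, is handled by expanding the exponential to order $2p$ and using Lemma \ref{prev}, which forces the resulting scalar to be $O(N^{-1/2}|\varsigma_N|^{2p-1}\Fd^{1/2})$, while the second-order remainder is $O(N^{-1}|\varsigma_N|^{2p}\Fd^{1/2})$. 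Together these are dominated by $|\varsigma_N|^{1/2}$ after multiplying by $\mathscr B_N^2$, in the regime $|\varsigma_N|\gg N^{-1/2}$. Summing the two contributions gives the claimed $|\varsigma_N|^{1/2}$-prefactor.

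\textbf{Regime D.} Here one no longer expands in $\varsigma_N$, so $\boldsymbol\vartheta_{\varsigma_N}$ plays the role of the decaying kernel directly (this function is of exponential decay, uniformly in $\varsigma_N$ on compacts, by Assumption \ref{a1}\eqref{a24}). The key observation is that $\mathcal A_N^1$ vanishes identically if $\phi$ is constant; writing $\phi(x_j/\sqrt N)=\phi(\tfrac{1}{2\sqrt N}(y_1+y_2))+\bigl[\phi(x_j/\sqrt N)-\phi(\tfrac{1}{2\sqrt N}(y_1+y_2))\bigr]$ and performing a first order Taylor expansion gives $|\mathcal A_N^1|\le CN^{-1/2}(|y_1-y_2|+\sum_j|x_j-y_j|)\|\phi\|_{C^2}^2\,\boldsymbol{\vartheta}_{\varsigma_N}$ pointwise, but this alone yields only a uniformly bounded prefactor after multiplication by $\mathscr B_N^2$. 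To extract the gain $|\varsigma_N-\boldsymbol v|$, decompose $\boldsymbol\vartheta_{\varsigma_N}=\boldsymbol\vartheta_{\boldsymbol v}+(\boldsymbol\vartheta_{\varsigma_N}-\boldsymbol\vartheta_{\boldsymbol v})$; the difference is $O(|\varsigma_N-\boldsymbol v|)$ uniformly, and the $\boldsymbol\vartheta_{\boldsymbol v}$-piece recovers the cancellation in $\mathcal A_N^1$ through the symmetric splitting above combined with the identity $\int(x_j-y_j)\rho((y_1,y_2),\cdot)=0$ (valid since the annealed marginals of $\boldsymbol p^{(2)}$ coincide with $\mu$), pushing that contribution to order $N^{-1}$.

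\textbf{Main obstacle.} The most delicate step is Regime D, where one must extract a rate of convergence $|\varsigma_N-\boldsymbol v|$ from a quantity whose naive pointwise bound is merely $O(1)$. This requires combining the Taylor gain in the spatial variable with the continuity of $\varsigma\mapsto\boldsymbol\vartheta_\varsigma$ in total variation and the marginal-matching identity to cancel the dominant $O(1)$ contribution; without this cancellation the raw estimate would not close. Once this is handled, the other cases proceed by the routine combination of Taylor remainder bounds (Lemmas \ref{45} and \ref{46}) and the Proposition \ref{tight1} moment estimates for the QVF.
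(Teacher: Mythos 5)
Your overall strategy (obtain a pointwise bound on the error integrand of the form $\varepsilon_N\cdot g(y_1-y_2)$ with $g$ of exponential decay, then reduce to the increment of $Q_N^g(\cdot,\ind)$ and invoke Proposition \ref{tight1}) is exactly the paper's. Regime A matches the paper: Lemma \ref{45} handles $\mathcal V^1_N$ directly, and the Lipschitz estimate on $\partial^{r_2}\phi$ handles $\mathcal V^2_N$. For Regimes B and C your ``cross/diagonal'' split of $\mathcal A^1_N$ differs from the paper, which instead Taylor-expands both copies of $\phi$ around $N^{-1/2}y_1$ and then decomposes $(x_2-y_1)^{k_2}$ into $(x_2-y_2)^i(y_1-y_2)^{k_2-i}$; both routes are viable, and both produce a prefactor of order $\max(N^{-1/2},\, N^{-1/2}/|\varsigma_N|)$ for the first-order term (so neither strictly establishes the stated $|\varsigma_N|^{1/2}$, though both go to zero in Regime B/C, which is what actually matters downstream).

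Your Regime D argument, however, contains a genuine error. You claim that the first-order Taylor bound $|\mathcal A_N^1|\lesssim N^{-1/2}(1+|y_1-y_2|)\,\Fd^{1/2}(|y_1-y_2|)$, after multiplication by $\mathscr B_N^2$, ``yields only a uniformly bounded prefactor.'' This is incorrect: since $\mathscr B_N^2/N^{(d-2)/2}=|\varsigma_N|^{-2p}\to|\boldsymbol v|^{-2p}$ is bounded, the resulting prefactor relative to $Q_N^g(\cdot,\ind)$ is $C(\boldsymbol v)\,N^{-1/2}$, which already vanishes and already suffices. The subsequent attempt to extract a gain $|\varsigma_N-\boldsymbol v|$ by decomposing $\boldsymbol\vartheta_{\varsigma_N}=\boldsymbol\vartheta_{\boldsymbol v}+(\boldsymbol\vartheta_{\varsigma_N}-\boldsymbol\vartheta_{\boldsymbol v})$ is both unnecessary and unfounded: the expression $\boldsymbol\vartheta_{\varsigma_N}$ simply does not occur as a multiplicative factor inside $\mathcal A^1_N$ (the integral defining $\mathcal A^1_N$ carries the pointwise density $\prod_j e^{\varsigma_N\bullet(x_j-y_j)-\log M}$ against the $\phi$-difference $\prod_j\phi(x_j/\sqrt N)-\phi(y_1/\sqrt N)^2$, not the integrated quantity $\boldsymbol\vartheta_{\varsigma_N}$), so this decomposition has no natural way of acting on $\mathcal E^1_N$. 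The ``cancellation through the symmetric splitting'' you invoke does not close. The paper's own Regime D remark is similarly terse, but its essential content is simply the uniform-in-$N$ exponential decay of $\boldsymbol\vartheta_{\varsigma_N}$; the $N^{-1/2}$ Taylor gain is what carries the argument, and your misdiagnosis of the scale ($O(1)$ instead of $O(N^{-1/2})$) sends the rest of your Regime D reasoning down a wrong path.
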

We remark that these bounds show that the ``error terms" $\mathcal E^1,\mathcal E^2,\mathcal V^1,\mathcal V^2$ in Lemma \ref{4.3} vanish relative to $Q_N(t,\phi^2)$, which will be a crucial observation in showing tightness and identifying limit points in Section \ref{sec:6} just below. 

\begin{proof}
\textbf{Proof of (2) and (3). }Let us first deal with the $\mathcal E^j_N$ (the second bullet point) which corresponds to the extremal regime but below the critical scale. 
First consider $\mathcal E_N^2$. In the expression for $\mathcal E^2_N$ given by \eqref{err234}, use Lemma \ref{46} to obtain immediately that that $$|\mathcal E^2_N(t,\phi)-\mathcal E^2_N(s,\phi)| \leq C|  \varsigma_N|^{\frac12} \|\phi\|_{C^{p+2}(\mathbb R)}^2  |Q_N^f(t,\ind_{\mathbb R}) - Q_N^f(s,\ind_{\mathbb R})|, $$ where $f(x):= \Fd(x)^{1/(2p+2)}$ which decays exponentially fast at infinity by Assumption \ref{a1}. 

    Finally let us bound $\mathcal E^1_N$. To do this we will need to work with $\mathcal A^1_N(\phi,y_1,y_2)$ as defined just before \eqref{err1}. Let us perform a second-order Taylor expansion of $\phi$ centered at $N^{-1/2}y_1$. 
    Then the expression for $\mathcal A^1_N(\phi,y_1,y_2)$ can be written as a sum $\mathcal A^{1,a}_N(\phi,y_1,y_2)+\mathcal A^{1,b}_N(\phi,y_1,y_2)$ where
    \begin{align*}\mathcal A^{1,a}_N(\phi,y_1,y_2)&:= \sum_{\substack{0\leq \#k_1,\#k_2\leq 2\\\#k_1+\#k_2>0}}\frac1{k_1!k_2!}N^{-(k_1+k_2)/2} (\partial^{k_1}\phi)(N^{-1/2}y_1) (\partial^{k_2}\phi)(N^{-1/2} y_1)\\& \hspace{2 cm} \cdot \int_{I^2} \prod_{j=1}^2 e^{  \varsigma_N\bullet (x_j-y_j) - \log M(   \varsigma_N)} \cdot(x_1-y_1)^{k_1} (x_2-y_1)^{k_2}\rho\big( (y_1,y_2), (\mathrm \dr x_1,\mathrm \dr x_2) \big) 
    \end{align*} where the sum is over multi-indices $k_1,k_2$, where $\#j = j_1 +... +j_d$, and where $x^j = x_1^{j_1} \cdots x_d^{j_d}$ and $j! = j_1!\cdots j_d!$ for vectors $x=(x_1,...,x_d)$ and multi-indices $j = (j_1,...,j_d)$. By Taylor's theorem the other term $\mathcal A^{1,b}_N$ is a ``remainder" satisfying $$|\mathcal A^{1,b}_N(\phi,y_1,y_2)| \leq N^{-3/2}\|\phi\|_{C^3}^2  \int_{I^2} \prod_{j=1}^2 e^{|  \varsigma_N||x_j-y_j| - \log M(   \varsigma_N)} \cdot|x_1-y_1|^3 |x_2-y_2|^3 \big|\rho\big( (y_1,y_2), (\mathrm \dr x_1,\mathrm \dr x_2) \big)\big|.$$
    As in \eqref{err1} we can write $\mathcal E^1_N = \mathcal E^{1,a}_N+ \mathcal E^{1,b}_N$ corresponding respectively to the contributions of $\mathcal A^{1,a}_N$ and $\mathcal A^{1,b}_N$ respectively.
    
    The bound for $\mathcal E^{1,b}_N$ is straightforward: since the power of $N$ is $-3/2$, and since the number of summands in the expression for $\mathcal E^{1,b}_N$ is $N(t-s)$, we can use brutal absolute value bounds to obtain the desired estimate using arguments very similar to the bound for $\mathcal E_N^2$ above. By Lemma \ref{grow}, the integral is bounded independently of $y_1,y_2, N$.

    Let us discuss the bound for $\mathcal E^{1,a}_N$. In the expression for $\mathcal A^{1,a}_N$, each of the terms $(x_2-y_1)^{k_2}$ can be further expanded out into a sum of terms of the form $(x_2-y_2)^{i}(   y_1-   y_2)^{k_2-i}$ where $0\leq \#i \leq \#k_2.$ Ultimately $\mathcal A^{1,a}_N$ will be written as a sum of terms of the form \begin{align*}(   y_1-   y_2)^{k_2-i}N^{-(k_1+k_2)/2} &(\partial^{k_1}\phi)(N^{-1/2}y_1) \cdot (\partial^{k_2}\phi)(N^{-1/2} y_1)\\&\cdot \int_{I^2} \prod_{j=1}^2 e^{  \varsigma_N\bullet (x_j-y_j) - \log M(   \varsigma_N)} \cdot(x_1-y_1)^{k_1} (x_2-y_2)^{i}\rho\big( (y_1,y_2), (\mathrm \dr x_1,\mathrm \dr x_2) \big)\end{align*} where $0\leq k_1\le 2$ and $0\le i\le k_2\leq 2$. 
    Assuming that $\phi$ is supported on $[-J,J]$ we can then take the absolute value to bound the last expression by \begin{align*}|y_1-y_2|^{k_2-i}N^{-(k_1+k_2)/2} &\|\phi\|_{C^2}^2\ind_{[-J,J]}(N^{-1/2} y_1)\\&\cdot \bigg|\int_{I^2} \prod_{j=1}^2 e^{  \varsigma_N\bullet(x_j-y_j) - \log M(   \varsigma_N)} \cdot(x_1-y_1)^{k_1} (x_2-y_2)^{i}\rho\big( (y_1,y_2), (\mathrm \dr x_1,\mathrm \dr x_2) \big)\bigg|\end{align*}

    At this point, apply a Taylor expansion of the quantity $  \varsigma_N\mapsto \prod_{j=1}^2 e^{  \varsigma_N\bullet(x_j-y_j) - \log M(   \varsigma_N)}$, then use Lemma \ref{prev} and the decay assumptions on $\Fd$, and (using e.g. Lemma \ref{tbb} and the uniform moment bounds of Lemma \ref{grow}) we will see that the absolute value of the integral is bounded above by $\Fd(|y_1-y_2|)^{1/2} \cdot |  \varsigma_N|^{2p-\#k_2-\#i} . $ By considering all of the different cases of $(k_1,k_2,i)$, one verifies that the decay conditions on the function $\Fd$ in Item \eqref{a24} of Assumption \eqref{a1} are strong enough so that one may find $f$ as in the proposition statement to bound $\mathcal E^{1,a}_N$. 
\\
\\
    \textbf{Proof of (1).} The bounds for $\mathcal V^1,\mathcal V^2$ in Regime A are done in an extremely similar fashion, using the Taylor expansions of $\phi$ in the $y_i$-variable as well as the Taylor expansions of the exponential in the $  \varsigma_N$ variable to bound things correctly. One instead uses Lemma \ref{45} rather than \ref{46}, and one replaces $\phi$ by $\partial^k\phi$ with $\#k= p$ where appropriate, but the remaining details are extremely similar.
\\
\\
    \textbf{Proof of (4).} The final bullet point is also very similar to the proof of the second bullet point, noting here that $   v\mapsto \boldsymbol{\vartheta}_{   v}$ is a smooth function, more precisely since $  \varsigma_N\to   {\boldsymbol v}$ we have $|\boldsymbol{\vartheta}_{   \varsigma_N}-\boldsymbol{\vartheta}_{   v}| \leq |\varsigma_N-v| e^{C \sum_{j=1,2} |y_j-x_j|} $ where $C$ is independent of $N$. Then an application of Lemma \ref{grow} will allow to absorb the integral over $I^2$ into a constant and yield the desired bound as above while dealing with.
\end{proof}


\begin{cor}[Predictable quadratic variation bound]\label{predbound}
    Let $q$ be a positive even integer, and let $T>0$. Let $M^N(\phi)$ be the martingale defined in \eqref{mfield}, and let $\langle M^N(\phi)\rangle$ denote its predictable quadratic variation as in \eqref{quadvar}. Then there exists $C>0$ and exists $C>0$ and a function $f:I\to \mathbb R_+$ such that $|f(x)| \leq F(|x|)$ where $F:[0,\infty)\to[0,\infty)$ is decreasing and of exponential decay at infinity, such that we have the bound uniformly over all $N\ge 1$, all $s,t\in [0,T]\cap (N^{-1}\mathbb Z_{\ge 0}),$ and all $\phi\in C_c^\infty(\mathbb R)$ supported on $[-J,J]$
    \begin{align*}\mathbb E\big[ \big|\langle M^N(\phi)\rangle_{t}-\langle M^N(\phi)\rangle_s\big|^q\big]^{1/q}&\leq C  \|\phi\|_{C^{p+2}}^2 |\varsigma_N-v|^{1/2} \bigg( \mathbb E[ |Q^f_N(t,\ind_{\mathbb R})-Q^f_N(s,\ind_{\mathbb R})|^q]^{1/q}+ 
     |t-s|\bigg)
        \end{align*}
        In particular, if the sequence of initial data $\mathfrak H^N(0,\bullet)$ is a good sequence in the sense of Definition \ref{goodseq}, then we have that the above quantity is bounded above by $C  \|\phi\|_{C^{p+2}}^2 |t-s|^{\e},$ with $\e$ as in Theorem \ref{goodseq}.
\end{cor}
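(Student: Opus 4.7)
The plan is to combine three ingredients already in hand: the algebraic decomposition of the predictable quadratic variation in Proposition \ref{4.3}, the moment bound on increments of the QVF in Proposition \ref{tight1}, and the error-term estimates in Proposition \ref{errbound}. The corollary is then essentially a bookkeeping exercise. The only mildly delicate point is verifying a uniform-in-$N$ pointwise bound on the kernels that appear in the decomposition.

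First, in each of the four regimes \hyperref[eq:regimeA]{A}--\hyperref[eq:regimeD]{D}, I would apply Proposition \ref{4.3} to write
\[
\langle M^N(\phi)\rangle_t - \langle M^N(\phi)\rangle_s \;=\; c_N \big(Q_N^g(t,\Phi;\nu_N^{\otimes 2}) - Q_N^g(s,\Phi;\nu_N^{\otimes 2})\big) \;+\; \mathcal R_N(t,\phi) - \mathcal R_N(s,\phi),
\]
where $c_N \in \{1,\, e^{-2\log M(\varsigma_N)},\, |\varsigma_N|^{-2p}\}$ is uniformly bounded in $N$ (using $\varsigma_N \to v \ne 0$ in Regime D), $g$ is one of the kernels $\boldsymbol{\eta}_{r_1,r_2}$, $\z_{\varsigma_N/|\varsigma_N|}$, or $\boldsymbol{\vartheta}_{\varsigma_N}$, $\Phi$ is either $\phi^2$ or a product $\partial^{r_1}\phi\cdot\partial^{r_2}\phi$ with $\#r_1=\#r_2=p$, and $\mathcal R_N$ is a finite sum of error terms of types $\mathcal V^j_N$ or $\mathcal E^j_N$. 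The key observation is that in all cases $|g(x)|\le F(|x|)$ for a decreasing $F$ of exponential decay at infinity independent of $N$: for $\boldsymbol{\eta}_{r_1,r_2}$ this follows from Lemma \ref{grow} combined with the total-variation decay in Assumption \ref{a1}\eqref{a24}; for $\z_{\varsigma_N/|\varsigma_N|}$ it is Proposition \ref{prop:dcbound} applied with $|\varsigma_N/|\varsigma_N||=1$; and for $\boldsymbol{\vartheta}_{\varsigma_N}$ it is a combination of Lemma \ref{grow} and Lemma \ref{tbb}, using that $\varsigma_N$ lies in a bounded neighborhood of $0$. Also $\|\Phi\|_{L^\infty} \le \|\phi\|_{C^{p}}^2 \le \|\phi\|_{C^{p+2}}^2$.

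Next, Minkowski's inequality in $L^q$ reduces the problem to bounding the main term and $\mathcal R_N$ separately. For the main term, one has the crude pointwise estimate $|Q_N^g(t,\Phi;\nu_N^{\otimes 2})| \le \|\Phi\|_{L^\infty}\, Q_N^{|g|}(t,\ind_\mathbb R;\nu_N^{\otimes 2})$ (after splitting $g$ into positive and negative parts, each dominated by $F(|\cdot|)$), which gives an $L^q$-bound of the form $\|\phi\|_{C^{p+2}}^2 \cdot \mathbb E[|Q_N^f(t,\ind_\mathbb R) - Q_N^f(s,\ind_\mathbb R)|^q]^{1/q}$. For the error terms I invoke Proposition \ref{errbound}, whose regime-dependent prefactor ($N^{-1/2}$, $|\varsigma_N|^{1/2}$, $|\varsigma_N|^{1/2}$, or $|\varsigma_N-v|$ in Regimes A--D respectively) is absorbed into the claimed factor $|\varsigma_N-v|^{1/2}$ with a suitable interpretation of $v$ (e.g.\ $v=0$ in Regimes A and B). This yields the first displayed bound.

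For the ``in particular'' statement, I apply Proposition \ref{tight1} to the right-hand side with test function $\ind_\mathbb R$ (which is handled directly in the proof of that proposition, since only an $L^\infty$-bound on the test function is used), obtaining $\mathbb E[|Q_N^f(t,\ind_\mathbb R) - Q_N^f(s,\ind_\mathbb R)|^q]^{1/q} \le C|t-s|^{\varepsilon}$. The linear-in-$|t-s|$ contribution from Proposition \ref{errbound} is then absorbed via $|t-s| \le T^{1-\varepsilon}|t-s|^\varepsilon$, and the regime-dependent prefactor is bounded by an absolute constant. Summing the finitely many contributions gives $C\|\phi\|_{C^{p+2}}^2|t-s|^\varepsilon$. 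The hardest part is really the pointwise control $|g(x)|\le F(|x|)$ uniformly in $N$ for the three possible kernels; once this is in hand, the remainder of the argument is routine.
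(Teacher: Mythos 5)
Your argument matches the paper's own proof, which is a one-liner citing Propositions \ref{4.3}, \ref{errbound}, and estimate \eqref{e.tight7}; you have simply unpacked that one-liner, and the key technical points you verify (the uniform-in-$N$ exponential domination of $\boldsymbol{\eta}_{r_1,r_2}$, $\z_{\varsigma_N/|\varsigma_N|}$ and $\boldsymbol{\vartheta}_{\varsigma_N}$, the crude pointwise bound $|Q_N^g(t,\Phi)-Q_N^g(s,\Phi)|\le\|\Phi\|_\infty(Q_N^f(t,\ind_{\R})-Q_N^f(s,\ind_{\R}))$, and the use of \eqref{e.tight7} with a bounded indicator) are exactly what is being left to the reader.

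One small caveat worth noting: the decomposition you use does not actually produce the first displayed inequality of the corollary exactly as written. The main term $c_N\big(Q_N^g(t,\Phi)-Q_N^g(s,\Phi)\big)$ carries no small factor---$c_N$ is merely $O(1)$ in every regime---so what your argument genuinely yields is
\[
\mathbb E\big[|\langle M^N(\phi)\rangle_t-\langle M^N(\phi)\rangle_s|^q\big]^{1/q}\le C\|\phi\|_{C^{p+2}}^2\Big(\mathbb E[|Q_N^f(t,\ind_\R)-Q_N^f(s,\ind_\R)|^q]^{1/q}+|\varsigma_N-v|^{1/2}|t-s|\Big),
\]
with the $|\varsigma_N-v|^{1/2}$ (or $N^{-1/2}$, $|\varsigma_N|^{1/2}$, etc., according to regime) sitting only on the error contribution. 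Your remark that the error prefactors are ``absorbed into the claimed factor $|\varsigma_N-v|^{1/2}$, \ldots\ this yields the first displayed bound'' therefore overstates what is shown. The overall prefactor in the stated corollary appears to be a misplacement in the statement rather than an issue with your proof; and because $|\varsigma_N-v|^{1/2}$ is uniformly bounded, the discrepancy has no effect on the ``in particular'' conclusion $C\|\phi\|_{C^{p+2}}^2|t-s|^\e$, which is the only downstream consequence used.
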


\begin{proof}
    This is immediate from Propositions \ref{4.3} and \ref{errbound}. The last sentence then follows from \eqref{e.tight7}. 
\end{proof}


    \begin{prop}[Martingale tightness bound] \label{optbound}
        Fix $q\ge 1$ and 
        $T>0$. For all $s,t\in (N^{-1}\mathbb Z_{\ge 0})\cap [0,T]$ and all $\phi\in C_c^\infty(\mathbb R)$ we have that 
         \begin{equation*}\mathbb E\big[ \big|M_t^N(\phi)-M_s^N(\phi)\big|^q\big]^{1/q}\leq C \|\phi\|_{C^{p+2}(
         \mathbb R^d)} |t-s|^{\e}.
        \end{equation*}
        Here $C$ is independent of $N,\phi,s,t$. Moreover $\e$ is as in Definition \ref{goodseq}.
    \end{prop}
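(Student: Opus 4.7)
The plan is to invoke the discrete Burkholder-Davis-Gundy (BDG) inequality to reduce the $L^q$-moment estimate on $M^N_t(\phi) - M^N_s(\phi)$ to the control of the predictable quadratic variation $\langle M^N(\phi)\rangle$ provided by Corollary \ref{predbound}.

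Since $r \mapsto M^N_{s+r/N}(\phi) - M^N_s(\phi)$ is a discrete mean-zero martingale adapted to the filtration $(\mathcal F^\omega_{Ns+r})_{r\ge 0}$, the Burkholder--Rosenthal version of BDG yields, for every even integer $q\ge 2$,
\begin{equation*}
\|M^N_t(\phi) - M^N_s(\phi)\|_{L^q} \le C_q \|\langle M^N(\phi)\rangle_t - \langle M^N(\phi)\rangle_s\|_{L^{q/2}}^{1/2} + C_q \Bigl\|\max_{Ns<r\le Nt}|X_r|\Bigr\|_{L^q},
\end{equation*}
where $X_r := M^N_{r/N}(\phi) - M^N_{(r-1)/N}(\phi)$ denotes the one-step increment. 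The first summand is controlled directly by Corollary \ref{predbound} applied at exponent $q/2$ (which is at least $1$ provided $q\ge 2$), and produces exactly the polynomial decay in $|t-s|$ advertised in the statement, with the correct dependence on $\|\phi\|_{C^{p+2}}$.

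For the jump term, \eqref{mfield} together with Proposition \ref{prop:heatOperator} gives an explicit representation of $X_r$ as $\mathscr B_N$ times an integral of $\phi(N^{-1/2}\bullet)$ against the conditionally centered noise $K_r - \mu$ paired with $Z_N(r-1,\dr y)$. Combined with the exponential-moment hypothesis Assumption \ref{a1}\eqref{a22} and the calibration of $\mathscr B_N$ in Definition \ref{cntxnu}, this yields uniform conditional bounds of the form $\mathbb E[|X_r|^q \mid \mathcal F^\omega_{r-1}] \le C_q \|\phi\|_{C^0}^q \cdot \Phi(Z_N(r-1,\cdot))$ for some integrable functional $\Phi$ of the density profile. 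The crude union bound $\mathbb E[\max_r |X_r|^q] \le \sum_r \mathbb E[|X_r|^q]$ then produces a factor of $N|t-s|$ which is compensated by the $N$-dependent smallness of $\mathscr B_N^q$, in the same spirit as the arguments in the proof of Proposition \ref{tight1}.

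The main obstacle is the bookkeeping in the jump term: one must verify that the contribution of $\|\max_r|X_r|\|_{L^q}$ is of strictly lower order than the predictable-quadratic-variation contribution, uniformly in $N$ and in $\phi$ (up to the stated $C^{p+2}$ norm). This requires using the good-sequence hypothesis on $\mathfrak H^N(0,\bullet)$ to control the spatial integrals of powers of $Z_N$ that arise, mirroring the estimates carried out in the proof of Proposition \ref{tight1}. Once this is verified, combining with Corollary \ref{predbound} and taking $q$-th roots delivers the stated bound.
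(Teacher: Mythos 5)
Your structural plan matches the paper's: both start from a Burkholder/Rosenthal decomposition reducing the problem to (i) the predictable quadratic variation, controlled via Corollary \ref{predbound}, and (ii) a sum over one-step increments. The paper uses exactly this decomposition (citing \cite[Theorem 2.11]{Hall}) and the crude union bound $\mathbb E[\sup_u|X_u|^q]\le\sum_r\mathbb E[|X_r|^q]$ you propose.

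The gap is in your treatment of the jump term, and it is not mere bookkeeping. First, you claim the conditional increment moments are bounded by $C_q\|\phi\|_{C^0}^q\Phi(Z_N)$. That would be fatal: with only the sup norm of $\phi$, the one-step increment $X_r=\mathscr B_N\int\phi(N^{-1/2}x)e^{\varsigma_N\bullet(x-y)-\log M(\varsigma_N)}[K_{r+1}(y,\dr x)-\mu(\dr x-y)]Z_N(r,\dr y)$ carries the full factor $\mathscr B_N=N^{p/2+(d-2)/4}$ (in Regime A), and nothing cancels it. Second, you say $\mathscr B_N^q$ is ``$N$-dependently small'' --- it is the opposite; $\mathscr B_N\to\infty$. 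The compensating smallness comes from Taylor-expanding $\prod_{j=1,2}\phi(N^{-1/2}x_j)$ to order $\ge p$ around $(y_1,y_2)$ and invoking the deterministicity in Assumption \ref{a1}\eqref{a23}, which kills all terms of joint degree $<2p$ in the difference $K_{r+1}-\mu$; the surviving terms carry an explicit $N^{-p}$ (equivalently $|\varsigma_N|^{2p}$ in the extremal regimes), and the paper's estimate $|\mathbb E[\prod_j\mathpzc A_N(\ldots)]|\le C|\varsigma_N|^{2kp}\Fd(\max_{i<j}|y_i-y_j|)^{1/2}$ is precisely what makes $\mathscr B_N^{2k}\cdot|\varsigma_N|^{2kp}=O(N^{k(d-2)/2})$. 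This is also the reason the stated bound involves $\|\phi\|_{C^{p+2}}$, a dependence your sketch briefly acknowledges at the end but which contradicts the $\|\phi\|_{C^0}$ bound asserted earlier. Without naming this Taylor-expansion-plus-deterministicity mechanism, the jump-term estimate cannot close.

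A secondary omission: the integration over the initial condition $\nu_N$ via the second bound in the good-sequence Definition \ref{goodseq}, which converts the anticoncentration factor $(1+r^{-1/2}|a_i-a_j|)^{-K}$ into the time-dependent factor $(r/N)^{\e-1}$. You gesture at this (``mirroring Proposition \ref{tight1}''), which is fine in spirit, but the $\e$ in the final exponent enters only through this step, so it deserves to be named rather than deferred.
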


    \begin{proof}       
    By \cite[Theorem 2.11]{Hall}, we have the Burkholder-type bound 
\begin{align*}
    \mathbb E\big[ \big|M_t^N(\phi)-M_s^N(\phi)\big|^q\big]^{1/q} & \leq C_p \mathbb E \big[ \big( \langle M^N(\phi)\rangle_t - \langle M^N(\phi)\rangle_s \big)^{q/2} ]^{1/q} + \mathbb E \big[ \sup_{u \in N^{-1} \mathbb Z \cap [s,t]} |M_{u+N^{-1}}(\phi) - M_u(\phi)|^q    \big]^{1/q}\\ &\leq C\|\phi\|_{C^{p+2}} \cdot |t-s|^{\e} + \mathbb E \big[ \sup_{u \in N^{-1} \mathbb Z \cap [s,t]} |M_{u+N^{-1}}(\phi) - M_u(\phi)|^q    \big]^{1/q}.
\end{align*}
Recall from \eqref{diff} that
\begin{align*} \notag (M^N_{t+N^{-1}}(\phi)- M^N_t(\phi))^2 = \mathscr B_N^2 \bigg( \int_I \int_I \phi(N^{-1/2}x) e^{  \varsigma_N\bullet (x-y) - \log M(   \varsigma_N)}\big[ K_{r+1}(y,\mathrm \dr x) -\mu(\mathrm \dr x-y)\big]Z_N(r,\mathrm dy)\bigg)^2
\end{align*}

   It suffices to prove the claim when $q=2k$ for some positive integer $k$. We have that 
   \begin{align*}
      \mathbb E \big[ \sup_{u \in N^{-1} \mathbb Z \cap [s,t]} |M_{u+N^{-1}}(\phi) - M_u(\phi)|^{2k}    \big]^{1/{2k}} & \leq \mathbb E \bigg[ \sum_{r = Ns}^{Nt}|M_{N^{-1}(r+1)}(\phi) - M_{N^{-1}r}(\phi)|^{2k}    \bigg]^{1/{2k}} \\
      &= \bigg( \sum_{r = Ns}^{Nt}\mathbb E \big[ |M_{N^{-1}(r+1)}(\phi) - M_{N^{-1}r}(\phi)|^{2k}    \big]\bigg)^{1/{2k}}
   \end{align*}

   For $r\in \mathbb Z_{\ge 0}$ let us now define the signed (random) measure $H_{r+1}(y,\dr x):= K_{r+1}(y,\mathrm \dr x) -\mu(\mathrm \dr x-y). $ Note by Assumption \ref{a1} Item \eqref{a22} that $\int_I (y-x)^j H_{r+1}(y,\dr x) =0$ a.s. for all multi-indices $j$ with $0\le \#j \le p-1$. 

   Recall from \eqref{diff} that
\begin{align*} \notag (M^N_{t+N^{-1}}(\phi)- M^N_t(\phi))^{2k}= \mathscr B_N^{2k} \bigg( \int_I \int_I \phi(N^{-1/2}x) e^{  \varsigma_N\bullet (x-y) - \log M(   \varsigma_N)}H_{r+1}(y,\dr x)Z_N(r,\mathrm dy)\bigg)^{2k}
\end{align*}

Then define the signed (random) measure on $I^2$ $$\Phi_{r+1}\big( (y_1,y_2) , (\dr x_1, \dr x_2)\big):= H_{r+1}(y_1,\dr x_1) H_{r+1} (y_2,\dr x_2) .$$
Let us therefore define a shorthand notation for the inner integral: \begin{equation}\label{ann}\mathpzc A_N(r,\phi,y_1,y_2) := \int_{I^2} \prod_{j=1}^2 \phi(N^{-1/2}x_j)e^{  \varsigma_N\bullet (x_j-y_j) - \log M(   \varsigma_N)} \Phi_{r+1} \big( (y_1,y_2) , (\dr x_1, \dr x_2)\big).\end{equation} 

To prove the desired bound, we will proceed similarly to previous propositions by first Taylor expanding $\prod_{j=1}^2 \phi(N^{-1/2}x_j)$ around the point $(y_1,y_2)$ so we get that $\mathpzc A_N(r,\phi,y_1,y_2)$ can be written as a sum $\mathpzc A^{a}_N(r,\phi,y_1,y_2)+\mathpzc A^{b}_N(r,\phi,y_1,y_2)$ where
    \begin{align*}\mathpzc A^{a}_N(r,\phi,y_1,y_2)&:= \sum_{\substack{0\leq \#k_1,\#k_2\leq kd\\\#k_1+\#k_2>0}}\frac1{k_1!k_2!}N^{-(k_1+k_2)/2} (\partial^{k_1}\phi)(N^{-1/2}y_1) (\partial^{k_2}\phi)(N^{-1/2} y_1)\\& \hspace{2 cm} \cdot \int_{I^2} \prod_{j=1}^2 e^{  \varsigma_N\bullet (x_j-y_j) - \log M(   \varsigma_N)} \cdot(x_1-y_1)^{k_1} (x_2-y_1)^{k_2}\Phi_{r+1}\big( (y_1,y_2), (\mathrm \dr x_1,\mathrm \dr x_2) \big) 
    \end{align*} where the sum is over multi-indices $k_1,k_2$, where $\#j = j_1 +... +j_d$, and where $x^j = x_1^{j_1} \cdots x_d^{j_d}$  and $j! = j_1!\cdots j_d!$ for vectors $x=(x_1,...,x_d)$ and multi-indices $j = (j_1,...,j_d)$. By Taylor's theorem the other term $\mathpzc A^{b}_N$ is a ``remainder" satisfying $$|\mathpzc A^{b}_N(r,\phi,y_1,y_2)| \leq N^{-\frac{kd+1}2}\|\phi\|_{C^3}^2  \int_{I^2} \prod_{j=1}^2 e^{|  \varsigma_N||x_j-y_j| - \log M(   \varsigma_N)} \cdot|x_1-y_1|^{kd+1} |x_2-y_2|^{kd+1} \big|\Phi_{r+1}\big( (y_1,y_2), (\mathrm \dr x_1,\mathrm \dr x_2) \big)\big|.$$

Thus we obtain that for $r:=Nt$ one has 
\begin{align*}\mathbb E[ (M^N_{t+N^{-1}}(\phi) - M_t^N(\phi))^{2k} ] & = \mathbb E\bigg[\bigg( \mathscr B_N^2 \int_{I^2} \mathpzc A_N( r,\phi, y_1,y_2) \prod_{j=1,2} Z_N(r, \dr y_j) \bigg)^k \bigg]
\\&= \mathscr B_N^{2k} \int_{I^{2k}} \mathbb E [ \prod_{j=0}^{k-1} \mathpzc A_N(r,\phi, y_{2j+1}, y_{2j+2})] \; \mathbb E [\prod_{j=1}^{2k} Z_N(r, \dr y_j) ]
\end{align*}
where in the last line we used the independence of $\mathpzc A_N(r,\bullet)$ and $Z_N(r,\bullet)$ to split the expectations. 

Now we have explicitly that 
\begin{align*}\mathbb E [\prod_{j=1}^{2k} Z_N(r, \dr y_j) ] &= \mathbf E^{(\varsigma_N,2k)}_{ (\nu_N^{\varsigma_N})^{\otimes 4k}}[ e^{\sum_{s=0}^{r-1} \mathcal H_N(\mathbf R_s)} \ind_{\{\mathbf R_r \in \dr \y\}} ],
\end{align*}
where $\mathcal H_N$ is exactly as in \eqref{hfunction}. Meanwhile we have a bound coming from Assumption \ref{a1} which says that : 
$$\bigg\| \mathbb E \bigg[ \bigotimes_{j=1}^k \Phi_{r+1} ( y_{2j-1},y_{2j}, \bullet) \bigg]\bigg\|_{TV} \leq C \Fd (\max_{i<j} |y_i-y_j|).$$
\begin{align*}|\mathbb E & [ \prod_{j=0}^{k-1} \mathpzc A_N(r,\phi, y_{2j+1}, y_{2j+2}) | \le C |\varsigma_N|^{2kp} \Fd ( \max_{i<j} |y_i-y_j|)^{1/2} .
\end{align*}
Note here the max as opposed to the min which usually appears, which appears by considering all possible partitions in Assumption \ref{a1} Item \eqref{a24} and taking the mimimum. Noting that $\mathscr B_N^{2k} \cdot |\sigma_N|^{2kp} \leq C N^{\frac12 k(d-2)},$ we thus see that 
\begin{align*}\mathscr B_N^{2k} &\int_{I^{2k}} \mathbb E [ \prod_{j=0}^{k-1} \mathpzc A_N^a(r,\phi, y_{2j+1}, y_{2j+2})] \; \mathbb E [\prod_{j=1}^{2k} Z_N(r, \dr y_j) ]\\ &\leq C N^{\frac12 k(d-2)} \mathbf E^{(\varsigma_N,2k)}_{ (\nu_N^{\varsigma_N})^{\otimes 2k}}[ e^{\sum_{s=0}^{r-1} \mathcal H_N(\mathbf R_s)} \Fd (\max_{i<j}|R^i_r-R^j_r|)^{1/2} ].
\end{align*}
These are the worst terms, as products with ``$b$" indices will be similar but have extra factors of $N^{-\frac{kd+1}2}$ which may be bounded brutally, enough to counteract the growth of $\mathscr B_N^{2k}$. 
As $\Fd$ has exponential decay at infinity, one has the bound which can be derived from Theorem \ref{anti}: 
$$\mathbf E^{(\varsigma_N,2k)}_\bfa [e^{\sum_{s=0}^{r-1} \mathcal H_N(\mathbf R_s)} \Fd (\max_{i<j}|R^i_r-R^j_r|)^{1/2} ] \leq C  r^{-(2k-1)d/2}  \big( 1+r^{-1/2} \max_{i<j} |a_i-a_j|))^{-K}. $$
Integrating that over the initial conditions, fibering $I^{2k}$ over $I^{2k-1}$ by the subspace $\{(a,...,a):a\in I\}$, and using the second bound in Definition \ref{goodseq} we find that 
$$(r/N)^{-(2k-1)d/2} \int_{I^{2k}} \big( 1+r^{-1/2} \max_{i<j} |a_i-a_j|))^{-K} (\nu_N^{\varsigma_N})^{\otimes 2k} (\dr \bfa) \leq C(r/N)^{(2k-1)(\e-1)}.$$
Summarizing, we find that 
\begin{align*}
    \sum_{r=Ns}^{Nt-1} \mathbb E[ (M^N_{t+N^{-1}}(\phi) - M_t^N(\phi))^{2k} ]&\le N^{\frac12 k(d-2) -(2k-1) (\frac{d}2 - 1+\e)} \sum_{r=Ns}^{Nt} r^{(2k-1) (\e-1)} 
\end{align*}
Note that for $k=1$, this bound does recover the one for the predictable variation from earlier. Without loss of generality, $k$ is so large that $(2k-1)(1-\e)>1.$ Thus the whole sum is bounded above by $N^{(k-1)(1-\frac{d}2-2\e) -\e } $ which is upper bounded by $|t-s|^{(k-1)(\frac{d}2+2\e-1)}$ as long as $t\ne s$ (since $|t-s|\ge N^{-1})$. Taking the $2k$th root gives an upper bound of $|t-s|^{(\frac12-\frac1{2k}) (\frac{d}2 +2\e -1)},$ which is enough (recalling that $\e=\frac12$ in $d=1$). 
    \end{proof}

\section{Tightness and identification of limit points} \label{sec:6}

In this section we will finally prove our main results (Theorems \ref{main1}, \ref{main2}, \ref{main3}, \ref{main4}, and \ref{main5}) using the estimates of the previous section. The main idea is as follows. We want to study the field $\mathfrak F_N$ appearing in the main results. In Section \ref{sec:4}, we derived a relation which approximately says ``$(\partial_t - \frac12 \Delta) \mathfrak F_N(t,x) = M^N(t,x)$" for some martingale field $M^N.$ In Section \ref{sec:5} we proved very strong bounds and limit formulas for the quadratic variations $Q_N$ of the martingales $M^N$. Those estimates will be used in this section to show that as $N\to \infty$, this random process $Q_N$ converges to a deterministic limit. Thus as $N\to\infty$, by the Dubins-Schwarz Theorem, the $M^N$ must converge to some Gaussian limit with explicit covariance since their quadratic variation is deterministic. Since $\mathfrak F_N$ and $M^N$ are related by a linear map, this will imply that $\mathfrak F_N$ is also converging to a Gaussian limit, and the covariance operator will be clear since the limit of the quadratic variations $Q_N$ in every regime is explicit from Section \ref{sec:5}. This method for studying the density profile in RWRE models was introduced by us in our previous works \cite{DDP23, DDP+, Par24} and was also used recently by \cite{Kot} to study the bulk regime of a Kraichnan-type model (Example \ref{diffrm}). An important goal of this paper is to illustrate that the same approach works in every regime up to the critical scale.

\subsection{Weighted H\"older spaces and Schauder estimates} 

    We now introduce various natural topologies for our field $\mathfrak{F}_N$ and its limit points. We then discuss how the heat flow affects these topologies and record Kolmogorov-type lemmas that will be key in showing tightness under these topologies. We begin by recalling many familiar and useful spaces of continuous and differentiable functions that have natural metric structures.

For $d\ge 1$, we denote by $C_c^{\infty}(\R^d)$ the space of all compactly supported smooth functions on $\R^d$. For a smooth function on $\R^d$, we define its $C^r$ norm as
\begin{align*}
    \|f\|_{C^r}:=\sum_{  \#k \le r} \sup_{\mathbf x\in \R^d} |D^kf(\mathbf x)|
\end{align*}
 where the sum is over all multi-indices $  k=(k_1,...,k_d)\in \mathbb{Z}_{\ge 0}^d$ with $\#k=\sum_{i=1}^d k_i\le r$ and $D^{  {k}}:=\partial_{x_1}^{k_1}\cdots \partial_{x_d}^{k_d}$ denotes the mixed partial derivative.

\smallskip

We now recall the definition of weighted H\"older spaces from \cite[Definitions 2.2 and 2.3]{HL16}.  For the remainder of this paper, we shall work with \textit{elliptic and parabolic} weighted H\"older spaces with polynomial weight function
 \begin{align*}
     w(x):=(1+x^2)^\tau
 \end{align*}
  for some fixed $\tau>1$. We introduce these weights because weighted spaces will be more convenient to obtain tightness estimates. We expect that it is possible to remove the weights throughout this section, but this would require more precise moment estimates than the ones we derived in previous sections, which take into account spatial decay of the fields.

	\begin{defn}[Elliptic H\"older spaces]\label{ehs}
		For $\alpha\in(0,1)$ we define the space $C^{\alpha,\tau}(\mathbb R^d)$ to be the completion of $C_c^\infty(\mathbb R)$ with respect to the norm given by $$\|f\|_{C^{\alpha,\tau}(\mathbb R^d)}:= \sup_{x\in\mathbb R^d} \frac{|f(x)|}{w(x)} + \sup_{|x-y|\leq 1} \frac{|f(x)-f(   y)|}{w(x)|x-y|^{\alpha}}.$$
		For $\alpha<0$ we let $r=-\lfloor \alpha\rfloor$ and we define $C^{\alpha,\tau}(\mathbb R^d)$ to be the completion of $C_c^\infty(\mathbb R)$ with respect to the norm $$\|f\|_{C^{\alpha,\tau}(\mathbb R^d)}:= \sup_{x\in\mathbb R} \sup_{\lambda\in (0,1]} \sup_{\phi \in B_r} \frac{(f,S^\lambda_{x}\phi)_{L^2(\mathbb R)}}{w(x)\lambda^\alpha}$$ where the scaling operators $S^\lambda_{x}$ are defined by 
  \begin{align}\label{escale}
  S^\lambda_{x}\phi (y) = \lambda^{-d}\phi(\lambda^{-1}(x-y)),\end{align} 
  and where $B_r$ is the set of all smooth functions of {$C^r$ norm} less than 1 with support contained in the unit ball of $\mathbb R^d$.
	\end{defn}

 One may verify that these spaces embed continuously into $\mathcal S'(\mathbb R^d)$.

 \begin{defn}[Function spaces] \label{fsp} Let $C^{\alpha,\tau}(\mathbb R^d)$ be as in Definition \ref{ehs}. We define $C([0,T],C^{\alpha,\tau}(\mathbb R^d))$ to be the space of continuous maps $H:[0,T]\to C^{\alpha,\tau}(\mathbb R^d),$ equipped with the norm $$\|g\|_{C([0,T],C^{\alpha,\tau}(\mathbb R^d))} := \sup_{t\in[0,T]} \|g(t)\|_{C^{\alpha,\tau}(\mathbb R^d)}.$$ 
\end{defn}

	\begin{defn}Here and henceforth we will define $\Psi_{[a,b]}:=[a,b]\times\mathbb R^d$ and $\Psi_T:=\Psi_{[0,T]}.$ 
    \end{defn}

\smallskip
 
	\begin{defn}[Parabolic H\"older spaces]
		We define $C_c^\infty(\Psi_T)$ to be the set of functions on $\Psi_T$ that are restrictions to $\Psi_T$ of some function in $C_c^\infty(\mathbb R^{d+1})$, in particular we do not impose that elements of $C_c^\infty(\Psi_T)$ vanish at the boundaries of $\Psi_T.$ 
  
    For $\alpha\in(0,1)$ we define the space $C^{\alpha,\tau}_\mathfrak s(\Psi_T)$ to be the completion of $C_c^\infty(\Psi_T)$ with respect to the norm $$\|f\|_{C^{\alpha,\tau}_\mathfrak s(\Psi_T)}:= \sup_{(t,x)\in \Psi_T} \frac{|f(t,x)|}{w(x)} + \sup_{|s-t|^{1/2}+|x-y|\leq 1} \frac{|f(t,x)-f(s,y)|}{w(x)(|t-s|^{1/2}+|x-y|)^{\alpha}}.$$
		For $\alpha<0$ we let $r=-\lfloor \alpha\rfloor$ and we define $C^{\alpha,\tau}_\mathfrak s(\Psi_T)$ to be the completion of $C_c^\infty(\Psi_T)$ with respect to the norm $$\|f\|_{C^{\alpha,\tau}_\mathfrak s(\Psi_T)}:= \sup_{(t,x)\in\Psi_T} \sup_{\lambda\in (0,1]} \sup_{\varphi \in B_r} \frac{(f,S^\lambda_{(t,x)}\varphi)_{L^2(\Psi_T)}}{w(x)\lambda^\alpha}$$ where the scaling operators are defined by 
		\begin{align}
			\label{scale}
			S^\lambda_{(t,x)}\varphi (s,y) = \lambda^{-d-2}\varphi(\lambda^{-2}(t-s),\lambda^{-1}(x-y)),
		\end{align}
		and where $B_r$ is the set of all smooth functions of $C^r$ norm less than 1 with support contained in the unit ball of $\mathbb R^{d+1}$.
	\end{defn}

    So far we have used $\phi,\psi$ for test functions on $\R^d$. To make the distinction clear between test functions on $\R^d$ and $\R^{d+1}$, we shall use variant Greek letters such as $\varphi, \vartheta, \varrho$ for test functions on $\R^{d+1}$. In many instances below, we will explicitly write $(f,\varphi)_{\mathrm{para}}$ or $(g,\phi)_{\mathrm{ell}}$ when we want to be clear about the space in which we are applying the $L^2$-pairing (parabolic vs. elliptic).

An important property of both the parabolic and elliptic spaces is that one has a continuous embedding $C^{\alpha,\tau} \hookrightarrow C^{\beta,\tau}$ whenever $\beta<\alpha.$ In fact this embedding is compact, though we will not use this. We also have the following embedding of function spaces inside parabolic spaces.

 \begin{lem}\label{embed}
     For $\alpha<0,\tau>0$ one has a continuous embedding $C([0,T],C^{\alpha,\tau}(\mathbb R^d))\hookrightarrow C^{\alpha,\tau}_\mathfrak s(\Psi_T)$ given by identifying $v=(v(t))_{t\in [0,T]}$ with the tempered space-time distribution given by $$(v,\varphi)_{\mathrm{para}} = \int_0^T (v(t) ,\varphi(t,\cdot))_{\mathrm{ell}}dt.$$ 
 \end{lem}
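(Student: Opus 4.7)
The plan is to unfold the definitions and show directly that if $v\in C([0,T],C^{\alpha,\tau}(\mathbb R^d))$, then the space-time distribution defined by the stated pairing has finite $C^{\alpha,\tau}_\mathfrak s(\Psi_T)$ norm, bounded by a constant multiple of $\|v\|_{C([0,T],C^{\alpha,\tau})}$. The key observation is that the parabolic scaling operator at scale $\lambda$ localizes a test function in a $\lambda^2$-neighborhood in time and a $\lambda$-neighborhood in space, so that the time integral only contributes on an interval of length $O(\lambda^2)$, and on this interval one can read off the bound by invoking the elliptic H\"older norm.

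More precisely, fix $(t,x)\in\Psi_T$, $\lambda\in(0,1]$, and $\varphi\in B_r$ (parabolic, i.e.\ supported in the unit ball of $\mathbb R^{d+1}$ with $C^r$ norm bounded by $1$). For each $s\in[0,T]$, define $\phi_s(z):=\varphi(\lambda^{-2}(t-s),z)$. Then a direct computation from the definitions of $S^\lambda_{(t,x)}$ and $S^\lambda_x$ yields the identity
\begin{equation*}
    \bigl(S^\lambda_{(t,x)}\varphi\bigr)(s,y)=\lambda^{-2}\bigl(S^\lambda_x\phi_s\bigr)(y).
\end{equation*}
Moreover $\phi_s$ is supported in the unit ball of $\mathbb R^d$, has spatial $C^r$ norm bounded by $\|\varphi\|_{C^r}\le 1$, and vanishes unless $|t-s|\le\lambda^2$. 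Hence by the definition of the pairing and the elliptic $C^{\alpha,\tau}$ norm,
\begin{equation*}
    \bigl|(v,S^\lambda_{(t,x)}\varphi)_{\mathrm{para}}\bigr|
    \;\le\;\lambda^{-2}\int_{[0,T]\cap[t-\lambda^2,t+\lambda^2]} \|v(s)\|_{C^{\alpha,\tau}}\,w(x)\,\lambda^{\alpha}\,ds
    \;\le\;2\,\|v\|_{C([0,T],C^{\alpha,\tau})}\,w(x)\,\lambda^{\alpha}.
\end{equation*}
Dividing by $w(x)\lambda^\alpha$ and taking the supremum over $(t,x),\lambda,\varphi$ gives $\|v\|_{C^{\alpha,\tau}_\mathfrak s(\Psi_T)}\le 2\|v\|_{C([0,T],C^{\alpha,\tau})}$, establishing continuity of the embedding. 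Linearity is immediate, and injectivity follows by testing against $\varphi(s,y)=\chi(s)\phi(y)$ with $\chi$ approximating a Dirac mass at a Lebesgue point of $t\mapsto(v(t),\phi)_{\mathrm{ell}}$, which is continuous by hypothesis.

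The only step requiring any care is the support/regularity analysis of $\phi_s$ as a spatial test function in $B_r$: it is essential that the parabolic unit-ball support of $\varphi$ enforces both the spatial unit-ball support of $\phi_s$ and the temporal window $|t-s|\le\lambda^2$, since these together provide the matching of the $\lambda^{-2}$ prefactor against the length of the time integration. No genuine obstacle arises, as the computation is purely a rescaling identity; the result is essentially a statement that the parabolic scaling absorbs two extra powers of $\lambda$ compared to the elliptic scaling, exactly compensated by the $\lambda^2$ width of the time window on which $\phi_s$ is nonzero.
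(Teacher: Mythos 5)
Your proof is correct and is precisely the "straightforward from the definitions" argument the paper alludes to when omitting the proof. The key rescaling identity $\bigl(S^\lambda_{(t,x)}\varphi\bigr)(s,y)=\lambda^{-2}\bigl(S^\lambda_x\phi_s\bigr)(y)$ is verified directly by unwinding \eqref{escale} and \eqref{scale}, the $\lambda^{-2}$ prefactor is exactly cancelled by the $O(\lambda^2)$ width of the temporal support of $\phi_s$, and the resulting bound $\|v\|_{C^{\alpha,\tau}_\mathfrak s(\Psi_T)}\le 2\|v\|_{C([0,T],C^{\alpha,\tau})}$ is what one expects.
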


 The proof is straightforward from the definitions and is omitted.
 
\begin{rk}[Derivatives of distributions] \label{d/dx}  Let $\alpha<0$. By definition, any element $f\in C_\mathfrak s^{\alpha,\tau}(\Psi_T)$ admits an $L^2$-pairing with any smooth function $\varphi: \Psi_T\to \mathbb R$ of rapid decay (i.e., $\varphi$ is a Schwarz function restricted to $\Psi_T$). We will write this pairing as $(f,\varphi)_{\Psi_T}.$ Consequently there is a natural embedding $C_\mathfrak s^{\alpha,\tau}(\Psi_T)\hookrightarrow \mathcal S'(\mathbb R^{d+1})$ which is defined by formally setting $f$ to be zero outside of $[0,T]\times \mathbb R^d$. More rigorously, this means that the $L^2(\mathbb R^{d+1})$-pairing of $f$ with any $\varphi \in \mathcal S(\mathbb R^{d+1})$ is defined to be equal to $(f,\varphi|_{\Psi_T})_{\Psi_T}$.

 \smallskip
 
 The image of this embedding consists of some specific collection of tempered distributions that are necessarily supported on $[0,T]\times \mathbb R.$ Thus we can sensibly define $\partial_tf$ and $\partial_{x_j}f$ as elements of $\mathcal S'(\mathbb R^{d+1})$ whenever $f\in C_\mathfrak s^{\alpha,\tau}(\Psi_T)$, by the formulas$$(\partial_tf,\varphi)_{\Psi_T} := -(f,\partial_t\varphi)_{\Psi_T},\;\;\;\;\;\;\;\;\;\;(\partial_{x_j}f,\varphi)_{\Psi_T} := -(f,\partial_{x_j}\varphi)_{\Psi_T}$$for all smooth $\varphi:\Psi_T\to\mathbb R$ of rapid decay. This convention on derivatives will be useful for certain computations later. From the definitions, when $\alpha<0$ one can check that for such $f$ one necessarily has $\partial_tf \in C_\mathfrak s^{\alpha-2,\tau}(\Psi_T)$ and $\partial_x f \in C_\mathfrak s^{\alpha-1,\tau}(\Psi_T).$

\smallskip

The latter statement fails for $\alpha>0$. Indeed by our convention of derivatives, $\partial_tf$ may no longer be a smooth function (or even a function) even if $f\in C_c^\infty(\Psi_T)$. This is because such an $f$ gets extended to all of $\mathbb R^{d+1}$ by setting it to be zero outside $\Psi_T$. In particular, if $f$ does not vanish on the boundary of $\Psi_T$, then it may become a discontinuous function under our convention of extension to $\mathbb R^{d+1}$. Due to these discontinuities, the distributional derivative $\partial_tf$ can be a tempered distribution with singular parts along the boundaries. 
In our later computations, we never take derivatives of functions in $C_\mathfrak s^{\alpha,\tau}(\Psi_T)$ with $\alpha>0$. Sometimes we will write $\partial_s$ to mean the same thing as $\partial_t$.
\end{rk}

 We end this subsection by recording a Kolmogorov-type lemma for the three spaces introduced at the beginning of this subsection. It will be crucial in proving tightness in those respective spaces.
	
	\begin{lem}[Kolmogorov continuity lemma for distributions] \label{l:KC} 


		Let $(t,\phi) \mapsto V(t,\phi)$ be a map from $[0,T]\times \mathcal S(\mathbb R^d)$ into $L^2(\Omega,\mathcal F,\mathbb P)$ which is linear and continuous in $\phi$. Fix a non-negative integer $r$. Assume there exists some $\kappa>0, q>d/\kappa$ and $\alpha<-r$ and $C=C(\kappa,\alpha,q,{T})>0$ such that 
		\begin{align*}\mE[|V(t,S^\lambda_{x}\phi)|^q]^{1/q}&\leq C\lambda^{\alpha },\\  \mE[|V(t,S^\lambda_{x}\phi)-V(s,S^\lambda_{x}\phi)|^q]^{1/q}&\leq C\lambda^{\alpha -\kappa} |t-s|^{\kappa },
		\end{align*}uniformly over all smooth functions $\phi$ on $\mathbb R$ supported on the unit ball of $\mathbb R$ with {$\|\phi\|_{C^r}\leq 1$}, and uniformly over $\lambda\in(0,1]$ and $0\leq s,t\leq T$. Then for any $\tau>1$ and any $\beta<\alpha-\kappa$ there exists a random variable $\big(\mathscr V(t)\big)_{t\in[0,T]}$ taking values in $C([0,T],C^{\beta,\tau}(\mathbb R^d))$ such that $(\mathscr V(t),\phi)=V(t,\phi)$ almost surely for all $\phi$ and $t$. Furthermore, one has that $$\mE[\|\mathscr V\|^q_{C([0,T],C^{\beta,\tau}(\mathbb R^d))}]\leq C',$$ where $C'$ depends on the choice of $\alpha,\beta,q,\kappa,$ and the constant $C$ appearing in the moment bound above but not on $V,\Omega,\mathcal F,\mathbb P$.
 


	\end{lem}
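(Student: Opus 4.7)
The plan is to proceed by a dyadic wavelet decomposition of test functions, combined with the classical Kolmogorov continuity theorem applied in time. First I would fix a smooth mother bump $\phi_0\in C_c^\infty(\mathbb R^d)$ supported in the unit ball with $\|\phi_0\|_{C^r}\le 1$, and consider the countable family of bricks $\phi_{n,x}:=S^{2^{-n}}_x\phi_0$ indexed by $n\in\mathbb Z_{\ge 0}$ and $x\in 2^{-n}\mathbb Z^d$. The assumed moment bounds specialize to
\begin{align*}
\mathbb E[|V(t,\phi_{n,x})|^q]^{1/q}&\le C\cdot 2^{-n\alpha},\\
\mathbb E[|V(t,\phi_{n,x})-V(s,\phi_{n,x})|^q]^{1/q}&\le C\cdot 2^{-n(\alpha-\kappa)}|t-s|^\kappa.
\end{align*}
After enlarging $q$ so that $\kappa q>1$, the classical scalar Kolmogorov continuity theorem applied to $t\mapsto V(t,\phi_{n,x})$ yields an almost surely continuous modification and a moment bound on $\sup_{t\in[0,T]}|V(t,\phi_{n,x})|$ of order $2^{-n(\alpha-\kappa-\eta)}$ for any $\eta>0$.

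Next I would perform a union bound over all dyadic bricks, leveraging the polynomial weight. For fixed $n$, the number of lattice points $x\in 2^{-n}\mathbb Z^d$ in the annulus $\{2^{k-1}\le|x|\le 2^k\}$ is of order $2^{(k+n)d}$ while $w(x)\sim 2^{2k\tau}$ there. Since $\tau>1$, by further enlarging $q$ we can ensure $2\tau q>d$, so Chebyshev plus summation in $k\ge 0$ and $n\ge 0$ produces
\begin{equation*}
\mathbb E\Bigl[\sup_{t\in[0,T]}\sup_{n\ge 0}\sup_{x\in 2^{-n}\mathbb Z^d}\Bigl(\frac{|V(t,\phi_{n,x})|}{w(x)\cdot 2^{-n(\alpha-\kappa-2\eta)}}\Bigr)^q\Bigr]<\infty
\end{equation*}
for arbitrary $\eta>0$, with a constant depending only on $\alpha,\kappa,\tau,q,T$.

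The third step is to reassemble the brick estimates into a bound on $(V(t),S^\lambda_y\psi)$ for an arbitrary test function $\psi$ with $\|\psi\|_{C^r}\le 1$ supported in the unit ball. Writing $\lambda=2^{-m}$, I would decompose $S^\lambda_y\psi$ into $\sum_{n\ge m}\sum_x c_{n,x}\phi_{n,x}$ using either a Daubechies orthonormal wavelet basis or Calder\'on's reproducing formula; the $C^r$-smoothness of $\psi$ ensures coefficient decay of the form $|c_{n,x}|\lesssim 2^{-nd}\cdot 2^{-(n-m)r}$ restricted to an $O(\lambda)$-neighborhood of $y$. Pairing this with the brick bounds above and summing the geometric series (which converges when $r>\alpha-\kappa-2\eta$, and we are free to choose $r$ as large as we wish) yields the pointwise control
\begin{equation*}
\sup_{t\in[0,T]}\sup_{y,\lambda,\psi}\frac{|V(t,S^\lambda_y\psi)|}{w(y)\,\lambda^{\beta}}<\infty\quad\text{a.s.}
\end{equation*}
for any $\beta<\alpha-\kappa$, with an $L^q$-moment bound. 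Linearity and continuity of $V$ in $\phi$ then identify the random distribution $\mathscr V(t)\in C^{\beta,\tau}(\mathbb R^d)$ for each $t$, and applying the wavelet decomposition to temporal increments upgrades this to continuity in $t$, giving $\mathscr V\in C([0,T],C^{\beta,\tau}(\mathbb R^d))$ with the stated moment bound.

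The hard part will be the bookkeeping in step three: verifying that the coefficient decay $|c_{n,x}|\lesssim 2^{-nd-r(n-m)}$ actually holds (requiring a genuine wavelet basis or a Calder\'on-type identity rather than an arbitrary bump), and checking that the small losses from Kolmogorov ($\kappa\to\kappa-\eta$), from the spatial union bound ($d/q$-loss), and from the scale summation can all be absorbed into the single strict inequality $\beta<\alpha-\kappa$ by taking $q$ large and $\eta$ small. The overall scheme is standard for weighted Besov/H\"older spaces of negative regularity (see e.g.\ \cite{Hai14} or Triebel's function-space monographs); only the integration of the weight $w$ with $\tau>1$ requires the mildly technical step of choosing $q>d/(2\tau)$, which in practice adds no constraint since we are free to enlarge $q$ without cost.
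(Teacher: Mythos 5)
The paper itself gives no proof of this lemma, citing only \cite{WM}; the question is therefore whether your argument is sound on its own. The overall plan --- dyadic bricks, time-Kolmogorov per brick, weighted union bound, wavelet reassembly --- is the standard one and is the right approach. But two steps do not go through as written. First, $q$ is fixed by the hypothesis and cannot be enlarged: the spatial union bound $\sum_{x\in 2^{-n}\mathbb{Z}^d}w(x)^{-q}<\infty$ that you need demands $2\tau q>d$, and with $\tau>1$ and $q>d/\kappa$ this follows automatically only when $\kappa\le 2$ (which holds in every application in this paper, but is not part of the lemma's stated hypotheses). Your ``further enlarging $q$'' papers over this constraint. (Also note ``$\kappa q>1$'' is already a consequence of $q>d/\kappa$; no enlargement is needed there.)

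Second, the geometric series in your reassembly step diverges with the decay you assert. Set $\gamma\approx\alpha-\kappa<0$ for the exponent in the brick sup-bound; the sum over scales $l=n-m\ge 0$ has ratio $2^{-l(r+\gamma)}$, so it converges only if $r>-\gamma=-\alpha+\kappa$. But the hypothesis $\alpha<-r$ gives $r<-\alpha<-\alpha+\kappa$, so your series diverges. (The convergence condition $r>\alpha-\kappa-2\eta$ you write down is vacuously satisfied because $\alpha<0\le r$ --- that is the tip-off that a sign slipped.) The repair: the test functions defining $\|\cdot\|_{C^{\beta,\tau}}$ lie in $B_{r'}$ with $r'=-\lfloor\beta\rfloor$, and $\beta<\alpha<-r$ forces $r'\ge r+1$ and moreover $r'\ge-\beta$; so the coefficient decay is really $2^{-(n-m)r'}$, and after taking $\eta$ small enough that $\gamma>\beta$ one gets $r'+\gamma>0$ and the series converges. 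Cleaner still is to bypass the reassembly step entirely by invoking the wavelet characterization of $C^{\beta,\tau}$ with a compactly supported (Daubechies) basis of sufficiently high order: the assumed $q$-th moment bound applies directly to each (fixed, smooth, compactly supported) wavelet, and the only summation required is the union bound from your step two; no decomposition of an arbitrary $S^\lambda_y\psi$ is then needed.
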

	{A proof may be adapted from the proof of Lemma 9 in Section 5 of \cite{WM}.} 

 \subsection{Tightness}

 Throughout this section, we are going to fix a terminal time $T>0$. Let $\mu$ and $z_0$ be as in Assumption \ref{a1} Item \eqref{a22}. Recall that we denote $$\mu^{\varsigma}(\dr x):= e^{\varsigma x - \log M(\varsigma)} \mu(\dr x),$$ where $\mu$ as always is the annealed one-step measure of Assumption \ref{a1}. We claim that for any $f\in \mathcal S'(\mathbb R)$ and $|\varsigma| <z_0, $ the spatial convolution $g:= f*\mu^\varsigma$ given by 
 \begin{equation}
     \label{eq:convPhi} (g, \phi):= (f, \phi * \mu^\varsigma),\;\;\;\;\;\; \phi \in \mathcal S(\mathbb R^d),
 \end{equation}is well-defined as another element of $\mathcal S'(\mathbb R^d)$. To prove this, we must show that $\phi\mapsto \phi * \mu^\varsigma$ is a well-defined and continuous map from $\mathcal S(\mathbb R)$ into itself. By taking the Fourier transform (which is a linear isomorphism on $\mathcal S(\mathbb R^d)$), it suffices to show that the multiplication operator $T_\varsigma \psi(\xi) = \widehat{\mu^\varsigma}(\xi) \psi(\xi)$ is a continuous linear operator from $\mathcal S(\mathbb R^d)\to \mathcal S(\mathbb R^d)$. But this is clear from the fact that the Fourier transform $\widehat{\mu^\varsigma}$ (i.e., the ``characteristic function" of the measure $\mu^\varsigma$) is smooth with all derivatives bounded, since the measure $\mu^\varsigma$ has exponentially decaying tails. 

 Likewise if $f\in \mathcal S'(\mathbb R^{d+1})$ and one would like to define a tempered distribution $g\in \mathcal S'(\mathbb R^2)$ such that formally one has $g(t,x) = \int_{\mathbb R^d} f(t,x-y) \mu^\lambda(\dr y),$ the procedure is completely analogous, defining it via the Fourier transforms $\hat g(\xi_1,\xi_2):=  \widehat{\mu^\lambda}(\xi_2) \hat f(\xi_1,\xi_2).$
    
    \begin{defn}Define $$\Psi_{N,T}:= \{ (t,x)\in [0,T]\times \mathbb R^d : (Nt,N^{1/2}x) \in \mathbb Z\times I\}.$$ 
    \end{defn}

    Also recall the discrete lattice $$\Lambda_N:=\{(t,   x)\in \mathbb Z_{\ge 0}\times\mathbb R^d :   x+tN^{-1}   d_N\in \mathbb Z_{\ge 0}\times I\}$$ that was used in Section \ref{sec:3}.

    \begin{defn}\label{dnlnkn} For $(s,y) \in \mathbb Z\times I$ define $p_N(s,dy)$ to be the transition density at time $s$ and position $y$ of a random walker on $\Lambda_N$ with increment distribution $\mu^{   \varsigma_N}.$ Define the (macroscopic) linear operators $D_N,L_N,K_N 
    $ on $\mathcal S'(\mathbb R^{d+1})$ 
    by
    \begin{align*} D_Nf(t,x) &= N \big[ f(t+N^{-1},x)-f(t,x)\big],\\
        L_N f(t,x) &= N\bigg[f(t+N^{-1},x-N^{-1}   d_N) -\int_I f(t,x-N^{-1/2}y) \mu^{   \varsigma_N} (\dr y)\bigg],\\
        K_N f(t,x) &= N^{-1}\sum_{s\in [0,T]\cap (N^{-1}\mathbb Z_{\ge 0})} \int_{I} f(t-s,x-N^{-1/2}y)p_N(Ns,\dr y).
    \end{align*}
    These equalities should be understood by integration against smooth functions $\varphi \in \mathcal S(\mathbb R^{d+1})$, and the convolutions need to be understood using the explanation after \eqref{eq:convPhi}.
    \end{defn}

  $L_N$ is a diffusively rescaled version of the microscopic heat operator $\mathcal L_N$ from Section \ref{sec:3}, but which acts on tempered distributions rather than measures on $\mathbb Z\times I$. Indeed if $\varphi \in \mathcal S(\mathbb R^{d+1})$ then a second-order Taylor expansion shows that $L_N\varphi$ converges in $\mathcal S(\mathbb R^{d+1})$ as $N\to\infty$ to $(\partial_t-\frac12\partial_x^2)f$, i.e., $L_N$ approaches the continuum heat operator. Then $K_N$ is the inverse operator to $L_N$, in the following sense.
    
    \begin{lem}\label{inv} $L_NK_Nf=K_NL_N f=f$ whenever $f$ is a tempered distribution supported on $[a,b]\times \mathbb R^{d}$ with $b-a<T+1$.
    \end{lem}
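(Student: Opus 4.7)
The content of the lemma is that $K_N$ is a discrete Green's function for the operator $L_N$: by construction it convolves $f$ with the fundamental solution of the discrete parabolic equation $L_N u=0$, which is encoded by the walker transition density $p_N$. By the linearity and translation-convolution structure of both operators as maps on $\mathcal S'(\mathbb R^{d+1})$ (each is a finite linear combination of space-time translations composed with spatial convolutions against the fixed measures $\mu^{\varsigma_N}$ and $p_N(n,\cdot)$), it suffices to verify both identities on $f\in C_c^\infty(\mathbb R^{d+1})$ with $\mathrm{supp}(f)\subset[a,b]\times\mathbb R^d$, $b-a<T+1$, and extend to tempered distributions by density. For such $f$, the sum defining $K_N f(t,x)$ is genuinely finite at each $(t,x)$, so all exchanges of summation and spatial integration below are legitimate.

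Writing $L_N=N(A-B)$ with $A$ the space-time translation $(t,x)\mapsto(t+N^{-1},x-N^{-1}d_N)$ and $B$ the spatial convolution with $N^{-1/2}\cdot\mu^{\varsigma_N}$, expand $L_N K_N f(t,x)=\sum_{n=0}^{NT}(\alpha_n-\beta_n)$, where $\alpha_n,\beta_n$ denote the contributions of the $n$-th summand of $K_N f$ under $A$ and $B$ respectively. The defining one-step Markov recursion for $p_N$ --- that $p_N(n+1,\cdot)$ is obtained from $p_N(n,\cdot)$ by convolving with $\mu^{\varsigma_N}$ after absorbing the drift shift built into the lattice $\Lambda_N$ --- combined with a compensating change of variable $y\mapsto y+N^{-1/2}d_N$ in the spatial integral, produces the cancellation $\alpha_{n+1}=\beta_n$ for each $0\le n<NT$. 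The sum therefore telescopes to just two surviving endpoint contributions. The low-end contribution, together with $p_N(0,\cdot)=\delta_0$ and the explicit first-order form of $L_N$, reconstitutes $f(t,x)$; the high-end contribution involves $f$ evaluated at time $t-T$, and vanishes identically in $(t,x)$ whenever $b-a<T+1$ (precisely the range in which $t-T$ cannot simultaneously lie in $[a,b]$ for any $(t,x)$ at which $L_N K_N f$ is potentially nonzero).

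The identity $K_N L_N f=f$ follows from a mirror-image telescoping in which the roles of the two halves of $L_N$ are interchanged; the underlying commutation is automatic because $A$, $B$, and each spatial convolution against $p_N(n,\cdot)$ are compositions of space-time translations with spatial convolutions against fixed deterministic measures, hence pairwise commuting. The main technical obstacle is the careful bookkeeping of two distinct drift scales: the explicit shift $-N^{-1}d_N$ in the time-step of $L_N$, versus the drift of scale $N^{-1/2}d_N$ implicit in each walker step of $p_N$. Reconciling these through the change of variable described above is what makes the Markov recursion of $p_N$ telescope cleanly against the cross-term of $L_N$; once this alignment is recorded, the boundary analysis via the support hypothesis $b-a<T+1$ is routine.
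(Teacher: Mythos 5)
Your proposal takes the same route as the paper: reduce to smooth, compactly supported $f$ via density and the $\mathcal S'(\mathbb R^{d+1})$-continuity of $L_N$ and $K_N$, then verify the identity by a telescoping computation built on the one-step Markov recursion for $p_N$. This telescoping is precisely the "direct calculations" the paper invokes once it notes that $p_N$ is the inverse kernel of $\mathcal L_N$, so the two proofs coincide in spirit and in structure.

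One point to flag: with your pairing $\alpha_{n+1} = \beta_n$, the surviving low-end term is $\alpha_0 = \int_I f\big(t+N^{-1}, x - N^{-1}d_N - N^{-1/2}y\big)\,p_N(0,\dr y) = f\big(t+N^{-1}, x - N^{-1}d_N\big)$, a one-step space-time shift of $f$ rather than $f(t,x)$ itself. Your statement that the low end ``together with $p_N(0,\cdot)=\delta_0$ and the explicit first-order form of $L_N$, reconstitutes $f(t,x)$'' is therefore asserted rather than derived: to make the telescope land exactly on $f(t,x)$ you need to be precise about which $s$-range appears in $K_N$ (whether $s=0$ is genuinely included, or whether the first contributing index is $s=N^{-1}$) and whether $p_N$ carries a deterministic drift offset at time $0$. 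The paper is equally terse at this point, so this does not put you behind the paper's own exposition, but it is the one spot in your write-up where the bookkeeping is not actually closed.
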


    \begin{proof}
        Note that for each $N$ the operators $K_N,L_N$ are continuous on $\mathcal S'(\mathbb R^{d+1})$, as may be verified using the Fourier transform as explained after \eqref{eq:convPhi}.  Therefore it suffices to prove the claim for all smooth functions $f$ that have compact support contained in $(a,b)\times \mathbb R^d$, since these are dense in the subset of distributions supported on $[a,b]\times \mathbb R$, with respect to the topology of $\mathcal S'(\mathbb R^{d+1})$. The smooth analogue is true by direct calculations, since $p_N(s,\dr y)$ is the kernel for the inverse operator to the discrete heat operator $\mathcal L_N$ introduced in Section \ref{sec:3}.
    \end{proof}

    Recall the fields $M^N_t(\phi)$ and $Q^f_N(t,\phi)$ as defined in \eqref{mfield} and \eqref{qfield} respectively. We can now view these as random elements of $C([0,T+1],\mathcal S'(\mathbb R))$ by defining these fields by linear interpolation for $t\notin N^{-1}\mathbb Z_{\ge 0}.$
    

    For any $T$, note that $C([0,T],\mathcal S'(\mathbb R^d))$ embeds naturally into the linear subspace of $\mathcal S'(\mathbb R^{d+1})$ consisting of distributions supported on $[0,T]\times \mathbb R^d$, thus we can make sense of $D_Nf,L_Nf,K_Nf$ for all $f\in C([0,T],\mathcal S'(\mathbb R^d)),$ and these will be elements of $\mathcal S'(\mathbb R^{d+1})$ in general.

    \begin{defn}
        We will say that two tempered distributions $f,g \in \mathcal S'(\mathbb R^{d+1})$ agree on $[0,T]$ if there exists $\e>0$ such that $(f,\varphi) = (g,\varphi)$ for all $\varphi$ supported on $[-\varepsilon,T+\varepsilon]\times \mathbb R^d.$
    \end{defn}

    \begin{defn}
        Sample the environment $\omega$ and then define a collection of coefficients $a_N(s,\dr y)$ as $a_N(0,\dr y) = \nu_N$ and for $s\geq 1$, $$a_N(s,A) = \int_I p_N(s,A- y) \;\;(\nu_N^{\varsigma_N})(\dr x_0),$$ where $\nu_N^{\varsigma_N}$ is the $  \varsigma_N$-tilted initial state $(\nu_N^{\varsigma_N})(\dr x):= e^{  \varsigma_N\bullet x - \log\int_I e^{\varsigma_N \bullet a} \nu_N(\dr a)} \nu_N(\dr x)$ and $p_N$ is the same random walk kernel from Definition \ref{dnlnkn}. 
        Then define the distribution $\mathfrak G^N\in C([0,T+1],\mathcal S'(\mathbb R^d))$ for $t\in N^{-1}\mathbb Z_{\ge 0}$ by $$\mathfrak G^N(t,\phi) := \int_I  \phi(N^{-1/2}y) a_N(Nt,\dr y) = \mathbb E[ \mathfrak H^N(t,\phi)],$$and linearly interpolated for $t\notin N^{-1}\mathbb Z_{\ge 0}$.
    \end{defn}

    \begin{lem}[Discrete Duhamel formula]\label{u=kdm} 
        Let $M^N$ be as in \eqref{mfield}. Furthermore, let $\mathfrak H^N$ be as defined in \eqref{hn}, and define $$\mathfrak F^N:= \mathscr B_N\cdot (\mathfrak H^N-\mathfrak G^N).$$ Restrict $\mathfrak F^N$ to $[0,T]$ thus viewed as an element of $C([0,T],\mathcal S'(\mathbb R^d))$. Then $L_N\mathfrak F^N$ agrees with $D_NM^N$ on $[0,T]$. Consequently $\mathfrak F^N$ agrees with $K_N D_NM^N$ on $[0,T]$.
    \end{lem}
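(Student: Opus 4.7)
The plan is to first establish the operator identity $L_N \mathfrak{F}^N = D_N M^N$ as tempered distributions on $\R^{d+1}$ supported in $[0,T]\times \R^d$, after which the Duhamel formula $\mathfrak{F}^N = K_N D_N M^N$ on $[0,T]$ follows by applying $K_N$ to both sides: via Lemma \ref{embed}, $\mathfrak{F}^N \in \mc S'(\R^{d+1})$ is supported in $[0,T]\times \R^d$ (an interval of length $T<T+1$), so Lemma \ref{inv} gives $K_N L_N \mathfrak{F}^N = \mathfrak{F}^N$.

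To prove the operator identity, I would decompose $\mathfrak{F}^N = \mathscr{B}_N(\mathfrak{H}^N - \mathfrak{G}^N)$ and handle each piece separately. For $\mathfrak{H}^N$, the change of variables $y = x - d_N t$ in \eqref{hn} yields the clean representation $\mathfrak{H}^N(t,\phi) = \int_I \phi(N^{-1/2}y)Z_N(Nt,\dr y)$ at grid times $t \in N^{-1}\mathbb{Z}_{\ge 0}$. Under this identification, the macroscopic operator $L_N$ acting on $\mathfrak{H}^N$ corresponds, after diffusive rescaling, to $N$ times the microscopic operator $\mathcal{L}_N$ acting on $Z_N$ and paired against $\phi(N^{-1/2}\cdot)$. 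Proposition \ref{prop:heatOperator} then evaluates $\mathcal{L}_N Z_N$ explicitly in terms of the environment kernels, and direct comparison with the defining relation \eqref{mfield} yields $\mathscr{B}_N L_N \mathfrak{H}^N(t,\phi) = D_N M^N(t,\phi)$ at all grid times.

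For $L_N \mathfrak{G}^N$: the construction of $a_N$ as the convolution of $\nu_N^{\varsigma_N}$ with the kernel $p_N$ of a random walk on $\Lambda_N$ with increments $\mu^{\varsigma_N}$ implies the one-step discrete heat equation $a_N(s+1,\dr y - N^{-1}d_N) = \int_I a_N(s,\dr y - z)\mu^{\varsigma_N}(\dr z)$ for $s \ge 1$. After diffusive rescaling this is exactly the statement $L_N \mathfrak{G}^N(t,\phi) = 0$ for $t \in [N^{-1},T] \cap N^{-1}\mathbb{Z}_{\ge 0}$. Subtracting the two identities gives $L_N \mathfrak{F}^N(t,\phi) = D_N M^N(t,\phi)$ at interior grid times; since $\mathfrak{H}^N$, $\mathfrak{G}^N$, and $M^N$ are all linearly interpolated in $t$ between their values on $N^{-1}\mathbb{Z}_{\ge 0}$, and since $L_N$, $D_N$ act on such piecewise-linear functions through discrete differences of grid values, the equality extends to all of $[0,T]$ as an identity of tempered distributions.

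The main technical hurdle will be the careful bookkeeping of the shift $N^{-1}d_N$ appearing in $L_N$ versus the internal microscopic shift $rN^{-1}d_N$ already built into $Z_N(r,\cdot)$, together with the change of variables on the $\mu^{\varsigma_N}$ integration in the spatial convolution. A secondary subtlety lies at the boundary $t=0$, where $a_N(0,\cdot) = \nu_N$ rather than $\nu_N^{\varsigma_N}$; because we only require equality on $[0,T]$ (in the enlarged-support sense defined before Lemma \ref{inv}), this initial-time discrepancy does not contribute to any test-function pairing localized in a small neighborhood of $[0,T]$, and can be absorbed into the argument once this is verified.
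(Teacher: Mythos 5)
Your proposal is correct and follows essentially the same route as the paper's proof: match $L_N$ applied to $\mathscr B_N\mathfrak H^N$ against $D_NM^N = V^N$ via Proposition \ref{prop:heatOperator} and \eqref{mfield}, observe $L_N\mathfrak G^N = 0$, extend through linear interpolation, and invert with $K_N$ via Lemma \ref{inv}. One caveat: your dismissal of the $t=0$ discrepancy is not actually sound as stated. The "agrees on $[0,T]$" convention pairs against test functions supported on $[-\e,T+\e]\times\mathbb R^d$, which does see $t=0$, and a change in $\mathfrak G^N(0,\cdot)$ propagates into $L_N\mathfrak F^N$ on the whole interval $[0,N^{-1}]$ through the linear interpolation, so this is not a removable null set. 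The correct resolution is that $a_N(0,\dr y)=\nu_N$ appears to be a typo in the paper; it should read $\nu_N^{\varsigma_N}$, which is both what $Z_N(0,\dr y)$ actually equals and what the displayed formula for $a_N(s,\cdot)$ produces at $s=0$ with $p_N(0,\cdot)=\delta_0$. With that correction, $\mathfrak G^N(0,\cdot)=\mathbb E[\mathfrak H^N(0,\cdot)]$ and $L_N\mathfrak G^N(0,\cdot)=0$ hold with no boundary exception, and your argument goes through verbatim.
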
 

    \begin{proof}  
    Firstly, notice that $L_N \mathfrak G^N (t,\cdot) = 0$ for $t\in N^{-1}\mathbb Z_{\ge 0}$. 
    
    Let $V^N := D_NM^N$. Then it is clear from \eqref{mfield} that 
    $$V^N(t,\phi) = N\big[ M^N_{t+N^{-1}}(\phi)-M^N_t(\phi)\big]=N\mathscr B_N\int_I \phi(N^{-1/2}x) (\mathcal L_NZ_N)(Nt,\dr x),$$ for all $0\le t\in N^{-1}\mathbb Z_{\ge 0}$ and $\phi\in \mathcal S(\mathbb R^d)$.
    Now, with $Z_N$ defined in \eqref{zn}, for $t\in N^{-1}\mathbb N$ we have $$\mathfrak H^N(t,\phi) = \int_I \phi(N^{-1/2}x)Z_N(Nt,\dr x) .$$ With $\mathcal L_N$ as defined in Section \ref{sec:3}, we than apply the convolution defining $L_N$ to both sides and it is then clear that for $t\in N^{-1}\mathbb Z_{\ge 0}$ the expression for $L_N\mathfrak H^N(t,\cdot)$ can be written as \begin{align*}  N \mathscr B_N \int_I \phi(N^{-1/2}x) (\mathcal L_N Z_N)(Nt,\dr x).
    \end{align*}
        which is the same as the expression for $V^N(t,\phi).$ 
        
        Now we need to show that equality holds even if $t\notin N^{-1}\mathbb Z_{\ge 0}$. Since linear operators respect linear interpolation, and since all of the fields $\mathfrak H^N, V_N, M^N$ are defined via linear interpolation, this is actually immediate.
        
        Finally, if we view the restriction of $V^N$ to $[0,T]$ as an element of $\mathcal S'(\mathbb R^{d+1})$ supported on $[0,T]\times \mathbb R$, then we can apply $K_N$ to both sides and we obtain that $(\mathfrak H^N-\mathfrak G^N)(t,\cdot) = K_N V^N(t,\cdot) = K_ND_NM^N(t,\cdot)$ for all $t\in [0,T]$ by Lemma \ref{inv}.
    \end{proof}

 \subsection{Identification of the correct noise coefficients for each regime}
 In this section, we specialize the function $f$ appearing in the quadratic variation field \eqref{qfield} to the specific correct choices of functions $\boldsymbol\eta_{\alpha,\beta}$, $\z$, $\boldsymbol{\vartheta}_{   v}$ from Definitions \ref{etadef}, \ref{z}, and \ref{def:vartheta} respectively.

 \begin{prop}[Tightness of all relevant processes]\label{mcts} The following are true.
 \begin{enumerate}
		\item The fields $M^N$ from \eqref{mfield} may be realized as an element of $C([0,T],C^{\alpha,\tau}(\mathbb R^d))$ for any $\alpha<-5$ and $\tau>1$. Moreover, they are tight with respect to that topology.  \item The fields $\mathfrak F_N:=\mathscr B_N\cdot (\mathfrak H^N-\mathbb E[\mathfrak H^N])$ from \eqref{hn} may be realized as an element of as an element of $C([0,T],C^{\alpha,\tau}(\mathbb R^d))$ for any $\alpha<-d-p-2$ and $\tau>1$. Moreover, they are tight with respect to that topology.
        \item For any $f:I\to \mathbb R$ of exponential decay, the fields $Q_N^f$ from Definition \ref{qfield} may be realized as an element of $C([0,T],C^{\gamma,\tau}(\mathbb R))$ for any $\gamma<-1$ and $\tau>1$. Moreover, they are tight with respect to that topology.
        
        \item Let $\alpha,\gamma<0$ be as in the previous items. Let $(M^\infty,Q^{f,\infty}, H^\infty)$ be a joint limit point of $(M^N,\hat Q_N^{f},\mathfrak H^N)$ in the space $C\big([0,T],C^{\alpha,\tau}(\mathbb R^d)\times C^{\gamma,\tau}(\mathbb R^d)\times C^{\alpha,\tau}(\mathbb R^d)\big),$ where $f$ could be any specific function on $I$. For all $\phi\in C_c^\infty(\mathbb R)$, the process $(M_t^\infty(\phi))_{t\in[0,T]}$ is a continuous martingale with respect to the canonical filtration on that space, and moreover its quadratic variation is given by
  \begin{equation}
      \label{e:mcts}
      \langle M^\infty(\phi)\rangle_t = \begin{cases} \sum_{\#(k_1)=p ,\#(k_2)=p} \frac{1}{k_1!k_2!} Q_t^{\boldsymbol\eta_{k_1,k_2},\infty}(\partial^{k_1} \phi\cdot \partial^{k_2}\phi), & N^{1/2} \varsigma_N\to 0 \\ Q_t^{\z_{  {\boldsymbol v}} ,\infty} (\phi^2) , & N^{-1}\psi_N(p,d)\gg |  \varsigma_N|\gg N^{-1/2} , \frac{   \varsigma_N}{|  \varsigma_N|}\to   {\boldsymbol v} \\ 
      Q_t^{\z_{  {\boldsymbol v}/|  {\boldsymbol v}|} ,\infty} (\phi^2) , & d=2 \text{ and } (\log N)^{1/2p}   \varsigma_N\to   {\boldsymbol v}\ne 0 \\ 
      |  {\boldsymbol v}|^{-2p} Q_t^{\boldsymbol\vartheta_{  {\boldsymbol v}} ,\infty} (\phi^2), & d\ge 3 \text{ and } \varsigma_N\to   {\boldsymbol v} \ne 0.
      \end{cases}
  \end{equation} Here $\boldsymbol\eta_{\alpha,\beta}$, $\z$, $\boldsymbol{\vartheta}_{   v}$ are the specific functions from Definitions \ref{etadef}, \ref{z}, and \ref{def:vartheta} respectively. Furthermore, the same joint limit holds if $f$ is replaced by $f_N$ in the prelimit, where $\sup_N f_N$ is dominated by some function of exponential decay and $f_N\to f$ uniformly.
  \end{enumerate}
	\end{prop}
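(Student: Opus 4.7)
My plan for this proposition is as follows. For Items (1) and (3), I would use the distributional Kolmogorov continuity lemma (Lemma \ref{l:KC}) together with the moment bounds of Propositions \ref{optbound} and \ref{tight1}. For $M^N$, Proposition \ref{optbound} gives $\mathbb E[|M_t^N(\phi)-M_s^N(\phi)|^q]^{1/q}\le C\|\phi\|_{C^{p+2}}|t-s|^\varepsilon$ for arbitrary $q\ge 1$, and evaluating on the rescaled test function $S_x^\lambda\psi$ together with the rescaling bound $\|S_x^\lambda\psi\|_{C^{p+2}}\le C\lambda^{-d-p-2}$ gives the hypotheses of Lemma \ref{l:KC} with $\alpha-\kappa=-d-p-2$ and $\kappa=\varepsilon$; choosing $q$ sufficiently large yields tightness in $C([0,T],C^{\beta,\tau})$ for all $\beta<-d-p-2$. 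For $Q_N^f$ the identical argument based on Proposition \ref{tight1} and $\|S_x^\lambda\psi\|_{L^\infty}\le C\lambda^{-d}$ yields tightness in $C([0,T],C^{\gamma,\tau})$ for $\gamma<-d$, which covers the asserted range.

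For Item (2), the natural route is through the discrete Duhamel identity $\mathfrak F^N=K_N D_N M^N$ on $[0,T]$ from Lemma \ref{u=kdm}. I would establish a Schauder-type bound showing that $K_N D_N$ is bounded on the parabolic H\"older space $C^{\alpha,\tau}_\mathfrak s(\Psi_{T+1})$ with norm uniform in $N$. Heuristically this is the discrete analogue of the standard fact that the singular integral operator given by convolution of $\partial_t$ with the heat semigroup preserves parabolic H\"older classes without net gain or loss, and it can be proved by splitting $K_N D_N$ according to whether the time lag is small or large compared to $\lambda^2$, using summation by parts to transfer one factor of $D_N$ onto the test function, and then invoking Gaussian-type decay of the discrete kernel $p_N$ (inherited from exponential moment bounds on $\mu^{\varsigma_N}$) to close the estimate. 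Once this Schauder bound is available, tightness of $M^N$ immediately propagates to tightness of $\mathfrak F^N$ in the same parabolic class, and the embedding from Lemma \ref{embed} combined with standard interpolation turns this into the asserted bound in $C([0,T],C^{\alpha,\tau}(\mathbb R^d))$ for $\alpha<-d-p-2$.

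For Item (4), fix any joint subsequential limit $(M^\infty,Q^{f,\infty},H^\infty)$. Continuity of $t\mapsto M_t^\infty(\phi)$ is automatic from the ambient function space. The martingale property follows by passage to the limit in the prelimiting identity: for $s<t$ in $N^{-1}\mathbb Z_{\ge 0}$ and any bounded continuous functional $\Phi$ of the canonical path up to time $s$, one has $\mathbb E[(M_t^N(\phi)-M_s^N(\phi))\Phi(M_{\cdot\le s}^N)]=0$, and the uniform $L^q$ bound of Proposition \ref{optbound} with some $q>1$ supplies the uniform integrability needed to commute the expectation with the weak limit, yielding the martingale identity for $M^\infty(\phi)$. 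To identify $\langle M^\infty(\phi)\rangle$ I would combine Proposition \ref{4.3} with the vanishing error estimate of Proposition \ref{errbound}: in each of Regimes A through D, these together show
\begin{equation*}
\big|\langle M^N(\phi)\rangle_t-\mathcal Q_N(t,\phi)\big|\xrightarrow{N\to\infty}0\;\;\text{in probability},
\end{equation*}
where $\mathcal Q_N(t,\phi)$ is the regime-dependent linear combination of $Q_N^f(t,\bullet)$ terms read off from \eqref{e:mcts}. The prelimiting martingality of $(M_t^N(\phi))^2-\langle M^N(\phi)\rangle_t$ then passes to the limit exactly as above, now using Proposition \ref{optbound} with $q>2$ for uniform integrability, and the joint convergence $(M^N,Q_N^{f},\mathfrak H^N)\to(M^\infty,Q^{f,\infty},H^\infty)$ identifies the compensator as the right-hand side of \eqref{e:mcts} by uniqueness of the Doob--Meyer decomposition.

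The main obstacle will be securing uniform integrability strong enough to pass expectations of both $M_t^N(\phi)$ and $M_t^N(\phi)^2$ through the limit, which forces using Proposition \ref{optbound} and Proposition \ref{tight1} with $q$ strictly greater than $2$; this is available since both bounds hold for arbitrary $q$. A secondary technical point is the replacement $f\rightsquigarrow f_N$: since $f_N\to f$ uniformly on compacts and $\sup_N|f_N|$ decays exponentially, the difference $Q_N^{f_N}-Q_N^f$ vanishes in $L^2$ by applying Corollary \ref{cor:expectationBoundGeneral} (via the Markov-property upgrade used in Proposition \ref{tight1}) to $|f_N-f|$ and letting the decay amplitude tend to zero, so the identification of $\langle M^\infty(\phi)\rangle_t$ is unchanged. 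Finally, the Schauder bound for $K_ND_N$ in Item (2), while conceptually standard, requires care because the $N\to\infty$ uniformity is not covered by any off-the-shelf result and must be tailored to the discrete heat kernel at hand.
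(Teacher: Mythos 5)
Your proposal follows essentially the same route as the paper's proof: Items (1) and (3) via Lemma \ref{l:KC} fed by the moment bounds of Propositions \ref{optbound} and \ref{tight1}; Item (2) by transferring the tightness of $M^N$ through the discrete Duhamel identity $\mathfrak F^N = K_N D_N M^N$ of Lemma \ref{u=kdm} together with the $N$-uniform Schauder bound of Lemma \ref{kndn}; and Item (4) by passing the prelimiting Dynkin martingales to the limit under uniform integrability and killing the error terms $\mathcal V_N^j,\mathcal E_N^j$ via Propositions \ref{4.3} and \ref{errbound} combined with \eqref{e.tight7}. The only minor addition you make is an explicit treatment of the $f_N\rightsquigarrow f$ replacement via Corollary \ref{cor:expectationBoundGeneral}, which the paper's proof of this proposition leaves implicit; this is a harmless and correct elaboration.
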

 \begin{proof}
     Take any $\phi\in C_c^\infty(\mathbb R)$ with $\|\phi\|_{L^\infty}\leq 1.$ Recall $S_{(t,x)}^{\lambda}$ from \eqref{scale}. Using the first bound in Proposition \ref{tight1}, we have for fixed $f:I\to\mathbb R$ of rapid decay that$$\Ex[ (Q_N^f(t,S^\lambda_{x}\phi)-Q_N^f(s,S^\lambda_{x}\phi))^{k}]\leq C |t-s|^{k/2}\lambda^{-k},$$ uniformly over $x\in\mathbb R, \lambda\in (0,1], 0\leq s,t\leq T$ with $s,t\in N^{-1}\mathbb Z_{\ge 0}$. Since $Q_N(0,\phi)=0$ by definition, the assumptions of Lemma \ref{l:KC} are therefore satisfied for any $\kappa\leq 1/4$, any $p>1/\kappa$, and any $\alpha\leq -1$, and we conclude the desired tightness for $\hat Q_N=Q_N$. This proves Item \textit{(3)}.

  \smallskip
  
		Now we address the tightness of the $M^N$. Let $\e$ be as in Definition \ref{goodseq}. Using  Proposition \ref{optbound}, we have that \begin{equation}\label{m1}\Ex[ (M^N_t(\phi)-M^N_s(\phi))^{q}]^{1/q} 
        \leq C(t-s)^{\e} \|\phi\|_{C^{p+2}(\mathbb R^d)},\end{equation} where $\phi\in C_c^\infty(\mathbb R)$, $C=C(k,q)>0$ is free of $\phi,s,t,N.$
		This gives $$\Ex\big[ \big(M^N_t(S^\lambda_{x}\phi)-M^N_s(S^\lambda_{x}\phi)\big)^{2/\e}\big]^{\e/2}\leq C(t-s)^{\e} \lambda^{-d-p-2} $$
		uniformly over $x\in\mathbb R, \lambda\in (0,1], 0\leq s,t\leq T$, and $\phi\in C_c^\infty(\mathbb R^d)$ with $\|\phi\|_{C^{p+2}}\leq 1$ with support contained in the unit interval. Moreover $M^N(0,\phi)=0$ by definition, therefore the assumptions of Lemma \ref{l:KC} are satisfied with $\kappa=\epsilon$, any $q>1/\kappa$, and any $\alpha\leq -d-p-2$. Hence, we conclude the desired tightness for $M^N$. This proves Item \textit{(1)}.

  Now tightness for the fields $\mathfrak H^N-\mathfrak G^N$ is immediate from Lemma \ref{kndn}, since we know from Lemma \ref{u=kdm} that $\mathfrak H^N-\mathfrak G^N = K_ND_NM^N$ where we view $K_ND_N $ as a bounded operator from $C([0,T+1],C^{\alpha,\tau}(\mathbb R^d))\to C([0,T],C^{\alpha,\tau}(\mathbb R^d))$. 

  \smallskip
  
We next show that the limit point $M^{\infty}(\phi)$ is a martingale indexed by $t\in N^{-1}\mathbb Z_{\ge 0}$. Since $M^N_0(\phi)=0$, from \eqref{m1}, we see that $\sup_N \Ex[M^N_t(\phi)^{2k}]<\infty.$ Thus $M^{\infty}(\phi)$ is a martingale since martingality is preserved by limit points under the uniform integrability assumption. Continuity is guaranteed by the definition of the spaces in which we proved tightness. In the prelimit, we know $M_t^N(\phi)^2$ minus the objects in Proposition \ref{4.3} are martingales. 
By the latter proposition and the tightness estimates  \eqref{e.tight7} of $Q_N^f$, it follows that the error terms $V^j_N(t,\phi)$ and $\mathcal E_N^j(t,\phi)$ vanish in probability (in the topology of $C[0,T]$ for each $\phi \in C_c^\infty(\mathbb R)$), so we conclude (again by uniform $L^p$ boundedness guaranteed by Proposition \ref{tight1}) that the process given in the proposition statement is a martingale. This verifies \eqref{e:mcts} completing the proof of Item \textit{(4)}.
 \end{proof}

\subsection{Technical lemmas on the operators}

\begin{lemma}\label{kndn}
     Fix $\alpha<0,\tau>0$. Let $K_N,D_N$ be the operators from Definition \ref{dnlnkn}. Recall the function spaces from Definition \ref{fsp}.
     Then we have the operator norm bound $$\sup_{N\ge 1}\|K_ND_N\|_{C([0,T+1],C^{\alpha,\tau}(\mathbb R^d))\to C([0,T],C^{\alpha,\tau}(\mathbb R^d))}<\infty.$$
 \end{lemma}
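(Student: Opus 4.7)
The plan is to express $K_N D_N v$ via discrete summation by parts, reducing the estimate to a parabolic Schauder-type bound against scaled test functions. Fix $v \in C([0,T+1], C^{\alpha,\tau}(\mathbb R^d))$ with norm at most $1$, fix $t\in [0,T]\cap (N^{-1}\mathbb Z_{\ge 0})$, and fix $\phi\in C_c^\infty(\mathbb R^d)$ supported in the unit ball with $C^r$-norm $\le 1$ (where $r=-\lfloor\alpha\rfloor$). Introduce the adjoint semigroup $P_N^*(s)f(x):=\int_I f(x+N^{-1/2}y)p_N(Ns,\dr y)$ and the adjoint generator $\delta_N^* f(x):= N\int_I (f(x+N^{-1/2}y)-f(x))p_N(Ns,\dr y)$. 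Summing $\sum_{s\in [0,t]\cap N^{-1}\mathbb Z} P_N(t-s)[v(s+N^{-1})-v(s)]$ by parts and then dualizing, one obtains the identity
\[
(K_N D_N v(t), S_x^\lambda \phi)
= (v(t+N^{-1}), S_x^\lambda \phi)
+ \frac{1}{N}\sum_{s\in [0,t-N^{-1}]\cap N^{-1}\mathbb Z_{\ge 0}} \bigl(v(s),\, P_N^*(t-s-N^{-1})\,\delta_N^* S_x^\lambda \phi\bigr),
\]
where all pairings are the elliptic $L^2(\mathbb R^d)$-pairing. The first term is clearly bounded by $\lambda^\alpha(1+x^2)^\tau$ by the definition of $C^{\alpha,\tau}(\mathbb R^d)$.

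The main estimate to establish is the dichotomy bound, uniform in $N$, $x$, $\lambda\in(0,1]$, and $s\le t$:
\[
\bigl|\bigl(v(s),\, P_N^*(t-s-N^{-1})\,\delta_N^* S_x^\lambda \phi\bigr)\bigr|
\;\lesssim\; \bigl(\lambda^{\alpha-2}\wedge (t-s)^{\alpha/2-1}\bigr)\,(1+x^2)^\tau.
\]
For the $\lambda^{\alpha-2}$ bound one uses that $\delta_N^*$ is a discrete second-order operator with exponentially decaying kernel (Assumption \ref{a1}(\ref{a22})), so $\delta_N^* S_x^\lambda \phi$ is a test function at scale $\lambda$ with $C^r$-norm of order $\lambda^{-2}$; applying $P_N^*$ preserves the scale up to translations absorbed by the weight, and pairing with $v(s)$ gives $\lambda^{\alpha-2}$. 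For the $(t-s)^{\alpha/2-1}$ bound, a local CLT / heat kernel estimate (obtainable via Fourier analysis of $p_N$ using the exponential moments of $\mu^{\varsigma_N}$) shows that $P_N^*(u)\delta_N^* S_x^\lambda \phi$ is essentially a test function at the larger parabolic scale $\sqrt{u}$ with $C^r$-norm of order $u^{-1}$ once $u\gtrsim \lambda^2$; pairing with $v(s)$ yields $(\sqrt{t-s})^{\alpha}(t-s)^{-1}=(t-s)^{\alpha/2-1}$. The weight factor $(1+x^2)^\tau$ enters because translates by $N^{-1/2}y$ in $p_N$ are controlled by the exponential tails of $p_N$, which are uniform in $N$ (this uses $\mu^{\varsigma_N}$ having uniform exponential tails for $|\varsigma_N|$ bounded away from $z_0$), allowing comparison of $w(x-N^{-1/2}y)$ with $w(x)$.

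Granted the dichotomy bound, Riemann-summing (uniform in $N$) yields
\[
\frac{1}{N}\sum_{s\in[0,t]\cap N^{-1}\mathbb Z_{\ge 0}}\bigl(\lambda^{\alpha-2}\wedge (t-s)^{\alpha/2-1}\bigr)
\;\le\; \int_0^{t-\lambda^2}(t-s)^{\alpha/2-1}\dr s + \int_{t-\lambda^2}^t \lambda^{\alpha-2}\dr s
\;\le\; C(\alpha)\,\lambda^\alpha,
\]
valid since $\alpha<0$, and uniform over $t\in[0,T]$. Taking suprema over $\phi$, $\lambda$, $x$ gives the elliptic $C^{\alpha,\tau}$-norm bound at each fixed $t$; continuity in $t$ follows by linearly interpolating the definition of $D_N$ and $K_N$ on the sub-grid (and noting the expression is actually continuous in $t$ because $v(t+N^{-1})$ is). The case $r=-\lfloor\alpha\rfloor>0$ requires no new work since $S_x^\lambda$-tests include all such smooth test functions.

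The main obstacle is the dichotomy estimate for $P_N^*(u)\delta_N^* S_x^\lambda \phi$ with constants uniform in $N$. This reduces to showing that the discrete heat semigroup $P_N^*$ obeys Gaussian-type estimates and maps test functions at scale $\lambda$ to test functions at scale $\sqrt{u+\lambda^2}$ with the expected $C^r$-norm decay; this is standard once one exploits the exponential moment bound on $\mu^{\varsigma_N}$ from Assumption \ref{a1}(\ref{a22}) together with a Fourier/CLT argument, but it requires some care to make the constants independent of $N$ and of the drift $d_N$ (the drift is absorbed by the centering $x\mapsto x-N^{-1}d_N t$ already built into the definitions of $L_N$ and $p_N$).
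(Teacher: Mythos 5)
Your plan is essentially the same as the paper's (the proof is deferred to \cite[Lemma 5.18]{Par24}, but the strategy is visible in outline in the same paper): discrete summation by parts to expose the adjoint semigroup $P_N^*$ and adjoint generator $\delta_N^*$, the dichotomy estimate $\lambda^{\alpha-2}\wedge(t-s)^{\alpha/2-1}$ for $(v(s),P_N^*(t-s-N^{-1})\delta_N^*S_x^\lambda\phi)$, and a Riemann-sum integration to land on $\lambda^\alpha$. What you leave as a sketch — the uniform-in-$N$ discrete Schauder/heat-kernel bound behind the dichotomy — is indeed the part that carries the work, and the referenced lemma establishes it precisely by the Fourier/local-CLT argument you indicate (exploiting that the recentered one-step kernel has mean zero and uniform exponential moments); you should also record the boundary term $-(v(0),P_N^*(t)S_x^\lambda\phi)$ that appears for general $v$ (in the paper's application it vanishes because $v$ is taken to vanish near $t=0$, but for the operator-norm statement it must be estimated, which follows from $P_N^*(t)S_x^\lambda\phi$ being a test function at scale $\max\{\lambda,\sqrt{t}\}$ and $\alpha<0$).
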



 \begin{proof} The full proof can be found in \cite[Lemma 5.18]{Par24}. Note that that proof is for $d=1$ but readily extends to all dimensions. 
 \end{proof}

\begin{lem}[Controlling the difference between the discrete and continuum time-derivatives] \label{ds} Fix $\alpha<0, \tau>1$. The derivative operator $\partial_s : C^{\alpha,\tau}_\mathfrak s(\Psi_T) \to C_\mathfrak s^{\alpha-2,\tau}(\Psi_T)$ which was defined in Remark \ref{d/dx}, is a bounded linear map. Furthermore, let $D_N$ be as in Definition \ref{dnlnkn}. Then we have the operator norm bounds 
 \begin{align*}\sup_{N\ge 1}\|D_N\|_{C^{\alpha,\tau}_\mathfrak s(\Psi_T)\to C_\mathfrak s^{\alpha-2,\tau}(\Psi_T)}&<\infty. \\ \|D_N-\partial_s\|_{C^{\alpha,\tau}_\mathfrak s(\Psi_T) \to C^{\alpha-4,\tau}_\mathfrak s(\Psi_T)} &\leq \frac12 N^{-1}.
 \end{align*}
\end{lem}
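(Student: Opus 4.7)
The plan is to verify all three bounds by directly testing the relevant objects against the scaled test functions $S^\lambda_{(t,x)}\psi$ that appear in the definition of the weighted parabolic H\"older norm, and then reading off the correct power of $\lambda$. Throughout, set $r := -\lfloor \alpha \rfloor$, so that the norm on $C^{\alpha-k,\tau}_\mathfrak s(\Psi_T)$ uses test functions in $B_{r+k}$. Crucially, if $\psi \in B_{r+k}$ then $\partial_1^j \psi$ (derivative in the first argument) is supported in the unit ball and satisfies $\|\partial_1^j\psi\|_{C^r} \le \|\psi\|_{C^{r+k}} \le 1$ for $0 \le j \le k$, so $\partial_1^j\psi$ is an admissible test function for pairings with $f \in C^{\alpha,\tau}_\mathfrak s(\Psi_T)$. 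I would record this book-keeping remark first, since it is what links the three estimates to the defining norm of $f$.

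For the boundedness of $\partial_s$, I would compute directly that
\begin{equation*}
\partial_s\bigl(S^\lambda_{(t,x)}\psi\bigr)(s,y)\;=\;-\lambda^{-2}\,S^\lambda_{(t,x)}(\partial_1\psi)(s,y),
\end{equation*}
so by the distributional definition of $\partial_s$ (Remark \ref{d/dx}) and the above observation with $k=2$,
\begin{equation*}
\bigl|(\partial_s f,\,S^\lambda_{(t,x)}\psi)\bigr| \;=\;\lambda^{-2}\,\bigl|(f,\,S^\lambda_{(t,x)}(\partial_1\psi))\bigr|\;\le\;\lambda^{\alpha-2}\,w(x)\,\|f\|_{C^{\alpha,\tau}_\mathfrak s(\Psi_T)},
\end{equation*}
which is precisely the bound needed for $\partial_s f \in C^{\alpha-2,\tau}_\mathfrak s(\Psi_T)$.

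The uniform bound on $D_N$ is only slightly different. The adjoint of $D_N$ acts as $\Delta_N\varphi(s,y) := N[\varphi(s-N^{-1},y)-\varphi(s,y)]$, and by the fundamental theorem of calculus applied to $u \mapsto \psi(\lambda^{-2}(t-s+u),\lambda^{-1}(x-y))$ I get
\begin{equation*}
\Delta_N\bigl(S^\lambda_{(t,x)}\psi\bigr)(s,y)\;=\;-\lambda^{-2}\,N\int_0^{N^{-1}} S^\lambda_{(t+u,x)}(\partial_1\psi)(s,y)\,\mathrm d u,
\end{equation*}
i.e.\ the discrete difference of a scaled test function is $\lambda^{-2}$ times an average of scaled test functions with the same $\lambda$, only translated in time. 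Pairing with $f$, taking absolute values inside the integral, and applying the bound on $f$ with $\partial_1\psi \in B_r$ gives $|(D_N f, S^\lambda_{(t,x)}\psi)| \le \lambda^{\alpha-2} w(x)\|f\|$, uniformly in $N$. One small bookkeeping point, which I expect to be the only real annoyance, is that $t+u$ may leave $[0,T]$; this is harmless because $f$ is interpreted as an element of $\mathcal S'(\mathbb R^{d+1})$ supported on $\Psi_T$ in the sense of Remark \ref{d/dx}, and the same norm bound holds with the convention $f\equiv 0$ outside $\Psi_T$.

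The quantitative bound on $D_N-\partial_s$ is obtained by pushing the previous calculation one Taylor order further. Writing the integral-form second-order Taylor remainder,
\begin{equation*}
\Delta_N\varphi(s,y)+\partial_s\varphi(s,y)\;=\;N\int_0^{N^{-1}}(N^{-1}-u)\,\partial_s^2\varphi(s-u,y)\,\mathrm d u,
\end{equation*}
and specializing to $\varphi = S^\lambda_{(t,x)}\psi$ with $\partial_s^2\varphi(s-u,y) = \lambda^{-4}S^\lambda_{(t+u,x)}(\partial_1^2\psi)(s,y)$, I obtain
\begin{equation*}
\bigl|((D_N-\partial_s)f,\,S^\lambda_{(t,x)}\psi)\bigr| \;\le\; \lambda^{-4}N\int_0^{N^{-1}}(N^{-1}-u)\,\bigl|(f,\,S^\lambda_{(t+u,x)}(\partial_1^2\psi))\bigr|\,\mathrm d u.
\end{equation*}
Since $\partial_1^2\psi \in B_r$ when $\psi \in B_{r+4}$, the inner pairing is at most $\lambda^\alpha w(x)\|f\|_{C^{\alpha,\tau}_\mathfrak s}$, and the remaining integral evaluates to $N\cdot N^{-2}/2 = N^{-1}/2$, giving the claimed constant $\tfrac12 N^{-1}$ exactly. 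I do not anticipate any subtle obstacle here; the main thing to be careful about is tracking the regularity index of the test function so that $\partial_1^j\psi$ lies in $B_r$, and handling the boundary issue $t+u>T$ via the zero-extension convention.
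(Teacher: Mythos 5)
Your proof is correct and takes exactly the approach one would expect from the paper's remark that it is ``fairly immediate from the definitions'' (the paper itself supplies no argument here and simply cites \cite[Lemma~6.21]{DDP23}). The strategy of passing to the adjoint $\Delta_N$, expressing the discrete difference as an integral average of time-translated scaled test functions, and pushing the Taylor expansion one more order for the $D_N-\partial_s$ bound is the standard and right one. Two small comments. First, a sign typo: your displayed formula for $\Delta_N\bigl(S^\lambda_{(t,x)}\psi\bigr)$ should read $+\lambda^{-2}N\int_0^{N^{-1}}S^\lambda_{(t+u,x)}(\partial_1\psi)\,\mathrm du$ (the minus sign you wrote cancels because $\frac{d}{du}\varphi(s-u,y)=-\partial_s\varphi(s-u,y)$ and the FTC already carries a minus); this is immaterial since you take absolute values immediately. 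Second, the boundary point you flag is a genuine (if routine) wrinkle: for $t\in[T-N^{-1},T]$ the translated centre $t+u$ can exit $\Psi_T$, so the defining norm (whose supremum ranges only over $(t,x)\in\Psi_T$) is not directly applicable to the pairing $(f,S^\lambda_{(t+u,x)}\phi)$. The clean fix is to observe that since the support of $S^\lambda_{(t+u,x)}\phi$ meets $\Psi_T$ only when $t+u-\lambda^2\le T$, one may shift the centre to $T$ while absorbing the shift $\lambda^{-2}(t+u-T)\le 1$ into $\phi$; the shifted test function is supported in a ball of radius $2$, and covering it by finitely many unit balls shows the sup over $(t,x)\in\mathbb R\times\mathbb R^d$ of the pairing is comparable to the $\Psi_T$-restricted sup. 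This costs only a universal constant, so it does not threaten the uniform $D_N$ bound or the $O(N^{-1})$ rate; it does mean the sharp constant in the last estimate is $\tfrac{C}{2}N^{-1}$ rather than literally $\tfrac12N^{-1}$, but the paper is evidently not tracking that constant carefully either.
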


 \begin{proof}The proof is fairly immediate from the definitions, see \cite[Lemma 6.21]{DDP23} for the complete argument. \end{proof}

 \begin{lem}[Controlling the difference between the discrete and continuum heat operators] \label{kn-k}
     Fix $\alpha<0, \tau>1$. Let $K_N$ be as in Definition \ref{dnlnkn}, and let $K : C^{\alpha,\tau}_\mathfrak s(\Psi_T) \to C^{\alpha+2,\tau}_\mathfrak s(\Psi_T)$ by \begin{align}
   \label{e:kf}
 			Kf(t,x):= \int_{\Psi_T} G_{t-s}(x-y)f(s,y)dsdy,
 		\end{align} where $G_t$ as usual is the deterministic keat kernel on $\mathbb R^d$. Then we have the operator norm bound $$\|K_N-K\|_{C^{\alpha,\tau}_\mathfrak s(\Psi_T) \to C^{\alpha-1,\tau}_\mathfrak s(\Psi_T)} \leq CN^{-1/4}.$$
  Here, $C$ is independent of $N$.
 \end{lem}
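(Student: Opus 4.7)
The plan is to use a duality argument. Since $\alpha - 1 < 0$, the norm on $C^{\alpha-1,\tau}_\mathfrak s(\Psi_T)$ is defined by pairings against rescaled test functions $S^\lambda_{(t,x)}\varrho$ with $\varrho \in B_r$, where $r = -\lfloor \alpha - 1\rfloor$. Thus it suffices to bound
\[
\bigl|\bigl((K_N - K)f,\; S^\lambda_{(t,x)}\varrho\bigr)_{\mathrm{para}}\bigr| \;=\; \bigl|\bigl(f,\;(K_N^* - K^*)\,S^\lambda_{(t,x)}\varrho\bigr)_{\mathrm{para}}\bigr|
\]
uniformly over $\varrho \in B_r$, $(t,x)\in \Psi_T$ and $\lambda\in(0,1]$, and show the right-hand side is $\le C N^{-1/4}\|f\|_{C^{\alpha,\tau}_\mathfrak s}\, w(x)\,\lambda^{\alpha-1}$. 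The adjoint operators can be written explicitly: with $u=t-s$, $z=x-y$,
\[
K^*\varphi(s,y) = \int_0^{T+1-s}\!\!\int_{\mathbb R^d} G_u(z)\,\varphi(s+u, y+z)\,\mathrm dz\,\mathrm du,\quad K_N^*\varphi(s,y) = N^{-1}\!\!\!\!\!\sum_{u\in N^{-1}\mathbb Z_{\ge 0}\cap[0,T+1-s]}\!\!\!\!\!\int_I \varphi(s+u,y+N^{-1/2}z)\,p_N(Nu,\mathrm dz).
\]

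I will split $K_N^* - K^* = \mathcal T_1 + \mathcal T_2$, where $\mathcal T_1$ is the time-discretization error (replacing $\int \mathrm du$ by $N^{-1}\sum_u$ while keeping the continuum Gaussian kernel), and $\mathcal T_2$ is the spatial error (replacing the Gaussian by the rescaled random walk density $p_N(Nu,\cdot)$). For $\mathcal T_1$, standard Riemann-sum analysis gives a pointwise bound of order $N^{-1}\sup|\partial_u(G_u\ast\varphi(s+u,y+\cdot))|$, which after plugging $\varphi = S^\lambda_{(t,x)}\varrho$ costs at worst $\lambda^{-2}$ in the scaling parameter but only $N^{-1}$ globally. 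For $\mathcal T_2$, a uniform local central limit theorem (obtained from Assumption \ref{a1}\eqref{a22} via the Edgeworth-type arguments in Appendix~\ref{appendix:b}) yields $\|p_N(Nu,\cdot) - N^{d/2} G_{H_{\varsigma_N} u}(N^{1/2}\cdot)\|_{TV} \lesssim (Nu)^{-1/2}$ for $Nu\ge 1$, which translates into a pointwise error of the same order once convolved with a test function.

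The main obstacle---and the reason the exponent is $-1/4$ rather than the naive $-1/2$---is handling the singularity at $u=0$. When $u\le N^{-1/2}$ the local CLT degrades and $\mathcal T_2$ must be controlled crudely, which costs a factor $\lambda^{-d}$ from $\|\varphi\|_{L^\infty}\lesssim \lambda^{-d-2}$; balancing this crude regime against the tail regime $u>N^{-1/2}$ produces the bound $N^{-1/4}$. A similar balancing argument is needed for $\mathcal T_1$, where one uses the smoothness of $G_u\ast \varphi$ in $u$ away from $0$ combined with a brutal bound near $u=0$. After these estimates, one shows that $(K_N^* - K^*)S^\lambda_{(t,x)}\varrho$ can be written, modulo an $N^{-1/4}\lambda^{\alpha-1}w(x)$ remainder, as a finite sum of scaled test functions $S^{\lambda'}_{(t',x')}\varrho'$ with $\varrho'\in B_r$ (possibly after a rescaling) and $\lambda' \ge \lambda$, whose pairings against $f$ are controlled by $\|f\|_{C^{\alpha,\tau}_\mathfrak s}$ by definition of the norm.

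Finally, the weight $w(x)=(1+|x|^2)^\tau$ is preserved throughout: since $p_N(Nu,\cdot)$ has uniform-in-$N$ exponential tails (from the exponential moment assumption), convolution against it alters weights by a factor bounded uniformly in $N$ and $u\le T+1$, and the same holds for the Gaussian. Assembling all bounds and taking a supremum over $(t,x,\lambda,\varrho)$ gives the desired operator norm estimate. The proof is essentially the higher-dimensional analogue of \cite[Lemma 5.19]{Par24}; the only new ingredient is the multidimensional uniform local CLT, which is already furnished by the SRI heat-kernel estimates of Theorem~\ref{anti} together with the finer Edgeworth expansions for the tilted single-particle walks.
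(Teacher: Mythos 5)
Your proposal is sound in outline but takes a genuinely different route from the paper. The paper's proof defers almost entirely to \cite[Lemma 6.20]{DDP+} and explicitly says the key technical engine there is a \emph{KMT (Koml\'os--Major--Tusn\'ady) strong coupling} of the nearest-neighbor random walk with Brownian motion---i.e., a pathwise comparison of the walk trajectory against a BM trajectory, with the only modification here being to replace the nearest-neighbor walk by the walk with increment law $\mu$. Your argument instead proceeds at the level of \emph{transition densities}, using a uniform local CLT with Edgeworth-type corrections (TV distance of order $(Nu)^{-1/2}$), then balancing the crude short-time regime $u \lesssim N^{-1/2}$ against the local-CLT regime to extract the $N^{-1/4}$. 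Both tools are standard and both should produce the same crude exponent (the paper itself notes it does not claim optimality of $N^{-1/4}$), so this is a legitimate alternative. The trade-off is roughly: the KMT route buys a pathwise coupling so that the difference $K_N - K$ is a single integrated error along one coupled trajectory, which makes the localization of $S^\lambda_{(t,x)}\varrho$ in time and space easier to track; the local CLT route is more elementary and self-contained (you only need density asymptotics, not a strong approximation theorem), but pushes more of the work into the balancing near $u=0$ and into keeping uniform control of error constants over the tilting parameter $\varsigma_N$, which you correctly flag but leave imprecise. One small inconsistency in your sketch worth cleaning up: you state the local CLT for $Nu \ge 1$ but then declare the ``crude regime'' to be $u \le N^{-1/2}$; in the intermediate window $N^{-1} \le u \le N^{-1/2}$ the TV bound $(Nu)^{-1/2}$ is still available but is only between $N^{-1/4}$ and $O(1)$, so the balancing has to integrate across that whole window rather than using a sharp cutoff, and it is this integrated contribution that ultimately forces $N^{-1/4}$ rather than anything better.
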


 The above bound is crude; we do not claim optimality of the H\"older exponents here.

 \begin{proof}
     The fact that $K$ as defined by \eqref{e:kf} actually defines a bounded linear map $C^{\alpha,\tau}_\mathfrak s(\Psi_T) \to C^{\alpha+2,\tau}_\mathfrak s(\Psi_T)$ is the so-called Schauder estimate, see e.g. \cite[Corollary 1.2]{HL16}. 
     For the operator norm bound, see \cite[Lemma 6.20]{DDP+}, and replace $\mathbb R$ by $\mathbb R^d$ throughout. When applying the KMT coupling there, replace the nearest neighbor random walk by that of the increment distribution $\mu$ instead, and the remaining details go through verbatim.
 \end{proof}

 \begin{cor} \label{cor:Kds}
     Let $\alpha<0$. With $K$ as in \eqref{e:kf} and $\partial_s$ as in Remark \ref{d/dx}, we have that $K\partial_s : C([0,T], C^{\alpha,\tau}(\mathbb R)) \to C([0,T], C^{\alpha,\tau}(\mathbb R)).$ Furthermore, $\|K_ND_N - K \partial_s \|_{C^{\alpha,\tau}_\mathfrak s(\Psi_T) \to C^{\alpha-5,\tau}_\mathfrak s(\Psi_T)} \to 0.$
 \end{cor}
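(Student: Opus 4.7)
The strategy is to split the claim into its two assertions and in both cases reduce to the lemmas already proved. The key algebraic identity is the decomposition
\begin{equation*}
K_N D_N - K\partial_s \;=\; K_N(D_N - \partial_s) \;+\; (K_N - K)\partial_s,
\end{equation*}
which lets us package the two discretization errors (temporal and spatial/heat-kernel) separately.

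For the first claim, I would observe that Lemma \ref{ds} gives $\partial_s : C^{\alpha,\tau}_{\mathfrak s}(\Psi_T) \to C^{\alpha-2,\tau}_{\mathfrak s}(\Psi_T)$ boundedly, while the classical Schauder estimate (which also appears implicitly in Lemma \ref{kn-k}) gives $K : C^{\alpha-2,\tau}_{\mathfrak s}(\Psi_T) \to C^{\alpha,\tau}_{\mathfrak s}(\Psi_T)$ boundedly. Combining these with the embedding $C([0,T], C^{\alpha,\tau}(\mathbb R)) \hookrightarrow C^{\alpha,\tau}_{\mathfrak s}(\Psi_T)$ from Lemma \ref{embed}, the composition $K\partial_s$ at least lands in $C^{\alpha,\tau}_{\mathfrak s}(\Psi_T)$. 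To upgrade to genuine continuity in $t$ with values in the elliptic space, I would use the Duhamel-type identity $K\partial_s f = f - G_t\!\ast\! f(0,\cdot)$ valid for $f \in C([0,T], C^{\alpha,\tau}(\mathbb R))$, obtained by integrating by parts in the definition \eqref{e:kf} and accounting for the boundary distribution at $s=0$ produced when one extends $f$ by zero outside $[0,T]$ (cf. Remark \ref{d/dx}). Both terms on the right visibly lie in $C([0,T], C^{\alpha,\tau}(\mathbb R))$ by direct inspection (the convolution $G_t\!\ast\!f(0,\cdot)$ is smoothing in $x$ and continuous in $t$).

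For the operator-norm bound, I would estimate each piece of the decomposition above. The second bound in Lemma \ref{ds} says $D_N - \partial_s$ maps $C^{\alpha,\tau}_{\mathfrak s}$ into $C^{\alpha-4,\tau}_{\mathfrak s}$ with norm $\le \tfrac12 N^{-1}$. Lemma \ref{kn-k} combined with the Schauder bound on $K$ implies that $K_N$ is uniformly bounded from $C^{\alpha-4,\tau}_{\mathfrak s}$ into $C^{\alpha-2,\tau}_{\mathfrak s} \hookrightarrow C^{\alpha-5,\tau}_{\mathfrak s}$, so the first term has operator norm $O(N^{-1})$ between $C^{\alpha,\tau}_{\mathfrak s}$ and $C^{\alpha-5,\tau}_{\mathfrak s}$. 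For the second term, Lemma \ref{ds} gives $\partial_s : C^{\alpha,\tau}_{\mathfrak s} \to C^{\alpha-2,\tau}_{\mathfrak s}$ boundedly, and Lemma \ref{kn-k} gives $\|K_N - K\|_{C^{\alpha-2,\tau}_{\mathfrak s}\to C^{\alpha-3,\tau}_{\mathfrak s}} \le CN^{-1/4}$. Using the embedding $C^{\alpha-3,\tau}_{\mathfrak s} \hookrightarrow C^{\alpha-5,\tau}_{\mathfrak s}$, the second term has operator norm $O(N^{-1/4})$. Summing the two contributions yields $\|K_N D_N - K\partial_s\|_{C^{\alpha,\tau}_{\mathfrak s}\to C^{\alpha-5,\tau}_{\mathfrak s}} \le CN^{-1/4} \to 0$.

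The only nontrivial step is the first one, where one must verify that $K\partial_s$ indeed lands in the genuinely time-continuous space $C([0,T], C^{\alpha,\tau}(\mathbb R))$ and not merely in the parabolic space; the Duhamel identity above is the clean way to see this, but it requires a careful treatment of the distributional time derivative at the temporal boundary. Once this is in hand, the operator-norm estimate is essentially a bookkeeping exercise against Lemmas \ref{ds} and \ref{kn-k}, with no extra arguments needed. The H\"older loss of $5$ is not optimal, but it is irrelevant for our purposes since $\alpha < 0$ is arbitrarily negative.
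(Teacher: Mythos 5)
The decomposition $K_ND_N - K\partial_s = K_N(D_N-\partial_s) + (K_N-K)\partial_s$ and the bookkeeping against Lemmas \ref{ds} and \ref{kn-k} is exactly the intended argument — the paper's proof is just a pointer to those lemmas and to Lemma \ref{kndn} — and your operator-norm estimate is correct.

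However, the Duhamel-type identity you use for the first assertion is wrong. You claim $K\partial_s f = f - G_t*f(0,\cdot)$, but with the paper's conventions (zero extension and distributional $\partial_s$ as in Remark \ref{d/dx}) the correct statement is that $K$ is a genuine two-sided inverse for the heat operator $L := \partial_s - \tfrac12\Delta$, i.e.\ $KLf = f$ for $f$ supported on $[0,T]$ (the boundary Dirac $\delta_{s=0}\otimes f(0,\cdot)$ inside $\partial_s f$ already produces the propagated initial condition, which cancels the $-G_t*f(0,\cdot)$ you expect from the classical Duhamel formula). Consequently
\[
K\partial_s f \;=\; K\bigl(L + \tfrac12\Delta\bigr)f \;=\; f + \tfrac12 K\Delta f,
\]
not $f - G_t*f(0,\cdot)$. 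A concrete counterexample: take $f \equiv 1$, so $\partial_s f = \delta_0 - \delta_T$ in time, and then $K\partial_s f(t,x) = \int_I G_t(x-y)\,\dr y - 0 = 1$ for $0<t<T$, whereas $f - G_t*f(0,\cdot) = 0$. Another: $f(s,y) = s\,e^{i\xi\cdot y}$ gives $K\partial_s f = \frac{1-e^{-|\xi|^2 t/2}}{|\xi|^2/2}e^{i\xi\cdot x}$, which differs from $f = t e^{i\xi\cdot x}$. To complete the first assertion, you therefore still need to argue that $\tfrac12 K\Delta$ maps $C([0,T],C^{\alpha,\tau}(\mathbb R^d))$ into itself — this is a time-continuous version of the Schauder estimate, since $\Delta f \in C([0,T],C^{\alpha-2,\tau})$ and $K$ gains two parabolic degrees. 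Alternatively, one can sidestep the identity altogether by combining the uniform bound of Lemma \ref{kndn} with the $C^{\alpha-5,\tau}_{\mathfrak s}$-convergence you already proved; either route closes the gap, but the identity as you wrote it would lead you astray.
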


 This is immediate from the result of Lemma \ref{kndn}, and the previous two corollaries. 

\subsection{Uniqueness of the martingale problem and proof of main results}

\begin{prop}\label{uniqueness_of_mart_prob}
    In any dimension, one has uniqueness of the martingale problem for the SPDEs appearing in Theorems \ref{main1} $-$ \ref{main4}. More precisely, we have the following result.
    
     Fix $\alpha<0$, $\tau\in \mathbb R$, $p\in\mathbb Z_{\ge 0},$ and coefficients $\gamma_{  k_1,k_2}\in\mathbb R$. Consider any $C([0,T], C^{\alpha,\tau}(\mathbb R^d))$-valued random process $u(t,\bullet)$ adapted to the canonical filtration, with $u(0,\bullet)\equiv 0$. If the processes 
\begin{align*}
    M_t(\phi)&:= u(t,\phi) - u(0,G_t*\phi)- \frac12 \int_0^t  u(s,\Delta \phi)\dr s \\ G_t(\phi) &:= M_t(\phi)^2 -  \sum_{\#(  k_1)=p, \#(  k_2)=p} \gamma_{  k_1,  k_2} \int_0^t \int_{\mathbb R^2} \partial^{  k_1 }\phi(a) \partial^{  k_2 }\phi(a)\cdot (\mathfrak H_0 *G_t)(a)^2\dr a \dr s
\end{align*}
are both martingales for every $\phi\in C_c^\infty(\mathbb R^d)$, then $u(t,\bullet)$ must have the same law as the solution of the additive-noise SPDE given by \begin{equation}\label{eq:extremal5}\partial_t \mathcal U(t,   x) = \frac12\Delta \mathcal U(t,   x) + \eta(t,x),
\end{equation}
where $\eta$ is the Gaussian noise on $\mathbb R^d\times \mathbb R_+$ that has covariance structure given by $$\mathbb E[(\eta(t,\bullet),\phi)_{L^2(\mathbb R^d)}^2] = \sum_{\#(  k_1)=p, \#(  k_2)=p} \gamma_{  k_1,  k_2} \int_{\mathbb R^2} \partial^{  k_1 }\phi(a) \partial^{  k_2 }\phi(a)\cdot (\mathfrak H_0 *G_t)(a)^2\dr a,\;\;\;\;\;\;\;\;\;\;\;t>0, \;\phi\in C_c^\infty(\mathbb R^d).$$
\end{prop}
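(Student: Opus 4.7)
The plan is to show that $u$ is uniquely determined in law in two conceptual steps: first, identify the martingale $M_t(\phi)$ as a distribution-valued Gaussian process with explicit covariance, and second, recover $u$ from $M$ via a deterministic (Duhamel-type) linear map. Since $u(0,\bullet)\equiv 0$, the martingality of $M$ asserts that $u$ satisfies, in the weak sense, $du=\tfrac12\Delta u\,dt+dM_t$, which is precisely the Cauchy problem for the SPDE \eqref{eq:extremal5}; thus the strategy reduces to identifying $M$ and then inverting the discrete-time analog of a Duhamel formula, taking advantage of the fact that the noise coefficient is independent of $u$ (additive noise).

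To identify $M$, I would first use polarization on the given quadratic variation formula applied to $\phi\pm\psi$ to deduce that, for every $\phi,\psi\in C_c^\infty(\mathbb R^d)$, the bracket $\langle M(\phi),M(\psi)\rangle_t$ is the deterministic bilinear expression
\[
  \langle M(\phi),M(\psi)\rangle_t
  =\sum_{\#(k_1)=p,\,\#(k_2)=p}\gamma_{k_1,k_2}\int_0^t\!\!\int_{\mathbb R^d}\partial^{k_1}\phi(a)\,\partial^{k_2}\psi(a)\,(\mathfrak H_0\!*G_s)(a)^2\,da\,ds.
\]
Applying the multi-dimensional Dambis--Dubins--Schwarz / Lévy characterization to $(M_t(\phi_1),\dots,M_t(\phi_n))$, which is a continuous martingale vector with deterministic quadratic covariation, yields that this vector is a centered Gaussian martingale with the prescribed covariance. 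By the Kolmogorov extension theorem, this characterizes the law of the $\mathcal S'(\mathbb R^d)$-valued process $(M_t)_{t\in[0,T]}$ uniquely, and identifies it (in law) with the stochastic-integral martingale $M_t(\phi)=\int_0^t(\eta(s,\bullet),\phi)\,ds$ generated by the Gaussian noise $\eta$ of the proposition statement.

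To recover $u$ from $M$, I would use the standard time-dependent-test-function trick. Fix $t>0$ and $\psi\in C_c^\infty(\mathbb R^d)$, and set $\phi_s:=G_{t-s}*\psi$ for $s\in[0,t]$, so that $(\partial_s+\tfrac12\Delta)\phi_s=0$. Applying the martingale identity on a partition $0=s_0<s_1<\cdots<s_n=t$ to the constant-in-$s$ test function $\phi_{s_k}$ on each subinterval $[s_k,s_{k+1}]$ and summing by parts gives
\[
  u(t,\psi)-u(0,G_t*\psi)=\sum_{k=0}^{n-1}\bigl[M_{s_{k+1}}(\phi_{s_k})-M_{s_k}(\phi_{s_k})\bigr]+R_n,
\]
where the remainder $R_n\to 0$ as the mesh tends to zero by the backward heat equation satisfied by $\phi_s$ together with the a.s. continuity of $u(s,\cdot)$ in the topology of $C^{\alpha,\tau}$. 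Since $u(0,\bullet)\equiv 0$, one obtains in the limit $u(t,\psi)=\int_0^t dM_s(\phi_s)$, a deterministic linear functional of the realization of $M$. Uniqueness of $u$ in law then follows from uniqueness of $M$ in law established above.

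The main technical obstacle will be the last step: rigorously passing to the limit in the Riemann sum when the test functions $\phi_s$ are not compactly supported and when the driving noise and $u$ are merely distribution-valued in a weighted Hölder space of negative exponent. This requires controlling the martingale increments $M_{s_{k+1}}(\phi_{s_k})-M_{s_k}(\phi_{s_k})$ in $L^2$ via the deterministic quadratic-variation bound (which introduces the decay factor $(\mathfrak H_0*G_s)^2$ that tempers the spatial growth of $\phi_s$) and estimating the drift remainder using the $C^{\alpha,\tau}$ regularity of $u$ together with the smoothing properties of $G_{t-s}$. Alternatively, one can bypass the pointwise-in-time limit by testing against smooth space-time functions $\varrho\in C_c^\infty((0,T)\times\mathbb R^d)$, applying the duality with the backward heat operator $-\partial_s+\tfrac12\Delta$, and identifying $u$ as the unique weak solution driven by the noise $\eta$ in the sense of Walsh; this variant avoids the boundary terms but requires the same moment estimates on $M$.
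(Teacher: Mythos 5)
Your proposal is correct and is, at its core, the same two-step argument the paper uses: (1) exploit the deterministic quadratic variation together with Dambis--Dubins--Schwarz and a Cram\'er--Wold/polarization argument to identify $M$ as a Gaussian distribution-valued martingale whose law is pinned down by the covariance formula, and (2) recover $u$ from $M$ via a deterministic linear Duhamel map, so that the law of $u$ is the pushforward of the law of $M$.

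The only substantive difference is in how step (2) is executed. The paper's proof does not re-derive the Duhamel representation; it simply invokes the already-established operator machinery: $u = K\partial_s M$, where $K\partial_s : C([0,T],C^{\alpha,\tau}) \to C([0,T],C^{\alpha,\tau})$ has been shown to be a \emph{bounded} linear map in Corollary~\ref{cor:Kds} (building on Lemmas~\ref{kndn}, \ref{ds}, and \ref{kn-k}). Because this map is bounded on the Banach space in which $u$ and $M$ live, the pushforward of a Gaussian measure under it is again Gaussian with the transported covariance, and no Riemann-sum limiting argument is needed at all. Your version, by contrast, reconstructs the Duhamel map by hand with the time-dependent test function $\phi_s = G_{t-s}*\psi$ and a mesh-refinement argument; this is a perfectly valid classical route, but you are effectively re-proving the boundedness of the map $M \mapsto u$ in disguise, and you correctly flag that the delicate points are the lack of compact support of $\phi_s$ (so the martingale problem must first be extended from $C_c^\infty$ to suitable Schwartz-type test functions using the polynomial weight $\tau$) and the control of the Riemann remainder in the negative-regularity space. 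These are precisely the estimates that the paper has packaged once and for all into Corollary~\ref{cor:Kds}, which is why citing that corollary produces a much shorter proof. In short: your approach is correct, slightly more self-contained, but technically heavier at exactly the spot the paper has chosen to offload to its Schauder-type operator lemmas.
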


In particular, if $p=0$, the result says the following. Consider any $C([0,T], C^{\alpha,\tau}(\mathbb R^d))$-valued random process $u(t,\bullet)$ adapted to the canonical filtration, with $u(0,\bullet)\equiv 0$. If the processes 
\begin{align*}
    M_t(\phi)&:= u(t,\phi) - u(0,G_t*\phi)- \frac12 \int_0^t  u(s,\Delta \phi)ds \\ G_t(\phi) &:= M_t(\phi)^2 - \gamma^2 \int_0^t \big(G(s,\bullet)^2, \phi^2 \big)_{L^2(\mathbb R^d)}\dr s
\end{align*}
are both martingales for every $\phi\in C_c^\infty(\mathbb R^d)$, then $u(t,\bullet)$ must have the same law as the solution of the additive-noise SPDE \begin{equation}\label{eq:extremal5a}\partial_t \mathcal U(t,   x) = \frac12\Delta \mathcal U(t,   x) + \gamma\cdot   (\mathfrak H_0 *G_t)(x)\; \xi(t,   x),
\end{equation} where $\mathcal U(t,0)=0$ and $\xi$ is a standard space-time white noise.

Thus, for $p=0$ the result is relevant to the extremal SPDE appearing in Theorems \ref{main1} $-$ \ref{main3}, whereas for $p\ge 1$ it is more relevant to the bulk SPDE. 

\begin{proof}
    This is quite standard from the theory of martingale problems (see \cite{SV}), so we only sketch the proof here. It is easily shown using the Dubins-Schwarz time change theorem that any continuous martingale with a \textit{deterministic} quadratic variation process is a Gaussian process. Using the Cramer-Wold device and linearity of $\phi\mapsto M_t(\phi)$, it follows that the entire family of random variables $(M_t(\phi))_{\phi\in C_c^{\infty}(\mathbb R^d), t\ge 0}$ is a jointly Gaussian process in the variables $t,\phi.$ Thus, using Lemma \ref{l:KC} and the Gaussianity of the marginals of $M$, the law of $M$ can be realized as a Gaussian measure on the Banach space $C([0,T],C^{\alpha,\tau}(\mathbb R))$. From here, notice that in the notation of Lemmas \ref{kndn} and \ref{ds} and \ref{kn-k}, the field $u$ is just $K\partial_s$ applied to $M$, where $K\partial_s : C([0,T],C^{\alpha,\tau}(\mathbb R))\to C([0,T],C^{\alpha,\tau}(\mathbb R))$ is a \textit{bounded linear map} (see Corollary \ref{cor:Kds}). In particular, the law of $u$ can also be realized as a Gaussian field on the same Banach space. Now one simply matches the covariances of $u$ to the correct Gaussian field, but this is immediate from the given expressions for the quadratic variation.
\end{proof}

 Now we are in a position to prove the main theorems of the paper.

\begin{proof}[Proof of Theorems \ref{main1}, \ref{main2}, \ref{main3}, and \ref{main4}] There are four different cases depending on the choice of location strength $  \varsigma_N$. Recall that we referred to these as \hyperref[eq:regimeA]{Regime A}, \hyperref[eq:regimeB]{Regime B}, \hyperref[eq:regimeC]{Regime C}, and \hyperref[eq:regimeD]{Regime D} in the introduction. We also recall the constant $c_1:=1 , c_2=2\pi, $ and $c_d:= d(2-d)/\Gamma(1+\frac{d}2)$. To prove the theorems, note from Proposition \ref{uniqueness_of_mart_prob} that we just need to show that the limiting quadratic variation field $Q^{f,\infty}$ constructed in Proposition \ref{mcts} is actually deterministic with the correct quadratic variation in each case. 
\\
\\
\textbf{Regime A. $N^{1/2} \varsigma_N \to \boldsymbol{\varsigma}$.} As explained in the proof of \ref{4.3} it suffices to consider only the case $\boldsymbol{\varsigma}=0.$ As guaranteed by Proposition \ref{mcts}, let $(M^\infty,Q^\infty, H^\infty)$ be a joint limit point of $(M^N,\hat Q_N^{\z},\mathfrak H^N)$ in the space $C([0,T],C^{\alpha,\tau}(\mathbb R^d)\times C^{\gamma,\tau}(\mathbb R)\times C^{\alpha,\tau}(\mathbb R^d)),$ where $\z$ is the specific function from Definition \ref{z}. On one hand, Proposition \ref{mcts} guarantees that the process $\langle M^\infty (\phi)\rangle_t = \sum_{\#(k_1)=p ,\#(k_2)=p} \frac{1}{k_1!k_2!} Q_t^{\eta_{k_1,k_2},\infty}(\partial^{k_1} \phi\cdot \partial^{k_2}\phi)$. On the other hand, we have by Lemmas \ref{kndn} and \ref{ds} and \ref{kn-k}, and the relation of Lemma \ref{u=kdm} that $H^\infty = K\partial_s M^\infty.$ Furthermore by Proposition \ref{gen.ic} and the uniform integrability guaranteed by Proposition \ref{tight1}, we also have that the process $$Q^{f,\infty}_t(\phi) - \pi^{\mathrm{inv}}(f) \cdot c_d\cdot \int_0^t \int_{\mathbb R^d} \big((G_s * \mathfrak H_0)(a)\big)^2 \phi(a)\dr a\dr s$$ is a martingale starting at 0, by using the Markov property and directly computing the conditional expectation. On the other hand, as long as $\phi \ge 0$, one verifies that it is a difference of two increasing processes, thus of finite variation. We conclude that the above process is in fact 0.
This identifies $H^\infty$ as an element of $C([0,T], C^{\alpha,\tau}(\mathbb R^d)) $ with the property that $M^\infty_t(\phi) = (H^\infty_t,\phi) -\frac12 \int_0^t (H^\infty_s,\Delta \phi)ds$ is a martingale with the deterministic quadratic variation given by $\int_0^t \int_{\mathbb R^d} A_p [ \phi,\phi] (a) (\mathfrak H_0 * G_s) (a) \dr s \dr a$. By the uniqueness of the martingale problem from \ref{uniqueness_of_mart_prob}, this uniquely identifies $H^\infty$ in law as the solution of the SPDE written in the first bullet points of Theorems \ref{main1}, \ref{main2}, and \ref{main3}.
\\
\\
\textbf{Regime B. $\psi_N(p,d)\gg  |  \varsigma_N|\gg N^{-1/2}$ and $  \varsigma_N/|  \varsigma_N|\to   {\boldsymbol v}$. } As guaranteed by Proposition \ref{mcts}, let $(M^\infty,Q^\infty, H^\infty)$ be a joint limit point of $(M^N,\hat Q_N^{\z},\mathfrak H^N)$ in the space $C([0,T],C^{\alpha,\tau}(\mathbb R^d)\times C^{\gamma,\tau}(\mathbb R)\times C^{\alpha,\tau}(\mathbb R^d)),$ where $\z$ is the specific function from Definition \ref{z}. On one hand, Proposition \ref{mcts} guarantees that $\langle M^\infty (\phi)\rangle_t = Q^{\z_{  {\boldsymbol v}},\infty}(\phi^2).$ On the other hand, we have by Lemmas \ref{kndn} and \ref{ds} and \ref{kn-k}, and  the relation of Lemma \ref{u=kdm} that $H^\infty = K\partial_s M^\infty.$ Furthermore by Proposition \ref{gen.ic} and the uniform integrability guaranteed by Proposition \ref{tight1}, we also have that the process $$Q^{f,\infty}_t(\phi) - \pi^{\mathrm{inv}}(f) \cdot c_d\cdot \int_0^t \int_{\mathbb R^d} \big((G_s * \mathfrak H_0)(a)\big)^2 \phi(a)\dr a\dr s$$ is a martingale starting at 0, by using the Markov property and directly computing the conditional expectation. On the other hand, as long as $\phi \ge 0$, one verifies that it is a difference of two increasing processes, thus of finite variation. We conclude that the above process is in fact 0.This identifies $H^\infty$ as an element of $C([0,T], C^{\alpha,\tau}(\mathbb R^d)) $ with the property that $M^\infty_t(\phi) = (H^\infty_t,\phi) -\frac12 \int_0^t (H^\infty_s,\Delta \phi)ds$ is a martingale with the correct quadratic variation. By the uniqueness of the martingale problem from \ref{uniqueness_of_mart_prob}, this uniquely identifies $H^\infty$ in law as the solution of the SPDE written in the second bullet points of Theorems \ref{main1}, \ref{main2}, and \ref{main3}.
\\
\\
\textbf{Regime C. $d=2$ and $(\log N)^{1/2p}   \varsigma_N\to   {\boldsymbol v}$. } As guaranteed by Proposition \ref{mcts}, let $(M^\infty,Q^\infty, H^\infty)$ be a joint limit point of $(M^N,\hat Q_N^{\z},\mathfrak H^N)$ in the space $C([0,T],C^{\alpha,\tau}(\mathbb R^d)\times C^{\gamma,\tau}(\mathbb R)\times C^{\alpha,\tau}(\mathbb R^d)),$ where $\z$ is the specific function from Definition \ref{z}. On one hand, Proposition \ref{mcts} guarantees that $\langle M^\infty (\phi)\rangle_t = Q_t^\infty(\phi^2).$ On the other hand, we have by Lemmas \ref{kndn} and \ref{ds} and \ref{kn-k}, and  the relation of Lemma \ref{u=kdm} that $H^\infty = K\partial_s M^\infty.$ Furthermore by Proposition \ref{gen.ic} and the uniform integrability guaranteed by Proposition \ref{tight1}, we also have that the process $$Q^{f,\infty}_t(\phi) - \pi^{\mathrm{inv}}(f)\frac{2}{2-\gamma_{
\mathrm{ext}}(
  {\boldsymbol v})}
 \cdot 2\pi \cdot \int_0^t \int_{\mathbb R^d} \big((G_s * \mathfrak H_0)(a)\big)^2 \phi(a)\dr a\dr s$$ is a martingale starting at 0, by using the Markov property and directly computing the conditional expectation. On the other hand, as long as $\phi \ge 0$, one verifies that it is a difference of two increasing processes, thus of finite variation. We conclude that the above process is in fact 0. This identifies $H^\infty$ as an element of $C([0,T], C^{\alpha,\tau}(\mathbb R^d)) $ with the property that $M^\infty_t(\phi) = (H^\infty_t,\phi) -\frac12 \int_0^t (H^\infty_s,\Delta \phi)ds$ is a martingale with the correct quadratic variation. By the uniqueness of the martingale problem from \ref{uniqueness_of_mart_prob}, this uniquely identifies $H^\infty$ in law as the solution of the SPDE written in the third bullet point of Theorems  \ref{main2}.
\\
\\
\textbf{Regime D. $d\ge 3$ and $  \varsigma_N\to    v.$} As guaranteed by Proposition \ref{mcts}, let $(M^\infty,Q^\infty, H^\infty)$ be a joint limit point of $(M^N,\hat Q_N^{\z},\mathfrak H^N)$ in the space $C([0,T],C^{\alpha,\tau}(\mathbb R^d)\times C^{\gamma,\tau}(\mathbb R)\times C^{\alpha,\tau}(\mathbb R^d)),$ where $\z$ is the specific function from Definition \ref{z}. On one hand, Proposition \ref{mcts} guarantees that $\langle M^\infty (\phi)\rangle_t = Q_t^\infty(\phi^2).$ On the other hand, we have by Lemmas \ref{kndn} and \ref{ds} and \ref{kn-k}, and  the relation of Lemma \ref{u=kdm} that $H^\infty = K\partial_s M^\infty.$ Furthermore by Proposition \ref{gen.ic} and the uniform integrability guaranteed by Proposition \ref{tight1}, we also have that the process $$Q^{f,\infty}_t(\phi) -\Theta_{\mathrm{eff}}^2(f;  {\boldsymbol v}) \cdot c_d\cdot \int_0^t \int_{\mathbb R^d} \big((G_s^{(  {\boldsymbol v})} * \mathfrak H_0)(a)\big)^2 \phi(a)\dr a\dr s$$ is a martingale starting at 0, by using the Markov property and directly computing the conditional expectation. On the other hand, as long as $\phi \ge 0$, one verifies that it is a difference of two increasing processes, thus of finite variation. We conclude that the above process is in fact 0. This identifies $H^\infty$ as an element of $C([0,T], C^{\alpha,\tau}(\mathbb R^d)) $ with the property that $M^\infty_t(\phi) = (H^\infty_t,\phi) -\frac12 \int_0^t (H^\infty_s,\Delta_v \phi)ds$ is a martingale with the correct quadratic variation (here $\Delta_v = \mathrm{div}(H_v\nabla))$ which is self-adjoint). By the uniqueness of the martingale problem from \ref{uniqueness_of_mart_prob}, this uniquely identifies $H^\infty$ in law as the solution of the SPDE written in the third bullet point of Theorems  \ref{main3} with the coefficient given by that of Theorem \ref{main4}.
\end{proof}

Finally, we prove Theorem \ref{main5}.

\begin{proof}[Proof of Theorem \ref{main5}]
    The proof is extremely similar to before: the main idea is to study the \textit{cross variations} of the martingales $M_t^N(\phi)$ for different initial data. 
    
        Thus we let $\nu^j_N$ $(1\le j \le n)$ be probability measures on $I$ with the property that the associated sequences of rescaled initial data $\mathfrak H^N_j(0,\phi)$ as derived from $\nu^j_N$ via \eqref{hn} are good sequences in the sense of Definition \ref{goodseq}, and furthermore converge weakly as $N\to \infty$ to some measures $\mathfrak H_j.$ Then we define for $1\le j \le n$ the martingales $M^{j,N}_t(\phi)$ to be the martingales from \eqref{mfield}, but with the property that the initial condition is replaced by $\nu_N^j$.
        
        For $i\ne j$, one needs to compute $\langle M^{i,N}(\phi), M^{j,N}(\phi)\rangle$. Redoing the calculation of Proposition \ref{4.3} for this case, one has that
        $$\langle M^{i,\infty} (\phi), M^{j,\infty}(\phi)\rangle_t = Q^{\bullet}_N(t,\phi; \nu^i_N\otimes \nu^j_N)+ \sum_{1,2} \mathcal E^a +\mathcal V^a$$ where $Q_N$ is exactly as in \eqref{qfield}, where the ``$\bullet$" depends on the regime in exactly the same way as in Proposition \ref{4.3}, and the error terms are defined likewise. Note the only difference here compared to earlier is that the initial condition for the random walkers is $\nu_N^i\otimes \nu_N^j$ rather than $\nu_N^{\otimes 2}$ as before. 
        
        Going through exactly the same calculation as in Theorem \ref{gen.ic} but replacing $\nu_N^{\otimes 2}$ by $\nu_N^i\otimes \nu_N^j$ throughout that proof, one 
        has the following limits:
     \begin{itemize}
         \item If $d=1$ and $N^{\frac1{4p}} \varsigma_N\to 0$ then $$\lim_{N \to \infty}\;  \Ex\big[ Q_N^{f}(t,\phi; \nu^i_N\otimes \nu^j_N)\big] =\pi^{\mathrm{inv}}(f)\int_0^t \int_{\mathbb R} (G_t* \mathfrak H_i)(a)(G_t* \mathfrak H_j)(a)\phi(a)\dr a\dr s.$$ 
         \item If $d=2$ and $(\log N)^{1/2p}   \varsigma_N\to   {\boldsymbol v}$ then we have 
            $$\lim_{N \to \infty}\;  \Ex\big[ Q_N^{f}(t,\phi; \nu^i_N\otimes \nu^j_N)\big] =\pi^{\mathrm{inv}}(f)\frac{
            2}{2- \gamma_{
            \mathrm{ext}}(  {\boldsymbol v})} \int_0^t \int_{\mathbb R^2} (G_t* \mathfrak H_i)(a)(G_t* \mathfrak H_j)(a)\phi(a)\dr a\dr s.$$ 
         \item If $d\ge 3$ and if $  \varsigma_N\to   {\boldsymbol v}$ then 
         $$\lim_{N \to \infty}\;  \Ex\big[ Q_N^f(t,\phi; \nu^i_N\otimes \nu^j_N)\big]= \Theta_{\mathrm{eff}}^2( f;  {\boldsymbol v}) \int_0^t \int_{\mathbb R^d} (G_t^{(\boldsymbol v)}* \mathfrak H_i)(a)(G_t^{(\boldsymbol v)}* \mathfrak H_j)(a) \phi(a)\dr a\dr s,$$ 
         where $\Theta_{\mathrm{eff}}^2( f;  {\boldsymbol v})$ is as in Proposition \ref{4.1c}.
     \end{itemize}
         We allow $\boldsymbol v$ to be 0 above, so that this covers all regimes. Now going through the proof of the uniqueness in \eqref{uniqueness_of_mart_prob}, one finds that for any joint limit point as guaranteed by Proposition \ref{mcts}, this then leads to the correct expressions to match the quadratic variations of the flow of solutions for the SPDEs written in the main theorems.
\end{proof}

\appendix

\section{Proof of Theorem \ref{inv00}: Invariance principle for SRI chains}\label{appendix:a}

These appendices are devoted to the proofs of the three major theorems in Section \ref{sec:4}. Most of the proofs will involve fairly standard techniques involving well-known bounds on martingales and additive functionals of time-homogeneous Markov chains. A few ideas from the theory of Fourier transforms will also be used. 

We start with the proof of Theorem \ref{inv00}. We first need three lemmas and another definition.

\begin{lem}\label{cool}
Consider a martingale $M$ defined on any filtered probability space $(\Omega, \mathcal F, P),$ and suppose that the increments of $M$ satisfy a uniform $q^{th}$ moment bound under the probability measure $P$: $$\mathcal S_q(M;P):=\sup_{n\ge 1}  E [|X_{n+1} -X_n|^q]<\infty,$$ where $q\ge 2$. Then $E [ |M_r-M_s|^q ] \leq C_q (r-s)^{\frac{q}2} \mathcal S_q(M;P),$ for some absolute constant $C_q$ that is uniform over all martingales $M$ and probability spaces $(\Omega, \mathcal F,P).$
 \end{lem}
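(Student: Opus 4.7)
The plan is to combine the discrete Burkholder--Davis--Gundy (BDG) inequality with a Jensen-type bound on the quadratic variation. By translating time, it suffices to consider $s = 0$ and prove the bound for $E[|M_r|^q]$, assuming without loss of generality that $M_0 = 0$ (otherwise work with $\tilde M_n := M_{s+n} - M_s$, which is a martingale with respect to the shifted filtration and has the same increment moment bound).

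First I would apply the discrete BDG inequality (see e.g.\ Burkholder's original theorem, or \cite[Theorem 2.11]{Hall}), which gives an absolute constant $C_q$ with
\begin{equation*}
E[|M_r|^q] \;\leq\; C_q\, E\!\left[\Big(\sum_{n=0}^{r-1} (M_{n+1}-M_n)^2\Big)^{q/2}\right].
\end{equation*}
Since $q/2 \ge 1$, the power-mean (Jensen) inequality applied to the normalized sum $\tfrac{1}{r}\sum_{n=0}^{r-1}(M_{n+1}-M_n)^2$ yields the pathwise estimate
\begin{equation*}
\Big(\sum_{n=0}^{r-1} (M_{n+1}-M_n)^2\Big)^{q/2} \;\leq\; r^{q/2-1} \sum_{n=0}^{r-1} |M_{n+1}-M_n|^q.
\end{equation*}

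Taking expectations in the last display and using the uniform increment bound $E[|M_{n+1}-M_n|^q] \leq \mathcal{S}_q(M;P)$ for every $n$ gives
\begin{equation*}
E\!\left[\Big(\sum_{n=0}^{r-1} (M_{n+1}-M_n)^2\Big)^{q/2}\right] \;\leq\; r^{q/2-1} \cdot r \cdot \mathcal{S}_q(M;P) \;=\; r^{q/2}\, \mathcal{S}_q(M;P).
\end{equation*}
Combining with the BDG bound and replacing $r$ by $r-s$ via the shift argument above yields the claim with an absolute constant depending only on $q$. The only conceptual step is the BDG inequality itself, which is standard in this discrete setting and requires no additional hypothesis beyond the martingale property. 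The Jensen step is elementary but essential: it is what converts the $L^{q/2}$-moment of the quadratic variation into a sum of $L^q$-moments of individual increments, which is what the assumption on $\mathcal{S}_q(M;P)$ controls. No other obstacles arise.
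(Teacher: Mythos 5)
Your proof is correct and takes essentially the same route as the paper: apply Burkholder--Davis--Gundy to bound $E[|M_r-M_s|^q]$ by the $q/2$-moment of the quadratic variation, then use Jensen's (power-mean) inequality to reduce this to a sum of $q$-th moments of the increments, each controlled by $\mathcal S_q(M;P)$. The only cosmetic difference is your preliminary time-shift to $s=0$, which the paper simply avoids by indexing the quadratic variation sum from $n=s$ to $r-1$ directly.
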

    
    \begin{proof} We may apply Burkholder-Davis-Gundy, then Jensen, then the definition of $\mathcal S_q(M;P)$ in that order to obtain uniformly over $r\ge s\ge 0$
    \begin{align*}
        E [ |M_r-M_s|^q ] &\leq C_q  E \bigg[ \bigg( \sum_{n=s}^{r-1} (M_{n+1}-M_n)^2\bigg)^{q/2}\bigg] \\ &\leq C_q E \bigg[ (r-s)^{\frac{q}2 - 1} \sum_{n=s}^{r-1} |M_{n+1}-M_n|^q \bigg] \\ &\leq C_q (r-s)^{\frac{q}2} \mathcal S_q(M;P).
    \end{align*}
    Since the constant $C_q$ in Burkholder-Davis-Gundy is known to be universal, the claim follows.
\end{proof}

    \begin{lem}\label{tbb}
    Let $\nu_1,\nu_2$ be two probability measures on $\mathbb R^m$, and let $g:\mathbb R^m\to \mathbb R_+$ such that $G:=\int_{\mathbb R^m} g(x) (\nu_1(\dr    x) + \nu_2(\dr    x))<\infty $. Then for any measurable $f:\mathbb R^m\to\mathbb R$ such that $f(   x)^2\leq g(   x)$ we have $$\bigg|\int_{\mathbb R^m} f(x) \big((\nu_1(\dr    x)-\nu_2(\dr    x)\big)\bigg| \leq 2G^{1/2}\|\nu_1-\nu_2\|_{TV}^{1/2}$$
\end{lem}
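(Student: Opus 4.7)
The plan is to reduce the assertion to a single application of Cauchy--Schwarz against the total-variation measure $|\nu_1-\nu_2|$. Concretely, I would invoke the Hahn--Jordan decomposition to write $\nu_1-\nu_2 = \mu^+-\mu^-$ as a difference of mutually singular positive measures, and let $h:=\tfrac{d(\nu_1-\nu_2)}{d|\nu_1-\nu_2|}$, which takes values in $\{-1,+1\}$ on the support of $|\nu_1-\nu_2|=\mu^++\mu^-$. Then
$$\int_{\mathbb R^m} f(x)\,(\nu_1-\nu_2)(\dr x) \;=\; \int_{\mathbb R^m} f(x)\, h(x)\, |\nu_1-\nu_2|(\dr x).$$

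Next, I would apply Cauchy--Schwarz to the right-hand side, splitting the integrand as $f(x)h(x) = (f(x)h(x))\cdot 1$ and integrating against $|\nu_1-\nu_2|$. This yields
$$\bigg|\int f\,d(\nu_1-\nu_2)\bigg|^{2} \leq \bigg(\int f^2\, d|\nu_1-\nu_2|\bigg)\cdot |\nu_1-\nu_2|(\mathbb R^m),$$
using $h^2=1$. The two factors on the right can be handled independently: the first is bounded using the pointwise hypothesis $f^2\le g$ together with the trivial comparison of measures $|\nu_1-\nu_2|\le \nu_1+\nu_2$, giving $\int f^2\,d|\nu_1-\nu_2|\le \int g\,d(\nu_1+\nu_2)=G$; the second equals $2\|\nu_1-\nu_2\|_{TV}$ under the convention $\|\nu_1-\nu_2\|_{TV}=\sup_A |\nu_1(A)-\nu_2(A)|$, which is standard.

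Putting this together gives $\big|\int f\,d(\nu_1-\nu_2)\big|^{2}\le 2G\,\|\nu_1-\nu_2\|_{TV}$, from which the stated bound follows after taking square roots (absorbing $\sqrt{2}\le 2$ into the prefactor). There is no real obstacle here—this is a clean and short argument; the only point to be careful about is fixing the normalization convention for $\|\cdot\|_{TV}$, since different conventions differ by a factor of $2$ and the constant on the right-hand side of the lemma is presumably chosen to absorb this safely.
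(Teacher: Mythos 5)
Your proof is correct, but it takes a genuinely different route from the paper's. The paper argues via an optimal coupling: it constructs a probability measure $\omega$ on $\mathbb{R}^m\times\mathbb{R}^m$ with marginals $\nu_1,\nu_2$ and small off-diagonal mass, rewrites the integral as $\int (f(x)-f(y))\,\omega(\dr x,\dr y)$, restricts to $\{x\ne y\}$, and then applies Cauchy--Schwarz together with $(|a|+|b|)^2\leq 2a^2+2b^2$ to land on $(2\rho)^{1/2}(2G)^{1/2}$. You instead work directly with the signed measure: Hahn--Jordan gives $d(\nu_1-\nu_2)=h\,d|\nu_1-\nu_2|$ with $h^2\equiv 1$, Cauchy--Schwarz against $|\nu_1-\nu_2|$ produces $\big(\int f^2\,d|\nu_1-\nu_2|\big)\cdot|\nu_1-\nu_2|(\mathbb{R}^m)$, and the two factors are then controlled by the trivial domination $|\nu_1-\nu_2|\leq\nu_1+\nu_2$ and by the relation between total mass and $\|\cdot\|_{TV}$. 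Both arguments are short and elementary; yours has the small advantage of avoiding the coupling lemma entirely and, as you noted, yields a marginally sharper constant ($\sqrt{2}$ rather than $2$, or even $1$ depending on the normalization of $\|\cdot\|_{TV}$), either of which is absorbed into the stated prefactor of $2$. Your caution about the TV normalization is well placed — the paper does not pin down the convention and appears to be building in slack precisely so that either choice works.
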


Typically we will apply this lemma when $g(x)=|x|^q$ for some $q>1$, or where $g(x) = e^{\sigma|x|}$ for some $\sigma>0.$

\begin{proof}
    Let $\rho:= \|\nu_1-\nu_2\|_{TV}.$ Using the coupling definition of total variation distance, one may construct a probability measure $\omega$ on $\mathbb R^m\times \mathbb R^m$ with the property that $\omega(\{ (x,y): x\ne y\}) \leq 2\rho.$ Then 
    \begin{align*}
        \bigg|\int_{\mathbb R^m} f(   x) \big((\nu_1(\dr    x)-\nu_2(\dr    x)\big)\bigg|&= \bigg|\int_{\mathbb R^m\times \mathbb R^m} (f(   x)-f(   y)) \omega(\dr x,\dr y) \bigg|\\&\leq \int_{\{ (   x,   y):    x\ne   y\}} \big(|f(   x)|+|f(   y)| \big)\omega(\dr    x,\dr    y) \\ &\leq \omega(\{ (   x,   y):    x\ne   y\})^{1/2} \bigg(\int_{\mathbb R^m\times \mathbb R^m} \big(2f(   x)^2 +2f(   y)^2 \big)\omega(\dr    x,\dr    y)\bigg)^{1/2} \\ & \leq (2\rho)^{1/2} (2G)^{1/2}.
    \end{align*}
    In the second line, we used triangle inequality and the fact that $f(x)-f(   y)=0$ on the set $\{(x,y):x=y\}$. We used Cauchy-Schwartz in the third line, together with the fact that $(|a|+|b|)^2 \leq 2a^2+2b^2$.
\end{proof}

\begin{defn}
    Let $k \in \mathbb N$ and $1\leq i\neq j \leq k$. Consider a decreasing function $F:[0,\infty)\to[0,\infty)$ of exponential decay at infinity. We define the functional $V^{ij}(F;\bullet):(I^k)^{\mathbb Z_{\ge 0}} \to I^{\mathbb Z_{\ge 0}}$ by 
  \begin{align}
      \label{vijai}
      V^{ij}(F;r):= \sum_{s=0}^{r-1} F(|   R^i_{s}-   R^j_{s}|).
  \end{align}
We remark that the processes $V^{ij}(F;\bullet)$ are predictable with respect to the canonical filtration on $(I^k)^{\mathbb Z_{\ge 0}}$. 
\end{defn}

These processes $V^{ij}$ will play a crucial role going forward in this appendix. 

\begin{lem}\label{vfin}
Fix $k\in\mathbb N$, $q\ge 2$, $M>0$, a decreasing function $F:[0,\infty)\to[0,\infty)$ of exponential decay at infinity, and $1\leq i,j\leq k$. Let $u_\x^{ij}(\y):=|   y_i-   y_j-(   x_i-   x_j)|$, for $\x,\y\in I^k$. Here $|   u|$ as usual denotes the Euclidean norm of $   u\in \mathbb R^d$. Let $\br_0\in \Ek$ be centered. Then there exists $C>0$ such that uniformly over all $\br \in \Ek$ and $\x,\y\in I^k$ one has $$-C\cdot d_{\mathrm{SRI}}(\br,\br_0)^{1/4} F\big(\min_{i'<j'}|y_{i'}-y_{j'}|\big)^{1/4}\le (P^{\br}-\mathrm{Id}) u_\x^{ij}(\y) \leq C ,
$$
where $P^{\br}$ is the Markov operator associated to $P_{\br}$.
\end{lem}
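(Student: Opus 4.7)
I would start by writing the difference explicitly: for $\mathbf z \in I^k$, setting $w := (y_i - y_j) - (x_i - x_j) \in \mathbb R^d$ (constant in $\mathbf z$) and $\delta(\mathbf z) := (z_i - z_j) - (y_i - y_j)$, one has $u_\x^{ij}(\mathbf z) - u_\x^{ij}(\y) = |w + \delta(\mathbf z)| - |w|$. The reverse triangle inequality gives the pointwise bound $\big||w+\delta(\mathbf z)|-|w|\big| \le |\delta(\mathbf z)| \le |z_i - y_i| + |z_j - y_j|$; integrating against $\br(\y,\dr\mathbf z)$ and applying Lyapunov's inequality (available since $q\ge 2$) to the $q$-th moment hypothesis built into $\Ek$ immediately produces the upper bound $(P^{\br} - \mathrm{Id}) u_\x^{ij}(\y) \le 2 M^{1/q}$, uniformly over $\x,\y,\br$.

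For the lower bound, the idea is to produce two complementary estimates and combine them via the elementary inequality $\min\{a,b\} \le \sqrt{ab}$. The first estimate compares $\br$ to the centered reference chain $\br_0$: since $\br_0$ is centered, $\int \delta(\mathbf z)\,\br_0(\y,\dr\mathbf z) = 0$, so Jensen's inequality applied to the convex function $v \mapsto |w + v|$ gives $\int(|w+\delta(\mathbf z)|-|w|)\,\br_0(\y,\dr\mathbf z) \ge 0$. Consequently
\[ (P^{\br} - \mathrm{Id}) u_\x^{ij}(\y) \;\ge\; -\bigg|\int (|w+\delta(\mathbf z)|-|w|)\,(\br-\br_0)(\y,\dr\mathbf z)\bigg| \;\ge\; -C\, d_{\mathrm{SRI}}(\br,\br_0)^{1/2}, \]
where in the second step I apply Lemma \ref{tbb} with $f = |w+\delta|-|w|$ and majorant $g = |\delta|^2$ (whose integral against both measures is uniformly bounded by the second-moment condition on elements of $\Ek$). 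The second estimate compares $\br$ instead to the product measure $\nu^{\otimes k}(\y - \bullet)$ associated to $\br$'s own base measure $\nu$: under this product measure the two relevant coordinates of $\mathbf z - \y$ are i.i.d.\ $\nu$-samples, so $\int \delta(\mathbf z)\,\nu^{\otimes k}(\y-\dr\mathbf z) = 0$, and the same Jensen argument yields $\int(|w+\delta|-|w|)\,\nu^{\otimes k}(\y-\dr\mathbf z) \ge 0$; combining with the defining SRI inequality $\|\br(\y,\cdot) - \nu^{\otimes k}(\y-\cdot)\|_{TV} \le F(\min_{i'<j'}|y_{i'}-y_{j'}|)$ and another application of Lemma \ref{tbb} gives $(P^{\br}-\mathrm{Id}) u_\x^{ij}(\y) \ge -C\,F(\min_{i'<j'}|y_{i'}-y_{j'}|)^{1/2}$. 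Taking the better (larger) of the two lower bounds yields
\[ (P^{\br}-\mathrm{Id}) u_\x^{ij}(\y) \;\ge\; -C\min\{d_{\mathrm{SRI}}(\br,\br_0)^{1/2},\,F(\min_{i'<j'}|y_{i'}-y_{j'}|)^{1/2}\}, \]
and applying $\min\{a,b\}\le\sqrt{ab}$ with $a = d_{\mathrm{SRI}}^{1/2}$, $b = F^{1/2}$ produces the claimed bound.

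The main technical obstacle is to ensure that the constant appearing in the product-measure estimate is uniform over $\br \in \Ek$; this requires that the (possibly varying) base measure $\nu$ inherits a uniform second-moment bound. I would verify this by a truncation argument: for $\phi_R(u) := \min\{|u|^2, R\}$, the SRI bound yields $\big|\int \phi_R(z_i - y_i)(\br - \nu^{\otimes k})(\y,\dr\mathbf z)\big| \le R\,F(\min_{i'<j'}|y_{i'}-y_{j'}|)$; sending $\min_{i'<j'}|y_{i'}-y_{j'}|\to\infty$ along a suitable diagonal sequence $\y$ and then letting $R \to \infty$ by monotone convergence gives $\int |u|^2\,\nu(\dr u) \le M^{2/q}$ uniformly, which makes the constant $G$ in Lemma \ref{tbb} uniformly bounded throughout.
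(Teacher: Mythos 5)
Your proof is correct and follows essentially the same strategy as the paper: the upper bound via the triangle inequality and the moment hypothesis, and the lower bound by producing two complementary estimates (one via $d_{\mathrm{SRI}}(\br,\br_0)^{1/2}$, one via $F^{1/2}$) from Jensen's inequality and Lemma \ref{tbb}, then interpolating with $\min\{a,b\}\le\sqrt{ab}$. The one stylistic difference is where you place Jensen's inequality: the paper applies it to $P^{\br}u_\x^{ij}(\y)$ directly, reducing the lower bound to controlling the mean discrepancy $|y_i-y_j - \int(a_i-a_j)\br(\y,\dr\bfa)|$, whereas you apply it to the reference measures $\br_0(\y,\cdot)$ and $\nu^{\otimes k}(\y-\cdot)$ and then apply Lemma \ref{tbb} to the increment $|w+\delta|-|w|$ itself; the two routes are interchangeable. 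Your final paragraph, verifying that the base measure $\nu$ of an arbitrary $\br\in\Ek$ inherits a uniform second-moment bound via truncation and the SRI total-variation estimate, makes explicit a point that the paper's proof leaves implicit when invoking Lemma \ref{tbb} against $\nu_{\br}^{\otimes k}$, which is a small but genuine tightening.
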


\begin{proof}
    For the upper bound, we have that 
    \begin{align*}
        (P^{\br}-\mathrm{Id}) u_\x^{ij}(\y)&= \int_{I^k} \bigg(\big|a_i-a_j-(x_i-x_j)\big|-\big|y_i-y_j-(x_i-x_j)\big|\bigg) \br (\y,\mathrm d\bfa)  \\& \leq \int_{I^k} |a_i-a_j - (y_i-y_j)| \br(\y,\dr \bfa).
    \end{align*}
    From here, the uniform bound follows from $(q,M)$ being fixed.
    
    For the lower bound, use Jensen's inequality to say
    \begin{align*}
        P^{\br} u_{\x}(\y) & = \int_{I^k} |a_i-a_j-(x_i-x_j)| \br(\y,\dr\bfa) \\&\ge \bigg| \int_{I^k} (a_i-a_j)\br(\y,\dr\bfa) - (x_i-x_j)\bigg| \\&\ge |y_i-y_j - (x_i-x_j)| - \bigg| y_i-y_j - \int_{I^k} (a_i-a_j)\br(\y,\dr\bfa)\bigg| \\&= u_{\x}(\y) - \bigg| y_i-y_j - \int_{I^k} (a_i-a_j)\br(\y,\dr\bfa)\bigg|.
    \end{align*}
    Thus we just need to show that \begin{equation}\label{useful}\bigg| y_i-y_j - \int_{I^k} (a_i-a_j)\br(\y,\dr\bfa)\bigg|\leq C\cdot d_{\mathrm{SRI}}(\br,\br_0)^{1/4} F\big(\min_{i'<j'}|y_{i'}-y_{j'}|\big)^{1/4}.
    \end{equation}
    In fact, we will actually show the stronger claim that $$\bigg| y_i-y_j - \int_{I^k} (a_i-a_j)\br(\y,\dr\bfa)\bigg|\leq C\min \big\{  d_{\mathrm{SRI}}(\br,\br_0)^{1/2} , F\big(\min_{i'<j'}|y_{i'}-y_{j'}|\big)^{1/2}\big\},$$ from which \eqref{useful} will follow immediately using $\min\{a,b\}\leq a^{1/2}b^{1/2}.$
    For the upper bound of $d_{\mathrm{SRI}}(\br,\br_0)^{1/2}$, first note by the centered assumption that $y_i-y_j = \int_{I^k} (a_i-a_j) \br_0 (\y,\dr\bfa)$, then apply Lemma \ref{tbb}. For the upper bound of $F\big(\min_{i'<j'}|y_{i'}-y_{j'}|\big)^{1/2}$, let $\nu_{\br}$ denote the base measure of $\br$. Note that $\nu_{\br}^{\otimes k}$ is a permutation-invariant measure, thus $\int_{I^k} (a_i-a_j) \nu_{\br}^{\otimes k} (\dr \bfa)=0$. Then apply Lemma \ref{tbb}.
\end{proof}

The above bound will be quite useful in showing tightness of all of the relevant processes in the topology of $C[0,T]$. The following estimate is one of the key results of this appendix, and an important step towards proving both Theorem \ref{inv00} and Theorem \ref{anti}.

\begin{thm}\label{exp00}
    Fix $k\in \mathbb N$ and $q\ge 2$ and a decreasing function $F:[0,\infty)\to[0,\infty)$ such that $\sum_{\ell=0}^\infty \ell^{d-1}F(\ell)^{1/4}<\infty$. Let $\br\in \Ek$ be centered and $\delta$-repulsive.
    
    Also fix some decreasing function $H:[0,\infty)\to[0,\infty)$ such that $\sum_{\ell=0}^\infty \ell^{d-1}H(\ell)<\infty$. There exists some $C=C(k,F,H ,q)>0$ and $\epsilon=\epsilon(k,F,H,q)>0$ such that uniformly over all $\br\in \Ek$ with $d_{\mathrm{SRI}}(\br,\br_0)<\epsilon$, and all integers $r\ge r'\ge 0$ and $1\le i,j\le k$ we have the bounds 
    \begin{align}
        \;\sup_{\mathbf x\in I^k} \;\Ebb \big[ |(   R^i_r-   R^j_r)-(   R^i_{r'}-   R^j_{r'})|^q\big]^{1/q}& \leq C (r-r')^{1/2}. \label{e1''} \\  \;\sup_{\mathbf x\in I^k} \;\Ebb \big[ \big|V^{ij}(H;r)-V^{ij}(H;r')\big|^q\big]^{1/q}& \leq C  (r-r')^{1/2}.\label{e2''}
    \end{align}
\end{thm}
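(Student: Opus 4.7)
The plan is to prove the two bounds in tandem, reducing first to $r'=0$ by the Markov property of $\Pbb$. For \eqref{e1''}, I would apply the Doob decomposition to $Y_s:=R^i_s-R^j_s$, writing $Y_r - Y_0 = M_r + D_r$ with $M$ an $\mathbb R^d$-valued martingale and $D_r=\sum_{s=0}^{r-1}\mathbb E^{\br}[Y_{s+1}-Y_s\mid\mathcal F_s]$ predictable. The martingale $M$ has coordinatewise $q$-th moment of increments bounded uniformly by the SRI moment assumption, so Lemma \ref{cool}, applied to each coordinate, gives $\|M_r\|_{L^q}\le Cr^{1/2}$. For the drift, the computation \eqref{useful} inside the proof of Lemma \ref{vfin} shows $|\mathbb E^{\br}[Y_{s+1}-Y_s\mid\mathcal F_s]|\le CF^{1/2}(\min_{i'<j'}|R^{i'}_s-R^{j'}_s|)$, so $|D_r|\le C\sum_{i'<j'}V^{i'j'}(F^{1/2};r)$. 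Hence \eqref{e1''} follows from \eqref{e2''} applied with the choice $H=F^{1/2}$, whose required summability is implied by the standing assumption $\sum_\ell\ell^{d-1}F(\ell)^{1/4}<\infty$.

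For the key additive-functional bound \eqref{e2''}, the strategy is a Lyapunov/potential-function argument. I would construct $\phi:I^k\to\mathbb R_{\ge 0}$ (depending on $H$, $i$, $j$, and $\br_0$) satisfying approximately $(P^{\br_0}-\mathrm{Id})\phi(\y) = -H(|y_i-y_j|)+\psi(\y)$, with $\phi$ of at most linear growth in $|y_i-y_j|$ and with error $\psi$ pointwise bounded by a rapidly decreasing function of $\min_{i'<j'}|y_{i'}-y_{j'}|$. Such a $\phi$ is given (formally) by integrating $H$ against the compensated potential kernel $\sum_{s\ge 0}[\br_0^{(s)}(\y,\cdot)-\nu^{\otimes k}(\cdot-\y)]$, and the condition $\sum_\ell\ell^{d-1}H(\ell)<\infty$ is exactly what makes $\phi$ well-defined with linear growth in every dimension (with only logarithmic or even bounded growth in $d\ge 2$). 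Dynkin's formula for the chain $\br$ then yields
$$
V^{ij}(H;r)=\phi(\x)-\phi(\mathbf R_r)+M^{(\phi)}_r+\sum_{s=0}^{r-1}\bigl[(P^{\br}-P^{\br_0})\phi(\mathbf R_s)+\psi(\mathbf R_s)\bigr].
$$
The martingale $M^{(\phi)}$ has increments bounded by $C|Y_{s+1}-Y_s|$ thanks to the linear growth of $\phi$, so Lemma \ref{cool} again gives $\|M^{(\phi)}_r\|_{L^q}\le Cr^{1/2}$; the boundary term $\|\phi(\mathbf R_r)-\phi(\x)\|_{L^q}$ is controlled by $C\|Y_r-Y_0\|_{L^q}$ using the Lipschitz-in-$(y_i-y_j)$ structure of $\phi$ and then bounded via \eqref{e1''}; and Lemma \ref{tbb} gives the error bound $|(P^{\br}-P^{\br_0})\phi(\y)|\le C\min\{d_{\mathrm{SRI}}(\br,\br_0)^{1/2},F^{1/2}(\min_{i'<j'}|y_{i'}-y_{j'}|)\}\le C\epsilon^{1/4}F^{1/4}(\min_{i'<j'}|\cdot|)$. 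Taking $L^q$-norms produces a self-consistent inequality
$$
\|V^{ij}(H;r)\|_{L^q}\le Cr^{1/2}+C\epsilon^{1/4}\sum_{i'<j'}\|V^{i'j'}(F^{1/4};r)\|_{L^q},
$$
which, applied first with the choice $H=F^{1/4}$ (legitimate by the standing assumption), absorbs the right-hand side into the left for $\epsilon$ sufficiently small, yielding the bound for $F^{1/4}$ and hence for arbitrary $H$.

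The principal obstacle will be constructing the potential function $\phi$ with the required growth and controlling the error $\psi$ uniformly over the SRI class. This requires quantitative heat-kernel / local-CLT estimates for the centered base chain $\br_0$, which I would prove separately via a Fourier / characteristic-function argument — exploiting the Gaussian decay of the characteristic function that follows from the centered $q$-th moment condition, combined with the total variation perturbation bound \eqref{tvb2} that relates $\br_0$ to the product measure $\nu^{\otimes k}$. A secondary technical difficulty is that \eqref{e1''} enters the derivation of \eqref{e2''} through the term $\|Y_r-Y_0\|_{L^q}$, so the actual logical order is: first prove \eqref{e1''} for the centered chain $\br_0$ (immediate, since then $Y$ is a genuine martingale and BDG applies directly); use this to establish \eqref{e2''} for $\br_0$ via the potential argument above; and finally bootstrap to $\br\in\Ek$ within $d_{\mathrm{SRI}}$-distance $\epsilon$ of $\br_0$ using the self-consistent inequality.
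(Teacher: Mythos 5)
Your overall skeleton — reducing to $r'=0$ by the Markov property, controlling a martingale part via \eqref{cool}-type moment bounds, controlling a drift part in terms of the additive functionals $V^{ij}$, and then closing a self-consistent system by absorption for $\epsilon$ small — is the same as the paper's six-step argument. The genuine difference, and the gap, is in how the Lyapunov/potential function is constructed.

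You want $\phi$ with $(P^{\br_0}-\mathrm{Id})\phi\approx -H$, built by integrating $H$ against the compensated potential kernel $\sum_{s\ge 0}[\br_0^{(s)}(\y,\cdot)-\text{(RW kernel)}]$. Making this sum converge, controlling its growth, and showing the error $\psi$ is rapidly decreasing all require quantitative control on the $s$-step transitions $\br_0^{(s)}$ for the SRI chain itself — that is, a heat-kernel/anticoncentration estimate of the type in Theorem \ref{anti}. But in the paper's logic Theorem \ref{anti} is \emph{downstream} of Theorem \ref{exp00}, so invoking it here is circular. Your proposed fix — a Fourier/characteristic-function argument for $\br_0$ combined with the one-step total-variation bound \eqref{tvb2} — does not supply the missing ingredient: characteristic-function local CLTs apply to i.i.d. sums, and $\br_0$ is a Markov chain whose increments are \emph{not} i.i.d. near the Weyl-chamber boundary; the one-step TV bound \eqref{tvb2} cannot be tensorized over $s$ steps to yield a bound on $\br_0^{(s)}(\y,\cdot)$. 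So the central technical object in your proof is left unconstructed.

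The paper sidesteps this entirely. Instead of inverting the generator via a Green's function, it builds the Lyapunov function $f_H$ \emph{explicitly} as a weighted lattice sum of the Lipschitz functions $u_\x(\y)=|y_i-y_j-(x_i-x_j)|$. The key one-step observation (using centeredness and $\delta$-repulsivity) is that $(P^{\br_0}-\mathrm{Id})u_\x(\y)$ is uniformly bounded below by some $\delta'>0$ when $y_i-y_j$ is within $\delta'$ of $x_i-x_j$; weighting by $H$ and summing over translates then produces $g_{\br_0,H}=(P^{\br_0}-\mathrm{Id})f_H\ge H$ modulo an error that is $O(d_{\mathrm{SRI}}(\br,\br_0)^{1/4}F^{1/4}(\cdot))$. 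This is a one-step, local, elementary construction that requires no local CLT for $\br_0$ at all; the rest of your Dynkin/absorption plan then goes through unchanged.

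A secondary imprecision: when bounding the drift $D_r$ of $Y=R^i-R^j$ you quote only $|\mathbb E^{\br}[Y_{s+1}-Y_s\mid\mathcal F_s]|\le CF^{1/2}(\cdots)$, but \eqref{useful} gives the sharper $C\min\{d_{\mathrm{SRI}}(\br,\br_0)^{1/2},F^{1/2}(\cdots)\}\le Cd_{\mathrm{SRI}}(\br,\br_0)^{1/4}F^{1/4}(\cdots)$. The $d_{\mathrm{SRI}}^{1/4}$ factor is not cosmetic — it is what makes the $V$-contribution in your self-consistent inequality for \eqref{e1''} enter with coefficient $O(\epsilon^{1/4})$ rather than $O(1)$; without it the combined system does not close by taking $\epsilon$ small, and the bootstrap from $\br_0$ to nearby $\br$ stalls.
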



\begin{proof} By the Markov property and the fact that there is a supremum over $\x$, it suffices to prove the claim when $r'=0$, so that $   R^i_{r'}-   R^j_{r'} = x_i-x_j$ where $\x=(x_1,...,x_k)$ in coordinates. We can also assume without any loss of generality that $H\geq F^{1/4}$ because otherwise one may replace $G$ with $\tilde H:=\max\{H,F^{1/4}\}$, note that $\tilde H$ is still decreasing and summable, then prove the claim for $\tilde G$, and note that $V^{ij}(\tilde H;r)- V^{ij}(\tilde H;r')\ge V^{ij}(H;r)-V^{ij}( H;r')$ pathwise to immediately obtain the claim for the original function $H$.  

We now break the proof into five steps. 

\textbf{Step 1.} In this step, we show that there exists $\epsilon_0=\epsilon_0(q,M,\delta)>0$, so that $\br\in\Ek$ and $d_{\mathrm{SRI}}(\br,\br_0)<\epsilon_0$ together imply that $\br$ is $(\delta/2)$-repulsive. To prove this, notice by Lemma \ref{tbb} that $$\bigg|\int_{I^k} |y_i-y_j-(x_i-x_j)| \big( \br - \br_0 )(\x,\dr\y) \bigg| \leq C \cdot d_{\mathrm{SRI}}(\br,\br_0)^{1/2},$$
for some $C=C(q,M)>0.$ Thus taking $\epsilon_0:= \delta^2/(4C) $ gives the claim.

\textbf{Step 2.} Let $\epsilon_0$ be as in Step 1. In this step, we show that the collection of Markov kernels $\mathscr M:= \{ \br \in \Ek: d_{\mathrm{SRI}}(\br,\br_0)<\epsilon_0\}$ satisfies a certain coercivity condition on test functions given by absolute values of differences of coordinates, specifically \eqref{del} below. This will be extremely useful later.  For $\x,\y\in I^k$, let $$u_\x (\y):= |y_i-y_j-(x_i-x_j)| \;\;\;\;\text{and} \;\;\;\;v_\x^{\br}:= P^{\br} u_\x-u_\x,$$ so by Lemma \ref{vfin} 
we have that $v_\x^{\br}(\y)\ge -C\cdot d_{\mathrm{SRI}}(\br,\br_0)^{1/4} \cdot F\big(\min_{i'<j'}|y_{i'}-y_{j'}|\big)^{1/4}$.

We now claim a stronger bound: specifically, we claim that there exists $\delta'>0$ so that for all $\x,\y\in I^k$ and $\br\in \mathscr M$ one has that \begin{equation}\label{del}v_\x^{\br}(\y)>\delta' \ind_{B_{\delta'}(   0)}\big(y_i-y_j-(x_i-x_j)\big)-C\cdot d_{\mathrm{SRI}}(\br,\br_0)^{1/4} \cdot F\big(\min_{i'<j'}|y_{i'}-y_{j'}|\big)^{1/4}.\end{equation} To prove this, note that
    \begin{align*}
        |P^{\br} u_\x(\y) - P^{\br} u_\y(\y)| &= \bigg| \int_{I^k} |a_i-a_j-(x_i-x_j)|\br (\y, \mathrm d\bfa)-\int_{I^k} |a_i-a_j-(y_i-y_j)|\br (\y, \mathrm d\bfa)\bigg| \\ &\le \int_{I^k} \big| |a_i-a_j-(x_i-x_j)|-|a_i-a_j-(y_i-y_j)| \big| \br (\y, \mathrm d\bfa) \\ &\leq \int_{I^k}|x_i-x_j-(y_i-y_j)| \br (\y, \mathrm d\bfa)=|x_i-x_j-(y_i-y_j)|.
    \end{align*}
    Recall from Step 1 that $\inf_{\beta\in\ak} \inf_{\x\in I^k} P^{\br} u_\x(\x)\geq \delta/2$. Thus for all $\x,\y\in I^k$ we have $P^{\br} u_\y(\y)\ge \delta/2$, and from the above bound it then follows that $P^{\br} u_\x(\y)>  \delta/3$ if $|x_i-x_j-(y_i-y_j)|<\delta/6.$ Thus we see that $v_\x^{\br}(\y) = P^{\br} u_\x(\y) - |x_i-x_j-(y_i-y_j)| > \delta/6$ if $|x_i-x_j-(y_i-y_j)|<\delta/6$, thus proving the claim \eqref{del} with $\delta':=\delta/6.$

    \textbf{Step 3.}  In this step, we are going to find a useful family of martingales and derive a discrete ``Tanaka's formula," see \eqref{tanaka} below. That formula will be instrumental in the later analysis. We will also derive bounds related to these martingales, specifically \eqref{hb}, \eqref{lip}, and \eqref{go1} below. 
    
    If $   x\in \mathbb R^d$, let $   x^{(i)}$ denote the element of $\mathbb R^{kd}$ with $   x_i$ in the $i^{th}$ coordinate and $   0$ in all other coordinates. Equation \eqref{del} easily implies that for all $\y=(y_1,...,y_k)\in I^k$ one has
    \begin{align}\notag H(|y_i-y_j|) &\le (\delta')^{-1} \sum_{   m \in \mathbb Z^d}  \delta' \ind_{B_{\delta'}(   0)}\big(y_i-y_j-\delta'    m\big)\cdot G(\delta'|   m|) \\& \leq \inf_{\br \in \mathscr M}(\delta')^{-1}\sum_{   m\in \mathbb Z^d} G(\delta'  |   m|)\big[ v^{\br}_{\delta'    m^{(i)}}(\y) + C\cdot d_{\mathrm{SRI}}(\br,\br_0)^{1/4}F\big(\min_{i'<j'}|y_{i'}-y_{j'}|\big)^{1/4}\big].\label{hb} \end{align}
    For every $\x,\bfa\in I^k$, note that $u_\x(   R^i_r-   R^j_r)-\sum_{s=0}^{r-1} v_\x^{\br}(   R^i_s-   R^j_s)$ is a $\mathbf P^{\br}_\bfa$-martingale for every $\bfa$, just by definition of the Markov operator (these are the Dynkin martingales). Defining $f_H,g_{\br,H}:I^k\to\mathbb R$ by  $$f_H:= (\delta')^{-1}\sum_{   m\in \mathbb Z^d} H(\delta' |   m|) u_{\delta    m^{(i)}},\;\;\;\;\;\;\;\;\; g_{\br,H}:= (\delta')^{-1}\sum_{   m\in \mathbb Z^d}G(\delta' |   m|) v^\beta_{\delta    m^{(i)}},$$ we see that the process \begin{equation}\label{tanaka}\mathcal M_{\br,H}  (r):=-(f_G(   R^i_r-   R^j_r)- f_H (\x))+\sum_{s=0}^{r-1}g_{\br,H} (   R^i_r-   R^j_r)\end{equation} is a $\mathbf P^{\br}_\bfa$-martingale (for all $\y\in I^k)$ with $\mathcal M_{\br,H} (0)=0$. 
    Note that $f$ is globally Lipschitz, in fact 
    \begin{align} |f_F (\x)-f_F (\y)| &\leq \bigg((\delta')^{-1}\sum_{   m\in \mathbb Z^d} H(\delta' |   m|) \bigg)|   x_i-   x_j-(   y_i-   y_j)|\leq C|   x_i-   x_j-(   y_i-   y_j)|\label{lip},\end{align}
    where we absorbed all constants in the last bound. Note that the sum is convergent precisely because of the assumption that $\sum_{\ell=0}^\infty \ell^{d-1}H(\ell)<\infty$. We also claim that $g_{\br, G}$ is a globally bounded function. Indeed, Lemma \ref{vfin} gives that $\Gamma:= \sup_{\br \in \mathscr M} \sup_{\x,\y\in I^k} v^{\br}_\x(\y)    <\infty$, thus we see that 
    \begin{equation}\label{go1}\sup_{\br \in \mathscr M}\sup_{\x\in I^k}|g_{\br, H}(\x)| \leq \bigg((\delta')^{-1}\sum_{   m\in \mathbb Z^d} H(\delta' |   m|) \bigg) \cdot \Gamma <\infty.
    \end{equation}
    From \eqref{tanaka}, \eqref{lip}, and \eqref{go1} one sees that the increments of the martingales $\mathcal M_{\br, H}$ have $q^{th}$ moments bounded
    uniformly over $\br \in \mathscr M$ and all times $r\in \mathbb Z_{\ge 0}$, thus by Lemma \ref{cool} one obtains that 
    \begin{equation}\label{mamr}\sup_{\br \in \mathscr M} \Ebb [ |\mathcal M_{\br, H}(r)|^q]^{1/q} \leq  r^{1/2},
    \end{equation} where $C$ does not depend on $q$.
    
\textbf{Step 4.}  In this step, we will show an \textit{a priori} bound that the left side of \eqref{e2''} can actually be bounded above by some multiple of the left side of \eqref{e1''} plus $C(r-r')^{1/2},$ which will effectively reduce the problem to showing the first bound.  This bound is \eqref{preb1} below.

In \eqref{hb}, note by the decreasing property that $F\big(\min_{i'<j'}|x_{i'}-x_{j'}|\big)^{1/4} \leq \sum_{i'<j'} F(|x_{i'}-x_{j'}|)^{1/4}$. Furthermore, recall from the beginning of the proof our assumption (without loss of generality) that $H\ge F^{1/4}$, thus $F\big(|x_{i'}-x_{j'}|)^{1/4} \leq H(|x_{i'}-x_{j'}|)$. Thus from \eqref{hb} 
    we have the preliminary bound 
    \begin{align}\notag \Ebb[ V^{ij}(H;r)^q ]^{1/q} & \leq \Ebb \bigg[ \bigg( \sum_{s=0}^{r-1} g_{\br,H}(   R^i_s-   R^j_s) \bigg)^q\bigg]^{1/q} + C\cdot d_{\mathrm{SRI}}(\br,\br_0)^{1/4}  \sum_{1\le i'<j' \le k} \Eb [ V^{i'j'}(H;r)^q]^{1/q}.
    \end{align}
    Using \eqref{tanaka}, \eqref{lip}, \eqref{go1}, and \eqref{mamr}, we furthermore have 
    \begin{align*}\Ebb \bigg[ \bigg( \sum_{s=0}^{r-1} g_{\br,H}(   R^i_s-   R^j_s) \bigg)^q\bigg]^{1/q} &\leq \Ebb [ |\mathcal M_{\br,G}(r)|^q]^{1/q}+\Ebb [ |f_F (   R^i_r-   R^j_r) - f_F(\x)|^q ] ^{1/q}  \notag \\& \leq Cr^{1/2} + C \Ebb[ |R^i_r-   R^j_r-(x_i-x_j)|^q]^{1/q}.
    \end{align*}
    Combining the last two expressions and summing over all indices $(i,j)$ with $1\le i<j\le k$, then moving all instances of $\Ebb[ V^{ij}(G;r)^q ]^{1/q}$ to the left side of the inequality, yields 
    \begin{align*}\big(1- C\cdot d_{\mathrm{SRI}}(\br,\br_0)^{1/4} \cdot \tfrac12k(k-1)\big)& \sum_{1\le i<j \le k} \Ebb[ V^{ij}(H;r)^q ]^{1/q} \\ &\leq C r^{1/2} + C \sum_{1\le i<j\le k}\Ebb[ |R^i_r-   R^j_r-(x_i-x_j)|^q]^{1/q}. 
    \end{align*}
    Now choose $\epsilon<\epsilon_0$ small enough so that $ C\cdot \epsilon^{1/4} \cdot \tfrac12k(k-1) \leq \frac12.$ Then after absorbing all constants into some larger one, the last expression yields for $ d_{\mathrm{SRI}}(\br,\br_0) <\epsilon $ that
    \begin{equation}
        \label{preb1}  \sum_{1\le i<j \le k} \Eb[ V^{ij}(H;r)^q ]^{1/q} \leq C r^{1/2} + C \sum_{1\le i<j\le k}\Eb[ |R^i_r-   R^j_r-(x_i-x_j)|^q]^{1/q}. 
    \end{equation}
This bound will be a key step in proving the theorem shortly.

\textbf{Step 5.}  In this step, we will obtain a bound that is in a sense converse to \eqref{preb1}. 
Recall from \eqref{useful} that \begin{equation}\label{useful'} \bigg| y_i-y_j - \int_{I^k} (a_i-a_j)\br(\y,\dr\bfa)\bigg|\leq C\cdot d_{\mathrm{SRI}}(\br,\br_0)^{1/4} F\big(\min_{i'<j'}|y_{i'}-y_{j'}|\big)^{1/4}.\end{equation}
uniformly over $\beta\in \ak$ and $\x\in I^k.$ 
Let $   \pi_i:I^k\to I$ be the coordinate function $\x\mapsto x_i$, and define the process $$M^{i,j,\br}_r:= R^i_r-   R^j_r - (x_i-x_j)-\sum_{s=0}^{r-1} \big((P^{\br}-\mathrm{Id}) (   \pi_i-   \pi_j)\big)(\mathbf R_s).$$ This is a $\Pbb$-martingale for all $\x\in I^k$, which should \textit{not} be confused with the Tanaka martingales appearing in \eqref{tanaka}, as they are unrelated. Now we can use Lemma \ref{cool}, because \eqref{useful'} and the condition that $\br\in \Ek$ show that the increments of this martingale have uniformly bounded $q^{th}$ moments. Thus we see that $\Eb[ |M^{i,j,\br}_r|^q]^{1/q} \leq C r^{1/2},$ where $C$ does not depend on $\br\in \Ek$ and integers $r\ge 0$. Thus we find that \begin{equation}\label{ftrr}\Ebb[ |R^i_r-   R^j_r - (x_i-x_j)|^q]^{1/q} \leq C r^{1/2} + \Ebb\bigg[ \bigg( \sum_{s=0}^{r-1} \big((P^{\br}-\mathrm{Id}) (   \pi_i-   \pi_j)\big)(\mathbf R_s)\bigg)^q\bigg]^{1/q}.
\end{equation}
Equation \eqref{useful'} precisely says $|(P^{\br}-\mathrm{Id}) (   \pi_i-   \pi_j)\big)(\mathbf R_s)| \leq C\cdot d_{\mathrm{SRI}}(\br,\br_0)^{1/4} F\big(\min_{i'<j'}|y_{i'}-y_{j'}|\big)^{1/4},$ which can be further bounded above by $C \cdot d_{\mathrm{SRI}}(\br,\br_0)^{1/4} \sum_{1\le i'<j'\le k}F\big( |R^{i'}_s-R^{j'}_s|\big)^{1/4}.$ Using the definition \eqref{vijai} of the processes $V^{ij}$, this precisely means that $$\sum_{s=0}^{r-1} \big|\big(P^{\br}-\mathrm{Id}) (   \pi_i-   \pi_j)\big)(\mathbf R_s) \big| \leq Cd_{\mathrm{SRI}}(\br,\br_0)^{1/4}\sum_{1\le i'<j'\le k}V^{i'j'}(F^{1/4};r). $$
Plugging that bound into \eqref{ftrr}, and recalling $H\ge F^{1/4}$, we obtain 
\begin{equation}\label{back}\Ebb[ |R^i_r-   R^j_r - (x_i-x_j)|^q]^{1/q} \leq C r^{1/2}  + C\cdot d_{\mathrm{SRI}}(\br,\br_0)^{1/4}\sum_{1\le i'<j'\le k} \Ebb [ V^{i'j'}(H,r)^q]^{1/q}.
\end{equation}
uniformly over $\br\in \mathscr M$ and $\x\in I^k.$

\textbf{Step 6.} In this step, we will establish the theorem. Apply \eqref{back} and then \eqref{preb1} in that order, and one finds that 
\begin{align*}
    \Ebb[ |R^i_r-   R^j_r - (x_i-x_j)|^q]^{1/q} \leq C r^{1/2} + C\cdot d_{\mathrm{SRI}}(\br,\br_0)^{1/4} \sum_{1\le i'<j'\le k} \Ebb[ |R^{i'}_r-R^{j'}_r - (x_i-x_j)|^q]^{1/q} .
\end{align*}
Now sum the left side over all indices $(i,j)$ with $1\le i<j\le k$, and move all instances of $\Ebb[ |R^i_r-   R^j_r - (x_i-x_j)|^q]^{1/q}$ to the left side of the expression, and we obtain $$\big(1-\tfrac12 k(k-1) \cdot C\cdot d_{\mathrm{SRI}}(\br,\br_0)^{1/4} \big) \sum_{1\le i<j\le k}\Ebb[ |R^i_r-   R^j_r - (x_i-x_j)|^q]^{1/q} \leq Cr^{1/2} .$$ 
Now make $\epsilon$ smaller (if needed) so that $C \cdot \epsilon^{1/4} \cdot \frac12 k(k-1) \leq 1/2$. Then, for $d_{\mathrm{SRI}}(\br,\br_0)\leq \epsilon$, one obtains from the previous expression (after absorbing constants on the right side into some larger constant) that 
$$\sum_{1\le i<j\le k}\Ebb[ |R^i_r-   R^j_r - (x_i-x_j)|^q]^{1/q} \leq Cr^{1/2} .$$
This proves \eqref{e1''}, and then \eqref{e2''} follows immediately from \eqref{e1''} and \eqref{preb1}.
\end{proof}

Now we are in a position to prove Theorem \ref{inv00}.



\begin{proof}[Proof of Theorem \ref{inv00}] Without loss of generality, we can assume that the covariance matrix of the base measure $\nu_0$ of $\br_0$ is equal to the $d\times d$ identity matrix. If not, one can always replace $I$ by some linear transform of that lattice. We break the proof into several steps.

\textbf{Step 1. } In this step, we find a useful family of martingales associated to each of the coordinates of the process. Define the $j^{th}$ coordinate function $   \pi_j: I^k \to I$ by $(x_1,...,x_k)\mapsto x_j.$ Notice that for each $1\le j \le k$ that the process $$M^{j,\br}_r := R^j_r - \sum_{s=0}^{r-1} (P^{\br}-\mathrm{Id})   \pi_j(\mathbf R_s)$$ is a $\Pbb$-martingale for all $\x\in I^k$ and $\br\in \Ek$. Explicitly we have that \begin{equation}\label{expre}(P^{\br}-\mathrm{Id})   \pi_j(\x) = \int_{I^k} (y_j-x_j) \br(\x,\dr\y).\end{equation}

    \textbf{Step 2.} Fix $j\in \{1,...,k\}$, and let $R^j$ as usual denote the $j^{th}$ coordinate of the canonical process $\mathbf R$ on $(I^k)^{\mathbb Z_{\ge 0}}$. Recall the quantity $\mathcal S_q(M,P)$ from Lemma \ref{cool}. In this step we will show that for each $q\ge 1$, \begin{equation}\label{b4}\sup_{\br \in \Ek} \sup_{\x\in I^k} \mathcal S_q(\mathbf M^{\br};\Pbb)<\infty.\end{equation} First note that \begin{equation}\label{b4a}\sup_{\br \in \Ek}\sup_{\x\in I^k} |(P_{\br} -\mathrm{Id})   \pi_j(\x)| \leq \sup_{\br \in \Ek} \sup_{\x\in I^k} \int_{I^k} |y_j-x_j| \br(\x,\dr\y)<\infty.\end{equation}
    Then note by the Markov property that \begin{equation}\label{b4b}\sup_{r\ge 1} \sup_{\x\in I^k} \sup_{\br \in \Ek} \Ebb[ |R^j_r-   R^j_{r-1}|^q] \leq \sup_{\x'\in I^k} \sup_{\br \in \Ek} \int_{I^k} |y_j-x_j|^q \br(\x',\dr\y)<\infty .\end{equation} Combining \eqref{b4a} and \eqref{b4b}, we immediately obtain \eqref{b4}.
    
    \textbf{Step 3.} In this step we study the processes given by $$\mathcal B_N(t):= N^{-1/2}\mathbf s_Nt - N^{-1/2}\sum_{r=0}^{Nt-1} (P^{\brn}-\mathrm{Id})   \pi_j(\mathbf R_r).$$
    In particular we will show that if $q\ge 1$ then $\Ebbnn [ |\mathcal B_N(t)-\mathcal B_N(s)|^q]^{1/q} \leq Cd_{\mathrm{SRI}}(\brn,\br_0)^{1/4}|t-s|^{1/2},$ so that $\mathcal B_N$ converges to the zero process in the topology of $C[0,T]$. To prove this, let $    s(\nu):= \int_I    u \nu(\dr    u)$ so that $\mathbf s_N= (   s(\nu_N),...,   s(\nu_N)).$ First notice that by an extremely similar argument as that used in proving \eqref{useful}, one has \begin{equation}\label{useful''}\bigg|    s(\nu_{\br}) - \int_{I^k} (a_i-y_i)\br(\y,\dr\bfa)\bigg|\leq C\cdot d_{\mathrm{SRI}}(\br,\br_0)^{1/4} F\big(\min_{i'<j'}|y_{i'}-y_{j'}|\big)^{1/4}.
    \end{equation} uniformly over $\br\in \Ek$ and $\y\in I^k$. Here $\nu_{\br}$ denotes the base measure of $\br\in \Ek$. Thus for $t\in N^{-1}\mathbb Z_{\ge 0}$, we have
    \begin{align*}|\mathcal B_N(t+N^{-1})-\mathcal B_N(t)| &= N^{-1/2}|   s(\nu_N)- \int_{I^k} (y_j-   R^j_r)\br(\mathbf R_r,\dr\y)|\\ &\leq N^{-1/2} d_{\mathrm{SRI}}(\brn,\br_0)^{1/4} F\big(\min_{i'<j'}|R_r^{i'}-R_r^{j'}|\big)^{1/4} 
    \end{align*} 
    where $r:=Nt$. Thus using the bound $F(\min_\ell u_\ell)^{1/4} \leq \sum_\ell F(u_\ell)^{1/4}$ we find that 
    \begin{align*}
        |\mathcal B_N(t)-\mathcal B_N(s)| &\leq \sum_{u \in [s,t]\cap N^{-1}\mathbb Z_{\ge 0}} |\mathcal B_N(u+N^{-1})-\mathcal B_N(u)| \\ &\leq d_{\mathrm{SRI}}(\brn,\br_0)^{1/4}\sum_{r \in [Ns,Nt]\cap\mathbb Z_{\ge 0}} \sum_{1\le i'<j'\le k} F(|R^{i'}_r-R^{j'}_r| )^{1/4} 
    \end{align*}
    Thus taking expectation and applying \eqref{e2''} we find that \begin{equation}\label{bbd}\Ebbnn [ |\mathcal B_N(t)-\mathcal B_N(s)|^q]^{1/q} \leq Cd_{\mathrm{SRI}}(\brn,\br_0)^{1/4}|t-s|^{1/2} .\end{equation}
\textbf{Step 4.} Combining the results of \eqref{b4} and \eqref{bbd} and Lemma \ref{cool}, we immediately obtain tightness of the rescaled process $\mathbf X_N$ in the space $C[0,T]$. It remains to identify the limit point as Brownian motion. In this step, we will begin by showing that any limit point $\mathbf U = (U^1,...,U^k)$ is necessarily a martingale in the joint filtration of all coordinates, and moreover each coordinate $U^j$ is \textit{individually} distributed as a standard Brownian motion started from $x_j$ where $\boldsymbol x=(x_1,...,x_k)$ is as in the theorem statement.

Write $\mathbf X_N = (X_N^1,...,X_N^j)$ in coordinates. By \eqref{bbd}, the difference $X_N^j(t) - N^{-1/2} M^{j,\brn}_{Nt}$ converges in probability to zero in the topology of $C[0,T]$, thus it suffices to show that if $\mathbf U$ is any limit point of the processes $(N^{-1/2} M^{j,\brn}_{Nt})_{1\le j \le k,t\ge 0}$ then $\mathbf U$ is a martingale and each coordinate is individually a Brownian motion. The martingality of the limit point is clear, since martingality is preserved by limit points as long as one has uniform integrability as guaranteed by the $L^q$ bounds \eqref{b4}.

Now we will show that each coordinate of the limit point $\mathbf U$ is a standard Brownian motion. To do this, we need to study the quadratic variations of the martingales $(N^{-1/2} M^{j,\brn}_{Nt})_{t\ge 0}$. Fix some vector $\bfa := (   a_1,...,   a_k)\in \mathbb R^{kd}$ and define the martingale $$Z^{\bfa, \brn}_r := \sum_{j=1}^k    a_k \bullet M^{j,\brn}_r,$$ where $\bullet $ is the dot product in $\mathbb R^d$. Note that $(N^{-1/2} Z^{\bfa,\brn}_{Nt})_{t\ge 0}$ is a tight family of processes in $C[0,T]$ simply by Lemma \ref{cool} and \eqref{b4a}. 
Note that the process $$Y^{\bfa,\br}_r:= (Z^{\bfa,\br}_r)^2 - \sum_{s=0}^{r-1}\Ebb [ (Z^{\bfa,\br}_{s+1}-Z^{\bfa,\br}_s)^2|\mathcal F_s]$$ is also a $\Pbb$-martingale. We claim that $(N^{-1} Y^{\bfa,\brn}_{Nt})_{t\ge 0}$ is a tight family of processes in $C[0,T]$ as $N\to \infty$. Indeed, one easily verifies that $\big (N^{-1}(M^{j,\brn}_{Nt})^2\big)_{t\ge 0}$ is tight and satisfies the same $L^q$ estimates as the processes $\big (N^{-1/2}M^{j,\brn}_{Nt}\big)_{t\ge 0}$ on any compact time interval $t\in [0,T]$, simply because it is the square of a process satisfying such bounds as shown in Steps 2 and 3. On the other hand, by Minkowski's inequality, we also have that \begin{align*}\bigg\| N^{-1} \sum_{r=Ns}^{Nt} \Ebbnn [ (M^{j,\brn}_{r+1}-M^{j,\brn}_r)^2|\mathcal F_s]\bigg\|_{L^q(\mathbf P_{\x_N}^{(\beta_N,k)})}& \leq N^{-1} \sum_{r=Ns}^{Nt} \big\|M^{j,\brn}_{r+1}-M^{j,\brn}_r\big\|_{L^{2q}(\mathbf P_{\x_N}^{\brn})}\\ &\leq \bigg(\sup_{N\ge 1} \mathcal S_{2q}(M^{j,\brn};\mathbf P_{\x_N}^{\brn}) \bigg)|t-s|,\end{align*}
where the supremum is finite by \eqref{b4}. Summarizing these bounds, we have that for some $q> 2$ we have $\Ebbnn \big[ \big(N^{-1}| Y^{\bfa,\brn}_{Nt}- Y^{\bfa,\brn}_{Ns}|\big)^q\big]^{1/q} \leq C|t-s|^{1/2}$ for all $s,t$ in a compact interval, thus completing the proof that $(N^{-1} Y^{\bfa,\brn}_{Nt})_{t\ge 0}$ is a tight family of processes in $C[0,T]$ as $N\to \infty$. 

We will now study the joint limit as $N\to \infty$ under the measures $\mathbf P^{\brn}_{\x_N}$ of the pair of processes $(N^{-1/2} Z^{\bfa,\brn}_{Nt}, N^{-1} Y^{\bfa,\brn}_{Nt})_{t\ge 0}.$ Consider any joint limit point $(U,V).$ Then the pair are martingales in their joint filtration, since martingality is preserved by limit points as long as one has the $L^q$ bounds as shown above. Let us study how $U$ and $V$ must be related.  From \eqref{expre} and \eqref{useful''}, it is immediate that 
\begin{equation}\label{uu2}\Ebbnn \big[ \big(( P^{\brn} -\mathrm{Id})   \pi_j(\mathbf R_r)-   s(\nu_N)\big)^2 \big] \leq Cd_{\mathrm{SRI}}(\br_0,\brn)^{1/4},
\end{equation}
which implies from the definition of the martingales $M^{j,\br}$ that \begin{equation}\label{cob1}\Ebbnn \bigg[ N^{-1}\sum_{s=0}^{Nt-1} \bigg| \Ebbnn \bigg[ (Z_{r+1}^{\bfa,\brn} -Z_r^{\bfa,\brn})^2 - \bigg(\sum_{j=1}^k    a_j \bullet (   R^j_{r+1}-   R^j_r-   s(\nu_N))\bigg)^2\bigg| \mathcal F_r\bigg]\bigg| \bigg] \leq Cd_{\mathrm{SRI}}(\br_0,\brn)^{1/4} \stackrel{N\to\infty}{\longrightarrow} 0.\end{equation}
To prove this, one expands out \begin{align*}(Z_{r+1}^{\bfa,\brn}& -Z_r^{\bfa,\brn})^2 = \bigg(\sum_{j=1}^k    a_j \bullet (   R^j_{r+1}-   R^j_r-   s(\nu_N))\bigg)^2 \\&+ 2 \bigg(\sum_{j=1}^k    a_j \bullet (   R^j_{r+1}-   R^j_r-   s(\nu_N))\bigg)\big(( P^{\brn} -\mathrm{Id})   \pi_j(\mathbf R_r)-   s(\nu_N)\big)+\big(( P^{\brn} -\mathrm{Id})   \pi_j(\mathbf R_r)-   s(\nu_N)\big)^2,
\end{align*}
then one uses \eqref{uu2}. With \eqref{cob1} established, set $\mathrm{var}^{\bfa}_N:= \int_{I^k} \big( \sum_{j=1}^k    a_j \bullet (   u_j -    s(\nu_N)) \big)^2 \nu_N^{\otimes k}(\dr   u).$ Then using similar arguments to \eqref{useful''} we also have that \begin{align*}\bigg|\Ebbnn & \bigg[ \bigg(\sum_{j=1}^k    a_j \bullet (   R^j_{r+1}-   R^j_r-   s(\nu_N))\bigg)^2\bigg| \mathcal F_r\bigg]\;\; - \;\; \mathrm{var}^{\bfa}_N \bigg| \\&= \bigg|\int_{I^k} \bigg(\sum_{j=1}^k    a_j \bullet (y_j-   R^j_r-   s(\nu_N))\bigg)^2\brn(\mathbf R_r,\dr\y)\;\; - \;\; \mathrm{var}^{\bfa}_N \bigg| \le C\cdot d_{\mathrm{SRI}}(\brn,\br_0)^{1/4} F\big(\min_{i'<j'}|R_r^{i'}-R_r^{j'}|\big)^{1/4}.
\end{align*}
This implies that \begin{align}\notag \Ebbnn \bigg[ N^{-1}\sum_{s=0}^{Nt-1} \bigg|\Ebbnn & \bigg[ \bigg(\sum_{j=1}^k    a_j \bullet (   R^j_{r+1}-   R^j_r-   s(\nu_N))\bigg)^2\bigg| \mathcal F_r\bigg]\;\; - \;\; \mathrm{var}^{\bfa}_N \bigg|\bigg] \\ \notag & \leq C\cdot d_{\mathrm{SRI}}(\brn,\br_0)^{1/4} .\Ebbnn \bigg[ N^{-1}\sum_{s=0}^{Nt-1}\sum_{1\le i'<j'\le k}F\big( |R^{i'}_r-R^{j'}_r| \big)^{1/4} \bigg]\\&\stackrel{N\to\infty}{\longrightarrow} 0.\label{cob2}\end{align}
where we applied \eqref{e2''} and $d_{\mathrm{SRI}}(\br,\brn)\to 0$ in the last line. Combining \eqref{cob1} and \eqref{cob2}, and recalling that the covariance matrix of the base measure $\nu$ of $\br_0$ is $\mathrm{Id}_{d\times d}$, we have that $\mathrm{var}^{\bfa}_N \to |\bfa|^2$ as $N\to\infty$ we find that \begin{equation}\label{cob3}\lim_{N\to \infty} \Ebbnn \bigg[ \bigg||\bfa|^2 t\;\;\;-\;\;\;N^{-1}\sum_{s=0}^{Nt-1}  \Ebbnn \big[ (Z_{r+1}^{\bfa,\brn} -Z_r^{\bfa,\brn})^2\big| \mathcal F_r\big]\bigg| \bigg] =0.\end{equation}
Using \eqref{cob3}, one may immediately conclude that the joint limit point $(U,V)$ of the pair of processes $(N^{-1/2} Z^{\bfa,\brn}_{Nt}, N^{-1} Y^{\bfa,\brn}_{Nt})_{t\ge 0}$ must satisfy $V_t=U_t^2-|\bfa|^2t$. Since both are continuous martingales as explained above, and since this holds true for all choices of $\bfa \in \mathbb R^{kd}$, it must be true that $(U^1,...,U^k)$ is a standard Brownian motion in $\mathbb R^{kd}$.
\end{proof}

\section{Proof of Theorem \ref{anti}: Heat kernel bounds for SRI chains}\label{appendix:b}

\begin{lem}\label{rwbd}
    Let $m\ge 1$ and $z_0>0$ and $K>0$. Let $\mathscr A_\gamma (m)$ denote the set of all probability measures $\rho$ on $\mathbb R^d$ with mean zero, with covariance matrix $\mathrm{Id}_{d\times d}$, and satisfying $\int_{\mathbb R^d} |   x|^3\rho(\dr    x)<\gamma $. There exists a constant $C=C(d,z_0,K)>0$ such that for all $r\in \mathbb Z_{\ge 0}$ and $   y\in\mathbb R^d$ one has
    $$\sup_{\rho\in \mathscr A_\gamma (d)} \nu^{* r}(\{   x: |   x-   y|\leq 1\}) \leq C r^{-d/2} \big( 1+r^{-1/2} |   y-   x|\big)^{-K}.$$ Here $\rho^{* r}$ denotes the $r$-fold convolution of $\rho$ with itself.
\end{lem}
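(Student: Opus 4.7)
The plan is to use Fourier inversion to express $\rho^{*r}(B_1(y))$ as an oscillatory integral, extracting Gaussian-type decay in $r$ from the behavior of $\hat\rho$ near the origin, and obtaining polynomial decay in $|y|$ via integration by parts in the frequency variable. (Here I read the right-hand side of the lemma as $(1+r^{-1/2}|y|)^{-K}$, treating the stray ``$-x$'' as a typo since $x$ is a dummy integration variable; the stated dependence of $C$ on $z_0$ likewise suggests that an exponential-moment version of $\mathscr{A}_\gamma$ is implicitly intended.)

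First I will bound the ball indicator by a suitable smooth test function. Pick a nonnegative Schwartz function $\phi$ on $\mathbb{R}^d$ with $\phi \geq \mathbf{1}_{B_1(0)}$ whose Fourier transform $\hat\phi$ is smooth, even, nonnegative, and compactly supported in $\{|\xi| \leq \epsilon_0\}$; such $\phi$ can be built by rescaling a convolution square of a bump in frequency space. Fourier inversion then yields
\begin{equation*}
\rho^{*r}\{x: |x - y| \leq 1\} \le (\phi * \rho^{*r})(y) = (2\pi)^{-d} \int \hat\phi(\xi)\, \hat\rho(\xi)^r\, e^{iy\cdot\xi}\, d\xi.
\end{equation*}
Next I use the moment hypotheses on $\rho$ to control $\hat\rho(\xi)^r$ on the support of $\hat\phi$. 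Since $\rho$ has mean zero, covariance $\mathrm{Id}$, and bounded third moment, Taylor expansion gives $\hat\rho(\xi) = 1 - \tfrac12|\xi|^2 + O(\gamma|\xi|^3)$ uniformly over $\rho \in \mathscr{A}_\gamma$, so one can fix $\epsilon_0 = \epsilon_0(\gamma,d) > 0$ such that $|\hat\rho(\xi)| \leq e^{-|\xi|^2/4}$ on $\{|\xi| \leq \epsilon_0\}$, and hence $|\hat\rho(\xi)|^r \leq e^{-r|\xi|^2/4}$ on the support of $\hat\phi$. Rescaling $\xi = \eta/\sqrt{r}$ immediately yields the basic estimate $(\phi*\rho^{*r})(y) \leq Cr^{-d/2}$, which already settles the claim when $|y| \leq \sqrt r$.

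For $|y| > \sqrt r$, I will integrate by parts $K$ times in the direction $\hat y := y/|y|$, using $e^{iy\cdot\xi} = (i|y|)^{-K}\partial_{\hat y}^K e^{iy\cdot\xi}$; boundary terms vanish because $\hat\phi$ is compactly supported. By Fa\`a di Bruno, $\partial_{\hat y}^K[\hat\phi(\xi)\hat\rho(\xi)^r]$ is a finite sum of terms of the form $r(r-1)\cdots(r-j+1)\,\hat\rho(\xi)^{r-j}\prod_\ell \partial^{\alpha_\ell}\hat\rho(\xi)\cdot \partial^\beta\hat\phi(\xi)$ with $\sum_\ell|\alpha_\ell| + |\beta| = K$. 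The worst (largest in $r$) such term has $j = K$ and all $|\alpha_\ell| = 1$: on $\mathrm{supp}(\hat\phi)$ it is bounded by $Cr^K|\xi|^K e^{-cr|\xi|^2}$, using that $\partial\hat\rho(\xi) = O(|\xi|)$ near zero from the mean-zero condition and that higher derivatives of $\hat\rho$ are uniformly bounded (this is where the exponential-moment hypothesis, tacitly encoded in the constant $C(d,z_0,K)$, is used; one needs at least $2K$ moments to take $K$ derivatives). After the rescaling $\xi = \eta/\sqrt r$ this term becomes $Cr^{K/2}|\eta|^K e^{-c|\eta|^2}$, integrable in $\eta$. Combining with the $r^{-d/2}$ Jacobian and the $|y|^{-K}$ prefactor yields $(\phi*\rho^{*r})(y) \leq C|y|^{-K}r^{(K-d)/2} = Cr^{-d/2}(r^{-1/2}|y|)^{-K}$. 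Merging the two regimes gives the claimed bound $Cr^{-d/2}(1 + r^{-1/2}|y|)^{-K}$.

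The main obstacle will be the bookkeeping for the Fa\`a di Bruno expansion: each subleading term (fewer derivatives landing on copies of $\hat\rho$, or a single $\partial^{\alpha_\ell}\hat\rho$ absorbing more derivatives) must also be dominated uniformly in $\rho$ by an integrable kernel after rescaling, through the compensating interplay between lower powers of $r$, extra $|\xi|$ factors from mean-zero, and uniform bounds on higher derivatives of $\hat\rho$. A secondary technical point is the construction of the Schwartz function $\phi \geq \mathbf{1}_{B_1(0)}$ with $\hat\phi$ supported in a prescribed small ball $\{|\xi|\leq \epsilon_0\}$, which is achieved by appropriate dilation and convolution-square of a bump.
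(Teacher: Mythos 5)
Your proof is correct and follows essentially the same Esseen-type Fourier route as the paper: dominate the ball indicator by a Schwartz function with compactly supported Fourier transform, exploit the Gaussian bound on $\hat\rho$ near the origin for the $r^{-d/2}$ factor, and integrate by parts $K$ times in $\xi$ to extract the $(1+r^{-1/2}|y|)^{-K}$ factor; the only cosmetic difference is that the paper rescales $\xi\mapsto\xi/\sqrt r$ before integrating by parts while you do so afterward. One minor inaccuracy in your parenthetical remark: controlling $K$ derivatives of $\hat\rho$ requires only $K$ (not $2K$) finite moments of $\rho$, though your larger point stands that the lemma's stated third-moment hypothesis is insufficient for general $K$ and the exponential-moment dependence via $z_0$ is what tacitly supplies the uniform bounds on $\|\partial^\alpha\hat\rho\|_{L^\infty}$.
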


\begin{proof} We will effectively follow the proof of ``Esseen's concentration inequality," which is actually an anti-concentration inequality. Choose $\gamma=\gamma(z_0,M)>0$ (small enough) so that $|\hat \rho(\xi)|\leq e^{-\frac14|\xi|^2}$ for $|\xi|<\gamma.$ This is permissible since the covariance matrix of $\rho$ is $\mathrm{Id}_{d\times d}$. Next, choose a smooth even function $H$ supported on $[-\gamma,\gamma]$, and let $U$ be the Fourier transform of $H*H$. By Plancherel and then a change of variable, we have that $$\int_I U(x-y) \rho^{*r}(\dr x) = \int_{\mathbb R^d} e^{i\xi \bullet y} H(\xi)^2 \hat\rho(\xi)^r  \dr\xi =r^{-d/2} \int_{\mathbb R^d} e^{i\xi\bullet y/\sqrt{r}} |H(\xi/\sqrt r)|^2 \hat \rho (\xi/\sqrt r)^r \dr\xi.$$We need to show that the integral is bounded above by $C\big( 1+ r^{-1/2}|y|\big)^{-K}$ for some large enough constant $C=C(M,z_0,K).$ To do this, we will actually show that the integral is bounded above by $C\min\{1, r^{K/2}|y|^{-K}\},$ which is an equivalent bound. The upper bound of 1 is clear since $H$ being supported on $[-\gamma,\gamma]$ implies $|H(\xi/\sqrt r)|^2 |\hat \rho (\xi/\sqrt r)|^r \leq Ce^{-\frac14|\xi|^2}.$ For the upper bound of $r^{K/2}|y|^{-K}$, integrate by parts exactly $K$ times to yield that 
    $$\int_{\mathbb R^d} e^{i\xi\bullet y/\sqrt{r}} |H(\xi/\sqrt r)|^2 \hat \rho (\xi/\sqrt r)^r \dr\xi = \bigg(-\frac{i\sqrt r}{|y|}\bigg)^K \int_{\mathbb R^d}e^{i\xi\bullet y/\sqrt{r}} \partial_\xi^K\bigg( |H(\xi/\sqrt r)|^2 \hat \rho (\xi/\sqrt r)^r \bigg)\dr\xi.$$
    Now we will take absolute values on both sides. If we can prove that $\big|\partial_\xi^K\big( |H(\xi/\sqrt r)|^2 \hat \rho (\xi/\sqrt r)^r \big)\big| \leq C (1+|\xi|^m) e^{-\frac18 |\xi|^2}, $ for some constant $C=C(K,z_0,M)>0$, then this would complete the proof of the claim. Since $H$ is smooth and compactly supported, it is clear that the first $K$ derivatives of the function $\xi \mapsto |H(\xi/\sqrt r)|^2$ are bounded independently of $\xi$ and $r$, thus by the product rule for derivatives, we just need to obtain a sufficiently strong bound on $\big|\partial_\xi^K\big( \hat \rho (\xi/\sqrt r)^r \big)\big|$, that is independent of $r\ge 0$ and $|\xi|<\gamma\sqrt r$. 
    
    We will now show that $\big|\partial_\xi^K\big( \hat \rho (\xi/\sqrt r)^r \big)\big| \leq C (1+|\xi|^K) e^{-\frac18 |\xi|^2} $ for $|\xi|\leq \gamma$, and then we will be finished. When $K=0$, we already know this to be true by construction. Abbreviate $\phi:=\hat \rho$, and $\phi^{(\ell)} :=\partial_\xi^\ell \phi$. Since $\rho$ is assumed to have moments of all orders, we have $\max_{1\le \ell\le m} \|\phi^{(\ell)}\|_{L^\infty}\leq C.$ Thus at each instance of differentiation, notice that the only seemingly problematic terms that could arise are derivatives of terms of the form $\phi(\xi/\sqrt r)^{r-\ell}$ with $\ell< K$. The derivative of such a term is $(r-\ell)\phi(\xi/\sqrt r)^{r-\ell-1} \phi'(\xi/\sqrt r)\cdot r^{-1/2}.$ Since the measure $\rho$ has mean zero, one has that $|\phi'(\xi)|\leq C |\xi| $ for $|\xi|<\gamma$. We thus obtain that $(r-\ell)\phi(\xi/\sqrt r)^{r-\ell-1} \phi'(\xi/\sqrt r)\cdot r^{-1/2}\leq C|\xi| \phi(\xi/\sqrt r)^{r-\ell-1}\leq C|\xi| e^{-\frac{r-\ell-1}{r} \cdot\frac14 |\xi|^2}$ for $|\xi|< \gamma\sqrt r$. Without loss of generality, we can assume that $r\ge 2K$ since controlling some finite set of $r$-values is vacuous (just making $C$ larger if necessary), which means $\frac{r-\ell-1}{r}\geq \frac12$ for $\ell<K,$ so that $e^{-\frac{r-\ell-1}{r} \cdot\frac14 |\xi|^2}\leq e^{-\frac18|\xi|^2}.$
\end{proof}

\begin{lem}\label{cool2}Consider any adapted sequence of random variables $(X_n)_{n\ge 0}$ defined on any filtered probability space $(\Omega, (\mathcal F_r)_{r\in \mathbb Z_{\ge 0}}, P),$ with $M_0=0$. Assume that we have $\mathcal E_{\lambda_0} (M;P):=\sup_{n\ge 1} \| E [e^{4\lambda_0|X_{n+1} -X_n|}|\mathcal F_n] \big\|_{L^\infty(\Omega,\mathcal F,P)}<\infty,$ for some $\lambda_0>0$. Then for $|\lambda|<\lambda_0$ we have that 
\begin{equation}\label{mart1}E[ e^{\lambda X_r}] \leq E\bigg[ \prod_{s=0}^{r-1} E[e^{2\lambda (X_{s+1}-X_s)}|\mathcal F_s]\bigg]^{1/2}.
\end{equation}
If in particular $X$ is a martingale, then for $|\lambda|<\lambda_0$ we have that 
\begin{align}
    \label{mart2} \notag E[ e^{\lambda X_r}] &\leq E\bigg[ \prod_{s=0}^{r-1} \big( 1+4 \lambda^2 \mathcal E_{\lambda_0}(M,P)^{1/2}  E[|X_{s+1}-X_s|^4|\mathcal F_s]^{1/2}\big)\bigg]^{1/2} \\& \leq E\bigg[e^{4 \lambda^2 \mathcal E_{\lambda_0}(M,P)^{1/2}\sum_{s=0}^{r-1}  E\big[|X_{s+1}-X_s|^4\big|\mathcal F_s\big]^{1/2}}\bigg]^{1/2}
\end{align}
\end{lem}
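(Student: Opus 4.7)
The plan is to imitate the Doléans--Dade martingale trick already used in Lemma~\ref{cool} and then insert a second-order Taylor expansion that exploits the martingale property. For \eqref{mart1}, I would define the predictable process
\[
W_r:=\sum_{s=0}^{r-1}\log E[e^{2\lambda(X_{s+1}-X_s)}\mid \mathcal F_s],
\]
which is finite a.s.\ thanks to the hypothesis $|\lambda|<\lambda_0$ and the bound $\mathcal E_{\lambda_0}(M;P)<\infty$. The process $e^{2\lambda X_r-W_r}$ is then a genuine $P$-martingale with initial value $1$, exactly as in the proof of Lemma~\ref{cool}. Applying Cauchy--Schwarz gives
\[
E[e^{\lambda X_r}]=E\bigl[e^{\lambda X_r-\tfrac12 W_r}\cdot e^{\tfrac12 W_r}\bigr]\leq E\bigl[e^{2\lambda X_r-W_r}\bigr]^{1/2}E[e^{W_r}]^{1/2}=E[e^{W_r}]^{1/2},
\]
which is exactly \eqref{mart1}.

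To upgrade this to \eqref{mart2} under the martingale hypothesis, the key step is to estimate the one-step MGF $E[e^{2\lambda(X_{s+1}-X_s)}\mid \mathcal F_s]$ sharply. Writing $Y:=X_{s+1}-X_s$ and $Q:=E[\cdot\mid \mathcal F_s]$, the martingale property gives $Q[Y]=0$, so using the integral remainder $e^x-1-x=\int_0^1(1-t)x^2 e^{tx}\,dt$ and $e^{tx}\leq e^{|x|}$ for $t\in[0,1]$,
\[
Q[e^{2\lambda Y}]=1+Q\bigl[e^{2\lambda Y}-1-2\lambda Y\bigr]\leq 1+2\lambda^2\,Q\bigl[Y^2\, e^{2|\lambda||Y|}\bigr].
\]
Since $|\lambda|<\lambda_0$, Cauchy--Schwarz yields
\[
Q[Y^2 e^{2|\lambda||Y|}]\leq Q[Y^4]^{1/2}\,Q[e^{4|\lambda||Y|}]^{1/2}\leq Q[Y^4]^{1/2}\,\mathcal E_{\lambda_0}(M;P)^{1/2},
\]
so that $Q[e^{2\lambda Y}]\leq 1+4\lambda^2\mathcal E_{\lambda_0}(M;P)^{1/2}Q[Y^4]^{1/2}$ (absorbing the factor of $2$ in the constant as in the statement).

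Substituting this pointwise estimate for each factor of the product $\prod_{s=0}^{r-1}E[e^{2\lambda(X_{s+1}-X_s)}\mid \mathcal F_s]=e^{W_r}$ inside \eqref{mart1} yields the first inequality of \eqref{mart2}, and the elementary bound $1+x\leq e^x$ applied to each factor converts the product into the exponential of a sum, giving the second inequality. No real obstacle is anticipated; the only care needed is to ensure that the exponential moment condition $\mathcal E_{\lambda_0}(M;P)<\infty$ is used with the correct constant $4\lambda_0$ in the definition so that $Q[e^{4|\lambda||Y|}]$ is genuinely dominated for all $|\lambda|<\lambda_0$, which is precisely why the factor $4$ (rather than $2$) appears in the definition of $\mathcal E_{\lambda_0}(M;P)$ in the statement of the lemma.
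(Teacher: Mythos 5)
Your proof is correct and follows essentially the same route as the paper's: the Cauchy--Schwarz/change-of-measure trick for \eqref{mart1}, then a one-step Taylor bound on $Q[e^{2\lambda Y}]$ using $Q[Y]=0$ followed by conditional Cauchy--Schwarz to separate $Q[Y^4]^{1/2}$ from the exponential moment, and $1+x\le e^x$ at the end. The only cosmetic differences are that the paper invokes the elementary inequality $|e^u-1-u|\le u^2 e^{|u|}$ directly (giving the constant $4\lambda^2$ at once, whereas your integral-remainder computation gives the sharper $2\lambda^2$ before you deliberately weaken it) and that the paper presents the chain of reductions in reverse order.
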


\begin{proof}
    Fix $\lambda>0$, and let $W_r:=\sum_{n=0}^{r-1} \log E[e^{2\lambda (X_{n+1}-X_n)}|\mathcal F_n],$ and notice that $e^{2\lambda X_r - W_r}$ is a $P$-martingale. Therefore by Cauchy-Schwartz we have that $$E[ e^{\lambda X_r}] =E[e^{\lambda X_r -\frac12 W_r} e^{\frac12W_r}] \leq E[e^{2\lambda X_r-W_r}]^{1/2} E[e^{W_r}]^{1/2} = 1\cdot E[e^{W_r}]^{1/2}, $$ which already proves \eqref{mart1}. To prove the first bound in \eqref{mart2}, we just need to show that for $|\lambda|<\lambda_0$ one has $$E[e^{2\lambda (X_{s+1}-X_s)}|\mathcal F_s] \leq 1 +4\lambda^2 E[ e^{4|\lambda||X_{s+1}-X_s|}|\mathcal F_s]^{1/2} \cdot E[|X_{s+1}-X_s|^4|\mathcal F_s]^{1/2}.$$ By Cauchy-Schwartz it suffices to show the weaker bound $$E[e^{2\lambda (X_{s+1}-X_s)}|\mathcal F_s] \leq 1 +4\lambda^2 E[ (X_{s+1}-X_s)^2 e^{2|\lambda (X_{s+1}-X_s)| }|\mathcal F_s].$$ But this is immediate from the elementary inequality $|e^u-1-u| \leq u^2 e^{|u|},$ and the assumption that $E[X_{s+1}-X_s|\mathcal F_s]=0.$ 
    
    The second bound of \eqref{mart2} then just follows from the trivial bound $1+a\leq e^a,$ applied to each term in the product.
\end{proof}

\begin{lem} \label{lemm:antid1} Consider decreasing functions $F, H:[0,\infty)\to[0,\infty)$ of exponential decay at infinity. Let $q\ge 2, M>0$ and let $\br_0\in\Ek$ be centered and $\delta$-repulsive for some $\delta>0$. Then there exists $\epsilon>0$ such that for all $\lambda>0$ one has that
\begin{equation}
    \label{f2b'} \sup_{\substack{\br \in \Ek\\d(\br,\br_0)<\epsilon }} \sup_{\x\in I^k} \sup_{r\in\mathbb Z_{\ge 0}} \Ebb [ e^{ \lambda r^{-1/2} \sum_{s=0}^{r-1} \sum_{1\le i'<j'\le k} H(|R^{i'}_s-R^{j'}_s|) }]<\infty.
\end{equation} 
\end{lem}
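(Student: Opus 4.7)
The strategy is to combine the Tanaka-type martingale decomposition from Step 3 of the proof of Theorem \ref{exp00} with sub-Gaussian exponential moment bounds for martingales via Lemma \ref{cool2}, together with a bootstrap that absorbs the error terms when $\br$ is sufficiently close to $\br_0$ in the $d_{\mathrm{SRI}}$ distance. Without loss of generality I shall replace $H$ by $\max(H, F^{1/4})$, which is still a decreasing function of exponential decay, so that the bootstrap closes up.

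For each pair $1 \leq i < j \leq k$, the bound \eqref{hb} combined with the martingale identity \eqref{tanaka} yields the pathwise inequality
\begin{align*}
\sum_{s=0}^{r-1} H(|R^i_s - R^j_s|) &\leq \mathcal{M}^{ij}_{\br,H}(r) + |f_H(R^i_r - R^j_r) - f_H(\x)| \\
&\quad + C\,d_{\mathrm{SRI}}(\br,\br_0)^{1/4} \sum_{s=0}^{r-1} \sum_{1\leq i' < j' \leq k} F(|R^{i'}_s - R^{j'}_s|)^{1/4},
\end{align*}
where $\mathcal{M}^{ij}_{\br,H}$ is the Tanaka martingale constructed in the proof of Theorem \ref{exp00} and $f_H$ is globally Lipschitz by \eqref{lip}. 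Summing over all pairs $(i,j)$, using $F^{1/4} \leq H$, and choosing $\epsilon$ small enough so that $C\binom{k}{2}\epsilon^{1/4} \leq 1/2$, the triple sum on the right can be absorbed into the left-hand side. This produces the crude bound
\begin{equation*}
\sum_{i<j}\sum_{s=0}^{r-1} H(|R^i_s - R^j_s|) \leq 2\sum_{i<j}\bigl[\, |\mathcal{M}^{ij}_{\br,H}(r)| + C|R^i_r - R^j_r - (x_i - x_j)|\,\bigr].
\end{equation*}

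Next, I will establish sub-Gaussian exponential moments for each right-hand term after dividing by $r^{1/2}$. The increments of $\mathcal{M}^{ij}_{\br,H}$ are bounded in absolute value by $C(1 + |R^i_{s+1} - R^i_s| + |R^j_{s+1} - R^j_s|)$, using the Lipschitz estimate \eqref{lip} and the uniform boundedness of $g_{\br,H}$ established in \eqref{go1}. Similarly, $|R^i_r - R^j_r - (x_i - x_j)|$ differs from the martingale $M^{i,j,\br}_r$ introduced in Step 5 of the proof of Theorem \ref{exp00} by an error that can be iterated back into the bootstrap; the increments of $M^{i,j,\br}_r$ are themselves bounded by $C(1 + |R^i_{s+1} - R^i_s| + |R^j_{s+1} - R^j_s|)$. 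Under the exponential moment hypothesis on single-step coordinate increments available in the $\Ekk$ framework, the conditional exponential moments of both martingales' increments are uniformly bounded, so Lemma \ref{cool2} yields for every $\lambda > 0$ the uniform bounds
\begin{equation*}
\sup_{\x,r} \Ebb\bigl[e^{\lambda r^{-1/2} |\mathcal{M}^{ij}_{\br,H}(r)|}\bigr] < \infty \quad \text{and} \quad \sup_{\x,r} \Ebb\bigl[e^{\lambda r^{-1/2} |M^{i,j,\br}_r|}\bigr] < \infty,
\end{equation*}
valid uniformly over $\br$ in the prescribed neighborhood of $\br_0$.

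To conclude, I exponentiate the crude bound above with parameter $\lambda r^{-1/2}$ and apply Hölder's inequality to split the exponential of the sum over $(i,j)$ into a product of $\binom{k}{2}$ factors; each such factor is uniformly bounded by the sub-Gaussian martingale estimates, yielding the desired inequality. The main obstacle is carefully tracking the constants in the Tanaka decomposition, in particular ensuring that the Lipschitz constant of $f_H$ and the sup-norm of $g_{\br,H}$ are controlled uniformly over all admissible $\br$; these depend on the summability $\sum_\ell \ell^{d-1} H(\delta' \ell) < \infty$ and on the $\delta$-repulsivity, both of which are preserved in a neighborhood of $\br_0$ for $\epsilon$ small enough (by Step 1 of the proof of Theorem \ref{exp00}).
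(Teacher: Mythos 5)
Your proposed argument has a genuinely different flavor from the paper's: you feed the Tanaka decomposition (Step 3 of Theorem \ref{exp00}) into the sub-Gaussian martingale estimate of Lemma \ref{cool2} after closing up a bootstrap, whereas the paper Taylor-expands the exponential and proves, by a renewal-type induction, the moment bound $\sup_\x \Ebb[(\sum_s \sum_{i'<j'}H(|R^{i'}_s-R^{j'}_s|))^m] \le C^m\sqrt{m!}\,r^{m/2}$, then sums the series $\sum_m (C\lambda)^m/\sqrt{m!}$. The pathwise bootstrap you set up in the first half is correct (the absorption via $F^{1/4}\le H$ and the smallness of $d_{\mathrm{SRI}}(\br,\br_0)$ does close).

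However, the argument breaks at the point where you invoke Lemma \ref{cool2}. That lemma requires $\mathcal E_{\lambda_0}(M;P) = \sup_n \|E[e^{4\lambda_0|X_{n+1}-X_n|}|\mathcal F_n]\|_{L^\infty} < \infty$, i.e.\ \emph{exponential} conditional moments for the martingale increments. You correctly observe that the increments of both $\mathcal M^{ij}_{\br,H}$ and $M^{i,j,\br}$ are controlled by $C(1+|R^i_{s+1}-R^i_s|+|R^j_{s+1}-R^j_s|)$, and then say this gives bounded conditional exponential moments ``under the exponential moment hypothesis on single-step coordinate increments available in the $\Ekk$ framework.'' But the lemma is stated for $\br_0\in\Ek=\mathcal E_{k\times d}^{\mathrm{SRI}}(q,M,F)$, not for $\Ekk$: membership in $\Ek$ only furnishes a $q$-th moment bound on single-step increments, and nothing rules out an element of $\Ek$ whose increments have tails too heavy for any exponential moment. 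So the hypothesis of Lemma \ref{cool2} cannot be verified in the generality claimed, and your route gives the conclusion only for the smaller class $\Ekk$. This matters, because the lemma is later used in the $d=1$ part of Theorem \ref{anti} precisely with a supremum over $\br\in\Ek$ (with $q\ge 4$), not $\Ekk$. The paper's proof sidesteps this by never going through an exponential-moment inequality for the increments: the inductive moment bound only needs the $L^1$ (hence $L^q$) estimate \eqref{e2''} from Theorem \ref{exp00}, which holds for finite $q$, and the $\sqrt{m!}$ growth it extracts is exactly enough to sum the Taylor series for every $\lambda$.
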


We remark that this will only serve as an initial bound, not the optimal one by any means (as is indicated by the fact that the bound does not depend on $d$ in any way). More precisely, it is only optimal in $d=1$, or more generally when the covariance matrix of $\nu$ has rank 1, i.e., $\nu$ is supported on a dimension-one subgroup of $I$.

\begin{proof}
     One first Taylor expands the exponential as an infinite series. It then suffices to show that there exists $C>0$ such that uniformly over all $d(\br,\br_0)<\epsilon$, all $m\in \mathbb N$, and all integers $r\ge 0$ one has \begin{equation}\label{f2b}\sup_{\x \in I^k}\Ebb \bigg[ \bigg(\sum_{s=0}^{r-1} \sum_{1\le i'<j'\le k} H(|R^{i'}_s-R^{j'}_s|)\bigg)^m \bigg] \leq C^m \sqrt{m!} \cdot  r^{m/2} , 
\end{equation}
with the understanding that $s_0=0$. Multiplying both sides by $r^{-m/2}\gamma^{2m}/m!$ and summing over $m$ will then imply the claim.

To prove \eqref{f2b}, use induction on $m$. For the $m=1$ case, the bound in \eqref{f2b} is immediate from \eqref{e2''} with $q=1$. 

For the inductive step, let us define $\varrho_m$ to be the optimal constant so that the left side of \eqref{f2b} is bounded above by $\varrho_m r^{m/2}$ uniformly over all $r\ge 0$. We have 
\begin{align*}
           h_m(r;\x)&:= \sum_{s_1,s_2,...,s_m=0}^{r-1} \Ebb \bigg[ \prod_{j=1}^{m} H(|R^{i'}_{s_j}-   R^{j'}_{s_j}|) \bigg] \\&=  \sum_{\substack{0\le s_1,s_2,...,s_m<r\\ \mathrm{no\;repeated\;index}}} \Ebb \bigg[ \prod_{j=1}^{m} H(|R^{i'}_{s_j}-   R^{j'}_{s_j}|) \bigg] +\sum_{\substack{0\le s_1,s_2,...,s_m<r\\\mathrm{repeated\;index}}} \Ebb \bigg[ \prod_{j=1}^{m} H(|R^{i'}_{s_j}-   R^{j'}_{s_j}|) \bigg] \\&\leq m! \cdot \Ebb \bigg[ \sum_{0\leq s_1 < ...<s_m< r} \prod_{j=1}^{m} H(|R^{i'}_{s_j}-   R^{j'}_{s_j}|) \bigg] + \|H\|_{\infty} h_{m-1}(r;\x) \\&= m! \cdot  \Ebb \bigg[ \sum_{s_1=0}^{r-1} H(|R^{i'}_{s_1}-   R^{j'}_{s_1}|)\cdot \frac1{(m-1)!} h_{m-1}(r-s_1;\mathbf R_{s_1})] \bigg] + \|H\|_{\infty} h_{m-1}(r;\x) \\ &\le m\varrho_{m-1}   \Ebb \bigg[ \sum_{s_1=0}^{r-1} H(|R^{i'}_{s_1}-   R^{j'}_{s_1}|) \cdot  (r-s_1)^{(m-1)/2}\bigg] +  \|H\|_{\infty} h_{m-1}(r;\x) 
        \end{align*}
        We used the Markov property in the fourth line. Now for the first term on the right side, use summation by parts 
        and we obtain that $$\sum_{s=0}^{r-1} H(|R^{i'}_{s}-   R^{j'}_{s}|)\cdot  (r-s)^{(m-1)/2} = \sum_{s=0}^{r-1} \big[ (r-s)^{(m-1)/2}-(r-1-s)^{(m-1)/2}\big]\cdot \bigg(\sum_{t=0}^{s-1} H(|R^{i'}_{t}-   R^{j'}_{t}|)\bigg).$$ Now note that $(r-s)^{(m-1)/2}-(r-1-s)^{(m-1)/2}$ can be bounded above by $C (\frac{m-1}{2}) (r-s)^{(m-3)/2} $, then apply the expectation $\Ebb$ over the last expression, then use the $m=1$ case once again, and we see that the expectation is bounded above by $C (\frac{m-1}{2})\cdot \sum_{s=0}^{r-1} (r-s)^{(m-3)/2}s^{1/2}  $ which is equal to $r^{m/2} $ multiplied by a Riemann sum approximation for $\int_0^1 (1-u)^{(m-3)/2}u^{1/2}du $. Notice that the latter is $O(m^{-3/2})$ as $m\to \infty.$ This whole discussion yields $$\Ebb \bigg[ \sum_{s_1=0}^{r-1} H(|R^{i'}_{s_1}-   R^{j'}_{s_1}|) \cdot  (r-s_1)^{(m-1)/2}\bigg] \leq Cm^{-1/2}r^{m/2}$$ for some absolute constant $C$ not depending on $m,r$.

        As for the second term $ \|H\|_{\infty} h_{m-1}(r;\x) $, we can absorb $\|H\|_{\infty}$ into some absolute constant $C$ since $F$ is just a fixed function. By the inductive hypothesis, $ C h_{m-1}(r;\x) $ may then be bounded above by $ C\varrho_{m-1} r^{(m-1)/2} \leq C \varrho_{m-1} r^{m/2}. $ 
        
        The entire above discussion yields the relation $\varrho_m \leq Cm^{1/2} \varrho_{m-1}$ for some absolute constant $C$ independent of $m\in\mathbb N,$ which easily implies \eqref{f2b}. 
\end{proof}

\begin{defn}
    Let $k,d\in\mathbb N$. Suppose that $\br\in \Ekk$, and define 
    \begin{equation}
        \mathrm{Dr}_{\br}(\x):= \bigg|\int_{I^k} (\y-\x) \br(\x,\dr\y) - \mathbf s(\nu_{\br})\bigg|.
    \end{equation}
    Note that this is 0 for any centered chain $\br$. 
    Here $\mathbf s(\nu_{\br}) = \int_{I^k} \mathbf u \nu_{\br} (\dr u)$ is the deterministic drift vector as used in Theorem \ref{inv00}. 
\end{defn}

Typically, the underlying chain will be clear, and we will just write $\mathrm{Dr}_{\br}(\x) = \mathrm{Dr}(\x)$. 

The following proposition will be an intermediate step towards the proof of Theorem \ref{anti}.

\begin{prop}\label{trunc}
    Fix $K,\delta,\beta >0$, and let $\br_0 \in \Ekk$ be centered and $\delta$-repulsive. Then there is a $\epsilon=\epsilon(F,z_0,M,\delta)>0$ and $C=C(F,\sigma,\delta,K)>0$ such that uniformly over $\x\in I^k$ and $\y\in \mathbb R^{kd}$ and integers $r\ge 0$ one has the bound: We have $$\Ebb \bigg[ \bigg( 1+r^{-\beta  } |\mathbf R_r-r\mathbf s_N-\y| \bigg)^{-K} e^{-r^{-\beta}\sum_{s=0}^{r-1}\mathrm{Dr}(\mathbf R_s)}\bigg] \leq C^m r^{-\frac{kd}2 \big( 1-2\beta \big)} \big( 1+r^{-1/2} |\y-\x|\big)^{-K}.$$
\end{prop}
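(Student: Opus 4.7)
The proof adapts the Fourier-analytic strategy of Lemma~\ref{rwbd} to the weighted, non-iid setting, combining it with iterated use of the Markov property for the SRI chain. The weight $e^{-r^{-\beta}\sum_s \mathrm{Dr}(\mathbf R_s)}$ plays the essential role of absorbing the drift corrections that arise because $\br$ is only close to, but not equal to, a centered chain.

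\textbf{Stage 1 (Fourier setup).} I would bound $(1+r^{-\beta}|\mathbf z|)^{-K}$ by a Schwartz function $\Psi(r^{-\beta}\mathbf z)$ majorizing it and apply Fourier inversion at scale $r^\beta$:
\begin{equation*}
(1+r^{-\beta}|\mathbf R_r-r\mathbf s_N-\y|)^{-K}\leq C\,r^{\beta kd}\int_{\mathbb R^{kd}}\hat\Psi(r^\beta\eta)\,e^{i\eta\cdot(\mathbf R_r-r\mathbf s_N-\y)}\,d\eta.
\end{equation*}
Taking expectations then reduces the problem to a uniform Gaussian bound on the \emph{weighted characteristic function}
\begin{equation*}
\Phi_r(\eta|\x):=\Ebb\big[e^{-r^{-\beta}\sum_{s=0}^{r-1}\mathrm{Dr}(\mathbf R_s)}\,e^{i\eta\cdot(\mathbf R_r-r\mathbf s_N-\x)}\big].
\end{equation*}
Specifically, I aim for $|\Phi_r(\eta|\x)|\leq e^{-cr|\eta|^2}$ on the range $|\eta|\leq cr^{-\beta}$, with rapid decay of $\hat\Psi(r^\beta\eta)$ controlling larger $|\eta|$. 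The resulting Gaussian integral contributes $r^{-kd/2}$, which combined with the Fourier prefactor $r^{\beta kd}$ produces the advertised $r^{-kd(1-2\beta)/2}$. The tail factor $(1+r^{-1/2}|\y-\x|)^{-K}$ then comes from $K$-fold integration by parts in $\eta$, exactly as in Lemma~\ref{rwbd}.

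\textbf{Stage 2 (Per-step Taylor expansion and iteration).} Via the Markov property, $\Phi_r$ is built iteratively from the one-step characteristic function
\begin{equation*}
f(\eta|\x)=\int_{I^k}e^{i\eta\cdot(\y-\x)}\br(\x,d\y)=\exp\!\Big(i\eta\cdot\mathbf s_N+i\eta\cdot\mathbf v(\x)-\tfrac12\eta^T\Sigma(\x)\eta+R(\eta|\x)\Big),
\end{equation*}
with $|\mathbf v(\x)|\leq\mathrm{Dr}(\x)$ by the definition of $\mathrm{Dr}$; $\Sigma(\x)=\mathrm{Id}_{kd}+O(d_{\mathrm{SRI}}(\br,\br_0)^{1/2})\succeq\tfrac12\mathrm{Id}_{kd}$ uniformly (by Lemma~\ref{tbb} applied to $\br_0$, centered with identity covariance); and $|R(\eta|\x)|\leq C|\eta|^3$ uniformly for $|\eta|$ bounded, using the exponential-moment condition of $\Ekk$. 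Taking magnitudes gives $|f(\eta|\x)|\leq\exp(-\tfrac14|\eta|^2+C|\eta|^3)$ uniformly in $\x$. To extract $r$ Gaussian factors, I would set up a recursive bound $\sup_\x|\Phi_k(\eta|\x)|\leq \Psi_k(\eta)$ with $\Psi_k\leq e^{-c|\eta|^2+C|\eta|^3}\Psi_{k-1}$. The weight $e^{-r^{-\beta}\mathrm{Dr}(\mathbf R_s)}$ is precisely what neutralizes the dependence of the next-step phase $e^{i\eta\cdot\mathbf v(\x)}$ on the current state: for $|\eta|\leq cr^{-\beta}$, one has $|e^{i\eta\cdot\mathbf v(\x)}-1|\leq|\eta|\mathrm{Dr}(\x)\leq cr^{-\beta}\mathrm{Dr}(\x)$, which is directly dominated by the weight. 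Iterating yields $|\Phi_r(\eta|\x)|\leq\exp(-cr|\eta|^2+Cr|\eta|^3)$ on the target range, and Lemma~\ref{lemm:antid1} bounds the cumulative cubic remainder $\Ebb[e^{Cr|\eta|^3\sum_s F(\min|R^i_s-R^j_s|)^{1/4}}]$ uniformly, absorbing it into the overall constant.

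\textbf{Main obstacle.} The crux of the argument is Stage 2: converting the deterministic per-step magnitude bound $|f(\eta|\x)|\leq e^{-c|\eta|^2}$ into the $r$-fold weighted bound $|\Phi_r(\eta|\x)|\leq e^{-cr|\eta|^2}$. A naive application of the Markov property and Jensen removes only a single Gaussian factor; the correct iteration requires carefully coupling the weight $e^{-r^{-\beta}\mathrm{Dr}(\mathbf R_s)}$ with the phase $e^{i\eta\cdot\mathbf v(\mathbf R_s)}$ step by step to show that the cumulative imaginary drift corrections are entirely absorbed by the real damping. The $\delta$-repulsivity (ensuring the chain does not linger where $\mathrm{Dr}$ is large), the exponential-moment hypothesis of $\Ekk$ (controlling the per-step cubic remainder), and the additive-functional exponential bound of Lemma~\ref{lemm:antid1} together form the technical backbone of this iteration, and verifying that their constants combine in the right way to give a nontrivial range $|\eta|\leq cr^{-\beta}$ with a uniform $c>0$ independent of~$r$ is the key point requiring most care.
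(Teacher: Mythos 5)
Your Fourier-analytic Stage~2 has a genuine gap: the recursion $\sup_\x|\Phi_k(\eta|\x)|\leq\Psi_k(\eta)$ with $\Psi_k\leq e^{-c|\eta|^2+C|\eta|^3}\Psi_{k-1}$ does not follow from a one-step bound on $|f(\eta|\x)|$. Conditioning on $\mathcal F_1$ gives
\begin{equation*}
\Phi_k(\eta|\x)=e^{-r^{-\beta}\mathrm{Dr}(\x)}\,\Ebb\big[e^{i\eta\cdot(\mathbf R_1-\mathbf s_N)}\,\tilde\Phi_{k-1}(\eta|\mathbf R_1)\big],
\end{equation*}
where $\tilde\Phi_{k-1}(\eta|\cdot)$ is a complex-valued function of the starting point. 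If you take absolute values inside the expectation, the phase $e^{i\eta\cdot(\mathbf R_1-\mathbf s_N)}$ disappears and you obtain only $|\Phi_k|\leq\Psi_{k-1}$ with no Gaussian gain from the new step. If you keep the phase, you must control how $\tilde\Phi_{k-1}(\eta|\y)$ varies in $\y$, and there is no mechanism for this: the multiplicativity $\Phi_r=f^r$ that makes Esseen-type arguments work for random walks is precisely what fails for a Markov chain, where $\tilde\Phi_{k-1}$ is state-dependent. Absorbing the drift phase $e^{i\eta\cdot\mathbf v(\x)}$ into the weight, as you suggest, fixes only the mean; it does not restore multiplicativity of the quadratic part. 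Without an operator-theoretic framework (spectral gap for the transfer operator $L_\eta$, which the SRI chain certainly does not possess on the relevant diffusive scales), this iteration cannot be closed.

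The paper sidesteps this entirely. It first constructs a per-step coupling of $\mathbf R$ with a genuine random walk $\mathbf S$ having increment law $\nu_{\br}^{\otimes k}$, so that $\mathbf R_r=\mathbf S_r+\mathbf M_r+\mathbf D_r$ with $\mathbf M$ a martingale and $\mathbf D_r=\sum_{s<r}\mathbf E[\mathbf X_{s+1}-\mathbf X_s\mid\mathcal F_s]$ satisfying $|\mathbf D_{s+1}-\mathbf D_s|=\mathrm{Dr}(\mathbf R_s)$. The Fourier lemma (Lemma~\ref{rwbd}) is applied \emph{only} to $\mathbf S$, where the characteristic function is exactly multiplicative; the weight controls $|\mathbf D_r|$ by the triangle inequality, and $|\mathbf M_r|$ is handled by the exponential martingale estimate of Lemma~\ref{cool2} combined with the additive-functional bound \eqref{f2b'}. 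Finally, note that the $C^m$ in the statement arises because the paper first gets a crude estimate at scale $r^{1/3}$ and then iterates the Markov property to reach scales $\tfrac13(\tfrac23)^m$, paying a constant per iteration; your proposal, if it worked, would give a single constant $C$ for arbitrary $\beta$, strictly stronger than what is actually proved -- a further indication that a hidden obstacle must lie in the iteration step.

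Your Stage~1 computation (Fourier prefactor $r^{\beta kd}$ times Gaussian integral $r^{-kd/2}$ gives $r^{-\tfrac{kd}{2}(1-2\beta)}$, with $K$-fold integration by parts yielding the tail factor) is correct and matches the structure of Lemma~\ref{rwbd}; that part would be fine if the Gaussian bound on $\Phi_r$ were available, but it is not.
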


Note that if $\beta=0$, this would prove the first bound in Theorem \ref{anti}, but unfortunately, we cannot obtain this strong of a bound just yet. The point is that we can still get close, with $\beta$ as close to 0 as we want.

Note that in $d=1$, we want $\beta >1/2$ on the left-hand side to kill the sum in the exponential, which is of order $r^{1/2}$. However, such a $\beta$ gives a bad bound on the RHS.

\begin{proof} Throughout the proof, we will assume as always that $q,M,F$ as well as $\br_0 \in \Ekk$ are fixed, with $\br_0$ centered and $\delta$-repulsive. 

\textbf{Step 1.} In this step, we construct a coupled Markov chain under which the first marginal is the SRI chain $\br$ of interest, and the second marginal just a random walk with increment law $\nu_{\br}^{\otimes k}$, and the increments of both coordinates are equal with high probability away from the boundaries of the ``$(k-1)d$-dimensional Weyl chamber."

For now, fix some $\br\in \Ek$. Using the total variation bound in tshe definition of SRI chain, consider any Markov kernel $\boldsymbol{v}_{\br}((\x_1,\x_2),(\dr\y_1,\dr\y_2))$ on $I^{2k}$ with the following properties:
\begin{itemize}
    \item The first marginal of $\boldsymbol{v}_{\br}((\x_1,\x_2),\bullet)$ is equal to $\br(\x_1,\bullet)$.
    \item The second marginal of $\boldsymbol{v}_{\br}((\x_1,\x_2),\bullet)$ is $\nu_{\br}^{\otimes k}(\x_2-\bullet).$
    \item We have that $\boldsymbol{v}_{\br}((\x_1,\x_2), \{ (\y_1,\y_2): \y_2-\y_1\neq \x_2-\x_1 \}) \leq 2F(\min_{i<j} |   x_1^i-   x_1^j|)$ where $\x_1=(   x^1_1,    x_1^2,...,   x_1^k)$ in coordinates.
\end{itemize}
Such a coupling is possible simply by the coupling definition of total variation distance and the conditions in the definition of SRI chains. Denote by $(\mathbf R_r,\mathbf S_r)$ the canonical process on $(I^{2k})^{\mathbb Z_{\ge 0}},$ and let $\mathbf E^{\boldsymbol{v}_{\br}}_{(\x_1,\x_2)}$ denote the canonical measure on that space, under which $(\mathbf R_r,\mathbf S_r)$ is distributed as the Markov chain started from $(\x_1,\x_2)$ with transition kernels $\boldsymbol{v}_{\br}((\x_1,\x_2),\bullet).$ Under $\mathbf E^{\boldsymbol{v}_{\br}}_{(\x_1,\x_2)}$, notice that the marginal law of $\mathbf R_r$ is simply that of the SRI chain associated to $\br$ started from $\x_1$, and the marginal law of $\mathbf S_r$ is that of a random walk on $I^k$ with increment law $\nu_{\br}^{\otimes k}$. 

\textbf{Step 2.} Henceforth denote $$\mathbf X_r:=\mathbf R_r-\mathbf S_r.$$ If $q \ge 1$ and $s\in\mathbb Z_{\ge 0}$, we may use Cauchy-Schwartz to say that \begin{align}
    \notag \mathbf E^{\boldsymbol{v}_{\br}}_{(\x_1,\x_2)}\big[ |\mathbf X_{s+1} - \mathbf X_{s}|^{q/2}\big|\mathcal F_s \big] &\leq \notag  \mathbf E^{\boldsymbol{v}_{\br}}_{(\x_1,\x_2)} \big[|\mathbf X_{s+1}- \mathbf X_s|^q|\mathcal F_s]^{1/2} \mathbf P^{\boldsymbol{v}_{\br}}_{(\x_1,\x_2)}\big(\mathbf X_{s+1}-\mathbf X_s\ne 0\big|\mathcal F_s\big)^{1/2} \\& \leq (Cq)^{q/2}  \notag \mathbf P^{\boldsymbol{v}_{\br}}_{(\x_1,\x_2)}\big(\mathbf X_{s+1}-\mathbf X_s\ne 0\big|\mathcal F_s\big)^{1/2} \\& \leq \notag (Cq)^{q/2} F\big(\min_{i'<j'} |R^{i'}_s-R^{j'}_s|\big)^{1/2} \\ &\leq (Cq)^{q/2} \sum_{1\le i'<j'\le k}  F\big( |R^{i'}_s-R^{j'}_s|\big)^{1/2} .\label{db3}
\end{align}
Here $C$ is a large constant that independent of $q\ge 1$, $s\in \mathbb Z_{\ge 0}$, $\br \in \Ekk, $ and $\x_1,\x_2\in I^k.$ Note here that we are using the exponential tail bound inherent in Definition \ref{ekk} to bound the $q^{th}$ moment by $(Cq)^q$ for some absolute constant $C.$ In the last line, we are using that $F$ is decreasing.

\textbf{Step 3.}  In this step, we define some useful martingales and obtain some preliminary bounds associated to them. We can define the martingale $\mathbf M_r:= \mathbf X_r-\mathbf D_r,$ where $$\mathbf D_r:= \sum_{s=0}^{r-1} \mathbf E^{\boldsymbol{v}_{\br}}_{(\x_1,\x_2)} \big[\mathbf X_{s+1}- \mathbf X_s\big|\mathcal F_s\big].$$
Note the decomposition 
\begin{equation}\label{rsmd}\mathbf R_r = \mathbf S_r + \mathbf M_r + \mathbf D_r
\end{equation}
which will be crucial to our analysis.

Since $\mathbf M$ is a martingale, Lemma \ref{cool2} (together with the exponential moment assumption inherent in Definition \ref{ekk}) yields for $\lambda>0$ that 
\begin{equation}
    \label{cde}\sup_{\x_1,\x_2\in I^k} \sup_{r\in\mathbb Z_{\ge 0}} \mathbf E^{\boldsymbol{v}_{\br}}_{(\x_1,\x_2)} [ e^{\lambda r^{-1/4} |\mathbf M_r|}] \leq \sup_{\x_1,\x_2\in I^k} \sup_{r\in\mathbb Z_{\ge 0}} \mathbf E^{\boldsymbol{v}_{\br}}_{(\x_1,\x_2)} [ e^{C \lambda^2 r^{-1/2} \sum_{s=0}^{r-1} E[ |\mathbf M_{s+1} -\mathbf  M_s|^4|\mathcal F_s] }]<\infty,
\end{equation}
where the finiteness of the last supremum follows from \eqref{db3} (with $q=8$) and \eqref{f2b'}, and the trivial fact that by definition, one has $|\mathbf M_{s+1}-\mathbf M_s|^4 \leq 2^4 |\mathbf X_{s+1}-\mathbf X_s|^4 + 2^4 \mathbf E^{\boldsymbol{v}_{\br}}_{(\x_1,\x_2)}[|\mathbf X_{s+1}-\mathbf X_s|^4|\mathcal F_s].$

\textbf{Step 4.} In this step, we obtain a preliminary anticoncentration estimate for $|\mathbf R_r|$ under the measure $\mathbf E^{\boldsymbol{v}_{\br}}_{(\x_1,\x_2)}$. In the notation of Theorem \ref{anti}, notice that $|\mathbf D_{s+1}-\mathbf D_s| = \mathrm{Dr}(\mathbf R_s).$ Using the results of the previous steps, we thus have uniformly over all $d_{\mathrm{SRI}}(\br,\br_0)<\epsilon$ and $r\ge 0$ and $   y\in \mathbb R^d$ and $\x_1,\x_2\in I^k$ one has
\begin{align}
    \notag  \\ \notag \mathbf E^{\boldsymbol{v}_{\br}}_{(\x_1,\x_2)}&
    \bigg[ e^{-r^{-1/3}|\mathbf R_r-r\mathbf s_N-\y| - r^{-1/3}\sum_{s=0}^{r-1}\mathrm{Dr}(\mathbf R_s)}\bigg] 
    \\ \notag &\leq \mathbf E^{\boldsymbol{v}_{\br}}_{(\x_1,\x_2)}\bigg[ e^{-r^{-1/3}\big(|\mathbf R_r-r\mathbf s_N-\y| + |\mathbf D_r|\big)}\bigg]  
    \\ \notag &\leq \mathbf E^{\boldsymbol{v}_{\br}}_{(\x_1,\x_2)}
    \bigg[ e^{-r^{-1/3}\big(|\mathbf S_r-r\mathbf s_N-\y| - |\mathbf M_r|\big)}\bigg]\\&= \mathbf E^{\boldsymbol{v}_{\br}}_{(\x_1,\x_2)}
    \bigg[ e^{-r^{-1/3}\big(|\mathbf S_r-r\mathbf s_N-\y| - |\mathbf M_r|\big)}\ind_{\{ |\mathbf M_r| \le r^{1/3}\}} \bigg] + \mathbf E^{\boldsymbol{v}_{\br}}_{(\x_1,\x_2)}
    \bigg[ e^{-r^{-1/3}\big(|\mathbf S_r-r\mathbf s_N-\y| - |\mathbf M_r|\big)}\ind_{\{ |\mathbf M_r| > r^{1/3}\}}\bigg]\notag \\ &\le e \cdot \mathbf E^{\boldsymbol{v}_{\br}}_{(\x_1,\x_2)}
    \big[ e^{-r^{-1/3}|\mathbf S_r-r\mathbf s_N-\y| } \big] + \mathbf E^{\boldsymbol{v}_{\br}}_{(\x_1,\x_2)}
    \big[ e^{2r^{-1/3}|\mathbf M_r|}\big]^{1/2} \mathbf P^{\boldsymbol{v}_{\br}}_{(\x_1,\x_2)} \big(|\mathbf M_r| > r^{1/3}\big)^{1/2}\notag
    \\&\le Cr^{-kd/6}\big( 1+r^{-1/2} |\y-\x|\big)^{-K} + C e^{-r^{\frac1{12}}}.\label{c6a}
    \end{align}
Indeed the second line has already been explained. The fourth line follows from \eqref{rsmd}. The fifth line is just Cauchy-Schwartz. The final line follows from Lemma \ref{rwbd} and the result \eqref{cde} of Step 3.

Now we are going to argue why the stretch-exponential term can be disregarded. Assume without loss of generality that $K>10kd$ since the theorem for some $K$ implies the theorem for any smaller $K$ (and we are claiming that the bound in the theorem holds for all $K>0$). If $|\x-\y|< r^{K}$ then it is clear that $ e^{-r^{\frac1{12}}} \leq Cr^{-kd/6} r^{-(K-\frac12)} \le C'r^{-kd/6} (1+r^{-1/2}|\x-\y|)^{-K}$. On the other hand, if $|\x-\y|\ge r^K$ then  $\Ebb [ e^{-r^{-1/3} |\mathbf R_r-r\mathbf s_N-\y|} ] \leq e^{-r^{-1/3}|\x-\y|} \Ebb [ e^{+r^{-1/3} \sum_{s=0}^{r-1} |\mathbf R_{s+1}-\mathbf R_s - \mathbf s_N|}] \leq e^{-r^{-1/3}|\x-\y| + Cr^{2/3}},$ where the expectation being bounded above by $e^{Cr^{2/3}}$ can be deduced by iteratively applying the Markov property and the definition of what it means for $\br$ to be an element of $\Ekk$. Now $|\x-\y|>r^K$ and $K>2$ means that $Cr^{2/3} = Cr^{-4/3} \cdot r^2 \leq Cr^{-4/3} \cdot |\x-\y|.$ Thus the last expression can be bounded above by $e^{-r^{-1/3} |\x-\y| +Cr^{-4/3}|\x-\y|} \leq C'e^{-\frac12 r^{-1/3} |\x-\y|}.$ Again using $|\x-\y| >r^K$ it is not hard to see that this can be bounded above by $Cr^{-kd/6} \big( 1+r^{-1/2} |\x-\y| \big)^{-K} ,$ for example since one has that $e^{-\frac12 r^{-1/3} |\x-\y|} \leq e^{-\frac12 r^{-1/3} |\x-\y|^{2/3}} $ and then using the fact that the exponential decays faster than any polynomial. 

The last two paragraphs combined with \eqref{c6a} yield the bound that $$\Ebb \bigg[ e^{-r^{-1/3}|\mathbf R_r-r\mathbf s_N-\y| - r^{-1/3}\sum_{s=0}^{r-1}\mathrm{Dr}(\mathbf R_s)}\bigg]\le Cr^{-kd/6}\big( 1+r^{-1/2} |\y-\x|\big)^{-K}.$$
Assuming without loss of generality that $K>kd$, we have that $$(1+r^{-1/3} |\y-\x|)^{-K} \leq C \sum_{\bfa\in \mathbb Z^{kd} } e^{ -r^{-1/3} |\bfa-\x|} (1+|\bfa-\y|)^{-K},$$ and for similar reasons we also have $$\sum_{\bfa \in \mathbb Z^{kd} }(1+|\bfa -\y|)^{-K} (1 +r^{-1/2} |\x-\bfa|)^{-K} \leq C (1+r^{-1/2} |\x-\y|)^{-K},$$ thus from the last expression we obtain that 
\begin{equation}
    \label{iter} \Ebb \bigg[ \bigg( 1+r^{-1/3} |\mathbf R_r-r\mathbf s_N-\y| \bigg)^{-K} e^{-r^{-1/3}\sum_{s=0}^{r-1}\mathrm{Dr}(\mathbf R_s)}\bigg]\le Cr^{-kd/6}\big( 1+r^{-1/2} |\y-\x|\big)^{-K}.
\end{equation}

\textbf{Step 5.} Note that \eqref{iter} yields a bound that is \textit{optimal} at scales of size $r^{1/3}$ from the origin, which is strictly better than one can do just using the invariance principle of Theorem \ref{inv00} which only yields good bounds at scales $r^{1/2}.$ In particular since $1/3$ is smaller than $1/2$, the bound \eqref{iter} can be repeatedly \textit{iterated} using the Markov property to yield bounds at better and better scales, approaching (but not quite reaching) scales of size 1 which is what we ultimately want. 

To illustrate the iteration, note that $\frac29 = \frac13 \cdot \frac23$, thus we can use the Markov property and apply \eqref{iter} twice to see that 
 \begin{align*}
     &\Ebb \bigg[ \bigg( 1+r^{-2/9} |\mathbf R_r-r\mathbf s_N-\y| \bigg)^{-K} e^{-r^{-2/9}\sum_{s=0}^{r-1}\mathrm{Dr}(\mathbf R_s)}\bigg] \\ 
     &= \Ebb \bigg[ e^{-r^{-2/9}\sum_{s=0}^{r-r^{2/3}-1 } \mathrm{Dr}(\mathbf R_s)   } \\  & \ \ \ \ \ \ \ \ \cdot \mathbf E^{\br}_{\mathbf R_{r-r^{2/3}}} \bigg[ e^{-r^{-2/9} \sum_{s=0}^{r^{2/3}-1 } \mathrm{Dr}(\tilde{\mathbf R}_s)}\bigg( 1+ r^{-2/9} |\tilde{\mathbf R}_{r^{2/3}} -r^{2/3}\mathbf s_N - ((r-r^{2/3})\mathbf s_N +\y)|\bigg)^{-K} \bigg] \bigg] \\
     &\leq C \Ebb \bigg[ e^{-r^{-1/3}\sum_{s=0}^{r-r^{2/3}-1 } \mathrm{Dr}(\mathbf R_s)   } \cdot (r^{2/3})^{-kd/6} \bigg( 1+ r^{-1/3} |\mathbf R_{r-r^{2/3}} - (r-r^{2/3})\mathbf s_N - \y | \bigg)^{-K} \bigg] \\& \leq C^2 r^{-\frac{kd}6 \big( 1+ \frac23\big)} \big( 1+r^{-1/2} |\y-\x|\big)^{-K}.
 \end{align*}
 Above $\tilde{\mathbf R}$ is an independent copy of the Markov chain. This is now the optimal bound on scale $r^{2/9}$, which is better than the $r^{1/3}$ from \eqref{iter}. Keep doing this iteration repeatedly, and it will yield that for all $m\in \mathbb N$ one has 
 \begin{align*}
     \Ebb \bigg[ \bigg( &1+r^{-\frac13 \big(\frac23\big)^m } |\mathbf R_r-r\mathbf s_N-\y| \bigg)^{-K} e^{-r^{-\frac13 \big(\frac23\big)^m}\sum_{s=0}^{r-1}\mathrm{Dr}(\mathbf R_s)}\bigg] \leq C^m r^{-\frac{kd}6 \big( 1+ \frac23+...+\big(\frac23\big)^m \big)} \big( 1+r^{-1/2} |\y-\x|\big)^{-K}.
     \end{align*}
This implies the proposition for $\beta$ of the form $\frac13 \big(\frac23\big)^m$, with $m\in \mathbb N$, and the case of general $\beta$ follows easily. 
\end{proof}

Notice that we would really like to take $m=\infty$ in the last bound, because $\frac16\big( 1+ \frac23+...+\big(\frac23\big)^m+... \big)=\frac12 $ which is the optimal bound as claimed in the theorem, however the problem is that the constant $C^m$ blows up. In fact, one may convince themselves that only $m=\log\log r$ iterations are needed, but even then $C^m$ blows up like $(\log r)^{C'}$ which is bad despite being extremely close to optimal. Thus some new idea is needed, where we can actually truncate the above bound at some \textbf{fixed} value of $m$, then yield the optimal bound immediately afterwards. Using this idea, we now prove Theorem \ref{anti}.

\begin{proof}[Proof of 
Theorem \ref{anti}]
    \textbf{Step 1.} Fix arbitrary $\beta>0$. Combining the results of Lemma \ref{rwbd} and Proposition \ref{trunc} yields the relation 
    \begin{align}\label{fast}\Ebb [E[e^{-|\mathbf R_r + \mathbf S_{r^{2\beta}}- (r+r^{2\beta})\mathbf s_N -\y|}]e^{ -r^{-\beta} \sum_{s=0}^{r-1} \mathrm{Dr}(\mathbf R_s)}]\leq Cr^{-kd/2} (1+r^{-1/2}|\y-\x|)^{-K},  
    \end{align}
    where $\mathbf S$ is a random walk with increment law $\nu^{\otimes k}$ that is independent of $\mathbf R$, and $E$ denotes an expectation over $\mathbf S$ only (conditional on $\mathbf R)$. In other words, appending a random walk of length $r^{2\beta}$ to the \textit{end} of the trajectory of $\mathbf R$ yields a bound of the desired type. To prove this, condition on $\mathbf R$ and take the expectation just over $\mathbf S$ first, then Lemma \ref{rwbd} shows that $E[e^{-|\mathbf R_r + \mathbf S_{r^{2\beta}}- (r+r^{2\beta})\mathbf s_N -\y|}] \leq Cr^{-\beta k d} ( 1+r^{-\beta} | \mathbf R_r - r\mathbf s_N-\y|)^{-K}.$ Then one applies Proposition \ref{trunc}.

    We claim that appending a random walk of length $r^{2\beta}$ to the \textit{beginning} of the trajectory of $\mathbf R$ also yields a bound of the desired type, as long as the initial point $\x$ is far from the boundary of the Weyl chamber. More precisely, if $\x=(x_1,...,x_k)$ and if $\min_{i<j} |x_i-x_j| >r^{2\beta}$ we claim that 
    \begin{align}\label{fast3} E[ \mathbf E_{\x+\mathbf S_{r^{2\beta}}}^{\br} [ e^{-|\mathbf R_r - (r+r^{2\beta}) \mathbf s_N -\y| - r^{-\beta}\sum_{s=0}^{r-1} \mathrm{Dr}(\mathbf R_s)}] ]\leq Cr^{-kd/2} (1+r^{-1/2}|\y-\x|)^{-K}.
    \end{align}
    To prove this, define the function $h_r(\x,\y):= \Ebb [ e^{-|\mathbf R_r -r\mathbf s_N -\y| -r^{-\beta}\sum_{s=0}^{r-1} \mathrm{Dr}(\mathbf R_s)}],$ so by Proposition \ref{trunc} and the fact that $e^{-|\mathbf R_r-r\mathbf s_N-\y|} \leq e^{-r^{-\beta}|\mathbf R_r-r\mathbf s_N-\y|}, $ we get that $h_r(\x,\y) \leq Cr^{-\frac{kd}2 (1-2\beta)}  (1+r^{-1/2} |\x-\y|)^{-K},$ and thus we can write the left side of \eqref{fast3} as $$E[ h_r (\x+\mathbf S_{r^{2\beta}}, \y+r^{2\beta} \mathbf s_N)] \leq  Cr^{-\frac{kd}2 (1-2\beta)} E[ (1 +r^{-1/2} |\x-\y +\mathbf S_{r^{2\beta}} -r^{2\beta}\mathbf s_N|)^{-K}].$$
    Since we can take $K$ as large as we like, the result of Lemma \ref{rwbd} implies that the expectation on the right side is bounded above by $C(r^{2\beta})^{-kd/2} (1+ r^{-1/2} |\x-\y|)^{-K},$ which in turn implies \eqref{fast3} as desired.

    \textbf{Step 2.} Fix $\beta>0$. We now claim that if $\min_{i<j} |x_i-x_j| >r^{2\beta}$\textbf{ or} if $\min_{i<j} |y_i-y_j| >r^{2\beta}$ then one necessarily has $$\Ebb [ e^{-|\mathbf R_r- r\mathbf s_N -\y| - r^{-\beta}\sum_{s=0}^{r-1} \mathrm{Dr}(\mathbf R_s)} ] \leq Cr^{-kd/2} (1+r^{-1/2} |\x-\y| )^{-K},$$ in other words the optimal bound does indeed hold as long as \textit{at least one} of $\x$ or $\y$ are sufficiently far from the boundary of the Weyl chamber (where sufficiently far means of distance at least $r^{2\beta}$ at time $r$).

    First consider the case where $\min_{i<j} |x_i-x_j| >r^{2\beta}$. In this case, just using the total variation bound in the definition of SRI chain, one can use a coupling to replace the trajectory $\mathbf R|_{[0,r^{2\beta}]}$ with its initial condition plus the trajectory of the random walk $\mathbf S|_{[0,r^{2\beta}]}$, and the probability of any disagreement can be bounded above by $Ce^{-cr^{\beta}}$. Just as we explained after \eqref{c6a}, the stretch exponential terms can be disregarded as they decay sufficiently rapidly. Then \eqref{fast3} immediately gives the claim.

    Now consider the case where $\min_{i<j} |y_i-y_j| >r^{2\beta}$. Break the expectation into two pieces according to $\{|\mathbf R_r- r\mathbf s_N -\y| \ge \frac12 r^{2\beta}\}$ and $\{|\mathbf R_r- r\mathbf s_N -\y| < \frac12 r^{2\beta}\}$. The first event has probability bounded above by by $Ce^{-cr^{\beta}}$. Just as we explained after \eqref{c6a}, the stretch exponential terms can be disregarded as they decay sufficiently rapidly. For the second term one can use a coupling to replace the trajectory $\mathbf R|_{[r-r^{2\beta},r]}$ with its initial condition plus the trajectory of the random walk $\mathbf S|_{[r-r^{2\beta},r]}$, and the probability of any disagreement can be bounded above by $Ce^{-cr^{\beta}}$. Thus \eqref{fast} immediately gives the claim.

    \textbf{Step 3.} 
    Every point in $\mathbb R^{kd}$ is within distance 1 of some point in the lattice $\Lambda := (kd)^{-1/2}\mathbb Z^{kd}$. Thus apply a union bound, and then use the Markov property to write 
    \begin{align*}
        \Ebb [ e&^{-|\mathbf R_{2r} - 2r\mathbf s_N-\y| - r^{-\beta} \sum_{s=0}^{2r-1} \mathrm{Dr}(\mathbf R_s)} ] \leq \sum_{\bfa \in \Lambda } \Ebb [ \ind_{\{|\mathbf R_r - r\mathbf s_N- \bfa |\le 1 \}} e^{-|\mathbf R_{2r} -\y| - r^{-\beta}\sum_{s=0}^{2r-1} \mathrm{Dr}(\mathbf R_s)} ]] \\&=  \sum_{\bfa \in \Lambda } \bigg(\Ebb \big[ \ind_{\{|\mathbf R_r - r\mathbf s_N- \bfa |\le 1 \}} e^{-r^{-\beta}\sum_{s=0}^{r-1} \mathrm{Dr}(\mathbf R_s)} \mathbf E^{\br}_{\mathbf R_s} [ e^{-|\tilde{\mathbf R}_r - 2r\mathbf s_N-\y| - r^{-\beta}\sum_{s=0}^{r-1} \mathrm{Dr}(\tilde{\mathbf  R}_s)}]\big]\bigg)\\&  \leq \sum_{\bfa \in \Lambda } \bigg( \Ebb[\ind_{\{|\mathbf R_r- r\mathbf s_N - \bfa |\le 1 \}} e^{-r^{-\beta}\sum_{s=0}^{r-1} \mathrm{Dr}(\mathbf R_s)} ] \cdot \sup_{\mathbf u: |\mathbf u- r\mathbf s_N-\bfa |\leq 1} \mathbf E^{\br}_{\mathbf u} [e^{-|\tilde{\mathbf R}_r - 2r\mathbf s_N-\y| - r^{-\beta}\sum_{s=0}^{r-1} \mathrm{Dr}(\tilde{\mathbf  R}_s)}]\bigg) \\&\leq e \sum_{\bfa \in \Lambda } \bigg( \Ebb[e^{-|\mathbf R_r - r\mathbf s_N- \bfa |} e^{-r^{-\beta}\sum_{s=0}^{r-1} \mathrm{Dr}(\mathbf R_s)} ] \cdot \sup_{\mathbf u: |\mathbf u- r\mathbf s_N-\bfa |\leq 1} \mathbf E^{\br}_{\mathbf u} [e^{-|\tilde{\mathbf R}_r - 2r\mathbf s_N-\y| - r^{-\beta}\sum_{s=0}^{r-1} \mathrm{Dr}(\tilde{\mathbf  R}_s)}]\bigg)
    \end{align*}
    Writing $\bfa = (a_1,...,a_k)$, split the sum according to $\min_{i<j} |a_i-a_j| >r^{2\beta}$ or $\min_{i<j}|a_i-a_j| \le r^{2\beta}$. Write the corresponding sums as $I_1$ and $I_2$. Note that $\mathbf s_N$ is a vector of the form $(   s_N,...,   s_N)$ for some $   s_N\in \mathbb R^d$, thus using the result of Step 2, we have that $$I_1 \leq e \sum_{\bfa \in \Lambda} \bigg(r^{-kd/2} (1+r^{-1/2} |\x-\bfa | )^{-K}\bigg)\bigg(r^{-kd/2} (1+r^{-1/2} |\bfa -\y| )^{-K}\bigg) \leq Cr^{-kd/2} (1+r^{-1/2} |\x-\y| )^{-K},$$ which is the required bound (note here: $K>kd$ without loss of generality). Now let us bound $I_2$ which is where we will need to make an explicit (and very small) choice of $\beta>0$. We have by Proposition \ref{trunc} that 
    \begin{align*}
        I_2 &\le e \sum_{\substack{\bfa \in \Lambda \\ \min_{i<j}|a_i-a_j| \le r^{2\beta}}} \bigg(r^{-\frac{kd}{2} (1-2\beta) } (1+r^{-1/2} |\bfa -\x|) \bigg)\bigg(r^{-\frac{kd}{2} (1-2\beta) } (1+r^{-1/2} |\bfa -\y|) \bigg) \\&\leq C r^{3\beta kd} r^{(k-1)d/2} r^{-kd} \big( 1+r^{-1/2}|\x-\y|\big)^{-K} \\&= r^{3\beta kd -\frac{d}2} r^{-kd/2}\big( 1+r^{-1/2}|\x-\y|\big)^{-K} .
    \end{align*}
    Now take any $\beta<1/(6k)$ to obtain the bound 
    \begin{equation}\label{pprx}
        \Ebb [ e^{-|\mathbf R_{2r} - 2r\mathbf s_N-\y| - r^{-\beta} \sum_{s=0}^{2r-1} \mathrm{Dr}(\mathbf R_s)} ] \leq Cr^{-kd/2}\big( 1+r^{-1/2}|\x-\y|\big)^{-K} ,
    \end{equation}
    where $C$ may depend on $K,\beta$ but nothing else.

    \textbf{Step 4.} In this step, we prove the theorem. For $d=1$ note that \eqref{pprx} already implies the given bound in the second bullet point in Theorem \ref{anti}, since $\mathrm{Dr}$ vanishes for a centered chain. Thus we focus on $d\ge 2$. 
    
    Henceforth fix $\beta$ as appearing in \eqref{pprx}. From \eqref{pprx} and the decay conditions on $\mathrm{Dr}$ we have $$\Ebb \bigg[ \mathrm{Dr}(\mathbf R_r) e^{-r^{-\beta} \sum_{s=0}^{r-1} \mathrm{Dr}(\mathbf R_s)} \bigg] \leq Cr^{-d/2}, $$ for $r\ge 1$. Thus if $0\le s_1\le ... \le s_m\le r$ then by iterating this bound and using the Markov property, we have that
    \begin{equation}\label{betarbound}\Ebb \bigg[ \prod_{j=1}^m \mathrm{Dr}(\mathbf R_{s_j})e^{-r^{-\beta} \sum_{s=0}^{r-1} \mathrm{Dr}(\mathbf R_s)}\bigg]\le C^m \prod_{j=0}^{m-1} 1\wedge (s_{j+1}-s_j)^{-d/2}. \end{equation}
    The remainder of the proof will be split into cases depending on the dimension.
    \\
    \\
    If $d\ge 2$, then \eqref{betarbound} implies that e.g. $$\Ebb \bigg[ \bigg( r^{-\beta/2} \sum_{s=0}^{r-1} \mathrm{Dr}(\mathbf R_s)\bigg)^m e^{-r^{-\beta} \sum_{s=0}^{r-1} \mathrm{Dr}(\mathbf R_s)} \bigg] \leq m! (C'r^{-\beta/2}\log r)^m . $$
    Divide both sides by $m!$ and sum over $m$, and we will obtain that $$\sup_{r\ge 0} \Ebb \bigg[ e^{(r^{-\beta/2}-r^{-\beta}) \sum_{s=0}^{r-1} \mathrm{Dr}(\mathbf R_s)}\bigg]<\infty.$$
    Note that $r^{-\beta/2} - r^{-\beta} > \frac12 r^{-\beta/2}$ for sufficiently large $r\ge 0$, so by Markov's inequality, the last expression implies that $$\Pbb \bigg( \sum_{s=0}^{r-1} \mathrm{Dr}(\mathbf R_s) > r^{\beta} \bigg) =  \Pbb \bigg( \frac12 r^{-\frac \beta2} \sum_{s=0}^{r-1} \mathrm{Dr}(\mathbf R_s) >\frac12  r^{\beta/2} \bigg) \leq Ce^{-\frac12 r^{\beta/2}}.$$
    Going back to \eqref{pprx}, we find that 
    \begin{align*}
        \Ebb [ e^{-|\mathbf R_{2r} - 2r\mathbf s_N-\y|}] &\le \Ebb [ e^{-|\mathbf R_{2r} - 2r\mathbf s_N-\y|} \ind_{\{\sum_{s=0}^{2r-1} \mathrm{Dr}(\mathbf R_s) >  r^{\beta} \}}] + \Ebb [ e^{-|\mathbf R_{2r} - 2r\mathbf s_N-\y|} \ind_{\{\sum_{s=0}^{2r-1} \mathrm{Dr}(\mathbf R_s) \le  r^{\beta} \}}] \\&\leq Ce^{-\frac12 r^{\beta/2}} + e\cdot  \Ebb [ e^{-|\mathbf R_{2r} - 2r\mathbf s_N-\y| - r^{-\beta} \sum_{s=0}^{2r-1} \mathrm{Dr}(\mathbf R_s)} ] \\&\leq Ce^{-\frac12 r^{\beta/2}}+ Cr^{-kd/2}\big( 1+r^{-1/2}|\x-\y|\big)^{-K}.
    \end{align*}
    Just as we explained after \eqref{c6a}, the stretch exponential terms can be disregarded as they decay sufficiently rapidly, which gives the claim.
\end{proof}

\section{Proof of Theorem \ref{thm:invMeasure}: Invariant measures for SRI chains}\label{appendix:c}

We will see that the existence and uniqueness proofs of $\pi^{\mathrm{inv}}$ will be different in dimensions $d=1,2$ versus $d\ge 3.$ This is because recurrence essentially gives automatic existence and uniqueness, while in the case of a transient SRI chain it is somewhat trickier to show both, since transient chains need not have invariant measures. Again we emphasize that $\pi^{\mathrm{inv}}$ will be an infinite measure, not a probability measure (it is not an invariant \textit{distribution}).

\begin{lem}
    \label{buo} Under the assumptions of Theorem \ref{thm:invMeasure}, the limiting chain $\br$ is $\delta$-repulsive for some $\delta>0,$ and centered. 
\end{lem}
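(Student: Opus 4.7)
My plan is to handle $\delta$-repulsiveness and centeredness separately, with repulsiveness being the substantially easier of the two. For repulsiveness, I would compare the quantity $I(\x):=\int_{I^2}|y_1-y_2-(x_1-x_2)|\,\br(\x,\dr\y)$ with its analogue for the base random walk $\nu^{\otimes 2}$. Combining the SRI bound \eqref{tvb2} with Lemma \ref{tbb}, applied to the function $g(u)=|u_1-u_2|^2$ (which has bounded integral against both measures because $\br\in\Ekk$ carries exponential moments), one obtains
\[
|I(\x)-\delta_0|\;\le\; C\cdot F(|x_1-x_2|)^{1/2},\qquad \delta_0:=\int|u_1-u_2|\,(\nu\otimes\nu)(\dr u)>0,
\]
where positivity of $\delta_0$ follows from the non-degeneracy of $\nu$ (its covariance is the identity). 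This handles the region where $|x_1-x_2|$ is large. For $|x_1-x_2|$ in a compact set, I would use translation invariance to reduce to the difference variable, and then invoke the strong Feller continuity and irreducibility assumptions on $\bdif$ to rule out $I(\x)=0$: a failure of strict positivity at some point would, by continuity, give a ball on which $\bdif$ is effectively supported on the diagonal, contradicting irreducibility. A standard compactness argument then yields a uniform lower bound $\delta>0$.

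For centeredness, translation invariance $\br(\x+(a,a),\dr\y+(a,a))=\br(\x,\dr\y)$ immediately forces the drifts $h_j(\x):=\int(y_j-x_j)\,\br(\x,\dr\y)$ to depend only on $x_1-x_2$. Another use of Lemma \ref{tbb}, now with $g(u)=|u_j|^2$, together with the SRI bound \eqref{tvb2}, gives $|h_j(\x)-m|\le C\,F(|x_1-x_2|)^{1/2}$ where $m:=\int u\,\nu(\dr u)$, so $h_j(\x)\to m$ as $|x_1-x_2|\to\infty$.

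The hard part will be upgrading this asymptotic statement to exact equality $h_j\equiv m$. My plan is to exploit the invariance structure of $\bdif$: using a Krylov--Bogolyubov construction that does \emph{not} yet require the full uniqueness content of Theorem \ref{thm:invMeasure}, I would produce some invariant measure for $\bdif$ and then argue that $h_j$, viewed as a function of $x_1-x_2$, must be harmonic for $\bdif$ up to the constant $m$; combined with the asymptotic identification $h_j(\x)\to m$ at infinity, harmonicity together with the strong Feller and irreducibility assumptions should force $h_j\equiv m$. I anticipate the principal obstacle here is logical rather than analytic: one must be careful to avoid circularly invoking Theorem \ref{thm:invMeasure} while proving a lemma used in its proof, so the Krylov--Bogolyubov construction and the harmonicity argument will have to be set up in a form that stands on its own.
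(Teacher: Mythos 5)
Your proof of $\delta$-repulsivity is essentially the paper's proof: bound $\big|\int_I|y-x|\bdif(x,\dr y)-\int_I|u|\nu_{\mathrm{dif}}(\dr u)\big|$ via Lemma~\ref{tbb} and the SRI bound \eqref{tvb2} to handle $|x_1-x_2|$ large, then use strong Feller continuity and compactness, and rule out a zero infimum on a compact set via the irreducibility hypothesis. (Minor point: the paper writes the Lemma~\ref{tbb} bound as $2F(|x|)$ where the correct exponent is $F(|x|)^{1/2}$ as you have it, but this is inconsequential.)

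For centeredness your proposal diverges sharply from the paper. The paper simply asserts ``the centeredness is clear,'' while you treat it as the substantial part and propose a Krylov--Bogoliubov plus harmonicity argument. Your instinct that centeredness is not derivable from the bare hypotheses of Theorem~\ref{thm:invMeasure} is actually correct: an SRI chain with a small, exponentially localized drift perturbation near the diagonal $\{x_1=x_2\}$ can easily be arranged to satisfy the exponential-moment, diagonal-translation-invariance, strong Feller, and irreducibility assumptions, yet have $h_j(\x):=\int(y_j-x_j)\br(\x,\dr\y)\neq\int u\,\nu(\dr u)$ on a bounded set. What the paper is silently using is that in every application $\br$ is the $k$-point motion $\boldsymbol p^{(k)}$ (or its Girsanov tilt), whose coordinate marginals are literally random walks with increment law~$\nu$; in that case $h_j\equiv\int u\,\nu(\dr u)$ is an identity of distributions, not something to prove. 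However, your proposed repair does not close the gap: the drift function $h_j-m$ is $(\Pdif-\mathrm{Id})\mathrm{id}$, not a $\Pdif$-harmonic function, so there is no maximum-principle-type rigidity to invoke. An invariant measure $\pi$ obtained by Krylov--Bogoliubov yields only the single scalar identity $\int(h_j-m)\,d\pi=0$, which together with the exponential decay of $h_j-m$ at infinity does not force $h_j\equiv m$. Your concern about circularity is real, but the deeper obstruction is that the harmonicity step simply is not available. The honest fix is to add centeredness as a hypothesis of the lemma (or observe that the Lyapunov argument in Proposition~\ref{d1} needs only asymptotic centeredness, which is a genuine consequence of \eqref{tvb2}).
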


\begin{proof}
    The centeredness is clear. To show $\delta$-repulsivity, notice from the symmetry assumptions that we just need to show $$\inf_{x\in I} \int_I|y-x|\bdif(x,\dr y)>0.$$ By Lemma \ref{tbb}, one has that $\big| \int_I |y-x| \bdif (x,\dr y) - \int_I |u|\nu(\dr u)\bigg| \le 2F(|x|),$ and since $\nu$ has covariance matrix $\mathrm{Id}_{d\times d}$ this rules out the possibility of some sequence $x_n\in I$ with $|x_n|\to\infty$ such that $\int_I |y-x_n| \bdif(x_n,\dr y)\to 0.$ Thus we just need to show that for each compact set $K\subset I$ one has $\inf_{x\in K} \int_I |y-x| \bdif(x,\dr y)>0$. By the regularity assumptions on $\bdif$, the latter is actually a continuous function of $x$, thus it achieves its infimum on each compact set. Thus if the infimum was zero, then there would be an absorbing state, contradicting the topological irreducibility assumption on $\bdif$ in Assumption \ref{a1} \eqref{a16}.
\end{proof}


\begin{prop}[Existence/Uniqueness/nontriviality for $d=1,2$---recurrent case] \label{d1}
    Let $d\le 2$. With $\bdif$ as in Theorem \ref{thm:invMeasure}, there is a unique nontrivial invariant measure $\pi^{\mathrm{inv}}$ up to scalar multiple. 
\end{prop}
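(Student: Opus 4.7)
The plan is to exploit recurrence: in dimensions $d \leq 2$, the chain $\bdif$ ought to behave like a mean-zero random walk with finite variance, which is recurrent in these dimensions. Once recurrence is established, Harris-type theorems will give both existence and uniqueness of an invariant measure up to scalar multiple.

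First I would establish \emph{topological recurrence}, meaning that for every open ball $B \subset I$ and every starting point $x \in I$, one has $\mathbf{P}_x^{\bdif}(X_n \in B \text{ i.o.}) = 1$. Two routes suggest themselves. The Lyapunov route uses the functions $u(x) = 1 + |x|$ if $d = 1$ or $u(x) = \log(1+|x|)$ if $d = 2$ from Definition \ref{def:f}; by the SRI property and Lemma \ref{tbb}, the drift $\mathfrak{f} = (\bdif - \mathrm{Id})u$ differs only on a neighborhood of $0$ from the analogous quantity for a genuine random walk with increment distribution $\nu$, and for the random walk the classical computation shows $\mathfrak{f}$ is bounded and integrable against Lebesgue measure. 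A Foster–Lyapunov argument then forces the chain to return to a bounded set a.s. The alternative route is to invoke the invariance principle (Theorem \ref{inv00}) to compare with a $d$-dimensional Brownian motion, which is recurrent for $d \leq 2$; the anti-concentration bound of Theorem \ref{anti} provides the uniform control needed to upgrade convergence in law of the rescaled paths to a.s.\ infinite return to bounded sets. I expect the Lyapunov route to be cleaner since the functions $u$ are already in the paper.

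Next, combining this recurrence with the strong Feller/TV-continuity and the topological irreducibility assumed in Theorem \ref{thm:invMeasure}, I would upgrade topological recurrence to \emph{Harris recurrence}: every Borel set $A$ with $\pi_0(A) > 0$ (for some reference measure $\pi_0$, e.g.\ the transition kernel $\bdif(x_0, \cdot)$ from a reference point) is visited infinitely often from every starting point. This is the hypothesis needed to apply the Revuz–Meyn–Tweedie theory.

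With Harris recurrence in hand, existence of $\pi^{\mathrm{inv}}$ follows from the standard excursion construction: pick a reference open ball $B_0$ around the origin and a reference point $x_0 \in B_0$, let $T = \inf\{n \geq 1 : X_n \in B_0\}$, and define
\[
\pi^{\mathrm{inv}}(A) := \mathbf{E}_{x_0}^{\bdif}\!\left[\sum_{n=0}^{T-1} \mathbf{1}_{\{X_n \in A\}}\right].
\]
Harris recurrence guarantees $T < \infty$ a.s.\ and the resulting measure is $\sigma$-finite (though infinite, since the chain is expected to be null-recurrent); invariance $\pi^{\mathrm{inv}} \bdif = \pi^{\mathrm{inv}}$ follows from the strong Markov property applied at $T$. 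Uniqueness up to scalar multiple is the standard consequence of Harris recurrence plus irreducibility: any other $\sigma$-finite invariant measure must be proportional because excursions from $B_0$ determine the measure up to the normalization of $\pi^{\mathrm{inv}}(B_0)$.

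Finally, to obtain the constant growth at infinity (needed to apply Theorem \ref{thm:invMeasure} elsewhere in the paper), I would compare $\pi^{\mathrm{inv}}$ far from the origin with the Haar measure on $I$ (Lebesgue or counting, depending on $I$). Away from the diagonal, $\bdif$ coincides up to exponentially small TV error with the convolution of two copies of $\nu$, so the restriction of the Haar measure to balls at infinity is approximately invariant; a coupling argument combined with uniqueness then forces $\pi^{\mathrm{inv}}$ to grow like Haar measure at infinity. The main obstacle I anticipate is proving the initial recurrence with sharp enough uniformity in the starting point to guarantee Harris recurrence rather than just null-recurrence from a specific state; the Lyapunov approach via the modified functions $u$ should handle this because $\mathfrak{f}$ is globally bounded with controlled sign outside a compact set.
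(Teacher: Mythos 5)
Your overall architecture — topological recurrence from a Lyapunov argument, upgrade to Harris recurrence via the Strong Feller property and open-set irreducibility, then existence and uniqueness from the standard Harris theory — is exactly the route the paper takes. However, the choice of Lyapunov function is where your proposal breaks down.

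The Foster--Lyapunov criterion (condition~(V1)) requires a function $V$ with compact sublevel sets satisfying $(\Pdif - \mathrm{Id})V(x) \le 0$ for all $x$ outside some compact set, i.e.\ a \emph{strict} super-harmonicity. The functions $u(x) = 1 + |x|$ (in $d=1$) and $u(x) = \log(1+|x|)$ (in $d=2$) from Definition~\ref{def:f} do \emph{not} satisfy this. Since $\bdif$ is asymptotically centered, $(\Pdif-\mathrm{Id})u(x) \approx \tfrac12 \sigma^2 \Delta u(x)$ to leading order for large $|x|$. In $d=1$, $u''(x)=0$ away from the origin, so the leading term vanishes; worse, $E[|x+\epsilon|] - |x| = E[|x+\epsilon| - (x+\epsilon)] \ge 0$ for $x>0$ and mean-zero $\epsilon$, so the drift is actually \emph{nonnegative}. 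In $d=2$, $\Delta \log(1+|x|) = \tfrac{1}{|x|(1+|x|)^2} > 0$, again the wrong sign. So $\mathfrak{f} = (\Pdif - \mathrm{Id})u$ is not negative outside a compact set, and ``bounded and integrable'' is not the hypothesis (V1) asks for. The function $\mathfrak{f}$ from Definition~\ref{def:f} is used in the paper for a completely different purpose — to \emph{normalize} $\pi^{\mathrm{inv}}$ (the unit normalization) and to compute the constants in Theorems~\ref{inv01}--\ref{inv02} — not as a Lyapunov function.

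What the paper does instead is take a concave modification: $V(x) = \sqrt{1+|x|}$ in $d=1$ (so $V''(x) = -\tfrac14(1+|x|)^{-3/2} < 0$) and $V(x) = \sqrt{\log(1+|x|^2)}$ in $d=2$ (so $\Delta V(x) \le -\tfrac{1}{2|x|^2\log^{3/2}|x|} < 0$). The strict negativity of the second-order term is then what gives $(\Pdif - \mathrm{Id})V(x) < 0$ for large $|x|$, after careful control of the first-order term (which vanishes by centeredness up to an exponentially small correction from the SRI decay) and the third-order Taylor remainder and the far-tail contribution. With this $V$, the chain is non-evanescent, and then Harris recurrence, existence, and uniqueness follow from the Meyn--Tweedie results exactly as you sketch. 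One minor remark: the constant-growth property is not part of this proposition — it is established later (as a consequence of the expectation limits and Theorem~\ref{anti}), so you need not prove it here.
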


\begin{proof}
    Let $d\le 2$. The proof will proceed by construction of Lyapunov functions to show recurrence. By the continuity of $x\mapsto \bdif(x,\bullet)$ in total variation norm, the Markov kernel $\pdif$ is \textit{Strong Feller,} which means that the associated Markov operator $\Pdif$ sends bounded measurable functions to bounded continuous functions. In particular $\bdif$ is a ``$T$-chain" in the sense of \cite[Definition 6.0.0 (iii)]{MTbook}. 

    Furthermore $\bdif$ is ``open-set irreducible" in the sense of \cite[Section 6.1.2]{MTbook} by the assumptions of Theorem \ref{thm:invMeasure}. In particular, by \cite[Proposition 6.1.5]{MTbook} the chain is ``$\psi$-irreducible" where the maximal irreduciblilty measure $\psi$ has full support due to the open-set irreducibility and the Strong Feller property.

    We will now show that for all $x\in I$, there exists a compact set $K\subset I$ such that the Markov chain $(X_r)_{r\ge 0}$ visits $K$ infinitely often almost surely when started from $x$. In other words, the Markov kernel $\pdif$ is ``non-evanescent" in the sense of \cite[Section 9.2.1]{MTbook}. Now by \cite[Theorem  9.2.2 (ii)]{MTbook} any non-evanescent $\psi$-irreducible $T$-chain is automatically \textit{Harris recurrent}. 
    
    By \cite[Theorem 17.3.2]{MTbook}, Harris recurrent chains can be characterized as exactly those chains for which a unique invariant measure $\pi^{\mathrm{inv}}$ exists uniquely (up to scalar multiple). By \cite[Theorem 10.4.9]{MTbook}, the measure $\pi^{\mathrm{inv}}$ is equivalent to the maximal irreducibility measure $\psi$, which has already been explained to have full support, in particular it is a nonzero measure.

    The manner in which we will whow the non-evanescence of the chain $\bdif$ is by showing that there exists a Lyapunov drift function $V$ satisfying the so-called \textit{(V1)} drift condition, that is the function has compact sublevel sets and furthermore one has $\Pdif V(x)\leq V(x)$ for all $x$ outside some compact set. By \cite[Theorem 9.4.1]{MTbook}, this would imply the non-evanescence.
\\
\\
\textbf{Existence of a Lyapunov drift function in $d=1$:} We set $V(x):= \sqrt{|x|+1},$ and we let $\Pdif$ denote the Markov operator associated to the Markov kernel $\bdif$. We will show that $\Pdif V(x)\leq V(x)$ for all positive and sufficiently large $x$, and the proof for negative $x$ would be completely symmetric. 

    For $x>2$ and $a\in (x/2,3x/2)$ we can write \begin{align*}V(a)-V(x) &= V'(x) (a-x) + \frac12 V''(x) (a-x)^2 +R(a,x) \\ &= \tfrac12 (x+1)^{-1/2} (a-x) -\frac14 (x+1)^{-3/2} (a-x)^2 + R(a,x),
    \end{align*} where by Taylor's remainder theorem one has uniformly over all $x>2$ and $a\in (x/2,3x/2)$ the bound $$|R(a,x)| \leq \frac16 |a-x|^3 \sup_{b\in [x/2,3x/2]}|V'''(b)| \le \frac3{48} (x/2)^{-5/2} |a-x|^3 . $$ Now recall that $\int_I (a-x)\bdif(x,\dr a) = 0$ for all $x\in I$, which is immediate from the definitions. Consequently we can disregard the first-order term when calculating $\Pdif V$ and we find that \begin{align}\notag \Pdif V(x)-V(x) &= \int_I (V(a)-V(x)) \bdif (x,\dr a) \\ \notag &\leq -\frac14 (x+1)^{-3/2}\int_I (a-x)^2 \bdif(x,\dr a) + \frac3{48}(x/2)^{-5/2} \int_I |a-x|^3 \bdif(x,\dr a) \\ \label{b8}&\;\;\;\;\;\;\;\;\;\;\;\;\;\;\;\;\;+ \int_I |V(a)-V(x)| \ind_{\{|x-a|>x/2\}} \bdif(x,\dr a).\end{align}
    Let $\sigma$ be as the definition of the class $\Ekk$, and let us define constants
    \begin{align*}
        \omega_{\mathrm{exp}}: &= \sup_{y\in I} \int_I e^{\frac{\sigma}4|a-y|} \bdif(y,\dr a),\\
        \omega_2:&= \sup_{y\in I} \int_I (a-y)^2 \bdif(y,\dr a),\\ \omega_3 :&= \sup_{y\in I} \int_I |a-y|^3 \bdif(y,\dr a), \\ \delta_2 :&= \inf_{y\in I} \int_I (a-y)^2 \bdif(y,\dr a).
    \end{align*}
    The first three constants are all finite by e.g. Lemma \ref{grow}, and the last constant is strictly positive by e.g. Lemma \ref{buo}. Notice that $V$ is a globally Lipchitz function on $I$ with Lipchitz constant 1/2, i.e., $|V(x)-V(a)|\leq \frac12 |x-a|,$ thus using Cauchy-Schwarz and Markov's inequality we see that 
    \begin{align*}
        \int_I |V(a)-V(x)| \ind_{\{|x-a|>x/2\}} \bdif(x,\dr a) &\leq \bigg(\int_I (V(a)-V(x))^2\bdif(x,\dr a)\bigg)^{1/2} \bigg(\int_I \ind_{\{|x-a|>x/2\}} \bdif(x,\dr a)\bigg)^{1/2} \\ &\leq  \bigg(\frac14 \int_I (a-x)^2\bdif(x,\dr a)\bigg)^{1/2} \bigg(\frac{\int_I e^{\frac{\sigma}4|x-a|} \bdif(x,\dr a)}{e^{\frac{\sigma}4(x/2)}}\bigg)^{1/2} \\ &\leq \frac12 \omega_2^{1/2} \omega_{\mathrm{exp}}^{1/2}\cdot  e^{-\frac{\sigma x}{16}}.
    \end{align*}
    Plugging this bound back into \eqref{b8}, we find that for $x>2$ one has $$\Pdif V(x) - V(x) \leq -\frac{\delta_2} 4 (x+1)^{-3/2} + \frac{3\omega_3}{48} (x/2)^{-5/2} + \frac12 \omega_2^{1/2} \omega_{\mathrm{exp}}^{1/2} \cdot e^{-\frac{\sigma x}{16}}.$$ The right side is clearly negative for sufficiently large $x>2$, thus proving the claim.
\\
\\
    \textbf{Existence of a Lyapunov drift function in $d=2$:} We set $V(x):= \sqrt{\log(1+|x|^2)}.$ A direct calculation reveals that for $|x|$ large one has $\Delta V(x) \leq -\frac{1}{2|x|^2 \log^{3/2}|x|}$. 
    
    We will show that $\Pdif V(x)\leq V(x)$ for all sufficiently large $|x|$. For $|x|>2$ and $a$ within a ball of radius $|x|/2$ of $x$, we can write \begin{align*}V(a)-V(x) &= \sum_{j=1,2} V_j(x) (a_j-x_j) + \frac12 \sum_{i,j=1,2} V_{ij}(x) (a_i-x_i)(a_j-x_j) +R(a,x), 
    \end{align*} where by Taylor's remainder theorem one has uniformly over all $x>2$ and $a\in (x/2,3x/2)$ the bound $$|R(a,x)| \leq \frac16 |a-x|^3 \sup_{b: |b-x|<|x|/2}\sum_{i,j,k=1,2} |V_{ijk}(b)| \le C |x|^{-3} |a-x|^3 , $$ where $C$ is some large enough constant as one may verify by direct calculation. Now recall that $\int_I (a-x)\bdif(x,\dr a) = 0$ for all $x\in I$, which is immediate from the definitions. 
    
    Consequently we can disregard the first-order term when calculating $\bdif V$ and we find that \begin{align}\notag \Pdif V(x)-V(x) &= \int_I (V(a)-V(x)) \bdif (x,\dr a) \\ \notag &\leq  \frac12 \sum_{i,j=1,2} V_{ij}(x) \int_I (a_i-x_i)(a_j-x_j) \bdif(x,\dr a) + C|x|^{-3} \int_I |a-x|^3 \bdif(x,\dr a) \\ \label{b9}&\;\;\;\;\;\;\;\;\;\;\;\;\;\;\;\;\;+ \int_I |V(a)-V(x)| \ind_{\{|x-a|>x/2\}} \bdif(x,\dr a).\end{align}
    We also define the measure $\nu_{\mathrm{dif}}$ by $\int_I f(u)\nu_{\mathrm{dif}}(\dr u):= \int_{I^2} f(x-y) \nu(\dr x)\nu(\dr y),$ and we recall from the assumptions in Theorem \ref{thm:invMeasure}  that the covariance matrix of $\nu_{\mathrm{dif}}$ is $\mathrm{Id}_{d\times d},$ thus $\int_I u_iu_j \nu_{\mathrm{dif}}(\dr u) = \delta_{ij}$ for $i,j=1,2$. Since $\bdif$ is an SRI we have by Lemma \ref{tbb} 
    $$\bigg|\int_I u_iu_j \nu_{\mathrm{dif}}(\dr u)(\dr u) - \int_I (a_i-x_i)(a_j-x_j) \bdif(\x,\dr \bfa)\bigg| \leq C F(|x|)^{1/2}.$$
    One furthermore directly verifies that $|V_{ij}(x)| \leq C|x|^{-2}$ for some sufficiently large $C>0$. This means that 
    \begin{equation}
        \bigg| \frac12\Delta V(x) -\frac12 \sum_{i,j=1,2} V_{ij}(x) \int_I (a_i-x_i)(a_j-x_j) \bdif(x,\dr a)\bigg| \leq C|x|^{-2} F(|x|)^{1/2} \leq C|x|^{-3},
    \end{equation}
    where we are using the fact that the given conditions on $F$ clearly imply that $F(|x|)$ decays faster than $|x|^{-1}$ (in fact exponentially).

    Furthermore, just like in the $d=1$ case above, one easily shows that $$\int_I |V(a)-V(x)| \ind_{\{|x-a|>x/2\}} \bdif(x,\dr a) \leq Ce^{-c|x|}$$ for some $C,c>0$. In summary, we have $\Pdif V(x)-V(x) \leq \frac12\Delta V(x) + C|x|^{-3}. $ As noted above, we have $\Delta V(x) \leq -\frac{1}{2|x|^2 \log^{3/2}|x|}$ for sufficiently large $x$, which proves the claim.
\end{proof}

\begin{prop}[Existence/Uniqueness/nontriviality for $d\ge 3$---transient case]\label{d2}
    Let $d\ge 3$. With $\bdif$ as in Theorem \ref{thm:invMeasure} , there is a unique nontrivial invariant measure $\pi^{\mathrm{inv}}$ up to scalar multiple. The measure $\pi^{\mathrm{inv}}$ may be a signed measure, but its positive and negative parts both have constant growth at infinity.
\end{prop}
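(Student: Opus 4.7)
My plan is to construct $\pi^{\mathrm{inv}}$ as a controlled perturbation of a translation-invariant reference measure, exploiting the transience of the difference chain in $d \geq 3$ to invert $I - \bdif$ via its Green's function. Concretely, let $\mu_0$ denote the Haar measure on $I$ (counting measure if $I$ is discrete, Lebesgue measure if $I = \mathbb{R}^d$), which is invariant under the reference convolution kernel $\nu_{\mathrm{dif}}(x,\dr y):=\nu(\dr(y-x))\ast \nu(\dr(x-y))$ coming from the base measure $\nu$ of $\br$. Set $f := \bdif \mu_0 - \mu_0 = (\bdif - \nu_{\mathrm{dif}}^\ast)\mu_0$, which by Lemma \ref{tbb} and Item \eqref{a24} of Assumption \ref{a1} is a finite signed measure on $I$ with exponentially decaying tails (testing against $\phi$ shows $|\int \phi\, df| \le C\|\phi\|_\infty \int_I F(|x|)\,\mu_0(\dr x)$, with the integrand exponentially small for $x$ far from $\mathrm{supp}(\phi)$ and the origin).

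The key step is to define $\sigma := \sum_{k=0}^\infty \bdif^k f$ as a signed measure on $I$ and verify convergence. Testing $\sigma$ against the indicator of $B_1(y)$ and using the heat kernel estimate from Theorem \ref{anti} with a sufficiently large exponent $K$ gives
\begin{equation*}
|\bdif^k f|(B_1(y)) \;\le\; C k^{-d/2} \int_I e^{-c|x|} \bigl(1 + k^{-1/2}|x-y|\bigr)^{-K} \mu_0(\dr x)\;\le\; C k^{-d/2}\bigl(1 + k^{-1/2}|y|\bigr)^{-K}.
\end{equation*}
Splitting the sum $\sum_k$ at $k \asymp |y|^2$ yields $|\sigma|(B_1(y)) \le C(1+|y|)^{2-d}$, which is finite (and indeed vanishing at infinity) precisely because $d \geq 3$. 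Setting $\pi^{\mathrm{inv}} := \mu_0 + \sigma$, the identity $(I - \bdif)\sigma = -f$ forces $\bdif \pi^{\mathrm{inv}} = \bdif \mu_0 + \bdif \sigma = \mu_0 + f + (\sigma - f) = \pi^{\mathrm{inv}}$; the exchange of $\bdif$ and the infinite sum is justified by the absolute convergence just established. Constant growth at infinity for $\pi^{\mathrm{inv}}$ follows because $\mu_0$ has constant growth and $|\sigma|$ is uniformly bounded on unit balls; Hahn decomposing $\pi^{\mathrm{inv}} = \mu_0 + \sigma_+ - \sigma_-$ then bounds each of its positive and negative parts by measures with constant growth.

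For uniqueness, suppose $\pi'$ is another invariant signed measure whose positive and negative parts both have constant growth. The plan is to extract an asymptotic density $c \in \mathbb{R}$ so that $\tau := \pi' - c\, \pi^{\mathrm{inv}}$ is a \emph{finite} signed measure on $I$, and then to apply the heat kernel estimate to kill $\tau$. Specifically, invariance of $\pi'$ combined with the SRI estimate $\|\bdif(x,\bullet) - \nu_{\mathrm{dif}}(x,\bullet)\|_{TV} \le F(|x|)$ implies the convolution identity $\pi'(B_1(y)) = \int \nu_{\mathrm{dif}}(x,B_1(y))\pi'(\dr x) + o(1)$ as $|y|\to\infty$, whence the constant growth hypothesis and a standard renewal/mean-ergodic argument for the translation-invariant convolution $\nu_{\mathrm{dif}}^\ast$ force the existence of the limit $c := \lim_{|y|\to\infty}\pi'(B_1(y))/\mu_0(B_1(y))$. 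By construction $\tau$ has constant growth and vanishes at infinity; a second application of the same heat-kernel convolution analysis (now with $\tau$ replacing $f$) upgrades this to $|\tau|(I) < \infty$. The claim $\tau \equiv 0$ then follows from iterating invariance: $|\tau|(B_1(y)) = |\bdif^n \tau|(B_1(y)) \le \int |\tau|(\dr x)\cdot \bdif^n(x,B_1(y)) \le C n^{-d/2}|\tau|(I) \to 0$ by Theorem \ref{anti}, forcing $\tau = 0$ and hence $\pi' = c \,\pi^{\mathrm{inv}}$.

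The main technical obstacle will be the uniqueness step, specifically the rigorous extraction of the asymptotic density $c$ and the upgrade from ``vanishing at infinity'' to ``finite total variation'' for $\tau$. Unlike the recurrent case of Proposition \ref{d1}, where Harris recurrence via Meyn–Tweedie yields uniqueness almost for free, the transient case affords no such machinery and one must genuinely exploit the near-translation-invariance of $\bdif$ at infinity through the SRI bound \eqref{tvb2}. I expect that the cleanest path will be to iterate the identity $(I - \nu_{\mathrm{dif}}^\ast)\tau = (\bdif - \nu_{\mathrm{dif}}^\ast)\pi'$ (where the right-hand side inherits rapid decay from $F$) and apply the Green's function of the centered random walk $\nu_{\mathrm{dif}}^\ast$ on $I$, which is well-defined in $d\ge 3$ with the classical $|y|^{2-d}$ decay.
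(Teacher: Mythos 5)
Your construction of $\pi^{\mathrm{inv}}$ is sound and closely parallel to the paper's, just phrased dually: you build the measure explicitly as $\mu_0 + \sum_k (\bdif^*)^k(\bdif^*\mu_0 - \mu_0)$, whereas the paper defines its action on test functions via $\langle\pi^{\mathrm{inv}}, f\rangle = \int_I (P_{\mathrm{RW}}-\mathrm{Id})\big(\sum_m \Pdif^m f\big)(x)\,\dr x$. Both rest on the same pillar---the summability of $\Pdif^m$ in $d\ge 3$ provided by the heat kernel estimate of Theorem \ref{anti}---and your verification of invariance and constant growth is correct.

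The uniqueness step, however, contains a genuine gap that you yourself flag. You need (i) the existence of the asymptotic density $c = \lim_{|y|\to\infty}\pi'(B_1(y))/\mu_0(B_1(y))$, and (ii) the upgrade from ``$\tau$ vanishes at infinity'' to ``$\tau$ has finite total variation.'' Neither follows from a ``standard renewal/mean-ergodic argument'': the difficulty is precisely that an a priori invariant measure of constant growth need not have an asymptotic density at all---the statement that it does is essentially equivalent to the Choquet--Deny-type uniqueness you are trying to prove. Constant growth alone permits oscillating densities; it is only after one knows invariance forces rigidity that the density stabilizes. So steps (i) and (ii) beg the question. The paper sidesteps this by reducing directly to the random walk: it maps each invariant measure $\pi_i$ for $\bdif$ to a measure $\lambda_i$ that is \emph{exactly} invariant for $\nu_{\mathrm{dif}}^\ast$ via the inverse formula $\int f\,\dr\lambda_i = \int_I (\Pdif - \mathrm{Id})\big(\sum_m P_{\mathrm{RW}}^m f\big)\,\pi_i(\dr x)$, and then shows (by a clean Fourier argument on $\mathbb T^m\times\mathbb R^{d-m}$, using that $|\phi_\nu(\xi)|<1$ away from $\xi=0$) that a random walk with aperiodic step has Haar measure as its only invariant measure of constant growth. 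Your closing paragraph gestures at exactly this Green's function inversion of $(I - \nu_{\mathrm{dif}}^\ast)$, which is the right move---but as written you apply it to $\tau = \pi' - c\pi^{\mathrm{inv}}$, which presupposes $c$ already exists. To close the gap, perform the Green's function reduction \emph{first} (on $\pi'$ and $\pi^{\mathrm{inv}}$ directly, before any subtraction) to land in the translation-invariant setting, and then prove uniqueness there by Fourier; the asymptotic density $c$ then emerges as a byproduct rather than a hypothesis.
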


We can rule out the possibility of a signed measure using the results of Theorems \ref{inv01} and \ref{inv02} which are proved Appendix \ref{appendix:d}, showing that $\pi^{\mathrm{inv}}$ is actually a nonnegative measure, but for now this is the best we can say.

\begin{proof}
    In $d\ge 3$, one cannot rely on recurrence techinques to prove the existence of invariant measures. This is because the chain is simply \textit{not} recurrent. However, there is an easy alternative in higher dimensions: an explicit formula for invariant measure.

    Recall that $I$ is a locally compact additive subgroup of $\mathbb R^d$, thus $I$ has a Haar measure which we write $\dr x$. In fact one may write $I=\Sigma (\mathbb Z^m \times \mathbb R^{d-m})$ for some $d\times d$ matrix $\Sigma$ and $m\in \{0,...,d\}$, and then the measure $\dr x$ can just be identified with the product of counting measure and Lebesgue measure on $\mathbb Z^m$ and $\mathbb R^{d-m}$ respectively.
    
    Let $\Pdif$ be the Markov operator for the Markov kernel $\bdif$, and let $P_{\mathrm{RW}}$ be the Markov operator for the simple random walk with increment law $\nu_{\mathrm{dif}}$. Then we claim that for $f:I\to \mathbb R$ of exponential decay, the invariant measure acts on $f$ explicitly by $$\int_I f\; \dr \pi^{\mathrm{inv}} = \int_I \big(P_{\mathrm{RW}}-\mathrm{Id}\big) \bigg( \sum_{m=0}^\infty \Pdif^m f\bigg) (x) \dr x.$$
    We need to justify why the integral converges if $f$ is of exponential decay. We have the bound
    \begin{align}
        \notag \bigg|\big(P_{\mathrm{RW}}-\mathrm{Id}\big) \bigg( \sum_{m=0}^\infty \Pdif^mf \bigg) (x)\bigg| &\notag \le \bigg|\big(\Pdif-\mathrm{Id}\big) \bigg( \sum_{m=0}^\infty \Pdif^mf \bigg) (x)\bigg| + \bigg|\big(P_{\mathrm{RW}}-\Pdif\big) \bigg( \sum_{m=0}^\infty \Pdif^m f\bigg) (x)\bigg| \\ \notag &= |f(x)| + \bigg|\big(P_{\mathrm{RW}}-\Pdif \big) \bigg( \sum_{m=0}^\infty \Pdif^m f \bigg) (x)\bigg| \\&\leq |f(x)| + F(|x|) \cdot \bigg\| \sum_{m=0}^\infty \Pdif^m f\bigg\|_{L^\infty(I)},\label{cgai}
    \end{align}where the last bound follows from the definition of the SRI chain having decay function $F$. The exponential decay condition on $F$ imply that it is indeed integrable with respect to the Haar measure on $I$. The fact that the infinite series converges and yields a bounded function is clear from the anticoncentration estimate in Theorem \ref{anti}, since it implies that $\|\Pdif^m f\| \leq Cm^{-d/2} \|f\|_{L^\infty(I)}$ which is summable over $m\in \mathbb N$ as long as $d\ge 3$. That theorem is applicable because the topological irreducibility plus regularity conditions on $\bdif$ imply that the chain $\br$ on $I^k$ is indeed centered and $\delta$-repulsive for some $\delta>0$, see Lemma \ref{buo}. This justifies finiteness of the integral defining $\pi^{\mathrm{inv}}$.
    
    Then it is clear that $$\int_I \Pdif f \; \dr\pi^{\mathrm{inv}} = \int_I \big(P_{\mathrm{RW}}-\mathrm{Id}\big) \bigg( \sum_{k=1}^\infty \Pdif^m \bigg) f(x) \dr x = \int_I f\dr\pi^{\mathrm{inv}} - \int_I\big(P_{\mathrm{RW}}-\mathrm{Id}\big)f(x)\dr x =\int_I f\dr\pi^{\mathrm{inv}},$$
    because we know that $\int_I\big(P_{\mathrm{RW}}-\mathrm{Id}\big)f(x)\dr x = 0$ since the Haar measure $\dr x$ is easily checked to be invariant for $P_{\mathrm RW}.$ Thus we see that $\pi^{\mathrm{inv}}$ is indeed an invariant measure for $\bdif,$ and \eqref{cgai} easily implies that its positive and negative parts have constant growth at infinity.

    Now we just need to justify the uniqueness. Consider two different invariant measures $\pi_1,\pi_2$, each of which may be signed measures whose positive and negative parts have constant growth at infinity. Define measures $\lambda_i$ for $i=1,2$ by the formula $$\int_I f\; \dr \lambda_i = \int_I \big(\Pdif-\mathrm{Id}\big) \bigg( \sum_{m=0}^\infty P_{\mathrm{RW}}^m f\bigg) (x) \;\;\pi_i(\dr x),\;\;\;\; i=1,2.$$Since this is just an ``inversion" of the above formula for $\pi^{\mathrm{inv}}$, one then easily shows that the measures $\lambda_1,\lambda_2$ must both be invariant for the random walk on $I$ with increment distribution $\nu_{\mathrm{dif}}$. Thus, we just need to show that the random walk on $I$ with increment distribution $\nu_{\mathrm{dif}}$ has a unique invariant measure (up to scalar multiple) given by the Haar measure on $I$, which is a simpler problem. 
    
     Let $\lambda$ be an invariant (signed) measure for the random walk with increment law $\nu_{\mathrm{dif}}$, of constant growth at infinity. We need to show that $\lambda$ is a multiple of Haar measure. As a first step, we claim that the random walk with increment law $\nu_{\mathrm{dif}}$ is not supported on a proper closed subgroup of $I$. This is where we need to use the irreducibility assumptions on $\bdif$. 
     
     We assumed that $\bdif$ is continuous in TV norm in the $x$ variable, thus so is the underlying random walk under translations. This means that $\nu_{\mathrm{dif}}$ must have a $L^1$ density with respect to the Haar measure on $I$, so that it cannot be supported on a proper closed subgroup. 
     
     Assuming without loss of generality that $I=\mathbb Z^m \times \mathbb R^{d-m} $ for some $m\in\{0,...,d\}$ we can then speak of the Fourier transform $\phi_{\nu}$ of the measure $\nu$ which is a complex-valued function on $\mathbb T^m \times \mathbb R^{d-m}$. Since $\nu_{\mathrm{dif}}$ is not supported on a proper subgroup of $I$, the same holds true for $\nu$, and thus $|\phi_\nu(\xi)| = 1$ implies that $\xi = (0,...,0).$ Now if we write the relation for $\lambda $ to be an invariant measure (for the random walk with increment law $\nu_{\mathrm{dif}}$) in Fourier space, it reads $(|\phi_\nu (\xi)|^2 -1) \hat\lambda(\xi)=0$ for all $\xi$, where one needs to interpret $\hat \lambda$ as a tempered distribution on $\mathbb T^m \times \mathbb R^{d-m}$ (this is permissible because the constant growth condition on $\lambda$ implies that it does have a finite value when paired against smooth functions of rapid decay on $\mathbb T^m \times \mathbb R^{d-m}$). The relation $(|\phi_\nu (\xi)|^2 -1) \hat\lambda(\xi)=0$ means that $\hat\lambda$ must be a tempered distribution that is supported at the origin of $\mathbb T^m \times \mathbb R^{d-m}$. Since $\lambda$ is assumed to be a signed measure of constant growth at infinity, this means that $\hat\lambda$ can only be $c\delta_0$ for some $c\in \mathbb R$, which means by inverting the Fourier transform that $\lambda$ itself must be $c$ times Haar measure. This proves the uniqueness.

    Nontriviality of $\pi^{\mathrm{inv}}$ just follows from the explicit formula above. Indeed, take any strictly positive function $h$ of exponential decay at infinity, and note that if we set $f:= (\Pdif - \mathrm{Id}) \sum_{m=0}^\infty P_{\mathrm{RW}}^m  h,$ then one has $\int_I f \dr\pi^{\mathrm{inv}} = \int_I h(x)\dr x>0,$ so that $\pi^{\mathrm{inv}}$ cannot be the zero measure.
\end{proof}

Combining the results of Propositions \ref{d1} and \ref{d2} immediately proves Theorem \ref{thm:invMeasure}. 

\section{Proof of Theorems \ref{inv01}-\ref{inv02}: Expectation limits for SRI chains}\label{appendix:d}

Now we are in a position to complete the proof of Theorem \ref{inv01}. The key ingredients to proving this theorem will be the invariance principle of Theorem \ref{inv00} together with the anticoncentration result of Theorem \ref{anti} and the uniqueness of $\pi^{\mathrm{inv}}$ (up to scalar multiple) guaranteed by Propositions \ref{d1} and \ref{d2}. We shall start with the case $\ell=1$, then proceed inductively.

\begin{proof}[Proof of Theorem \ref{inv01}] 
    Existence and uniqueness are immediate from the previous two propositions. It remains to prove the limit theorem for the expectations. For $d=1,2$ it also remains to prove constant growth at infinity of $\pi^{\mathrm{inv}}$, but note that this is immediate from the expectation limit theorem and from the bound in Theorem \ref{anti}. For $d\ge 3$ we must also rule out the possibility that $\pi^{\mathrm{inv}}$ is a signed measure, but again this can be easily ruled out by the expectation limit theorem. 

Thus it only remains to prove the expectation limit theorem. We break the proof into five steps. As always let $\Pdif$ be the Markov operator associated to $\bdif$.

\textbf{Step 1.} We first reduce to the case where all $f_N$ are equal to some fixed function $f$. This is fairly straightforward given Theorem \ref{anti}. Indeed since we have by assumption $|f(x)|\leq H(|x|)$ and $H$ is of exponential decay, we have $|f_N(x)-f(x)| \leq \min\{ \|f_N-f\|_\infty, 2H(|x|)\}\leq \|f_N-f\|_\infty^{1/2}( 2H(|x|))^{1/2}$, and it follows from the latter theorem that $$N^{\frac{d-2}2} \sum_{r=0}^{Nt} \Ebbnn[ |(f_N-f)(   R^1_r-   R^2_r)| ] \leq C \|f_N-f\|_{L^\infty(I)}^{1/2}  \cdot \bigg(N^{\frac{d-2}2}\sum_{r=0}^{Nt} \Ebbnn[ H(|   R^1_r-   R^2_r|)^{1/2}]\bigg)\to 0,$$ which completes this reduction.

\textbf{Step 2.} 
Take any function $f:I\to \R$ such that $|f(x)| \leq H(|x|)$ for some decreasing $H:[0,\infty)\to[0,\infty)$ of exponential decay at infinity. In this step, we will show that $$\sup_{N\ge 1} N^{\frac{d-2}2} \sum_{s=0}^{Nt-1}\Ebbnn [ f(   R^1_s-   R^2_s)] \leq C(H), 
$$ where the constant $C(H)$ may depend on $H$ but not on $N$ or on any $f$ dominated by $x\mapsto H(|x|)$ as above. By the assumptions of Theorem \ref{inv01}, we have that $|x^i_N-x^j_N| \geq cN^{1/2}$ for $N\ge N_0$, where $N_0\in\mathbb N$ and $c>0$. Thus, taking $K:=d$ in Theorem \ref{anti}, we have for $N\ge N_0$ that 
\begin{align*}
    \int_I f\;\dr\gamma_N & \leq N^{\frac{d-2}2} \sum_{s=1}^{Nt-1} r^{-d/2} \big( 1+ N^{1/2}cr^{-1/2} \big)^{-d} \\&= N^{-1} \sum_{s=1}^{Nt-1} \bigg( \frac{N}r\bigg)^{d/2} \bigg( 1+c\sqrt{\frac{ N}r}\bigg)^{-d}.
\end{align*}
The latter is a Riemann sum approximation for $\int_0^t u^{-d/2} (1+cu^{-1/2})^{-d} \dr u,$ which is finite since $(1+cu^{-1/2})^{-d}\leq c^{-d} u^{d/2}.$ Consequently, the last sum remains bounded in $N$.

\textbf{Step 3.} We now claim that in Theorem \ref{inv01}, the quantity $\phi(N^{-1/2} (   R^1_r -    s_N r)\big)$ can be replaced by $\phi(\frac12 N^{-1/2} (   R^1_r+R^2_r-2    s_N r)\big)$ without changing the value of the limit. To prove this, notice that $$|\phi\big(N^{-1/2} (   R^1_r -    s_N r)\big)-\phi\big(\tfrac12 N^{-1/2} (   R^1_r +R^2_r-2    s_N r)\big)\big| \leq \tfrac12 \|\phi\|_{C^1} N^{-1/2} |   R^1_r-   R^2_r|.$$ Thus setting $\tilde f_N(x) = |x||f_N(x)|,$ we find that 
\begin{align*}
     N^{\frac{d-2}2} &\bigg| \sum_{r=0}^{Nt} \Ebbnn [ \phi(N^{-1/2} (   R^1_r -    s_N r)\big)f_N(   R^1_r-   R^2_r) ] - \sum_{r=0}^{Nt}\Ebbnn [ \phi\big(\tfrac12 N^{-1/2} (   R^1_r +R^2_r-2    s_N r)\big)f_N(   R^1_r-   R^2_r) ] \big)\bigg| \\&\leq N^{-1/2} \|\phi\|_{C^1} \cdot N^{\frac{d-2}2} \sum_{r=0}^{Nt} \Ebbnn [ \tilde f_N(   R^1_r-   R^2_r) ]  \\ &\leq C N^{-1/2} \|\phi\|_{C^1},
\end{align*}
where we use the result of Step 1 in the last step, noting that $|x|\sup_N |f_N(x)|$ still decays exponentially. Thus the difference tends to zero, which means we now need to show that 
\begin{align}\notag\lim_{N\to\infty} N^{\frac{d-2}2} \sum_{r=0}^{Nt} \Ebbnn &[ \phi\big(\tfrac12 N^{-1/2} (   R^1_r +R^2_r-2    s_N r)\big)f_N(   R^1_r-   R^2_r) ] \\&= \bigg( \int_0^t \int_{\mathbb R^d} G(2s,x_i-x_j) G(2s,y) \phi(\tfrac12(y+x_i+x_j)) \dr y\dr s\bigg)\int_I f \;\dr\pi^{\mathrm{inv}},\label{eqnn}
\end{align}
where $R^1_r$ appearing inside $\phi$ in the original formulation has now been replaced by $\tfrac12 (   R^1_r+R^2_r).$
We will see shortly that the latter formulation has certain advantages.

\textbf{Step 4.\label{step4}} The remainder of the proof will focus on proving \eqref{eqnn}. Henceforth abbreviate 
$$X^N_r:= \tfrac12 (   R^1_r+R^2_r-2 N^{-1}   s_N),\;\;\;\;\;\;\; Y_r:= R^1_r-   R^2_r .$$Consider the sequence $\gamma_N$ of measures on $I$ defined by $$N^{\frac{d-2}2} \sum_{s=0}^{Nt-1}\Ebbnn [ \phi\big(N^{-1/2} X^N_s\big)f(Y_s)].$$ 
We will show that any subsequence $\gamma_{N_k}$ of this sequence of measures has a further subsequence $\gamma_{N_{k_j}}$ converging as $N\to\infty$ to an invariant measure $\gamma$ for the Markov kernel $\pdif$. By convergence, we mean that for all continuous $f:I\to \mathbb R$ such that $|f(x)|\leq H(|x|)$, we have $\int_I f\;\dr\gamma_N \to \int_I f\;\dr\gamma$ along this subsequence $\gamma_{N_{k_j}}$. By the uniqueness of $\pi^{\mathrm{inv}}$ proved in Propositions \ref{d1} and \ref{d2}, this uniquely identifies any subsequential limit of $\gamma_N$ as a scalar multiple of $\pi^{\mathrm{inv}}$. Notice that the ultimate goal of the theorem can be formulated as establishing convergence of the \textit{entire} sequence $\gamma_N$ to $ \pi^{\mathrm{inv}}$ multiplied by the constant $\big( \int_0^t \int_{\mathbb R^d} G(s,x_1-x_2)G(s,y)\phi(y) \dr y\dr s\big),$ which will be accomplished in the next step by identifying the correct constant.

Thus consider a subsequence $\gamma_{N_k}$. There is indeed a further convergent subsequence $\gamma_{N_{k_j}}$. This is because we can apply the Banach-Alaoglu theorem to the sequence of measures $\chi_N:= H(|x|) \gamma_N(\dr x)$. By the result of Step 2, we have that  
\begin{align*}
    \sup_{N \geq 1} \|\chi_N\|_{TV} &= \sup_{N\ge 1} N^{\frac{d-2}2} \sum_{s=0}^{Nt-1}\Ebbnn [ H(|   R^1_s-   R^2_s|)] < \infty 
\end{align*}so that $\|\gamma\|_{TV} <\infty.$ 

Let us call this subsequential limit $\gamma$. To show that $\gamma$ is an invariant measure, we need to show that for all continuous $f:I\to \mathbb R$, say of exponential decay at infinity, one has $\int_I \Pdif f \;\dr\gamma = \int_I f\;\dr\gamma.$ To prove this, consider such $f$, say $|f(x)| \leq e^{-\theta|x|}$ where $\theta\leq z_0/4$, 
and define $\bar f: I^k\to\mathbb R$ by $\bar f(\x) = f(x_1-x_2)$. To show that $\int_I \Pdif f \;\dr\gamma = \int_I f\;\dr\gamma,$ it suffices to show that \begin{equation}\label{cesro}\lim_{N\to\infty}N^{\frac{d-2}2} \sum_{s=0}^{Nt-1} \Ebbnn [ |(P^{\br}-P^{\brn}) \bar f(\mathbf R_s)|] = 0.
\end{equation}
To prove \eqref{cesro}, we claim by the exponential decay assumption on $f$, one has the bound 
\begin{equation}\label{prvces}|(P^{\br}-P^{\brn}) \bar f(\x)| \leq C\min\{ d_{\mathrm{SRI}}(\br,\brn), e^{-\frac\theta{2} |x_1-x_2|} \}\leq Cd_{\mathrm{SRI}}(\br,\brn)^{1/2}   e^{-\frac\theta{4} |x_1-x_2|}. 
\end{equation}
The second inequality is immediate from the first one using $\min\{u,v\}\leq u^{1/2}v^{1/2}$. To prove the first inequality, the upper bound of $Cd_{\mathrm{SRI}}(\br,\brn)$ is clear simply from the definition of $d_{\mathrm{SRI}}$, since $|\bar f|\leq 1$. For the upper bound of $Ce^{-\frac\theta{2} |x_1-x_2|}$, we can show that both of the terms $P^{\brn} \bar f(\x)$ and $P^{\br} \bar f(\x)$ are bounded above by such a quantity. To show this, write the definition $P^{\brn} \bar f(\x):= \int_{I^k}  f(y_i-y_j) \brn(\x,\dr\y), $ then split the integral into two parts: $\{\y: |\y-\x| \leq \frac12|x_1-x_2|\}  $ and $\{\y: |\y-\x| > \frac12|x_1-x_2|\}  $, where $|\x|=\sum_{j=1}^k |x_j|$. On the first set, use the exponential decay bound on $f$ to bound $|f(\y) |\leq Ce^{-\frac\theta 2|x_1-x_2|}$. On the second set, use the exponential moment bounds inherent in the definition of $\Ekk$ together with Markov's inequality (and $|f|\leq 1$) to obtain an upper bound of $Ce^{-\frac{\sigma}{8} |x_1-x_j|} \leq Ce^{-\frac\theta2 |x_1-x_2|}.$

With \eqref{prvces} proved, the claim \eqref{cesro} follows immediately from Theorem \ref{anti}. Now let us show why \eqref{cesro} implies that the subsequential limit $\gamma$ is indeed an invariant measure. We have  $P^{\br} \bar f(\mathbf R_s) = \Pdif f(   R^1_s-   R^2_s)$ so that 
\begin{equation}\label{cesr1}N^{\frac{d-2}2} \sum_{s=0}^{Nt-1}\Ebbnn [ \phi\big(N^{-1/2} X^N_s)\big) P^{\br} \bar f(\mathbf R_s)] \to \int_I \Pdif f \;\dr\gamma
\end{equation}
along the subsequence $N_{k_j}$. Moreover, by the Markov property and a summation by parts, we have
\begin{align} \notag &\sum_{s=0}^{Nt-1}\Ebbnn [\phi(N^{-1/2} X^N_s) (P^{\brn} -\mathrm{Id})\bar f(\mathbf R_s)] = \sum_{s=0}^{Nt-1}\Ebbnn [\phi(N^{-1/2} X^N_s) (\bar f(\mathbf R_{s+1}) -\bar f(\mathbf R_s))] \\&= \notag \Ebbnn [\phi(N^{-1/2} X_{Nt}) \bar f(\mathbf R_{Nt})] - \phi(N^{-1/2} \x_N) \bar f(\x_N) \\
&- \label{cestr} \sum_{s=0}^{Nt-1} \Ebbnn[ (\phi(N^{-1/2} X^N_{s+1}) - \phi(N^{-1/2}X^N_s)) \bar f(\mathbf R_{s+1}) ] 
\end{align}We claim that all three terms on the right side go to zero when multiplied by $N^{\frac{d-2}{2}}.$ The first two terms go to zero because Theorem \ref{anti} implies that $N^{\frac{d-2}2} \Ebbnn [f(   R^1_{Nt} -   R^2_{Nt})]\to 0$, and because the assumption $N^{-1/2}|x^1_N-x^2_N|\not\to 0$ guarantees that $N^{\frac{d-2}2} f(x^1_N-x^2_N)\to 0$. The third term tends to zero because by conditioning at time $s$ and then using Theorem \ref{anti} we have the bound 
\begin{align*}\Ebbnn[ (\phi(N^{-1/2} X_{s+1}) - \phi(N^{-1/2}X^N_s)) \bar f(\mathbf R_{s+1}) ] &\leq \|\phi\|_{C^1(\mathbb R^d)} N^{-1/2} \Ebbnn [ |X^N_{s+1}-X^N_s|^{q'}]^{1/q'} \Ebbnn[ \bar f(\mathbf R_s)^q]^{1/q} \\&\leq C \|\phi\|_{C^1(\mathbb R^d)} N^{-1/2} \Ebbnn [ \bar f^q(\mathbf R_s)]^{1/q}\\&\leq  \|\phi\|_{C^1(\mathbb R^d)} N^{-1/2} s^{-\frac{d}{2q}} \big(1+s^{-1/2} |x_N^1-x_N^2|)^{-K} \\&\le \|\phi\|_{C^1(\mathbb R^d)} N^{-1/2} s^{-\frac{d}{2q}} \big(1+cs^{-1/2} N^{1/2})^{-K}.
\end{align*}
Here $q,q'$ are conjugate Holder exponents and this is valid for any $q$. In the last line, we used $|x^1_N-x^2_N| >cN^{1/2}$ by assumption. Now we need to multiply by $N^{\frac{d-2}2} $ and show that the last expression summed from $s=1$ to $s=Nt$ tends to 0. Taking $q<\frac{d}{d-1}$ will show that it tends to 0 using a Riemann sum interpretation.


Since \eqref{cestr} has been shown to vanish, \eqref{cesr1} thus shows that along the subsequence $N_{k_j}$ we have that \begin{equation}\label{cesr2}N^{\frac{d-2}2} \sum_{s=0}^{Nt-1}\Ebbnn [ \phi\big(N^{-1/2} X^N_s)\big) P^{\brn} \bar f(\mathbf R_s)] \to \int_I f \;\dr\gamma.
\end{equation}
Combining \eqref{cesr1} and \eqref{cesr2} with \eqref{cesro}, we do indeed see that $\int_I f\;\dr\gamma = \int_I \Pdif f\;\dr\gamma,$ proving that $\gamma$ is indeed an invariant measure.

\textbf{Step 5.} Recall the measures $\gamma_N$ from the previous step. Define the function $\mathfrak f:=(\Pdif - \mathrm{Id})u$ where $u(x) := \mathrm{sign}(2-d)\cdot (1+|x|)^{2-d}$ for $d\ne 2$ and $u(x) = \log(1+|x|)$ for $d=2$. Recall we defined the \textit{unit normalization} of $\pi^{\mathrm{inv}}$ as the one in which $\int_I \mathfrak f\; \dr\pi^{\mathrm{inv}} =1.$ Also recall that we define the dimension-dependent constants $c_1 =1, c_2=2\pi,$ and $c_d= d(d-2) /\Gamma(1+\frac{d}2)$ for $d\ge 3$.

We need to check that $\int_I \mathfrak f\; \dr\pi^{\mathrm{inv}}< \infty$ to ensure that the unit normalization of $\dr\pi^{\mathrm{inv}}$ is well-defined. This will follow from showing that 
\begin{equation} \label{eq:intfdu}
    |\mathfrak f(x)| \leq C|x|^{-d-1}
\end{equation}for $|x|>1$. A direct calculation with the Laplacian formula in polar coordinates reveals that 
$\Delta u(x) =  |x|^{-1} (1+|x|)^{-2}$ in $d=2$ and $\Delta u(x) = (2-d)(1-d) |x|^{-1} (1+|x|)^{-d}$ for $d \neq 2$ and $x\ne 0$, so in particular we have the bound $|\Delta u(x) | \leq C_d |x|^{-d-1}$ for $|x|>1$ in all dimensions $d\ge 1$. The third-order derivatives also satisfy $|u_{ijk}(x)| \leq C|x|^{-d-1}.$ From here and Lemma \ref{tbb} and the fact that the covariance matrix of $\nu$ is $\mathrm{Id}_{d\times d}$, one may show (very similarly to the proof of existence of Lyapunov functions in $d=2$ above) by third order Taylor expansion that $$|\mathfrak f(x)| \leq C|x|^{-d-1}$$ for $|x|>1$. This implies that $\mathfrak f$ is indeed in $L^1(\pi^{\mathrm{inv}})$.


We will prove in this step that $\int_I \mathfrak f\;\dr\gamma_N \to c_d \cdot \int_0^t \int_{\mathbb R^d} G(s,x_1-x_2) G(s,y) \phi(y)\dr y\dr s$. If we can prove this, then the theorem would be proved, because by the uniqueness proved in Propositions \ref{d1} and \ref{d2}, together with the result of Step 4, this would mean that any subsequential limit $\gamma$ of the sequence $\gamma_N$ must be a constant multiple of $\pi^{\mathrm{inv}}$. But the convergence of $\int_I \mathfrak f\;\dr\gamma_N $ to the correct quantity uniquely identifies the constant as the one appearing in the theorem statement. If every subsequence of $\gamma_N$ has a further subsequence converging to some fixed measure, then the sequence $\gamma_N$ must itself converge to that fixed measure, thus completing the proof. 

Letting $\bar u(\x):= u(x_1-x_2)$ and $\bar f_N(\x):= (P^{\brn} -\mathrm{Id})\bar u$, both of which are functions on $I^2$, we claim that 
\begin{equation}\label{ano}|\mathfrak f(x_1-x_2) - \bar{\mathfrak  f}_N(\x)|\leq d_{\mathrm{SRI}}(\br,\brn)^{1/4} F( |x_1-x_2| )^{1/4}.
\end{equation} for some large enough constant $C>0$.
This is indeed fairly immediate from Lemma \ref{tbb} and the definition of SRI chains, see \eqref{useful} for the proof of a similar claim.

With \eqref{ano} established, 
we find that $$\Ebbnn \bigg[ N^{\frac{d-2}2} \sum_{r=0}^{Nt-1} |\mathfrak f(R_r^1-R_r^2)-\bar{\mathfrak  f}_{N} (\mathbf R_r)|\bigg] \leq \Ebbnn \bigg[ N^{\frac{d-2}2} d_{\mathrm{SRI}}(\br,\brn)^{1/4}\sum_{r=0}^{Nt-1} F(|R^1_r-   R^2_r|)^{1/4}\bigg].$$ 
Using Theorem \ref{anti} and the conditions on $F$, we may immediately conclude that the above expectation is bounded above by $C d_{\mathrm{SRI}}(\br,\brn)^{1/4}$ and thus tends to zero. Thus, in order to complete this step, we need to calculate the limit  
\begin{equation}
    \label{qnm0} \lim_{N\to\infty}\Ebbnn \bigg[ N^{\frac{d-2}2} \sum_{r=0}^{Nt-1} \phi( N^{-1/2}X^N_s)\bar{\mathfrak  f}_{N}(\mathbf R_r)\bigg],
\end{equation}
where we remind the reader that $X^N_r:= \tfrac12 (   R^i_r+R^j_r-2 N^{-1}   s_N)$ is a shorthand notation. Recalling that $\bar{\mathfrak  f}_{N} = (P^{\brn}-\mathrm{Id}) \bar u$, we have by the Markov property and a summation by parts that
\begin{align*}N&^{\frac{d-2}2} \sum_{s=0}^{Nt-1}\Ebbnn [\phi(N^{-1/2} X^N_s) \bar{\mathfrak  f}_{N}(\mathbf R_s)] = N^{\frac{d-2}2} \sum_{s=0}^{Nt-1}\Ebbnn [\phi(N^{-1/2} X^N_s) (P^{\brn} -\mathrm{Id})\bar u(\mathbf R_s)]\\& = N^{\frac{d-2}2} \sum_{s=0}^{Nt-1}\Ebbnn [\phi(N^{-1/2} X^N_s) (\bar u(\mathbf R_{s+1}) -\bar u(\mathbf R_s))] \\&= N^{\frac{d-2}2} \bigg( \Ebbnn [\phi(N^{-1/2} X_{Nt}) \bar u(\mathbf R_{Nt})] - \phi(N^{-1/2} \x_N) \bar u(\x_N) \\ &\;\;\;\;\;\;\;\;\;\;\;\;\;\;\;\;\;\;\;\;\;\;\;- \sum_{s=0}^{Nt-1} \Ebbnn[ (\phi(N^{-1/2} X^N_{s+1}) - \phi(N^{-1/2}X^N_s)) \bar u(\mathbf R_{s}) ] \\& \;\;\;\;\;\;\;\;\;\;\;\;\;\;\;\;\;\;\;\;\;\;\;\;\;\;\;\;\;\;\;\;\;\;\;\;\;\;\;\;\;\;\;\;\;\;- \sum_{s=0}^{Nt-1} \Ebbnn [(\phi(N^{-1/2} X^N_{s+1}) - \phi(N^{-1/2}X^N_s)) (\bar u(\mathbf R_{s+1})-\bar u(\mathbf R_s)) ] \bigg).
\end{align*}
Now, we need to take the limit of each of the four terms of the last expression. Let us first deal with the first two terms at once. For this, it will be useful to define the function $$V_t(x) := \begin{cases}
    \mathbf E_{BM}[|B_t-x|], & d=1\\ \mathbf E_{BM}[\log|B_t-x|] , & d=2 \\ -\mathbf E_{BM}[|B_t-x|^{2-d}] , &d\ge 3
\end{cases}$$
where $x\in \mathbb R^d, t\ge 0$, and the expectation is with respect to the standard Brownian motion of rate 2 on $\mathbb R^d$, started from the origin. Then by the invariance principle of Theorem \ref{inv00}, and the assumptions on the sequence $\x_N$, we have that
\begin{align}
    \notag \lim_{N\to\infty} &N^{\frac{d-2}2} \bigg( \Ebbnn [\phi(N^{-1/2} X_{Nt}) \bar u(\mathbf R_{Nt})] - \phi(N^{-1/2} \x_N) \bar u(\x_N)\bigg)\\& \notag = \mathbf E_{\boldsymbol x}^{BM^{\otimes k}} [ \phi( \tfrac12(B^i_t+B^j_t)) V_0(B^i_t-B^j_t)] - \phi(\tfrac12(x_1+x_2)) V_0(x_1-x_2) \\&= \bigg( \int_{\mathbb R^d} G(2t ,y) \phi(\tfrac12 (y+x_1+x_2))\dr y\bigg) V_t(x_1-x_2)  - \phi(\tfrac12(x_1+x_2)) V_0(x_1-x_2).\label{step1}
\end{align}
All expectations are with respect to a standard $2d$-dimensional Brownian motion started from $\boldsymbol x=(x_1,x_2)$. In the third line we are using the independence of $B^1+B^2$ from $B^1-B^2$, which implies that $\mathbf E_{BM^{\otimes k}} [ \phi( \tfrac12(B^1_t+B^2_t)) V_0(B^1_t-B^2_t)] = \mathbf E_{BM^{\otimes k}} [ \phi( \tfrac12(B^1_t+B^2_t))]\mathbf E_{BM^{\otimes k}}[ V_0(B^1_t-B^2_t)] = \mathbf E_{BM^{\otimes k}}[\phi( \tfrac12(B^1_t+B^2_t))] V_t(x_1-x_2)$. The reason that the invariance principle applies in this situation is because e.g. in $d\ge 3$ we can write $N^{\frac{d-2}2} \bar u(\mathbf R_{Nt}) = (N^{-1/2} +N^{-1/2} |R^1_{Nt}-   R^2_{Nt}|)^{2-d}$, and we can therefore use the fact that the sequence of functions $|N^{-1/2}+x|^{2-d}$ converges for $x\ne 0$ to the function $|x|^{2-d}=V_0(x)$. Notice that when $d\ge 2$, the function $V_0$ does have a singularity at the origin, but fortunately the uniform integrability implied by Theorem \ref{anti} (which guarantees for instance that $\sup_N \Ebbnn \big[ \big| N^{\frac{d-2}2}(1+\big| R^1_{Nt} -   R^2_{Nt}|)^{2-d}\big|^q\big]<\infty $ for any $q \in [1,\frac{d}{d-2})$ if $d\ge 3$) does imply convergence of the associated expectations.

Now we will take the limit of the third term $N^{\frac{d-2}2} \sum_{s=0}^{Nt-1} \Ebbnn[ (\phi(N^{-1/2} X^N_{s+1}) - \phi(N^{-1/2}X^N_s)) \bar u(\mathbf R_{s}) ]$. Apply a third-order Taylor expansion of $\phi$, and we see that 
\begin{align*}
    \Ebbnn[ (\phi(N^{-1/2} X^N_{s+1}) - \phi(N^{-1/2}X^N_s))  |\mathcal F_s ] &= N^{-1} \Delta \phi(X_s) + \mathbf{Er}_N(\phi,s),
\end{align*}
where the error term satisfies $$|\mathbf{Er}_N(\phi,s)| \leq C\|\phi\|_{C^3} N^{-3/2}+ C N^{-1/2}\|\phi\|_{C^2} d_{\mathrm{SRI}}(\br,\br_0)^{1/4} F(R^1_s-R^2_s)^{1/4}.$$
Indeed the first order term $N^{-1/2} \nabla \phi (X_s)\bullet  \Ebbnn[ X^N_{s+1} - X^N_s |\mathcal F_s ] $ can be absorbed into $\mathbf E_N$ because $\brn$ is converging to a centered chain (for which the conditional expectation would be exactly zero), thus one has that $|\Ebbnn[ X^N_{s+1} - X^N_s |\mathcal F_s ]| \leq C d_{\mathrm{SRI}} (\br,\br_0)^{1/4} F( |R^1_s-R^2_s|)^{1/4} $ by e.g. \eqref{useful}. Likewise all of the ``off-diagonal" second partials involving terms of the form $\partial_{12} \phi(X_s^N)$ would be absorbed into $\mathbf{Er}_N$ for a very similar reason, noting that the covariance matrix of the base measure of the limiting chain $\br$ is $\mathrm{Id}_{d\times d}$ (see the proof of Proposition \ref{d1} above for a very similar calculation). The remainder terms involving third derivatives all come with a factor of $N^{-3/2}$. 

Using Theorem \ref{anti} and the preceding bound, one may convince oneself that $\mathbf{Er}_N$ does not contribute to the limit, so that by the invariance principle (Theorem \ref{inv00}), one then has
\begin{align}
    \notag \lim_{N\to \infty} N^{\frac{d-2}2} \sum_{s=0}^{Nt-1} &\Ebbnn[ (\phi(N^{-1/2} X^N_{s+1}) - \phi(N^{-1/2}X^N_s)) \bar u(\mathbf R_s) ] = \lim_{N\to\infty} N^{\frac{d-2}2} \sum_{s=0}^{Nt-1} N^{-1} \Ebbnn[ \Delta \phi(X_s) \bar u(\mathbf R_s) ] \\ \notag &= \int_0^t \mathbf E_{BM^{\otimes 2} } [ \Delta \phi (B^1_s+B^2_s) V_0(B^i_s-B^j_s)]\dr s \\&= \int_0^t \bigg( \int_{\mathbb R^d} G(2s ,y) (\Delta \phi)(\tfrac 12(y+x_1+x_2))\dr y\bigg) V_s(x_1-x_2)\dr s\notag \\&= \int_0^t \partial_s \bigg( \int_{\mathbb R^d} G(2s ,y) \phi(\tfrac 12(y+x_1+x_2))\dr y\bigg) V_s(x_1-x_2)\dr s \notag \\ &= \bigg( \int_{\mathbb R^d} G(2t ,y) \phi(\tfrac12 (y+x_1+x_2))\dr y\bigg) V_t(x_1-x_2)  - \phi(\tfrac12(x_1+x_2)) V_0(x_1-x_2) \notag \\&\;\;\;\;\;\;\;\;- \int_0^t  \bigg(\int_{\mathbb R^d} G(2s ,y) \phi(\tfrac 12(y+x_1+x_2))\dr y\bigg)  (\partial_s V_s)(x_1-x_2)\dr s. \label{step2}
\end{align}
In the third line we are using the independence of $B^1+B^2$ from $B^1-B^2$, which implies that $\mathbf E_{BM^{\otimes 2}} [ \phi( \tfrac12(B^1_t+B^2_t)) V_0(B^1_t-B^2_t)] = \mathbf E_{BM^{\otimes 2}} [ \phi( \tfrac12(B^1_t+B^2_t))]\mathbf E_{BM^{\otimes 2}}[ V_0(B^1_t-B^2_t)] = \mathbf E_{BM^{\otimes 2}}[\phi( \tfrac12(B^1_t+B^2_t))] V_t(x_1-x_2)$ since $G_s*V_0 = V_s$. In the fifth line, we simply integrated by parts.

Next we argue that the fourth term goes to 0, i.e., $$\lim_{N\to \infty} N^{\frac{d-2}2} \sum_{s=0}^{Nt-1} \Ebbnn [(\phi(N^{-1/2} X^N_{s+1}) - \phi(N^{-1/2}X^N_s)) (\bar u(\mathbf R_{s+1})-\bar u(\mathbf R_s)) ].$$
The proof uses H\"older's inequality is essentially verbatim with the ``Term 4" argument in the proofs of Theorem \ref{2.11} and \ref{2.12}. We do not repeat it here.

Subtracting the results of \eqref{step1} and \eqref{step2}, we have obtained that 
\begin{align*}
    \lim_{N\to\infty} \int_I \mathfrak f\; \dr\gamma_N = \int_0^t  \bigg(\int_{\mathbb R^d} G(2s ,y) \phi(\tfrac 12(y+x_1+x_2))\dr y\bigg)  (\partial_s V_s)(x_1-x_2)\dr s.
\end{align*}
Now, simply note that $\partial_s V_s = c_d \cdot G(2s,\bullet)$ for the universal constant $c_d$ defined earlier. In particular, the subsequential limits of $\gamma_N$ are unique, as it must be the unique scalar multiple $c\cdot \pi^{\mathrm{inv}}$ under which $c \int_I \mathfrak f \dr \pi^{\mathrm{inv}} = c_d \int_0^t \int_I G(2s, y) \phi(\tfrac12 (y+x_1+x_2)) G(2s,x_1-x_2) \dr y \dr s.$ Recalling that $\int_I\mathfrak f\; \dr \pi^{\mathrm{inv}}=1$ gives the result.
\end{proof}

\begin{proof}[Proof of Theorem \ref{inv02}]

The $d=1$ statement follows from exactly the same proof as that of Theorem \ref{inv01}, with no changes necessary. Thus, we focus on the $d=2$ statement, which needs the appropriate modifications. In particula,r we need to explain why the correct factor is $\frac12$ multiplying $\pi^{\mathrm{inv}}$, which is the substance of the theorem. The proof starts similarly to that of Theorem \ref{inv01}, using the same subsequence trick.

   As in Step 1 in the proof of Theorem \ref{inv01}, it suffices to show that one has $$(\log N)^{-1}\sum_{r=0}^N \Ebbnn[f(   R^1_r-   R^2_r)] \to \frac12 \int_I f\dr\pi^{\mathrm{inv}}
    $$ for a fixed function $f$, rather than a sequence $f_N$ as in Assumption \ref{ass:SRI}. Consider the sequence of measures given by $$\gamma_N(f):= (\log N)^{-1}\sum_{r=0}^N \Ebbnn[f(   R^1_r-   R^2_r)].$$

    First we show that every subsequence $\gamma_{N_k}$ has a further converging subsequence $\gamma_{N_{k_j}}$ converging as $N\to\infty$ to a measure $\gamma$ which is a scalar multiple of the invariant measure $\pi^{\mathrm{inv}}$. As in the previous theorem, by convergence, we mean that for all continuous $f:I\to \mathbb R$ such that $|f(x)|\leq H(|x|)$ for some decreasing $H:[0,\infty)\to[0,\infty)$ such that $H(t) \leq Ae^{-bt}$ with $A,b>0$, we have $\int_I f\;\dr\gamma_N \to \int_I f\;\dr\gamma$ along this subsequence $\gamma_{N_{k_j}}$. 

Consider a subsequence $\gamma_{N_k}$. Consider the sequence $\chi_N:= H(|x|) \gamma_N(\dr x)$ for any fixed $H:[0,\infty)\to[0,\infty)$ that is decreasing and satisfies $H(t) \leq Ae^{-bt}$ with $A,b>0$. By the Banach-Alaoglu Theorem, there is a further subsequence $\gamma_{N_{k_j}}$ of $\gamma_{N_k}$ that converges. Let us call this subsequential limit $\gamma$. The fact that $\sup_N \|\chi_N\|_{TV} < \infty$ (so that $\gamma$ also has a finite total variation norm, and hence is a locally finite measure) is due to the fact that by Corollary \ref{cor:expectationBoundGeneral}, one has
\begin{align*}
    \|\chi_N\|_{TV} &= \int_I  H(|x|)d\gamma_N = (\log N)^{-1}\sum_{r=0}^N \Ebbnn[H(|   R^1_r-   R^2_r|)]
\leq  (\log N)^{-1}\sum_{r=0}^N C \cdot Ab^{-d} \cdot r^{-1}  \leq C \cdot Ab^{-d}.
\end{align*}

    Next, we show that any subsequential limit $\gamma$ must be an invariant measure. To prove this, we need to show that for all continuous $f:I\to \mathbb R$, say of exponential decay at infinity, one has $\int_I \Pdif f \;\dr\gamma = \int_I f\;\dr\gamma.$ To prove this, consider $f$ such that $|f(x)| \leq e^{-\theta|x|}$ where $\theta\leq z_0/4$. We first show that  \begin{equation}\lim_{N\to\infty}(\log N)^{-1} \sum_{r=0}^{N} \Ebbnn [ |(P^{\br}-P^{\brn})f(   R^1_r - R^2_r)|] = 0.
\end{equation}

Exactly as in \eqref{prvces}, we have that
\begin{equation}\label{c15}|(P^{\br}-P^{\brn})  f(\x)| \leq C\min\{ d_{\mathrm{SRI}}(\br,\brn), e^{-\frac\theta{2} |x_i-x_j|} \}\leq Cd_{\mathrm{SRI}}(\br,\brn)^{1/2}   e^{-\frac\theta{4} |x_i-x_j|}. 
\end{equation}

We therefore have 
\begin{align*}
    (\log N)^{-1} \sum_{r=0}^{N} \Ebbnn [ |(P^{\br}-P^{\brn})f(   R^1_r - R^2_r)|] \leq (\log N)^{-1} \sum_{r=0}^{N} \Ebbnn [ |Cd_{\mathrm{SRI}}(\br,\brn)^{1/2}   e^{-\frac\theta{4} |R_r^1-R_r^2|}|].
\end{align*}

It follows from Corollary \ref{cor:expectationBoundGeneral} that $ \Ebbnn [ |e^{-\frac\theta{4} |R_r^1-R_r^2|}|] \leq C r^{-1}$ so we have 
\begin{align*}
(\log N)^{-1} \sum_{r=0}^{N} \Ebbnn [ |Cd_{\mathrm{SRI}}(\br,\brn)^{1/2}   e^{-\frac\theta{4} |R_r^1-R_r^2|}|] &\leq (\log N)^{-1} \sum_{r=0}^{N} Cd_{\mathrm{SRI}}(\br,\brn)^{1/2} r^{-1} \\
&\leq C d_{\mathrm{SRI}}(\br,\brn)^{1/2}.
\end{align*}
This last term goes to $0$ as $N \to \infty$ due to the assumption that $\brn$ converges to $\br$.

We have  $P^{\br} f(   R^1_r - R^2_r) = \Pdif f(   R^1_r-   R^2_r)$ so that 
\begin{equation}\label{c16}(\log N)^{-1} \sum_{r=0}^{N}\Ebbnn [ P^{\br} f(   R^1_r - R^2_r)] \to \int_I \Pdif f \;\dr\gamma
\end{equation}
along the subsequence $N_{k_j}$.

Finally, by the Markov property, we have
\begin{align*}(\log N)^{-1} \sum_{r=0}^{N}\Ebbnn [(P^{\brn} -\mathrm{Id})f(   R^1_r - R^2_r)] &= (\log N)^{-1} \sum_{r=0}^{N}\Ebbnn [(f(   R^1_{r+1} - R^2_{r+1}) - f(   R^1_r - R^2_r))] \\&= (\log N)^{-1} \big(\Ebbnn [f(   R^1_{N+1} - R^2_{N+1})] - f(x^1_N-x^2_N) \big).
\end{align*}

This again goes to $0$ by Corollary \ref{cor:expectationBoundGeneral}, implying that along the subsequence $N_{k_j}$ we have that \begin{equation}(\log N)^{-1} \sum_{r=0}^{N}\Ebbnn [ P^{\brn} f(   R^1_r - R^2_r)] \to \int_I f \;\dr\gamma. \label{c17}
\end{equation}
Combining \eqref{c15}, \eqref{c16}, \eqref{c17}, we have $\int_I f\;\dr\gamma = \int_I \Pdif f\;\dr\gamma,$ proving that $\gamma$ is an invariant measure.

    Finally, we show that all of these subsequential limits agree, in other words, the invariant measures have the same normalizing factors. To show this we take the particular choice $\mathfrak f=(\Pdif-\mathrm{Id}) u(\x)$ where $u(\x):= \log(1+|x_i-x_j|).$ In this case, the invariance principle of Theorem \ref{inv00} easily implies that 
    \begin{align}
        \notag (\log N)^{-1}\sum_{r=0}^{N-1} \Ebbnn[f(   R^1_r-   R^2_r)] &= (\log N)^{-1} \sum_{r=0}^{N-1} \Ebbnn [ \log (1 + |R^i_{r+1} - R^j_{r+1}|) - \log ( 1+ |R^i_r-R^j_r|) ] \\ &= \notag \frac{ \Ebbnn[\log (1 + |R^1_{N}-R^2_{N}| )] - \log(1+ |x^i_N-x^j_N| ) }{\log N} \\&= \notag \frac12 + \frac{ \Ebbnn[\log (N^{-1/2} + N^{-1/2}|R^1_{N}-R^2_{N}| )] - \log(1+|x^i_N-x^j_N| ) }{\log N} \\& \stackrel{N\to\infty}{\longrightarrow} \frac12.
    \end{align} 
    In the last line, we used the invariance principle and the assumption that $|x^i_N-x^j_N|$ remains bounded to conclude that the numerator remains bounded. 
    
    Note that $\frac{1}{2} = \frac12 \int_I \mathfrak f \dr \pi^{\mathrm{inv}}$ when $\pi^{\mathrm{inv}}$ is under the unit normalization. 
    This means that every subsequence of $\gamma_N$ has a further subsequence converging to $\frac12 \pi^{\mathrm{inv}}$. Thus the sequence $\gamma_N$ must itself converge to $\frac12 \pi^{\mathrm{inv}}$. 
\end{proof}

Of course, the above proof is only valid in $d=2$, although at first glance, this logarithm trick appears to work in every dimension. However, for $d=1$, $\gamma_N$ has no subsequential limits in the first place since \eqref{eq:intfdu} would not hold for $d=1$ if we replaced $u(x)=1+|x|$ with $\log(1+|x|)$. Similarly, for $d>2$ the function $u$ does not decay sufficiently fast at infinity to be integrable with respect to $\pi^{\mathrm{inv}}$, thus $d=2$ is the only case where $\int_I (\Pdif -\mathrm{Id}) \log(1+|\bullet|) \dr\pi^{\mathrm{inv}}$ is actually guaranteed to exist.

\bibliographystyle{alpha}
\bibliography{ref.bib}

\end{document}